\documentclass[10pt]{article}
\usepackage{hyperref}\hypersetup{hidelinks}
\usepackage{mathtools}
\usepackage{amsfonts}
\usepackage{amssymb}
\usepackage{amsthm}

\usepackage[shortlabels]{enumitem}
\usepackage{microtype}
\usepackage{fullpage}
\usepackage{mathrsfs}
\usepackage{mleftright}
\mleftright

\usepackage{xparse}

\DeclareMathOperator{\Real}{Re}
\DeclareMathOperator{\Imaginary}{Im}
\DeclareMathOperator{\supp}{supp}

\NewDocumentCommand{\transpose}{}{\top}

\NewDocumentCommand{\R}{}{\mathbb{R}}
\NewDocumentCommand{\C}{}{\mathbb{C}}

\NewDocumentCommand{\Z}{}{\mathbb{Z}}
\NewDocumentCommand{\M}{}{\mathbb{M}}
\NewDocumentCommand{\EmptySet}{}{\varnothing}
\NewDocumentCommand{\Zgeq}{}{\Z_{\geq}}
\NewDocumentCommand{\Zg}{}{\Z_{>}}
\NewDocumentCommand{\Rn}{}{\R^n}
\NewDocumentCommand{\Rngeq}{}{\Rn_{\geq}}
\NewDocumentCommand{\Rng}{}{\Rn_{>}}
\NewDocumentCommand{\Rnmo}{}{\R^{n-1}}
\NewDocumentCommand{\psit}{}{\tilde{\psi}}
\NewDocumentCommand{\phih}{}{\hat{\phi}}
\NewDocumentCommand{\vt}{}{\tilde{v}}
\NewDocumentCommand{\at}{}{\tilde{a}}
\NewDocumentCommand{\ah}{}{\hat{a}}
\NewDocumentCommand{\uh}{}{\hat{u}}

\NewDocumentCommand{\Wt}{}{\widetilde{W}}
\NewDocumentCommand{\Wh}{}{\widehat{W}}
\NewDocumentCommand{\Omegat}{}{\widetilde{\Omega}}
\NewDocumentCommand{\Tt}{}{\widetilde{T}}

\NewDocumentCommand{\Kt}{}{\widetilde{K}}
\NewDocumentCommand{\At}{}{\widetilde{A}}
\NewDocumentCommand{\sB}{}{\mathcal{B}}

\NewDocumentCommand{\JSet}{}{J}
\NewDocumentCommand{\JComplement}{}{J^c}

\NewDocumentCommand{\wtEst}{}{\tilde{w}}

\NewDocumentCommand{\Compact}{}{\mathcal{K}}

\NewDocumentCommand{\Omegageq}{}{\Omega_{\geq}}

\NewDocumentCommand{\Ropn}{}{\R^{1+n}}
\NewDocumentCommand{\Ropngeq}{}{\R^{1+n}_{\geq}}

\NewDocumentCommand{\Cubenmo}{m}{Q^{n-1}(#1)}
\NewDocumentCommand{\CubenmoOne}{}{\Cubenmo{1}}
\NewDocumentCommand{\CubenmoOneHalf}{}{\Cubenmo{1/2}}

\NewDocumentCommand{\DSet}{}{\mathcal{D}}
\NewDocumentCommand{\DSetL}{m}{\mathcal{D}_{#1}}

\NewDocumentCommand{\Cuben}{m}{Q^n(#1)}
\NewDocumentCommand{\CubenOne}{}{\Cuben{1}}
\NewDocumentCommand{\Cubengeq}{m}{Q_{\geq}^n(#1)}
\NewDocumentCommand{\Cubeng}{m}{Q_{>}^n(#1)}
\NewDocumentCommand{\CubengeqOne}{}{\Cubengeq{1}}

\NewDocumentCommand{\CubenOneHalf}{}{\Cuben{1/2}}

\NewDocumentCommand{\CubengeqOneHalf}{}{\Cubengeq{1/2}}
\NewDocumentCommand{\CubengeqThreeFourths}{}{\Cubengeq{3/4}}
\NewDocumentCommand{\CubengOneHalf}{}{\Cubeng{1/2}}

\NewDocumentCommand{\CubengeqOneHalfClosure}{}{\overline{\Cubengeq{1/2}}}

\NewDocumentCommand{\CubengeqThreeFourthsClosure}{}{\overline{\Cubengeq{3/4}}}
\NewDocumentCommand{\CubengeqSevenEighths}{}{\Cubengeq{7/8}}
\NewDocumentCommand{\CubengeqSevenEighthsClosure}{}{\overline{\Cubengeq{7/8}}}
\NewDocumentCommand{\CubenSevenEighthsClosure}{}{\overline{\Cuben{7/8}}}

\NewDocumentCommand{\CubengeqDeltaOne}{}{\Cubengeq{\delta_1}}
\NewDocumentCommand{\CubengeqDeltaOneClosure}{}{\overline{\Cubengeq{\delta_1}}}
\NewDocumentCommand{\CubengeqDeltaTwoClosure}{}{\overline{\Cubengeq{\delta_2}}}
\NewDocumentCommand{\CubengeqDeltaTwo}{}{\Cubengeq{\delta_2}}
\NewDocumentCommand{\CubenDeltaTwoClosure}{}{\overline{\Cuben{\delta_2}}}

\NewDocumentCommand{\Bn}{m o}{B^n(\IfValueT{#2}{#2,}#1)}
\NewDocumentCommand{\Bnmo}{m o}{B^{n-1}(\IfValueT{#2}{#2,}#1)}

\NewDocumentCommand{\sconst}{}{(\mathrm{s.c.})}
\NewDocumentCommand{\lconst}{}{(\mathrm{l.c.})}

\NewDocumentCommand{\MatrixSpace}{o o o}{\mathbb{M}^{\IfNoValueTF{#1}{N}{#1} \times \IfNoValueTF{#2}{N}{#2} }(\IfNoValueTF{#3}{\C}{#3})}

\NewDocumentCommand{\MatrixNorm}{s m o o o}{\IfBooleanT{#1}{\left}\| #2 \IfBooleanT{#1}{\right}\|_{\MatrixSpace[#3][#4][#5]}}

\NewDocumentCommand{\GL}{m o}{\mathrm{GL}_{#1}\IfValueT{#2}{(#2)}}

\NewDocumentCommand{\CNip}{s m m o}{\IfBooleanT{#1}{\left}\langle #2, #3 \IfBooleanT{#1}{\right}\rangle_{\C^{\IfNoValueTF{#4}{N}{#4}}}}

\NewDocumentCommand{\ManifoldN}{}{\mathfrak{N}}
\NewDocumentCommand{\BoundaryN}{}{\partial \ManifoldN}
\DeclareMathOperator{\Int}{Int}
\NewDocumentCommand{\InteriorN}{}{\Int(\ManifoldN)}

\NewDocumentCommand{\TangentSpace}{m m}{T_{#1}#2}

\NewDocumentCommand{\CinftySpace}{s o o}{C^\infty\IfValueT{#2}{\IfBooleanT{#1}{\left}  (#2 \IfValueT{#3}{;#3}  \IfBooleanT{#1}{\right})}}
\NewDocumentCommand{\CinftycptSpace}{s o o}{C^\infty_{\mathrm{cpt}}\IfValueT{#2}{\IfBooleanT{#1}{\left}(#2 \IfValueT{#3}{;#3}  \IfBooleanT{#1}{\right})}}
\NewDocumentCommand{\CinftycpttSpace}{s o o}{C^\infty_{\mathrm{cpt},t}\IfValueT{#2}{\IfBooleanT{#1}{\left}(#2 \IfValueT{#3}{;#3}  \IfBooleanT{#1}{\right})}}

\NewDocumentCommand{\LtSpace}{s o o o}{L^2\IfNoValueTF{#2}{}{\IfBooleanT{#1}{\left}(#2\IfValueT{#3}{,#3}\IfValueT{#4}{;#4} \IfBooleanT{#1}{\right})}}

\NewDocumentCommand{\LtNorm}{s m o o o}{\IfBooleanTF{#1}{\left\|#2\right\|}{\|#2\|}_{\LtSpace[#3][#4][#5]}}
\NewDocumentCommand{\LtSpaceNoMeasure}{s o o}{L^2\IfNoValueTF{#2}{}{\IfBooleanT{#1}{\left}(#2\IfValueT{#3}{;#3} \IfBooleanT{#1}{\right})}}
\NewDocumentCommand{\LtSpaceNoMeasurecpts}{s o o}{L_{\mathrm{cpt},s}^2\IfNoValueTF{#2}{}{\IfBooleanT{#1}{\left}(#2\IfValueT{#3}{;#3} \IfBooleanT{#1}{\right})}}

\NewDocumentCommand{\LtlocSpace}{s o}{L^2_{\mathrm{loc}}\IfValueT{#2}{  \IfBooleanT{#1}{\left}(#2\IfBooleanT{#1}{\right})  }}

\NewDocumentCommand{\LtxnSpace}{o o}{L^2_{x_n}\IfValueT{#1}{(#1\IfValueT{#2}{;#2})}}

\NewDocumentCommand{\VlsSpace}{m m o}{V^{#1,#2}\IfValueT{#3}{(#3)}}
\NewDocumentCommand{\VlsNorm}{s m m m o}{\IfBooleanT{#1}{\left}\| #2 \IfBooleanT{#1}{\right}\|_{\VlsSpace{#3}{#4}[#5]}}

\NewDocumentCommand{\QNorm}{s m}{\IfBooleanT{#1}{\left}\| #2 \IfBooleanT{#1}{\right}\|_{\DomainQ}}

\NewDocumentCommand{\Formt}{s o o}{\FormtSymbol\IfValueT{#2}{ \IfBooleanTF{#1}{\left( #2 \IfValueT{#3}{,#3} \right)}{(#2\IfValueT{#3}{,#3})}     }}
\NewDocumentCommand{\FormQepsilon}{s o o}{\FormQepsilonSymbol\IfValueT{#2}{ \IfBooleanTF{#1}{\left( #2 \IfValueT{#3}{,#3} \right)}{(#2\IfValueT{#3}{,#3})}     }}
\NewDocumentCommand{\FormtSymbol}{}{\mathfrak{t}}
\NewDocumentCommand{\FormQepsilonSymbol}{}{\mathcal{Q}_{\epsilon}}
\NewDocumentCommand{\FormQSymbol}{}{\mathcal{Q}}
\NewDocumentCommand{\FormQ}{s o o}{\FormQSymbol\IfValueT{#2}{ \IfBooleanTF{#1}{\left( #2 \IfValueT{#3}{,#3} \right)}{(#2\IfValueT{#3}{,#3})}     }}
\NewDocumentCommand{\FormQh}{s o o}{\widehat{\FormQSymbol}\IfValueT{#2}{ \IfBooleanTF{#1}{\left( #2 \IfValueT{#3}{,#3} \right)}{(#2\IfValueT{#3}{,#3})}     }}
\NewDocumentCommand{\FormQF}{s o o}{\FormQSymbol_{\mathrm{F}}\IfValueT{#2}{ \IfBooleanTF{#1}{\left( #2 \IfValueT{#3}{,#3} \right)}{(#2\IfValueT{#3}{,#3})}     }}
\NewDocumentCommand{\FormRpsi}{o o}{\mathcal{R}_{\psi}\IfValueT{#1}{(#1,#2)}}

\NewDocumentCommand{\FormQFt}{s o o}{\widetilde{\FormQSymbol}_{\mathrm{F}}\IfValueT{#2}{ \IfBooleanTF{#1}{\left( #2 \IfValueT{#3}{,#3} \right)}{(#2\IfValueT{#3}{,#3})}     }}
\NewDocumentCommand{\FormQZero}{s o o}{\FormQSymbol_0\IfValueT{#2}{ \IfBooleanTF{#1}{\left( #2 \IfValueT{#3}{,#3} \right)}{(#2\IfValueT{#3}{,#3})}     }}

\NewDocumentCommand{\FormQH}{s o o}{\FormQSymbol^{H}\IfValueT{#2}{ \IfBooleanTF{#1}{\left( #2 \IfValueT{#3}{,#3} \right)}{(#2\IfValueT{#3}{,#3})}     }}
\NewDocumentCommand{\FormQHh}{s o o}{\widehat{\FormQSymbol}^{H}\IfValueT{#2}{ \IfBooleanTF{#1}{\left( #2 \IfValueT{#3}{,#3} \right)}{(#2\IfValueT{#3}{,#3})}     }}

\NewDocumentCommand{\QDomainQ}{}{(\FormQ,\DomainQ)}
\NewDocumentCommand{\tDomaint}{}{(\Formt,\HsSpace{\kappa}[\Gamma])}
\NewDocumentCommand{\QepsilonDomainQepsilon}{}{(\FormQepsilon, \DSet)}

\NewDocumentCommand{\DomainSymbol}{}{\mathrm{Dom}}
\NewDocumentCommand{\Domain}{m}{\DomainSymbol(#1)}
\NewDocumentCommand{\DomainQ}{}{\Domain{\FormQSymbol}}

\NewDocumentCommand{\Ltip}{s m m o o o}{\IfBooleanTF{#1}{\left \langle #2,#3 \right\rangle}{\langle#2,#3 \rangle}_{\LtSpace[#4][#5][#6]}}

\NewDocumentCommand{\CoreB}{}{\mathscr{B}}

\NewDocumentCommand{\opP}{}{\mathscr{P}}
\NewDocumentCommand{\opL}{}{\mathscr{L}}
\NewDocumentCommand{\opLh}{}{\widehat{\opL}}
\NewDocumentCommand{\DomainL}{}{\Domain{\opL}}
\NewDocumentCommand{\DomainLepsilon}{}{\Domain{\opL_{\epsilon}}}
\NewDocumentCommand{\LDomainL}{}{(\opL, \DomainL)}
\NewDocumentCommand{\LepsilonDomainLepsilon}{}{(\opL_\epsilon, \DomainLepsilon)}
\NewDocumentCommand{\DomainLNorm}{s m}{\IfBooleanT{#1}{\left}\|#2\IfBooleanT{#1}{\right}\|_{\DomainL}}

\NewDocumentCommand{\Vol}{o}{\mathrm{Vol}\IfValueT{#1}{(#1)}}
\NewDocumentCommand{\Mult}{m}{\mathrm{Mult}(#1)}

\NewDocumentCommand{\HsSpace}{m o}{H^{#1}\IfValueT{#2}{(#2)}}
\NewDocumentCommand{\HsNorm}{s m m o}{\IfBooleanT{#1}{\left}\| #2  \IfBooleanT{#1}{\right}\|_{\HsSpace{#3}[#4]} }

\NewDocumentCommand{\HstxpSpace}{m o}{H_{t,x'}^{#1}\IfValueT{#2}{(#2)}}
\NewDocumentCommand{\HstxpNorm}{s m m o}{\IfBooleanT{#1}{\left}\| #2  \IfBooleanT{#1}{\right}\|_{\HstxpSpace{#3}[#4]} }

\NewDocumentCommand{\HsWSpace}{s m o}{H_W^{#2}\IfValueT{#3}{\IfBooleanT{#1}{\left}(#3\IfBooleanT{#1}{\right})}}
\NewDocumentCommand{\HsWNorm}{s m m o}{\IfBooleanT{#1}{\left}\| #2  \IfBooleanT{#1}{\right}\|_{\HsWSpace{#3}[#4]} }

\NewDocumentCommand{\HsWhSpace}{s m o}{H_{\Wh}^{#2}\IfValueT{#3}{\IfBooleanT{#1}{\left}(#3\IfBooleanT{#1}{\right})}}
\NewDocumentCommand{\HsWtSpace}{m o}{H_{\Wt}^{#1}\IfValueT{#2}{(#2)}}

\NewDocumentCommand{\HsWlocSpace}{s m o}{H_{W,\mathrm{loc}}^{#2}\IfValueT{#3}{\IfBooleanT{#1}{\left}(#3\IfBooleanT{#1}{\right})}}
\NewDocumentCommand{\HsWtlocSpace}{s m o}{H_{{\Wt},\mathrm{loc}}^{#2}\IfValueT{#3}{\IfBooleanT{#1}{\left}(#3\IfBooleanT{#1}{\right})}}

\NewDocumentCommand{\HsYSpace}{m o}{H_Y^{#1}\IfValueT{#2}{(#2)}}
\NewDocumentCommand{\HsYNorm}{s m m o}{\IfBooleanT{#1}{\left}\| #2  \IfBooleanT{#1}{\right}\|_{\HsYSpace{#3}[#4]} }

\NewDocumentCommand{\HsYgeqSpace}{m o o}{H_{Y,\geq}^{#1}\IfValueT{#2}{(#2\IfValueT{#3}{;#3})}}
\NewDocumentCommand{\HsYgeqNorm}{s m m o}{\IfBooleanT{#1}{\left}\| #2  \IfBooleanT{#1}{\right}\|_{\HsYgeqSpace{#3}[#4]} }

\NewDocumentCommand{\HsWgeqSpace}{m o o}{H_{W,\geq}^{#1}\IfValueT{#2}{(#2\IfValueT{#3}{;#3})}}
\NewDocumentCommand{\HsWgeqNorm}{s m m o}{\IfBooleanT{#1}{\left}\| #2  \IfBooleanT{#1}{\right}\|_{\HsWgeqSpace{#3}[#4]} }

\NewDocumentCommand{\HsWgeqcptSpace}{m o}{H_{W,\geq,\mathrm{cpt}}^{#1}\IfValueT{#2}{(#2)}}

\NewDocumentCommand{\HsYgeqcptSpace}{m o}{H_{Y,\geq,\mathrm{cpt}}^{#1}\IfValueT{#2}{(#2)}}

\NewDocumentCommand{\GsSpace}{m o}{G^{#1}\IfValueT{#2}{(#2)}}
\NewDocumentCommand{\GsNorm}{s m m o}{\IfBooleanT{#1}{\left}\| #2  \IfBooleanT{#1}{\right}\|_{\GsSpace{#3}[#4]} }

\NewDocumentCommand{\HscptSpace}{m o}{H^{#1}_{\mathrm{cpt}}\IfValueT{#2}{(#2)}}

\NewDocumentCommand{\FourierSymbol}{}{\mathscr{F}}
\NewDocumentCommand{\Fouriertxp}{}{\FourierSymbol_{(t,x')\rightarrow (\tau,\xi')}}
\NewDocumentCommand{\Fouriertx}{}{\FourierSymbol_{(t,x)\rightarrow (\tau,\xi)}}

\NewDocumentCommand{\JBracket}{s m}{\IfBooleanT{#1}{\left}\langle #2 \IfBooleanT{#1}{\right}\rangle }

\NewDocumentCommand{\Symbolstx}{m}{\Psi^{#1}_{t,x}}
\NewDocumentCommand{\Symbolstxp}{m}{\Psi^{#1}_{t,x'}}
\NewDocumentCommand{\Lambdatx}{o}{\Lambda_{t,x}\IfValueT{#1}{^{#1}}}
\NewDocumentCommand{\Lambdatxp}{o}{\Lambda_{t,x'}\IfValueT{#1}{^{#1}}}
\NewDocumentCommand{\Lambdax}{o}{\Lambda_{x}\IfValueT{#1}{^{#1}}}

\NewDocumentCommand{\SymbolstxNorm}{s m m m}{\IfBooleanT{#1}{\left}\| #2\IfBooleanT{#1}{\right}\|_{\Symbolstx{#3},#4}}
\NewDocumentCommand{\SymbolstxpNorm}{s m m m}{\IfBooleanT{#1}{\left}\| #2\IfBooleanT{#1}{\right}\|_{\Symbolstxp{#3},#4}}

\NewDocumentCommand{\SchwartzSymbol}{}{\mathscr{S}}
\NewDocumentCommand{\TemperedDistributions}{o}{\SchwartzSymbol'\IfValueT{#1}{(#1)}}
\NewDocumentCommand{\SchwartzSpace}{o}{\SchwartzSymbol\IfValueT{#1}{(#1)}}

\NewDocumentCommand{\TestFunctionsSymbol}{}{\mathscr{D}}
\NewDocumentCommand{\TestFunctionsZero}{s o}{\TestFunctionsSymbol_0\IfValueT{#2}{\IfBooleanTF{#1}{\left(#2\right)}{(#2)}}} 
\NewDocumentCommand{\DistributionsZero}{s o}{\TestFunctionsSymbol_0'\IfValueT{#2}{\IfBooleanTF{#1}{\left(#2\right)}{(#2)}}} 
\NewDocumentCommand{\TestFunctionsZeroCM}{s o}{\TestFunctionsSymbol_0\IfValueT{#2}{\IfBooleanTF{#1}{\left(#2;\C^M\right)}{(#2;\C^M)}}} 
\NewDocumentCommand{\DistributionsZeroCM}{s o}{\TestFunctionsSymbol_0'\IfValueT{#2}{\IfBooleanTF{#1}{\left(#2;\C^M\right)}{(#2;\C^M)}}}
\NewDocumentCommand{\Distributions}{s o o}{\TestFunctionsSymbol'\IfValueT{#2}{\IfBooleanT{#1}{\left}(#2\IfValueT{#3}{;#3}\IfBooleanT{#1}{\right})}}

\NewDocumentCommand{\Stx}{m o}{S_{t,x\IfValueT{#2}{,#2}}^{(#1,-\infty)}}
\NewDocumentCommand{\Stxp}{m o}{S_{t,x'\IfValueT{#2}{,#2}}^{(#1,-\infty)}}

\NewDocumentCommand{\Stxt}{m o}{\widetilde{S}_{t,x\IfValueT{#2}{,#2}}^{(#1,-\infty)}}
\NewDocumentCommand{\Stxpt}{m o}{\widetilde{S}_{t,x'\IfValueT{#2}{,#2}}^{(#1,-\infty)}}

\NewDocumentCommand{\CjSpace}{m o}{C^{#1}\IfValueT{#2}{(#2)}}
\NewDocumentCommand{\CjNorm}{s m m o o}{\IfBooleanT{#1}{\left}\|  #2 \IfBooleanT{#1}{\right}\|_{C^{#3} \IfValueT{#4}{(#4 \IfValueT{#5}{;#5} )} }}

\NewDocumentCommand{\Extension}{}{\mathscr{E}}

\NewDocumentCommand{\FilteredSheaf}{m o o}{%
  #1_{%
    \IfValueTF{#3}{#3}{\bullet}%
  }%
  \IfValueT{#2}{(#2)}%
}

\NewDocumentCommand{\FilteredSheafF}{o o}{\FilteredSheaf{\mathcal{F}}[#1][#2]}
\NewDocumentCommand{\FilteredSheafG}{o o}{\FilteredSheaf{\mathcal{G}}[#1][#2]}
\NewDocumentCommand{\FilteredSheafFh}{o o}{\FilteredSheaf{\widehat{\mathcal{F}}}[#1][#2]}
\NewDocumentCommand{\RestrictFilteredSheaf}{m m o}{%
  #1\big|_{#2}^{\#}\IfValueT{#3}{(#3)}
}

\NewDocumentCommand{\TLSymbol}{}{\mathscr{F}}
\NewDocumentCommand{\TLSpace}{m m m o o}{\TLSymbol^{#1}_{#2,#3}\IfValueT{#4}{(#4\IfValueT{#5}{,#5})}}

\NewDocumentCommand{\floor}{s m}{\IfBooleanT{#1}{\left}\lfloor #2\IfBooleanT{#1}{\right}\rfloor}

\NewDocumentCommand{\LtOpNorm}{s m}{\IfBooleanT{#1}{\left}\|#2\IfBooleanT{#1}{\right}\|_{L^2\rightarrow L^2}}

\NewDocumentCommand{\FSpace}{m}{\mathcal{F}_{#1}}

\NewDocumentCommand{\DomainLN}{m}{\DomainSymbol(\opL^{#1})}

\NewDocumentCommand{\EllipOmega}{}{\Gamma}

\NewDocumentCommand{\DomainQNorm}{m}{\|#1\|_{\DomainQ}}

\begin{document}

\newtheorem{theorem}{Theorem}[section]
\newtheorem{corollary}[theorem]{Corollary}
\newtheorem{proposition}[theorem]{Proposition}
\newtheorem{lemma}[theorem]{Lemma}
\newtheorem{conjecture}[theorem]{Conjecture}
\newtheorem{problem}[theorem]{Problem}

\theoremstyle{remark}
\newtheorem{remark}[theorem]{Remark}

\theoremstyle{definition}
\newtheorem{definition}[theorem]{Definition}

\theoremstyle{definition}
\newtheorem{assumption}[theorem]{Assumption}

\theoremstyle{remark}
\newtheorem{example}[theorem]{Example}

\theoremstyle{definition}
\newtheorem{goal}[theorem]{Goal}

\theoremstyle{remark}
\newtheorem{question}[theorem]{Question}

\numberwithin{equation}{section}

\title{A Priori Estimates for Maximally Subelliptic Quadratic Forms}

\author{Brian Street\footnote{The author was partially supported by National Science Foundation Grant 2153069.}}
\date{}

\maketitle

\begin{abstract}
    We prove a priori subelliptic estimates, near a non-characteristic boundary point, for the heat operators associated to a wide class of maximally subelliptic quadratic forms. This is the third paper in a series devoted to studying general maximally subelliptic boundary value problems.
\end{abstract}

\section{Introduction}\label{Section::Intro}
We establish a priori subelliptic estimates (near a non-characteristic boundary point--see Definition \ref{Defn::Intro::NonChar::NonCharDefn}) 
for the heat operator corresponding to certain
maximally subelliptic quadratic forms. These are quadratic forms defined in terms of H\"ormander vector fields
which satisfy a  maximally subelliptic estimate (see Assumption \ref{Assumption::Intro2::MaxSub}), and can be viewed as maximally subelliptic boundary value problems;
we do not restrict attention to Dirichlet boundary conditions.
This is the third paper in a series
studying general maximally subelliptic boundary value problems.\footnote{The first paper in the series \cite{StreetCarnotCaratheodoryBallsOnManifoldsWithBoundary} studied the adapted Carnot--Carath\'eodory geometry on manifolds with boundary,
while the second paper \cite{StreetFunctionSpacesAndTraceTheoremsForMaximallySubellipticBoundaryValueProblems} studied the adapted function spaces. In forthcoming papers, these ideas will
be combined to provide a general study of maximally subelliptic boundary value problems.}
For applications in the future papers in the series,
it is important that we keep track of what constants depend on in this paper (see Section \ref{Section::Intro::Quant}).

Maximal subellipticity (also known as maximal hypoellipticity) is a far-reaching generalization of ellipticity, which has its roots
in work of H\"ormander \cite{HormanderHypoellipticSecondOrderDifferentialEquations} and Kohn \cite{KohnLecturesOnDegenerateEllipticProblems},
and was first made explicit in the work of Folland and Stein \cite{FollandSteinEstimatesForTheBarPartialBComplex},
Folland \cite{FollandSubellipticEstimatesAndFunctionSpacesOnNilpotentLieGroups},  Rothschild and Stein \cite{RothschildSteinHypoellipticDifferentialOperatorsAndNilpotentGroups},
and Helffer and Nourrigat \cite{HelfferNourrigatHypoellipticiteMaximalePourDesOperateursPolynomesDeChampsDeVecteurs}.
Since these original papers, a vast program followed (covering thousands of papers), generalizing the classical
theory of elliptic PDEs to this more general and more degenerate setting.  
Most of this work has addressed the interior theory:
see
 \cite{BramantiAnInvitationToHypoellipticOperators,BramantiBrandoliniHormanderOperators} for a friendly introduction
 and
\cite{StreetMaximalSubellipticity}
for a more detailed history and an up-to-date account of the interior theory of maximally subelliptic PDEs (covering both the
linear and fully nonlinear theory). 
While there have been some examples of maximally subelliptic boundary value problems studied in the literature
(an early, important work is that of Jerison \cite{JerisonDirichletProblemForTheKohnLaplacianI,JerisonDirichletProblemForTheKohnLaplacianII}), there
is no general theory 
similar to what is known for elliptic boundary value problems. 
This is the third paper in a series devoted to introducing such a theory. 
While the results in this paper are not sharp, they are an important step in obtaining sharp results
(see Section \ref{Section::Intro::Quant}).

The main theorem of this paper is Theorem \ref{Thm::Result::MainThm::New}. 
The easiest way to use the main theorem in practice is via Corollary \ref{Cor::Core::MainCor}.
In this introduction, we describe some further corollaries 
which are easier to understand and which already
contain the main ideas and some of the main applications.
For any smooth manifold with boundary, \(\ManifoldN\), let \(\BoundaryN\) denote the boundary,
\(\InteriorN\) the interior, \(\CinftycptSpace[\ManifoldN]\) the smooth functions with compact support (possibly nonzero
on the boundary), and \(\TestFunctionsZero[\ManifoldN]\) those functions in \(\CinftycptSpace[\ManifoldN]\)
which vanish to infinite order at \(\BoundaryN\). We give \(\CinftycptSpace[\ManifoldN]\)
the usual locally convex topology, and \(\TestFunctionsZero[\ManifoldN]\) the topology as a closed subspace
of \(\CinftycptSpace[\ManifoldN]\). Let \(\DistributionsZero[\ManifoldN]\) be the dual space--this is the
space of distributions we use throughout the paper.

Let \(\ManifoldN\) be a compact,\footnote{The main
 results in this paper
 (Theorem \ref{Thm::Result::MainThm::New} and Corollary \ref{Cor::Core::MainCor})
 are local, and in the main results we do not assume compactness.} 
smooth manifold with boundary and dimension of \(\ManifoldN\) equal to \(n\geq 2\), and 
let \(\Vol\) be a smooth, strictly positive density on \(\ManifoldN\).
Let \(W_1,\ldots, W_r\) be smooth vector fields on \(\ManifoldN\)
satisfying H\"ormander's condition: the Lie algebra generated by \(W_1,\ldots, W_r\)
spans the tangent space at every point (see Definition \ref{Defn::Result::HormandersCondition}).
For a list \(\alpha=(\alpha_1,\alpha_2,\ldots,\alpha_L)\in \{1,\ldots, r\}^L\),
set \(W^{\alpha}=W_{\alpha_1}W_{\alpha_2}\cdots W_{\alpha_L}\) and \(|\alpha|=L\).

Fix \(\kappa\in \Zg=\left\{ 1,2,\ldots \right\}\) and for lists \(|\alpha|,|\beta|\leq \kappa\)
let \(a_{\alpha,\beta}\in \CinftySpace[\ManifoldN]\).
Define a sesqui-linear form
\begin{equation}\label{Eqn::Intro2::DefineFormQF}
    \FormQF[f][g]:=\sum_{|\alpha|,|\beta|\leq \kappa}
    \Ltip{W^\alpha f}{a_{\alpha,\beta}W^\beta g}[\ManifoldN][\Vol].
\end{equation}
Note that
\begin{equation*}
    \FormQF[f][g]=\Ltip*{f}{\opL_0 g}[\ManifoldN][\Vol],\quad f\in \TestFunctionsZero[\ManifoldN], g\in \DistributionsZero[\ManifoldN],
\end{equation*}
where
\begin{equation}\label{Eqn::Intro2::opL0}
    \opL_0=\sum_{|\alpha|,|\beta|\leq \kappa} \left( W^{\alpha} \right)^{*}a_{\alpha,\beta} W^\beta,
\end{equation}
and \(*\) denotes the formal \(\LtSpace[\ManifoldN][\Vol]\) adjoint.
Many different forms \(\FormQF\) give rise to the same formula \eqref{Eqn::Intro2::opL0}.
As we discuss, and as is well-known, the choice of \(\FormQF\) and domain for \(\FormQF\) corresponds
to boundary conditions for \(\opL_0\)--see Example \ref{Example::Intro::Examples} and Section \ref{Section::BoundaryCond}.

We turn to (an example of) the type of boundary conditions we consider in this paper.
Let \(X\) be in the \(\CinftySpace[\ManifoldN][\R]\) module generated by \(W_1,\ldots, W_r\),
let \(\JSet\subseteq \left\{ 0,1,\ldots, \kappa-1 \right\}\) be any subset, and set
\begin{equation}\label{Eqn::Intro2::CoreB}
    \CoreB
    :=\left\{ f\in \CinftySpace[\ManifoldN] : X^j f\big|_{\BoundaryN}=0, j\in \JSet \right\}.
\end{equation}
For an explanation of how \(\CoreB\) relates to boundary conditions, see Section \ref{Section::BoundaryCond}.

Our main assumption is:
\begin{assumption}[Maximal Subellipticity]\label{Assumption::Intro2::MaxSub}
    We assume \(\exists C_1, C_2\geq 0\), \(\forall f\in\CoreB\),
    \begin{equation}\label{Eqn::Intro2::MaxSub::MaxSubEqn}
        \sum_{|\alpha|\leq \kappa} \LtNorm*{W^\alpha f}[\ManifoldN][\Vol]^2
        \leq
        C_1 \Real \FormQF[f][f]
        +C_2 \LtNorm{f}[\ManifoldN][\Vol]^2.
    \end{equation}
\end{assumption}

\begin{lemma}\label{Lemma::Intro2::QFIsCloseable}
    \((\FormQF, \CoreB)\) is closable. Let \(\QDomainQ\) denote its closure.
    \(\QDomainQ\) is a closed, densely defined, sectorial form.
    See \cite[Chapter 6, Sections 1.1-1.4]{KatoPerturbationTheory} for the relevant definitions.
\end{lemma}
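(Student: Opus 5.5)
The plan is to compare \(\FormQF\) on \(\CoreB\) with the natural Sobolev-type norm
\[
\|f\|_{W}^2:=\sum_{|\alpha|\leq \kappa}\LtNorm{W^\alpha f}[\ManifoldN][\Vol]^2,
\]
and then apply Kato's criteria \cite[Chapter 6]{KatoPerturbationTheory}. Since the \(a_{\alpha,\beta}\) are smooth on the compact \(\ManifoldN\), they are bounded, and Cauchy--Schwarz gives
\[
|\FormQF[f][g]|\leq C\|f\|_W\|g\|_W \quad\text{for } f,g\in\CinftySpace[\ManifoldN].
\]
Combined with Assumption \ref{Assumption::Intro2::MaxSub}, on \(\CoreB\) this yields the two-sided equivalence
\[
\|f\|_W^2\leq C_1\Real\FormQF[f][f]+C_2\LtNorm{f}^2\leq C'\|f\|_W^2 .
\]

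\textbf{Sectoriality.} Sectoriality follows from the same two bounds. We have
\[
|\Imaginary \FormQF[f][f]|\leq C\|f\|_W^2\leq CC_1\Real\FormQF[f][f]+CC_2\LtNorm{f}^2 .
\]
This gives, after shifting by a constant \(\gamma\) chosen so that \(\Real\FormQF[f][f]\geq \gamma\LtNorm{f}^2\) (such \(\gamma\) exists because \(\Real \FormQF\geq -C_2C_1^{-1}\LtNorm{f}^2\)), the sector condition
\[
|\Imaginary(\FormQF-\gamma)[f][f]|\leq M\,\Real(\FormQF-\gamma)[f][f].
\]
If \(C_1=0\), the assumption gives \(\|f\|_W\lesssim\|f\|_{L^2}\) directly and everything is trivial, so assume \(C_1>0\). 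Dense definedness is clear, since \(\CoreB\supseteq\TestFunctionsZero[\ManifoldN]\), which is dense in \(\LtSpace[\ManifoldN][\Vol]\).

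\textbf{Closability.} By Kato's criterion it suffices to show the following: if \(f_k\in\CoreB\), \(f_k\to0\) in \(L^2\), and \(\FormQF[f_k-f_l][f_k-f_l]\to0\), then \(\FormQF[f_k][f_k]\to0\). From the coercive estimate, \(f_k\) is Cauchy in \(\|\cdot\|_W\). Hence each \(W^\alpha f_k\) converges in \(L^2\) to some \(g_\alpha\). Pairing with \(\phi\in\TestFunctionsZero[\ManifoldN]\) gives
\[
\langle W^\alpha f_k,\phi\rangle=\langle f_k,(W^\alpha)^*\phi\rangle\to 0,
\]
so \(g_\alpha=0\). Here no boundary terms appear, since \(\phi\) vanishes to infinite order at \(\BoundaryN\). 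Thus \(\|f_k\|_W\to0\), and by the upper bound, \(\FormQF[f_k][f_k]\to0\).

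\textbf{Closure.} The closure \(\QDomainQ\) is then the completion of \(\CoreB\) in \(\|\cdot\|_W\), realized inside \(L^2\) via the injectivity just shown. Closed sectorial forms remain sectorial after closure, by continuity of the form in the \(W\)-norm. The only mildly delicate step is the identification of the limits \(g_\alpha=0\), which requires integrating by parts against test functions vanishing at the boundary. This is why the distributions \(\DistributionsZero[\ManifoldN]\) are the right setting. In the non-compact local version one would instead work with compactly supported elements, but the argument is the same.
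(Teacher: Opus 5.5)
Your proposal is correct and follows essentially the same route as the paper. Both arguments establish \(|\FormQF[f][f]|+\LtNorm{f}^2\approx \HsWNorm{f}{\kappa}^2\) on \(\CoreB\), then verify Kato's closability criterion by showing that an \(L^2\)-null, \(\FormQF\)-Cauchy sequence is null in \(\HsWSpace{\kappa}[\ManifoldN]\). The differences are minor: the paper cites completeness of \(\HsWSpace{\kappa}[\ManifoldN]\) and lets Kato's theorem supply sectoriality of the closure, whereas you prove the needed completeness by pairing with \(\TestFunctionsZero[\ManifoldN]\) and compute the vertex \(\gamma=-C_2/C_1\) explicitly.
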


See Section \ref{Section::IntroProofs} for a proof of Lemma \ref{Lemma::Intro2::QFIsCloseable}.
By \cite[Chapter 6, Section 2, Theorem 2.1]{KatoPerturbationTheory},
there is a unique, closed, densely defined, m-sectorial operator associated to \(\QDomainQ\),
call this operator \(\LDomainL\); it satisfies
\begin{equation}\label{Eqn::Intro::DefnOfOpL}
    \Ltip{f}{\opL g}[\ManifoldN][\Vol] = \FormQ[f][g],\quad \forall f\in \DomainQ,\:\forall g\in \DomainL,
\end{equation}
and is maximal with respect to this--see \cite[Chapter 6, Section 2]{KatoPerturbationTheory} for
details. On its domain, \(\opL\) agrees with \(\opL_0\) defined by \eqref{Eqn::Intro2::opL0}
(as can be seen by taking \(f\in \TestFunctionsZero[\ManifoldN]\subseteq \CoreB\subseteq \DomainQ\) in \eqref{Eqn::Intro::DefnOfOpL}).

Let \(\Omega:=\InteriorN\bigcup \left\{ x\in \BoundaryN : X(x)\not \in \TangentSpace{x}{\BoundaryN} \right\}\),
which is an open submanifold with boundary of \(\ManifoldN\).
Our goal is to prove subelliptic estimates for \(\partial_t+\opL\); though we only do so
on \(\R\times \Omega\).

Let \(\R\times \ManifoldN\) have coordinates \((t,\xi)\). We let \(\HsSpace{s}[\R\times \ManifoldN]\)
be the usual \(\LtSpace\)-Sobolev space of order \(s\), except with \(\partial_t\) treated as an operator of
order \(2\kappa\) (see Section \ref{Section::PseudodifferentialOps}
and in particular, Remark \ref{Rmk::PDOs::DefineClassicalHsSpaceOnMfld}).
We write \(\HsNorm{u}{s}[\R\times \ManifoldN]<\infty\) to mean \(u\in \HsSpace{s}[\R\times \ManifoldN]\).
For \(\phi_1,\phi_2\in \CinftycptSpace[\R\times \ManifoldN]\), we write
\(\phi_1\prec \phi_2\) to mean \(\phi_2=1\) on a neighborhood of \(\supp(\phi_1)\).
Fix \(\epsilon_0:=\min\{1/2,1/m\}\) where \(m\) is the order of H\"ormander's condition (see Definition \ref{Defn::Result::HormandersCondition}).\footnote{In the results
which follow, this choice of \(\epsilon_0>0\) is not optimal; see Section \ref{Section::Intro::Quant}.}
All the results which follow are corollaries of our single main result: Theorem \ref{Thm::Result::MainThm::New}.
See Section \ref{Section::IntroProofs} for proofs of the results in this section.

\begin{definition}
    For \(l\in \Zg\) we define vector spaces \(\FSpace{l}\subseteq \LtSpaceNoMeasure[\R\times \ManifoldN]\)
    recursively as follows:
    \begin{itemize}
        \item \(\FSpace{1}:=\LtSpaceNoMeasure[\R][\DomainL]\).
        \item For \(l\geq 1\), \(\FSpace{l+1}:=\left\{ u\in \FSpace{l} : \left( \partial_t+\opL \right)u\in \FSpace{l} \right\}\),
            where \(\partial_t u(t,x)\) is taken in the sense of \(\DistributionsZero[\R\times \ManifoldN]\) and \(\opL u\)
            is defined since \(u\in \FSpace{l}\subseteq \FSpace{1}= \LtSpaceNoMeasure[\R][\DomainL]\).
    \end{itemize}
\end{definition}

\begin{corollary}[Subellipticity]
    \label{Cor::Intro::dtPlusOplSubellipic}
    \(\forall \phi_1,\phi_2\in \CinftycptSpace[\R\times \Omega]\) with \(\phi_1\prec \phi_2\),
    \(\forall l\in \Zg\) with \(l\geq 2\kappa-1\), \(\exists C\geq 0\),
    \(\forall u\in \FSpace{1}\),
    \begin{equation*}
        \HsNorm*{\phi_1 u}{l+1}
        \leq C
        \left( \HsNorm*{\phi_2 \left( \partial_t+\opL \right) u}{l+1-\epsilon_0}
        +\LtNorm*{\phi_2 u} \right),
    \end{equation*}
    where if the right-hand side is finite, so is the left-hand side.
\end{corollary}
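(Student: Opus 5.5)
The corollary should follow from the main a priori estimate, Theorem \ref{Thm::Result::MainThm::New}, packaged through Corollary \ref{Cor::Core::MainCor}. The plan has three parts: verify the hypotheses of that corollary in the present global setting, derive a local a priori estimate at low regularity, and bootstrap in $l$ while justifying the qualitative statement that a finite right-hand side forces a finite left-hand side.

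\emph{Step 1: hypotheses near the boundary.} Near any point of $\R\times\Omega$ that lies over $\BoundaryN$, the vector field $X$ is transversal to the boundary. I would pick coordinates $(x',x_n)$ with $X=\partial_{x_n}$, which is possible locally after straightening $X$. In these coordinates the core $\CoreB$ becomes $\{f : \partial_{x_n}^j f|_{x_n=0}=0,\ j\in\JSet\}$. Assumption \ref{Assumption::Intro2::MaxSub} then restricts to compactly supported functions near the point and gives exactly the maximal subellipticity hypothesis required there. Interior points are covered by the interior theory, which is the same result with no boundary present. Lemma \ref{Lemma::Intro2::QFIsCloseable} and the definition of $\opL$ through \eqref{Eqn::Intro::DefnOfOpL} identify $u\in\FSpace{1}$ with a weak solution in the sense used there. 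Concretely, $u(t)\in\DomainL\subseteq\DomainQ$, and $\FormQ[f][u(t)]=\Ltip{f}{\opL u(t)}$ for all $f\in\CoreB$. This supplies the variational identity needed to test against $\phi^2u$ and difference quotients.

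\emph{Step 2: base estimate and bootstrap.} The main theorem gives, for some fixed $s_0$ and all $s\ge s_0$, a gain of $\epsilon_0$ derivatives:
\begin{equation*}
\HsNorm{\phi_1 u}{s+\epsilon_0}\le C\left(\HsNorm{\phi_2(\partial_t+\opL)u}{s}+\HsNorm{\phi_2 u}{s}\right),
\end{equation*}
whenever the right-hand side is finite. I interpret the threshold $l\ge 2\kappa-1$ as ensuring the starting regularity lies within this range, since $\partial_t$ counts as order $2\kappa$. To pass from $l+1$ down to $L^2$, I insert a finite chain of cutoffs $\phi_1\prec\psi_1\prec\cdots\prec\psi_N\prec\phi_2$ with $N\approx (l+1)/\epsilon_0$. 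I then iterate the estimate, lowering the Sobolev index on $u$ by $\epsilon_0$ at each step, until the lowest term is $\LtNorm{\psi_N u}\le\LtNorm{\phi_2u}$. Because $\HsNorm{\psi_j(\partial_t+\opL)u}{s}\lesssim\HsNorm{\phi_2(\partial_t+\opL)u}{l+1-\epsilon_0}$ for $s\le l+1-\epsilon_0$, every forcing term is controlled by the right-hand side of the corollary. This yields the stated inequality.

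\emph{Step 3: qualitative regularity (main obstacle).} The hardest part is the claim that finiteness of the right-hand side implies finiteness of the left-hand side. The theorem is only an a priori estimate, so I must show that each intermediate norm is finite before applying it. My first attempt would be to mollify only in the tangential variables $(t,x')$, using operators of the form $\Lambda_{t,x'}$-regularizations in the paper's pseudodifferential calculus of Section \ref{Section::PseudodifferentialOps}. Tangential mollification preserves the boundary conditions defining $\CoreB$ and hence keeps $u$ in the form domain. I would then check that the commutators $[\partial_t+\opL,\cdot]$ are uniformly bounded of the right order, so that the a priori estimate holds uniformly in the mollification parameter and passes to the limit. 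Normal regularity is recovered afterwards from the equation together with maximal subellipticity and the non-characteristic condition on $X$. If the main theorem is already stated with the conclusion that the left-hand side is finite, which the phrase ``where if the right-hand side is finite, so is the left-hand side'' suggests, this step reduces to checking that $u\in\FSpace{1}$ meets its qualitative hypothesis. In that case the real work is the identification made in Step 1.
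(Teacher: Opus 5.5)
Your plan of deducing the statement from Theorem~\ref{Thm::Result::MainThm::New} via Corollary~\ref{Cor::Core::MainCor} is the paper's route. However, Step~1 checks only the easy hypotheses and omits the one that needs an argument. So your closing remark that ``the real work is the identification made in Step~1'' misplaces the difficulty. Maximal subellipticity on $\CoreB$ is the easy part: it is Assumption~\ref{Assumption::Core::MaxSubAssumption} with $\Omegat=\ManifoldN$, $\psi=1$, $M=1$, and the other core assumptions are equally immediate by compactness. The boundary argument also needs the estimate and, crucially, the boundary-term-free identity in Assumption~\ref{Assumption::EstBdry::MaxSubOnSmoothing}\ref{Item::EstBdry::MaxSubOnSmoothing::MaxSubEstimate},\ref{Item::EstBdry::MaxSubOnSmoothing::FormQHGivesOp} to hold for $Su$ with $u(t,\cdot)\in\DomainL$. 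This requires the tangential smoothing operators $T=\Psi_*S\Psi^*$ to map into the form domain, which is Assumption~\ref{Asssumption::Core::Smoothing}\ref{Item::Core::Smoothing::SOperator}. The paper verifies it in Section~\ref{Section::IntroProofs}. Choose $\Psi$ with $\Psi^*X=c_0\partial_{x_n}$ for a nonzero \emph{constant} $c_0$ (your normalization $X=\partial_{x_n}$ is fine). Since $\partial_{x_n}K=0$ for $x_n<\delta$, $T$ preserves $C^\infty(\ManifoldN)$, preserves vanishing on $\BoundaryN$, and satisfies $X^j[X,T]f|_{\BoundaryN}=0$. Hence $X^jTf|_{\BoundaryN}=TX^jf|_{\BoundaryN}=0$ for $j\in\JSet$ and $f\in\CoreB$, so $T:\CoreB\to\CoreB\subseteq\DomainQ$. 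Your Step~3 remark that tangential mollification preserves the boundary conditions is this idea, but it hinges on the $x_n$-independence of the kernel near $x_n=0$. The same issue affects your proposed testing against $\phi^2u$: e.g.\ for $\JSet=\{1\}$, multiplication by a cutoff $\phi$ preserves $\CoreB$ only if $X\phi=0$ on $\BoundaryN$. The idea must enter as this hypothesis check. Once it does, Step~3 is unnecessary, since the clause ``if the right-hand side is finite, so is the left-hand side'' is already part of Theorem~\ref{Thm::Result::MainThm::New}.

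Step~2 misstates the theorem, and the bootstrap as written would not close. Near the boundary the paper proves no isotropic $\epsilon_0$-gain with an $H^s$ error valid for a range $s\ge s_0$. Theorem~\ref{Thm::Result::MainThm::New} concerns only integer orders $l+1\ge 2\kappa$. Lowering the index on $u$ by $\epsilon_0$ at a time down to $L^2$ would need estimates at non-integer orders far below that threshold, where none are available (compare the remark following Corollary~\ref{Cor::Result::PartialtPluOpLHypoEllip}). No iteration is needed: Theorem~\ref{Thm::Result::MainThm::New} with $N=J=1$ is the statement. Indeed, $l\ge 2\kappa-1$ is exactly what makes $J=1\le\lfloor (l+1)/(2\kappa)\rfloor$ admissible. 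Moreover $(l+1-\epsilon_0)\vee 0=l+1-\epsilon_0$, and $\|\cdot\|_{H^{l+1-2\kappa}}\le\|\cdot\|_{H^{l+1-\epsilon_0}}$ since $\epsilon_0\le 1/2<2\kappa$. The error term is already $\|\phi_2u\|_{L^2}$. Inside that theorem, the $L^2$ error comes from the tangential estimate of Proposition~\ref{Prop::EstBdry::Tangent::MainTangentProp}, which holds in $G^s$ for every $s\in\R$. The condition $l\ge 2\kappa-1$ is what lets the $\partial_{x_n}$-derivatives up to order $l+1$ be recovered from $(\partial_t+\opL)u$ through the non-characteristic structure (Proposition~\ref{Prop::EstBdry::ReduceTangent::MainReduceTangentProp}), at the cost of the $H^{l+1-2\kappa}$ norm of the forcing term. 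The isotropic iteration you describe is the interior mechanism (Corollary~\ref{Cor::EstBdryInt::MainCor}) and does not transfer to the boundary.
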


\begin{corollary}[Hypoellipticity]
    If \(u\in \FSpace{1}\)
    is such that \(\left( \partial_t +\opL \right)u\) is smooth near some \((t_0,\xi_0)\in \R\times \Omega\),
    then \(u\) is smooth near \((t_0,\xi_0)\).
\end{corollary}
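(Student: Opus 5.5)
The hypoellipticity statement should follow from Corollary \ref{Cor::Intro::dtPlusOplSubellipic} by a bootstrap in the Sobolev scale, so I would not return to the main theorem.

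\emph{Setup.} Suppose \(\left(\partial_t+\opL\right)u\) agrees with a smooth function on a neighborhood \(U\subseteq \R\times\Omega\) of \((t_0,\xi_0)\); here \(U\) is open in the manifold with boundary \(\R\times\Omega\), so it may contain boundary points. Choose cutoffs \(\phi_1\prec\phi_2\) in \(\CinftycptSpace[\R\times\Omega]\) with \(\supp(\phi_2)\subseteq U\) and \(\phi_1=1\) near \((t_0,\xi_0)\). Then \(\phi_2(\partial_t+\opL)u\) is a smooth function with compact support in \(\R\times\ManifoldN\), possibly nonzero on the boundary. It therefore lies in \(\HsSpace{s}[\R\times\ManifoldN]\) for every \(s\), since these Sobolev spaces on a manifold with boundary, as in Remark \ref{Rmk::PDOs::DefineClassicalHsSpaceOnMfld}, contain \(\CinftycptSpace\). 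Moreover, \(u\in\FSpace{1}\subseteq \LtSpaceNoMeasure[\R\times\ManifoldN]\), so \(\LtNorm{\phi_2 u}<\infty\).

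\emph{Applying the estimate.} For each \(l\geq 2\kappa-1\), Corollary \ref{Cor::Intro::dtPlusOplSubellipic} with these \(\phi_1,\phi_2\) has a finite right-hand side. Its qualitative clause then gives \(\phi_1 u\in \HsSpace{l+1}[\R\times\ManifoldN]\). The cutoffs do not depend on \(l\), so \(\phi_1 u\in \bigcap_{s}\HsSpace{s}\). No iteration with shrinking cutoffs is needed, because the corollary already passes from \(L^2\) control directly to \(\HsSpace{l+1}\) control, with the regularity gain built into its ``if finite'' statement.

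\emph{Conclusion.} The remaining step is a Sobolev embedding: a function lying in \(\HsSpace{s}\) for all \(s\) on the parabolic scale, which is equivalent up to constants to the usual scale near a point, is \(C^\infty\), up to the boundary where the point is a boundary point. Since \(\phi_1=1\) near \((t_0,\xi_0)\), \(u\) is smooth there. This is the only point needing care. I would handle it by observing that \(\HsSpace{s}\) with \(\partial_t\) of order \(2\kappa\) satisfies \(\HsSpace{2\kappa s}_{\text{parabolic}}\subseteq \HsSpace{s}_{\text{classical}}\) locally, and then applying classical Sobolev embedding on a half-space chart (for example, by extending). I expect this to be routine given the definitions in Section \ref{Section::PseudodifferentialOps}.
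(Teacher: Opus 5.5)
Your proposal is correct and is essentially the paper's argument. The paper obtains this corollary through Corollary \ref{Cor::Core::MainCor} and Corollary \ref{Cor::Result::PartialtPluOpLHypoEllip}, whose proof applies the subellipticity estimate of Corollary \ref{Cor::Result::PartialtPluOpLSubEllip} for every \(l\geq 2\kappa-1\) with fixed cutoffs, exactly as you do; your comparison of the parabolic and classical Sobolev scales just spells out the Sobolev embedding step the paper leaves implicit.
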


\begin{corollary}
    \(\forall \phi_1,\phi_2\in \CinftycptSpace[\R\times \Omega]\) with \(\phi_1\prec \phi_2\),
    \(\forall N\in \Zg\), \(\forall l'\in \{1,2,\ldots, N\}\), \(\exists C\geq 0\),
    \(\forall u\in \FSpace{N}\),
    \begin{equation*}
        \HsNorm*{\phi_1 u}{2\kappa l'}
        \leq C
        \left( 
            \HsNorm*{\phi_2\left( \partial_t+\opL \right)^N u}{(2\kappa l'-N\epsilon_0)\vee 0}
            +\sum_{k=0}^{N-1} \LtNorm*{\phi_2 \left( \partial_t+\opL \right)^k u}
         \right),
    \end{equation*}
    where if the right-hand side is finite, so is the left-hand side.
    In particular, if \(N\epsilon_0\geq 2\kappa l'\), \(\forall u\in \FSpace{N}\),
    \begin{equation*}
        \HsNorm*{\phi_1 u}{2\kappa l'}\leq C
        \sum_{k=0}^{N} \LtNorm*{\phi_2 \left( \partial_t+\opL \right)^k u}.
    \end{equation*}
\end{corollary}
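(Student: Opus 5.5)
We argue by iterating Corollary \ref{Cor::Intro::dtPlusOplSubellipic}, applied to \(v_k:=(\partial_t+\opL)^k u\in \FSpace{N-k}\subseteq \FSpace{1}\) for \(0\leq k\leq N-1\). Note \((\partial_t+\opL)v_k=v_{k+1}\). We first choose a nested sequence of cutoffs \(\phi_1=\psi_0\prec\psi_1\prec\cdots\prec\psi_N\prec\phi_2\) in \(\CinftycptSpace[\R\times\Omega]\). All estimates below are understood in the a priori sense given by the corollary: if the right-hand side is finite, so is the left-hand side. Finiteness therefore propagates inductively from \(k=N\) down to \(k=0\).

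\textbf{Gain per step.} The corollary as stated requires an integer Sobolev index \(l+1\) with \(l\geq 2\kappa-1\). It gives
\begin{equation*}
\HsNorm{\psi_{j}v_k}{s}\leq C\bigl(\HsNorm{\psi_{j+1}v_{k+1}}{s-\epsilon_0}+\LtNorm{\psi_{j+1}v_k}\bigr)
\end{equation*}
for integers \(s\geq 2\kappa\). For a chain of \(N\) steps ending at index \(2\kappa l'\), the indices encountered, \(2\kappa l'-j\epsilon_0\), are in general not integers. We handle this in one of two ways. The first option is to invoke the main Theorem \ref{Thm::Result::MainThm::New}, where presumably real indices are allowed. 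The second option is to lower the target: run the chain at the integer levels \(s_j:=\lceil 2\kappa l'-j\epsilon_0\rceil\). Since \(\epsilon_0\leq 1/2\), the ceiling loses at most a fraction of a derivative per step, which is compensated by the trivial inequality \(\|\cdot\|_{s}\leq\|\cdot\|_{s'}\) for \(s\leq s'\). We will want the bookkeeping to yield exactly index \((2\kappa l'-N\epsilon_0)\vee 0\) for \(v_N\), so the first option is preferable.

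\textbf{Low-regularity regime.} For the levels where the target index falls below \(2\kappa\) (the corollary needs \(l\geq 2\kappa-1\)), we instead use the basic energy estimate. Assumption \ref{Assumption::Intro2::MaxSub}, together with the G\aa{}rding-type inequality implicit in the main theorem, gives
\begin{equation*}
\HsNorm{\psi_j v_k}{\sigma}\lesssim \HsNorm{\psi_{j+1}v_{k+1}}{(\sigma-\epsilon_0)\vee 0}+\LtNorm{\psi_{j+1}v_k}
\end{equation*}
for \(0\leq\sigma\leq 2\kappa\). Once the index reaches \(0\), the remaining factors \(\LtNorm{\phi_2 v_k}\) are simply kept; this accounts for the \(\vee 0\) in the statement.

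\textbf{Assembling the chain.} Chaining the step estimates from \(k=0\) to \(k=N-1\), each application costs one \(\LtNorm{\phi_2 v_k}\) term and passes the top-order term down by \(\epsilon_0\). The result is
\begin{equation*}
\HsNorm{\phi_1u}{2\kappa l'}\lesssim \HsNorm{\phi_2 v_N}{(2\kappa l'-N\epsilon_0)\vee0}+\sum_{k<N}\LtNorm{\phi_2v_k}.
\end{equation*}

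\textbf{Special case.} When \(N\epsilon_0\geq 2\kappa l'\), the first term is \(\LtNorm{\phi_2 v_N}\), which gives the second display.

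\textbf{Main obstacle.} The main difficulty is the bookkeeping of non-integer and low indices, i.e. ensuring a one-step estimate is available at every intermediate level, including levels below \(2\kappa\). I would obtain it directly from Theorem \ref{Thm::Result::MainThm::New} rather than from Corollary \ref{Cor::Intro::dtPlusOplSubellipic}.
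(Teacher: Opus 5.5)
Your chain does not go through, because the one-step estimates it needs at the intermediate levels are not available.

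\begin{itemize}
\item Theorem \ref{Thm::Result::MainThm::New} does not allow real indices. It requires \(l\in\Zg\) with \(l\geq 2\kappa-1\), i.e.\ an integer index \(l+1\geq 2\kappa\), exactly as in Corollary \ref{Cor::Intro::dtPlusOplSubellipic}. So the levels \(2\kappa l'-j\epsilon_0\) are not covered.
\item The integer fallback fails completely. Since \(2\kappa l'\in\Zg\) and \(0<\epsilon_0\leq 1/2\), you have \(\lceil 2\kappa l'-\epsilon_0\rceil=2\kappa l'\). Rounding up therefore gives back the entire gain at every step. After \(N\) steps you are left with \(\HsNorm{\phi_2(\partial_t+\opL)^N u}{2\kappa l'-\epsilon_0}\), not index \((2\kappa l'-N\epsilon_0)\vee 0\).
\item The low-regularity estimate for \(0\leq\sigma\leq 2\kappa\) is only asserted. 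Near \(\BoundaryN\) nothing in the paper provides it; only the interior result, Theorem \ref{Thm::EstBdryInt::MainThm::New}, holds for all real \(s\). This is exactly the hard point: near the boundary, \(x_n\)-derivatives are recovered from \(\partial_t+\opL\) only through Proposition \ref{Prop::EstBdry::ReduceTangent::MainReduceTangentProp}, which needs \(l\geq 2\kappa-1\).
\end{itemize}

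The paper remarks, just before Corollary \ref{Cor::Result::BoundSobolevByHighPowers}, that Corollary \ref{Cor::Result::PartialtPluOpLSubEllip} cannot be iterated for precisely this reason.

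The missing observation is that no iteration is needed here: it is already built into Theorem \ref{Thm::Result::MainThm::New} through the parameter \(J\). The theorem applies in the introduction's setting by Corollary \ref{Cor::Core::MainCor}, as verified in Section \ref{Section::IntroProofs}.

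\begin{itemize}
\item Take \(l+1=2\kappa l'\) and \(J=l'\). This is admissible because \(l'\geq 1\) gives \(l\geq 2\kappa-1\), and \(\floor{(l+1)/(2\kappa)}=l'\leq N\).
\item The \(J\)-term becomes \(\HsNorm{\phi_2(\partial_t+\opL)^{l'}u}{0}=\LtNorm{\phi_2(\partial_t+\opL)^{l'}u}\). When \(l'<N\) this is one of the summands; when \(l'=N\) it is dominated by the \(N\)-term.
\item The special case follows because \((2\kappa l'-N\epsilon_0)\vee 0=0\) when \(N\epsilon_0\geq 2\kappa l'\).
\end{itemize}

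Inside the proof of the theorem, your obstacle is avoided by separating two mechanisms. Each power of \(\partial_t+\opL\) trades \(2\kappa\) full derivatives via the normal-derivative reduction (Lemma \ref{Lemma::EstBdry::CompletePF::WorkWithHsNorms}), which is only invoked at integer levels \(\geq 2\kappa\). The \(\epsilon_0\)-gain is iterated only in the tangential norms \(G^s\), where Proposition \ref{Prop::EstBdry::Tangent::MainTangentProp} holds for every real \(s\) (Lemma \ref{Lemma::EstBdry::CompletePF::WorkWithGsNorms}).
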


\begin{corollary}
    \(\forall u\in \FSpace{\infty}:=\bigcap_{l\in \Zg} \FSpace{l}\), 
    \(u\big|_{\R\times\Omega}\in \CinftySpace[\R\times \Omega]\).
    Furthermore
    \(\forall \phi_1,\phi_2\in \CinftycptSpace[\R\times \Omega]\) with \(\phi_1\prec \phi_2\),
    \(\forall l\in \Zg\), \(\exists N\in \Zg\), \(\exists C\geq 0\),
    \(\forall u\in \FSpace{\infty}\),
    \begin{equation*}
        \CjNorm{\phi_1 u}{l}
        \leq C \sum_{k=0}^N \LtNorm*{\phi_2 \left( \partial_t +\opL \right)^k u}.
    \end{equation*} 
\end{corollary}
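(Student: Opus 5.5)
The plan is to derive this from the preceding corollary, which gives, for each \(N\) and \(l'\le N\), control of \(\HsNorm{\phi_1 u}{2\kappa l'}\) by \(L^2\) norms of \(\phi_2(\partial_t+\opL)^k u\). Let \(u\in\FSpace{\infty}\). Then \(u\in\FSpace{N}\) for every \(N\), and \((\partial_t+\opL)^k u\in \FSpace{1}\subseteq \LtSpaceNoMeasure[\R\times\ManifoldN]\) for every \(k\). Hence all the norms \(\LtNorm{\phi_2(\partial_t+\opL)^k u}\) are finite. The statement involves two cutoffs only, so no intermediate cutoffs are needed.

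First I prove smoothness. Fix \((t_0,\xi_0)\in\R\times\Omega\) and choose \(\phi_1\prec\phi_2\) in \(\CinftycptSpace[\R\times\Omega]\) with \(\phi_1=1\) near \((t_0,\xi_0)\). For a given \(l'\), take \(N\) with \(N\epsilon_0\ge 2\kappa l'\) and \(N\ge l'\). The "in particular" part of the previous corollary then gives \(\phi_1 u\in\HsSpace{2\kappa l'}[\R\times\ManifoldN]\). Since \(l'\) is arbitrary, \(\phi_1 u\) lies in every \(\HsSpace{s}\).

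Next comes the Sobolev embedding step. The space \(\HsSpace{s}\) here is anisotropic, with \(\partial_t\) of order \(2\kappa\) (Remark \ref{Rmk::PDOs::DefineClassicalHsSpaceOnMfld}). It still embeds into \(C^l\) once \(s\) is large: in local coordinates, the order-\(s\) parabolic Sobolev norm dominates the isotropic \(H^{s/(2\kappa)}\) norm, because \(\langle\tau\rangle^{1/(2\kappa)}+|\xi|\) controls \(\langle(\tau,\xi)\rangle^{1/(2\kappa)}\). On a manifold with boundary, \(\phi_1 u\) is supported in a coordinate patch that may meet \(\BoundaryN\). There I use that the \(\HsSpace{s}\) spaces are defined via restriction or extension, so the classical Sobolev embedding up to the boundary applies. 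Consequently there is \(s(l)\) with
\[
\CjNorm{\phi_1 u}{l}\le C\HsNorm{\phi_1 u}{s(l)}.
\]
This shows \(\phi_1 u\in C^l\) for all \(l\), and hence \(u\big|_{\R\times\Omega}\) is smooth.

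For the estimate, given \(l\) I choose \(l'\) with \(2\kappa l'\ge s(l)\) and \(N\ge l'\) with \(N\epsilon_0\ge 2\kappa l'\). Chaining the embedding with the previous corollary gives
\[
\CjNorm{\phi_1u}{l}\le C\HsNorm{\phi_1u}{2\kappa l'}\le C\sum_{k=0}^N\LtNorm{\phi_2(\partial_t+\opL)^ku}.
\]
Here \(N\) and \(C\) depend only on \(l\), \(\phi_1\) and \(\phi_2\), not on \(u\), as required.

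I expect the main obstacle to be the careful justification of the anisotropic Sobolev embedding up to the boundary, which depends on exactly how Section \ref{Section::PseudodifferentialOps} defines \(\HsSpace{s}[\R\times\ManifoldN]\). If that definition is by restriction of whole-space functions, the embedding on \(\R^{1+n}\) suffices. If it is intrinsic, I would instead first pass to an extension operator.
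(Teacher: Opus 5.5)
Your proposal is correct and follows the paper's own argument. The paper obtains this corollary through Corollary \ref{Cor::Core::MainCor}, exactly as in the proof of Corollary \ref{Cor::Result::partialtPlusOpLInfinityGivesSmooth}: it chooses \(l'\) so that Sobolev embedding gives \(\CjNorm{\phi_1 u}{l}\lesssim\HsNorm{\phi_1 u}{2\kappa l'}\), applies the \(L^2\)-only estimate \eqref{Eqn::Result::BoundSobolevByHighPowers::EstimateWithJustL2} with \(N\epsilon_0\geq 2\kappa l'\), and gets smoothness by letting \(\phi_1\) vary.

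Your worry about the embedding up to the boundary is also settled by the paper's definitions. \(\HsSpace{s}[\Ropngeq]\) is defined by restriction from \(\HsSpace{s}[\Ropn]\), so the whole-space embedding suffices: the anisotropic space of order \(s\) embeds in the isotropic space of order \(s/(2\kappa)\).
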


We also have similar results for \(\opL\) in place of \(\partial_t+\opL\). Here, we use the usual Sobolev spaces
\(\HsSpace{s}[\ManifoldN]\).  As before, if we have \(\HsNorm{f}{s}[\ManifoldN]<\infty\),
this means \(f\in \HsSpace{s}[\ManifoldN]\).
We define \(\DomainLN{N}\) recursively by:
\begin{itemize}
    \item \(\DomainLN{1}=\DomainL\).
    \item \(\DomainLN{N+1}=\left\{ f\in \DomainLN{N} : \opL f\in \DomainLN{N} \right\}\).
\end{itemize}
Set \(\DomainLN{\infty}:=\bigcap_{N}\DomainLN{N}\).\footnote{Because the resolvent set of \(\opL\)
is non-empty, the spaces \(\DomainLN{N}\) are Banach spaces--see 
\cite[Chapter 2, Proposition 2.15]{EngelNagelAShortCourseOnOperatorSemigroups}.
Therefore, \(\DomainLN{\infty}\) is a Fr\'echet space. We do not use these facts.}

\begin{corollary}[Subellipticity]
    \(\forall \phi_1,\phi_2\in \CinftycptSpace[\Omega]\) with \(\phi_1\prec \phi_2\),
    \(\forall l\in \Zg\) with \(l\geq 2\kappa-1\), \(\exists C\geq 0\), \(\forall f\in \DomainL\),
    \begin{equation*}
        \HsNorm*{\phi_1 f}{l+1}
        \leq C
        \left( 
            \HsNorm*{\phi_2 \opL f}{l+1-\epsilon_0}
            +\LtNorm*{\phi_2 f}
         \right),
    \end{equation*}
    where if the right-hand side is finite, so is the left-hand side.
\end{corollary}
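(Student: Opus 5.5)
The plan is to deduce the stationary subellipticity estimate for \(\opL\) from Corollary \ref{Cor::Intro::dtPlusOplSubellipic}, applied to functions that are products in time. Fix \(\phi_1\prec\phi_2\) in \(\CinftycptSpace[\Omega]\) and \(f\in\DomainL\). Choose \(\chi_1,\chi_2\in\CinftycptSpace[\R]\) with \(\chi_1\prec\chi_2\). Also choose an auxiliary \(\eta\in\CinftycptSpace[\R]\) with \(\eta=1\) on a neighborhood of \(\supp\chi_2\). Set \(u(t,x):=\eta(t)f(x)\). Since \(f\in\DomainL\), we have \(u\in\LtSpaceNoMeasure[\R][\DomainL]=\FSpace{1}\). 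Moreover
\[
(\partial_t+\opL)u=\eta'(t)f+\eta(t)\opL f,
\]
where \(\partial_t\) is taken in the sense of \(\DistributionsZero[\R\times\ManifoldN]\). This computation is justified by pairing against \(\TestFunctionsZero\) test functions.

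Next apply Corollary \ref{Cor::Intro::dtPlusOplSubellipic} with cutoffs \(\Phi_j(t,x):=\chi_j(t)\phi_j(x)\), which satisfy \(\Phi_1\prec\Phi_2\) in \(\CinftycptSpace[\R\times\Omega]\). On \(\supp\Phi_2\) we have \(\eta=1\) and \(\eta'=0\), so \(\Phi_2(\partial_t+\opL)u=\chi_2(t)\,\phi_2\opL f\) and \(\Phi_2 u=\chi_2(t)\,\phi_2 f\). The corollary then gives
\[
\HsNorm*{\chi_1\otimes\phi_1 f}{l+1}[\R\times\ManifoldN]\le C\Big(\HsNorm*{\chi_2\otimes \phi_2\opL f}{l+1-\epsilon_0}[\R\times\ManifoldN]+\LtNorm*{\chi_2\otimes\phi_2 f}\Big),
\]
together with the statement that finiteness of the right side implies finiteness of the left side.

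The remaining step is comparing the anisotropic tensor-product norms with the \(\HsSpace{s}[\ManifoldN]\) norms. I expect this to be the only real obstacle, and it is mostly bookkeeping using the definitions of Section \ref{Section::PseudodifferentialOps}.

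For the right side, I claim \(\HsNorm{\chi_2\otimes g}{s}[\R\times\ManifoldN]\le C\HsNorm{g}{s}[\ManifoldN]\) for \(s\ge0\). In local coordinates the parabolic weight satisfies \((1+|\tau|^{1/\kappa}+|\xi|^2)^{s/2}\lesssim(1+|\tau|)^{s/(2\kappa)}(1+|\xi|^2)^{s/2}\), and \(\hat\chi_2\) is Schwartz, so the estimate follows. When the right side of the target estimate is finite, this shows the right side of the corollary's estimate is finite.

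For the left side, I claim the reverse bound \(\HsNorm{g}{s}[\ManifoldN]\le C\HsNorm{\chi_1\otimes g}{s}[\R\times\ManifoldN]\), where I normalize \(\int|\chi_1|^2\neq0\). It follows from the pointwise bound \((1+|\xi|^2)^{s}\le(1+|\tau|^{1/\kappa}+|\xi|^2)^{s}\) and Plancherel in \(\tau\). Since the global \(\HsSpace{s}[\ManifoldN]\) norm of a function supported in \(\Omega\) is defined via a partition of unity and coordinate charts (half-space charts at boundary points), both comparisons reduce to these Euclidean ones. They must be done on \(\R^n\) or the half-space \(\R^n_{\geq}\) consistently with how the paper defines restriction spaces. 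In the half-space case I would compare via extension operators acting only in \(x\), which commute with the tensor structure in \(t\).

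Finiteness also transfers: if \(\chi_1\otimes\phi_1 f\in\HsSpace{l+1}\), the reverse bound gives \(\phi_1 f\in\HsSpace{l+1}[\ManifoldN]\). Combining the three estimates yields the claimed inequality.
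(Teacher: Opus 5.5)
Your argument is correct and is essentially the paper's: the paper proves this via Corollary \ref{Cor::Result::MainCorWithoutDt} with \(N=J=1\). There it sets \(u(t,x)=\eta_3(t)f(x)\) and uses tensor-product cutoffs \(\eta_1(t)\phi_1(x)\prec\eta_2(t)\phi_2(x)\), exactly as you do. The paper leaves implicit the comparison between the anisotropic norms of \(\chi\otimes g\) and the \(\HsSpace{s}[\ManifoldN]\) norms of \(g\); your Fourier-side bounds, with extension acting only in \(x\) at the boundary, correctly supply that bookkeeping.
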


\begin{corollary}[Hypoellipticity]
    If \(f\in \DomainL\) is such that \(\opL f\) is smooth near some \(x_0\in \Omega\), then \(f\)
    is smooth near \(x_0\).
\end{corollary}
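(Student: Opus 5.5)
We derive this from the preceding Corollary (subellipticity for \(\opL\)) by an induction that climbs the Sobolev scale in steps of \(\epsilon_0\), using a nested family of cutoffs.

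Fix \(x_0\in\Omega\) and a neighborhood \(U\subseteq\Omega\) of \(x_0\) on which \(\opL f\) is smooth. Choose \(\phi_2\in\CinftycptSpace[\Omega]\) supported in \(U\) with \(\phi_2=1\) near \(x_0\). Then choose a sequence of cutoffs \(\psi_0=\phi_2\succ\psi_1\succ\psi_2\succ\cdots\), each in \(\CinftycptSpace[\Omega]\) and each equal to \(1\) on a fixed neighborhood \(V\) of \(x_0\). For every \(k\), the function \(\psi_k\opL f\) is a smooth compactly supported function on \(\ManifoldN\). It is therefore in \(\HsSpace{s}[\ManifoldN]\) for every \(s\): note that support near boundary points of \(\Omega\) is allowed, since \(\HsSpace{s}[\ManifoldN]\) is the usual Sobolev space on the manifold with boundary. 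Also \(\LtNorm{\psi_k f}<\infty\), because \(f\in\DomainL\subseteq\LtSpaceNoMeasure[\ManifoldN]\).

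Now apply the preceding Corollary with the pair \(\psi_{k+1}\prec\psi_k\) and an arbitrary \(l\geq 2\kappa-1\). This gives
\begin{equation*}
\HsNorm{\psi_{k+1} f}{l+1}\leq C\left(\HsNorm{\psi_k\opL f}{l+1-\epsilon_0}+\LtNorm{\psi_k f}\right).
\end{equation*}
The right-hand side is finite, and the Corollary asserts that finiteness of the right-hand side implies finiteness of the left-hand side. Hence \(\psi_{k+1}f\in\HsSpace{l+1}[\ManifoldN]\) for every \(l\geq 2\kappa-1\).

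In fact no iteration is needed, because the Corollary's right-hand side involves only \(\LtNorm{\phi_2 f}\) and not a Sobolev norm of \(f\) of intermediate order. We may therefore take just \(\phi_1\prec\phi_2\) with \(\phi_1=1\) near \(x_0\), and conclude that \(\phi_1 f\in\HsSpace{s}[\ManifoldN]\) for all \(s\). The Sobolev embedding on a manifold with boundary (\(\HsSpace{s}\subseteq C^j\) for \(s>j+n/2\)) then shows that \(\phi_1 f\) is \(C^\infty\) up to the boundary. Hence \(f\) is smooth near \(x_0\), in the sense of smoothness on \(\ManifoldN\), including when \(x_0\in\BoundaryN\cap\Omega\).

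The only point needing care is interpretation. Here "\(\opL f\) smooth near \(x_0\)" must mean that \(\opL f\) agrees on a relatively open neighborhood of \(x_0\) in \(\ManifoldN\) with a function smooth up to the boundary. This is exactly what makes \(\HsNorm{\phi_2\opL f}{s}\) finite for all \(s\). The analytic content is entirely in the preceding Corollary, so there is no real obstacle beyond this bookkeeping.
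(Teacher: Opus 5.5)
Your proof is correct and is essentially the paper's argument: it applies the subellipticity corollary for \(\opL\), with \(\phi_2\) supported where \(\opL f\) is smooth, for every \(l\geq 2\kappa-1\), and uses the ``finite right-hand side implies finite left-hand side'' clause to get \(\phi_1 f\in H^{l+1}\) for all \(l\), which gives smoothness. The nested cutoffs \(\psi_k\) you set up first are unnecessary, as you noticed yourself.
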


\begin{corollary}
    \(\forall \phi_1,\phi_2\in \CinftycptSpace[\Omega]\) with \(\phi_1\prec \phi_2\), \(\forall N\in \Zg\),
    \(\forall l'\in \left\{ 1,2,\ldots, N \right\}\), \(\exists C\geq 0\), \(\forall f\in \DomainLN{N}\),
    \begin{equation*}
        \HsNorm*{\phi_1 f}{2\kappa l'} \leq C
        \left( 
            \HsNorm*{\phi_2 \opL^N f}{(2\kappa l'-N\epsilon_0)\vee 0}
            +\sum_{k=0}^{N-1} \LtNorm*{\phi_2 \opL^k f}
         \right),
    \end{equation*}
    where if the right-hand side is finite, so is the left-hand side. In particular, if \(N\epsilon_0\geq 2\kappa l'\),
    \(\forall f\in \DomainLN{N}\),
    \begin{equation*}
        \HsNorm*{\phi_1 f}{2\kappa l'} \leq C \sum_{k=0}^N \LtNorm*{\phi_2 \opL^k f}.
    \end{equation*}
\end{corollary}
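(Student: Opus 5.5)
The statement is the elliptic ($\opL$, time-independent) analogue of the iterated corollary for $\partial_t+\opL$. I will derive it from that corollary by extending $f$ trivially in time, so that the stationary estimate follows from the heat estimate.

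\textbf{Reduction to the heat-operator result.} Fix $f\in\DomainLN{N}$ and $\phi_1\prec\phi_2$ in $\CinftycptSpace[\Omega]$. Choose $\chi_1\prec\chi_2\prec\chi_3$ in $\CinftycptSpace[\R]$, real-valued and not identically zero, and set
\begin{equation*}
u(t,x):=\chi_3(t)f(x).
\end{equation*}
Since $f\in\DomainL$, we have $u\in\LtSpaceNoMeasure[\R][\DomainL]=\FSpace{1}$. Inductively, $(\partial_t+\opL)^k u$ is a finite sum of terms $\chi_3^{(j)}(t)\,\opL^{k-j}f(x)$ with binomial coefficients. Each such term lies in $\FSpace{N-k}$ because $\opL^{k-j}f\in\DomainLN{N-k+j}$. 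Hence $u\in\FSpace{N}$.

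\textbf{Applying the corollary.} Apply the heat-operator corollary with cutoffs $\chi_1\otimes\phi_1\prec\chi_2\otimes\phi_2$; these are in $\CinftycptSpace[\R\times\Omega]$. On the support of $\chi_2$ we have $\chi_3\equiv 1$, so all derivatives $\chi_3^{(j)}$ with $j\geq1$ vanish there. Therefore
\begin{equation*}
(\chi_2\otimes\phi_2)\,(\partial_t+\opL)^k u=\chi_2\otimes\left(\phi_2\opL^k f\right).
\end{equation*}
The right-hand side of the corollary becomes
\begin{equation*}
\HsNorm*{\chi_2\otimes \phi_2\opL^N f}{s}[\R\times\ManifoldN]+\sum_{k<N}\LtNorm*{\chi_2\otimes\phi_2\opL^k f},
\end{equation*}
with $s=(2\kappa l'-N\epsilon_0)\vee 0$.

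\textbf{Comparing norms of tensor products.} This is the main technical point. I need
\begin{equation*}
\HsNorm{\chi\otimes g}{s}[\R\times\ManifoldN]\approx_\chi \HsNorm{g}{s}[\ManifoldN]
\end{equation*}
in both directions, for fixed $\chi\in\CinftycptSpace[\R]$ with $\chi\neq0$. In this parabolic scaling, $\partial_t$ counts as order $2\kappa$.
\begin{itemize}
\item The upper bound $\lesssim$ is straightforward. Localize to coordinate charts, where $\ManifoldN$ looks like a half-space, and use the Fourier weight $\langle\tau\rangle^{1/(2\kappa)}+\langle\xi\rangle$. The weight $\langle\tau\rangle^{s/(2\kappa)}$ is absorbed by $\hat\chi$, which is a Schwartz function. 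The boundary is handled by the same extension operators used to define the spaces in Section \ref{Section::PseudodifferentialOps}.
\item The lower bound $\gtrsim$ follows because the parabolic weight dominates the spatial weight $\langle\xi\rangle^{s}$.
\end{itemize}
Applying this on the left-hand side gives $\HsNorm{\phi_1 f}{2\kappa l'}$ up to a constant depending on $\chi_1$. On the right-hand side it gives the stated norms of $\phi_2\opL^k f$. The constant $C$ then depends only on the fixed $\chi_i$, which are chosen independently of $f$.

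\textbf{Finiteness claim.} The assertion that a finite right-hand side forces a finite left-hand side transfers by the same equivalence. Finiteness of the right side for $f$ gives finiteness for $u$. The corollary then gives $\chi_1\otimes\phi_1 u\in\HsSpace{2\kappa l'}$, and the lower bound returns $\phi_1 f\in\HsSpace{2\kappa l'}[\ManifoldN]$.

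The special case $N\epsilon_0\geq 2\kappa l'$ follows immediately: then $s=0$, and the $\HsSpace{0}$ norm is the $\LtSpace$ norm.
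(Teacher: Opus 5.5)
Your proposal is correct and follows essentially the paper's own argument. The paper sets $u(t,x)=\eta_3(t)f(x)\in\mathcal{F}_N$ with cutoffs $\eta_j(t)\phi_j(x)$ and applies Theorem~\ref{Thm::Result::MainThm::New} to obtain the time-independent estimate for general $(l,J)$ (Corollary~\ref{Cor::Result::MainCorWithoutDt}), then takes $l+1=2\kappa l'$, $J=l'$; you specialize first by passing through the $\partial_t+\opL$ corollary. The paper also leaves the comparison $\|\chi\otimes g\|_{H^s(\R\times\ManifoldN)}\approx\|g\|_{H^s(\ManifoldN)}$ implicit; your justification is fine, except that on the half-space those norms are infima over extensions, so the lower bound needs one more step: averaging an extension $F$ of $\chi_1\otimes g$ in $t$ against $\overline{\chi_1}/\|\chi_1\|_{L^2}^2$ gives an extension of $g$ whose $H^s$ norm is $\lesssim\|F\|_{H^s}$ for $s\geq 0$.
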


\begin{corollary}
    \(\forall f\in \DomainLN{\infty}\), \(f\big|_{\Omega}\in \CinftySpace[\Omega]\).
    Furthermore, \(\forall \phi_1,\phi_2\in \CinftycptSpace[\Omega]\) with \(\phi_1\prec \phi_2\),
    \(\forall l\in \Zg\), \(\exists N\in \Zg\), \(\exists C\geq 0\), \(\forall f\in \DomainLN{\infty}\),
    \begin{equation*}
        \CjNorm*{\phi_1 f}{l}\leq C \sum_{k=0}^N \LtNorm*{\phi_2 \opL^k f}.
    \end{equation*}
\end{corollary}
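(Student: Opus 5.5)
The plan is to deduce this statement from the preceding corollary (the one giving \(\HsNorm{\phi_1 f}{2\kappa l'}\) bounds for \(f\in\DomainLN{N}\)), combined with Sobolev embedding. The deduction runs in the same way the \(\partial_t+\opL\) version for \(\FSpace{\infty}\) follows from its analogue.

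\emph{Step 1: the estimate.} Fix \(\phi_1\prec\phi_2\) in \(\CinftycptSpace[\Omega]\) and \(l\in\Zg\). Choose \(l'\in\Zg\) with \(2\kappa l' > l+n/2\), and then choose \(N\in\Zg\) with \(N\geq l'\) and \(N\epsilon_0\geq 2\kappa l'\). Since \(f\in\DomainLN{\infty}\subseteq\DomainLN{N}\), the \emph{in particular} part of the preceding corollary applies and gives
\begin{equation*}
    \HsNorm*{\phi_1 f}{2\kappa l'}\leq C\sum_{k=0}^N \LtNorm*{\phi_2\opL^k f}.
\end{equation*}
The right-hand side is finite, since each \(\opL^k f\in\DomainL\subseteq\LtSpace\). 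Hence \(\phi_1 f\in\HsSpace{2\kappa l'}[\ManifoldN]\). The function \(\phi_1 f\) is compactly supported in \(\Omega\), a manifold with boundary. Using a partition of unity and coordinate charts (half-space charts near \(\BoundaryN\)), the Sobolev embedding \(\HsSpace{s}\hookrightarrow \CjSpace{l}\) for \(s>l+n/2\) on \(\R^n\) and on the half-space gives \(\CjNorm{\phi_1 f}{l}\lesssim \HsNorm{\phi_1 f}{2\kappa l'}\). On the half-space, we first apply a bounded Sobolev extension operator. This yields the stated inequality.

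\emph{Step 2: smoothness.} For any \(x_0\in\Omega\), pick \(\phi_1\prec\phi_2\) with \(\phi_1=1\) near \(x_0\). Applying Step 1 for every \(l\) (with \(N\) depending on \(l\)) shows that \(\phi_1 f\in\CjSpace{l}\) for all \(l\). Therefore \(f\) is \(C^\infty\) near \(x_0\), up to the boundary when \(x_0\in\BoundaryN\cap\Omega\). Hence \(f\big|_{\Omega}\in\CinftySpace[\Omega]\).

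No step is genuinely difficult given the earlier corollary. The only care needed is that Sobolev embedding holds up to the boundary, which the extension argument handles, and that \(N\) is chosen depending only on \(l\), \(\kappa\), \(n\) and \(\epsilon_0\). This matches the quantifier order in the statement.
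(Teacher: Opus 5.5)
Your proposal is correct and follows the paper's own route. The paper deduces this statement from the preceding corollary in the same way the \(\FSpace{\infty}\) version is deduced from its analogue: use Sobolev embedding to pick \(l'\) with \(\CjNorm{\phi_1 f}{l}\lesssim \HsNorm{\phi_1 f}{2\kappa l'}\), then take \(N\) with \(N\epsilon_0\geq 2\kappa l'\) and apply the ``in particular'' estimate. Your remarks about embedding up to the boundary and about requiring \(N\geq l'\) simply fill in details the paper leaves implicit.
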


\begin{corollary}
    \label{Cor::Intro::EigenVectorsSmooth}
    Let \(f\in \DomainL\) satisfy \(\opL f=\lambda f\), where \(\lambda\in \C\). Then,
    \(f\big|_{\Omega}\in \CinftySpace[\Omega]\). Furthermore, \(\forall \phi\in \CinftycptSpace[\Omega]\),
    \(\forall l\in \Zg\), \(\exists N\in \Zg\), \(\exists C\geq 0\), \(\forall \lambda \in \C\), \(\forall f\in \DomainL\) with \(\opL f=\lambda f\),
    \begin{equation*}
        \CjNorm{\phi f}{l}\leq C (1+|\lambda|)^N \LtNorm{f}.
    \end{equation*}
\end{corollary}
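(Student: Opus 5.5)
The plan is to reduce Corollary \ref{Cor::Intro::EigenVectorsSmooth} to the preceding corollary, the one estimating \(\CjNorm{\phi_1 f}{l}\) for \(f\in \DomainLN{\infty}\). Eigenvectors belong to \(\DomainLN{\infty}\), and their iterated images under \(\opL\) are scalar multiples of \(f\), so the right-hand side of that estimate collapses to a power of \(|\lambda|\) times \(\LtNorm{f}\).

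\emph{Step 1: \(f\in \DomainLN{\infty}\).} Suppose \(f\in\DomainL\) and \(\opL f=\lambda f\). Since \(\DomainL\) is a vector space, \(\opL f=\lambda f\in \DomainL=\DomainLN{1}\), so \(f\in\DomainLN{2}\). Assume inductively that \(f\in \DomainLN{N}\). Then \(\opL f=\lambda f\in\DomainLN{N}\), because \(\DomainLN{N}\) is a vector space (by induction, as \(\opL\) is linear), and therefore \(f\in \DomainLN{N+1}\). This shows \(f\in\DomainLN{\infty}\), and \(\opL^k f=\lambda^k f\) for every \(k\).

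\emph{Step 2: smoothness.} The previous corollary applied to \(f\in\DomainLN{\infty}\) gives \(f\big|_{\Omega}\in\CinftySpace[\Omega]\).

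\emph{Step 3: the estimate.} Given \(\phi\in\CinftycptSpace[\Omega]\), choose \(\phi_2\in\CinftycptSpace[\Omega]\) with \(\phi\prec\phi_2\). This is possible because \(\supp(\phi)\) is a compact subset of the open set \(\Omega\) (open in \(\ManifoldN\)), so a standard cutoff exists. Apply the previous corollary with \(\phi_1=\phi\) and this \(\phi_2\). For the given \(l\), it yields \(N\) and \(C\), independent of \(f\) and hence of \(\lambda\), such that
\begin{equation*}
    \CjNorm{\phi f}{l}\leq C\sum_{k=0}^N \LtNorm{\phi_2 \opL^k f} = C\sum_{k=0}^N |\lambda|^k\LtNorm{\phi_2 f}\leq C\sup|\phi_2|\,(N+1)(1+|\lambda|)^N\LtNorm{f}.
\end{equation*}
Absorbing \(\sup|\phi_2|\,(N+1)\) into the constant gives the claim.

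The only point needing care is that the constants \(N\) and \(C\) from the previous corollary are uniform over all of \(\DomainLN{\infty}\). This is exactly what that corollary states, so they do not depend on \(\lambda\). There is no genuine obstacle here: the analytic work is contained in the earlier corollaries, which the paper derives from Theorem \ref{Thm::Result::MainThm::New}.
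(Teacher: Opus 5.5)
Your proof is correct and is essentially the paper's argument. The paper proves the Section \ref{Section::MainResult} version (Corollary \ref{Cor::Result::EigenvectorsSmooth}) in exactly this way: it notes \(f\in\DomainLN{\infty}\), chooses \(\phi\prec\phi_2\), and collapses \(\sum_{k\leq N}\LtNorm{\phi_2\opL^k f}\) to a multiple of \((1+|\lambda|)^N\LtNorm{f}\). It then transfers the result to the introduction via Corollary \ref{Cor::Core::MainCor}, which is the same route that yields the preceding introductory corollary you invoke.
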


\begin{remark}
    The above results hold with \(\LDomainL\) replaced by \((\opL^{*},\Domain{\opL^{*}})\), throughout.
    Indeed, replacing \(\FormQF\) with \(\FormQF^{*}(f,g)=\overline{\FormQF[g][f]}\) has the effect
    of replacing \(\LDomainL\) with \((\opL^{*},\Domain{\opL^{*}})\);
    see \cite[Chapter 6, Section 2, Theorem 2.5]{KatoPerturbationTheory}.
    The  assumptions in this section are unchanged under this replacement.\footnote{The assumptions
    in Section \ref{Section::MainCor} are also unchanged under a similar replacement (thereby
    replacing \(\LDomainL\) with \((\opL^{*},\Domain{\opL^{*}})\)). However,
    this is not true of the assumptions in Section \ref{Section::MainResult}; namely,
    Assumption \ref{Asssumption::Result::BoundaryAssumption::New}\ref{Item::Result::BoundaryAssumption::SOperator::New}
    uses \(\DomainL\), and so one would need a similar assumption for \(\Domain{\opL^{*}}\) to obtain
    the same results for \((\opL^{*},\Domain{\opL^{*}})\).}
\end{remark}

\begin{example}\label{Example::Intro::Examples}
    We describe several examples of the above to keep in mind.
    \begin{enumerate}[(i)]
        \item\label{Item::Intro::Examples::Dirichlet} When \(J=\left\{ 0,\ldots, \kappa-1 \right\}\), this corresponds to Dirichlet
            boundary conditions for \(\opL_0\). Here, the particular choice of \(X\) does not matter,
            only that such an \(X\) exists--see Section \ref{Section::Intro::NonChar}.  Also, in this case the particular choice
            of \(\FormQF\) does not matter, so long as it gives rise to \(\opL_0\);
            however, the choice of \(\FormQF\) does matter in \ref{Item::Intro::Examples::SubLaplace} and \ref{Item::Intro::Examples::NonDirichlet}, below.
        \item\label{Item::Intro::Examples::SubLaplace} Consider an operator of the form
            \begin{equation*}
                \opL_0=-\sum_{1\leq i,j\leq r}a_{i,j}(x) W_iW_j+\sum_{j=1}^r a_j(x)W_j+a(x),
            \end{equation*}
            where 
            \(a_{i,j}\in \CinftySpace[\ManifoldN][\R]\),
            \(a_j,a\in \CinftySpace[\ManifoldN][\C]\), and \(\left( a_{i,j}(x) \right)_{1\leq i,j\leq r}\)
            is a real, symmetric, strictly positive definite matrix for each \(x\).
            Let \(X\) and \(J\subseteq \{0\}\) be as described above.
            In Proposition \ref{Prop::BoundaryConds::SecondOrder::MainSecondOrder}, we show 
            the results in this introduction apply in this situation.
            When \(J=\{0\}\), this corresponds to 
            Dirichlet boundary conditions, as mentioned in \ref{Item::Intro::Examples::Dirichlet},
            and the particular choice of \(\FormQF\) does not play a role.
            However, when \(J=\EmptySet\), in Proposition \ref{Prop::BoundaryConds::SecondOrder::MainSecondOrder}, we show that we may pick different
            choices of \(\FormQF\) to give rise to different ``unstable'' boundary conditions.
        \item\label{Item::Intro::Examples::NonDirichlet} More generally than \ref{Item::Intro::Examples::SubLaplace}, in Section \ref{Section::BoundaryCond::GeneralOps},
            when \(J\subsetneq \left\{ 0,\ldots, \kappa-1 \right\}\),
            we show how to derive the boundary conditions induced by \(\FormQF\) and \(\CoreB\)
            in the general setting of this introduction.

        \item\label{Item::Intro::Examples::HormanderSubLap} The quadratic form
            \begin{equation}\label{Eqn::Intro::Examples::HormanderSubLapForm}
                \FormQF[f][g]=\sum_{j=1}^r \Ltip*{W_j f}{W_j g}[\ManifoldN]
            \end{equation}
            corresponds to the sub-Laplacian \(\opL_0=\sum_{j=1}^r W_j^{*} W_j\). It clearly satisfies Assumption \ref{Assumption::Intro2::MaxSub}
            (with \(\kappa=1\))
            for any subspace \(\CoreB\subseteq \CinftySpace[\ManifoldN][\C^M]\).

        \item\label{Item::Intro::Examples::NonSymmetric} In \ref{Item::Intro::Examples::SubLaplace} and \ref{Item::Intro::Examples::HormanderSubLap}, 
        \(\opL_0\) is symmetric modulo lower order terms. However, this need not be the case. For example, let \(\FormQF\) be given by \eqref{Eqn::Intro::Examples::HormanderSubLapForm},
            and consider the form \((f,g)\mapsto \FormQF[f][g]+\alpha \Ltip{W_1 f}{W_2 g}\), where
            \(\alpha\) is a complex number with
            \(\left\lvert \alpha \right\rvert <2\).
            This form satisfies Assumption \ref{Assumption::Intro2::MaxSub}
            (with \(\kappa=1\))
            for any subspace \(\CoreB\subseteq \CinftySpace[\ManifoldN][\C^M]\) (see Lemma \ref{Lemma::Intro2::PerturbForm} for a proof of a more general version of this). In this case, 
            \(\opL_0 = \sum_{j=1}^r W_j^{*} W_j + \alpha W_1^{*} W_2\).  

        \item \ref{Item::Intro::Examples::SubLaplace}, \ref{Item::Intro::Examples::HormanderSubLap}, and \ref{Item::Intro::Examples::NonSymmetric}
            are all second-order operators (\(\kappa=1\)).  However, our results hold for general \(\kappa\), and second-order is not used in our proof in any way.
            For example, take
            \begin{equation*}
                \FormQF[f][g]=\sum_{|\alpha|\leq \kappa} \Ltip*{W^\alpha f}{W^{\alpha} g}[\ManifoldN].
            \end{equation*}
            \(\FormQF\) clearly satisfies Assumption \ref{Assumption::Intro2::MaxSub}
            for any subspace \(\CoreB\subseteq \CinftySpace[\ManifoldN][\C^M]\).
            This corresponds to the operator \(\opL_0=\sum_{|\alpha|\leq \kappa} \left( W^\alpha \right)^{*} W^\alpha\).
    \end{enumerate}
\end{example}

\begin{remark}
    A main difficulty in this paper is that we do not restrict
    ourselves to Dirichlet boundary conditions (see Example \ref{Example::Intro::Examples}\ref{Item::Intro::Examples::SubLaplace},\ref{Item::Intro::Examples::NonDirichlet}).
    Even in the setting of Dirichlet boundary conditions, our results are new in the
    generality in which they are stated. However, the proof becomes more subtle when
    non-Dirichlet boundary conditions are addressed. Integration by parts becomes
    a major road block, and this adds substantial difficulties.
\end{remark}

\begin{remark}
    Another main difficulty in this paper is that we do not restrict to the special case \(\kappa=1\).
    Many estimates are easier in that case.
\end{remark}

\begin{remark}
    In \cite{KohnNirenbergNonCoerciveBoundaryValueProblems}, Kohn and Nirenberg introduced an ``elliptic regularization''
    procedure which has since become an important tool in the subject. We do not use this elliptic regularization
    and instead proceed directly. However, in Section \ref{Section::EllipticRegularization} we show how one can seamlessly introduce this regularization
    in our framework if desired.
\end{remark}

\begin{remark}
    The results in this introduction hold more generally in the case when the functions take values in \(\C^M\);
    here \(a_{\alpha,\beta}\in \CinftySpace[\ManifoldN][\M^{M\times M}(\C)]\).
    In fact, the rest of this paper works in this greater generality.
\end{remark}

For some previous works on subelliptic boundary value problems, see
\cite{KohnNirenbergNonCoerciveBoundaryValueProblems} (which inspired several methods in this paper),
as well as
\cite{DerridjSurUnTheoremeDeTraces,
JerisonDirichletProblemForTheKohnLaplacianI,
JerisonDirichletProblemForTheKohnLaplacianII,
ParmeggianiXuTheDirichletProblemForSubEllipticSecondOrderEquations,
OrponenVillaSubEllipticBoundaryValueProblemsInFlagDomains}.

\begin{lemma}\label{Lemma::Intro2::PerturbForm}
    Suppose \(\FormQF\) is given by \eqref{Eqn::Intro2::DefineFormQF}, \(\CoreB\subseteq \CinftySpace[\ManifoldN]\) is a subspace,
    and  \(\FormQF\) satisfies Assumption \ref{Assumption::Intro2::MaxSub}.
    Let
    \begin{equation*}
        \FormQZero[f][g]:=\sum_{|\alpha|,|\beta|\leq \kappa} \Ltip*{W^{\alpha} f}{b_{\alpha,\beta}W^{\beta}g}[\ManifoldN][\Vol],\quad b_{\alpha,\beta}\in \CinftySpace[\ManifoldN].
    \end{equation*}
    Define,
    \begin{equation*}
        \FormQFt[f][g]:=\FormQF[f][g]+\FormQZero[f][g].
    \end{equation*}
    Suppose, \(\exists c<1\), \(C\geq 0\) with
    \begin{equation*}
        -\Real \FormQZero[f][f]\leq c \Real \FormQF[f][f] +C \LtNorm{f}^2,\quad \forall f\in \CoreB.
    \end{equation*}
    Then, \(\FormQFt\) also satisfies Assumption \ref{Assumption::Intro2::MaxSub} (with the same \(\CoreB\)).
\end{lemma}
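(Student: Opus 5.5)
The argument is a direct algebraic manipulation of the two inequalities. For \(f\in\CoreB\) we have
\[
\Real \FormQFt[f][f]=\Real\FormQF[f][f]+\Real\FormQZero[f][f]\geq (1-c)\Real\FormQF[f][f]-C\LtNorm{f}^2 .
\]
The plan is to solve this for \(\Real\FormQF[f][f]\) and substitute into \eqref{Eqn::Intro2::MaxSub::MaxSubEqn}.

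First, the case \(c<0\) should be handled separately, since then \(1-c>1\) and the estimate would carry a sign issue if \(\Real\FormQF[f][f]\) were negative. The simplest fix is to replace \(c\) by \(\max\{c,0\}\). This is legitimate provided \(\Real\FormQF[f][f]\geq -C'\LtNorm{f}^2\), and that lower bound follows from Assumption \ref{Assumption::Intro2::MaxSub}: the left-hand side of \eqref{Eqn::Intro2::MaxSub::MaxSubEqn} is nonnegative, so \(C_1\Real\FormQF[f][f]\geq -C_2\LtNorm{f}^2\). If \(C_1=0\), the estimate holds with no dependence on the form at all, and the conclusion is immediate. If \(C_1>0\), then for \(c<0\) we get \(c\Real\FormQF[f][f]\leq |c|\,(C_2/C_1)\LtNorm{f}^2\), so we may absorb this term into the constant and assume \(0\leq c<1\).

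With \(0\leq c<1\), the displayed inequality rearranges to
\[
\Real\FormQF[f][f]\leq (1-c)^{-1}\bigl(\Real\FormQFt[f][f]+C\LtNorm{f}^2\bigr).
\]
Substituting this into \eqref{Eqn::Intro2::MaxSub::MaxSubEqn}, which is valid since \(C_1\geq0\), gives
\[
\sum_{|\alpha|\leq\kappa}\LtNorm{W^\alpha f}^2\leq \frac{C_1}{1-c}\Real\FormQFt[f][f]+\Bigl(\frac{C_1C}{1-c}+C_2\Bigr)\LtNorm{f}^2
\]
for all \(f\in\CoreB\). This is exactly Assumption \ref{Assumption::Intro2::MaxSub} for \(\FormQFt\), with the same \(\CoreB\).

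We should also note that \(\FormQFt\) has the form \eqref{Eqn::Intro2::DefineFormQF}, with coefficients \(a_{\alpha,\beta}+b_{\alpha,\beta}\in\CinftySpace[\ManifoldN]\). This matters so that the assumption is applied to the right class of forms.

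There is no real obstacle here. The only points needing care are the sign issue when \(c<0\) and the degenerate case \(C_1=0\), both handled as described above.
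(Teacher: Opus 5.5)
Your proof is correct and is the same argument as the paper's: you combine \(\Real \FormQFt[f][f]\geq (1-c)\Real\FormQF[f][f]-C\LtNorm{f}^2\) with \eqref{Eqn::Intro2::MaxSub::MaxSubEqn} and arrive at the same constants \(C_1/(1-c)\) and \(C_2+C_1C/(1-c)\). Your separate treatment of \(c<0\) and \(C_1=0\) is valid but unnecessary, because \(1-c>0\) for every \(c<1\), so dividing by it is legitimate whatever the sign of \(c\) or of \(\Real\FormQF[f][f]\).
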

\begin{proof}
    By assumption, we have
    \begin{equation*}
        \Real \FormQFt[f][f]\geq (1-c) \Real \FormQF[f][f] -C\LtNorm{f}^2,\quad \forall f\in \CoreB.
    \end{equation*}
    Combining this with \eqref{Eqn::Intro2::MaxSub::MaxSubEqn} shows, \(\forall f\in \CoreB\),
    \begin{equation*}
        \sum_{|\alpha|\leq \kappa} \LtNorm*{W^{\alpha}f}^2
        \leq C_1 \Real \FormQF[f][f]+ C_2 \LtNorm*{f}^2
        \leq \frac{C_1}{1-c} \Real \FormQFt[f][f]+\left( C_2+\frac{C_1 C}{1-c} \right)\LtNorm*{f}^2.
    \end{equation*}
\end{proof}

\paragraph{AI Usage.} Once an early version of this paper was written, and after the mathematical content and proofs were complete, the author used ChatGPT for proofreading and improving the exposition. 
Otherwise, this article is human generated.

    \subsection{Non-characteristic points}\label{Section::Intro::NonChar}
    \begin{definition}\label{Defn::Intro::NonChar::NonCharDefn}
  We say \(x_0\in \BoundaryN\) is \(W\)-non-characteristic if \(\exists j\) with \(W_j(x_0)\not\in \TangentSpace{x}{\BoundaryN}\).
  In other words, \(\BoundaryN\) is non-characteristic for \(x\mapsto \left( W_1(x),\ldots, W_r(x) \right)\) (in the classical sense) near \(x_0\).
\end{definition}

As first pointed out by Kohn and Nirenberg \cite{KohnNirenbergNonCoerciveBoundaryValueProblems},
Derridj \cite{DerridjSurUnTheoremeDeTraces},
and Jerison \cite{JerisonDirichletProblemForTheKohnLaplacianI,JerisonDirichletProblemForTheKohnLaplacianII}
whether a boundary point is  \(W\)-non-characteristic has important implications
for the subellipticity of operators like \(\opL\). 
In fact, as shown in \cite{JerisonDirichletProblemForTheKohnLaplacianII},
hypoellipticity for the Dirichlet problem for the sublaplacian can fail
at characteristic points in even some of the simplest situations.

In the setting above \(\Omega\cap \BoundaryN\) consists entirely of \(W\)-non-characteristic points.
In fact, given \(x_0\in \BoundaryN\), one can pick \(X\) as above such that
\(x_0\in \Omega\) if and only if \(x_0\) is \(W\)-non-characteristic.
Thus, as described in Example \ref{Example::Intro::Examples}\ref{Item::Intro::Examples::Dirichlet},
we study the Dirichlet problem precisely at the \(W\)-non-characteristic boundary points.
Even when we move beyond the Dirichlet problem, our results apply near \(x_0\in \BoundaryN\)
only if \(x_0\) is \(W\)-non-characteristic.
Furthermore, as studied in \cite{StreetFunctionSpacesAndTraceTheoremsForMaximallySubellipticBoundaryValueProblems},
trace theorems work well near \(W\)-non-characteristic boundary points (see \cite[Chapter \ref*{FS::Chapter::Trace}]{StreetFunctionSpacesAndTraceTheoremsForMaximallySubellipticBoundaryValueProblems}), though
not as well near \(W\)-characteristic boundary points (see \cite[Section \ref*{FS::Section::Trace::CharacteristicFailure}]{StreetFunctionSpacesAndTraceTheoremsForMaximallySubellipticBoundaryValueProblems}).

    \subsection{Quantitative estimates and motivation}\label{Section::Intro::Quant}
    While the results in this paper are perhaps interesting in their own right, 
they are not sharp: even if one takes great care in the arguments, the value of \(\epsilon_0>0\)
 used in the introduction is not optimal.
One of our motivations is to use the results in this paper as a step in later obtaining sharp results; even involving
adapted non-isotropic function spaces as described in \cite{StreetFunctionSpacesAndTraceTheoremsForMaximallySubellipticBoundaryValueProblems}.
To do so, we need to keep careful track of what the constants in this paper depend on, because
the results of this paper will be applied an infinite number of times, and the estimates need
to be uniform over this infinite number of applications.

More precisely, the main estimates of this paper happen in a nice coordinate chart
(see Assumption \ref{Asssumption::Result::BoundaryAssumption::New}).
In Theorems \ref{Thm::EstBdry::MainThm::New} and \ref{Thm::EstBdryInt::MainThm::New}, we take care to describe what each constant depends on, in terms of the various objects
written in this coordinate system. 
Following a proof method from the interior theory (see \cite[Theorem 8.1.1]{StreetMaximalSubellipticity} and \cite[Section \ref*{Heat::Section::Subellip::MaximalSub::Parametricies}]{StreetHypoellipticityAndHigherOrderGaussianBounds}, though similar ideas date
back to \cite{FollandSubellipticEstimatesAndFunctionSpacesOnNilpotentLieGroups})
the plan is the following:
\begin{enumerate}[(i)]
    \item\label{Item::Intro::Quant::APriori} Establish a priori estimates in a fixed coordinate system, while keeping track of the dependence of the constants (this paper).
    \item\label{Item::Intro::Quant::ScaleInv} For each \(\delta\in (0,1]\) in a future paper, 
    we plan to apply the results from \ref{Item::Intro::Quant::APriori} with \(\FormQF\) 
    replaced by \(\delta^{2\kappa} \FormQF\) (i.e., replacing
    \(\opL\) by \(\delta^{2\kappa}\opL\)).  Here the required coordinate charts were given in \cite[Theorem \ref*{CC::Thm::Scaling::MainResult}]{StreetCarnotCaratheodoryBallsOnManifoldsWithBoundary};
    these charts have the effect of ``rescaling'' \(\delta\) to be equal to \(1\).
    This establishes certain hypoelliptic estimates for \(\partial_t+\delta^{2\kappa}\opL\) at 
    every Carnot--Carath\'eodory scale,
    which are appropriately uniform in \(\delta\) and the base point.
    \item\label{Item::Intro::Quant::Heat} We plan to use the estimates from \ref{Item::Intro::Quant::ScaleInv},
        combined with the main result of \cite{StreetHypoellipticityAndHigherOrderGaussianBounds} to conclude appropriate higher
        order Gaussian bounds for the heat operator \(e^{-t\opL}\).
    \item Integrating the heat operator yields a parametrix. In a future paper, we plan to use the bounds from \ref{Item::Intro::Quant::Heat}
        to prove better a priori estimates for \(\opL\) in terms of the function spaces
        from \cite{StreetFunctionSpacesAndTraceTheoremsForMaximallySubellipticBoundaryValueProblems}, and formulate a more general theory of maximally subelliptic boundary
        value problems.
\end{enumerate}
    
\section{H\"ormander vector fields}
Let \(\ManifoldN\) be a smooth manifold with boundary.

\begin{definition}\label{Defn::Result::HormandersCondition}
    Let \(W_1,\ldots, W_r\) be smooth vector fields on some open set \(\Omega\subseteq \ManifoldN\).
    We say \(W_1,\ldots, W_r\) satisfy H\"ormander's condition of order \(m\in \Zg=\left\{ 1,2,3,\ldots \right\}\) on \(\Omega\) if
    \begin{equation*}
        \underbrace{W_1,\ldots, W_r}_{\substack{\text{commutators}\\\text{of order }1}}, \ldots, \underbrace{[W_i,W_j]}_{\substack{\text{commutators}\\\text{of order }2}},\ldots, \underbrace{[W_i,[W_j,W_k]]}_{\substack{\text{commutators}\\\text{of order }3}},\ldots, \text{(commutators of order m)}
    \end{equation*}
    span the tangent space at every point of \(\Omega\).
\end{definition}

Let \(\TestFunctionsZero[\ManifoldN]\) 
be the closed subspace of \(\CinftycptSpace[\ManifoldN]\)
consisting of those \(f\in \CinftycptSpace[\ManifoldN]\) which vanish to infinite
order on \(\BoundaryN\) and let \(\DistributionsZero[\ManifoldN]\)
be the dual. We identify \(L^1_{\mathrm{loc}}(\ManifoldN)\) with a subspace
of \(\DistributionsZero[\ManifoldN]\) in the usual way:
\begin{equation*}
    f(\phi):=\int f(\xi)\phi(\xi) \: d\Vol(\xi),\quad f\in L^1_{\mathrm{loc}}(\ManifoldN),\: \phi\in \TestFunctionsZero[\ManifoldN].
\end{equation*}
\(\DistributionsZero[\ManifoldN]\) is the space of distributions we use throughout this paper.

Fix \(\kappa\in \Zgeq=\{0,1,2,3,\ldots\}\), and define
\begin{equation}\label{Eqn::Intro::DefnHsWSpace}
    \HsWSpace{\kappa}[\ManifoldN]
    :=
    \left\{ 
        f\in \LtSpace[\ManifoldN][\Vol]
        :
        W^{\alpha}f\in \LtSpace[\ManifoldN][\Vol], \forall |\alpha|\leq \kappa
      \right\},
      \quad
      \HsWNorm{f}{\kappa}[\ManifoldN]^2
      :=\sum_{|\alpha|\leq \kappa} 
      \LtNorm*{W^\alpha f}[\ManifoldN][\Vol]^2.
\end{equation}
\(\HsWSpace{\kappa}[\ManifoldN]\) is a Hilbert space.
We write \(\HsWSpace{\kappa}[\ManifoldN;\C^M]\)
for the obvious modification where the functions are allowed to take values in \(\C^M\)
instead of \(\C\).


\section{The main theorem}\label{Section::MainResult}
Let \(\ManifoldN\) be a smooth manifold with boundary of dimension \(n\geq 2\),  let \(\Vol\) be a smooth, 
strictly positive density on \(\ManifoldN\),
and fix \(M\in \Zg\).
Let \(\QDomainQ\) be a closed, densely defined, sectorial, sesqui-linear form on 
\(\LtSpace*[\ManifoldN][\Vol][\C^M]\);
see \cite[Chapter 6, Sections 1.1-1.3]{KatoPerturbationTheory} for the relevant definitions.
We remind the reader that \(\QDomainQ\) is called sectorial if \(\exists \gamma\in \R\) (note, \(\gamma\) can be negative),
and \(C\geq 0\) with, \(\forall f\in \DomainQ\),
\begin{equation}\label{Eqn::Result::SectorialDefn}
    \gamma \LtNorm{f}^2 \leq \Real \FormQ[f][f],
    \quad
    \left|  \Imaginary  \FormQ[f][f] \right| \leq C\left( \Real \FormQ[f][f]-\gamma \LtNorm{f}^2 \right).
\end{equation}
Note that \eqref{Eqn::Result::SectorialDefn} implies
\begin{equation}\label{Eqn::Result::SectorialConsequence}
    \left| \FormQ[f][f] \right|\leq C_1  \Real \FormQ[f][f] + C_2 \LtNorm{f}^2,
\end{equation}
for some \(C_1,C_2\geq 0\).

\(\DomainQ\) can be given the structure of a Hilbert space by defining
\begin{equation}\label{Eqn::Result::DomainQHilbertNorm}
    \DomainQNorm{f}^2
    \coloneqq \Real \FormQ[f][f]+\Gamma \LtNorm{f}^2,
\end{equation}
where \(\Gamma\gg 1\) is sufficiently large (any two such choices of \(\Gamma\) yield equivalent norms); see \cite[Chapter 6, Section 1.3]{KatoPerturbationTheory} for details. We henceforth
equip \(\DomainQ\) with this topology.
Note that, by \eqref{Eqn::Result::SectorialConsequence},
\begin{equation}\label{Eqn::Core::QNormSameAsFormQ}
    \QNorm{u}^2\approx \left| \FormQ[u][u] \right|+\LtNorm{u}^2.
\end{equation}

By \cite[Chapter 6, Section 2, Theorem 2.1]{KatoPerturbationTheory}, there exists
a unique, unbounded, densely defined operator \(\LDomainL\) on \(\LtSpace[\ManifoldN][\Vol][\C^M]\) defined by
\begin{equation*}
    \Ltip{f}{\opL g}[\ManifoldN][\Vol][\C^M]
    =\FormQ[f][g], \quad \forall f\in \DomainQ, g\in \DomainL,
\end{equation*}
and \(\LDomainL\) is the maximal operator with this property (see \cite[Chapter 6, Theorem 2.1]{KatoPerturbationTheory} for details).
\(\LDomainL\) is a closed operator, and we give \(\DomainL\) the usual Banach space structure via the graph norm:
\(\DomainLNorm{f}:=\LtNorm{f}+\LtNorm{\opL f}\).

\begin{goal}
    Given \(x_0\in \ManifoldN\), give conditions on \(\QDomainQ\) so that \(\partial_t+\opL\)
    is subelliptic near \(x_0\).
\end{goal}


Fix \(\Omega\subseteq \ManifoldN\) an open set and let \(W_1,\ldots, W_r\) be smooth vector fields
on \(\Omega\) satisfying H\"ormander's condition of order \(m\in \Zg\).  
We write \(A\Subset B\) to mean \(A\) is a relatively compact subset of \(B\).
Fix \(\kappa\in \Zg\).

Fix \(a_{\alpha,\beta}\in \CinftySpace[\Omega][\MatrixSpace[M][M][\C]]\), for \(|\alpha|,|\beta|\leq \kappa\),
where \(\MatrixSpace[M][M][\C]\) is the space of \(M\times M\) complex matrices.
Define, whenever it makes sense, 
\begin{equation}\label{Eqn::Result::FormQFFormula}
    \FormQF[f][g]:=\sum_{|\alpha|,|\beta|\leq \kappa}
    \Ltip*{W^{\alpha}f}{a_{\alpha,\beta} W^{\beta}g}[\Omega][\Vol][\C^M].
\end{equation}
In particular, \(\FormQF\) is sesqui-linear form \(\HsWSpace{\kappa}[\Omega;\C^M]\times \HsWSpace{\kappa}[\Omega;\C^M]\rightarrow \C\).
Here, \(\mathrm{F}\) stands for ``formula,'' as \(\FormQF\) will give the formula 
for \(\FormQ\); see Assumption \ref{Asssumption::Result::QIsQF}.

As before, let \(\TestFunctionsZeroCM[\Omega]\) consist of those functions in \(\CinftycptSpace[\Omega][\C^M]\) which vanish to infinite
order on \(\Omega\cap\BoundaryN\); this is a closed subspace of \(\CinftycptSpace[\Omega][\C^M]\) and inherits the locally convex topology.
The dual space \(\DistributionsZeroCM[\Omega]\) is the space of distributions we use when we need to define generalized derivatives.
Set
\begin{equation*}
    \HsWlocSpace{\kappa}[\Omega;\C^M]:=\left\{ f:\Omega\rightarrow \C^M \: |\:  \forall U\Subset \Omega\text{ open, }f\big|_{U}\in \HsWSpace{\kappa}[U;\C^M] \right\},
\end{equation*}
given the usual Fr\'echet topology.

\begin{assumption}\label{Asssumption::Result::TestFunctionsInDomain}
    \(\TestFunctionsZeroCM[\Omega]\subseteq \DomainQ\).
\end{assumption}

\begin{assumption}\label{Asssumption::Result::DomainIsHkappaW}
    \(f\mapsto f\big|_{\Omega}\)
    takes \(\DomainQ\rightarrow \HsWlocSpace{\kappa}[\Omega;\C^M]\).

\end{assumption}

\begin{remark}\label{Rmk::Result::DomainIsHkappaW::Continuity}
    We do not assume continuity in Assumption \ref{Asssumption::Result::DomainIsHkappaW}. 
    In fact, continuity follows by the Closed Graph Theorem. Indeed, if \(f_j\rightarrow f\) in \(\DomainQ\)
    and \(f_j\big|_{\Omega} \rightarrow g\) in \(\HsWlocSpace{\kappa}[\Omega;\C^M]\), then these convergences
    imply \(f_j\big|_{\Omega}\rightarrow f\big|_{\Omega}\) and \(f_{j}\big|_{\Omega}\rightarrow g\) in \(\LtlocSpace\). 
    It follows that \(f\big|_{\Omega}=g\) and therefore the graph is closed. The Closed Graph Theorem implies the restriction
    map is continuous \(\DomainQ\rightarrow \HsWlocSpace{\kappa}[\Omega;\C^M]\).
\end{remark}

\begin{remark}\label{Rmk::Result::DomainIsHkappaW::MaxSub}
    Assumption \ref{Asssumption::Result::DomainIsHkappaW} is the ``maximally subelliptic'' assumption.
    Indeed, if \(\Compact_0\Subset \Omega\) is a compact set, then Remark \ref{Rmk::Result::DomainIsHkappaW::Continuity}
    implies \(\exists C_{\Compact_0}\geq 0\) such that
    \begin{equation*}
        \HsWNorm{f}{\kappa}\leq C_{\Compact_0} \DomainQNorm{f},\quad \forall f\in \DomainQ,\:\supp(f)\subseteq \Compact_0.
    \end{equation*}
\end{remark}

\begin{assumption}\label{Asssumption::Result::QIsQF}
    \(\forall f,g\in \DomainQ\), with \(\supp(f)\Subset \Omega\), \(\FormQ[f][g]=\FormQF[f][g]\).
\end{assumption}

\begin{remark}
    Let \(g\in \DomainL\), then by Assumption \ref{Asssumption::Result::QIsQF},
    \begin{equation*}
        \Ltip*{f}{\opL g}=\FormQ[f][g]=\FormQF[f][g],\quad \forall f\in \DomainQ\text{ with }\supp(f)\Subset \Omega.
    \end{equation*}
    In light of \eqref{Eqn::Result::FormQFFormula} and Assumption \ref{Asssumption::Result::TestFunctionsInDomain},
    by taking \(f\in \TestFunctionsZeroCM[\Omega]\),
    we see
    \begin{equation*}
        \opL g \big|_{\Omega}= \sum_{|\alpha|,|\beta|\leq \kappa} \left( W^\alpha \right)^{*} a_{\alpha,\beta} W^{\beta}g\big|_{\Omega},
    \end{equation*}
    where the right-hand side is defined in the sense of \(\DistributionsZeroCM[\Omega]\).
\end{remark}


For our final assumption, we 
follow \cite{KohnNirenbergNonCoerciveBoundaryValueProblems} and
consider operators of the following form.
Let 
\(\Cuben{\delta}:=\left\{ (x_1,\ldots, x_n)\in \Rn : |x_j|< \delta,\: \forall j  \right\}\),
\(\Cubengeq{\delta}=\left\{ (x_1,\ldots, x_n)\in \Cuben{\delta} : x_n\geq 0 \right\}\),
and \(\Cubeng{\delta}=\left\{ (x_1,\ldots, x_n)\in \Cuben{\delta} : x_n> 0 \right\}\).
Let \(K(t,s,x,y')\in \CinftycptSpace*[\R\times \R\times \CubengeqOneHalf\times \CubenmoOneHalf]\)
be such that
\(\exists \delta>0\) with \(\partial_{x_n}K(t,s,(x',x_n),y')=0\) for \(0\leq x_n<\delta\),
and let \(S\) be any operator of the form 
\begin{equation}\label{Eqn::Result::FormOfS}
    S u(t,x)=\iint K(t,s,x,y')u(s,y',x_n)\: ds\: dy',
\end{equation}
(here we let \(S\) act diagonally on functions taking values in \(\C^M\) as \(K\) is scalar valued).

\begin{assumption}\label{Asssumption::Result::BoundaryAssumption::New}
    For every \(x_0\in \Omega\cap \BoundaryN\), \(\exists\) a coordinate chart
    \(\Psi:\CubengeqOne\xrightarrow{\sim}\Psi\left( \CubengeqOne \right)\subseteq \Omega\)
    with \(\Psi(0)=x_0\) and a vector field \(X\) in the \(\CinftySpace[\Psi\left( \CubengeqOne \right)][\R]\)-module
    generated by \(W_1,\ldots, W_r\) such that
    \begin{enumerate}[(i)]
        \item\label{Item::Result::BoundaryAssumption::PullBackX::New} \(\Psi^{*}X=c_0(x)\partial_{x_n}\), where \(c_0(x)\ne 0\), \(\forall x\in \CubengeqOne\).
        \item\label{Item::Result::BoundaryAssumption::SOperator::New} 
        Let \(S\) be any operator as described in \eqref{Eqn::Result::FormOfS}, and set \(T=\Psi_{*}S\Psi^{*}\), where
            \(\Psi\) is acting in the \(x\) variable; i.e.,
            \begin{equation*}
                Tu(t, \Psi(x)) = \iint K(t,s,x,y') u(s, \Psi(y',x_n))\: dy'\: ds.
            \end{equation*}
            Then,
            \(T:\LtSpaceNoMeasure*[\R][\DomainL]\rightarrow \LtSpaceNoMeasure*[\R][\DomainQ]\) (we do not assume continuity).
    \end{enumerate}
\end{assumption}

\begin{remark}
    It is sometimes easier to verify assumptions on a core for \(\QDomainQ\), rather than all elements
    of \(\DomainQ\).
    See Section \ref{Section::MainCor} for such assumptions on a core which imply the above assumptions.
\end{remark}

\begin{remark}
    In most applications, one can verify the stronger assumption
    where \(\DomainL\) is replaced by \(\DomainQ\) in 
    Assumption \ref{Asssumption::Result::BoundaryAssumption::New}\ref{Item::Result::BoundaryAssumption::SOperator::New}.
    In this case, all the assumptions are symmetric in \(\DomainL\) and \(\left( \opL^{*}, \Domain{\opL^{*}} \right)\)
    as can be seen by using the fact that \(\left( \opL^{*}, \Domain{\opL^{*}} \right)\)
    corresponds to the form \(\left( \overline{\FormQ}, \DomainQ \right)\), where
    \(\overline{\FormQ}(f,g)=\overline{\FormQ[g][f]}\), for \(f,g\in \DomainQ\).
    See \cite[Chapter 6, Section 2, Theorem 2.5]{KatoPerturbationTheory}.
\end{remark}

\begin{definition}\label{Defn::Result::FSpace}
    For \(l\in \Zg\) we define vector spaces \(\FSpace{l}\subseteq \LtSpaceNoMeasure[\R\times \ManifoldN][\C^M]\)
    recursively as follows:
    \begin{itemize}
        \item \(\FSpace{1}:=\LtSpaceNoMeasure[\R][\DomainL] \).
        \item For \(l\geq 1\), \(\FSpace{l+1}:=\left\{ u\in \FSpace{l} : \left( \partial_t+\opL \right)u\in \FSpace{l} \right\}\),
            where \(\partial_t u(t,x)\) is taken in the sense of \(\DistributionsZeroCM[\R\times \ManifoldN]\) and \(\opL u\)
            is defined since \(u\in \FSpace{l}\subseteq \FSpace{1}= \LtSpaceNoMeasure[\R][\DomainL]\).
    \end{itemize}
\end{definition}

Set \(\epsilon_0:=\min\{1/m,1/2\}\).
In what follows we use the non-isotropic Sobolev spaces \(\HscptSpace{s}[\R\times \ManifoldN]\) consisting of functions \(u(t,x)\) defined on compact subsets of \(\R\times \ManifoldN\);
where we treat \(\partial_t\) as an operator of order \(2\kappa\)--see Remark \ref{Rmk::PDOs::DefineClassicalHsSpaceOnMfld} for details.
When considering functions supported in a fixed compact set, the equivalence class of the norm
is well-defined. The \(\HsSpace{s}\)-norm is finite precisely when the function lies in the space.

We are not explicit about what the constants in our estimates depend on in this section;
however, in Sections \ref{Section::EstNearBdry} and \ref{Section::EstInt} we work in local coordinates,
we make explicit what the various constants depend on.


\begin{theorem}\label{Thm::Result::MainThm::New}
    Under the above assumptions
    (Assumptions \ref{Asssumption::Result::TestFunctionsInDomain}, \ref{Asssumption::Result::DomainIsHkappaW},
    \ref{Asssumption::Result::QIsQF}, and \ref{Asssumption::Result::BoundaryAssumption::New}), the following holds.
    For all \(N\in \Zg\), \(\forall \phi_1,\phi_2\in \CinftycptSpace[\R\times \Omega]\) with \(\phi_1\prec\phi_2\),
    \(\forall l\in \Zg\) with \(l\geq 2\kappa-1\), \(\forall J\in \Zg\) with \(1\leq J\leq \min\left\{ \floor{\frac{l+1}{2\kappa}},N \right\}\),
    \(\exists C\geq 0\), \(\forall u\in \FSpace{N}\),
    \begin{equation}\label{Eqn::Result::MainThm::MainEqn}
    \begin{split}
         &\HsNorm*{\phi_1 u}{l+1}
         \\&\leq C
         \left(
            \HsNorm*{\phi_2 \left( \partial_t+\opL \right)^J u}{l+1-2\kappa J}
            +\HsNorm*{\phi_2 \left( \partial_t+\opL \right)^N u}{(l+1-N\epsilon_0)\vee 0}
            +\sum_{k=0}^{N-1} \LtNorm*{ \phi_2 \left( \partial_t +\opL \right)^k u}
         \right),
    \end{split}
    \end{equation}
    where if the right-hand side is finite, so is the left-hand side.
    Here, and in the rest of the paper, \(a\vee b=\max\{a,b\}\).
\end{theorem}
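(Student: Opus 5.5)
The plan is to localize with a partition of unity subordinate to a cover of \(\supp(\phi_1)\) by interior patches \(U\Subset \R\times \InteriorN\cap\Omega\) and boundary patches \(\R\times\Psi(\CubengeqOne)\) given by Assumption \ref{Asssumption::Result::BoundaryAssumption::New}. On each patch I would establish two local estimates, and then combine them.
\begin{itemize}
\item \textbf{Gain estimate.} For \(u\in\FSpace{1}\) and \(s\geq 0\),
\[
\HsNorm{\phi_1 u}{s+\epsilon_0}\lesssim \HsNorm{\phi_2(\partial_t+\opL)u}{s}+\LtNorm{\phi_2 u}.
\]
\item \textbf{Elliptic-type estimate.}
\[
\HsNorm{\phi_1 u}{s+2\kappa}\lesssim \HsNorm{\phi_2(\partial_t+\opL)u}{s}+\HsNorm{\phi_2 u}{s+2\kappa-\epsilon_0}+\LtNorm{\phi_2 u},
\]
valid for \(s\geq 0\) with \(s+2\kappa\leq l+1\). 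This is where the restriction \(l\geq 2\kappa-1\) enters, guaranteeing at least one step of order \(2\kappa\).
\end{itemize}
Both are proved at the level of a priori inequalities, and are then upgraded to qualitative statements (finite right-hand side implies finite left-hand side) by mollifying in the tangential variables.

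In the interior the gain estimate is the classical one. Assumption \ref{Asssumption::Result::DomainIsHkappaW} (via Remark \ref{Rmk::Result::DomainIsHkappaW::MaxSub}) and Assumption \ref{Asssumption::Result::QIsQF} give
\[
\Real\Ltip{(\partial_t+\opL)v}{v}\gtrsim \|v\|_{\HsWSpace{\kappa}}^2-C\|v\|^2 .
\]
Since \(\Real\Ltip{\partial_t v}{v}=0\), the \(H^\kappa_W\) control together with H\"ormander's condition gives an \(\epsilon_0\)-gain in the isotropic scale (\(\HsWSpace{\kappa}\subseteq H^{\kappa/m}\)). Commuting with pseudodifferential operators of order \(s\), and using the usual \(\Lambda^s\) commutator bookkeeping, yields the gain estimate. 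The \(\partial_t\) direction is handled by the pairing with \(\partial_t\) together with the parabolic weighting.

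At the boundary the same scheme is run, but only with tangential operators in \((t,x')\). These are constructed exactly as in \eqref{Eqn::Result::FormOfS}: tangential mollifiers and \(\Lambdatxp[s]\)-type operators cut off to have kernels independent of \(x_n\) near \(x_n=0\). Assumption \ref{Asssumption::Result::BoundaryAssumption::New}\ref{Item::Result::BoundaryAssumption::SOperator::New} is precisely what allows \(Tu\in\LtSpaceNoMeasure[\R][\DomainQ]\), so \(Tu\) may be inserted as a test function in \(\FormQ\). This is the substitute for integration by parts, which is unavailable for non-Dirichlet boundary conditions. It gives tangential regularity \(\HstxpNorm{\cdot}{s+\epsilon_0}\) with \(L^2_{x_n}\) and \(H^\kappa_W\) control.

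Normal regularity is recovered from the equation. Since \(\Psi^*X=c_0\partial_{x_n}\) with \(X\) in the module generated by the \(W_j\), the principal part of \(\opL_0\) contains \(c\,X^{2\kappa}\) with \(c\) invertible, by maximal subellipticity. One then solves for \(\partial_{x_n}^{2\kappa}u\) in terms of \((\partial_t+\opL)u\) and terms with fewer normal derivatives, and inducts on the number of normal derivatives. This is the Kohn--Nirenberg non-characteristic argument, and it gives the elliptic-type estimate in the full space \(\HsSpace{s}\).

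With the two local estimates in hand, I would bootstrap them to obtain \eqref{Eqn::Result::MainThm::MainEqn}, using a nested sequence of cutoffs \(\phi_1\prec\psi_1\prec\dots\prec\phi_2\).
\begin{itemize}
\item Apply the elliptic-type estimate \(J\) times to \(u,(\partial_t+\opL)u,\dots\). This reduces \(\HsNorm{\phi_1 u}{l+1}\) to \(\HsNorm{\phi_2(\partial_t+\opL)^Ju}{l+1-2\kappa J}\) plus lower-order terms of the form \(\HsNorm{\psi(\partial_t+\opL)^k u}{l+1-2\kappa k-\epsilon_0}\).
\item Treat these remaining terms with the gain estimate applied iteratively to \((\partial_t+\opL)^k u\). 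Each application trades one power of \((\partial_t+\opL)\) for an \(\epsilon_0\) gain, so after \(N\) total steps one reaches \(\HsNorm{\phi_2(\partial_t+\opL)^N u}{(l+1-N\epsilon_0)\vee 0}\) plus \(L^2\) norms of \((\partial_t+\opL)^ku\), \(k<N\).
\end{itemize}
Membership \(u\in\FSpace{N}\) ensures each \((\partial_t+\opL)^k u\in\FSpace{1}\), so every local estimate applies.

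I expect the main obstacle to be the boundary tangential gain estimate. Commutators \([T,\opL]\) must be controlled only through the form \(\FormQ\), and only through \(\DomainQ\) membership. Moreover, \(\kappa>1\) forces one to handle commutators of \(T\) with \(W^\alpha\) of all orders, with \(\HsWSpace{\kappa}\) bounds, without any symmetry of \(a_{\alpha,\beta}\). I would first try to write every commutator as a sum of terms \(\Ltip{W^{\alpha'}T'u}{bW^\beta u}\) with \(|\alpha'|<|\alpha|\) and \(T'\) of the same tangential form, and then absorb them by Cauchy--Schwarz with a small constant.
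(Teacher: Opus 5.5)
\textbf{Your ``elliptic-type estimate'' is false, and the bootstrap rests on it.} The estimate
\[
\HsNorm{\phi_1 u}{s+2\kappa}\lesssim \HsNorm{\phi_2(\partial_t+\opL)u}{s}+\HsNorm{\phi_2 u}{s+2\kappa-\epsilon_0}+\LtNorm{\phi_2 u}
\]
says that \(\partial_t+\opL\) is elliptic of order \(2\kappa\) up to an \(\epsilon_0\)-smoothing error. For \(u\) supported where \(\phi_1=1\), absorbing the middle term by interpolation would give a genuine elliptic estimate. This already fails away from the boundary. Take the Heisenberg sub-Laplacian (\(\kappa=1\), \(\epsilon_0=1/2\)) and \(u=\eta(t)f_\lambda\), where \(f_\lambda\) is a localized ground-state wave packet at central frequency \(\lambda\). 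Then \(\opL f_\lambda\approx\lambda f_\lambda\) and \(f_\lambda\) has isotropic frequency \(\approx\lambda\). So \(\HsNorm{u}{s+2}\approx\lambda^{s+2}\), while the right-hand side is \(O(\lambda^{s+1}+\lambda^{s+3/2})\). By translation, such packets fit inside any boundary patch, at interior points next to a non-characteristic boundary.

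The error is in ``solve for \(\partial_{x_n}^{2\kappa}u\) in terms of \((\partial_t+\opL)u\) and terms with fewer normal derivatives.'' Those terms (\(\partial_t u\), \(\partial_{x'}^{2\kappa}u\), \dots) have fewer normal derivatives but the same parabolic order. Inducting on normal derivatives therefore gives only
\[
\HsNorm{\phi_1 u}{s+2\kappa}\lesssim \HsNorm{\phi_2(\partial_t+\opL)u}{s}+\GsNorm{\phi_2 u}{s+2\kappa}.
\]
This is Proposition \ref{Prop::EstBdry::ReduceTangent::MainReduceTangentProp}. The right-hand side contains a tangential norm of full order, and no isotropic norm of lower order can replace it.

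\textbf{Consequence for how the pieces must be combined.} After the \(J\) normal-recovery steps, the remainders are tangential norms of full order: \(\GsNorm{\phi_2(\partial_t+\opL)^k u}{l+1-2\kappa k}\) for \(0\le k<J\) (Lemma \ref{Lemma::EstBdry::CompletePF::WorkWithHsNorms}). Since \(2\kappa\geq\epsilon_0\), these are dominated by \(\GsNorm{\cdot}{l+1-k\epsilon_0}\). They must then be reduced by a gain estimate in the tangential scale,
\[
\GsNorm{\phi_1 v}{s+\epsilon_0}\lesssim \GsNorm{\phi_2(\partial_t+\opL)v}{s}+\LtNorm{\phi_2 v},\qquad v\in\DSetL{1},\ s\in\R,
\]
iterated \(N-k\) times, as in Lemma \ref{Lemma::EstBdry::CompletePF::WorkWithGsNorms}. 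Only at the very end do you bound \(\GsNorm{\cdot}{l+1-N\epsilon_0}\lesssim\HsNorm{\cdot}{(l+1-N\epsilon_0)\vee 0}\). The gain must hold for every real \(s\), because the indices become negative.

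You already have this ingredient: your boundary argument yields exactly \(\HstxpNorm{\cdot}{s+\epsilon_0}\)-regularity with \(L^2_{x_n}\). But you convert to isotropic norms too early. That conversion not only needs the false estimate above. It also needs an isotropic boundary gain at levels \(s+\epsilon_0<2\kappa\); for example, \(l+1=2\kappa\), \(J=1\) leaves \(\HsNorm{\psi u}{2\kappa-\epsilon_0}\). At such levels normal derivatives cannot be recovered from the equation, which is precisely why the theorem requires \(l\geq 2\kappa-1\).

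In the interior, drop the elliptic-type step entirely. Iterating the gain estimate alone (Corollary \ref{Cor::EstBdryInt::MainCor}) gives a stronger bound, and the \(J\)-term on the right is harmless.

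Separately, commutator terms with one fewer \(W\)-derivative cannot simply be absorbed with a small constant. The large-constant piece has the same tangential order. It becomes lower order only through the H\"ormander gain (Proposition \ref{Prop::HorVfs::GainBoundary::MainGainProp}), applied repeatedly.
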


We state several simple corollaries of Theorem \ref{Thm::Result::MainThm::New}
which are useful in applications. The corollaries in this section all assume the same assumptions
as Theorem \ref{Thm::Result::MainThm::New}.
The first corollary shows \(\partial_t+\opL\) is subelliptic on \(\R\times \Omega\).


\begin{corollary}\label{Cor::Result::PartialtPluOpLSubEllip}
    \(\forall \phi_1,\phi_2\in \CinftycptSpace[\R\times \Omega]\) with \(\phi_1\prec \phi_2\),
    \(\forall l\in \Zg\) with \(l\geq 2\kappa-1\), \(\exists C\geq 0\),
    \(\forall u\in \FSpace{1}\),
    \begin{equation*}
        \HsNorm*{\phi_1 u}{l+1}
        \leq C
        \left( \HsNorm*{\phi_2 \left( \partial_t+\opL \right) u}{l+1-\epsilon_0}
        +\LtNorm*{\phi_2 u} \right),
    \end{equation*}
    where if the right-hand side is finite, so is the left-hand side.
\end{corollary}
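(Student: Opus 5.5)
The corollary should follow from Theorem \ref{Thm::Result::MainThm::New} by specializing to \(N=1\) and \(J=1\), and then absorbing one right-hand term into another.

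First, check that this choice is admissible. We have \(\FSpace{1}=\FSpace{N}\) with \(N=1\). Since \(l\geq 2\kappa-1\), we have \(l+1\geq 2\kappa\), so \(\floor{\frac{l+1}{2\kappa}}\geq 1\). Hence \(\min\left\{\floor{\frac{l+1}{2\kappa}},1\right\}=1\), and \(J=1\) is the only allowed value. Fix \(\phi_1\prec\phi_2\) and \(l\). The theorem gives a constant \(C\) such that, for all \(u\in\FSpace{1}\),
\begin{equation*}
    \HsNorm*{\phi_1 u}{l+1}\leq C\left( \HsNorm*{\phi_2(\partial_t+\opL)u}{l+1-2\kappa} + \HsNorm*{\phi_2(\partial_t+\opL)u}{(l+1-\epsilon_0)\vee 0} + \LtNorm*{\phi_2 u}\right).
\end{equation*}
Here the sum over \(k\) reduces to the single term \(k=0\).

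Next, simplify the exponents. Since \(\epsilon_0\leq 1/2\), we get \(l+1-\epsilon_0>0\), so \((l+1-\epsilon_0)\vee 0=l+1-\epsilon_0\). Since \(\kappa\geq 1\), we also have \(0\leq l+1-2\kappa< l+1-\epsilon_0\).

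The only remaining step is the monotonicity of the non-isotropic Sobolev norms. The function \(\phi_2(\partial_t+\opL)u\) is supported in the fixed compact set \(\supp(\phi_2)\). For such functions, \(\HsNorm{v}{s}\leq C'\HsNorm{v}{s'}\) whenever \(s\leq s'\), with \(C'\) depending only on the compact set, \(s\), and \(s'\). This is standard from the definition of \(\HscptSpace{s}\) via the symbol \(\JBracket{\cdot}^s\) in local coordinates, as in Remark \ref{Rmk::PDOs::DefineClassicalHsSpaceOnMfld}. Applying it with \(s=l+1-2\kappa\) and \(s'=l+1-\epsilon_0\), the first term on the right is bounded by a constant multiple of the second. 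This yields the stated estimate.

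For the qualitative claim, suppose \(\HsNorm*{\phi_2(\partial_t+\opL)u}{l+1-\epsilon_0}\) and \(\LtNorm*{\phi_2 u}\) are finite. By the same monotonicity, the \(H^{l+1-2\kappa}\) term is finite too, so the whole right-hand side of \eqref{Eqn::Result::MainThm::MainEqn} is finite. The theorem then gives \(\phi_1 u\in\HsSpace{l+1}\).

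There is no substantive obstacle in this argument. The one point to be careful about is that the monotonicity of the Sobolev scale is used only for functions with support in a fixed compact set, so that the implied constants are uniform in \(u\).
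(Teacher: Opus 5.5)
Your proposal is correct and follows the paper's proof, which simply takes \(N=J=1\) in Theorem \ref{Thm::Result::MainThm::New}. Your explicit absorption of the \(H^{l+1-2\kappa}\) term into the \(H^{l+1-\epsilon_0}\) term, using monotonicity of the Sobolev scale on functions supported in a fixed compact set, is the same step the paper writes out in the proof of Corollary \ref{Cor::EstBdry::MainThm::CorNEquals1}.
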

\begin{proof}
    Take \(N=J=1\) in Theorem \ref{Thm::Result::MainThm::New}.
\end{proof}

The next corollary shows \(\partial_t+\opL\) is hypoelliptic on \(\R\times \Omega\).

\begin{corollary}\label{Cor::Result::PartialtPluOpLHypoEllip}
    If \(u\in \FSpace{1}\)
    is such that \(\left( \partial_t +\opL \right)u\) is smooth near some \((t_0,x_0)\in \R\times \Omega\),
    then \(u\) is smooth near \((t_0,x_0)\).
\end{corollary}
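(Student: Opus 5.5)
Corollary \ref{Cor::Result::PartialtPluOpLHypoEllip} follows from Corollary \ref{Cor::Result::PartialtPluOpLSubEllip} by a bootstrap in the Sobolev scale \(\HsSpace{s}\) (with \(\partial_t\) of order \(2\kappa\)), using a nested family of cutoffs. Fix \(u\in\FSpace{1}\) and let \(U\) be a neighborhood of \((t_0,x_0)\) in \(\R\times\Omega\) on which \((\partial_t+\opL)u\) is smooth. Choose cutoffs \(\psi_0,\psi_1,\psi_2,\ldots\in\CinftycptSpace[\R\times\Omega]\), all supported in \(U\), each equal to \(1\) near \((t_0,x_0)\), with \(\psi_{k+1}\prec\psi_k\) for every \(k\). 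By smoothness of \((\partial_t+\opL)u\) on \(U\), every \(\psi_k(\partial_t+\opL)u\) is in \(\CinftycptSpace\), hence in \(\HsSpace{s}[\R\times\ManifoldN]\) for every \(s\). Since \(u\in\LtSpaceNoMeasure[\R][\DomainL]\subseteq\LtlocSpace\), we also have \(\LtNorm{\psi_k u}<\infty\).

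\textbf{Step 1.} Apply Corollary \ref{Cor::Result::PartialtPluOpLSubEllip} with \(\phi_1=\psi_{k+1}\), \(\phi_2=\psi_k\), and any \(l\geq 2\kappa-1\). The right-hand side,
\begin{equation*}
\HsNorm{\psi_k(\partial_t+\opL)u}{l+1-\epsilon_0}+\LtNorm{\psi_k u},
\end{equation*}
is finite. The clause ``if the right-hand side is finite, so is the left-hand side'' then gives \(\psi_{k+1}u\in\HsSpace{l+1}\). Here the Corollary involves no induction on regularity of \(u\) itself: the only \(u\)-norm on the right is \(L^2\). So no iteration is actually needed, and a single pair \(\phi_1\prec\phi_2\) suffices for each \(l\). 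Concretely, fix \(\phi_1\prec\phi_2\) supported in \(U\) with \(\phi_1=1\) on a neighborhood \(V\) of \((t_0,x_0)\). Then \(\phi_1u\in\HsSpace{l+1}\) for every \(l\geq 2\kappa-1\).

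\textbf{Step 2.} Pass from membership in every \(\HsSpace{s}\) to smoothness. Because \(\partial_t\) is weighted as order \(2\kappa\), the space \(\HsSpace{s}\) embeds into the classical isotropic Sobolev space of order \(s/(2\kappa)\), up to constants, for compactly supported functions. Therefore \(\phi_1u\) lies in the classical \(H^{\sigma}\) for all \(\sigma\). By the Sobolev embedding it lies in \(C^j\) for all \(j\), up to the boundary portion \(\Omega\cap\BoundaryN\) if \(x_0\) is a boundary point. This uses the Sobolev embedding on the half-space, or equivalently extension followed by the usual embedding, and is compatible with Remark \ref{Rmk::PDOs::DefineClassicalHsSpaceOnMfld}. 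Hence \(u=\phi_1u\) is smooth on \(V\).

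The only step needing care is Step 2, namely the precise definition of \(\HsSpace{s}\) on \(\R\times\ManifoldN\) near boundary points, i.e. that \(\bigcap_s\HsSpace{s}_{\mathrm{cpt}}\subseteq C^\infty\) up to the boundary. I expect this to be a standard property of the spaces from Section \ref{Section::PseudodifferentialOps}, proved via local coordinates and the anisotropic-to-isotropic embedding described above.
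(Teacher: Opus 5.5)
Your proof is correct and is essentially the paper's argument: the paper deduces this corollary from Corollary \ref{Cor::Result::PartialtPluOpLSubEllip} applied for every \(l\geq 2\kappa-1\) with a single pair \(\phi_1\prec\phi_2\). The paper leaves implicit the Sobolev-embedding step that you spell out (including the comparison of the anisotropic \(H^{s}\) norm with isotropic order \(s/(2\kappa)\)). As you noticed yourself, the nested family of cutoffs is unnecessary, since the only \(u\)-term on the right-hand side is \(\|\phi_2 u\|_{L^2}\).
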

\begin{proof}
    This follows from Corollary \ref{Cor::Result::PartialtPluOpLSubEllip} applied to every \(l\in \Zg\) with
    \(l\geq 2\kappa-1\).
\end{proof}

Because \(l\) is required to be \(\geq 2\kappa-1\) in Corollary \ref{Cor::Result::PartialtPluOpLSubEllip},
one cannot iterate that corollary to obtain results when \(\phi_2(\partial_t+\opL)^j u\)
is in \(\HsSpace{s}\) for some large \(j\), where \(s<2\kappa-\epsilon_0\).
However, Theorem \ref{Thm::Result::MainThm::New} does imply such results as the next two corollaries
demonstrate.

\begin{corollary}\label{Cor::Result::BoundSobolevByHighPowers}
    \(\forall \phi_1,\phi_2\in \CinftycptSpace[\R\times \Omega]\) with \(\phi_1\prec \phi_2\),
    \(\forall N\in \Zg\), \(\forall l'\in \{1,2,\ldots, N\}\), \(\exists C\geq 0\),
    \(\forall u\in \FSpace{N}\),
    \begin{equation*}
        \HsNorm*{\phi_1 u}{2\kappa l'}
        \leq C
        \left( 
            \HsNorm*{\phi_2\left( \partial_t+\opL \right)^N u}{(2\kappa l'-N\epsilon_0)\vee 0}
            +\sum_{k=0}^{N-1} \LtNorm*{\phi_2 \left( \partial_t+\opL \right)^k u}
         \right),
    \end{equation*}
    where if the right-hand side is finite, so is the left-hand side.
    In particular, if \(N\epsilon_0\geq 2\kappa l'\), \(\forall u\in \FSpace{N}\),
    \begin{equation}\label{Eqn::Result::BoundSobolevByHighPowers::EstimateWithJustL2}
        \HsNorm*{\phi_1 u}{2\kappa l'}\leq C
        \sum_{k=0}^{N} \LtNorm*{\phi_2 \left( \partial_t+\opL \right)^k u}.
    \end{equation}
\end{corollary}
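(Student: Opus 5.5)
We derive Corollary \ref{Cor::Result::BoundSobolevByHighPowers} directly from Theorem \ref{Thm::Result::MainThm::New}, using the parameters \(l:=2\kappa l'-1\) and \(J:=l'\). First we check that these choices are admissible. Since \(l'\geq 1\), we have \(l=2\kappa l'-1\geq 2\kappa-1\), and \(l\in\Zg\) because \(\kappa\geq 1\). Next, \(\floor{\frac{l+1}{2\kappa}}=\floor{l'}=l'\), and \(l'\leq N\) by hypothesis, so \(1\leq J=l'\leq \min\{\floor{\frac{l+1}{2\kappa}},N\}\). Hence, for the given \(\phi_1\prec\phi_2\) and \(N\), Theorem \ref{Thm::Result::MainThm::New} applies to every \(u\in\FSpace{N}\).

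With these choices \(l+1=2\kappa l'\) and \(l+1-2\kappa J=0\), so \eqref{Eqn::Result::MainThm::MainEqn} becomes
\begin{equation*}
    \HsNorm*{\phi_1 u}{2\kappa l'}\leq C\left( \LtNorm*{\phi_2(\partial_t+\opL)^{l'}u} + \HsNorm*{\phi_2(\partial_t+\opL)^N u}{(2\kappa l'-N\epsilon_0)\vee 0}+\sum_{k=0}^{N-1}\LtNorm*{\phi_2(\partial_t+\opL)^k u}\right).
\end{equation*}
Here we use that \(\HsSpace{0}=\LtSpace\) with equivalent norms on functions supported in a fixed compact set. The only remaining issue is the first term. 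If \(l'\leq N-1\), it already appears in the sum over \(k\) and can be absorbed there. If \(l'=N\), it equals \(\LtNorm*{\phi_2(\partial_t+\opL)^N u}\), which is bounded by a constant times \(\HsNorm*{\phi_2(\partial_t+\opL)^N u}{s}\) for any \(s\geq 0\), in particular \(s=(2\kappa l'-N\epsilon_0)\vee 0\). This gives the first estimate. Finiteness transfers because the theorem already says that a finite right-hand side forces a finite left-hand side, and we have only bounded right-hand-side terms above by terms of the corollary's right-hand side.

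For \eqref{Eqn::Result::BoundSobolevByHighPowers::EstimateWithJustL2}, suppose \(N\epsilon_0\geq 2\kappa l'\). Then \((2\kappa l'-N\epsilon_0)\vee 0=0\), so the \(N\)-th term is an \(L^2\) norm and joins the sum, now running over \(k=0,\ldots,N\).

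There is no real obstacle; the one point needing care is the case \(l'=N\), where the \(J\)-term is not already in the sum. It is handled by the monotonicity \(\LtNorm{\cdot}\lesssim\HsNorm{\cdot}{s}\) for \(s\geq 0\).
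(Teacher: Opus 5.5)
Your proposal is correct and is the same argument as the paper's, which simply takes \(l+1=2\kappa l'\) and \(J=l'\) in Theorem~\ref{Thm::Result::MainThm::New}. You also handle explicitly the case \(l'=N\), where the \(J\)-term is not already among the \(L^2\) terms and is absorbed because the \(L^2\) norm is dominated by the \(H^s\) norm for \(s\geq 0\); the paper leaves this detail implicit.
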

\begin{proof}
    Take \(l+1=2\kappa l'\) and \(J=l'\) in Theorem \ref{Thm::Result::MainThm::New}.
\end{proof}

\begin{corollary}\label{Cor::Result::partialtPlusOpLInfinityGivesSmooth}
    \(\forall u\in \FSpace{\infty}:=\bigcap_{l\in \Zg} \FSpace{l}\), 
    \(u\big|_{\R\times\Omega}\in \CinftySpace[\R\times \Omega][\C^M]\).
    Furthermore
    \(\forall \phi_1,\phi_2\in \CinftycptSpace[\R\times \Omega]\) with \(\phi_1\prec \phi_2\),
    \(\forall l\in \Zg\), \(\exists N\in \Zg\), \(\exists C\geq 0\),
    \(\forall u\in \FSpace{\infty}\),
    \begin{equation}\label{Eqn::Result::partialtPlusOpLInfinityGivesSmooth::Estimate}
        \CjNorm{\phi_1 u}{l}
        \leq C \sum_{k=0}^N \LtNorm*{\phi_2 \left( \partial_t +\opL \right)^k u}.
    \end{equation} 
\end{corollary}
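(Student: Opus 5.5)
The plan is to derive Corollary \ref{Cor::Result::partialtPlusOpLInfinityGivesSmooth} from Corollary \ref{Cor::Result::BoundSobolevByHighPowers} together with a Sobolev embedding for the non-isotropic spaces \(\HsSpace{s}\).

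Fix \(\phi_1\prec\phi_2\) in \(\CinftycptSpace[\R\times\Omega]\) and \(l\in\Zg\). The main tool is the Sobolev embedding: there is \(s_l\) such that \(\CjNorm{v}{l}\lesssim \HsNorm{v}{s_l}\) for all \(v\) supported in the fixed compact set \(\supp(\phi_1)\). This holds because \(\partial_t\) is given weight \(2\kappa\), so \(\HsSpace{s}\subseteq \HsSpace{s/(2\kappa)}_{\text{iso}}\) locally, and the classical embedding \(\HsSpace{\sigma}_{\text{iso}}\hookrightarrow C^l\) holds for \(\sigma>l+(n+1)/2\). So it suffices to take \(s_l=2\kappa l'\) with \(l'\in\Zg\) chosen so that \(2\kappa l'/(2\kappa)=l'>l+(n+1)/2\). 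At the boundary I would use the half-space version of this embedding, via an extension operator in local coordinates, or via Remark \ref{Rmk::PDOs::DefineClassicalHsSpaceOnMfld}.

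Next, choose \(N\in\Zg\) with \(N\geq l'\) and \(N\epsilon_0\geq 2\kappa l'\). Since \(u\in\FSpace{\infty}\subseteq\FSpace{N}\), estimate \eqref{Eqn::Result::BoundSobolevByHighPowers::EstimateWithJustL2} gives
\begin{equation*}
    \CjNorm{\phi_1 u}{l}\lesssim \HsNorm{\phi_1 u}{2\kappa l'}\leq C\sum_{k=0}^N \LtNorm*{\phi_2(\partial_t+\opL)^k u}.
\end{equation*}
Each \((\partial_t+\opL)^k u\) lies in \(\FSpace{1}\subseteq \LtSpace\), so the right-hand side is finite. By the finiteness clause of Corollary \ref{Cor::Result::BoundSobolevByHighPowers}, \(\phi_1 u\in \HsSpace{2\kappa l'}\). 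This proves \eqref{Eqn::Result::partialtPlusOpLInfinityGivesSmooth::Estimate}.

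For smoothness, take any point of \(\R\times\Omega\) and cutoffs \(\phi_1\prec\phi_2\) with \(\phi_1=1\) near that point. The argument above gives \(\phi_1 u\in C^l\) for every \(l\), with \(\phi_1,\phi_2\) independent of \(l\) and only \(N\) depending on \(l\). Hence \(u\) is \(C^\infty\) near each point, including points of \(\R\times(\Omega\cap\BoundaryN)\), where smoothness means up to the boundary. Therefore \(u\big|_{\R\times\Omega}\in\CinftySpace[\R\times\Omega][\C^M]\).

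The only nontrivial ingredient is the non-isotropic Sobolev embedding up to the boundary. I expect it to follow directly from how the spaces are defined in the section on pseudodifferential operators, namely by localizing and comparing with isotropic Sobolev spaces on a half-space. I would cite or verify it there.
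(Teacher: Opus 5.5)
Your proposal is correct and matches the paper's proof. The paper likewise combines the Sobolev embedding \(\CjNorm{\phi_1 u}{l}\lesssim \HsNorm{\phi_1 u}{2\kappa l'}\) with the \(\LtSpace\)-only estimate \eqref{Eqn::Result::BoundSobolevByHighPowers::EstimateWithJustL2} of Corollary \ref{Cor::Result::BoundSobolevByHighPowers} (taking \(N\epsilon_0\geq 2\kappa l'\)), and then lets \(l\) vary with the cutoffs fixed to get smoothness. The boundary embedding you flag needs no extension operator: \(\HsSpace{s}[\Ropngeq]\) is defined by restriction from \(\HsSpace{s}[\Ropn]\) with the quotient norm, so \(C^l\) regularity up to the boundary follows at once.
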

\begin{proof}
    Given \(l\in \Zg\), the Sobolev embedding theorem shows \(\exists l'\in \Zg\)
    with
    \begin{equation}\label{Eqn::Result::partialtPlusOpLInfinityGivesSmooth::Tmp1}
        \CjNorm{\phi_1 u}{l} \lesssim \HsNorm{\phi_1 u}{2\kappa l'},
    \end{equation}
    where if the right-hand side is finite, then \(\phi_1 u \in \CjSpace{l}\).
    From here, \eqref{Eqn::Result::partialtPlusOpLInfinityGivesSmooth::Estimate}
    follows from \eqref{Eqn::Result::BoundSobolevByHighPowers::EstimateWithJustL2}.
    That \(\phi_1 u\in \CinftySpace[\R\times \Omega][\C^M]\)
    follows from \eqref{Eqn::Result::partialtPlusOpLInfinityGivesSmooth::Tmp1}
    applied to all \(l\in \Zg\).
    Since \(\phi_1\in \CinftycptSpace[\R\times \Omega]\)
    was arbitrary,
    we have \(u\big|_{\R\times\Omega}\in \CinftySpace[\R\times \Omega][\C^M]\),
    completing the proof.
\end{proof}

The above corollaries apply to the operator \(\partial_t+\opL\). We obtain similar results
for \(\opL\), as the next corollaries show.
Our first result shows an analog of Theorem \ref{Thm::Result::MainThm::New}
for \(\opL\).  For these, we use the spaces \(\HscptSpace{s}[\ManifoldN;\C^M]\), which are the usual
(isotropic) Sobolev spaces on \(\ManifoldN\).  As before, the 
equivalence class of the norm \(\HsNorm{f}{s}\) is well-defined for \(f\) supported in a fixed compact set;
and if the norm is finite, the function belongs to \(\HscptSpace{s}[\ManifoldN;\C^M]\).

\begin{corollary}\label{Cor::Result::MainCorWithoutDt}
    For all \(N\in \Zg\), \(\forall \phi_1,\phi_2\in \CinftycptSpace[\Omega]\) with \(\phi_1\prec \phi_2\),
    \(\forall l\in \Zg\) with \(l\geq 2\kappa-1\), \(\forall J\in \Zg\) with \(1\leq J\leq \min\left\{ \floor{\frac{l+1}{2\kappa}}, N \right\}\),
    \(\exists C\geq 0\), \(\forall f\in \DomainLN{N}\),
    \begin{equation}\label{Eqn::Result::MainCorWithoutDt::MainEqn}
        \HsNorm*{\phi_1 f}{l+1} \leq C
        \left( 
            \HsNorm*{\phi_2 \opL^J f}{l+1-2\kappa J} + \HsNorm*{\phi_2 \opL^N f}{(l+1-N\epsilon_0)\vee 0}
            +\sum_{k=0}^{N-1} \LtNorm*{\phi_2 \opL^k f}
         \right),
    \end{equation}
    where if the right-hand side is finite, so is the left-hand side.
\end{corollary}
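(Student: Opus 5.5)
The plan is to deduce Corollary \ref{Cor::Result::MainCorWithoutDt} from Theorem \ref{Thm::Result::MainThm::New} by applying the theorem to a time-independent function cut off in time. Fix \(f\in \DomainLN{N}\) and \(\phi_1\prec\phi_2\) in \(\CinftycptSpace[\Omega]\). Choose \(\chi_1\prec\chi_2\) in \(\CinftycptSpace[\R]\), and also \(\chi_3\in\CinftycptSpace[\R]\) with \(\chi_2\prec\chi_3\). Set \(u(t,x):=\chi_3(t)f(x)\), and take \(\Phi_1:=\chi_1\otimes\phi_1\prec\Phi_2:=\chi_2\otimes\phi_2\), both in \(\CinftycptSpace[\R\times\Omega]\).

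First I check that \(u\in\FSpace{N}\). Since \(\opL\) acts only in \(x\),
\begin{equation*}
(\partial_t+\opL)u=\chi_3'(t)f+\chi_3(t)\opL f.
\end{equation*}
Inductively, \((\partial_t+\opL)^k u=\sum_{i=0}^k\binom{k}{i}\chi_3^{(k-i)}(t)\opL^i f\). For \(k\le N-1\), every term lies in \(\LtSpaceNoMeasure[\R][\DomainL]\) because \(\opL^i f\in\DomainL\) for \(i\le N-1\). This gives \(u\in\FSpace{N}\) by induction on the recursive definition of \(\FSpace{l}\). The derivatives \(\partial_t\) are computed in \(\DistributionsZeroCM\) and agree with the classical ones, since \(\chi_3\) is smooth.

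Next I apply \eqref{Eqn::Result::MainThm::MainEqn} with \(\Phi_1,\Phi_2\). On \(\supp\chi_2\) we have \(\chi_3\equiv1\), so all derivatives of \(\chi_3\) vanish there. Hence
\begin{equation*}
\Phi_2(\partial_t+\opL)^k u=\chi_2(t)\,\phi_2\opL^k f .
\end{equation*}
This is the key simplification, and it is why \(\chi_3\) is taken strictly larger than \(\chi_2\).

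It remains to compare norms. I need to show that \(\HsNorm{\chi(t)g(x)}{s}[\R\times\ManifoldN]\approx\HsNorm{g}{s}[\ManifoldN]\) for a fixed \(\chi\in\CinftycptSpace[\R]\), \(\chi\not\equiv0\), and \(g\) supported in a fixed compact subset of \(\Omega\). In a local chart the parabolic \(H^s\) norm has weight \((1+|\tau|^{1/\kappa}+|\xi|^2)^{s/2}\) and the function has Fourier transform \(\hat\chi(\tau)\hat g(\xi)\).

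For the upper bound I use
\begin{equation*}
\JBracket{(\tau,\xi)}^{s}\lesssim\JBracket{\tau}^{s/2\kappa}\JBracket{\xi}^{s},
\end{equation*}
valid for \(s\ge0\) (all orders here are \(\ge0\), namely \(l+1-2\kappa J\ge0\) and \((\cdot)\vee0\)). Together with the rapid decay of \(\hat\chi\), this gives \(\lesssim\HsNorm{g}{s}\).

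For the lower bound I use \(\JBracket{(\tau,\xi)}\ge\JBracket{\xi}\), which gives \(\HsNorm{\chi g}{s}\ge\LtNorm{\chi}\HsNorm{g}{s}\). Localizing with a partition of unity makes both bounds hold on the manifold with boundary, with the norms defined as in Remark \ref{Rmk::PDOs::DefineClassicalHsSpaceOnMfld}. Here I must trust that the extension and restriction conventions used there respect tensor products, and that is the only real technical point I expect.

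Applying these comparisons to both sides of \eqref{Eqn::Result::MainThm::MainEqn} then yields \eqref{Eqn::Result::MainCorWithoutDt::MainEqn}, with the \(L^2\) terms handled the same way. The finiteness statement also transfers. If the right-hand side of \eqref{Eqn::Result::MainCorWithoutDt::MainEqn} is finite, then so is the right-hand side of the theorem, so \(\chi_1\phi_1 f\in H^{l+1}\). The lower bound, which only needs finiteness of the left-hand side, then gives \(\phi_1 f\in\HsSpace{l+1}\).
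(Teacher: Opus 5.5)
Your proposal is correct and is essentially the paper's argument. The paper also takes \(\eta_1\prec\eta_2\prec\eta_3\) in \(C^\infty_{\mathrm{cpt}}(\mathbb{R})\), with \(\eta_1=1\) near \(0\); this guarantees the inner cutoff is nonzero, which your lower bound needs. It then sets \(u=\eta_3(t)f\in\mathcal{F}_N\), applies Theorem~\ref{Thm::Result::MainThm::New} with the cutoffs \(\eta_j(t)\phi_j(x)\), and simply asserts that this yields \eqref{Eqn::Result::MainCorWithoutDt::MainEqn}. Your checks fill in exactly the details it leaves implicit: that \(u\in\mathcal{F}_N\), that the derivatives \(\eta_3^{(j)}\) with \(j\geq 1\) vanish on \(\supp\eta_2\), and the tensor-product comparisons between the parabolic and isotropic \(H^s\) norms.

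The one point you took on trust, the lower bound near the boundary, is harmless. Let \(F\) be any whole-space extension of \(\chi_1\phi_1 f\), and pick \(\psi\) with \(\int\psi\chi_1\,dt=1\). Then \(\int\psi(t)F(t,\cdot)\,dt\) extends \(\phi_1 f\). For \(s\geq 0\), Cauchy--Schwarz in \(\tau\) bounds its isotropic \(H^s\) norm by a constant times the parabolic \(H^s\) norm of \(F\).
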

\begin{proof}
    Let \(\eta_1,\eta_2,\eta_3\in \CinftycptSpace[\R]\) with \(\eta_1\prec \eta_2\prec \eta_3\) and \(\eta_1=1\) on a neighborhood of \(0\).
    For \(f\in \DomainLN{N}\), set \(u(t,x):=\eta_3(t) f(x)\in \FSpace{N}\). 
    Let \(\phih_1(t,x):=\eta_1(t)\phi_1(x)\) and \(\phih_2(t,x):=\eta_2(t)\phi_2(x)\), so that \(\phih_1\prec \phih_2\).
    Theorem \ref{Thm::Result::MainThm::New}
    shows \eqref{Eqn::Result::MainThm::MainEqn} holds for this choice of \(u\), and with \(\phi_1\) and \(\phi_2\)
    replaced by \(\phih_1\) and \(\phih_2\), respectively.
    By the above choices, this implies \eqref{Eqn::Result::MainCorWithoutDt::MainEqn} and completes the proof.
\end{proof}

Similar to the above corollaries about \(\partial_t+\opL\),
we can use Corollary \ref{Cor::Result::MainCorWithoutDt} to deduce results about \(\opL\).
We record these here.  This first shows \(\opL\) is subelliptic on \(\Omega\).

\begin{corollary}\label{Cor::Result::SubellipWithoutDt}
    \(\forall \phi_1,\phi_2\in \CinftycptSpace[\Omega]\) with \(\phi_1\prec \phi_2\),
    \(\forall l\in \Zg\) with \(l\geq 2\kappa-1\), \(\exists C\geq 0\), \(\forall f\in \DomainL\),
    \begin{equation*}
        \HsNorm*{\phi_1 f}{l+1}
        \leq C
        \left( 
            \HsNorm*{\phi_2 \opL f}{l+1-\epsilon_0}
            +\LtNorm*{\phi_2 f}
         \right),
    \end{equation*}
    where if the right-hand side is finite, so is the left-hand side.
\end{corollary}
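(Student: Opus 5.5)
This is the specialization of Corollary \ref{Cor::Result::MainCorWithoutDt} to \(N=J=1\), mirroring how Corollary \ref{Cor::Result::PartialtPluOpLSubEllip} was obtained from Theorem \ref{Thm::Result::MainThm::New}. Fix \(\phi_1\prec\phi_2\) in \(\CinftycptSpace[\Omega]\) and \(l\geq 2\kappa-1\), and take \(N=J=1\). The admissibility condition \(1\leq J\leq \min\{\floor{\frac{l+1}{2\kappa}},N\}\) holds because \(l+1\geq 2\kappa\) gives \(\floor{\frac{l+1}{2\kappa}}\geq 1\). Also \(\DomainLN{1}=\DomainL\), so the corollary applies to every \(f\in\DomainL\).

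With these choices, \eqref{Eqn::Result::MainCorWithoutDt::MainEqn} reads
\begin{equation*}
    \HsNorm*{\phi_1 f}{l+1}\leq C\left( \HsNorm*{\phi_2\opL f}{l+1-2\kappa}+\HsNorm*{\phi_2 \opL f}{(l+1-\epsilon_0)\vee 0}+\LtNorm*{\phi_2 f}\right).
\end{equation*}
Since \(\epsilon_0\leq 1/2\) and \(l+1\geq 2\), we have \((l+1-\epsilon_0)\vee 0=l+1-\epsilon_0\). Since \(\epsilon_0\leq 1/2<2\kappa\), we have \(l+1-2\kappa< l+1-\epsilon_0\).

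The Sobolev norms are monotone in the index for functions supported in the fixed compact set \(\supp(\phi_2)\). Hence the first term is bounded by a constant times the second, which gives the stated inequality. This monotonicity is also why finiteness of the right-hand side of Corollary \ref{Cor::Result::SubellipWithoutDt} implies finiteness of the right-hand side of \eqref{Eqn::Result::MainCorWithoutDt::MainEqn}. Corollary \ref{Cor::Result::MainCorWithoutDt} then gives that \(\HsNorm{\phi_1 f}{l+1}\) is finite.

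None of these steps is a real obstacle. The only thing to check is the elementary inequality \(l+1-2\kappa\leq l+1-\epsilon_0\) together with the admissibility of \(J=1\).
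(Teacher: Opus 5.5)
Your proposal is correct and matches the paper's proof, which is simply to take \(N=J=1\) in Corollary \ref{Cor::Result::MainCorWithoutDt}. You also spell out the monotonicity step. It absorbs \(\HsNorm*{\phi_2\opL f}{l+1-2\kappa}\) into \(\HsNorm*{\phi_2\opL f}{l+1-\epsilon_0}\), and the paper leaves this implicit here (it writes it out in Corollary \ref{Cor::EstBdry::MainThm::CorNEquals1}).
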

\begin{proof}
    Take \(N=J=1\) in Corollary \ref{Cor::Result::MainCorWithoutDt}
\end{proof}

The next corollary shows \(\opL\) is hypoelliptic on \(\Omega\).
\begin{corollary}\label{Cor::Result::HypoellipWithoutDt}
    If \(f\in \DomainL\) is such that \(\opL f\) is smooth near some \(x_0\in \Omega\), then \(f\)
    is smooth near \(x_0\).
\end{corollary}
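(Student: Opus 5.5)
The plan is to deduce this from Corollary \ref{Cor::Result::SubellipWithoutDt}, in the same way Corollary \ref{Cor::Result::PartialtPluOpLHypoEllip} was deduced from Corollary \ref{Cor::Result::PartialtPluOpLSubEllip}.

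Let \(f\in \DomainL\) and suppose \(\opL f\) is smooth on a neighborhood \(U\subseteq \Omega\) of \(x_0\). Here smoothness is understood for \(\opL f\big|_U\), which makes sense as an element of \(\DistributionsZeroCM[\Omega]\). Shrinking \(U\) if necessary, we may assume \(U\Subset\Omega\); when \(x_0\in \BoundaryN\), smoothness on \(U\) means smoothness up to \(U\cap\BoundaryN\). Choose \(\phi_1,\phi_2\in\CinftycptSpace[\Omega]\) with \(\phi_1\prec\phi_2\), \(\supp(\phi_2)\subseteq U\), and \(\phi_1=1\) on a neighborhood of \(x_0\).

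Since \(\opL f\) is smooth on \(U\) and \(\supp(\phi_2)\) is a compact subset of \(U\), the function \(\phi_2\opL f\) lies in \(\CinftycptSpace[\ManifoldN;\C^M]\). Hence \(\HsNorm{\phi_2\opL f}{l+1-\epsilon_0}<\infty\) for every \(l\). Moreover \(\LtNorm{\phi_2 f}<\infty\), because \(f\in\DomainL\subseteq\LtSpace\).

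Now apply Corollary \ref{Cor::Result::SubellipWithoutDt} for every \(l\in\Zg\) with \(l\geq 2\kappa-1\). The right-hand side is finite, so by the finiteness clause of that corollary \(\phi_1 f\in\HscptSpace{l+1}[\ManifoldN;\C^M]\) for all such \(l\). By the Sobolev embedding theorem on the manifold with boundary, applied in a coordinate chart in the half-space case, \(\phi_1 f\in C^k\) for every \(k\). Therefore \(\phi_1 f\) is smooth, and so \(f\) is smooth on the neighborhood of \(x_0\) where \(\phi_1=1\).

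There is no real obstacle. The only point needing care is that the conclusion "if the right-hand side is finite, so is the left-hand side" is part of Corollary \ref{Cor::Result::SubellipWithoutDt}, so the a priori estimate yields qualitative regularity without any separate approximation argument.
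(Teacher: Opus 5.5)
Your proposal is correct and takes essentially the same approach as the paper, which also proves this by applying Corollary \ref{Cor::Result::SubellipWithoutDt} for every \(l\in\Zg\) with \(l\geq 2\kappa-1\). The only difference is that you write out details the paper leaves implicit: the choice of cutoffs supported where \(\opL f\) is smooth, the use of the finiteness clause, and the concluding Sobolev embedding.
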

\begin{proof}
    This follows from Corollary \ref{Cor::Result::SubellipWithoutDt} applied to every \(l\in \Zg\) with \(l\geq 2\kappa-1\).
\end{proof}

\begin{corollary}\label{Cor::Result::BoundSobolevByDerivsWithoutDt}
    \(\forall \phi_1,\phi_2\in \CinftycptSpace[\Omega]\) with \(\phi_1\prec \phi_2\), \(\forall N\in \Zg\),
    \(\forall l'\in \left\{ 1,2,\ldots, N \right\}\), \(\exists C\geq 0\), \(\forall f\in \DomainLN{N}\),
    \begin{equation*}
        \HsNorm*{\phi_1 f}{2\kappa l'} \leq C
        \left( 
            \HsNorm*{\phi_2 \opL^N f}{(2\kappa l'-N\epsilon_0)\vee 0}
            +\sum_{k=0}^{N-1} \LtNorm*{\phi_2 \opL^k f}
         \right),
    \end{equation*}
    where if the right-hand side is finite, so is the left-hand side. In particular, if \(N\epsilon_0\geq 2\kappa l'\),
    \(\forall f\in \DomainLN{N}\),
    \begin{equation*}
        \HsNorm*{\phi_1 f}{2\kappa l'} \leq C \sum_{k=0}^N \LtNorm*{\phi_2 \opL^k f}.
    \end{equation*}
\end{corollary}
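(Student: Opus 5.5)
This will be deduced from Corollary \ref{Cor::Result::MainCorWithoutDt}, in exactly the way Corollary \ref{Cor::Result::BoundSobolevByHighPowers} was deduced from Theorem \ref{Thm::Result::MainThm::New}. Fix \(\phi_1\prec\phi_2\) in \(\CinftycptSpace[\Omega]\), \(N\in\Zg\), and \(l'\in\{1,\ldots,N\}\). Take \(l+1=2\kappa l'\) and \(J=l'\) in Corollary \ref{Cor::Result::MainCorWithoutDt}. Before doing so, check that these choices are admissible.

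\begin{itemize}
    \item Since \(l'\geq 1\), we have \(l=2\kappa l'-1\geq 2\kappa-1\), and \(l\in\Zg\) because \(2\kappa l'\geq 2\).
    \item \(\floor{\frac{l+1}{2\kappa}}=l'\), and \(l'\leq N\), so \(1\leq J=l'\leq \min\{l',N\}\).
\end{itemize}

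With these choices the first term on the right of \eqref{Eqn::Result::MainCorWithoutDt::MainEqn} becomes \(\HsNorm{\phi_2\opL^{l'}f}{0}=\LtNorm{\phi_2\opL^{l'}f}\). There are now two cases.

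\begin{itemize}
    \item If \(l'\leq N-1\), this term already appears in the sum \(\sum_{k=0}^{N-1}\LtNorm{\phi_2\opL^k f}\) and can be absorbed there.
    \item If \(l'=N\), it equals \(\LtNorm{\phi_2\opL^N f}\). This is dominated by \(\HsNorm{\phi_2\opL^N f}{(2\kappa l'-N\epsilon_0)\vee 0}\), because the Sobolev order \((2\kappa l'-N\epsilon_0)\vee 0\) is nonnegative and the \(\HsSpace{0}\) norm is the \(\LtSpace\) norm.
\end{itemize}

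In either case the right-hand side of \eqref{Eqn::Result::MainCorWithoutDt::MainEqn} is bounded by a constant times the stated right-hand side. The finiteness assertion carries over directly from Corollary \ref{Cor::Result::MainCorWithoutDt}: if our right-hand side is finite, so is the one there, and hence so is \(\HsNorm{\phi_1 f}{2\kappa l'}\).

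For the "in particular" statement, suppose \(N\epsilon_0\geq 2\kappa l'\). Then \((2\kappa l'-N\epsilon_0)\vee 0=0\), so the middle term is \(\LtNorm{\phi_2\opL^N f}\). Combined with the sum over \(k\leq N-1\), this gives \(\sum_{k=0}^N\LtNorm{\phi_2\opL^k f}\).

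There is no real obstacle here. The only points needing care are the index bookkeeping (\(l\geq 2\kappa-1\) and \(J\leq\min\)) and the small case split \(l'=N\) versus \(l'<N\) when absorbing the \(J\) term.
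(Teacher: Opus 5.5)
Your proposal is correct and is exactly the paper's argument: the paper proves this corollary by taking \(l+1=2\kappa l'\) and \(J=l'\) in Corollary \ref{Cor::Result::MainCorWithoutDt}. Your extra bookkeeping is also correct and fills in what the paper leaves implicit. That bookkeeping consists of checking that \(l\) and \(J\) are admissible, then handling the \(J\)-term \(\|\phi_2\opL^{l'}f\|_{L^2}\): it is absorbed into the sum when \(l'<N\), and into the middle term when \(l'=N\).
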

\begin{proof}
    Take \(l+1=2\kappa l'\) and \(J=l'\) in Corollary \ref{Cor::Result::MainCorWithoutDt}.
\end{proof}

\begin{corollary}\label{Cor::Result::OpLInfinityGivesSmoothWithoutPartialT}
    \(\forall f\in \DomainLN{\infty}\), \(f\big|_{\Omega}\in \CinftySpace[\Omega][\C^M]\).
    Furthermore, \(\forall \phi_1,\phi_2\in \CinftycptSpace[\Omega]\) with \(\phi_1\prec \phi_2\),
    \(\forall l\in \Zg\), \(\exists N\in \Zg\), \(\exists C\geq 0\), \(\forall f\in \DomainLN{\infty}\),
    \begin{equation*}
        \CjNorm*{\phi_1 f}{l}\leq C \sum_{k=0}^N \LtNorm*{\phi_2 \opL^k f}.
    \end{equation*}
\end{corollary}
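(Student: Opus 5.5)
The plan is to mirror the proof of Corollary \ref{Cor::Result::partialtPlusOpLInfinityGivesSmooth}, using Corollary \ref{Cor::Result::BoundSobolevByDerivsWithoutDt} in place of Corollary \ref{Cor::Result::BoundSobolevByHighPowers}.

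\emph{Step 1: Sobolev embedding.} Fix \(l\in \Zg\). The classical Sobolev embedding on the manifold with boundary \(\ManifoldN\), applied to functions supported in the fixed compact set \(\supp(\phi_1)\), gives some \(l'\in \Zg\) with \(2\kappa l'>l+n/2\). For this \(l'\) we have
\begin{equation*}
    \CjNorm{\phi_1 f}{l}\lesssim \HsNorm{\phi_1 f}{2\kappa l'},
\end{equation*}
and if the right-hand side is finite then \(\phi_1 f\in \CjSpace{l}\). Here \(\HsSpace{s}\) is the usual isotropic Sobolev space on \(\ManifoldN\), so this is the standard embedding. The interior case is immediate, and the boundary case follows from a bounded extension operator across the boundary in local coordinates.

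\emph{Step 2: the estimate.} Choose \(N\in \Zg\) with \(N\geq l'\) and \(N\epsilon_0\geq 2\kappa l'\); for instance \(N:=\lceil 2\kappa l'/\epsilon_0\rceil\), which is at least \(l'\) because \(\epsilon_0\leq 1/2\). Then \(l'\in\{1,\ldots,N\}\). The second part of Corollary \ref{Cor::Result::BoundSobolevByDerivsWithoutDt} applies to every \(f\in \DomainLN{\infty}\subseteq \DomainLN{N}\) and gives
\begin{equation*}
    \HsNorm{\phi_1 f}{2\kappa l'}\leq C\sum_{k=0}^N \LtNorm{\phi_2 \opL^k f}.
\end{equation*}
The right-hand side is finite, since each \(\opL^k f\in \LtSpace\) and \(\phi_2\) is bounded. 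Combining this with Step 1 yields the claimed \(C^l\) bound, and also shows \(\phi_1 f\in \CjSpace{l}\).

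\emph{Step 3: smoothness.} Since \(l\) was arbitrary, \(\phi_1 f\in \CinftySpace\) for every \(\phi_1\in\CinftycptSpace[\Omega]\). To conclude \(f\big|_\Omega\) is smooth, take any point of \(\Omega\) and a \(\phi_1\) equal to \(1\) near it; a matching \(\phi_2\) with \(\phi_1\prec\phi_2\) always exists. Hence \(f\big|_{\Omega}\in \CinftySpace[\Omega][\C^M]\).

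There is no real obstacle; everything reduces to earlier results. The only point needing care is that the Sobolev embedding is valid up to the boundary, which is standard for compactly supported functions on a manifold with boundary.
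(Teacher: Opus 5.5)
Your proposal is correct and matches the paper's proof. The paper also deduces this corollary from Corollary \ref{Cor::Result::BoundSobolevByDerivsWithoutDt} via Sobolev embedding, in the same way Corollary \ref{Cor::Result::partialtPlusOpLInfinityGivesSmooth} is deduced from Corollary \ref{Cor::Result::BoundSobolevByHighPowers}. Your explicit choice \(N=\lceil 2\kappa l'/\epsilon_0\rceil\), which guarantees both \(N\epsilon_0\geq 2\kappa l'\) and \(l'\leq N\), spells out a step the paper leaves implicit.
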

\begin{proof}
    This follows from Corollary \ref{Cor::Result::BoundSobolevByDerivsWithoutDt} in the same way
    Corollary \ref{Cor::Result::partialtPlusOpLInfinityGivesSmooth} follows from Corollary \ref{Cor::Result::BoundSobolevByHighPowers}.
\end{proof}

\begin{corollary}\label{Cor::Result::EigenvectorsSmooth}
    Let \(f\in \DomainL\) satisfy \(\opL f=\lambda f\), where \(\lambda\in \C\). Then,
    \(f\big|_{\Omega}\in \CinftySpace[\Omega]\). Furthermore, \(\forall \phi\in \CinftycptSpace[\Omega]\),
    \(\forall l\in \Zg\), \(\exists N\in \Zg\), \(\exists C\geq 0\), \(\forall \lambda \in \C\), \(\forall f\in \DomainL\) with \(\opL f=\lambda f\),
    \begin{equation*}
        \CjNorm{\phi f}{l}\leq C (1+|\lambda|)^N \LtNorm{f}.
    \end{equation*}
\end{corollary}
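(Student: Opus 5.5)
If \(\opL f=\lambda f\) with \(f\in \DomainL\), then \(\opL f=\lambda f\in \DomainL\), and by induction \(f\in \DomainLN{N}\) for every \(N\). Hence \(f\in \DomainLN{\infty}\), and Corollary \ref{Cor::Result::OpLInfinityGivesSmoothWithoutPartialT} gives \(f\big|_{\Omega}\in \CinftySpace[\Omega]\). This is the qualitative part.

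For the estimate, fix \(\phi\in \CinftycptSpace[\Omega]\) and \(l\in \Zg\). Choose \(\phi_2\in \CinftycptSpace[\Omega]\) with \(\phi\prec \phi_2\). Apply Corollary \ref{Cor::Result::OpLInfinityGivesSmoothWithoutPartialT} with \(\phi_1=\phi\) and this \(\phi_2\); it supplies \(N\) and \(C\), independent of \(f\) and \(\lambda\), such that
\begin{equation*}
    \CjNorm{\phi f}{l}\leq C\sum_{k=0}^N \LtNorm*{\phi_2 \opL^k f}.
\end{equation*}
Since \(\opL^k f=\lambda^k f\), each term satisfies \(\LtNorm{\phi_2\opL^k f}\leq \|\phi_2\|_{L^\infty}|\lambda|^k\LtNorm{f}\). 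Summing over \(k\le N\) gives \(\sum_{k=0}^N|\lambda|^k\leq (N+1)(1+|\lambda|)^N\). Therefore
\begin{equation*}
    \CjNorm{\phi f}{l}\leq C(N+1)\|\phi_2\|_{L^\infty}(1+|\lambda|)^N\LtNorm{f},
\end{equation*}
which is the claimed bound after renaming the constant.

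The only point requiring care is uniformity in \(\lambda\). This holds because the constants \(N\) and \(C\) in Corollary \ref{Cor::Result::OpLInfinityGivesSmoothWithoutPartialT} depend only on \(\phi_1\), \(\phi_2\) and \(l\), and not on \(f\). The argument has no substantive obstacle; everything reduces to the earlier corollary.
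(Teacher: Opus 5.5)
Your proof is correct and takes the same route as the paper. You observe \(f\in \DomainLN{\infty}\), apply Corollary \ref{Cor::Result::OpLInfinityGivesSmoothWithoutPartialT} with \(\phi_1=\phi\prec\phi_2\), and use \(\opL^k f=\lambda^k f\) to bound the right-hand side by \((1+|\lambda|)^N\LtNorm{f}\); uniformity in \(\lambda\) holds because \(N\) and \(C\) in that corollary do not depend on \(f\).
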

\begin{proof}
    For such \(f\), we have \(f\in \DomainLN{\infty}\), and therefore \(f\big|_{\Omega}\in \CinftySpace[\Omega]\)
    by Corollary \ref{Cor::Result::OpLInfinityGivesSmoothWithoutPartialT}.
    Letting \(\phi_2\in \CinftycptSpace[\Omega]\) with \(\phi\prec \phi_2\), 
    by Corollary \ref{Cor::Result::OpLInfinityGivesSmoothWithoutPartialT}, we have
    \(\exists N\in \Zg\),
    \(\forall f\in \DomainL\)
    with \(\opL f=\lambda f\), 
    \begin{equation*}
        \CjNorm{\phi f}{l}\lesssim \sum_{k=0}^N |\lambda|^k \LtNorm*{\phi_2 f} \lesssim (1+|\lambda|)^N \LtNorm{f},
    \end{equation*}
    as desired.
\end{proof}

\section{Checking on a core}\label{Section::MainCor}
Theorem \ref{Thm::Result::MainThm::New} involves assumptions on all of \(\DomainQ\).
Often \(\QDomainQ\) will be initially defined on a core and/or the relevant properties
will be easier to check on a core,
where the core is usually given by some subspace of smooth functions.  In this section, we present
a corollary of Theorem \ref{Thm::Result::MainThm::New} (Corollary \ref{Cor::Core::MainCor}), where the relevant assumptions
are given on a core.

As in Section \ref{Section::MainResult}, let \(\ManifoldN\)
be a smooth manifold with boundary of dimension \(n\geq 2\), \(\Vol\) a smooth, strictly positive density on \(\ManifoldN\),
and fix \(M\in \Zg\).  Let \(\QDomainQ\) be a closed, densely defined, sectorial,
sesqui-linear form on \(\LtSpace*[\ManifoldN][\Vol][\C^M]\).
Let \(\LDomainL\) be the unique, unbounded, densely defined, m-sectorial operator on \(\LtSpace*[\ManifoldN][\Vol][\C^M]\)
associated to \(\QDomainQ\) (see Section \ref{Section::MainResult} for details).

Fix \(\Omega\Subset\Omegat\subseteq \ManifoldN\) open sets, and let \(W_1,\ldots, W_r\) be smooth vector fields
on \(\Omegat\) satisfying H\"ormander's condition of order \(m\in \Zg\).
Fix \(a_{\alpha,\beta}\in \CinftySpace[\Omegat][\MatrixSpace[M][M][\C]]\), for \(|\alpha|,|\beta|\leq \kappa\),
and define \(\FormQF\) as in \eqref{Eqn::Result::FormQFFormula}.

Let \(\CoreB\subseteq \DomainQ\) be a core for  \(\QDomainQ\) (see \cite[Chapter 6, Section 1.4]{KatoPerturbationTheory}).

\begin{assumption}\label{Assumption::Core::ExistsPsi}
    There exists \(\psi\in \CinftycptSpace[\Omegat][\R]\) with \(\psi=1\) on a neighborhood of \(\overline{\Omega}\)
    such that \(\Mult{\psi}:f\mapsto \psi f\) maps \(\CoreB\rightarrow \CoreB\) and satisfies
    \begin{equation*}
        \Real \FormQ[\psi f][\psi f] \leq C_1 \left( \left| \FormQ[f][f] \right| + \LtNorm{f}^2 \right),\quad \forall f\in \CoreB.
    \end{equation*}
\end{assumption}

Throughout the rest of the section, we let \(\psi\) be as in
Assumption \ref{Assumption::Core::ExistsPsi}.

\begin{assumption}\label{Assumption::Core::TestFunctionsInCore}
    \(\TestFunctionsZeroCM[\Omega]\subseteq \CoreB\).
\end{assumption}

\begin{assumption}\label{Assumption::Core::LocalizationOfCore}
    \(\forall f\in \CoreB\), \(\psi f\in \HsWSpace*{\kappa}[\Omegat;\C^M]\).
\end{assumption}

\begin{assumption}\label{Assumption::Core::QEqualsQF}
    \(\forall f,g\in \CoreB\), \(\FormQ[\psi f][g]=\FormQF[\psi f][g]\) and \(\FormQ[f][\psi g]=\FormQF[f][\psi g]\).
\end{assumption}

\begin{assumption}\label{Assumption::Core::MaxSubAssumption}
    \(\exists C_2\geq 0\),
    \(\forall f\in \CoreB\),
    \begin{equation*}
        \HsWNorm{\psi f}{\kappa}^2
        \leq
        C_2 \left( 
            \left| \FormQ[\psi f][\psi f] \right| + \LtNorm{\psi f}^2.
         \right)
    \end{equation*}
\end{assumption}

For functions \(K(x,y')\in \CinftycptSpace[\CubengeqOneHalf\times \CubenmoOneHalf]\)
such that \(\exists \delta>0\) with \(\partial_{x_n}K((x',x_n),y')=0\) for \(0\leq x_n<\delta\), we consider
operators of the form
\begin{equation}\label{Eqn::MainCor::FormOfS}
    S f(x',x_n)= \int K(x,y') f(y',x_n)\: dy'.
\end{equation}

\begin{assumption}\label{Asssumption::Core::Smoothing}
        For every \(x_0\in \Omega\cap \BoundaryN\), \(\exists\) a coordinate chart
    \(\Psi:\CubengeqOne\xrightarrow{\sim}\Psi\left( \CubengeqOne \right)\subseteq \Omega\)
    with \(\Psi(0)=x_0\) and a vector field \(X\) in the \(\CinftySpace[\Psi\left( \CubengeqOne \right)][\R]\)-module
    generated by \(W_1,\ldots, W_r\) such that
    \begin{enumerate}[(i)]
        \item\label{Item::Core::Smoothing::PullBackX} \(\Psi^{*}X=c_0(x)\partial_{x_n}\), where \(c_0(x)\ne 0\), \(\forall x\in \CubengeqOne\).
        \item\label{Item::Core::Smoothing::SOperator} Let \(S\) be any operator as described in \eqref{Eqn::MainCor::FormOfS}, and set \(T=\Psi_{*}S\Psi^{*}\).
            Then,
            \begin{equation}\label{Eqn::Core::Smothing::CoreBToDomainQ}
                T:\CoreB\rightarrow \DomainQ.
            \end{equation}
    \end{enumerate}
\end{assumption}

\begin{remark}
    In \eqref{Eqn::Core::Smothing::CoreBToDomainQ},
    it is often easier to verify the stronger hypothesis \(T:\CoreB\rightarrow \CoreB\). Since \(\CoreB\subseteq \DomainQ\), this suffices.
\end{remark}

\begin{corollary}\label{Cor::Core::MainCor}
    Under the above assumptions, the conclusions of 
    all the results in Section \ref{Section::MainResult} hold.
    I.e., the conclusions of Theorem \ref{Thm::Result::MainThm::New}
    and Corollaries 
    \ref{Cor::Result::PartialtPluOpLSubEllip},
    \ref{Cor::Result::PartialtPluOpLHypoEllip},
    \ref{Cor::Result::BoundSobolevByHighPowers},
    \ref{Cor::Result::partialtPlusOpLInfinityGivesSmooth},
    \ref{Cor::Result::MainCorWithoutDt},
    \ref{Cor::Result::SubellipWithoutDt},
    \ref{Cor::Result::HypoellipWithoutDt},
    \ref{Cor::Result::BoundSobolevByDerivsWithoutDt},
    \ref{Cor::Result::OpLInfinityGivesSmoothWithoutPartialT},
    and \ref{Cor::Result::EigenvectorsSmooth}
    hold.
    The conclusions of those results also hold with \(\LDomainL\) replaced by
    \((\opL^{*}, \Domain{\opL^{*}})\), throughout.
\end{corollary}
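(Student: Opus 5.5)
The plan is to verify the four assumptions of Theorem \ref{Thm::Result::MainThm::New} (Assumptions \ref{Asssumption::Result::TestFunctionsInDomain}, \ref{Asssumption::Result::DomainIsHkappaW}, \ref{Asssumption::Result::QIsQF}, \ref{Asssumption::Result::BoundaryAssumption::New}) from the core assumptions, using the same \(\Omega\). All the listed corollaries were derived from Theorem \ref{Thm::Result::MainThm::New} alone, so they then follow automatically. Assumption \ref{Asssumption::Result::TestFunctionsInDomain} is immediate from Assumption \ref{Assumption::Core::TestFunctionsInCore}, since \(\CoreB\subseteq\DomainQ\).

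The first key step is to extend multiplication by \(\psi\) to all of \(\DomainQ\). By Assumption \ref{Assumption::Core::ExistsPsi} and \eqref{Eqn::Core::QNormSameAsFormQ}, \(\QNorm{\psi f}\lesssim \QNorm{f}\) for \(f\in\CoreB\). Since \(\CoreB\) is a core, \(\Mult{\psi}\) extends to a bounded map \(\DomainQ\to\DomainQ\). This extension agrees with pointwise multiplication by \(\psi\), because convergence in \(\DomainQ\) implies \(L^2\) convergence.

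Next, Assumption \ref{Assumption::Core::MaxSubAssumption} together with \eqref{Eqn::Core::QNormSameAsFormQ} gives \(\HsWNorm{\psi f}{\kappa}\lesssim \QNorm{\psi f}\lesssim \QNorm{f}\) on \(\CoreB\). Taking limits, and using that \(\HsWSpace{\kappa}\) is complete and its limits are identified through \(L^2\), gives \(\psi f\in \HsWSpace{\kappa}[\Omegat;\C^M]\) for every \(f\in\DomainQ\). Since \(\psi=1\) near \(\overline\Omega\), this yields Assumption \ref{Asssumption::Result::DomainIsHkappaW}.

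For Assumption \ref{Asssumption::Result::QIsQF}, first take \(f,g\in\CoreB\). Assumption \ref{Assumption::Core::QEqualsQF} gives \(\FormQ[\psi f][g]=\FormQF[\psi f][g]\). Both sides are jointly continuous on \(\DomainQ\times\DomainQ\) once restricted to pairs \((\psi f,g)\). On the left this uses boundedness of \(\Mult{\psi}\). On the right, \(\FormQF[\psi f][g]=\FormQF[\psi f][\psi' g]\) for a cutoff \(\psi'\) equal to \(1\) on \(\supp\psi\), chosen with \(\psi\prec\psi'\) inside the region where \(\psi=1\)\ldots

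Here a subtlety appears: we need control of \(g\) in \(\HsWSpace{\kappa}\) on \(\supp\psi\), but we only know \(\psi g\) is controlled there. The fix is to restrict to \(f\) with \(\supp f\Subset\Omega\). Then \(f=\psi f\), and \(\FormQF[f][g]=\FormQF[f][\psi g]\) because \(\psi=1\) near \(\supp f\). Both sides are now continuous in \((f,g)\in\DomainQ^2\).

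It remains to handle \(f\in\DomainQ\) with \(\supp f\Subset\Omega\). Write \(f=\chi f\), with \(\chi\in C^\infty_{\mathrm{cpt}}(\Omega)\) equal to \(1\) near \(\supp f\). Approximate \(f\) by \(f_j\in\CoreB\) in \(\DomainQ\). We would like \(\chi f_j\to \chi f\) in \(\DomainQ\), but \(\chi\) is not \(\psi\).

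I would handle this by an \(H^\kappa_W\)-density step. Elements of \(\HsWSpace{\kappa}\) supported in \(\Omega\) can be mollified tangentially and normally, pushed inward, into \(\TestFunctionsZeroCM[\Omega]\subseteq\CoreB\). On such functions the \(\DomainQ\)-norm is bounded by the \(\HsWSpace{\kappa}\)-norm, via \(\FormQ=\FormQF\). Convergence in \(\DomainQ\) then follows, since \(\DomainQ\) is closed. I expect this approximation, or an equivalent argument showing \(\chi\cdot\) is bounded on \(\DomainQ\), to be the main technical obstacle.

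Finally, for Assumption \ref{Asssumption::Result::BoundaryAssumption::New}, take the same chart and \(X\). Part (i) is immediate. For part (ii), the operator \(S\) from \eqref{Eqn::Result::FormOfS} with a \(t,s\)-dependent kernel reduces to the \(t\)-independent operators of \eqref{Eqn::MainCor::FormOfS}. Since \(T\) maps into functions supported in \(\Psi(\CubengeqOneHalf)\), it factors as \(T=T\circ\Mult{\psi}\), and \(\Mult{\psi}\) is bounded on \(\DomainQ\). The closed graph theorem then makes each \(T_{K(t,s)}:\DomainQ\to\DomainQ\) bounded, after first extending from \(\CoreB\) by the same core argument, with norms bounded uniformly by finitely many seminorms of \(K\). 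Integrating in \(s\) against \(u\in L^2(\R;\DomainL)\subseteq L^2(\R;\DomainQ)\) gives \(Tu\in L^2(\R;\DomainQ)\). This uniform bound in \(K\) is the second delicate point, and I would obtain it from the Banach–Steinhaus theorem applied to the family of kernels. The statement for \(\opL^{*}\) follows by applying all of the above to \(\overline{\FormQ}\), since the core assumptions are symmetric.
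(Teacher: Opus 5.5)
\textbf{Gap 1: verifying Assumption \ref{Asssumption::Result::QIsQF}.} You reduce to \(f\in\DomainQ\) with \(\supp f\Subset\Omega\), and then propose to approximate such \(f\) in \(\HsWSpace{\kappa}\) by elements of \(\TestFunctionsZeroCM[\Omega]\). That density claim is false whenever \(\supp f\) meets \(\BoundaryN\).

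Every point of \(\Omega\cap\BoundaryN\) is \(W\)-non-characteristic. In the chart of Assumption \ref{Asssumption::Core::Smoothing}\ref{Item::Core::Smoothing::PullBackX} we have \(\Psi^{*}X=c_0\partial_{x_n}\), with \(X\) in the \(W\)-module. Since \(\kappa\geq 1\), the trace \(f\mapsto (f\circ\Psi)(\cdot,0)\) is therefore continuous from locally supported \(\HsWSpace{\kappa}\) into \(L^2\) of the boundary. This trace vanishes on \(\TestFunctionsZeroCM[\Omega]\), so no mollification or inward push can approximate a function with nonzero boundary values.

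In the non-Dirichlet problems the paper targets (e.g.\ \(\CoreB=\CinftySpace[\ManifoldN]\) in Example \ref{Example::Intro::Examples}), \(\DomainQ\) contains such \(f\). Your argument would therefore show that \(\DomainQ\) is locally the Dirichlet domain, which is false. Replacing the approximants by \(\chi f_j\) does not help either: nothing gives \(\chi\CoreB\subseteq\DomainQ\), and only multiplication by \(\psi\) is controlled.

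The missing idea is a Leibniz identity that makes the available approximants usable. The elements \(\psi^{2\kappa+2}f_j\in\CoreB\) converge to \(\psi^{2\kappa+2}f=f\) in \(\DomainQ\), but they are not supported in \(\Omega\). There is a bounded sesquilinear form \(\FormRpsi\) on \(\HsWSpace{\kappa}[\Omegat;\C^M]\times\HsWSpace{\kappa}[\Omegat;\C^M]\) with \(\FormQF[\psi^{2\kappa+2}u][v]=\FormRpsi[\psi u][\psi v]\) for all \(u,v\). The reason is that every derivative of order \(\leq\kappa\) of \(\psi^{2\kappa+1}\) keeps a factor \(\psi^{\kappa+1}\), which can be moved to the \(v\)-side and absorbed into derivatives of \(\psi v\).

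Then take \(g_k\in\CoreB\) with \(g_k\to g\). Assumption \ref{Assumption::Core::QEqualsQF} gives \(\FormQ[\psi^{2\kappa+2}f_j][g_k]=\FormRpsi[\psi f_j][\psi g_k]\). Letting \(j,k\to\infty\) needs only the continuity of \(\Mult{\psi}\) on \(\DomainQ\) and from \(\DomainQ\) into \(\HsWSpace{\kappa}\), both of which you established. This yields \(\FormQ[f][g]=\FormRpsi[\psi f][\psi g]=\FormQF[f][g]\).

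\textbf{Gap 2: Assumption \ref{Asssumption::Result::BoundaryAssumption::New}\ref{Item::Result::BoundaryAssumption::SOperator::New}.} You only know \(T:\CoreB\to\DomainQ\). The closed graph theorem is not available, since it needs \(T\) defined on all of \(\DomainQ\). Extending from the core needs an a priori bound \(\QNorm{Tf}\lesssim\QNorm{f}\) on \(\CoreB\), which you do not supply.

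This bound is where part (i) of Assumption \ref{Asssumption::Core::Smoothing} is used. Because \(S\) is smoothing in \(x'\) and acts trivially in \(x_n\), one has \(\|\partial_x^\beta Sf\|\lesssim\sum_{j\leq\kappa}\|\partial_{x_n}^j f\|\). Since \(\partial_{x_n}\) is a multiple of \(\Psi^{*}X\), \(T\) is bounded on \(\HsWSpace{\kappa}\) (Lemma \ref{Lemma::Cor::SmoothingOpIsCont}). Now use \(Tf=\psi Tf=T\psi f\), together with \(\QNorm{\psi h}\approx\HsWNorm{\psi h}{\kappa}\) on \(\DomainQ\). The reverse inequality here comes from Assumption \ref{Assumption::Core::QEqualsQF} plus density. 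This gives \(\QNorm{Tf}\approx\HsWNorm{Tf}{\kappa}\lesssim\HsWNorm{\psi f}{\kappa}\lesssim\QNorm{f}\) on \(\CoreB\).

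The constants depend only on finitely many derivatives of \(K\). That gives the uniformity in \((t,s)\) directly, without Banach--Steinhaus.

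The rest of your plan matches the paper: Assumptions \ref{Asssumption::Result::TestFunctionsInDomain} and \ref{Asssumption::Result::DomainIsHkappaW}, the passage to the corollaries, and the symmetry argument for \(\opL^{*}\).
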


The assumptions of this section remain unchanged if \(\QDomainQ\) is replaced by
\((\FormQ^{*}, \DomainQ)\), where \(\FormQ^{*}(f,g)=\overline{\FormQ[g][f]}\);
in particular, \(\CoreB\) is a core for \((\FormQ^{*}, \DomainQ)\) if and only if it is a core
for \(\QDomainQ\).
Since replacing \(\QDomainQ\) with \((\FormQ^{*}, \DomainQ)\) has the effect of
replacing \(\LDomainL\) with \((\opL^{*}, \Domain{\opL^{*}})\)
(see \cite[Chapter 6, Section 2, Theorem 2.5]{KatoPerturbationTheory}),
once we show Corollary \ref{Cor::Core::MainCor} holds for \(\LDomainL\),
it follows that it also holds for \((\opL^{*}, \Domain{\opL^{*}})\).
With this in mind,
Corollary \ref{Cor::Core::MainCor} follows immediately from the next proposition.

\begin{proposition}\label{Prop::Core::AssumptionsHold}
    Under the above assumptions, the assumptions of Section \ref{Section::MainResult} hold.
\end{proposition}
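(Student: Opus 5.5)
The plan is to verify Assumptions \ref{Asssumption::Result::TestFunctionsInDomain}, \ref{Asssumption::Result::DomainIsHkappaW}, \ref{Asssumption::Result::QIsQF}, and \ref{Asssumption::Result::BoundaryAssumption::New} one at a time. Each follows the same pattern: prove an estimate on the core \(\CoreB\), then pass to \(\DomainQ\) by density, using that \(\QDomainQ\) is closed and that \(\QNorm{f}^2\approx|\FormQ[f][f]|+\LtNorm{f}^2\) by \eqref{Eqn::Core::QNormSameAsFormQ}.

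\textbf{Multiplication by \(\psi\).} The first step is to show that \(\Mult{\psi}\) extends to a bounded operator \(\DomainQ\to\DomainQ\). For \(f\in\CoreB\), Assumption \ref{Assumption::Core::ExistsPsi} together with \(\LtNorm{\psi f}\lesssim\LtNorm{f}\) gives \(\QNorm{\psi f}\lesssim\QNorm{f}\). Now take \(f\in\DomainQ\) and core elements \(f_j\to f\) in \(\DomainQ\). Then \(\psi f_j\) is Cauchy in \(\DomainQ\) and converges to \(\psi f\) in \(L^2\). Since the form is closed, \(\psi f\in\DomainQ\) and \(\psi f_j\to\psi f\) in \(\DomainQ\).

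\textbf{Assumptions \ref{Asssumption::Result::TestFunctionsInDomain} and \ref{Asssumption::Result::DomainIsHkappaW}.} Assumption \ref{Asssumption::Result::TestFunctionsInDomain} is immediate from Assumption \ref{Assumption::Core::TestFunctionsInCore} and \(\CoreB\subseteq\DomainQ\). For Assumption \ref{Asssumption::Result::DomainIsHkappaW}, combine Assumption \ref{Assumption::Core::MaxSubAssumption} with the previous step to get \(\HsWNorm{\psi f}{\kappa}[\Omegat]\lesssim\QNorm{f}\) for \(f\in\CoreB\). By density, for every \(f\in\DomainQ\) the sequence \(\psi f_j\) is Cauchy in \(\HsWSpace{\kappa}[\Omegat;\C^M]\), and its limit agrees with \(\psi f\) in \(L^2\). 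Hence \(\psi f\in\HsWSpace{\kappa}[\Omegat;\C^M]\), and because \(\psi=1\) near \(\overline\Omega\) we get \(f|_\Omega\in\HsWSpace{\kappa}[\Omega;\C^M]\). The same estimate applied to \(\psi^2 f\) (note \(\psi\) maps \(\CoreB\) into itself) will be used below.

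\textbf{Assumption \ref{Asssumption::Result::QIsQF}.} Fix \(f,g\in\DomainQ\) with \(\supp f\Subset\Omega\), so that \(f=\psi f\), and approximate \(f,g\) by \(f_j,g_j\in\CoreB\). Applying Assumption \ref{Assumption::Core::QEqualsQF} to the core elements \(\psi f_j\) and \(g_j\) gives \(\FormQ[\psi^2 f_j][g_j]=\FormQF[\psi^2 f_j][g_j]\), and also \(\FormQ[\psi f_j][\psi g_j]=\FormQF[\psi f_j][\psi g_j]\). The left-hand sides converge to the corresponding quantities for \(f,g\) by the first step. On the right, \(\FormQF\) is continuous on \(\HsWSpace{\kappa}\) of a compact subset of \(\Omegat\), and \(\psi f_j,\psi^2 f_j,\psi g_j\) converge there; so \(\FormQ[f][\psi g]=\FormQF[f][\psi g]\). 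The main obstacle I expect is removing the cutoff on \(g\), i.e.\ showing \(\FormQ[f][(1-\psi)g]=0\) and \(\FormQF[f][(1-\psi)g]=0\). The second is clear, since \(W^\alpha f\) vanishes off \(\supp f\), where \(1-\psi=0\). For the first, I would use \(\FormQ[\psi^2 f_j][g_j]=\FormQF[\psi^2f_j][g_j]\). The integrand only sees \(g_j\) on \(\supp\psi\), where \(g_j=\psi g_j+(1-\psi)g_j\), and \(\psi(1-\psi)g_j=\psi g_j-\psi^2g_j\) is bounded in \(\HsWSpace{\kappa}\). Meanwhile \(W^\alpha(\psi^2 f_j)\to W^\alpha f\), which vanishes where \(\psi\ne 1\). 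If needed I would insert a second cutoff \(\psi_1\prec\psi\) with \(\psi_1=1\) near \(\overline\Omega\), and apply the first step to \(\psi_1\), to make this localization rigorous.

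\textbf{Assumption \ref{Asssumption::Result::BoundaryAssumption::New}.} Part \ref{Item::Result::BoundaryAssumption::PullBackX::New} is identical to Assumption \ref{Asssumption::Core::Smoothing}\ref{Item::Core::Smoothing::PullBackX}. For part \ref{Item::Result::BoundaryAssumption::SOperator::New}, write \(Tu(t)=\int T_{t,s}u(s)\,ds\), where \(T_{t,s}\) is the operator of Assumption \ref{Asssumption::Core::Smoothing} with kernel \(K(t,s,\cdot,\cdot)\). I would first show that each such operator is bounded \(\DomainQ\to\DomainQ\), with a bound uniform over kernels in a bounded set of \(C^\infty\). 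The argument is via the closed graph theorem: the operator is bounded on \(L^2\) and maps \(\CoreB\) into \(\DomainQ\), and \(L^2\)-continuity plus closedness of the form identify limits, which gives closedness of the extension. Uniformity over kernels then comes from the uniform boundedness principle. Since \(\DomainL\hookrightarrow\DomainQ\) continuously, Bochner integration in \(s\) then shows \(T:\LtSpaceNoMeasure[\R][\DomainL]\to\LtSpaceNoMeasure[\R][\DomainQ]\).
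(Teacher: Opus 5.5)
Your first two steps (boundedness of \(\Mult{\psi}\) on \(\DomainQ\), and Assumptions \ref{Asssumption::Result::TestFunctionsInDomain}--\ref{Asssumption::Result::DomainIsHkappaW}) are fine and match the paper. The last two steps, however, have genuine gaps.

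\textbf{Assumption \ref{Asssumption::Result::BoundaryAssumption::New}.} The closed graph theorem cannot give \(T_{t,s}:\DomainQ\to\DomainQ\). It needs an operator already defined on all of \(\DomainQ\) with values in \(\DomainQ\), and you only know \(T_{t,s}(\CoreB)\subseteq\DomainQ\). \(L^2\)-boundedness plus closedness of the form show only that \(T_{t,s}|_{\CoreB}\) is closable in \(\DomainQ\). The domain of that closure can be a proper subspace of \(\DomainQ\): an \(L^2\)-bounded operator mapping a core into the form domain need not preserve the form domain.

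What you need is the a priori bound \(\QNorm{Tf}\lesssim\QNorm{f}\) for \(f\in\CoreB\), and this requires structure your argument never uses. The paper gets it from two facts.
(a) Assumption \ref{Asssumption::Core::Smoothing}\ref{Item::Core::Smoothing::PullBackX} says \(\Psi^{*}X=c_0\partial_{x_n}\) with \(X\) in the \(W\)-module. Combined with the fact that \(S\) integrates only in \(y'\), this gives \(T:\HsWSpace{\kappa}[\Omegat]\to\HsWSpace{\kappa}[\Omega]\) boundedly. Hence \(\HsWNorm{Tf}{\kappa}=\HsWNorm{T\psi f}{\kappa}\lesssim\HsWNorm{\psi f}{\kappa}\lesssim\QNorm{f}\).
(b) The reverse inequality \(\QNorm{\psi h}\lesssim\HsWNorm{\psi h}{\kappa}\) holds for \(h\in\DomainQ\). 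It follows from Assumption \ref{Assumption::Core::QEqualsQF} with \(g=\psi f\), the explicit formula for \(\FormQF\), and density. You only proved the opposite direction.

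Apply (b) to \(h=Tf\in\DomainQ\), which satisfies \(Tf=\psi Tf\). This gives \(\QNorm{Tf}\approx\HsWNorm{Tf}{\kappa}\lesssim\QNorm{f}\) on \(\CoreB\), and density then extends \(T\) boundedly to \(\DomainQ\). The constants depend only on \(C^L\)-norms of the kernel, so uniformity in \((t,s)\) is automatic and no uniform boundedness principle is needed.

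\textbf{Assumption \ref{Asssumption::Result::QIsQF}.} Your removal of the cutoff on \(g\) does not close. The pairing \(\FormQF[\psi^2 f_j][g_j]\) involves \(W^\beta g_j\) on all of \(\supp\psi\), including where \(\psi\) is small. There you have no \(L^2\) bound on \(W^\beta g_j\), and \(L^2\)-convergence of \(W^\alpha(\psi^2 f_j)\) to a limit vanishing there does not compensate. Boundedness of \(\psi(1-\psi)g_j\) in \(\HsWSpace{\kappa}[\Omegat]\) also does not control \(W^\beta((1-\psi)g_j)\) against \(W^\alpha(\psi^2 f_j)\). Moving a single factor of \(\psi\) across produces derivatives of \(\psi\) times lower-order \(W\)-derivatives of \(g_j\), which remain uncontrolled.

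The fallback cutoff \(\psi_1\prec\psi\) is not available. Assumption \ref{Assumption::Core::ExistsPsi} concerns \(\psi\) only, so nothing guarantees that \(\psi_1\) preserves \(\CoreB\) or is bounded on \(\DomainQ\).

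The missing idea is to use enough powers of \(\psi\), which do preserve \(\CoreB\). Write \(\psi^{2\kappa+2}u=\psi^{2\kappa+1}\cdot\psi u\). At most \(\kappa\) derivatives landing on \(\psi^{2\kappa+1}\) leave a factor \(\psi^{\kappa+1}\). That factor is enough to rewrite \(\psi^{\kappa+1}W^\beta v\) in terms of \(W^{\beta'}(\psi v)\) with \(|\beta'|\leq|\beta|\). Thus \(\FormQF[\psi^{2\kappa+2}u][v]=\FormRpsi[\psi u][\psi v]\), with \(\FormRpsi\) continuous on \(\HsWSpace{\kappa}[\Omegat]\times\HsWSpace{\kappa}[\Omegat]\).

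Since \(\psi^{2\kappa+1}f_j\in\CoreB\), Assumption \ref{Assumption::Core::QEqualsQF} applies. Together with the \(\HsWSpace{\kappa}\)-convergence of \(\psi f_j\) and \(\psi g_k\), it gives \(\FormQ[f][g]=\lim\FormQ[\psi^{2\kappa+2}f_j][g_k]=\lim\FormRpsi[\psi f_j][\psi g_k]=\FormQF[f][g]\) directly, with no splitting of \(g\).
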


The remainder of this section is devoted to the proof of Proposition \ref{Prop::Core::AssumptionsHold}.
Since \(\CoreB\) is a core for \(\QDomainQ\), \(\CoreB\) is a dense subspace of
\(\DomainQ\) (with the Hilbert space topology described in \eqref{Eqn::Result::DomainQHilbertNorm}). In fact, this is the definition
of \(\CoreB\) being a core for \(\QDomainQ\); see \cite[Chapter 6, Section 1.3]{KatoPerturbationTheory}.

\begin{lemma}\label{Lemma::Core::MultPsiIsCont}
    \(f\mapsto \psi f\) is a continuous map \(\DomainQ\rightarrow \DomainQ\).
\end{lemma}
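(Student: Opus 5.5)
The plan is to establish the estimate first on the core \(\CoreB\), and then extend it to \(\DomainQ\) by density and closedness of the form.

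\textbf{Step 1: the bound on the core.} For \(f\in\CoreB\), Assumption \ref{Assumption::Core::ExistsPsi} gives \(\psi f\in\CoreB\subseteq\DomainQ\) and
\begin{equation*}
    \Real\FormQ[\psi f][\psi f]\leq C_1\left(|\FormQ[f][f]|+\LtNorm{f}^2\right).
\end{equation*}
Since \(\psi\) is bounded, \(\LtNorm{\psi f}\lesssim\LtNorm{f}\). Using \eqref{Eqn::Result::DomainQHilbertNorm} and \eqref{Eqn::Core::QNormSameAsFormQ}, these combine to give
\begin{equation*}
    \QNorm{\psi f}^2 = \Real\FormQ[\psi f][\psi f]+\Gamma\LtNorm{\psi f}^2 \lesssim |\FormQ[f][f]|+\LtNorm{f}^2\approx\QNorm{f}^2 .
\end{equation*}
Hence \(\Mult{\psi}\) restricted to \(\CoreB\) is bounded in the \(\DomainQ\)-norm.

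\textbf{Step 2: extension to \(\DomainQ\).} Because \(\CoreB\) is dense in the Hilbert space \(\DomainQ\), \(\Mult{\psi}|_{\CoreB}\) extends uniquely to a bounded operator \(P:\DomainQ\to\DomainQ\). It remains to check that \(P\) is actually multiplication by \(\psi\).

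Take \(f\in\DomainQ\) and \(f_j\in\CoreB\) with \(f_j\to f\) in \(\DomainQ\). Then \(\psi f_j\to Pf\) in \(\DomainQ\), and in particular in \(L^2\). On the other hand, \(f_j\to f\) in \(L^2\) and \(\psi\) is bounded, so \(\psi f_j\to\psi f\) in \(L^2\). Therefore \(Pf=\psi f\). This shows \(\psi f\in\DomainQ\) and that \(f\mapsto\psi f\) is continuous on \(\DomainQ\).

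\textbf{Main point of care.} The only delicate issue is the identification of the abstract extension \(P\) with \(\Mult{\psi}\). This is exactly where closedness of the form is used: \(\DomainQ\) embeds continuously and injectively into \(L^2\), so limits taken in \(\DomainQ\) and in \(L^2\) agree. Everything else is the norm equivalence \eqref{Eqn::Core::QNormSameAsFormQ}.
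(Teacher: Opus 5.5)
Your proof is correct and follows the paper's argument: first bound \(\QNorm{\psi f}\lesssim \QNorm{f}\) on \(\CoreB\) using Assumption \ref{Assumption::Core::ExistsPsi} and \eqref{Eqn::Core::QNormSameAsFormQ}, then extend by density. Your Step 2 also spells out the identification of the extension with multiplication by \(\psi\), which the paper leaves implicit; note that this identification uses only that the \(\DomainQ\)-norm dominates the \(L^2\)-norm, while closedness (completeness of \(\DomainQ\)) is what guarantees the extension lands in \(\DomainQ\).
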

\begin{proof}
    Assumption \ref{Assumption::Core::ExistsPsi} and \eqref{Eqn::Core::QNormSameAsFormQ}
    show
    \begin{equation*}
        \QNorm{\psi f}\lesssim \QNorm{f},\quad \forall f\in \CoreB.
    \end{equation*}
    Since \(\CoreB\) is dense in \(\DomainQ\), the result follows.
\end{proof}

\begin{lemma}\label{Lemma::Core::QNormIsHsWNormOnCoreB}
    \begin{equation*}
        \QNorm{\psi f}\approx \HsWNorm{\psi f}{\kappa},\quad \forall f\in \CoreB.
    \end{equation*}
\end{lemma}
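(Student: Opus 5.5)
We need to show \(\QNorm{\psi f}\approx \HsWNorm{\psi f}{\kappa}\) for \(f\in\CoreB\), i.e.\ two inequalities. The plan is to combine \eqref{Eqn::Core::QNormSameAsFormQ} with Assumptions \ref{Assumption::Core::MaxSubAssumption}, \ref{Assumption::Core::QEqualsQF} and \ref{Assumption::Core::LocalizationOfCore}.

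The bound \(\HsWNorm{\psi f}{\kappa}\lesssim \QNorm{\psi f}\) is immediate. By Assumption \ref{Assumption::Core::MaxSubAssumption}, \(\HsWNorm{\psi f}{\kappa}^2\leq C_2\left(\left|\FormQ[\psi f][\psi f]\right|+\LtNorm{\psi f}^2\right)\). By \eqref{Eqn::Core::QNormSameAsFormQ}, applicable since \(\psi f\in\CoreB\subseteq\DomainQ\) by Assumption \ref{Assumption::Core::ExistsPsi}, the right-hand side is \(\approx \QNorm{\psi f}^2\).

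For the reverse bound, we again use \eqref{Eqn::Core::QNormSameAsFormQ}: \(\QNorm{\psi f}^2\approx \left|\FormQ[\psi f][\psi f]\right|+\LtNorm{\psi f}^2\). Apply Assumption \ref{Assumption::Core::QEqualsQF} with the pair \((f,\psi f)\), where \(\psi f\in\CoreB\). This gives \(\FormQ[\psi f][\psi f]=\FormQF[\psi f][\psi f]\). Since \(\psi f\in \HsWSpace{\kappa}[\Omegat;\C^M]\) by Assumption \ref{Assumption::Core::LocalizationOfCore}, the formula \eqref{Eqn::Result::FormQFFormula} and Cauchy--Schwarz give
\begin{equation*}
\left|\FormQF[\psi f][\psi f]\right|\leq \sum_{|\alpha|,|\beta|\leq\kappa}\left|\Ltip{W^\alpha \psi f}{a_{\alpha,\beta}W^\beta\psi f}\right|\lesssim \HsWNorm{\psi f}{\kappa}^2.
\end{equation*}
Also \(\LtNorm{\psi f}\leq\HsWNorm{\psi f}{\kappa}\), so \(\QNorm{\psi f}\lesssim\HsWNorm{\psi f}{\kappa}\).

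The only subtle point is this Cauchy--Schwarz step. The coefficients \(a_{\alpha,\beta}\) are smooth on \(\Omegat\) but need not be bounded there. However, \(\psi f\) is supported in \(\supp\psi\), which is a compact subset of \(\Omegat\), so only \(\sup_{\supp\psi}|a_{\alpha,\beta}|<\infty\) enters. The integrals therefore live on \(\supp\psi\) and the constant depends only on \(\psi\) and the \(a_{\alpha,\beta}\).
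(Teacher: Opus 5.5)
Your proof is correct and is essentially the paper's argument. The upper bound \(\QNorm{\psi f}\lesssim\HsWNorm{\psi f}{\kappa}\) comes from \eqref{Eqn::Core::QNormSameAsFormQ}, Assumption \ref{Assumption::Core::QEqualsQF} (to replace \(\FormQ\) by \(\FormQF\)) and the formula \eqref{Eqn::Result::FormQFFormula}, while the lower bound is Assumption \ref{Assumption::Core::MaxSubAssumption} combined with \eqref{Eqn::Core::QNormSameAsFormQ}; your remark that only \(\sup_{\supp\psi}|a_{\alpha,\beta}|\) enters the Cauchy--Schwarz step makes explicit a detail the paper leaves implicit.
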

\begin{proof}
    By Assumptions \ref{Assumption::Core::ExistsPsi}, \ref{Assumption::Core::LocalizationOfCore}, \ref{Assumption::Core::QEqualsQF} and \eqref{Eqn::Core::QNormSameAsFormQ},
    \begin{equation*}
        \QNorm{\psi f}^2
        \approx \left| \FormQ[\psi f][\psi f] \right| + \LtNorm{\psi f}^2
        =\left| \FormQF[\psi f][\psi f] \right| + \LtNorm{\psi f}^2
        \lesssim \HsWNorm{\psi f}{\kappa}^2,
    \end{equation*}
    where the final \(\lesssim\) used the formula for \(\FormQF\): \eqref{Eqn::Result::FormQFFormula}.

    By Assumption \ref{Assumption::Core::MaxSubAssumption} and \eqref{Eqn::Core::QNormSameAsFormQ},
    \begin{equation*}
        \HsWNorm{\psi f}{\kappa}^2
        \lesssim \left| \FormQ[\psi f][\psi f] \right| + \LtNorm{\psi f}^2
        \approx \QNorm{\psi f}^2,
    \end{equation*}
    completing the proof.
\end{proof}

\begin{lemma}\label{Lemma::Core::QNormIsHkappaWNormOnDomainQ}
    \(\forall f\in \DomainQ\), \(\psi f\in \HsWSpace*{\kappa}[\Omegat; \C^M]\) and
    \begin{equation*}
        \QNorm{\psi f}\approx \HsWNorm{\psi f}{\kappa},\quad \forall f\in \DomainQ.
    \end{equation*}
\end{lemma}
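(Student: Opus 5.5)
The plan is a density argument: pass from \(\CoreB\) to \(\DomainQ\) using Lemma \ref{Lemma::Core::MultPsiIsCont} and Lemma \ref{Lemma::Core::QNormIsHsWNormOnCoreB}. Fix \(f\in \DomainQ\). Since \(\CoreB\) is a core, it is dense in \(\DomainQ\) for the norm \eqref{Eqn::Result::DomainQHilbertNorm}, so choose \(f_j\in \CoreB\) with \(f_j\rightarrow f\) in \(\DomainQ\). By Lemma \ref{Lemma::Core::MultPsiIsCont}, \(\psi f_j\rightarrow \psi f\) in \(\DomainQ\); in particular, \(\psi f_j\) is Cauchy in \(\DomainQ\).

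Next, Lemma \ref{Lemma::Core::QNormIsHsWNormOnCoreB} is applied to differences. Its statement concerns \(\psi g\) with \(g\in\CoreB\). Since \(\CoreB\) is a subspace, \(f_j-f_k\in \CoreB\) and \(\psi(f_j-f_k)=\psi f_j-\psi f_k\). Hence
\begin{equation*}
    \HsWNorm{\psi f_j-\psi f_k}{\kappa}\approx \QNorm{\psi f_j-\psi f_k}\rightarrow 0,
\end{equation*}
so \(\psi f_j\) is Cauchy in the Hilbert space \(\HsWSpace{\kappa}[\Omegat;\C^M]\) (Assumption \ref{Assumption::Core::LocalizationOfCore} puts each \(\psi f_j\) there). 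Let \(h\) denote its limit in \(\HsWSpace{\kappa}[\Omegat;\C^M]\).

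The main point requiring care is identifying \(h\) with \(\psi f\). Both convergences imply \(L^2\) convergence: the \(\DomainQ\)-norm dominates a multiple of the \(L^2\) norm (since \(\Gamma\) is large in \eqref{Eqn::Result::DomainQHilbertNorm}), and the \(\HsWSpace{\kappa}\)-norm contains the \(L^2\) norm. So \(\psi f_j\rightarrow \psi f\) in \(\LtSpace[\ManifoldN][\Vol][\C^M]\) and \(\psi f_j\rightarrow h\) in \(\LtSpace[\Omegat]\). Because \(\psi\) is supported in \(\Omegat\), these two limits agree as elements of \(L^2\) (after extending by zero), so \(h=\psi f\). Thus \(\psi f\in\HsWSpace{\kappa}[\Omegat;\C^M]\).

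Finally, passing to the limit in \(\QNorm{\psi f_j}\approx\HsWNorm{\psi f_j}{\kappa}\), using continuity of both norms along the two convergent sequences, gives \(\QNorm{\psi f}\approx \HsWNorm{\psi f}{\kappa}\) with the same implicit constants.
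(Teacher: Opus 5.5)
Your proposal is correct and follows essentially the same route as the paper. You approximate \(f\) by \(f_j\in\CoreB\), use Lemma \ref{Lemma::Core::MultPsiIsCont} and Lemma \ref{Lemma::Core::QNormIsHsWNormOnCoreB} (applied to \(f_j-f_k\in\CoreB\)) to get \(\psi f_j\) Cauchy in both \(\DomainQ\) and \(\HsWSpace{\kappa}[\Omegat;\C^M]\), and pass to the limit in the norm equivalence. Your explicit identification of the \(\HsWSpace{\kappa}\)-limit with \(\psi f\) through \(L^2\) convergence is a step the paper leaves implicit, and you handle it correctly.
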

\begin{proof}
    Let \(f\in \DomainQ\). Take \(f_j\in \CoreB\) such that \(f_j\rightarrow f\) in \(\DomainQ\).
    By Assumption \ref{Assumption::Core::ExistsPsi}, \(\psi f_j\in \CoreB\), and by Lemma \ref{Lemma::Core::MultPsiIsCont},
    \(\psi f_j\rightarrow \psi f\) in \(\DomainQ\).
    Therefore \(\psi f_j\) is Cauchy in \(\DomainQ\), and Lemma \ref{Lemma::Core::QNormIsHsWNormOnCoreB}
    implies \(\psi f_j\) is Cauchy in \(\HsWSpace*{\kappa}[\Omegat;\C^M]\).
    We conclude \(\psi f_j\rightarrow \psi f\)in \(\HsWSpace*{\kappa}[\Omegat;\C^M]\) and therefore
    \(\psi f\in \HsWSpace*{\kappa}[\Omegat;\C^M]\) and
    \begin{equation*}
        \HsWNorm{\psi f}{\kappa}
        =\lim_{j\rightarrow \infty} \HsWNorm{\psi f_j}{\kappa}
        \approx\lim_{j\rightarrow \infty} \QNorm{\psi f_j}
        =\QNorm{\psi f},
    \end{equation*}
    where the \(\approx\) uses Lemma \ref{Lemma::Core::QNormIsHsWNormOnCoreB}.
\end{proof}

\begin{lemma}\label{Lemma::Core::MultPsiIsContDomQToHW}
    \(f\mapsto \psi f\) is a continuous map \(\DomainQ\rightarrow \HsWSpace*{\kappa}[\Omegat;\C^M]\).
\end{lemma}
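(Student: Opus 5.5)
The plan is to combine the two facts already established: multiplication by \(\psi\) is bounded on \(\DomainQ\) (Lemma \ref{Lemma::Core::MultPsiIsCont}), and on functions of the form \(\psi f\) the \(\DomainQ\)-norm controls the \(\HsWSpace*{\kappa}\)-norm (Lemma \ref{Lemma::Core::QNormIsHkappaWNormOnDomainQ}).

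First, the map is well defined. For \(f\in\DomainQ\), Lemma \ref{Lemma::Core::QNormIsHkappaWNormOnDomainQ} shows \(\psi f\in \HsWSpace*{\kappa}[\Omegat;\C^M]\). Linearity of \(f\mapsto \psi f\) is clear, so it suffices to prove a norm bound.

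Next, the norm bound. For \(f\in \DomainQ\), the \(\approx\) in Lemma \ref{Lemma::Core::QNormIsHkappaWNormOnDomainQ} gives \(\HsWNorm{\psi f}{\kappa}\lesssim \QNorm{\psi f}\). Lemma \ref{Lemma::Core::MultPsiIsCont} gives \(\QNorm{\psi f}\lesssim \QNorm{f}\), since a continuous linear map between Hilbert spaces is bounded. Chaining these yields
\begin{equation*}
    \HsWNorm{\psi f}{\kappa}\lesssim \QNorm{\psi f}\lesssim \QNorm{f},\quad \forall f\in \DomainQ,
\end{equation*}
which is exactly continuity of the linear map \(\DomainQ\rightarrow \HsWSpace*{\kappa}[\Omegat;\C^M]\).

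There is no real obstacle here. The only point needing care is that the implicit constant in Lemma \ref{Lemma::Core::QNormIsHkappaWNormOnDomainQ} is uniform over \(f\in\DomainQ\). This holds because that lemma was obtained by a density limit from the uniform estimate of Lemma \ref{Lemma::Core::QNormIsHsWNormOnCoreB}.
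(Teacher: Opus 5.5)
Your proposal is correct and is essentially the paper's own proof: the paper also uses Lemmas \ref{Lemma::Core::QNormIsHkappaWNormOnDomainQ} and \ref{Lemma::Core::MultPsiIsCont} to chain \(\HsWNorm{\psi f}{\kappa}\approx \QNorm{\psi f}\lesssim \QNorm{f}\). Your remark about uniformity of the implicit constant is accurate, since the constant is inherited from Lemma \ref{Lemma::Core::QNormIsHsWNormOnCoreB} through the density argument.
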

\begin{proof}
    For \(f\in \DomainQ\), \(\psi f\in \HsWSpace*{\kappa}[\Omegat;\C^M]\)
    by Lemmas \ref{Lemma::Core::QNormIsHkappaWNormOnDomainQ} and \ref{Lemma::Core::MultPsiIsCont} we have
    \begin{equation*}
        \HsWNorm{\psi f}{\kappa}\approx \QNorm{\psi f} \lesssim \QNorm{f}.
    \end{equation*}
\end{proof}

\begin{lemma}\label{Lemma::Core::FirstAssumptionsHold}
    Assumptions \ref{Asssumption::Result::TestFunctionsInDomain}, \ref{Asssumption::Result::DomainIsHkappaW},
    and \ref{Asssumption::Result::QIsQF}
    hold.
\end{lemma}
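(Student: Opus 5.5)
We verify the three assumptions separately, using the lemmas just proved about multiplication by \(\psi\).

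\emph{Assumption \ref{Asssumption::Result::TestFunctionsInDomain}.} This is immediate. By Assumption \ref{Assumption::Core::TestFunctionsInCore}, \(\TestFunctionsZeroCM[\Omega]\subseteq \CoreB\), and \(\CoreB\subseteq \DomainQ\).

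\emph{Assumption \ref{Asssumption::Result::DomainIsHkappaW}.} Let \(f\in \DomainQ\). Lemma \ref{Lemma::Core::QNormIsHkappaWNormOnDomainQ} gives \(\psi f\in \HsWSpace{\kappa}[\Omegat;\C^M]\). Since \(\psi=1\) on a neighborhood of \(\overline{\Omega}\), we have \((\psi f)\big|_{\Omega}=f\big|_{\Omega}\). Here \(W^\alpha\) is taken in the distributional sense on \(\Omega\), and it commutes with restriction because test functions on \(\Omega\) extend by zero to \(\Omegat\). Hence for every \(U\Subset\Omega\), \(f\big|_U=(\psi f)\big|_U\in \HsWSpace{\kappa}[U;\C^M]\). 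In fact this gives \(f\big|_\Omega\in \HsWSpace{\kappa}[\Omega;\C^M]\) globally, which is stronger than needed.

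\emph{Assumption \ref{Asssumption::Result::QIsQF}.} This is the main step, and I would prove it by density. Let \(f,g\in\DomainQ\) with \(\supp(f)\Subset\Omega\).

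First, because \(\psi=1\) near \(\overline\Omega\), we have \(\psi f=f\) and \(\psi g=g\) on a neighborhood of \(\supp f\). Since \(\FormQF\) is a local formula,
\[
\FormQF[f][g]=\FormQF[\psi f][\psi g],
\]
where the right-hand side is defined on \(\Omegat\) by the previous step.

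Next, I want to show \(\FormQ[f][g]=\FormQF[f][g]\). The obstacle is that Assumption \ref{Assumption::Core::QEqualsQF} only applies to elements of \(\CoreB\), and we cannot approximate \(f\) by functions \(\psi f_j\) while keeping the support condition. Instead I would use the identity \(\FormQ[\psi f'][g']=\FormQF[\psi f'][g']\) for \(f',g'\in\CoreB\) and pass to the limit in both slots.

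Take \(f_j,g_j\in \CoreB\) with \(f_j\to f\) and \(g_j\to g\) in \(\DomainQ\). By Lemma \ref{Lemma::Core::MultPsiIsCont}, \(\psi f_j\to\psi f\) in \(\DomainQ\), and since \(\FormQ\) is bounded on \(\DomainQ\), the left side \(\FormQ[\psi f_j][g_j]\) converges to \(\FormQ[\psi f][g]\).

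The right side needs more care, because \(g_j\) is only controlled in \(\DomainQ\), not in \(\HsWSpace{\kappa}\) on \(\Omegat\). To handle this, I would first insert \(\psi\) on the second slot as well. Choose \(\psi\) (which we may, since it is supported in \(\Omegat\)) and note the relevant locality: \(\FormQF[\psi f_j][g_j]\) involves \(g_j\) only on \(\supp\psi\). One cannot replace \(g_j\) by \(\psi g_j\) directly, since \(\psi\neq1\) on all of \(\supp\psi\).

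The fix is a cutoff \(\psi_0\) with \(\psi_0\prec\psi\), \(\psi_0=1\) near \(\overline\Omega\); the paper's \(\psi\) already equals \(1\) on a neighborhood of \(\overline{\Omega}\), so such a \(\psi_0\) exists.
\begin{enumerate}[(a)]
\item I would first use Assumption \ref{Assumption::Core::QEqualsQF} together with Lemma \ref{Lemma::Core::MultPsiIsContDomQToHW} to show that for \(f'\in\DomainQ\) and \(g'\in\CoreB\), \(\FormQ[f'][\psi g']=\FormQF[f'][\psi g']\). Passing \(f'\) to a limit is fine here: \(\FormQF[\cdot][\psi g']\) is controlled by \(\HsWNorm{\psi\,\cdot}{\kappa}\) only when the first argument is localized, and since \(\psi g'\) is supported in \(\supp\psi\), the pairing only sees \(f'\) on \(\supp\psi\). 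I expect to handle this by also writing \(\FormQF[f'][\psi g']=\FormQF[\psi_1 f'][\psi g']\), where \(\psi_1\) is a slightly larger cutoff satisfying Assumption \ref{Assumption::Core::ExistsPsi}.
\item Then take \(f\in\DomainQ\) with \(\supp f\Subset\Omega\) and \(g_j\in\CoreB\) with \(g_j\to g\). We have \(\FormQ[f][\psi g_j]=\FormQF[f][\psi g_j]=\FormQF[f][g_j]\), the last equality by locality. Also \(\FormQ[f][g_j]-\FormQ[f][\psi g_j]=\FormQ[f][(1-\psi)g_j]\).
\end{enumerate}
To conclude, I would show \(\FormQ[f][(1-\psi)g_j]=0\). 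I would obtain this via (a) applied with the roles swapped: approximate \(f\) by \(\psi f_k\) and use \(\FormQ[\psi f_k][(1-\psi)g_j]=\FormQF[\psi f_k][(1-\psi)g_j]\), then pass to the limit with \(f\) replacing \(\psi f_k\). In that limit the \(\FormQF\) term is \(0\), since \(\supp f\cap\supp(1-\psi)=\emptyset\). Everything converges by Lemma \ref{Lemma::Core::MultPsiIsContDomQToHW} in the slot localized by \(\psi\). Once that vanishing is established, letting \(j\to\infty\) gives \(\FormQ[f][g]=\FormQF[f][g]\).

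The delicate point throughout is this bookkeeping: keeping at least one slot localized by \(\psi\) so that \(\HsWSpace{\kappa}\)-continuity from Lemma \ref{Lemma::Core::MultPsiIsContDomQToHW} applies when passing to limits.
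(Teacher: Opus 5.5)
Your verification of Assumptions \ref{Asssumption::Result::TestFunctionsInDomain} and \ref{Asssumption::Result::DomainIsHkappaW} is correct and essentially matches the paper. Your argument for Assumption \ref{Asssumption::Result::QIsQF}, however, has a genuine gap.

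First, step (a) relies on ``a slightly larger cutoff $\psi_1$ satisfying Assumption \ref{Assumption::Core::ExistsPsi}''. The hypotheses provide only the single function $\psi$. Nothing says another cutoff preserves $\mathscr{B}$, obeys the form bound, or satisfies Assumption \ref{Assumption::Core::QEqualsQF}. For general $f'\in\mathrm{Dom}(\mathcal{Q})$, the expression $\mathcal{Q}_{\mathrm{F}}(f',\psi g')$ need not even be defined. In the only case you use, $f'=f$ with $\supp(f)\Subset\Omega$, this is repairable: approximate $f$ by $\psi f_k$, so that both slots of $\mathcal{Q}_{\mathrm{F}}(\psi f_k,\psi g_j)$ are localized.

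The serious problem is the claimed vanishing of $\mathcal{Q}(f,(1-\psi)g_j)$. Passing $k\to\infty$ in $\mathcal{Q}_{\mathrm{F}}(\psi f_k,(1-\psi)g_j)$ requires $v\mapsto\mathcal{Q}_{\mathrm{F}}(v,(1-\psi)g_j)$ to be bounded in the $H^\kappa_W$ norm on functions supported in $\supp(\psi)$, i.e.\ you need $W^\beta((1-\psi)g_j)\in L^2(\supp(\psi))$. The hypotheses only control $\psi g_j$ in $H^\kappa_W(\widetilde{\Omega};\mathbb{C}^M)$ (Assumption \ref{Assumption::Core::LocalizationOfCore}, Lemma \ref{Lemma::Core::MultPsiIsContDomQToHW}). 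This gives no control of $W^\beta g_j$ where $\psi$ is small but nonzero. There $1-\psi\approx 1$, while $W^\alpha(\psi f_k)$ keeps no factor of $\psi$ once derivatives fall on $\psi$.

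So your guiding principle, that keeping one slot localized by $\psi$ suffices, is false. Continuity in one slot needs $L^2$ control of the other slot on the whole support of the first. This is the very obstruction you correctly identified for $g_j$, now reappearing for a fixed $g_j$. The auxiliary $\psi_0$ does not help, since multiplication by $\psi_0$ is not known to preserve $\mathscr{B}$ or $\mathrm{Dom}(\mathcal{Q})$.

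The missing idea, which is how the paper proceeds, is to load the first slot with enough powers of $\psi$. Since $\psi=1$ near $\overline{\Omega}$, we have $f=\psi^{2\kappa+2}f$. Since multiplication by $\psi$ maps $\mathscr{B}$ into itself, $\psi^{2\kappa+2}f_j=\psi\cdot(\psi^{2\kappa+1}f_j)$ with $\psi^{2\kappa+1}f_j\in\mathscr{B}$. Hence Assumption \ref{Assumption::Core::QEqualsQF} gives $\mathcal{Q}(\psi^{2\kappa+2}f_j,g_k)=\mathcal{Q}_{\mathrm{F}}(\psi^{2\kappa+2}f_j,g_k)$.

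By the Leibniz rule, every term of $W^\alpha(\psi^{2\kappa+2}u)$ with $|\alpha|\le\kappa$ carries a factor $\psi^{\kappa+1}$. Move this factor to the second slot, and use that $\psi^{|\beta|+1}W^\beta v$ is a smooth combination of the $W^{\beta'}(\psi v)$ with $|\beta'|\le|\beta|$. This yields $\mathcal{Q}_{\mathrm{F}}(\psi^{2\kappa+2}u,v)=\mathcal{R}_\psi(\psi u,\psi v)$, with $\mathcal{R}_\psi$ bounded on $H^\kappa_W(\widetilde{\Omega})\times H^\kappa_W(\widetilde{\Omega})$.

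Lemmas \ref{Lemma::Core::MultPsiIsCont} and \ref{Lemma::Core::MultPsiIsContDomQToHW} then allow the double limit directly: $\mathcal{Q}(f,g)=\lim_{j,k}\mathcal{R}_\psi(\psi f_j,\psi g_k)=\mathcal{R}_\psi(\psi f,\psi g)=\mathcal{Q}_{\mathrm{F}}(\psi^{2\kappa+2}f,g)=\mathcal{Q}_{\mathrm{F}}(f,g)$. No splitting $g_j=\psi g_j+(1-\psi)g_j$ is needed. With this device your vanishing step would also go through, but it becomes superfluous.
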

\begin{proof}
    Assumption \ref{Asssumption::Result::TestFunctionsInDomain}:
    By Assumption \ref{Assumption::Core::TestFunctionsInCore},
    we have \(\TestFunctionsZeroCM[\Omega]\subseteq \CoreB\subseteq \DomainQ\).

    Assumption \ref{Asssumption::Result::DomainIsHkappaW}:
    The map \(g\mapsto g\big|_{\Omega}\) is continuous \(\HsWSpace*{\kappa}[\Omegat;\C^M]\rightarrow \HsWSpace*{\kappa}[\Omega;\C^M]\hookrightarrow \HsWlocSpace{\kappa}[\Omega;\C^M]\).
    By Lemma \ref{Lemma::Core::MultPsiIsContDomQToHW}, \(f\mapsto \psi f\) is continuous \(\DomainQ\rightarrow \HsWSpace*{\kappa}[\Omegat;\C^M]\).
    Combining these we see \(f\mapsto f\big|_{\Omega}= \psi f\big|_{\Omega}\) is continuous \(\DomainQ\rightarrow \HsWlocSpace{\kappa}[\Omega;\C^M]\).

    Assumption \ref{Asssumption::Result::QIsQF}:
    By the Leibniz rule, there is a continuous sesquilinear form
    \(\FormRpsi:\HsWSpace{\kappa}[\Omegat;\C^M]\times \HsWSpace{\kappa}[\Omegat;\C^M]\rightarrow \C\)
    such that
    \begin{equation*}
        \FormQF[\psi^{2\kappa+2}u][v]=\FormRpsi[\psi u][\psi v].
    \end{equation*}
    Indeed, we have
    \begin{equation*}
        \FormQF[\psi^{2\kappa+2}u ][v]=\sum_{|\alpha|,|\beta|\leq \kappa} \Ltip*{W^{\alpha} (\psi u)}{c_{\alpha,\beta} W^{\beta}(\psi v)}\eqqcolon \FormRpsi[\psi u][\psi v],
    \end{equation*}
    for some smooth, compactly supported \(c_{\alpha,\beta}\).

    Let \(f,g\in \DomainQ\) such that \(\supp(f)\subseteq \Omega\). Note that \(\psi f=f\).
    Take \(f_j,g_k\in \CoreB\) with \(f_j\rightarrow f\), and \(g_k\rightarrow g\) in \(\DomainQ\).
    By Lemma \ref{Lemma::Core::MultPsiIsCont}, \(\psi f_j\rightarrow \psi f=f\), \(\psi^{2\kappa+2}f_j\rightarrow \psi^{2\kappa+2}f=f\),
    and \(\psi g_k\rightarrow \psi g\) in \(\DomainQ\).
    By Lemma \ref{Lemma::Core::QNormIsHkappaWNormOnDomainQ}, \(f=\psi f, \psi g\in \HsWSpace{\kappa}[\Omegat;\C^M]\)
    and
    by Lemma \ref{Lemma::Core::MultPsiIsContDomQToHW},
    \(\psi f_j\rightarrow \psi f=f\) and \(\psi g_k\rightarrow \psi g\) in \(\HsWSpace{\kappa}[\Omegat;\C^M]\).
    We have, using Assumption \ref{Assumption::Core::QEqualsQF},
    \begin{align*}
    &\FormQ[f][g]=\FormQ[\psi^{2\kappa+2} f][g] && \text{Using }\supp(f)
    \\&=\lim_{j,k\rightarrow \infty} \FormQ[\psi^{2\kappa+2} f_j][g_k] & & \text{Convergence in }\DomainQ 
    \\&=\lim_{j,k\rightarrow \infty} \FormQF[\psi^{2\kappa+2} f_j][g_k]  & & \text{Assumption \ref{Assumption::Core::QEqualsQF}}
    \\&=\lim_{j,k\rightarrow \infty}\FormRpsi[\psi f_j][\psi g_k] && \text{Definition of }\FormRpsi
    \\&=\FormRpsi[\psi f][\psi g] && \text{Convergence in }\HsWSpace{\kappa}[\Omegat]
    \\&=\FormQF[\psi^{2\kappa+2}f][g] && \text{Definition of }\FormRpsi
    \\&=\FormQF[f][g] && \text{Using }\supp(f).
    \end{align*}

\end{proof}

It remains to establish Assumption \ref{Asssumption::Result::BoundaryAssumption::New}.
Let \(\Psi\) be as in Assumption \ref{Asssumption::Core::Smoothing}.

\begin{lemma}\label{Lemma::Cor::SmoothingOpIsCont}
    Let \(K(x,y')\in \CinftycptSpace[\CubengeqOneHalf\times \CubenmoOneHalf]\), and set
    \begin{equation}\label{Eqn::Cor::SmoothingOpIsCont::DefineS}
        S f(x',x_n):=\int K(x,y') f(y',x_n)\: dy',\quad T:=\Psi_{*} S \Psi^{*}.
    \end{equation}
    Then, \(T:\HsWSpace{\kappa}[\Omegat]\rightarrow \HsWSpace{\kappa}[\Omega]\) continuously.
\end{lemma}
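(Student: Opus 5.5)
The plan is to pull everything back to the coordinate chart and show that \(S\) commutes, up to harmless errors, with the pulled-back vector fields. Write \(Y_j:=\Psi^{*}W_j\) on \(\CubengeqOne\). Then for \(g\in \HsWSpace{\kappa}[\Omegat]\) we have \(Tg=\Psi_{*}S(\Psi^{*}g)\), and \(Tg\) is supported in \(\Psi(\overline{\CubengeqOneHalf})\Subset \Omega\) (compact support of \(K\) in \(x\)). It therefore suffices to prove
\begin{equation*}
\sum_{|\alpha|\leq\kappa}\LtNorm*{Y^{\alpha} S h}[\CubengeqOne]\lesssim \sum_{|\beta|\leq \kappa}\LtNorm*{Y^\beta h}[\CubengeqOneHalf],\quad h=\Psi^{*}g,
\end{equation*}
since the density \(\Vol\) pulls back to a smooth positive density, comparable to Lebesgue measure on compact subsets.

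The key point is that \(S\) is smoothing in the tangential variables \(x'\) but acts only pointwise in \(x_n\). Write each \(Y_j=\sum_{k<n} b_{j,k}(x)\partial_{x_k}+b_{j,n}(x)\partial_{x_n}\). Derivatives \(\partial_{x'}\) landing on \(Sh\) fall on \(K\), so they are harmless. Derivatives \(\partial_{x_n}\) must pass through the integral. Where they hit \(K\) they give terms of the same form, with kernel \(\partial_{x_n}K\) vanishing for \(x_n<\delta\). Where they hit \(h\) they give \(\int K(x,y')(\partial_{x_n}h)(y',x_n)\,dy'\).

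The unknown \(h\) only controls \(Y^\beta h\), not \(\partial_{x_n}h\). Here I would use Assumption \ref{Asssumption::Core::Smoothing}\ref{Item::Core::Smoothing::PullBackX}: \(X\) is in the module generated by the \(W_j\), so \(\partial_{x_n}=c_0^{-1}\Psi^{*}X=\sum_j e_j(x)Y_j\) with \(e_j\) smooth. Thus \(\partial_{x_n}^a h\), modulo lower-order terms, is a combination of \(Y\)-words of length \(a\) applied to \(h\). Lower-order coefficient derivatives produce \(Y\)-words with smooth coefficients, whose \(x_n\)-derivatives are again rewritten via the \(e_j\). Tangential derivatives are transferred onto \(K\) by integration by parts in \(y'\), which is legitimate because \(K\) is compactly supported in \(y'\in\CubenmoOneHalf\) (no boundary terms in \(y'\)).

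So, by induction on \(|\alpha|\), I would show that \(Y^\alpha S h\) is a finite sum of terms
\begin{equation*}
\int \widetilde K(x,y')\,(Y^{\beta}h)(y',x_n)\,dy',\qquad |\beta|\leq|\alpha|,\ \widetilde K\in\CinftycptSpace[\CubengeqOneHalf\times\CubenmoOneHalf].
\end{equation*}
Each such term is bounded in \(L^2\) by Schur's test in \(y'\) followed by integration in \(x_n\) (Minkowski/Cauchy--Schwarz), giving \(\lesssim\LtNorm{Y^\beta h}\).

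The main obstacle is the bookkeeping in this induction. When \(\partial_{x_n}\) hits a term \(\int \widetilde K (Y^\beta h)\,dy'\), it produces \(\int\widetilde K\,\partial_{x_n}Y^\beta h\), and \(\partial_{x_n}\) must then be written as \(\sum e_j(y',x_n)Y_j\) evaluated at \((y',x_n)\). The \(e_j\) can be placed inside the kernel since they are smooth, so this term is of the allowed form with \(|\beta|\) increased by one. The total length stays at most \(|\alpha|\leq\kappa\).

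The \(\delta\)-flatness of \(K\) near \(x_n=0\) is not needed for this continuity statement. It is relevant later, for showing that \(T\) preserves boundary conditions.

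Finally, the resulting bound \(\HsWNorm{Tg}{\kappa}[\Omega]\lesssim\HsWNorm{g}{\kappa}[\Omegat]\) yields continuity of the linear map \(T\).
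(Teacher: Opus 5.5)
Your proposal is correct and follows essentially the paper's route: \(S\) absorbs every \(x'\)-derivative into the kernel, so only \(\partial_{x_n}\)-derivatives reach \(\Psi^{*}f\), and these are turned into \(W\)-derivatives via \(\Psi^{*}X=c_0\partial_{x_n}\) with \(X\) in the module generated by the \(W_j\). The paper passes through Euclidean derivatives and the intermediate bound \(\sum_{j\leq\kappa}\|X^j f\|_{L^2}\), whereas you convert one \(\partial_{x_n}\) at a time inside an induction; one minor point is that no integration by parts in \(y'\) is needed, since \(x'\)-derivatives of \(Sh\) fall directly on the kernel and your induction never puts bare tangential derivatives on \(h\).
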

\begin{proof}
    Let \(X\) be as in Assumption \ref{Asssumption::Core::Smoothing}.
    We will show
    \begin{equation}\label{Eqn::Cor::SmoothingOpIsCont::ToShow}
        \sum_{|\alpha|\leq \kappa} \LtNorm*{W^{\alpha} T f}[\ManifoldN]\lesssim
        \sum_{j=0}^{\kappa} \LtNorm{X^j f}[\Psi( \CubengeqOneHalf )].
    \end{equation}
    To see why \eqref{Eqn::Cor::SmoothingOpIsCont::ToShow} completes the proof, note
    that 
    \(\supp(Tf)\subseteq\Psi(\CubengeqOneHalf )\subseteq \Omega\) and
    \eqref{Eqn::Cor::SmoothingOpIsCont::ToShow} implies
    \begin{equation*}
        \HsWNorm{T f}{\kappa} = \sum_{|\alpha|\leq \kappa} \LtNorm*{W^{\alpha} T f}[\ManifoldN]\lesssim
        \sum_{j=0}^{\kappa} \LtNorm{X^j f}[\Psi(\CubengeqOneHalf )]
        \lesssim \sum_{|\alpha|\leq \kappa}\LtNorm*{W^{\alpha} f}[\Psi(\CubengeqOneHalf )]
        \leq \HsWNorm{f}{\kappa}.
    \end{equation*}

    We turn to establishing \eqref{Eqn::Cor::SmoothingOpIsCont::ToShow}.  
    Since \(\Vol\) is a smooth, strictly positive density and \(\Psi(\CubengeqOneHalf)\Subset \ManifoldN\),
    we have
    \begin{equation*}
        \LtNorm{f}[\Psi(\CubengeqOneHalf)][\Vol]\approx \LtNorm{\Psi^{*} f}[\CubengeqOneHalf],
    \end{equation*}
    where the \(\LtNorm{\cdot}\) on the right-hand side uses Lebesgue measure.
    We conclude, using the form of \(S\) (see \eqref{Eqn::Cor::SmoothingOpIsCont::DefineS})
    and Assumption \ref{Asssumption::Core::Smoothing}\ref{Item::Core::Smoothing::PullBackX},
    \begin{equation*}
    \begin{split}
         &\sum_{|\alpha|\leq \kappa} \LtNorm*{W^{\alpha}T f}[\ManifoldN]
         =\sum_{|\alpha|\leq \kappa} \LtNorm*{W^{\alpha}T f}[\Psi(\CubengeqOneHalf)]
         \approx \sum_{|\alpha|\leq \kappa} \LtNorm*{\left( \Psi^{*} W \right)^{\alpha}S \Psi^{*} f}[\CubengeqOneHalf]
         \\&
         \lesssim \sum_{|\beta|\leq \kappa}\LtNorm*{\partial_x^{\beta}S \Psi^{*} f}[\CubengeqOneHalf]
         \lesssim \sum_{j=0}^{\kappa} \LtNorm*{\partial_{x_n}^j  \Psi^{*} f}[\CubengeqOneHalf]
         \approx \sum_{j=0}^{\kappa} \LtNorm*{\left( \Psi^{*} X \right)^j  \Psi^{*} f}[\CubengeqOneHalf]
         \\&\approx \sum_{j=0}^{\kappa} \LtNorm{X^j f}[\Psi(\CubengeqOneHalf)][\Vol],
    \end{split}
    \end{equation*}
    establishing \eqref{Eqn::Cor::SmoothingOpIsCont::ToShow} and
    completing the proof.
\end{proof}

\begin{lemma}\label{Lemma::Cor::SmoothingOpOnDomQ}
    Let \(T\) be any operator of the form described in Assumption \ref{Asssumption::Core::Smoothing}\ref{Item::Core::Smoothing::SOperator}.
    Then, \(T\) is a bounded operator \(T:\DomainQ\rightarrow \HsWSpace{\kappa}[\Omega;\C^M]\) and
    \(T:\DomainQ\rightarrow \DomainQ\).
\end{lemma}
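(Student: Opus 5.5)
The plan is a density argument. We start from the core \(\CoreB\), on which Assumption \ref{Asssumption::Core::Smoothing}\ref{Item::Core::Smoothing::SOperator} gives \(T:\CoreB\rightarrow\DomainQ\). The aim is to show that \(T\) is bounded in the relevant norms and then extend by continuity.

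The first step is to factor \(T\) through multiplication by \(\psi\). Since \(\supp(K)\) lies in \(\CubengeqOneHalf\times\CubenmoOneHalf\), \(Tf\) depends only on \(f\) restricted to \(\Psi(\CubengeqOneHalf)\subseteq\Omega\). On that set \(\psi=1\), so \(Tf=T(\psi f)\) for every \(f\). For \(f\in\DomainQ\), Lemma \ref{Lemma::Core::MultPsiIsContDomQToHW} gives \(\psi f\in\HsWSpace{\kappa}[\Omegat;\C^M]\) with \(\HsWNorm{\psi f}{\kappa}\lesssim\QNorm{f}\). Lemma \ref{Lemma::Cor::SmoothingOpIsCont}, applied componentwise since \(K\) is scalar and \(T\) acts diagonally, then gives \(\HsWNorm{Tf}{\kappa}\lesssim\HsWNorm{\psi f}{\kappa}\lesssim\QNorm{f}\). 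This proves the first claim: \(T:\DomainQ\rightarrow\HsWSpace{\kappa}[\Omega;\C^M]\) is bounded.

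The second step is to control the \(\DomainQ\)-norm of \(Tf\) for \(f\in\CoreB\), where we know \(Tf\in\DomainQ\). The support of \(Tf\) lies in \(\Psi(\CubengeqOneHalf)\), which is relatively compact in \(\Omega\), so \(\psi Tf=Tf\). We would like to apply Lemma \ref{Lemma::Core::QNormIsHkappaWNormOnDomainQ} to the element \(Tf\in\DomainQ\). That lemma gives \(\QNorm{Tf}=\QNorm{\psi Tf}\approx\HsWNorm{\psi Tf}{\kappa}=\HsWNorm{Tf}{\kappa}\lesssim\QNorm{f}\), using the first step at the end. So \(\QNorm{Tf}\lesssim\QNorm{f}\) for all \(f\in\CoreB\).

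To extend to all of \(\DomainQ\), take \(f\in\DomainQ\) and choose \(f_j\in\CoreB\) with \(f_j\rightarrow f\) in \(\DomainQ\). By the bound just proved, \(Tf_j\) is Cauchy in \(\DomainQ\), so it converges to some \(g\in\DomainQ\). By the first step, \(Tf_j\rightarrow Tf\) in \(\HsWSpace{\kappa}\), and in particular in \(\LtSpace\). Convergence in \(\DomainQ\) also implies convergence in \(\LtSpace\), so \(g=Tf\). Hence \(Tf\in\DomainQ\), and \(T\) is bounded on \(\DomainQ\).

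The main point requiring care is identifying supports. We need \(\supp(Tf)\) to lie where \(\psi\equiv1\), and we need \(Tf=T(\psi f)\) as \(L^2\) functions. Both follow from the compact support of \(K\) in the \(x\) and \(y'\) variables together with \(\Psi(\CubengeqOne)\subseteq\Omega\). The rest is routine.
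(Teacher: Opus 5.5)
Your proposal is correct and follows the paper's proof essentially step for step. Both first bound \(\HsWNorm{Tf}{\kappa}=\HsWNorm{T\psi f}{\kappa}\lesssim\QNorm{f}\) on \(\DomainQ\), then use \(\QNorm{Tf}=\QNorm{\psi Tf}\approx\HsWNorm{Tf}{\kappa}\) on the core via Lemma \ref{Lemma::Core::QNormIsHkappaWNormOnDomainQ}, and finally extend by density. Your explicit check that the \(\DomainQ\)-limit of \(Tf_j\) equals \(Tf\), via \(\LtSpace\) convergence, spells out a point the paper leaves implicit.
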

\begin{proof}
    Note that, by the definition of \(T\), \(Tf=\psi T f = T\psi f\) for any \(f\).
    Using Lemmas \ref{Lemma::Cor::SmoothingOpIsCont}, \ref{Lemma::Core::QNormIsHkappaWNormOnDomainQ}, and \ref{Lemma::Core::MultPsiIsCont},
    we have for \(f\in \DomainQ\),
    \begin{equation}\label{Eqn::Cor::SmoothingOpOnDomQ::HkappaW}
        \begin{split}
            &\HsWNorm{T f}{\kappa}
            =\HsWNorm{T \psi f}{\kappa}
            \lesssim \HsWNorm{\psi f}{\kappa}
            \approx \QNorm{\psi f}
            \lesssim \QNorm{f}.
        \end{split}
    \end{equation}
    \eqref{Eqn::Cor::SmoothingOpOnDomQ::HkappaW} shows \(T:\DomainQ\rightarrow \HsWSpace{\kappa}[\Omega;\C^M]\)
    and is continuous.
    
    For \(f\in \CoreB\), since \(T:\CoreB\rightarrow  \DomainQ\), by Assumption \ref{Assumption::Core::ExistsPsi},
    we have (using Lemma \ref{Lemma::Core::QNormIsHkappaWNormOnDomainQ}),
    \begin{equation}\label{Eqn::Cor::SmoothingOpOnDomQ::Tmp1}
        \QNorm{Tf}= \QNorm{\psi Tf}\approx \HsWNorm{\psi Tf }{\kappa}=\HsWNorm{Tf }{\kappa}
    \end{equation}
    Combining \eqref{Eqn::Cor::SmoothingOpOnDomQ::Tmp1} with \eqref{Eqn::Cor::SmoothingOpOnDomQ::HkappaW}
    shows
    \begin{equation*}
        \QNorm{Tf}\lesssim \QNorm{f},\quad f\in \CoreB.
    \end{equation*}
    Since \(\CoreB\) is dense in \(\DomainQ\), it follows that \(T:\DomainQ\rightarrow \DomainQ\)
    and is continuous, completing the proof.
\end{proof}

\begin{lemma}\label{Lemma::Cor::TtMapsRightSpaces}
    Let \(\Tt\) be an operator of the form
    of the operator \(T\) described in Assumption \ref{Asssumption::Result::BoundaryAssumption::New}\ref{Item::Result::BoundaryAssumption::SOperator::New}.
    Then, \(\Tt:\LtSpace*[\R][\DomainQ]\rightarrow \CinftycptSpace*[\R][\DomainQ]\) and
                \(\Tt:\LtSpace*[\R][\DomainQ]\rightarrow \CinftycptSpace*[\R][\HsWSpace{\kappa}[\Omega;\C^M]]\),
                and these two maps are continuous.
\end{lemma}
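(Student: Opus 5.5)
The plan is to reduce \(\Tt\) to the \(t\)-independent operators already handled in Lemma \ref{Lemma::Cor::SmoothingOpOnDomQ}. The \(s\)-integration is then just a scalar integral against a smooth compactly supported kernel.

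\textbf{Step 1: freeze the time variables.} Write \(\Tt u(t,\Psi(x))=\iint K(t,s,x,y')u(s,\Psi(y',x_n))\,dy'\,ds\) with \(K\in \CinftycptSpace*[\R\times\R\times \CubengeqOneHalf\times \CubenmoOneHalf]\) and \(\partial_{x_n}K=0\) for \(0\leq x_n<\delta\). For fixed \((t,s)\), let \(T_{t,s}\) be the operator of the form \eqref{Eqn::MainCor::FormOfS} with kernel \(K(t,s,\cdot,\cdot)\). Then formally
\begin{equation*}
\Tt u(t)=\int T_{t,s}\, u(s)\,ds .
\end{equation*}
By Lemma \ref{Lemma::Cor::SmoothingOpOnDomQ}, each \(T_{t,s}\) is bounded \(\DomainQ\to\DomainQ\) and \(\DomainQ\to\HsWSpace{\kappa}[\Omega;\C^M]\).

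\textbf{Step 2: uniform bounds.} I need the operator norms of \(\partial_t^j T_{t,s}\) (the operator with kernel \(\partial_t^jK\)) to be bounded uniformly in \((t,s)\). I also need \((t,s)\mapsto \partial_t^j T_{t,s}\) to be continuous in operator norm. The proofs of Lemmas \ref{Lemma::Cor::SmoothingOpIsCont} and \ref{Lemma::Cor::SmoothingOpOnDomQ} give the bound
\begin{equation*}
\QNorm{T_{t,s}f}\lesssim \HsWNorm{T_{t,s}\psi f}{\kappa}\lesssim C(K(t,s))\,\QNorm{f},
\end{equation*}
where \(C(K(t,s))\) is controlled by finitely many \(C^j\) norms of \(K(t,s,\cdot,\cdot)\) in \((x,y')\). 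The first inequality there used that \(T_{t,s}f\in\DomainQ\), obtained by density from the core. Because the dependence on \(K\) is linear and bounded by \(C^j\) seminorms, applying the same estimate to difference quotients in \(t\) and to \(K(t,s)-K(t_0,s_0)\) gives smoothness in \(t\) and continuity in operator norm. This bookkeeping is the main obstacle. Lemma \ref{Lemma::Cor::SmoothingOpOnDomQ} was stated only qualitatively, so I must re-read its proof to confirm that the implicit constant depends only on such seminorms of the kernel and on a fixed \(\delta\). Since \(K\) is compactly supported, a single \(\delta\) works for all \((t,s)\), and the constant from Lemma \ref{Lemma::Cor::SmoothingOpIsCont} comes from bounding \(\partial_x^\beta S\) by \(\partial_{x_n}^j\), which depends only on derivatives of the kernel.

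\textbf{Step 3: conclude.} For \(u\in\LtSpace*[\R][\DomainQ]\), the map \(s\mapsto u(s)\) is Bochner integrable on the compact \(s\)-support of \(K\). Hence \(\Tt u(t)=\int T_{t,s}u(s)\,ds\) is a Bochner integral in \(\DomainQ\), and it vanishes for \(t\) outside the compact \(t\)-support of \(K\). Differentiating under the integral sign using Step 2 shows \(t\mapsto \Tt u(t)\) is \(C^\infty\) into \(\DomainQ\). For each \(j\), Cauchy--Schwarz in \(s\) gives
\begin{equation*}
\sup_t \QNorm{\partial_t^j\Tt u(t)}\lesssim \LtNorm{u}[\R][\DomainQ],
\end{equation*}
which is the continuity into \(\CinftycptSpace*[\R][\DomainQ]\), all supports lying in a fixed compact set. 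The same argument, with the \(\DomainQ\to\HsWSpace{\kappa}[\Omega;\C^M]\) bound from Lemma \ref{Lemma::Cor::SmoothingOpOnDomQ}, gives the second mapping property. Finally, I must check that this Bochner integral agrees with the pointwise formula defining \(\Tt u\). Both are \(L^2_{\mathrm{loc}}\) functions, and pairing against test functions and applying Fubini identifies them.
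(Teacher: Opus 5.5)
Your proposal is correct and follows essentially the same route as the paper. The paper freezes the variable \(s\), applies Lemma \ref{Lemma::Cor::SmoothingOpOnDomQ} to the resulting family of operators to get smooth dependence on \(t\) with values in \(\DomainQ\) (respectively \(\HsWSpace{\kappa}[\Omega;\C^M]\)), and then integrates in \(s\). Your Step 2 checks that the operator norm in Lemma \ref{Lemma::Cor::SmoothingOpOnDomQ} is controlled linearly by finitely many \(C^j\)-seminorms of the kernel, so that difference quotients in \(t\) converge in operator norm; this makes explicit a uniformity that the paper's one-line argument leaves implicit.
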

\begin{proof}
    Let \(\Kt(t,s,x,y')\in \CinftycptSpace[\R\times \R\times \CubengeqOneHalf\times \CubenmoOneHalf]\)
be such that
\(\exists \delta>0\) with \(\partial_{x_n}K(t,s,(x',x_n),y')=0\) for \(0\leq x_n<\delta\),
and 
\begin{equation*}
    \Tt u(t,\Psi(x))=\iint \Kt(t,s,x,y')u(s,\Psi(y',x_n))\: ds\: dy'.
\end{equation*}
    Consider the operator \(U\) defined by
    \begin{equation*}
        Uu (t,s,\Psi(x)):=\int \Kt(t,s,x,y') u(s, \Psi(y',x_n))\: dy'.
    \end{equation*}
    Lemma \ref{Lemma::Cor::SmoothingOpOnDomQ} shows
    \begin{equation*}
        U:\LtSpaceNoMeasure*[\R][\DomainQ]\rightarrow \CinftycpttSpace*[\R][\LtSpaceNoMeasurecpts[\R][\DomainQ]],
    \end{equation*}
    \begin{equation*}
        U:\LtSpaceNoMeasure*[\R][\DomainQ]\rightarrow \CinftycpttSpace*[\R][\LtSpaceNoMeasurecpts[\R][\HsWSpace{\kappa}[\Omega]]],
    \end{equation*}
    continuously.
    Integrating in \(s\) gives the result.
\end{proof}


\begin{proof}[Proof of Proposition \ref{Prop::Core::AssumptionsHold}]
    In light of 
    Lemma \ref{Lemma::Core::FirstAssumptionsHold} all that remains
    is 
    to establish Assumption \ref{Asssumption::Result::BoundaryAssumption::New};
    however, this is an immediate consequence of Lemma \ref{Lemma::Cor::TtMapsRightSpaces}.
\end{proof}

\section{Function spaces and pseudodifferential operators}\label{Section::PseudodifferentialOps}
Fix \(n,\kappa\in \Zg\); we work on \(\Ropn:=\R\times \Rnmo\times \R\) with coordinates \((t,x',x_n)=(t,x)\),
and the submanifold with boundary \(\Ropngeq:=\left\{ (t,x',x_n)\in \Ropn : x_n\geq 0 \right\}\).
We define pseudodifferential operators and function spaces which treat \(\partial_t\) as an operator of order
\(2\kappa\).
Let \(\SchwartzSpace[\Ropn]\) be the usual Schwartz space on \(\Ropn\) and let
\(\SchwartzSpace[\Ropngeq]=\left\{ f\in \SchwartzSpace[\Ropn] : \supp(f)\subseteq \Ropngeq \right\}\).
Let \(\TemperedDistributions[\Ropn]\) and \(\TemperedDistributions[\Ropngeq]\) be the corresponding dual spaces.

Let \(\tau\in \R\), \(\xi'\in \Rnmo\), \(\xi_n\in \R\), and \(\xi=(\xi',\xi_n)\in \Rn\) be dual to
\(t\), \(x'\), \(x_n\), and \(x=(x',x_n)\), respectively.
Let \(\Fouriertxp\) denote the partial Fourier transform in \((t,x')\):
\begin{equation*}
    \Fouriertxp f(\tau, \xi', x_n)
    :=
    \iint f(t,x',x_n)e^{-2\pi i (t\tau+x'\cdot \xi')}\: dx'\: dt,
\end{equation*}
and let \(\Fouriertx\) denote the full Fourier transform:
\begin{equation*}
    \Fouriertx f(\tau, \xi) 
    :=
    \iint f(t,x) e^{-2\pi i(t\tau+x\cdot \xi)}\: dx\: dt.
\end{equation*}
Set
\begin{equation*}
    \JBracket*{(\tau, \xi',\xi_n)}=\JBracket*{(\tau,\xi)}:=\left( 1+|\tau|^2 + |\xi|^{4\kappa}   \right)^{1/4\kappa},
\end{equation*}
\begin{equation}\label{Eqn::PDOs::DefinetxpJBracket}
    \JBracket*{(\tau, \xi')}:=\left( 1+|\tau|^2 + |\xi'|^{4\kappa}  \right)^{1/4\kappa}.
\end{equation}

We consider the following symbol classes for pseudodifferential operators, for \(s\in \R\).
\(\Symbolstx{s}\) consists of those \(a(t,x,\tau,\xi)\in \CinftySpace[\Ropn\times \Ropn]\)
satisfying for every \(L\),
\begin{equation}\label{Eqn::PDOs::SymboltxSemiNorm}
    \SymbolstxNorm{a}{s}{L}
    :=\sum_{|\alpha_1|,|\alpha_2|,|\beta_1|,|\beta_2|\leq L}
    \sup_{t,x,\tau,\xi}
    \JBracket*{(\tau,\xi)}^{-s+|\beta_2|+2\kappa |\beta_1|}
    \left| \partial_t^{\alpha_1} \partial_x^{\alpha_2} \partial_\tau^{\beta_1} \partial_\xi^{\beta_2} a(t,x,\tau,\xi) \right|
    <\infty.
\end{equation}
\(\Symbolstxp{s}\) consists of those \(b(t,x,\tau,\xi')\in \CinftySpace[\R\times \Rngeq\times \R\times \Rnmo]\)
satisfying 
\begin{equation}\label{Eqn::PDOs::SymboltxpSemiNorm}
    \SymbolstxpNorm{b}{s}{L}:=
    \sum_{|\alpha_1|,|\alpha_2|,|\beta_1|,|\beta_2|\leq L}
    \sup_{t,x,\tau,\xi'}
    \JBracket*{(\tau,\xi')}^{-s+|\beta_2|+2\kappa |\beta_1|}
    \left| \partial_t^{\alpha_1} \partial_x^{\alpha_2} \partial_\tau^{\beta_1} \partial_{\xi'}^{\beta_2} b(t,x,\tau,\xi') \right|
    <\infty. 
\end{equation}

For \(a\in \Symbolstx{s}\), we set
\begin{equation}\label{Eqn::PDOs::DefineaOp}
    a(t,x,D_t, D_x) f(t,x):=\iint a(t,x,\tau,\xi) \Fouriertx f (\tau,\xi) e^{2\pi i (t\tau+x\cdot \xi)}\: d\tau \: d\xi,
\end{equation}
and for \(b\in \Symbolstxp{s}\), we set
\begin{equation}\label{Eqn::PDOs::DefinebOp}
    b(t,x,D_t, D_{x'}) f(t,x):=\iint b(t,x,\tau,\xi') \Fouriertxp f (\tau,\xi',x_n) e^{2\pi i (t\tau+x\cdot \xi')}\: d\tau \: d\xi'.
\end{equation}
Note that \(b(t,x,D_t, D_{x'})\) is well-defined for \(f\in \TemperedDistributions[\Ropngeq]\); though this is not true for \(a(t,x,D_t, D_x)\).
We call \(a\) and \(b\) the symbols for the operators \(a(t,x,D_t, D_x)\) and \(b(t,x,D_t, D_{x'})\), respectively.

Two special pseudodifferential operators we use are
\(\Lambdatx[s]\) with symbol \(\JBracket*{(\tau,\xi)}\in \Symbolstx{s}\) and \(\Lambdatxp[s]\) with symbol \(\JBracket*{(\tau,\xi')}\in \Symbolstxp{s}\).

We set
\begin{equation*}
    \HsNorm{f}{s}[\Ropn]:=\LtNorm*{\Lambdatx[s] f}[\Ropn],\quad \HsSpace{s}[\Ropn]:=\left\{ f\in \TemperedDistributions[\Ropn] : \HsNorm{f}{s}[\Ropn]<\infty \right\}.
\end{equation*}
Set 
\begin{equation*}
    \HsSpace{s}[\Ropngeq]:=\left\{ f\big|_{\Ropngeq} : f\in \HsSpace{s}[\Ropn] \right\},\quad
    \HsNorm{g}{s}[\Ropngeq] = \inf \left\{ \HsNorm{f}{s}[\Ropn] : f\big|_{\Ropngeq}=g \right\}.
\end{equation*}
\(\HsSpace{s}[\Ropngeq]\) can be identified with a subspace of \(\TemperedDistributions[\Ropngeq]\).
Set
\begin{equation*}
    \GsNorm{f}{s}[\Ropngeq]:=\LtNorm*{\Lambdatxp[s] f}[\Ropngeq],\quad \GsSpace{s}[\Ropngeq]:=\left\{ f\in \TemperedDistributions[\Ropngeq] : \GsNorm{f}{s}[\Ropngeq]<\infty \right\}.
\end{equation*}

\begin{remark}\label{Rmk::PDOs::DefineClassicalHsSpaceOnMfld}
Given a manifold with boundary, \(\ManifoldN\), of dimension \(n\), set
\(\TestFunctionsZero[\R\times \ManifoldN]\) to be the space of those smooth functions with compact support which vanish to infinite order
on \(\R\times \BoundaryN\). Let \(\DistributionsZero[\R\times \ManifoldN]\) be the dual space. 
Let \(\HscptSpace{s}[\R\times \ManifoldN]\) consist of those \(u\in \DistributionsZero[\R\times \ManifoldN]\)
such that \(u\) has compact support in \(\R\times \ManifoldN\), and for each \((t,x)\in \R\times \ManifoldN\),
there is a neighborhood \((t-\delta,t+\delta)\times U\) of \(t,x\), a diffeomorphism \(\Phi:U\xrightarrow{\sim} \Phi(U)\subseteq \Rngeq\),
and a function \(\phi\in \CinftycptSpace[(t-\delta,t+\delta)\times \Phi(U)]\) with \(\phi=1\) on a neighborhood of \((t,\Phi(x))\) with
\(\phi \Phi_{*} u(t,\cdot)\in \HscptSpace{s}[\Ropngeq]\).
\end{remark}

    \subsection{Notation: Estimates in terms of pseudodifferential operators near the boundary}\label{Section::PseudodifferentialOps::Bdry}
    We will prove a number of estimates involving
pseudodifferential operators supported on \(\R\times \CubengeqOne\) which are smoothing
in the \(t,x'\) variables.  In this section we introduce notation that will make these estimates easier to
state and work with.

Fix \(0<\delta_1<\delta_2<1\) and \(s\in \R\),
and open sets  \(U_1\Subset U_2 \Subset \R\). We write \(\Stxp{s}\) for an operator of the form:
\begin{enumerate}[(I)]
    \item\label{Item::PDOS::NearBdry::Formula} \(\Stxp{s}=b(t,x,D_t,D_{x'})\) for some \(b\in \bigcap_{s'\in \R} \Symbolstxp{s'}\); see \eqref{Eqn::PDOs::DefinebOp}.
    \item\label{Item::PDOS::NearBdry::Support} The Schwartz kernel of \(b(t,x,D_t,D_{x'})\) is supported in \(\left(U_1\times \CubengeqDeltaOne \right)\times \left( U_1\times \CubengeqDeltaOne \right)\).
    \item\label{Item::PDOS::NearBdry::DerivZero} \(\partial_{x_n}b(t,x,\tau,\xi')=0\) for \(|x_n|<\delta_0\) for some \(\delta_0>0\).
    \item\label{Item::PDOS::NearBdry::Estimates} The way \(s\) plays a role is only through the estimates as described below.
\end{enumerate}
We write \(\Stxpt{s}\) for the same thing, but with \(\delta_1\) and \(U_1\) replaced
by \(\delta_2\) and \(U_2\), respectively, in \ref{Item::PDOS::NearBdry::Support}.

When we write an estimate like
\begin{equation*}
    \LtNorm*{\Stxp{s}u}[\Ropngeq]\lesssim \LtNorm*{\wtEst \Stxpt{s'}v}[\Ropngeq],
\end{equation*}
it means the following.
Given any \(\Stxp{s}=b(t,x,D_t,D_{x'})\) as described in \ref{Item::PDOS::NearBdry::Formula}--\ref{Item::PDOS::NearBdry::DerivZero}
above, there exist countable collections:
\begin{equation*}
    \Stxpt{s'}[1]=b_1(t,x,D_t,D_{x'}), \Stxpt{s'}[2]=b_2(t,x,D_t,D_{x'}),\ldots,
\end{equation*}
\begin{equation*}
    \wtEst_1,\wtEst_2,\ldots\in \CinftySpace[\Ropngeq],
\end{equation*}
such that
\begin{equation*}
    \LtNorm*{\Stxp{s}u}[\Ropngeq]\leq \sum_{j}\LtNorm*{\wtEst_j \Stxpt{s'}[j]v}[\Ropngeq],
\end{equation*}
and for every \(L\), there exists \(K\) depending only on \(L\), \(s\), \(s'\), and \(n\),
such that
\begin{equation}\label{Eqn::PDOs::NearBdry::CL}
    \sup_j \CjNorm*{\wtEst_j}{L} + \sum_j \SymbolstxpNorm*{b_j}{s'}{L}\leq C_L,
\end{equation}
where \(C_L\) can be chosen to depend only on \(L\), \(s\), \(s'\), \(n\), \(\delta_1\), \(\delta_2\),
\(U_1\), \(U_2\),
and an upper bound for \(\SymbolstxpNorm{b}{s}{K}\).
\(C_L\) will also be allowed to depend on various other relevant ingredients in estimate;
see, e.g., Section \ref{Section::HorVfs}.

If we instead write
\begin{equation*}
    \LtNorm*{\Stxp{s}u}[\Ropngeq]\lesssim \LtNorm*{ \Stxpt{s'}v}[\Ropngeq],
\end{equation*}
it means the same thing but with \(\wtEst_j=1\), \(\forall j\).

For a more complicated example, we will write equations like
\begin{equation*}
      \LtNorm*{\Stxp{s}[1] u_1}[\Ropngeq]+\LtNorm*{\Stxp{s}[2] u_2}[\Ropngeq]
      \lesssim \left| \Ltip*{\wtEst\Stxpt{s'}v_1}{\wtEst\Stxpt{s'}v_2}[\Ropngeq] \right|
\end{equation*}
to mean that for every \(\Stxp{s}[1]\) and \(\Stxp{s}[2]\) as above, there exist
\(\Stxpt{s'}[0],\Stxt{s'}[1],\ldots \)
and \(\wtEst_0,\wtEst_1,\ldots\)
(as described above) such that
\begin{equation*}
    \LtNorm*{\Stx{s}[1] u_1}[\Ropngeq]+\LtNorm*{\Stx{s}[2] u_2}[\Ropngeq]
     \leq C
     \sum_{k}\left| \Ltip*{\wtEst_{2k}\Stxpt{s'}[2k]v_1}{\wtEst_{2k+1}\Stxpt{s'}[2k+1]v_2}[\Ropn] \right|.
\end{equation*}
In other words, each copy of \(\Stxpt{s'}\) and \(\wtEst\) on the right-hand side gets independently replaced by a countable collection of operators and functions of that form which we sum over.
\textbf{In particular, the operators \(\Stxpt{s'}\) and functions \(\wtEst\) may vary from line to line and term to term in our estimates, even though the notation
does not indicate the change.}

\begin{remark}\label{Rmk::PDOs::NotationNearBdry::WhyWtEst}
    The reason the \(\wtEst\) function appears is that we cannot include it in the operator
    \(\Stxpt{s'}\) without possibly destroying property \ref{Item::PDOS::NearBdry::DerivZero}.
    Property \ref{Item::PDOS::NearBdry::DerivZero} is essential to ensure the operators preserve the domain
    of the forms.
\end{remark}

The next lemma explains the main way we use the above operators to conclude results.
\begin{lemma}\label{Lemma::PDOs::NotationNearBdry::ConcludeSobolevEstimates}
    Suppose
    \begin{equation}\label{Eqn::PDOs::NotationNearBdry::AssumeTheIneq}
        \LtNorm*{\Stxp{s}u}[\Ropngeq]\lesssim \LtNorm*{\wtEst \Stxpt{s'}v}[\Ropngeq],
    \end{equation}
    and fix \(\phi_2\in \CinftycptSpace[\R\times \CubengeqOne]\) with \(\phi_2=1\) on a neighborhood of
\(\overline{U_2}\times \CubengeqDeltaTwoClosure\) and \(\phi_1\in \CinftycptSpace[U_1\times \CubengeqDeltaOne]\).
Then,
\begin{equation}\label{Eqn::PDOs::NotationNearBdry::AssumeTheIneq::Tmp3}
    \GsNorm*{\phi_1 u}{s}[\Ropngeq]\lesssim \GsNorm*{\phi_2 v}{s'}[\Ropngeq].
\end{equation}
\end{lemma}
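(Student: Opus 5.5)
The idea is to choose one specific operator \(\Stxp{s}\) whose action on \(u\) recovers \(\Lambdatxp[s]\phi_1 u\) up to a harmless error. The hypothesis \eqref{Eqn::PDOs::NotationNearBdry::AssumeTheIneq} then bounds it by \(\sum_j\LtNorm{\wtEst_j\Stxpt{s'}[j]v}\), and each of those terms is bounded by \(\GsNorm{\phi_2 v}{s'}\).

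\emph{Step 1: building \(\Stxp{s}\).} Pick \(\chi\in\CinftycptSpace[U_1\times\CubengeqDeltaOne]\) with \(\chi=1\) near \(\supp\phi_1\), and with \(\chi\) independent of \(x_n\) for small \(x_n\). This is possible because \(\supp\phi_1\) is compact in \(U_1\times\CubengeqDeltaOne\). We may arrange \(\phi_1\) to have the same property, or else handle the \(x_n\)-dependence of \(\phi_1\) through a multiplication \(\wtEst\) on the left, which \(G^s\) tolerates since \(\Lambdatxp[s]\) commutes with functions of \(x_n\) alone.

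The symbol \(\JBracket{(\tau,\xi')}^s\) is not smoothing, so it cannot serve as \(b\) in \ref{Item::PDOS::NearBdry::Formula} directly. Instead we truncate in frequency. Let \(\eta_R(\tau,\xi')=\eta(\tau/R^{2\kappa},\xi'/R)\) with \(\eta\in\CinftycptSpace\) equal to \(1\) near \(0\), and set
\[
b_R=\chi(t,x)\,\JBracket{(\tau,\xi')}^s\eta_R\,\#\,\chi .
\]
Concretely, \(\Stxp{s}_R=\chi\,\eta_R(D)\Lambdatxp[s]\chi\) acts in \((t,x')\) only, with \(x_n\) as a parameter. Its kernel is supported in \((U_1\times\CubengeqDeltaOne)^2\), its symbol lies in \(\bigcap_{s'}\Symbolstxp{s'}\), and it satisfies \ref{Item::PDOS::NearBdry::DerivZero}. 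Moreover \(\SymbolstxpNorm{b_R}{s}{K}\) is bounded uniformly in \(R\), because \(\eta_R\) is uniformly in \(\Psi^0\). Since \(C_L\) depends only on that bound, the implicit constants in \eqref{Eqn::PDOs::NotationNearBdry::AssumeTheIneq} are uniform in \(R\).

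\emph{Step 2: bounding the right-hand side.} For each \(j\) we have \(\wtEst_j\Stxpt{s'}[j]v=\wtEst_j\Stxpt{s'}[j]\phi_2 v\), because the kernel is supported in \(U_2\times\CubengeqDeltaTwo\) and \(\phi_2=1\) there. Write \(\Stxpt{s'}[j]=(\Stxpt{s'}[j]\Lambdatxp[-s'])\Lambdatxp[s']\). The first factor is a \((t,x')\)-pseudodifferential operator of order \(0\), with \(x_n\) as a smooth parameter, so it is \(L^2\)-bounded with a bound controlled by finitely many seminorms \(\SymbolstxpNorm{b_j}{s'}{L}\). Summing over \(j\) using \eqref{Eqn::PDOs::NearBdry::CL} gives
\[
\sum_j\LtNorm{\wtEst_j\Stxpt{s'}[j]v}\lesssim\GsNorm{\phi_2v}{s'} .
\]

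\emph{Step 3: the left-hand side.} We have
\[
\chi\eta_R(D)\Lambdatxp[s]\chi u=\chi\eta_R(D)\Lambdatxp[s]\phi_1 u+\chi\eta_R(D)\Lambdatxp[s](\chi-\phi_1)u .
\]
Multiplying on the left by \(\phi_1\) instead, and using a commutator argument, \(\phi_1\Lambdatxp[s]\chi-\Lambdatxp[s]\phi_1\chi\) has order \(s-1\). Let \(R\to\infty\) and apply Fatou, or monotone convergence on the Fourier side in \((t,x')\) for each fixed \(x_n\). This yields
\[
\GsNorm{\phi_1u}{s}\lesssim \GsNorm{\phi_2v}{s'}+\GsNorm{\tilde\phi u}{s-1}.
\]

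\emph{Main obstacle.} Removing this lower-order error term is the hard step. The remedy I would try first is to choose \(\Stxp{s}\) as the composition of \(\Lambdatxp[s]\) with a parametrix correction, so that the error is of order \(-\infty\) and is controlled by the same hypothesis. Alternatively, I would apply the hypothesis to a finite chain of nested cutoffs \(\phi_1\prec\cdots\prec\phi_2\), using that \eqref{Eqn::PDOs::NotationNearBdry::AssumeTheIneq} holds for every \(\Stxp{s}\), including the ones needed at each level, and iterate down in order until a remainder of sufficiently negative order can be absorbed.

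The qualitative issue, namely that a finite right-hand side implies a finite left-hand side, is handled by the uniformity in \(R\) from Step 1.
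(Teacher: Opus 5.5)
Your Steps 1 and 2 are sound and match the paper. The right-hand side of the hypothesis is bounded by $\|\phi_2 v\|_{G^{s'}}$ because every kernel lives where $\phi_2=1$. Frequency truncation with symbol bounds uniform in $R$, followed by a weak limit, transfers the hypothesis to a genuine order-$s$ operator.

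The gap is Step 3, which you flag yourself. Because you commute $\phi_1$ past $\Lambda^s_{t,x'}$, you only reach
\[
\|\phi_1 u\|_{G^s}\lesssim \|\phi_2 v\|_{G^{s'}}+\|\tilde\phi u\|_{G^{s-1}} .
\]
The hypothesis gives no control of a Sobolev norm of $u$ as such. It only controls $\|S^{(s,-\infty)}_{t,x'}u\|$ for admissible operators, i.e.\ those satisfying (I)--(III). Your nested-cutoff iteration does not close. Each round trades $\|\tilde\phi u\|_{G^{s-k}}$ for $\|\phi_2 v\|_{G^{s'}}$ plus a new term $\|\tilde\phi' u\|_{G^{s-k-1}}$ with a larger cutoff. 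A norm of $u$ survives at every stage, and it cannot be absorbed into the left-hand side, since it carries a bigger cutoff than $\phi_1$. Your parametrix remedy points the right way but is not carried out. As written, the argument does not prove the lemma.

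The missing idea is to arrange that the only error term is itself an admissible, infinitely smoothing operator applied to $u$. The hypothesis holds for \emph{every} such operator, so this error costs nothing beyond $\|\phi_2 v\|_{G^{s'}}$. Order-$(-\infty)$ errors come from disjoint supports (pseudolocality), not from commutators.

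The paper picks $\phi_1\prec\psi_0\prec\psi_1$ in $C^\infty_{\mathrm{cpt}}(U_1\times Q^n_{\geq}(\delta_1))$ with $\partial_{x_n}\psi_j=0$ near $x_n=0$, and obtains
\[
\|\phi_1u\|_{G^s}\lesssim\|\psi_1\Lambda^s_{t,x'}\psi_0u\|_{L^2}+\|S^{(s,-\infty)}_{t,x'}u\|_{L^2},
\]
where the last operator satisfies (I)--(III), so the hypothesis bounds it directly. Since $\phi_1=\phi_1\psi_0$ and $\psi_1=1$ near $\supp\psi_0$, the discrepancy between $\Lambda^s_{t,x'}\phi_1$ and an $L^2$-bounded operator composed with $\psi_1\Lambda^s_{t,x'}\psi_0$ runs through $\phi_1\Lambda^{-s}_{t,x'}(1-\psi_1)$. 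That operator is of order $-\infty$ by disjointness of supports, rather than a commutator of order $s-1$.

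The auxiliary cutoffs also keep the possibly $x_n$-dependent $\phi_1$ out of the operators fed into the hypothesis. You cannot ``arrange'' $\phi_1$ itself, since it is given. Only after this reduction is your Step 1 argument (frequency truncation plus weak compactness) applied, to $\psi_1\Lambda^s_{t,x'}\psi_0$.
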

\begin{proof}
    \eqref{Eqn::PDOs::NotationNearBdry::AssumeTheIneq} implies
    \begin{equation}\label{Eqn::PDOs::NotationNearBdry::AssumeTheIneq::Tmp1}
        \LtNorm*{\Stxp{s}u}[\Ropngeq]
        \lesssim
        \GsNorm*{\phi_2 v}{s'}[\Ropngeq].
    \end{equation}

    Take
    \begin{equation*}
        \psi_0,\psi_1
        \in\CinftycptSpace[U_1\times\CubengeqDeltaOne]
    \end{equation*}
    with
    \begin{equation*}
        \phi_1\prec\psi_0\prec\psi_1
    \end{equation*}
    and \(\partial_{x_n}\psi_j=0\) near \(x_n=0\), \(j=0,1\).
    The calculus of pseudodifferential operators gives
    \begin{equation}\label{Eqn::PDOs::NotationNearBdry::ConcludeSobolevEstimates::LocalizedSobolevEstimate}
        \GsNorm*{\phi_1 u}{s}[\Ropngeq]
        \lesssim
        \LtNorm*{\psi_1\Lambdatxp[s]\psi_0u}[\Ropngeq]
        +
        \LtNorm*{\Stxp{s}u}[\Ropngeq],
    \end{equation}
    where the final operator satisfies
    \ref{Item::PDOS::NearBdry::Formula}--%
    \ref{Item::PDOS::NearBdry::DerivZero} from
    Section \ref{Section::PseudodifferentialOps::Bdry}.

    By cutting off in the \((\tau,\xi')\)-variables, we may approximate
    \begin{equation*}
        \psi_1\Lambdatxp[s]\psi_0
    \end{equation*}
    by operators
    \begin{equation*}
        \Stxp{s}[j]=b_j(t,x,D_t,D_{x'})
    \end{equation*}
    such that
    \begin{equation*}
        \Stxp{s}[j]
        \longrightarrow
        \psi_1\Lambdatxp[s]\psi_0
    \end{equation*}
    on distributions and
    \begin{equation*}
        \sup_j\SymbolstxpNorm*{b_j}{s}{L}<\infty,
        \qquad \forall L.
    \end{equation*}
    The two cutoff functions ensure that the Schwartz kernels satisfy
    \ref{Item::PDOS::NearBdry::Support}, and their choice near \(x_n=0\)
    ensures \ref{Item::PDOS::NearBdry::DerivZero}. Thus,
    \eqref{Eqn::PDOs::NotationNearBdry::AssumeTheIneq::Tmp1} and weak
    compactness in \(\LtSpace[\Ropngeq]\) show
    \begin{equation*}
        \LtNorm*{\psi_1\Lambdatxp[s]\psi_0u}[\Ropngeq]
        \lesssim
        \GsNorm*{\phi_2 v}{s'}[\Ropngeq].
    \end{equation*}
    Combining this with
    \eqref{Eqn::PDOs::NotationNearBdry::AssumeTheIneq::Tmp1} and
    \eqref{Eqn::PDOs::NotationNearBdry::ConcludeSobolevEstimates::LocalizedSobolevEstimate}
    gives
    \eqref{Eqn::PDOs::NotationNearBdry::AssumeTheIneq::Tmp3}.
\end{proof}

The next lemma gives useful formulas for working with such operators.
\begin{lemma}\label{Lemma::PDOs::NotationNearBdry::CalcOfPDO}
    \begin{enumerate}[(i)]
        \item\label{Item::PDOs::NotationNearBdry::CalcOfPDO::Composition} \(\Stxp{s_1}[1]\Stxp{s_2}[2]=\Stxpt{s_1+s_2}\).
        \item\label{Item::PDOs::NotationNearBdry::CalcOfPDO::Adjoint} \(\left( \Stxp{s} \right)^{*} = \Stxpt{s}\).
        \item\label{Item::PDOs::NotationNearBdry::CalcOfPDO::CommutatorOfPDOs} \(\left[ \Stxp{s_1}[1], \Stxp{s_2}[2] \right]= \Stxpt{s_1+s_2-1}\).
        \item\label{Item::PDOs::NotationNearBdry::CalcOfPDO::CommutatorOfPartialt} \(\left[ \partial_t, \Stxp{s} \right]= \Stxpt{s}\).
        \item\label{Item::PDOs::NotationNearBdry::CalcOfPDO::CommutatorMultiplication}  Let \(v\in \CinftySpace[\R\times \CubengeqOne]\), and \(\Mult{v}:u\mapsto vu\).
            Then, \(\left[ \Stxp{s}, \Mult{v} \right]= \wtEst \Stxpt{s-1}\); recall,
            \(\wtEst \Stxpt{s-1}\) denotes a countable sum of such terms.
        \item\label{Item::PDOs::NotationNearBdry::CalcOfPDO::VectorField} Let \(Y\) be any smooth vector field on \(\CubengeqOne\) 
        with no \(\partial_{x_n}\) component
        (and therefore, \(Y\) also does not have a \(\partial_t\) component). Then,
            \(\left[ \Stxp{s}, Y \right]=\wtEst \Stxpt{s}\).
    \end{enumerate}
\end{lemma}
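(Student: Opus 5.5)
The plan is to reduce every item to the standard symbolic calculus for the anisotropic class \(\Symbolstxp{s}\), where \(x_n\) is a parameter and \((t,x')\) are the pseudodifferential variables with weight \(\JBracket{(\tau,\xi')}\). Fix \(\Stxp{s}=b(t,x,D_t,D_{x'})\) satisfying \ref{Item::PDOS::NearBdry::Formula}--\ref{Item::PDOS::NearBdry::DerivZero}. Because \(b\) is smoothing of all orders, every composition, adjoint, and commutator below is a priori in \(\bigcap_{s'}\Symbolstxp{s'}\). The content of each item is therefore threefold:
\begin{enumerate}[(a)]
    \item seminorm bounds in the stated order \(s'\), depending only on seminorms of the input symbols in their stated orders;
    \item the support condition \ref{Item::PDOS::NearBdry::Support} with \(\delta_2,U_2\) in place of \(\delta_1,U_1\);
    \item preservation of \ref{Item::PDOS::NearBdry::DerivZero}.
\end{enumerate}

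For (a), I would use the usual Kohn--Nirenberg composition formula in the \((t,x')\) variables, with \(x_n\) frozen. The composed symbol is given by the oscillatory integral
\[
\iint b_1(t,x,\tau+\sigma,\xi'+\eta')\, b_2(t+r,x'+y',x_n,\tau,\xi')\, e^{-2\pi i(r\sigma+y'\cdot\eta')}\,dr\,dy'\,d\sigma\,d\eta'.
\]
Its standard asymptotic expansion shows membership in \(\Symbolstxp{s_1+s_2}\). Each \(\partial_\tau\) gains \(2\kappa\) and each \(\partial_{\xi'}\) gains \(1\) in the weight, while \(\partial_t,\partial_{x'}\) cost nothing. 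Hence the first-order remainder in a commutator is of order \(s_1+s_2-1\); the worst case is a \(\partial_{\xi'}\) paired with a \(\partial_{x'}\). The symbol estimates depend only on finitely many seminorms, which is exactly the bookkeeping required by \eqref{Eqn::PDOs::NearBdry::CL}. The same reasoning applied to the adjoint formula \(b^*(t,x,\tau,\xi')\sim\sum \partial_{\tau,\xi'}^\gamma D^\gamma_{t,x'}\overline{b}\) gives \ref{Item::PDOs::NotationNearBdry::CalcOfPDO::Adjoint}.

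For (b), supports are read off the Schwartz kernels. Each output kernel is built by integrating in \((t,x')\) against the input kernels, so it is supported in the product of the \(U_1\times\CubengeqDeltaOne\) boxes, which sit inside the \(\delta_2,U_2\) boxes. There is one subtlety: the composed symbol itself need not vanish outside the support. This is why I would realize the output as a symbol whose operator equals the composition exactly, via its kernel, rather than truncating the asymptotic expansion.

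For (c), note that \(\partial_{x_n}\) commutes through all the formulas because \(x_n\) is only a parameter. So if both symbols are \(x_n\)-independent for \(x_n<\delta_0\), so is the result.

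Items \ref{Item::PDOs::NotationNearBdry::CalcOfPDO::Composition}--\ref{Item::PDOs::NotationNearBdry::CalcOfPDO::CommutatorOfPDOs} follow directly from (a)--(c). For \ref{Item::PDOs::NotationNearBdry::CalcOfPDO::CommutatorOfPartialt}, observe that \([\partial_t,b(t,x,D_t,D_{x'})]=(\partial_t b)(t,x,D_t,D_{x'})\), whose symbol \(\partial_t b\) is again of order \(s\) with the same support and \(x_n\)-behavior.

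For \ref{Item::PDOs::NotationNearBdry::CalcOfPDO::CommutatorMultiplication} and \ref{Item::PDOs::NotationNearBdry::CalcOfPDO::VectorField}, the multiplier \(v\) (or the coefficients of \(Y\)) may depend on \(x_n\) near \(0\). This is precisely why the factor \(\wtEst\) appears, as explained in Remark \ref{Rmk::PDOs::NotationNearBdry::WhyWtEst}. I would Taylor-expand \(v(t+r,x'+y',x_n)-v(t,x',x_n)\) in \((r,y')\) to high order and integrate by parts in the kernel. Each term then has the form \(w(t,x)\,\tilde b(t,x,D_t,D_{x'})\), where \(w\) is a derivative of \(v\) and \(\tilde b\) is a derivative of \(b\) in \((\tau,\xi')\), of order \(\le s-1\). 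The \(x_n\)-dependence is carried entirely by \(w\), so \(\tilde b\) retains \ref{Item::PDOS::NearBdry::DerivZero}. The Taylor remainder is kept in integral form and then written as a countable sum of products \(\wtEst_j\Stxpt{s-1}[j]\). To do this I expand \(v\) (localized by a cutoff equal to \(1\) on the \(\delta_2\) box) in a Fourier series in \((t,x')\) with rapidly decaying coefficients, which yields summable seminorms.

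For \(Y=\sum_j c_j(t,x)\partial_{x_j}\) with \(j<n\), write \([b,c_j\partial_{x_j}]=[b,c_j]\partial_{x_j}+c_j[b,\partial_{x_j}]\). Here \([b,\partial_{x_j}]=-(\partial_{x_j}b)(\cdot)\) is of order \(s\). The term \([b,c_j]\partial_{x_j}\) equals \(\wtEst\Stxpt{s-1}\partial_{x_j}\), and \(\Stxpt{s-1}\partial_{x_j}\) is a composition with an order \(1\) operator in \(x'\), hence of order \(s\). The absence of a \(\partial_{x_n}\) component is essential: \(\partial_{x_n}\) is not of finite order in this calculus.

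The main obstacle will be the countable-sum bookkeeping in \ref{Item::PDOs::NotationNearBdry::CalcOfPDO::CommutatorMultiplication}. Separating the \(x_n\)-dependent multiplier from the symbol while keeping \(\sum_j\SymbolstxpNorm{b_j}{s-1}{L}\) finite requires the Fourier-series decomposition of \(v\) together with uniform control of the remainder kernels.
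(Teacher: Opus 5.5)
Your treatment of (i)--(iv) and (vi) matches the paper. The paper simply cites the calculus for (i)--(iii). It uses \([\partial_t,b(t,x,D_t,D_{x'})]=(\partial_t b)(t,x,D_t,D_{x'})\) for (iv). It derives (vi) from (v) together with \([\Stxp{s},\partial_{x_j}]=\Stxpt{s}\) for \(j<n\).

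For (v) you take a different, but valid, route.

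The paper first localizes \(v\), extends it, and writes it as a rapidly convergent tensor sum \(v=\sum_j v_1^j(t,x')v_2^j(x_n)\). Since \(\Stxp{s}\) treats \(x_n\) as a parameter, \(\Mult{v_2^j}\) commutes with it exactly. Hence \([\Stxp{s},\Mult{v}]=\sum_j \Mult{v_2^j}[\Stxp{s},\Mult{v_1^j}]\). Each \([\Stxp{s},\Mult{v_1^j}]\) is an ordinary commutator with an \(x_n\)-independent multiplier, so it is a genuine \(\Stxpt{s-1}\) by the same calculus as (iii). No Taylor expansion or symbol shifts are needed.

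Your Taylor expansion gives the explicit leading terms \(\partial^\gamma_{t,x'}v\cdot(\partial^\gamma_{\tau,\xi'}b)(t,x,D_t,D_{x'})\), which is more informative. However, you still need a Fourier separation of the \(x_n\)-dependence to handle the remainder. There it costs you control of the shifted symbols \(\partial^\gamma_{\tau,\xi'}b(t,x,(\tau,\xi')+\theta k/L)\), uniformly in \(\theta\in[0,1]\) and with polynomial growth in \(|k|\). This is an anisotropic Peetre-type bound, which the rapid decay of the Fourier coefficients then absorbs. That works.

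Notice, though, that applying your Fourier expansion to \(v\) itself at the outset makes the Taylor step unnecessary; this is exactly the paper's argument. Your variant, expanding only in \((t,x')\) with \(x_n\)-dependent coefficients, even avoids the extension in \(x_n\).

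One small correction: the intermediate points in your remainder have \(t\)-coordinate in the convex hull of \(U_1\), which need not lie in \(U_2\). So the cutoff applied before expanding should equal \(1\) near \(\mathrm{conv}(\overline{U_1})\times\overline{Q^{n-1}(\delta_1)}\), not merely on the \(\delta_2\) box. This is harmless, since \(v\) is defined for all \(t\).
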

\begin{proof}
    \ref{Item::PDOs::NotationNearBdry::CalcOfPDO::Composition},
    \ref{Item::PDOs::NotationNearBdry::CalcOfPDO::Adjoint},
    and \ref{Item::PDOs::NotationNearBdry::CalcOfPDO::CommutatorOfPDOs}
    are simple consequences of the calculus of pseudodifferential operators.
    For \ref{Item::PDOs::NotationNearBdry::CalcOfPDO::CommutatorOfPartialt} note that
    if \(\Stxp{s}=b(t,x,D_t,D_{x'})\), then \(\left[ \partial_t, \Stxp{s} \right]= \left( \partial_t b \right)(t,x,D_t, D_{x'})\),
    and \ref{Item::PDOs::NotationNearBdry::CalcOfPDO::CommutatorOfPartialt} follows.

    We turn to \ref{Item::PDOs::NotationNearBdry::CalcOfPDO::CommutatorMultiplication}.
    Set \(\delta'=(\delta_1+\delta_2)/2\)
    and \(U'\) open with \(U_1\Subset U'\Subset U_2\).
    Take \(\phi\in \CinftycptSpace[U'\times \CubengeqDeltaTwo]\)
    with \(\phi=1\) on a neighborhood of \(\overline{U_1}\times \CubengeqDeltaOneClosure\).
    Note that,
    \begin{equation*}
        \Mult{\phi}\Stxp{s}=\Stxp{s}\Mult{\phi}=\Stxp{s},
    \end{equation*}
    and we may therefore replace \(v\) with 
    \(\phi v\) in the statement of \ref{Item::PDOs::NotationNearBdry::CalcOfPDO::CommutatorMultiplication}.
    As a consequence, we may without loss of generality assume
    \(v\in \CinftycptSpace[U'\times \Cubengeq{\delta'}]\).
    Fix \(\psi_1\in \CinftycptSpace[U'\times \Cubenmo{\delta'}]\), \(\psi_2\in \CinftycptSpace[\lbrack 0, \delta')]\)
    with \(\psi_1(t,x')\psi_2(x_n)v(t,x',x_n)=v(t,x',x_n)\).

    Extend \(v\) to a smooth function \(\vt\in \CinftycptSpace[\Ropn]\) (see \cite{SeeleyExtensionOfCInfinityFunctionsDefinedInAHalfSpace}).
    By a standard result, we may decompose
    \begin{equation*}
        \vt(t,x)=\sum_{j=1}^{\infty} f_1^j(t,x')f_2^j(x_n),
    \end{equation*}
    where \(\CjNorm{f_1^j}{L}+\CjNorm{f_2^j}{L}\lesssim j^{-N}\), \(\forall N,L\); to see this,
    decompose \(\vt(t,x)\) into a Fourier series on a large cube, and multiply by cutoff functions.
    Set \(v_1^j(t,x')=\psi_1(t,x') f_1^j(t,x')\), \(v_2^j(x_n)=\psi_2(x_n)f_2^j(x_n)\).
    We have, for \(x_n\geq 0\),
    \begin{equation*}
        v(t,x',x_n)=\sum_{j=1}^\infty v_1^j(t,x')  v_2^j(x_n).
    \end{equation*}
    We have,
    \begin{equation*}
    \begin{split}
         &\left[ \Stxp{s}, \Mult{v} \right]
         =\sum_{j=1}^\infty\left[ \Stxp{s}, \Mult{v_1^j}\Mult{v_2^j} \right]
         \\&=\sum_{j=1}^\infty\Mult{v_2^j} \left[ \Stxp{s}, \Mult{v_1^j} \right]
         =\sum_{j=1}^\infty\Mult{v_2^j} \Stxpt{s-1}[j],
    \end{split}
    \end{equation*}
    which establishes \ref{Item::PDOs::NotationNearBdry::CalcOfPDO::CommutatorMultiplication}.

    \ref{Item::PDOs::NotationNearBdry::CalcOfPDO::VectorField} follows from 
    \ref{Item::PDOs::NotationNearBdry::CalcOfPDO::CommutatorMultiplication} and the immediate fact
    that \(\left[ \Stxp{s}, \partial_{x_j} \right]=\Stxpt{s}\).
\end{proof}

    \subsection{Notation: Estimates in terms of pseudodifferential operators on the interior}\label{Section::PseudodifferentialOps::Interior}
    When working on interior estimates, we do not need to distinguish the \(x_n\) variable
and do not require a property like  \ref{Item::PDOS::NearBdry::DerivZero},
and can instead work with standard pseudodifferential operators in all variables.
We use a similar notation to Section \ref{Section::PseudodifferentialOps::Bdry},
though it is simpler in this case.

Fix open sets \(U_1\Subset U_2\Subset \Ropn\). We write \(\Stx{s}\) for an operator of the form:
\begin{enumerate}[(A)]
    \item\label{Item::PDOS::Interior::Formula} \(\Stx{s}=a(t,x,D_t,D_{x})\) for some \(a\in \bigcap_{s'\in \R} \Symbolstx{s'}\); see \eqref{Eqn::PDOs::DefineaOp}.
    \item\label{Item::PDOS::Interior::Support} The Schwartz kernel of \(a(t,x,D_t,D_{x})\) is supported in \(U_1\times U_1\).
    \item\label{Item::PDOS::Interior::Estimates} The way \(s\) plays a role is only through the estimates as described below.
\end{enumerate}
We write \(\Stxt{s}\) for the same thing, but with \(U_1\) replaced
by \(U_2\) in \ref{Item::PDOS::Interior::Support}.

Similar to Section \ref{Section::PseudodifferentialOps::Bdry}, when we write estimates like
\begin{equation*}
    \LtNorm*{\Stx{s}u}[\Ropn]\lesssim \LtNorm*{\Stxt{s'}v}[\Ropn],
\end{equation*}
it means the following.  Given any \(\Stx{s}=a(t,x,D_t,D_x)\) as described in \ref{Item::PDOS::Interior::Formula}
and \ref{Item::PDOS::Interior::Support}, there exists a finite\footnote{Unlike Section \ref{Section::PseudodifferentialOps::Bdry},
we only require finite collections when working on the interior. 
This is because we do not need to separate the \(\wtEst\) function 
(because we do not require \ref{Item::PDOS::NearBdry::DerivZero} to hold--see Remark \ref{Rmk::PDOs::NotationNearBdry::WhyWtEst}).
This means that we do not need a countable sum to obtain results like Lemma \ref{Lemma::PDOs::NotationNearBdry::CalcOfPDO}\ref{Item::PDOs::NotationNearBdry::CalcOfPDO::CommutatorMultiplication}.
This is not an essential point: allowing countable sums in our notation would not change the proofs.} 
collection
\(\Stxt{s'}[1]=a_1(t,x,D_t,D_x),\ldots, \Stxt{s'}[L]=a_L(t,x,D_t,D_x)\), as described above, with
\begin{equation*}
    \LtNorm*{\Stx{s}u}[\Ropn]\leq \sum_{j=1}^L\LtNorm*{\Stxt{s'}[j]v}[\Ropn],
\end{equation*}
and for every \(L\), there exists \(K\) depending only on \(L\), \(s\), \(s'\), and \(n\),
such that
\begin{equation}\label{Eqn::PDOs::NotationInterior::CL}
    \sum_{j=1}^L \SymbolstxNorm*{a_j}{s'}{L}\leq C_L,
\end{equation}
where \(C_L\) can be chosen to depend only on \(L\), \(s\), \(s'\), \(n\), \(U_1\), \(U_2\),
and an upper bound for \(\SymbolstxNorm{a}{s}{K}\)
(and any other relevant ingredients in the estimate).

\begin{lemma}\label{Lemma::PDOs::NotationInterior::ConcludeSobolevEstimates}
    Suppose
    \begin{equation}
        \LtNorm*{\Stx{s}u}[\Ropn]\lesssim \LtNorm*{\Stxt{s'}v}[\Ropn],
    \end{equation}
    and fix \(\phi_2\in \CinftycptSpace[\Ropn]\) with \(\phi_2=1\) on a neighborhood of
\(\overline{U_2}\) and \(\phi_1\in \CinftycptSpace[U_1]\).
Then,
\begin{equation}
    \HsNorm*{\phi_1 u}{s}[\Ropn]\lesssim \HsNorm*{\phi_2 v}{s'}[\Ropn].
\end{equation}
\end{lemma}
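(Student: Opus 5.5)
The plan is to mirror the proof of Lemma \ref{Lemma::PDOs::NotationNearBdry::ConcludeSobolevEstimates}, which becomes simpler here because there is no boundary and no constraint like \ref{Item::PDOS::NearBdry::DerivZero}. First, the hypothesis combined with the \(L^2\)-boundedness of order-zero operators should give a bound on the right-hand side. Write each \(\Stxt{s'}[j]\) as \(\Stxt{s'}[j]\phi_2\); this is legitimate because its Schwartz kernel is supported in \(U_2\times U_2\) and \(\phi_2=1\) near \(\overline{U_2}\). Then \(\Stxt{s'}[j]\Lambdatx[-s']\) is a pseudodifferential operator of order \(0\) in the symbol class \(\Symbolstx{0}\), and its seminorms are controlled as in \eqref{Eqn::PDOs::NotationInterior::CL}. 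By the non-isotropic Calder\'on--Vaillancourt theorem, \(\LtNorm{\Stxt{s'}[j]v}\lesssim \HsNorm{\phi_2 v}{s'}\). Since the collection is finite, summing gives \(\LtNorm{\Stx{s}u}\lesssim \HsNorm{\phi_2 v}{s'}\) for every admissible \(\Stx{s}\), with constants depending only on seminorm bounds of its symbol.

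Next, localize \(u\). Choose \(\psi_0,\psi_1\in\CinftycptSpace[U_1]\) with \(\phi_1\prec\psi_0\prec\psi_1\). The symbolic calculus (composition and commutator with multiplication, where commutators drop order by one in the non-isotropic sense) gives
\[
\HsNorm{\phi_1 u}{s}=\LtNorm{\Lambdatx[s]\phi_1 u}\lesssim \LtNorm{\psi_1\Lambdatx[s]\psi_0 u}+\LtNorm{R\,u}.
\]
Here \(R\) is a properly localized operator of very negative order. To get this, iterate the commutator expansion finitely many times using a chain of nested cutoffs, so that \(R\) has order \(\le s'-N'\) with \(N'\) large and kernel supported in \(U_1\times U_1\). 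Then \(\LtNorm{Ru}\lesssim \HsNorm{\phi_2 u}{-N}\), which in turn is controlled by the first step through any \(\Stx{\cdot}\) of smoothing type. Alternatively, absorb \(R\) into the family since it is itself of the form \(\Stx{s}\) (order \(\le s\)).

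The operator \(\psi_1\Lambdatx[s]\psi_0\) has kernel supported in \(U_1\times U_1\), but its symbol is not in \(\bigcap_{s'}\Symbolstx{s'}\). So approximate it, as in the boundary lemma, by \(a_j(t,x,D_t,D_x)=\psi_1\chi(\cdot/j)\Lambdatx[s]\psi_0\), where \(\chi\) is a smooth cutoff in \((\tau,\xi)\) adapted to the non-isotropic dilations. These operators are of the form \(\Stx{s}\) with symbol seminorms in \(\Symbolstx{s}\) bounded uniformly in \(j\). Hence the first step gives \(\LtNorm{a_j u}\le C\HsNorm{\phi_2 v}{s'}\) uniformly in \(j\). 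Since \(a_ju\to\psi_1\Lambdatx[s]\psi_0u\) in distributions, weak compactness in \(\LtSpace\) (lower semicontinuity of the norm) gives \(\LtNorm{\psi_1\Lambdatx[s]\psi_0u}\lesssim\HsNorm{\phi_2 v}{s'}\).

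Combining the last two steps yields the claimed estimate. Finiteness of the right-hand side implies \(\phi_1u\in\HsSpace{s}\) by the same limiting argument. The main obstacle is the bookkeeping that all constants are uniform in \(j\): this requires checking that the frequency cutoffs have \(\Symbolstx{s}\) seminorms bounded uniformly, which uses that \(\chi(\cdot/j)\) is taken along the non-isotropic dilation \((\tau,\xi)\mapsto(j^{2\kappa}\tau,j\xi)\). The remainder \(R\) also needs some care, but it is routine.
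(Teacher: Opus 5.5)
Your proposal is correct and follows the paper's route. The paper proves this lemma as a simpler reprise of Lemma~\ref{Lemma::PDOs::NotationNearBdry::ConcludeSobolevEstimates}, which is exactly your plan: bound every \(\LtNorm{\Stx{s}u}\) by \(\HsNorm{\phi_2 v}{s'}\) with constants depending only on symbol seminorms, localize \(\HsNorm{\phi_1u}{s}\) by \(\LtNorm{\psi_1\Lambdatx[s]\psi_0u}\) plus an \(\Stx{s}\) remainder, then truncate \(\psi_1\Lambdatx[s]\psi_0\) in frequency and use weak compactness.

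The one thing to fix is your first option for the remainder \(R\). The bound \(\HsNorm{\phi_2 u}{-N}\) is not controlled by the hypothesis, which only sees \(u\) through operators with kernels in \(U_1\times U_1\). Use your second option instead. Either arrange, via the disjoint supports of \(\phi_1\) and \(1-\psi_1\), that \(R\) is genuinely infinitely smoothing with kernel in \(U_1\times U_1\), or handle a finite-order \(R\) by the same frequency-truncation argument as the main term.
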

\begin{proof}
    This follows from 
    a simpler reprise of the proof of Lemma \ref{Lemma::PDOs::NotationNearBdry::ConcludeSobolevEstimates}.
\end{proof}

\begin{lemma}\label{Lemma::PDOs::NotationInterior::CalcOfPDO}
    \begin{enumerate}[(i)]
        \item\label{Item::PDOs::NotationInterior::CalcOfPDO::Composition} \(\Stx{s_1}[1]\Stx{s_2}[2]=\Stxt{s_1+s_2}\).
        \item\label{Item::PDOs::NotationInterior::CalcOfPDO::Adjoint} \(\left( \Stx{s} \right)^{*} = \Stxt{s}\).
        \item\label{Item::PDOs::NotationInterior::CalcOfPDO::CommutatorOfPDOs} \(\left[ \Stx{s_1}[1], \Stx{s_2}[2] \right]= \Stxt{s_1+s_2-1}\).
        \item\label{Item::PDOs::NotationInterior::CalcOfPDO::CommutatorOfPartialt} \(\left[ \partial_t, \Stx{s} \right]= \Stxt{s}\).
        \item\label{Item::PDOs::NotationInterior::CalcOfPDO::CommutatorMultiplication}  Let \(v\in \CinftySpace[\Ropn]\), and \(\Mult{v}:u\mapsto vu\).
            Then, \(\left[ \Stx{s}, \Mult{v} \right]= \Stxt{s-1}\).
        \item\label{Item::PDOs::NotationInterior::CalcOfPDO::VectorField} Let \(Y\) be any smooth vector field without a \(\partial_t\) component. Then,
            \(\left[ \Stx{s}, Y \right]=\Stxt{s}\).
    \end{enumerate}
\end{lemma}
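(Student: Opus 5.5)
The plan is to reduce every item to the standard symbolic calculus for pseudodifferential operators with the anisotropic weight \(\JBracket*{(\tau,\xi)}\), in which \(\partial_t\) has order \(2\kappa\). We work with symbols in \(\bigcap_{s'}\Symbolstx{s'}\) whose Schwartz kernels are supported in \(U_1\times U_1\). Two points need to be tracked throughout. First, every seminorm of the output symbol in \(\Symbolstx{s'}\) must be controlled by finitely many seminorms of the input symbols in the stated classes, so that the constants \(C_L\) in \eqref{Eqn::PDOs::NotationInterior::CL} have the claimed dependence. Second, the Schwartz kernel of each output must be supported in \(U_2\times U_2\).

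For support, fix an intermediate open set \(U'\) with \(U_1\Subset U'\Subset U_2\), and cutoffs \(\chi\in\CinftycptSpace[U_2]\) with \(\chi=1\) near \(\overline{U'}\) and \(\phi\in \CinftycptSpace[U']\) with \(\phi=1\) near \(\overline{U_1}\). Since \(\Stx{s}=\phi\Stx{s}\phi\), every composition, adjoint or commutator we form equals the same expression sandwiched between \(\chi\) on both sides. Its kernel is therefore supported in \(U_2\times U_2\).

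For the symbol of each item, use the anisotropic calculus: the asymptotic formulas for composition and adjoint in this non-isotropic Hörmander class. The key observation is that \(\partial_\tau\) lowers the order by \(2\kappa\) and \(\partial_\xi\) lowers it by \(1\). Concretely:
\begin{enumerate}[(i)]
\item For composition, the symbol \(a_1\# a_2\) lies in \(\Symbolstx{s_1+s_2}\). Because both symbols are in fact of order \(-\infty\) (they lie in every class), the oscillatory integral formula for the composed symbol converges, and its seminorms are bounded by seminorms of \(a_1\) in \(\Symbolstx{s_1}\) and of \(a_2\) in \(\Symbolstx{s_2}\). Since the composition is also of order \(-\infty\), it is of the required form.
\item The adjoint is handled the same way, using the formula \(a^*(t,x,\tau,\xi)\sim \sum \partial_{\tau,\xi}^{\gamma} D^{\gamma}_{t,x}\bar a\).
\item For the commutator, the leading terms of \(a_1\#a_2\) and \(a_2\#a_1\) cancel. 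The remainder involves at least one \(\partial_{\tau}\) or \(\partial_\xi\) derivative, so it has order at most \(s_1+s_2-1\) (using \(2\kappa\geq1\)). Again, the seminorm dependence is finite.
\item We have \([\partial_t,a(t,x,D_t,D_x)]=(\partial_t a)(t,x,D_t,D_x)\), and \(\partial_t a\in\Symbolstx{s}\) with seminorms controlled by those of \(a\). Conjugating with the cutoffs then gives the support condition.
\item For \(v\) smooth, first replace \(v\) by \(\chi v\), which does not change the commutator. The operator \(\Mult{\chi v}\) is then a pseudodifferential operator with symbol in \(\Symbolstx{0}\), and (iii)'s argument, applied with \(s_2=0\), yields order \(s-1\). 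The subtlety is that \(\chi v\) is not of order \(-\infty\). We handle this by writing the commutator as \(\chi[\Stx{s},\Mult{\chi v}]\chi\) and noting that the composition formula still converges because one factor has order \(-\infty\); alternatively, we compute \(a\#(\chi v)-(\chi v)a\) directly from the kernel representation. Either way, the result is a symbol in \(\bigcap_{s'}\Symbolstx{s'}\), with \(\Symbolstx{s-1}\) seminorms bounded by those of \(a\) in \(\Symbolstx{s}\) and \(C^L\) norms of \(\chi v\).
\item Write \(Y=\sum_j c_j(t,x)\partial_{x_j}\). Then \([\Stx{s},Y]=\sum_j [\Stx{s},\Mult{c_j}]\partial_{x_j}+\sum_j c_j[\Stx{s},\partial_{x_j}]\). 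The second sum involves \([\Stx{s},\partial_{x_j}]=-(\partial_{x_j}a)(t,x,D)\), which is of order \(s\). In the first sum, (v) gives order \(s-1\), and composing with \(\partial_{x_j}\), which has symbol \(2\pi i\xi_j\) of order \(1\), brings the total to order \(s\). Here we again use cutoffs, e.g. \(\partial_{x_j}\Stx{s}=\partial_{x_j}\chi\Stx{s}\), so that the symbol product is handled by (i) with the cutoff absorbed. This is also where the absence of a \(\partial_t\) component matters: a \(\partial_t\) term would have order \(2\kappa\), not \(1\).
\end{enumerate}

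I expect the only genuine obstacle to be bookkeeping rather than ideas. The constants must depend only on finitely many seminorms \(\SymbolstxNorm{a}{s}{K}\) with \(K=K(L,s,s',n)\), and this requires stating the composition theorem for this anisotropic class with explicit seminorm control. I would obtain this by the standard proof: Taylor-expand the composition integral to order \(N\), and bound the remainder by integrating by parts in \((t,x)\) and \((\tau,\xi)\). The estimates are weighted by \(\JBracket*{(\tau,\xi)}\), using the Peetre-type inequality \(\JBracket*{(\tau+\sigma,\xi+\eta)}^{s}\lesssim \JBracket*{(\tau,\xi)}^{s}\JBracket*{(\sigma,\eta)}^{|s|}\), which holds for this quasi-norm.
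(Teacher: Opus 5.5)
Your proposal is correct and takes the same route as the paper. The paper simply cites the (anisotropic) pseudodifferential calculus for (i)--(iii), (v) and (vi), and proves (iv) via the identity \([\partial_t,a(t,x,D_t,D_x)]=(\partial_t a)(t,x,D_t,D_x)\), exactly as you do; your cutoff argument for kernel supports, the Peetre-inequality bookkeeping for seminorm dependence, and the remark that a \(\partial_t\) component would cost order \(2\kappa\) just make explicit what the paper leaves implicit (one small simplification: in (vi), right-composition with \(\partial_{x_j}\) only multiplies the symbol by \(2\pi i\xi_j\), so no appeal to (i) is needed there).
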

\begin{proof}
    \ref{Item::PDOs::NotationInterior::CalcOfPDO::Composition}, 
    \ref{Item::PDOs::NotationInterior::CalcOfPDO::Adjoint},
    \ref{Item::PDOs::NotationInterior::CalcOfPDO::CommutatorOfPDOs},
    \ref{Item::PDOs::NotationInterior::CalcOfPDO::CommutatorMultiplication},
    and \ref{Item::PDOs::NotationInterior::CalcOfPDO::VectorField}
    follow from the calculus of pseudodifferential operators.
    \ref{Item::PDOs::NotationInterior::CalcOfPDO::CommutatorOfPartialt}
    follows just as in Lemma \ref{Lemma::PDOs::NotationNearBdry::CalcOfPDO}\ref{Item::PDOs::NotationNearBdry::CalcOfPDO::CommutatorOfPartialt}.
\end{proof}

\section{H\"ormander vector fields and dependence of constants}\label{Section::HorVfs}
Let \(W_1,\ldots, W_r\) be smooth vector fields on an open set \(U\subseteq \Rngeq\) or \(U\subseteq \Rn\) satisfying H\"ormander's condition of order \(m\in \Zg\).
By writing \(W_j=\sum_{k=1}^n a_j^k \partial_{x_k}\), we may identify \(W_j=(a_j^1,\ldots, a_j^n)\in \CinftySpace[U][\Rn]\).

Let \(X_1,\ldots, X_q\) be an enumeration of all commutators of \(W_1,\ldots, W_r\) up to order \(m\). By H\"ormander's condition,
for each compact set \(\Compact\Subset U\), \(\exists c_{\Compact}>0\),
\begin{equation}\label{Eqn::HorVfs::LowerBoundDet}
    \inf_{x\in \Compact}\max_{j_1,\ldots, j_n\in \{1,\ldots, q\}} 
    \left| \det \left( X_{j_1}(x)| \cdots | X_{j_n}(x) \right) \right|\geq c_{\Compact}>0,
\end{equation}
where \( \left( X_{j_1}(x)| \cdots | X_{j_n}(x) \right)\) denotes the \(n\times n\) matrix
with columns given by \(X_{j_1}(x),\ldots, X_{j_n}(x)\).

When we write estimates like:
\begin{equation*}
    \LtNorm*{\Stx{s} A}[\Ropn]\lesssim \LtNorm*{\Stxt{s'} B}[\Ropn],
\end{equation*}
where \(A\) and \(B\) are some functions possibly involving \(W_1,\ldots, W_r\),
it means the same thing as in Section \ref{Section::PseudodifferentialOps::Interior}
except \(C_L\) in \eqref{Eqn::PDOs::NotationInterior::CL}
can also depend on \(n\), \(r\), \(c_{\overline{U_2}}^{-1}\), and an upper bound for
\begin{equation*}
    \max_{1\leq j\leq r} \CjNorm*{W_j}{K}[U_2][\Rn],
\end{equation*}
where \(K\) is as described there.
We use a similar definition with \((t,x)\) replaced by \((t,x')\) and using the notation
from Section \ref{Section::PseudodifferentialOps::Bdry}.


    \subsection{Gaining regularity on the interior}
    We use the notation from this section to state the basic way we use H\"ormander's condition
to gain regularity on the interior of the manifold. Using the notation from Section \ref{Section::PseudodifferentialOps::Interior},
for vector fields \(W_1,\ldots, W_r\) 
satisfying H\"ormander's condition of order \(m\) 
on some open set \(U\subseteq \Rn\), we have:

\begin{proposition}\label{Prop::Horvfs::GainInterior::RSGain}
    \begin{equation*}
        \sum_{|\alpha|\leq \kappa-1} \LtNorm*{W^{\alpha} \Stx{s} u}[\Ropn]
        \lesssim
        \sum_{|\alpha|\leq \kappa} \LtNorm*{ W^{\alpha} \Stxt{s-1/m} u}[\Ropn]
        +\LtNorm*{\partial_t \Lambdatx[-\kappa]\Stxt{s-1/m} u}[\Ropn],
    \end{equation*}
    \(\forall u\in \Distributions[\Ropn]\).
\end{proposition}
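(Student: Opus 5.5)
The plan is to derive this from the classical Rothschild--Stein/H\"ormander subelliptic gain $\|v\|_{H^{1/m}_x}\lesssim \sum_j\|W_j v\|+\|v\|$, upgraded to the parabolic scaling in which $\partial_t$ has order $2\kappa$. For $|\alpha|\le\kappa-1$ we want to control $W^\alpha\Stx{s}u$ in $L^2$. Write $W^\alpha\Stx{s}=W^\alpha\Lambdatx[1/m]\Stx{s-1/m}$ modulo lower order terms; more precisely, choose $\Stx{s}=\Stx{1/m}[1]\,\Stxt{s-1/m}[2]+\Stxt{-\infty}$ using the calculus in Lemma \ref{Lemma::PDOs::NotationInterior::CalcOfPDO}, where $\Stx{1/m}[1]$ is $\Lambdatx[1/m]$ localized by cutoffs. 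Then commute $W^\alpha$ past $\Stx{1/m}[1]$; by Lemma \ref{Lemma::PDOs::NotationInterior::CalcOfPDO}\ref{Item::PDOs::NotationInterior::CalcOfPDO::VectorField} each commutator costs no order, producing terms $W^{\beta}\Stxt{s}$ with $|\beta|<|\alpha|$, which are handled by induction on $|\alpha|$ (starting from $|\alpha|=0$, which is the key case). So it suffices to show
\begin{equation*}
\|\Lambdatx[1/m]v\|\lesssim \sum_{|\alpha|\le 1}\|W^{\alpha}v\|+\|\partial_t\Lambdatx[-\kappa]\Lambdatx[1-\kappa] v\|\quad\text{roughly,}
\end{equation*}
for $v=W^\beta\Stxt{s-1/m}u$, with appropriate bookkeeping of orders.

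The core step is thus a localized parabolic $1/m$ gain. In the $x$ directions, I will invoke the classical estimate $\|\Lambdax[\epsilon]v\|\lesssim\sum_j\|W_jv\|+\|v\|$ with $\epsilon=1/m$, applied in $x$ for each fixed $t$ (uniform in $t$ since $W_j$ are $t$-independent) and integrated in $t$. This costs only one $W$-derivative, which is why the right side involves $|\alpha|\le\kappa$ when the left side has $|\alpha|\le\kappa-1$. In the $t$ direction, $\langle\tau\rangle^{1/(2\kappa m)}$ is dominated by $1+|\tau|\langle(\tau,\xi)\rangle^{-\kappa}\cdot\langle(\tau,\xi)\rangle^{\kappa-1+1/m}$-type quantities; concretely, I will split frequency space into $|\tau|\lesssim|\xi|^{2\kappa}$, where $\langle(\tau,\xi)\rangle^{1/m}\approx\langle\xi\rangle^{1/m}$ and the $x$-gain applies, and $|\tau|\gtrsim|\xi|^{2\kappa}$, where $\langle(\tau,\xi)\rangle^{\kappa}\lesssim|\tau|\langle(\tau,\xi)\rangle^{-\kappa}$, so the full $\kappa$ orders are controlled by the term $\partial_t\Lambdatx[-\kappa]\Stxt{s-1/m}u$. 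The split is implemented with symbols $\chi(\tau/\langle\xi\rangle^{2\kappa})$, which lie in $\Symbolstx{0}$.

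The main obstacle is making the $x$-only Rothschild--Stein gain compatible with the mixed symbol classes and the localizations, and keeping track of which constants it depends on (only $c_{\overline{U_2}}^{-1}$ and $C^K$ norms of $W_j$, as Section \ref{Section::HorVfs} requires). I would first try to prove the $x$-gain directly in the form $\|\Stx{\epsilon}v\|\lesssim\sum_j\|W_j\Stxt{0}v\|+\|\Stxt{0}v\|$ via H\"ormander's commutator argument: show that the set of $Y$ with $\|\Lambdax[\epsilon-1]Y v\|$ controlled is closed under commutators with $W_j$, losing a factor of $\epsilon$ per bracket, and conclude using \eqref{Eqn::HorVfs::LowerBoundDet} to write $\partial_{x_k}$ as smooth combinations of the $X_j$. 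Doing this with the parabolic symbols rather than $x$-symbols (so that $\Lambdatx$ replaces $\Lambdax$) is the delicate part; the fallback is to carry out the argument with $\Lambdax$ in the low-$\tau$ region only, where the two are equivalent.
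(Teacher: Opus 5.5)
Your proposal is essentially the paper's proof. Lemma~\ref{Lemma::HorVfs::GainInterior::RSGainLemma} uses the same frequency split (plus an extra low-frequency piece, which your cutoff $\chi(\tau/\langle\xi\rangle^{2\kappa})$ makes unnecessary), controls the region $|\xi|^{2\kappa}\lesssim|\tau|$ by $\|\partial_t\Lambdatx[-\kappa]\phi u\|$ because $\kappa-1+1/m\leq\kappa$, applies the Rothschild--Stein $x$-gain for each fixed $t$ on the region $|\tau|\lesssim|\xi|^{2\kappa}$, and then obtains the proposition by commuting $W^\alpha$ past $\Lambdatx[1/m]$. The one caution is that your ``first try'' of re-deriving the $x$-gain by H\"ormander's commutator-closure argument would only give a gain of $2^{1-m}$ rather than $1/m$, since each bracket halves the exponent (cf.\ Remark~\ref{Rmk::HorVfs::GainInterior::CanUseKohn}); the sharp $1/m$ as stated genuinely requires citing Rothschild--Stein, as the paper does.
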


Proposition \ref{Prop::Horvfs::GainInterior::RSGain} is standard and follows easily from a result of Rothschild and Stein
\cite{RothschildSteinHypoellipticDifferentialOperatorsAndNilpotentGroups}; we include the reduction here
as this forms the core of many of our estimates.

\begin{lemma}\label{Lemma::HorVfs::GainInterior::RSGainLemma}
    Fix relatively compact, open sets \(U_1\Subset V_1\Subset  U\), \(U_2\Subset \R\). Then,
    \begin{equation*}
        \sum_{|\alpha|\leq \kappa-1}
        \HsNorm*{W^{\alpha}u}{1/m}[\Ropn] 
        \leq C_{U_1, V_1,U_2}
        \left( \sum_{|\alpha|\leq \kappa} \LtNorm*{ W^{\alpha}u}[\Ropn] + 
        \LtNorm*{ \partial_t \Lambdatx[-\kappa] u}[\Ropn] \right),
    \end{equation*}
    \(\forall u\in \Distributions[\Ropn]\) with \(\supp(u)\subseteq U_2\times U_1\),
    where if the right-hand side is finite, so is the left-hand side.
    Here, \(C_{U_1, V_1,U_2}\geq 1\) can be chosen to depend only on \(\kappa\), \(n\), \(r\), \(m\), \(U_1\), \(V_1\), \(U_2\),
    \(c_{\overline{V_1}}\) (where \(c_{\overline{V_1}}\) is as in \eqref{Eqn::HorVfs::LowerBoundDet}),
    and an upper bound for
    \begin{equation*}
        \max_{1\leq j\leq r} \CjNorm{W_j}{L}[V_1][\Rn],
    \end{equation*}
    where \(L\) can be chosen to depend only on \(n\), \(r\), and \(m\).
\end{lemma}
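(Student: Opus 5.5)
The plan is to reduce the parabolic statement to the classical (time-independent) Rothschild--Stein gain in the \(x\)-variables, and to handle the \(t\)-regularity separately by a microlocal splitting in \((\tau,\xi)\). Recall that, since \(\partial_t\) has order \(2\kappa\),
\begin{equation*}
    \HsNorm{v}{1/m}[\Ropn]^2\approx \LtNorm*{\JBracket{\xi}^{1/m}\Fouriertx v}^2+\LtNorm*{\JBracket{\tau}^{1/(2\kappa m)}\Fouriertx v}^2 .
\end{equation*}
So for each \(v=W^\alpha u\) with \(|\alpha|\leq \kappa-1\), it suffices to bound an \(x\)-part and a \(\tau\)-part.

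\emph{Step 1 (the \(x\)-part).} Fix \(\psi\in \CinftycptSpace[V_1]\) with \(\psi=1\) near \(\overline{U_1}\); then \(W^\alpha u=\psi W^\alpha u\). Apply, for each fixed \(t\), the Rothschild--Stein estimate \cite{RothschildSteinHypoellipticDifferentialOperatorsAndNilpotentGroups} for H\"ormander vector fields of order \(m\) on \(V_1\), i.e. \(\|g\|_{H^{1/m}_x}\lesssim \sum_j\|W_jg\|_{L^2_x}+\|g\|_{L^2_x}\) for \(g\) supported in a fixed compact subset of \(V_1\). Applying it to \(g=W^\alpha u(t,\cdot)\) and integrating in \(t\) gives
\begin{equation*}
    \LtNorm*{\JBracket{\xi}^{1/m}\Fouriertx W^\alpha u}\lesssim \sum_{|\beta|\leq\kappa}\LtNorm{W^\beta u}.
\end{equation*}
The constants in Rothschild--Stein (or in Hörmander's \(1/m\)-gain proof, which is what one would actually quote) depend only on \(n,r,m\), \(c_{\overline{V_1}}\), finitely many derivatives of the \(W_j\), and the sets. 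This is the source of the stated dependence of \(C_{U_1,V_1,U_2}\).

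\emph{Step 2 (the \(\tau\)-part, microlocal splitting).} Let \(\chi\in C^\infty(\R)\) satisfy \(\chi=1\) on \([-1,1]\) and \(\supp\chi\subseteq[-2,2]\). Define Fourier multipliers \(P_1=\chi\big(\tau/\JBracket{\xi}^{2\kappa}\big)(D_t,D_x)\) and \(P_2=I-P_1\); both are order-zero operators in the calculus of Section~\ref{Section::PseudodifferentialOps}.
\begin{itemize}
    \item On \(\supp\) of the symbol of \(P_1\) we have \(\JBracket{\tau}^{1/(2\kappa m)}\lesssim \JBracket{\xi}^{1/m}\). Hence the \(\tau\)-part of \(P_1W^\alpha u\) is controlled by Step~1, because \(P_1\) is an \(L^2\)-bounded multiplier commuting with the weight.
    \item On \(\supp\) of the symbol of \(P_2\) we have \(|\tau|\approx\JBracket{(\tau,\xi)}^{2\kappa}\). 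Therefore \(\|\Lambdatx[\kappa]P_2 w\|\lesssim \|\partial_t\Lambdatx[-\kappa]w\|+\|w\|\) for any \(w\).
\end{itemize}
Write \(P_2W^\alpha u=P_2W^\alpha\psi' u\), with \(\psi'\) a cutoff adapted to the support of \(u\). Since \(W^\alpha\psi'\) is a differential operator of order \(|\alpha|\leq \kappa-1\), and since \(\JBracket{\tau}^{1/(2\kappa m)}\leq \JBracket{(\tau,\xi)}\), we get
\begin{equation*}
    \LtNorm*{\JBracket{\tau}^{1/(2\kappa m)}\Fouriertx P_2W^\alpha u}\lesssim \LtNorm*{\Lambdatx[\kappa] \tilde P_2 u}+\LtNorm{u},
\end{equation*}
where \(\tilde P_2\) has symbol supported in a slightly larger \(\tau\)-dominant cone. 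The commutator of \(P_2\) with the variable-coefficient operator \(W^\alpha\psi'\) is one order lower in this calculus and is absorbed into the same terms. Combined with the display above, this is bounded by \(\LtNorm{\partial_t\Lambdatx[-\kappa]u}+\LtNorm{u}\), and \(\LtNorm{u}\) appears on the right-hand side as the \(|\alpha|=0\) term.

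\emph{Step 3 (qualitative statement).} The claim ``if the right-hand side is finite, so is the left'' follows by running the argument with \(u\) replaced by a mollified \(u_\epsilon\) (mollifying in \((t,x)\), which preserves the support condition up to shrinking \(U_1\Subset V_1\)). One checks that the right-hand side for \(u_\epsilon\) stays uniformly bounded, since the commutators of the \(W_j\) with mollifiers are uniformly \(L^2\)-bounded (Friedrichs), and then lets \(\epsilon\to0\) by weak compactness.

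The main obstacle is Step~2's commutator bookkeeping: \(P_1,P_2\) are nonlocal and \(W^\alpha\) has variable coefficients. I would first handle this by commuting only through \(\psi'W^\alpha\) via Lemma~\ref{Lemma::PDOs::NotationInterior::CalcOfPDO}, so that each commutator is an operator of order \(\leq\kappa-1\) in the non-isotropic calculus. Such errors are controlled by \(\|\Lambdatx[\kappa-1]u\|\), which in turn must be bounded by the right-hand side. On the \(\tau\)-dominant cone this is automatic. On the \(\xi\)-dominant cone, however, \(\|\Lambdatx[\kappa-1]u\|\) is not obviously controlled, since the \(W\)'s need not span. If that fails, I would avoid commuting altogether: I would estimate the \(\tau\)-part of \(W^\alpha u\) directly by interpolation in \(t\), bounding \(\JBracket{\tau}^{1/(2\kappa m)}\) by \(\varepsilon\JBracket{\tau}^{1/2}\JBracket{(\tau,\xi)}^{-\kappa}\JBracket{(\tau,\xi)}^{|\alpha|+\ldots}\) in terms already present on the right-hand side.
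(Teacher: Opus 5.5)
Your proof is correct. It uses the same two ingredients as the paper: Rothschild--Stein applied slicewise in $x$, and trading the $\tau$-dominant region for $\partial_t\Lambda_{t,x}^{-\kappa}$. The organization is different, though. The paper applies $\Theta_1+\Theta_2+\Theta_3=I$ to $W^\alpha\phi u$ first. On the $\xi$-dominant piece it then applies Rothschild--Stein to $\chi_1\Theta_3W^\alpha\phi u$, which forces it to compare $\Lambda_{t,x}^{1/m}\Theta_3$ with $\Lambda_x^{1/m}\chi_1$ and to commute $W_j$ past $\chi_1\Theta_3$. You instead split the weight, $\langle(\tau,\xi)\rangle^{1/m}\approx\langle\xi\rangle^{1/m}+\langle\tau\rangle^{1/(2\kappa m)}$, and apply Rothschild--Stein directly to the slices $W^\alpha u(t,\cdot)$ with no microlocalization. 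The part of the $\tau$-weight where $|\tau|\lesssim\langle\xi\rangle^{2\kappa}$ then follows from a pointwise inequality on the Fourier side. So your only genuinely microlocal step is the $\tau$-dominant one, which is exactly the paper's $\Theta_2$ estimate. Your route is a little more economical: there is no separate low-frequency piece and no commutators on the $\xi$-dominant side. The paper's version keeps everything inside the pseudodifferential notation it uses in the rest of the section.

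Your closing worry is unfounded, and the fallback is not needed. Modulo an operator of order $-N$, the symbol of $[P_2,W^\alpha\psi']$ is built from $\partial_\xi^\gamma p_2$ with $\gamma\neq 0$. These are supported in the transition zone $\langle\xi\rangle^{2\kappa}\le|\tau|\le2\langle\xi\rangle^{2\kappa}$, where $|\tau|\approx\langle(\tau,\xi)\rangle^{2\kappa}$. So no error lands in the $\xi$-dominant cone; this is what your phrase ``slightly larger $\tau$-dominant cone'' was already capturing.

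You can avoid commutators entirely, as the paper does with $\Theta_2\partial_t^{-1}$. Since $p_2$ vanishes for $|\tau|<1$, the operator $P_2\partial_t^{-1}$ has order $-2\kappa$. Also $\partial_t$ commutes with the $t$-independent operator $W^\alpha\psi'$. Hence $\Lambda_{t,x}^{1/m}P_2W^\alpha\psi'u=\bigl[\Lambda_{t,x}^{1/m}(P_2\partial_t^{-1})W^\alpha\psi'\Lambda_{t,x}^{\kappa}\bigr]\partial_t\Lambda_{t,x}^{-\kappa}u$, and the bracket has order $1/m-1\le 0$.

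Step 3 (mollification) is also superfluous. The qualitative statement is already part of Rothschild--Stein, which you apply slicewise. The $\tau$-dominant piece is an $L^2$-bounded operator applied to $\partial_t\Lambda_{t,x}^{-\kappa}u$.
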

\begin{proof}
    We introduce three microlocalizations which are pseudodifferential operators with symbols in \(\Symbolstx{0}\):
    \(\Theta_1\), \(\Theta_2\), and \(\Theta_3\) which are chosen to satisfy
    \(\Theta_1+\Theta_2+\Theta_3=I\).
    \(\Theta_1\) has symbol supported on \(|\xi|^{2\kappa}+|\tau|\lesssim 1\) (and therefore is infinitely smoothing),
    \(\Theta_2\) has symbol supported on \(|\xi|^{2\kappa}+|\tau|\gtrsim 1\) and \(|\xi|^{2\kappa}\lesssim |\tau|\),
    and \(\Theta_3\) has symbol supported on \(|\xi|^{2\kappa}+|\tau|\gtrsim 1\) and \(|\tau|\lesssim |\xi|^{2\kappa}\).

    We have, for any \(u\in \Distributions[\Ropn]\), 
    and any \(\phi\in \CinftycptSpace[U]\),
    \begin{equation}\label{Eqn::HorVfs::GainInterior::RSGainLemma::Tmp1}
    \begin{split}
         &\sum_{|\alpha|\leq \kappa-1} \HsNorm*{\Stx{0}W^{\alpha}\phi u}{1/m}
         \lesssim \sum_{|\alpha|\leq \kappa-1} \sum_{j=1}^3 \LtNorm*{ \Stxt{0} \Lambdatx[1/m] \Theta_j  W^{\alpha} \phi u}.
    \end{split}
    \end{equation}
    \(\Theta_1\) is infinitely smoothing, so we have
    \begin{equation}\label{Eqn::HorVfs::GainInterior::RSGainLemma::Tmp2}
        \sum_{|\alpha|\leq \kappa-1}\LtNorm*{\Stxt{0}\Lambdatx[1/m] \Theta_1  W^{\alpha} \phi u}
        \lesssim \LtNorm*{u}.
    \end{equation}
    For \(\Theta_2\), since \((\kappa-1+1/m)/\kappa\leq 1\) and \(\Theta_2 \partial_t^{-1}\) is an operator of order
    \(-2\kappa\) (in the sense of \(\Symbolstx{-2\kappa}\)), we have
    \begin{equation}\label{Eqn::HorVfs::GainInterior::RSGainLemma::Tmp3}
        \sum_{|\alpha|\leq \kappa-1}\LtNorm*{\Stxt{0}\Lambdatx[1/m] \Theta_2  W^{\alpha} \phi u}
        \lesssim \LtNorm*{\partial_t \Lambdatx[-\kappa] \phi u}+\LtNorm*{u}.
    \end{equation}
    Finally, for \(j=3\), we consider the pseudodifferential operator
    \(\Lambdax[s]\) on \(\Rn\), with symbol \((1+|\xi|^2)^{s/2}\).
    Rothschild and Stein
    \cite{RothschildSteinHypoellipticDifferentialOperatorsAndNilpotentGroups}
    showed that, for all \(v\in\Distributions[\Rn]\) with
    \(\supp(v)\subseteq V_1\),
    \begin{equation*}
        \LtNorm{\Lambdax[1/m]v}[\Rn]
        \lesssim
        \sum_{|\beta|\leq1}\LtNorm{W^\beta v}[\Rn],
    \end{equation*}
    where if the right-hand side is finite, so is the left-hand side.
    See, also, Remark \ref{Rmk::HorVfs::GainInterior::CanUseKohn}.
    
    Choose \(\chi_0,\chi_1\in\CinftycptSpace[V_1]\), with
    \(\chi_0\prec\chi_1\), so that every operator \(\Stxt{0}\)
    appearing below satisfies
    \begin{equation*}
        \Stxt{0}=\Stxt{0}\chi_0.
    \end{equation*}
    The calculus of pseudodifferential operators and the support properties
    of \(\Theta_3\) give
    \begin{equation*}
    \begin{split}
        &\sum_{|\alpha|\leq\kappa-1}
        \LtNorm*{\Stxt{0}\Lambdatx[1/m]\Theta_3W^\alpha\phi u}
        \\
        &\qquad\lesssim
        \sum_{|\alpha|\leq\kappa-1}
        \LtNorm*{\Lambdax[1/m]\chi_1\Theta_3W^\alpha\phi u}
        +
        \sum_{|\alpha|\leq\kappa-1}\LtNorm*{W^\alpha\phi u}.
    \end{split}
    \end{equation*}
    Here, the terms arising from \(1-\chi_1\) are infinitely smoothing.
    Applying the Rothschild--Stein estimate to
    \(\chi_1\Theta_3W^\alpha\phi u\) for each \(t\), and then using the
    calculus of pseudodifferential operators, we conclude
    \begin{equation}\label{Eqn::HorVfs::GainInterior::RSGainLemma::Tmp4}
    \begin{split}
        &\sum_{|\alpha|\leq\kappa-1}
        \LtNorm*{\Stxt{0}\Lambdatx[1/m]\Theta_3W^\alpha\phi u}
        \\
        &\qquad\lesssim
        \sum_{|\alpha|\leq\kappa-1}\sum_{|\beta|\leq1}
        \LtNorm*{W^\beta\!\left(\chi_1\Theta_3W^\alpha\phi u\right)}
        +
        \sum_{|\alpha|\leq\kappa-1}\LtNorm*{W^\alpha\phi u}
        \\
        &\qquad\lesssim
        \sum_{|\alpha|\leq\kappa}\LtNorm*{W^\alpha\phi u}.
    \end{split}
    \end{equation}
    Combining \eqref{Eqn::HorVfs::GainInterior::RSGainLemma::Tmp1}, \eqref{Eqn::HorVfs::GainInterior::RSGainLemma::Tmp2},
    \eqref{Eqn::HorVfs::GainInterior::RSGainLemma::Tmp3}, and \eqref{Eqn::HorVfs::GainInterior::RSGainLemma::Tmp4} yields
    \begin{equation*}
        \sum_{|\alpha|\leq \kappa-1} \HsNorm*{\Stx{0}W^{\alpha}\phi u}{1/m}
        \lesssim \sum_{|\alpha|\leq \kappa} \LtNorm*{W^{\alpha}\phi u}+ \LtNorm*{\partial_t \Lambdatx[-\kappa] \phi u}.
    \end{equation*}
    Restricting to \(u\) with \(\supp(u)\subseteq U_2\times U_1\) and taking \(\phi=1\) on \(U_2\times U_1\), we see
    \begin{equation*}
        \sum_{|\alpha|\leq \kappa-1} \HsNorm*{\Stx{0}W^{\alpha} u}{1/m}
        \lesssim \sum_{|\alpha|\leq \kappa} \LtNorm*{W^{\alpha} u}+ \LtNorm*{\partial_t \Lambdatx[-\kappa] u}.
    \end{equation*}
    Taking \(\Stx{0}\rightarrow I\) completes the proof.
\end{proof}

\begin{proof}[Proof of Proposition \ref{Prop::Horvfs::GainInterior::RSGain}]
    We have, using the calculus of pseudodifferential operators and Lemma \ref{Lemma::HorVfs::GainInterior::RSGainLemma},
    \begin{equation*}
    \begin{split}
         &\sum_{|\alpha|\leq \kappa-1} \LtNorm*{W^\alpha \Stx{s}u}
         \lesssim \sum_{|\alpha|\leq \kappa-1} \LtNorm*{W^\alpha \Lambdatx[1/m] \Stxt{s-1/m}u}
         \\&\lesssim \sum_{|\alpha|\leq \kappa-1} \LtNorm*{\Lambdatx[1/m] W^\alpha  \Stxt{s-1/m}u}
         \\&\lesssim \sum_{|\alpha|\leq \kappa} \LtNorm*{W^\alpha  \Stxt{s-1/m}u} + \LtNorm*{\partial_t \Lambdatx[-\kappa] \Stxt{s-1/m} u}.
    \end{split}
    \end{equation*}
\end{proof}

\begin{remark}\label{Rmk::HorVfs::GainInterior::CanUseKohn}
    In many of our estimates and results, there is a ``gain'' of \(1/m\) derivatives, which comes from
    Proposition \ref{Prop::Horvfs::GainInterior::RSGain}.
    As described in Section \ref{Section::Intro::Quant}, this gain is often not optimal, and achieving
    the optimal gain is not relevant for our purposes (we plan to later use the results of this paper as a step
    in obtaining optimal results).
    If one is willing to replace \(1/m\) with \(2^{1-m}\) (which is fine for the applications we have in mind),
    then one can replace the above application of Rothschild and Stein's results \cite{RothschildSteinHypoellipticDifferentialOperatorsAndNilpotentGroups}
    with a result due to Kohn \cite{KohnLecturesOnDegenerateEllipticProblems} which has a more elementary proof.
    See \cite[Proposition 4.5.8]{StreetMaximalSubellipticity} for an exposition of this more elementary proof.
\end{remark}

    \subsection{Gaining regularity near the boundary}
    Similar to Proposition \ref{Prop::Horvfs::GainInterior::RSGain}, we can use H\"ormander's condition
to gain regularity near the boundary, though (at first) only in directions tangent to the boundary.
For later notational consistency, we use \(Y_0,Y_1,\ldots, Y_r\) to denote the H\"ormander vector fields in this case;
and we use the same dependence of constants as before, with \(Y\) replacing \(W\), throughout.

Let \(Y_0,Y_1,\ldots, Y_r\) be smooth vector fields on \(\CubengeqOne\) satisfying H\"ormander's condition of order \(m\).
Fix \(0<\delta_1<\delta_2<1\), and \(U_1\Subset U_2\Subset \R\) as in Section \ref{Section::PseudodifferentialOps::Interior}.
Suppose \(Y_0=\partial_{x_n}\) and \(Y_1,\ldots, Y_r\) have no \(\partial_{x_n}\) component. 

Recall the space \(\TestFunctionsZero[\Rngeq]\) consisting of those \(f\in \CinftycptSpace[\Rngeq]\)
which vanish to infinite order at the boundary \(\Rnmo\), and the dual space
\(\DistributionsZero[\Rngeq]\). When dealing with functions defined on \(\Rngeq\) (respectively, \(\Ropngeq\)),
we identify them with distributions in \(\DistributionsZero[\Rngeq]\) (respectively, \(\Ropngeq\)).
Thus, we may freely take derivatives without worrying about the boundary.

\begin{proposition}\label{Prop::HorVfs::GainBoundary::MainGainProp}
    For \(u\in \LtSpace[\Ropngeq]\), with \(\partial_{x_n}^ju\in \LtSpace[\Ropngeq]\), \(0\leq j\leq \kappa\), we have
    \begin{equation*}
        \sum_{|\alpha|\leq \kappa-1}
        \LtNorm*{Y^{\alpha} \Stxp{s} u}[\Ropngeq]
        \lesssim
        \sum_{|\alpha|\leq \kappa} \LtNorm*{Y^{\alpha}\Stxpt{s-1/m}u}[\Ropngeq]
        +\LtNorm*{\partial_t \left( 1-\partial_t^2 \right)^{-1/4}\Stxpt{s-1/m} u}[\Ropngeq].
    \end{equation*}
\end{proposition}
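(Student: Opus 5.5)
The plan is to mimic the interior argument (Lemma \ref{Lemma::HorVfs::GainInterior::RSGainLemma} and the proof of Proposition \ref{Prop::Horvfs::GainInterior::RSGain}), but with microlocalization only in the tangential variables \((t,x')\), treating \(x_n\) as a parameter. The key structural fact is that \(Y_0=\partial_{x_n}\) commutes with operators \(\Stxp{s}\) up to terms of the form \(\wtEst\Stxpt{s}\), because symbols satisfying \ref{Item::PDOS::NearBdry::DerivZero} have \(\partial_{x_n}b\) supported away from \(x_n=0\). The same holds for \(Y_1,\ldots,Y_r\) by Lemma \ref{Lemma::PDOs::NotationNearBdry::CalcOfPDO}\ref{Item::PDOs::NotationNearBdry::CalcOfPDO::VectorField}, since these have no \(\partial_{x_n}\) component. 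Consequently \(Y^\alpha\Stxp{s}=\Stxpt{s}Y^\alpha+\sum_{|\beta|<|\alpha|}\wtEst\Stxpt{s}Y^\beta\), and lower-order terms can be absorbed by induction on \(|\alpha|\).

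\emph{Step 1: reduce to a tangential gain lemma.} As in the interior case, write \(\Stxp{s}=\Lambdatxp[1/m]\Stxpt{s-1/m}\) modulo smoothing and commute \(Y^\alpha\) past it. It then suffices to prove the following. For \(v\) supported in \(U_2\times\CubengeqDeltaTwo\) with \(Y^\beta v\in L^2\) for \(|\beta|\leq\kappa\),
\[
\sum_{|\alpha|\leq\kappa-1}\GsNorm*{Y^\alpha v}{1/m}[\Ropngeq]\lesssim\sum_{|\alpha|\leq\kappa}\LtNorm*{Y^\alpha v}+\LtNorm*{\partial_t(1-\partial_t^2)^{-1/4}v}.
\]
Note that \(\Lambdatxp\) involves only \((\tau,\xi')\), so no boundary extension issues arise. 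The hypothesis \(\partial_{x_n}^j u\in L^2\) is used only to make the terms finite and to justify the commutations in \(\DistributionsZero\). In particular no integration by parts in \(x_n\) is needed, because everything acts tangentially.

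\emph{Step 2: split with \(\Theta_1+\Theta_2+\Theta_3=I\) in \((\tau,\xi')\).} Decompose into three microlocal regions.
\begin{itemize}
\item On \(|\tau|+|\xi'|^{2\kappa}\lesssim 1\), the operator \(\Theta_1\) is smoothing, and the contribution is bounded by \(\LtNorm{v}\).
\item On \(|\xi'|^{2\kappa}\lesssim|\tau|\), the multiplier \(\langle(\tau,\xi')\rangle^{1/m}\Theta_2\) is dominated by \(|\tau|^{1/(2\kappa m)}\). Apply Step 2 with \(v\) replaced by \(Y^\alpha v\) for each \(|\alpha|\leq\kappa-1\). Interpolating \(|\tau|^{1/(2\kappa m)}\) between the \(|\tau|^{1/2}\) weight carried by \((1-\partial_t^2)^{-1/4}\partial_t\) and the \(\kappa\) available \(Y\)-derivatives, as in \eqref{Eqn::HorVfs::GainInterior::RSGainLemma::Tmp3}, reduces the estimate to the two terms on the right. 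Here \(\frac{1}{2}\geq\frac{\kappa-1+1/m}{2\kappa}\cdot\) (the appropriate exponent) fails to be automatic, so this interpolation needs care. Its replacement of \(\Lambdatx[-\kappa]\) by \((1-\partial_t^2)^{-1/4}\) is exactly what this region requires.
\item On \(|\tau|\lesssim|\xi'|^{2\kappa}\), the gain must come from H\"ormander's condition, which is treated in Step 3.
\end{itemize}

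\emph{Step 3: the main obstacle, the \(\Theta_3\) region.} Here Rothschild--Stein must be applied on \(\CubengeqOne\), which is a half-cube, and the gain is needed only in \(x'\). The first approach to try is to freeze \(t\) and evenly reflect (or Seeley-extend) across \(x_n=0\). This is compatible because \(Y_0=\partial_{x_n}\) and \(Y_1,\ldots,Y_r\) are tangential, so after extending their coefficients the family still satisfies H\"ormander's condition of order \(m\) on a full neighborhood. Reflection is compatible with one derivative in \(x_n\), which is all that the \(|\beta|\leq1\) estimate uses. This gives \(\|\Lambdax[1/m]w\|\lesssim\sum_{|\beta|\leq1}\|Y^\beta w\|\), and since \(\langle\xi'\rangle\leq\langle\xi\rangle\), the tangential \(1/m\)-gain follows. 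An alternative, in case the reflection interacts badly with \(Y_0\), is a Kohn-type commutator argument with tangential \(\Lambdatxp\) (Remark \ref{Rmk::HorVfs::GainInterior::CanUseKohn}), yielding a gain of \(2^{1-m}\). Finally, combine the three regions, take \(\Stxp{0}\to I\) as at the end of Lemma \ref{Lemma::HorVfs::GainInterior::RSGainLemma}, and tally constants as in Section \ref{Section::HorVfs}.
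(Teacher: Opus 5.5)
Your Step 1 is the same reduction the paper makes: bound \(\|Y^\alpha\Stxp{s}u\|\) by \(\|\Lambdatxp[1/m]Y^\alpha\Stxpt{s-1/m}u\|\), then prove a tangential \(1/m\)-gain for \(v=\Stxpt{s-1/m}u\). Step 3, however, has a genuine gap.

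What you need there is a Rothschild--Stein estimate \emph{up to the boundary}: \(\|\Lambda_{x'}^{1/m}w\|_{L^2(\Rngeq)}\lesssim\sum_{|\beta|\le1}\|Y^\beta w\|_{L^2(\Rngeq)}\) for \(w\) that need not vanish at \(x_n=0\). Even reflection or Seeley extension does not deliver this.

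\begin{itemize}
\item The obstruction is not the \(x_n\)-derivative, as you suggest. It is the tangential fields \(Y_j=\sum_\ell b_{j,\ell}(x',x_n)\partial_{x_\ell}\), whose coefficients depend on \(x_n\).
\item If you extend the coefficients smoothly (as Rothschild--Stein requires), then for \(x_n<0\) the difference \(W_j\Extension_L w-\sum_k a_k (Y_jw)(x',-\lambda_k x_n)\) equals \(\sum_{k,\ell}a_k(\tilde b_{j,\ell}(x',x_n)-b_{j,\ell}(x',-\lambda_kx_n))(\partial_{x_\ell}w)(x',-\lambda_kx_n)\). This is an \(O(|x_n|)\nabla_{x'}w\) term, and \(\nabla_{x'}w\) is not controlled by \(Y\)-derivatives.
\item Concretely, take \(n=2\), \(Y_0=\partial_{x_2}\), \(Y_1=(x_1-x_2)\partial_{x_1}\), which is H\"ormander of order \(2\) and module-equivalent to Grushin. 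An element of \(\HsYSpace{1}\) need not have \(\partial_{x_1}w\in L^2\) near the points \((c,c)\), \(c>0\) small, where \(Y_1\) vanishes. Near the mirror points \((c,-c)\), the extended field \((x_1-x_2)\partial_{x_1}\) is nondegenerate, so the even reflection of \(w\) is not in \(\HsWSpace{1}\).
\item If instead you reflect the coefficients evenly as well, the fields are only Lipschitz across \(\{x_n=0\}\). Brackets up to order \(m\) are then undefined, and Rothschild--Stein does not apply.
\item Your Kohn-type fallback hits the same problem in another form. At the boundary, H\"ormander's condition generally needs brackets with \(Y_0=\partial_{x_n}\). Kohn's argument for \([Y_0,Y_j]\) moves \(\partial_{x_n}\) across an inner product, which produces boundary integrals over \(\{x_n=0\}\) that you do not control. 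Remark \ref{Rmk::HorVfs::GainInterior::CanUseKohn} concerns only the full-space setting.
\end{itemize}

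The missing ingredient is Lemma \ref{Lemma::HorVfs::GainBoundary::StreetExtension}. It gives a \emph{nonclassical} extension \(\Extension_N:\HsYSpace{j}\to\HsWSpace{j}\), \(0\le j\le N\), from the author's earlier work, which exploits that the boundary is non-characteristic. The paper stresses that classical extensions do not respect these spaces.

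With this lemma, the paper needs no tangential three-region split:
\begin{itemize}
\item It applies \(\Extension=\Extension_\kappa\) in \(x\) to \(\Stxpt{s-1/m}u\).
\item It passes from \(\Lambdatxp[1/m]\) on \(\Ropngeq\) to \(\Lambdatx[1/m]\) on \(\Ropn\), using restriction and \(\langle(\tau,\xi')\rangle\le\langle(\tau,\xi)\rangle\).
\item It applies the full-space Lemma \ref{Lemma::HorVfs::GainInterior::RSGainLemma}.
\item It uses \(|\tau|\langle(\tau,\xi)\rangle^{-\kappa}\le|\tau|(1+|\tau|^2)^{-1/4}\) and commutes \(\Extension\) with the \(t\)-multiplier.
\item It returns to \(\Ropngeq\) via the \(\HsYSpace{j}\to\HsWSpace{j}\) bound for \(j\le\kappa\).
\end{itemize}

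This route also removes the other loose end in your Step 2. With microlocalization only in \((\tau,\xi')\), \(\Theta_2\) gives no control of \(\xi_n\). The \(\partial_{x_n}\)-factors of \(Y^\alpha\) would then need an interpolation in \(x_n\) between \(\partial_{x_n}^\kappa\) and \(|\tau|^{1/2}\). That is doable, but you left it unfinished.
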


To prove Proposition \ref{Prop::HorVfs::GainBoundary::MainGainProp}, we extend the setting so
that Proposition \ref{Prop::Horvfs::GainInterior::RSGain} applies.
By a classic result of Seeley \cite{SeeleyExtensionOfCInfinityFunctionsDefinedInAHalfSpace},
we may extend \(Y_0,Y_1,\ldots Y_r\) to smooth vector fields on \(\CubenOne\); call the extended
vector fields \(W_0, W_1,\ldots, W_r\), with \(W_0=\partial_{x_n}\), satisfying:
\begin{itemize}
    \item \(W_j\big|_{\CubengeqOne}=Y_j\),
    \item For any open \(U\Subset \CubenOne\), \(\CjNorm{W_j}{L}[U][\Rn]\leq C_{L,n,U} \CjNorm{Y_j}{L}[U][\Rn]\),
        where \(C_{L,n,U}\geq 0\) depends only on \(L,n,U\).
\end{itemize}
Fix \(\delta_3\in(\delta_2,1)\).
By continuity, \(W_0,W_1,\ldots,W_r\) satisfy H\"ormander's condition
on a neighborhood of \(\CubengeqDeltaTwoClosure\).
More precisely, we may choose an open set \(V\Subset\CubenOne\), with
\(\CubengeqDeltaTwoClosure\Subset V\), such that the corresponding
lower bound in \eqref{Eqn::HorVfs::LowerBoundDet} satisfies
\(c_{\overline{V}}\gtrsim1\).
The set \(V\) and the implicit constant may be chosen to depend only on
\(n\), \(r\), \(m\), \(\delta_2\), \(\delta_3\),
\(c_{\CubengeqDeltaTwoClosure}^{-1}\), and an upper bound for
\begin{equation*}
    \max_{0\leq j\leq r}
    \CjNorm*{Y_j}{L}[\Cubengeq{\delta_3}][\Rn],
\end{equation*}
where \(L\) can be chosen to depend only on \(n\), \(r\), and \(m\).

Choose \(\psi\in\CinftySpace[\CubenOne]\) such that \(\psi=0\) on a
neighborhood of \(\CubengeqDeltaTwoClosure\) and
\begin{equation*}
    \psi=1
    \quad\text{on}\quad
    \CubenOne\setminus V.
\end{equation*}
Enlarge each of the lists \(Y_0,\ldots,Y_r\) and \(W_0,\ldots,W_r\)
by appending
\begin{equation*}
    \psi\partial_{x_1},\ldots,\psi\partial_{x_{n-1}}.
\end{equation*}
The enlarged list of \(W\)-vector fields satisfies H\"ormander's
condition on all of \(\CubenOne\).
Since the appended vector fields vanish on \(\CubengeqDeltaTwo\), the
conclusion of Proposition
\ref{Prop::HorVfs::GainBoundary::MainGainProp} is unchanged.
Henceforth, we use these enlarged lists.
For \(\Compact_0\Subset \CubengeqOne\) compact, we let
\begin{equation}\label{Eqn::HorVfs::GainBoundar::DefineHsYSpaceOfCompact}
    \HsYSpace{j}[\Compact_0]:=\left\{ u\in \HsYSpace{j}[\CubengeqOne]  : \supp(u)\subseteq \Compact_0\right\}.
\end{equation}
Similarly, for \(\Compact\Subset \CubenOne\) compact, we let
\begin{equation*}
    \HsWSpace{j}[\Compact]:=\left\{ u\in \HsWSpace{j}[\CubenOne]  : \supp(u)\subseteq \Compact\right\}.
\end{equation*}

Clearly, the restriction map \(f\mapsto f\big|_{\Rngeq}\) is continuous
\(\HsWSpace{j}[\Compact]\rightarrow \HsYSpace{j}[\Compact\cap \Rngeq]\). The next result shows that this has a corresponding
right-inverse, continuous, linear extension map. Here, we cannot use the classical extension maps (as discussed
in Section \ref{Section::EstNearBdry::ClassicalExtension}) because they do not respect the spaces.

\begin{lemma}\label{Lemma::HorVfs::GainBoundary::StreetExtension}
    Fix \(N\in \Zg\).
    There is a continuous, linear, extension map
    \begin{equation}\label{Eqn::HorVfs::GainBoundary::StreetExtension::ExtensionMapping}
        \Extension_{N}:\HsYSpace{j}[\CubengeqDeltaOneClosure]\rightarrow \HsWSpace{j}[\CubenDeltaTwoClosure],\quad 0\leq j\leq N,
    \end{equation}
    satisfying
    \begin{equation}\label{Eqn::HorVfs::GainBoundary::StreetExtension::ExtensionRestriction}
        \Extension_{N} f\big|_{\Rngeq} = f.
    \end{equation}
    Moreover,
    \begin{equation*}
        \HsWNorm{\Extension_{N} f}{j}
        \leq C_{N} \HsYNorm{f}{j}, \quad \forall 0\leq j\leq N,\: f\in \HsYSpace{j}[\CubengeqDeltaOneClosure],
    \end{equation*}
    where \(C_{N}\geq 0\) can be chosen to depend only on \(N\), \(n\), \(r\), \(m\), \(c_{\CubenDeltaTwoClosure}\)
    (as in \eqref{Eqn::HorVfs::LowerBoundDet}), and an upper bound for
    \begin{equation*}
        \max_{1\leq j\leq r} \CjNorm{W_j}{L}[\CubenDeltaTwoClosure][\Rn],
    \end{equation*}
    where \(L\) can be chosen to depend only on \(N\), \(n\), \(r\), and \(m\).
\end{lemma}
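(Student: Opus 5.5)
We will build \(\Extension_N\) as a weighted reflection across \(\{x_n=0\}\), namely a Seeley/Hestenes-type extension, and then check that it respects the \(W\)-Sobolev spaces. The structural fact we exploit is that \(W_0=\partial_{x_n}\), while \(W_1,\ldots,W_r\) (and the appended \(\psi\partial_{x_j}\)) have no \(\partial_{x_n}\) component. So every \(W_j\) with \(j\geq 1\) is of the form \(\sum_{k<n} a_j^k(x',x_n)\partial_{x_k}\), with coefficients extended smoothly across \(x_n=0\).

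Fix \(\chi\in\CinftycptSpace[\Cuben{\delta_2}]\) with \(\chi=1\) near \(\CubenmoOne\cap\overline{\Cuben{\delta_1}}\) in the tangential variables and near \(|x_n|\leq\delta_1\). For \(x_n<0\) we define
\[
\Extension_N f(x',x_n):=\chi(x)\sum_{k=1}^{N+1} c_k\, f(x',-\lambda_k x_n),
\]
with distinct \(\lambda_k\in(0,1]\) chosen small enough that \(-\lambda_k x_n\) stays inside \(\Cubengeq{\delta_1}\)-support considerations. The \(c_k\) solve the Vandermonde system \(\sum_k c_k(-\lambda_k)^p=1\) for \(0\leq p\leq N\). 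For \(x_n\geq0\) we simply set \(\Extension_N f=f\). This map is linear, restricts to \(f\), and has support in \(\CubenDeltaTwoClosure\).

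To bound \(\HsWNorm{\Extension_N f}{j}\), we express each \(W^\alpha\) applied to a reflected term \(f(x',-\lambda x_n)\). On \(x_n<0\) we have \(\partial_{x_n}[f(x',-\lambda x_n)]=-\lambda(\partial_{x_n}f)(x',-\lambda x_n)\). For \(j\geq1\), \(W_j[f(\cdot,-\lambda x_n)](x)=\sum_k a_j^k(x',x_n)(\partial_{x_k}f)(x',-\lambda x_n)\), which equals \((W_jf)(x',-\lambda x_n)\) plus the error \(\sum_k\bigl(a_j^k(x',x_n)-a_j^k(x',-\lambda x_n)\bigr)\partial_{x_k}f\). \textbf{This error is the main obstacle}: it involves tangential derivatives \(\partial_{x_k}f\), which are not controlled by \(\HsYNorm{f}{j}\). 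The key step is to reorganize the reflected coefficients. We write the reflected operator \(\widetilde{W}_j:=\sum_k a_j^k(x',-\lambda x_n)\partial_{x_k}\) as the pushforward of \(Y_j\) under the reflection. Then \(W_j\) restricted to \(x_n<0\) is \(\widetilde W_j\) plus an error coefficient vanishing at \(x_n=0\).

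We will avoid the error altogether as follows. Since \(W_j\) for \(j\geq1\) were obtained by Seeley extension of the \(Y_j\), we are free to \emph{choose} the extension: we define \(W_j\) on \(x_n<0\) near \(\CubenDeltaTwoClosure\) by blending the reflected fields \(\widetilde W_j\) (with \(\lambda=1\)), and we take \(\lambda_k\) sufficiently close to \(1\). If that freedom is disallowed, we fall back on a different route. Each coefficient \(a_j^k\) is written as \(\sum\) of \(\{W_0,\ldots,W_r\}\)-commutator combinations, with \(\partial_{x_k}=\sum_l b_{kl}X_l\) on \(\CubenDeltaTwoClosure\) by \eqref{Eqn::HorVfs::LowerBoundDet}. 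This is used only to absorb the error terms multiplied by \(x_n\)-vanishing factors. Concretely, \((a(x',x_n)-a(x',-\lambda x_n))=x_n\,e(x)\), and \(x_n\partial_{x_k}\)-type terms are handled by a Hardy-type inequality in \(x_n\) combined with induction on \(j\) via commuting \(W^\alpha\) past the reflection.

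The final step assembles these pieces. Induction on \(|\alpha|\leq j\leq N\) shows that \(W^\alpha\Extension_Nf\) on \(x_n<0\) is a sum of terms \(c_k g(x)\,(Y^\beta f)(x',-\lambda_k x_n)\) with \(|\beta|\leq|\alpha|\), each of which is in \(L^2\) by a change of variables. The matching conditions on the \(c_k\) guarantee that the distributional derivatives across \(x_n=0\) have no jump terms up to order \(N\). For \(f\in\HsYSpace{j}\) this is verified first on smooth dense \(f\) and then extended by continuity. The constants depend only on the listed quantities through the \(c_k\), \(\lambda_k\), \(\chi\), and the \(C^L\) norms of the coefficients.
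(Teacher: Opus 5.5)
There is a genuine gap: the reflection construction fails at exactly the step you flag, and neither proposed repair closes it. Any smooth extension of a coefficient $a$ of $Y_j$ is determined to infinite order at $x_n=0$. So on $x_n<0$,
\[
a(x',x_n)-a(x',-\lambda x_n)=(1+\lambda)\,x_n\,\partial_{x_n}a(x',0)+O(x_n^2)
\]
for every smooth extension and every $\lambda>0$. Taking $\lambda_k$ near $1$ does not make this small. The even reflection $a(x',|x_n|)$ is only Lipschitz, so it is not an admissible $W_j$. Hence the error $x_n e(x)(\partial_{x_k}f)(x',-\lambda x_n)$ really must be controlled by $\|f\|_{H^j_Y}$, and it cannot be. The loss does not occur at the boundary; it occurs at interior points of the half-space where the tangential fields degenerate. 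A Hardy inequality trades powers of $x_n$ only against $\partial_{x_n}$, never against tangential derivatives. Rewriting $\partial_{x_k}$ through commutators costs up to $m$ $W$-derivatives, not one.

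Here is a concrete counterexample. Take $n=2$, $Y_0=\partial_{x_2}$, $Y_1=(x_2-x_1)\partial_{x_1}$, which satisfy H\"ormander's condition of order $2$. Any smooth extension of $Y_1$ agrees with $(x_2-x_1)\partial_{x_1}$ up to coefficients that are $O(|x_2|^\infty)$. Fix a small $b>0$, a bump $\eta$ supported near $x_1=b$, and $\phi\in C^\infty_{\mathrm{cpt}}(\mathbb{R})$. Let $f_\mu(x)=\eta(x_1)\phi(\mu^{1/2}(x_2-x_1))e^{i\mu x_1}$. A direct (Grushin-scaling) computation gives $\|f_\mu\|_{L^2}\approx\mu^{-1/4}$ and $\|\partial_{x_2}f_\mu\|_{L^2}+\|Y_1f_\mu\|_{L^2}\lesssim\mu^{1/4}$.

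The $k$-th reflected term is concentrated near $(b,-b/\lambda_k)$, where the $\partial_{x_1}$-coefficient of $W_1$ has size $\gtrsim b$. The terms for distinct $\lambda_k$ have disjoint supports, and some $c_k\neq 0$, so $\|W_1\mathscr{E}_Nf_\mu\|_{L^2}\gtrsim b\,\mu^{3/4}$. Thus no operator of your form is bounded $H^1_Y\to H^1_W$, whichever smooth extension of the $Y_j$ you choose. In particular, your inductive formula $W^\alpha\mathscr{E}_Nf=\sum_k c_k\,g\,(Y^\beta f)(x',-\lambda_kx_n)$ with smooth $g$ is false: $g$ would have to blow up along $x_1=-\lambda_kx_2$.

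The reflection $x_n\mapsto-\lambda x_n$ does not respect the Carnot--Carath\'eodory geometry of the $W$'s, and an extension has to be adapted to that geometry. This is why the paper does not build $\mathscr{E}_N$ by hand. It identifies $H^j_Y$ and $H^j_W$ with the adapted Triebel--Lizorkin spaces $\mathscr{F}^{j}_{2,2}$ of the companion paper. It notes that every boundary point is non-characteristic (since $Y_0=\partial_{x_n}$) and that the restriction of the $W$-filtration to the half-cube is the $Y$-filtration. It then invokes the extension theorem proved there.
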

\begin{proof}
    This is a result from \cite{StreetFunctionSpacesAndTraceTheoremsForMaximallySubellipticBoundaryValueProblems};
    we describe how to see this result in the language of that paper.
    For \(\Omegageq \subseteq \CubengeqOne\) open and \(d\in \Zg\), let \(\FilteredSheafF[\Omegageq][d]\)
    be the \(\CinftySpace[\Omegageq][\R]\)-module generated by \(Y_0,\ldots, Y_r\),
    and for \(\Omega\subseteq \CubenOne\) open and \(d\in \Zg\), let
    \(\FilteredSheafFh[\Omega][d]\) be the \(\CinftySpace[\Omega][\R]\)-module generated by \(W_0,\ldots, W_r\).
    In the language of \cite{StreetFunctionSpacesAndTraceTheoremsForMaximallySubellipticBoundaryValueProblems},
    \(\FilteredSheafF\) and \(\FilteredSheafFh\) are H\"ormander filtrations of sheaves of vector fields
    (see \cite[Definition \ref*{FS::Defn::Filtrations_Sheaves::Hormander_Filtration_Sheaves}]{StreetFunctionSpacesAndTraceTheoremsForMaximallySubellipticBoundaryValueProblems}).
    Since \(Y_0=\partial_{x_n}\) is not tangent to \(\partial \CubengeqOne\), every boundary point of
    \(\partial \CubengeqOne\) is \(\FilteredSheafF\)-non-characteristic in the sense of
    \cite[Definition \ref*{FS::Defn::Filtrations::RestrictingFiltrations::NonCharPoints}]{StreetFunctionSpacesAndTraceTheoremsForMaximallySubellipticBoundaryValueProblems};
    see, also, \cite[Example \ref*{FS::Example::Filtrations::RestrictingFiltrations::NonCharExamples} \ref*{FS::Item::Filtrations::RestrictingFiltrations::NonCharExamples::CharacterizeWhenAllDegsOne}]{StreetFunctionSpacesAndTraceTheoremsForMaximallySubellipticBoundaryValueProblems}.
    Finally, we have \(\RestrictFilteredSheaf{\FilteredSheafFh}{\CubengeqOne}=\FilteredSheafF\) in the sense of
    \cite[Definition \ref*{FS::Defn::Filtrations::RestrictingFiltrations::RestrictedFiltration}]{StreetFunctionSpacesAndTraceTheoremsForMaximallySubellipticBoundaryValueProblems}; see, also,
    \cite[Proposition \ref*{FS::Prop::Filtrations::RestrictingFiltrations::CoDim0Restriction}]{StreetFunctionSpacesAndTraceTheoremsForMaximallySubellipticBoundaryValueProblems}.

    \cite[Proposition \ref*{FS::Prop::Spaces::EqualsSobolev}]{StreetFunctionSpacesAndTraceTheoremsForMaximallySubellipticBoundaryValueProblems}
    shows
    \begin{equation*}
        \TLSpace{j}{2}{2}[\CubengeqDeltaOneClosure][\FilteredSheafF] = \HsYSpace{j}[\CubengeqDeltaOneClosure],\quad \TLSpace{j}{2}{2}[\CubenDeltaTwoClosure][\FilteredSheafFh]=\HsWSpace{j}[\CubenDeltaTwoClosure],
    \end{equation*}
    with equivalence of norms,
    where the \(\TLSpace{s}{p}{q}\) spaces are defined in \cite[Chapter \ref*{FS::Chapter::Spaces}]{StreetFunctionSpacesAndTraceTheoremsForMaximallySubellipticBoundaryValueProblems}.
    From here, the existence of \(\Extension_{N}\) as in \eqref{Eqn::HorVfs::GainBoundary::StreetExtension::ExtensionMapping}
    and \eqref{Eqn::HorVfs::GainBoundary::StreetExtension::ExtensionRestriction}
    is a special case of \cite[Theorem \ref*{FS::Thm::Spaces::Extension} \ref*{FS::Item::Spaces::Extension::Extension}]{StreetFunctionSpacesAndTraceTheoremsForMaximallySubellipticBoundaryValueProblems}.
    The statement about what \(C_{N}\) depends on can be shown by keeping track of constants in the proofs in
    \cite{StreetFunctionSpacesAndTraceTheoremsForMaximallySubellipticBoundaryValueProblems}.
\end{proof}

\begin{proof}[Proof of Proposition \ref{Prop::HorVfs::GainBoundary::MainGainProp}]
    Let \(\Extension=\Extension_{\kappa}\) be as in Lemma \ref{Lemma::HorVfs::GainBoundary::StreetExtension}. 
    For functions \(u(t,x)\), we let \(\Extension\) act in the \(x\)
    variable.  We have, 
    \begin{equation*}
    \begin{split}
         &\sum_{|\alpha|\leq \kappa-1} \LtNorm*{Y^{\alpha}\Stxp{s} u}[\Ropngeq]
         \lesssim \sum_{|\alpha|\leq \kappa-1} \LtNorm*{\Lambdatxp[1/m]  Y^{\alpha}\Stxpt{s-1/m} u}[\Ropngeq]
         \\&\leq \sum_{|\alpha|\leq \kappa-1}
         \LtNorm*{\Lambdatxp[1/m]  W^{\alpha} \Extension \Stxpt{s-1/m} u}[\Ropn]
         \leq \sum_{|\alpha|\leq \kappa-1}
         \LtNorm*{\Lambdatx[1/m]  W^{\alpha} \Extension \Stxpt{s-1/m} u}[\Ropn]
         \\&=\sum_{|\alpha|\leq \kappa-1}\HsNorm*{W^{\alpha} \Extension \Stxpt{s-1/m} u}{1/m}[\Ropn]
         \\&\lesssim \sum_{|\alpha|\leq \kappa} \LtNorm*{W^{\alpha} \Extension \Stxpt{s-1/m} u}[\Ropn]
         +\LtNorm*{\partial_t \Lambdatx[-\kappa] \Extension \Stxpt{s-1/m} u}[\Ropn]
         \\&\leq\sum_{|\alpha|\leq \kappa} \LtNorm*{W^{\alpha} \Extension \Stxpt{s-1/m} u}[\Ropn]
         +\LtNorm*{\partial_t \left( 1-\partial_t^2 \right)^{-1/4} \Extension \Stxpt{s-1/m} u}[\Ropn]
        \\&=\sum_{|\alpha|\leq \kappa} \LtNorm*{W^{\alpha} \Extension \Stxpt{s-1/m} u}[\Ropn]
         +\LtNorm*{ \Extension \partial_t \left( 1-\partial_t^2 \right)^{-1/4} \Stxpt{s-1/m} u}[\Ropn]
         \\&\lesssim \sum_{|\alpha|\leq \kappa} \LtNorm*{Y^{\alpha} \Stxpt{s-1/m} u}[\Ropngeq]
         +\LtNorm*{ \partial_t \left( 1-\partial_t^2 \right)^{-1/4} \Stxpt{s-1/m} u}[\Ropngeq],
    \end{split}
    \end{equation*}
    where in the second inequality we  used \(\Extension f\big|_{\Rngeq} =f\), in the fourth inequality
    we used  Lemma \ref{Lemma::HorVfs::GainInterior::RSGainLemma}, 
    in the final equality we used that \(\Extension\) does not act in the \(t\)-variable,
    and in the final inequality we used
    Lemma \ref{Lemma::HorVfs::GainBoundary::StreetExtension}.
\end{proof}

\section{Estimates near the boundary}\label{Section::EstNearBdry}
Let \(W_0,\ldots, W_r\) be smooth vector fields on \(\CubengeqOne\) satisfying H\"ormander's condition of order \(m\in \Zg\),
with \(W_0=c_0\partial_{x_n}\), where \(c_0\in \R\setminus\{0\}\) is a constant.
We assume \(W_1(x',0),\ldots, W_r(x',0)\) have no \(\partial_{x_n}\) component for \(x'\in \CubenmoOne\).
Fix \(M,\kappa\in \Zg\).  As usual we give \(\Rngeq\) coordinates \(x=(x',x_n)\), where \(x_n\geq 0\),
and we give \(\Ropngeq\) coordinates \((t,x)=(t,x',x_n)\), where \(t\in \R\), \(x\in \Rngeq\).
Fix \(\sigma\in \CinftySpace[\CubengeqOne][(0,\infty)]\).

Let \(a_{\alpha,\beta}\in \CinftySpace[\CubengeqOne][\MatrixSpace[M][M][\C]]\) be given (for \(\alpha,\beta\) lists
of elements of \(\left\{ 0,1,\ldots,r \right\}\),
\(|\alpha|,|\beta|\leq \kappa\)). Define, wherever it makes sense,
\begin{equation}\label{Eqn::EstBdry::FormulaForQ}
    \FormQ[f][g]:=\sum_{|\alpha|,|\beta|\leq \kappa} \Ltip*{W^{\alpha}f}{\sigma a_{\alpha,\beta}W^\beta g}[\Rngeq],
\end{equation}
\begin{equation*}
    \FormQH[u][v]:= \sum_{|\alpha|,|\beta|\leq \kappa} \Ltip*{W^{\alpha}u}{\sigma a_{\alpha,\beta}W^\beta v}[\Ropngeq]
    +\Ltip{u}{\sigma \partial_t v}[\Ropngeq].
\end{equation*}
Set
\begin{equation*}
    \opL:=\sum_{|\alpha|,|\beta|\leq \kappa} \sigma^{-1}\left( W^{\alpha} \right)^{*} \sigma a_{\alpha,\beta} W^{\beta}.
\end{equation*}
Note that,
\begin{equation}\label{Eqn::EstBdry::LGivenByQ}
    \FormQ[f][g]=\Ltip*{f}{\sigma \opL g}[\Rngeq],\quad f,g\in \TestFunctionsZero*[\CubengeqOne],
\end{equation}
\begin{equation*}
    \FormQH[u][v]=\Ltip*{u}{\sigma \left( \partial_t +\opL \right)v}[\Ropngeq], \quad u,v\in \TestFunctionsZero*[\R\times \CubengeqOne].
\end{equation*}

\begin{assumption}\label{Assumption::EstBdry::MaxSubOnInterior}
    \(\exists D_1,D_2\geq 0\), \(\forall f\in \CinftycptSpace[\CubengOneHalf][\C^M]\),
    \begin{equation}\label{Eqn::EstBdry::MaxSubOnInterior}
        \sum_{|\alpha|=\kappa} \LtNorm*{W^\alpha f}[\Rngeq]^2
        \leq
        D_1 \Real \FormQ*[f][f] + D_2 \LtNorm*{f}[\Rngeq]^2.
    \end{equation}
\end{assumption}

For the next assumption, we consider integral kernels
\(K(t,s,x,y')\in \CinftycptSpace[\R\times \R\times \CubengeqOneHalf\times \CubenmoOneHalf]\)
such that \(\exists \delta_0>0\) with \(\partial_{x_n}K(t,s,x,y')=0\), for \(x_n<\delta_0\),
and operators
\begin{equation}\label{Eqn::EstBdry::SOpForAssump}
    S u(t,x)=\iint K(t,s,x,y') u(s,y',x_n)\:ds\: dy'.
\end{equation}


\begin{assumption}\label{Assumption::EstBdry::MaxSubOnSmoothing}
    We are given a \emph{set} 
    \(\DSetL{1}\subseteq \DistributionsZeroCM[\R\times \CubengeqOne]\)
    such that
    \begin{enumerate}[(i)]
        \item\label{Item::EstBdry::MaxSubOnSmoothing::InHkWOnCubegeqOneHalf} \(\forall u\in \DSetL{1}\), \(u\big|_{\R\times \CubengeqOneHalf}\in \LtSpaceNoMeasure[\R][\HsWSpace{\kappa}[\CubengeqOneHalf;\C^M]]\).
        \item\label{Item::EstBdry::MaxSubOnSmoothing::MaxSubEstimate} \(\exists E_1,E_2\geq 0\), \(\forall u\in \DSetL{1}\), \(\forall S\) of the form \eqref{Eqn::EstBdry::SOpForAssump},
            \begin{equation}\label{Eqn::EstBdry::MaxSubOnSmoothing::MaxSubEstimate}
                \sum_{|\alpha|=\kappa} \LtNorm*{W^{\alpha} S u}[\Ropngeq]^2
                \leq E_1 \Real \FormQH*[Su][Su] + E_2 \LtNorm*{Su}[\Ropngeq]^2.
            \end{equation}
        \item\label{Item::EstBdry::MaxSubOnSmoothing::dtplusOplOnDIsFunction}  \(\forall u\in \DSetL{1}\), \(\opL u\big|_{\R\times \CubengeqOneHalf}\)
            (when taken in the sense of \(\DistributionsZeroCM*[\R\times \CubengeqOne]\)) agrees with
            a function in \(\LtSpaceNoMeasure[\R\times \CubengeqOneHalf][\C^M]\).
        \item\label{Item::EstBdry::MaxSubOnSmoothing::FormQHGivesOp} \(\forall u\in \DSetL{1}\), 
        \(\forall S\) of the form \eqref{Eqn::EstBdry::SOpForAssump},
            \begin{equation*}
                \FormQH[S u][u] = \Ltip*{Su}{\sigma \left( \partial_t+\opL \right)u}[\Ropngeq].
            \end{equation*}
    \end{enumerate}
\end{assumption}

\begin{definition}\label{Defn::EstBdry::DSet}
    For \(N\geq 1\), we recursively define 
    \(\DSetL{N}\subseteq\DistributionsZeroCM[\R\times \CubengeqOne]\)
    by: 
    \[\DSetL{N+1}:=\left\{ u(t,x)\in \DSetL{N} : \left( \partial_t +\opL \right)u\in \DSetL{N} \right\},\]
    where \(\DSetL{1}\) is as given in Assumption \ref{Assumption::EstBdry::MaxSubOnSmoothing}.
    In particular, by Assumption \ref{Assumption::EstBdry::MaxSubOnSmoothing}\ref{Item::EstBdry::MaxSubOnSmoothing::InHkWOnCubegeqOneHalf}, every \(u\in \DSetL{N}\)
    restricts to an element of \(\LtSpaceNoMeasure[\R][\HsWSpace{\kappa}[\CubengeqOneHalf;\C^M]]\).
\end{definition}

Set \(\epsilon_0:=\min\{1/m,1/2\}\).

\begin{theorem}\label{Thm::EstBdry::MainThm::New}
    Under the above assumptions (Assumptions \ref{Assumption::EstBdry::MaxSubOnInterior} and \ref{Assumption::EstBdry::MaxSubOnSmoothing})
    the following holds. 
        Fix \(0<\delta_1<\delta_2<1/2\) and \(U_1\Subset U_2\Subset \R\) open sets.
    Let \(\phi_1\in \CinftycptSpace[U_1\times \Cubengeq{\delta_1}]\),
    \(\phi_2\in \CinftycptSpace[\R\times \CubengeqOneHalf]\)
    with 
    \(\phi_2=1\) on a neighborhood of \(\overline{U_1}\times \CubengeqDeltaOneClosure\).
    Then, \(\forall N\in \Zg\), \(\forall l\in \Zg\) with \(l\geq 2\kappa-1\), and
    \(J\in \Zg\) with \(1\leq J\leq \min\left\{ \floor{\frac{l+1}{2\kappa}},N \right\}\),
    there exists \(C_{l,\phi_1,\phi_2,N}\geq 0\), \(\forall u\in \DSetL{N}\),
    \begin{equation}\label{Eqn::EstBdry::MainThm::MainEst::New}
    \begin{split}
            \HsNorm*{\phi_1 u}{l+1}[\Ropngeq]\leq C_{l,\phi_1,\phi_2,N}
            \bigg( &
                \HsNorm*{\phi_2\left( \partial_t+\opL \right)^J u}{l+1-2\kappa J}[\Ropngeq]
                \\&+ \HsNorm*{\phi_2\left( \partial_t+\opL \right)^N u}{\left( l+1-N\epsilon_0 \right)\vee 0}[\Ropngeq]
                +\sum_{k=0}^{N-1} \LtNorm*{\phi_2\left( \partial_t+\opL \right)^k u}[\Ropngeq]
             \bigg),
    \end{split}    
\end{equation}
    where if the right-hand side of \eqref{Eqn::EstBdry::MainThm::MainEst::New} is finite, so is the left-hand side.
    \(C_{l,\phi_1,\phi_2,N}\) can be chosen to depend only on
    \(l\), \(\phi_1\), \(\phi_2\), \(N\), \(n\), \(\kappa\), \(m\), \(r\), \(M\), \(c_{\CubengeqSevenEighthsClosure}\) (as in \eqref{Eqn::HorVfs::LowerBoundDet}),
     \(\delta_1\), \(\delta_2\), \(U_1\), \(U_2\),
    \(D_1\), \(D_2\), \(E_1\), \(E_2\), and an upper bound for
    \begin{equation*}
        \max_{0\leq j\leq r} \CjNorm*{W_j}{L}[\CubengeqSevenEighths][\Rn] + \max_{\alpha,\beta} \CjNorm{a_{\alpha,\beta}}{L}[\CubengeqOneHalf][\MatrixSpace[M][M][\C]]
        +\CjNorm{\sigma}{L}[\CubengeqOneHalf]+\sup_{x\in \CubengeqOneHalf} \sigma(x)^{-1},
    \end{equation*}
    where \(L\) can be chosen to depend only on \(l\), \(m\), \(n\), \(N\), \(\kappa\), and \(r\).
\end{theorem}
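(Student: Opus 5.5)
The plan is to prove Theorem \ref{Thm::EstBdry::MainThm::New} in three stages. First, a tangential a priori estimate in the \(\GsSpace{s}\) scale gains \(\epsilon_0\) tangential derivatives at a time, using the operators \(\Stxp{s}\) of Section \ref{Section::PseudodifferentialOps::Bdry}. Second, normal derivatives are recovered from the equation, using that \(W_0=c_0\partial_{x_n}\) with \(c_0\neq 0\) a constant. Third, an induction on \(l\) and on the number of applications of \(\partial_t+\opL\) produces the stated form of \eqref{Eqn::EstBdry::MainThm::MainEst::New}.

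\emph{Basic tangential step.} The core estimate I aim for is, for \(u\in \DSetL{1}\),
\begin{equation*}
    \sum_{|\alpha|\leq \kappa}\LtNorm*{W^\alpha \Stxp{s}u}[\Ropngeq]
    +\LtNorm*{\partial_t(1-\partial_t^2)^{-1/4}\Stxp{s}u}[\Ropngeq]
    \lesssim \LtNorm*{\wtEst\Stxpt{s-\kappa}(\partial_t+\opL)u}[\Ropngeq]
    +\sum_{|\alpha|\leq\kappa}\LtNorm*{W^\alpha\Stxpt{s-\epsilon_0}u}[\Ropngeq]
    +\LtNorm*{\wtEst\Stxpt{s-\epsilon_0}u}[\Ropngeq].
\end{equation*}
The main steps are as follows.
\begin{enumerate}[(i)]
    \item Take \(S=\Stxp{s}\). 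Since \(\Stxp{s}\) has smooth kernel, compact support, and is \(x_n\)-independent near \(x_n=0\), it is a limit of operators of the form \eqref{Eqn::EstBdry::SOpForAssump}. Property \ref{Item::PDOS::NearBdry::DerivZero} is exactly what makes this possible.
    \item Apply Assumption \ref{Assumption::EstBdry::MaxSubOnSmoothing}\ref{Item::EstBdry::MaxSubOnSmoothing::MaxSubEstimate} to \(Su\). This bounds \(\sum_{|\alpha|=\kappa}\LtNorm{W^\alpha Su}^2\) by \(\Real\FormQH[Su][Su]\) plus a lower-order term.
    \item Write \(\FormQH[Su][Su]=\FormQH[S^*Su][u]+\text{commutators}\). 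By \ref{Item::EstBdry::MaxSubOnSmoothing::FormQHGivesOp}, the main term equals \(\Ltip{S^*Su}{\sigma(\partial_t+\opL)u}\). Cauchy--Schwarz then gives \(\LtNorm{\wtEst\Stxpt{s+\kappa}u}\LtNorm{\wtEst\Stxpt{s-\kappa}(\partial_t+\opL)u}\), and the first factor is absorbed.
    \item The commutators \([S,W^\alpha]\) and \([S,\sigma a_{\alpha,\beta}]\) are handled by Lemma \ref{Lemma::PDOs::NotationNearBdry::CalcOfPDO}. Tangential fields \(W_1,\ldots,W_r\) give \(\wtEst\Stxpt{s}\). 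The key point is that \(W_0=c_0\partial_{x_n}\) commutes with \(S\) near the boundary.
    \item The real part of \(\Ltip{Su}{\sigma\partial_t Su}\) is a commutator term \(\Ltip{Su}{(\partial_t\sigma\text{-free})\ldots}\), which is harmless since \(\sigma\) is \(t\)-independent.
    \item Control of the \(\partial_t\) term comes from the imaginary part, via \(\partial_t(1-\partial_t^2)^{-1/4}\) acting as an operator of order \(\kappa\) in \((\tau,\xi')\).
    \item Convert the top-order \(W^\alpha\) norms into the full gain of \(\epsilon_0\) by Proposition \ref{Prop::HorVfs::GainBoundary::MainGainProp}. Its extension argument requires \(Y_0=\partial_{x_n}\) exactly, which I arrange by rescaling \(W_0\) by \(c_0^{-1}\) and subtracting \(\partial_{x_n}\)-components of \(W_j\) (they vanish at \(x_n=0\) and are smooth multiples \(x_n\cdot\)); I would check these substitutions preserve the hypotheses.
    \item Lower-order commutator terms are absorbed by an \(\epsilon\)-Cauchy--Schwarz and by iterating downward in \(s\) in steps of \(\epsilon_0\), finishing at \(\LtNorm{\phi_2 u}\).
\end{enumerate}
Lemma \ref{Lemma::PDOs::NotationNearBdry::ConcludeSobolevEstimates} then converts this into \(\GsSpace{s}\) estimates with cutoffs. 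Hypothesis \ref{Item::EstBdry::MaxSubOnSmoothing::InHkWOnCubegeqOneHalf} guarantees finiteness at the start of the iteration, and the qualitative ``if the right-hand side is finite then so is the left'' part follows from the same argument with mollified symbols and weak compactness.

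\emph{Normal derivatives and the induction.} The equation \((\partial_t+\opL)u=f\) reads
\begin{equation*}
    \pm\sigma^{-1}c_0^{2\kappa}\partial_{x_n}^{2\kappa}(\sigma a_{0^\kappa,0^\kappa}\,\cdot)u=f-\partial_t u-(\text{terms with fewer than }2\kappa\text{ factors }\partial_{x_n}).
\end{equation*}
The coefficient \(a_{0^\kappa,0^\kappa}\) must be invertible; I would deduce this from Assumption \ref{Assumption::EstBdry::MaxSubOnInterior}, since \(W_0^\kappa\) is an elliptic direction. This lets one trade \(\partial_{x_n}^{2\kappa}\) for \(2\kappa\) tangential/\(t\) orders, with \(\partial_t\) counting as \(2\kappa\). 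By induction on \(j\), \(\HsNorm{\phi_1u}{l+1}\) is controlled by the \(\GsSpace{l+1}\) norm plus \(\HsNorm{\phi_2(\partial_t+\opL)u}{l+1-2\kappa}\).

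To reach the two-term right-hand side, iterate in two ways. First, apply the tangential step \(N\) times along the chain \(u,(\partial_t+\opL)u,\ldots\), each time gaining \(\epsilon_0\); each \(u_k=(\partial_t+\opL)^ku\in\DSetL{N-k}\). Second, use the normal-derivative recovery \(J\) times to reduce order by \(2\kappa J\). The constraint \(J\leq\floor{(l+1)/2\kappa}\) keeps the orders nonnegative.

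The main obstacle is step (iii): the integration by parts with \(S^*S\) in place of a clean test function. One needs \(S^*Su\) to be admissible in Assumption \ref{Assumption::EstBdry::MaxSubOnSmoothing}\ref{Item::EstBdry::MaxSubOnSmoothing::FormQHGivesOp}, which only allows \(\FormQH[Su][u]\). My plan is to write \(\FormQH[Su][Su]=\FormQH[Su][u]+\FormQH[Su][(S-I)\ldots]\), which is not literally right. Instead, I would apply \ref{Item::EstBdry::MaxSubOnSmoothing::FormQHGivesOp} with \(S\) replaced by the operator \(S^*S\), which is again of the form \eqref{Eqn::EstBdry::SOpForAssump} via Lemma \ref{Lemma::PDOs::NotationNearBdry::CalcOfPDO}. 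I would then move one \(S\) across the form, generating only commutators \(\wtEst\Stxpt{\cdot}\) of lower order.
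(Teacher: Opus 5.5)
Your architecture matches the paper's. It has a tangential $G^s$ estimate built from the operators $\Stxp{s}$ and Assumption~\ref{Assumption::EstBdry::MaxSubOnSmoothing}, recovery of $\partial_{x_n}$-derivatives from the equation (Proposition~\ref{Prop::EstBdry::ReduceTangent::MainReduceTangentProp}), and the two inductions of Lemmas~\ref{Lemma::EstBdry::CompletePF::WorkWithHsNorms} and~\ref{Lemma::EstBdry::CompletePF::WorkWithGsNorms}. Your resolution of the ``main obstacle'' (apply Assumption~\ref{Assumption::EstBdry::MaxSubOnSmoothing}\ref{Item::EstBdry::MaxSubOnSmoothing::FormQHGivesOp} to $S^*S$ and move one $S$ across $\FormQH$) is also what Lemma~\ref{Lemma::EstBdry::Tangent::MovePDOsToOtherSideOfQH} does. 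But the tangential step has genuine gaps.

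\textbf{The $\partial_t$ term, step (vi).} With $v=Su$ and $\sigma$ independent of $t$, $\Imaginary\langle v,\sigma\partial_t v\rangle=\pm2\pi\iint\sigma(x)\,\tau\,|\hat v(\tau,x)|^2\,d\tau\,dx$ has no sign. So it cannot yield $\|\partial_t(1-\partial_t^2)^{-1/4}Su\|^2\approx\iint|\tau|\,|\hat v|^2$. That term is indispensable for two reasons. Proposition~\ref{Prop::HorVfs::GainBoundary::MainGainProp} needs it to gain anything in the $t$-direction. And the commutator $\langle Su,[S,\sigma]\partial_t u\rangle$, produced when $S$ is moved across the $\partial_t$-part of $\FormQH$, is controlled only by it, at level $s-1/2$. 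Your step (v) treats that commutator as harmless, but $[S,\sigma]=\wtEst\Stxpt{s-1}$ while $\partial_t$ has order $2\kappa$. The missing idea is a second test operator $(\Stxp{s})^{*}\partial_t(1-\partial_t^2)^{-1/2}\Stxp{s}$, whose $t$-multiplier behaves like $i\,\mathrm{sgn}\,\tau$. Then $\Real\langle\partial_t(1-\partial_t^2)^{-1/2}Su,\sigma\partial_tSu\rangle=\|\sigma^{1/2}\partial_t(1-\partial_t^2)^{-1/4}Su\|^2$. One applies Assumption~\ref{Assumption::EstBdry::MaxSubOnSmoothing}\ref{Item::EstBdry::MaxSubOnSmoothing::FormQHGivesOp} with $\Stxpt{s}\partial_t(1-\partial_t^2)^{-1/2}\Stxpt{s}$, and bounds the accompanying $\int\FormQ(\cdots)\,dt$ by the maximal subelliptic estimate (Lemmas~\ref{Lemma::EstBdry::Tangent::BounddtStxp} and~\ref{Lemma::EstBdry::Tangent::BoundIntQByQH}).

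\textbf{The absorption, step (iii).} You bound $\langle S^*Su,\sigma(\partial_t+\opL)u\rangle$ by $\|\wtEst\Stxpt{s+\kappa}u\|\,\|\wtEst\Stxpt{s-\kappa}(\partial_t+\opL)u\|$ and absorb the first factor. That requires the left side to control $\kappa$ full $x'$-derivatives. For $m\geq 2$ the $W^\alpha$, $|\alpha|\leq\kappa$, control strictly fewer, and your core estimate is then false. Take the Heisenberg fields as tangential fields, $\kappa=1$, and $u=\eta(t)\psi(x_n)e^{i\lambda z}g_\lambda(x,y)$ with $g_\lambda$ in the lowest Landau level. The left side is of size $|\lambda|^{s+1/2}$ and the right side of size $|\lambda|^{s}$.

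You must instead pair at equal order, $\lesssim\|\Stxpt{s}u\|\,\|\Stxpt{s}\sigma(\partial_t+\opL)u\|$, and treat $\|\Stxpt{s}u\|$ as lower order via Lemma~\ref{Lemma::EstBdry::Tangent::NoDerivsGain}. This gives only $\|\phi_1u\|_{G^{s+\epsilon_0}}\lesssim\|\phi_2(\partial_t+\opL)u\|_{G^{s}}+\|\phi_2u\|_{L^2}$, which is all your final iteration actually uses.

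\textbf{The commutators from moving $S$.} These are not lower order in $s$, since $[\Stxp{s},Y_j]=\wtEst\Stxpt{s}$. They are harmless only because one side carries at most $\kappa-1$ vector fields. This is exploited through the $1/m$ gain with the order splitting $s\pm\epsilon_0/2$ (Lemma~\ref{Lemma::EstBdry::Tangent::Changes1s2Tor1r2}), which again requires the $\partial_t$ term.

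\textbf{Closing the iteration.} The downward iteration cannot end at $\|\phi_2u\|$ by tangential smoothing alone. The residual terms $\sum_{|\alpha|\leq\kappa}\|W^\alpha\Stxpt{s-N\epsilon_0/2}u\|$ contain up to $\kappa$ derivatives $\partial_{x_n}$, which $\Stxpt{s-N\epsilon_0/2}$ does not smooth. The paper closes them by invoking Proposition~\ref{Prop::EstBdry::ReduceTangent::MainReduceTangentProp} at very negative tangential order inside the proof of Proposition~\ref{Prop::EstBdry::Tangent::MainTangentProp}. So your normal-derivative step must be used there, not only afterwards.
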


\begin{remark}
    Throughout this section, we write \(A\lesssim B\) to mean \(A\leq CB\) where \(C\) is a constant
    which depends only on the correct quantities.
    More precisely, when we combine our estimates, the implicit constants will have the dependence of the constant
    \(C_{l,\phi_1,\phi_2, N}\) 
    in Theorem \ref{Thm::EstBdry::MainThm::New}.
\end{remark}

\begin{remark}
    We present the proof of the boundary estimates (Theorem \ref{Thm::EstBdry::MainThm::New}) before the easier interior results 
    (Theorem \ref{Thm::EstBdryInt::MainThm::New}). 
    This is perhaps logically backwards:
    we use the interior result Theorem \ref{Thm::EstBdryInt::MainThm::New} to help simplify the boundary proof.
    However, the interior proof is largely an easier reprise of the boundary proof, so we present the boundary proof
    first, and then refer back to the changes needed to prove the interior result. This is not circular:
    the proof of the interior result can be extracted in a completely self-contained way.
\end{remark}

\begin{remark}\label{Rmk::EstBdry::c0Approx1}
    Even though \(c_0\) does not appear among the quantities on which the
    constant in Theorem \ref{Thm::EstBdry::MainThm::New} depends,
    \(|c_0|\) is bounded above and below by constants with the dependence
    stated there (and therefore \(|c_0|\approx 1\)).
    The upper bound follows from
    \begin{equation*}
        \CjNorm{W_0}{0}[\CubengeqSevenEighths][\Rn]=|c_0|.
    \end{equation*}
    At \(x_n=0\), every commutator not containing \(W_0\) is tangent to
    the boundary, while every commutator containing
    \(W_0=c_0\partial_{x_n}\) has \(\partial_{x_n}\)-coefficient
    \(O(|c_0|)\), with constants controlled by the stated \(C^L\)-bounds
    and the upper bound for \(|c_0|\).
    Thus, every determinant in \eqref{Eqn::HorVfs::LowerBoundDet} is
    \(O(|c_0|)\) on the boundary.
    The lower bound in \eqref{Eqn::HorVfs::LowerBoundDet} gives
    \(|c_0|\gtrsim1\), and therefore \(|c_0|\approx1\).
\end{remark}

\begin{remark}
    In Assumption \ref{Assumption::EstBdry::MaxSubOnInterior}, we used
    \(\sum_{|\alpha|=\kappa}\), instead of the (a priori) stronger assumption with \(\sum_{|\alpha|\leq \kappa}\)
    on the left-hand side of \eqref{Eqn::EstBdry::MaxSubOnInterior}.
    In Lemma \ref{Lemma::EstBdry::Reduction::BoundAllDerivsByHighDerivs} 
    we show that these two assumptions are  equivalent.
    In future papers, we will only be given good quantitative bounds
    on \(\sum_{|\alpha|=\kappa}\), rather than on \(\sum_{|\alpha|\leq \kappa}\).
    Thus, it is more convenient to state Assumption \ref{Assumption::EstBdry::MaxSubOnInterior}
    as we have for the future applications we have in mind.
\end{remark}

\begin{remark}\label{Rmk::EstBdry::MainThm::tdoesntplayroleInMaxSub}
    Note that, because \(\sigma\) does not depend on \(t\),
    \begin{equation*}
        \Real \Ltip{v}{\sigma \partial_t v}[\Ropngeq]=0.
    \end{equation*}
    Thus, the \(\Real \Ltip{u}{\sigma \partial_t v}[\Ropngeq]\) term of \(\FormQH[u][v]\) 
    does not play a role in Assumption \ref{Assumption::EstBdry::MaxSubOnSmoothing}\ref{Item::EstBdry::MaxSubOnSmoothing::MaxSubEstimate}.
\end{remark}

Two special cases of Theorem \ref{Thm::EstBdry::MainThm::New} are particularly useful.
We state them here for the reader's convenience.

\begin{corollary}\label{Cor::EstBdry::MainThm::CorNEquals1}
    Under the above assumptions (Assumptions \ref{Assumption::EstBdry::MaxSubOnInterior} and \ref{Assumption::EstBdry::MaxSubOnSmoothing})
    the following holds. 
    Let \(\phi_1\) and \(\phi_2\) be as in Theorem \ref{Thm::EstBdry::MainThm::New}.
    Then, \(\forall l\in \Zg\) with \(l\geq 2\kappa-1\),
    there exists \(C_{l,\phi_1,\phi_2}\geq 0\), \(\forall u\in \DSetL{1}\),
    \begin{equation*}
    \begin{split}
            \HsNorm*{\phi_1 u}{l+1}[\Ropngeq]\leq C_{l,\phi_1,\phi_2}
            \bigg( &
                \HsNorm*{\phi_2\left( \partial_t+\opL \right) u}{l+1-\epsilon_0 }[\Ropngeq]
                + \LtNorm*{\phi_2 u}[\Ropngeq]
             \bigg),
    \end{split}    
\end{equation*}
where if the right-hand side is finite, so is the left-hand side.
    Here 
    \(C_{l,\phi_1,\phi_2}\) is as in Theorem \ref{Thm::EstBdry::MainThm::New} (with \(N=1\)).
\end{corollary}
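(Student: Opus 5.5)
This follows from Theorem \ref{Thm::EstBdry::MainThm::New} by specializing the parameters, just as Corollary \ref{Cor::Result::PartialtPluOpLSubEllip} was deduced from Theorem \ref{Thm::Result::MainThm::New}. I would take \(N=1\) and \(J=1\) in Theorem \ref{Thm::EstBdry::MainThm::New}. First I check that this choice is admissible. Since \(l\geq 2\kappa-1\), we have \(l+1\geq 2\kappa\), so \(\floor{\frac{l+1}{2\kappa}}\geq 1\) and \(1\leq J=1\leq \min\{\floor{\frac{l+1}{2\kappa}},1\}\). Also \(\DSetL{N}=\DSetL{1}\), so the theorem applies to every \(u\in \DSetL{1}\). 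The hypotheses on \(\phi_1,\phi_2\) are exactly those of the theorem.

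Next I compare the right-hand sides. With \(N=J=1\), the right-hand side of \eqref{Eqn::EstBdry::MainThm::MainEst::New} becomes
\[
\HsNorm*{\phi_2(\partial_t+\opL)u}{l+1-2\kappa}[\Ropngeq]+\HsNorm*{\phi_2(\partial_t+\opL)u}{(l+1-\epsilon_0)\vee 0}[\Ropngeq]+\LtNorm*{\phi_2 u}[\Ropngeq].
\]
Because \(\epsilon_0\leq 1/2<1\) and \(l+1\geq 2\), we have \((l+1-\epsilon_0)\vee 0=l+1-\epsilon_0\). Because \(\epsilon_0\leq 1/2\leq 2\kappa\), we have \(l+1-2\kappa\leq l+1-\epsilon_0\). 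The \(\HsSpace{s}[\Ropngeq]\) norms are monotone in \(s\): for \(s\leq s'\), \(\JBracket*{(\tau,\xi)}^{s}\leq \JBracket*{(\tau,\xi)}^{s'}\) pointwise on the symbol side, and this passes to the infimum over extensions. Therefore the first term is bounded by the second.

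This gives the stated estimate with constant \(2C_{l,\phi_1,\phi_2,1}\), which has the same dependence as the constant of Theorem \ref{Thm::EstBdry::MainThm::New} with \(N=1\). The qualitative claim, that finiteness of the right-hand side implies finiteness of the left-hand side, carries over directly. Indeed, if the corollary's right-hand side is finite, then by monotonicity the theorem's right-hand side is finite as well.

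There is no real obstacle here. The only points to check are the admissibility of \(J=1\) and the monotonicity of the Sobolev norms, both of which are routine.
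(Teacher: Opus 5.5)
Your proposal is correct and matches the paper's proof: set \(J=N=1\) in Theorem \ref{Thm::EstBdry::MainThm::New} and bound the \(\HsSpace{l+1-2\kappa}\) norm of \(\phi_2(\partial_t+\opL)u\) by its \(\HsSpace{l+1-\epsilon_0}\) norm. Your added checks (admissibility of \(J=1\), \((l+1-\epsilon_0)\vee 0=l+1-\epsilon_0\), and monotonicity of the Sobolev norms in \(s\)) are exactly the routine details the paper leaves implicit.
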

\begin{proof}
    Take \(J=N=1\) in Theorem \ref{Thm::EstBdry::MainThm::New}. 
    Using,
    \begin{equation*}
        \HsNorm*{\phi_2\left( \partial_t+\opL \right) u}{l+1-2\kappa }[\Ropngeq]
        \leq \HsNorm*{\phi_2\left( \partial_t+\opL \right) u}{l+1-\epsilon_0 }[\Ropngeq],
    \end{equation*}
    the result follows immediately from Theorem \ref{Thm::EstBdry::MainThm::New}.
\end{proof}

\begin{corollary}
    Under the above assumptions (Assumptions \ref{Assumption::EstBdry::MaxSubOnInterior} and \ref{Assumption::EstBdry::MaxSubOnSmoothing})
    the following holds. 
    Let \(\phi_1\) and \(\phi_2\) be as in Theorem \ref{Thm::EstBdry::MainThm::New}.
    Then, \(\forall N\in \Zg\), \(\forall l'\in \{1,2,\ldots, N\}\), \(\exists C_{l',\phi_1,\phi_2,N}\geq 0\),
    \(\forall u\in \DSetL{N}\),
    \begin{equation*}
        \HsNorm*{\phi_1 u}{2\kappa l'}[\Ropngeq]
        \leq C_{l',\phi_1,\phi_2,N}
        \left( 
            \HsNorm*{\phi_2\left(\partial_t+\opL  \right)^N u}{(2\kappa l'-N\epsilon_0)\vee 0}[\Ropngeq]
            +\sum_{k=0}^{N-1} \LtNorm*{\phi_2 \left( \partial_t+\opL \right)^k u}[\Ropngeq]
         \right),
    \end{equation*}
    where if the right-hand side is finite, so is the left-hand side and
    \(C_{l',\phi_1,\phi_2,N}\geq 0\) is as in  Theorem \ref{Thm::EstBdry::MainThm::New}.
    In particular, if \(N\epsilon_0\geq 2\kappa l'\), for \(u\in \DSetL{N}\), we have
    \begin{equation*}
        \HsNorm*{\phi_1 u}{2\kappa l'}[\Ropngeq]
        \leq C_{l',\phi_1,\phi_2,N}
            \sum_{k=0}^{N} \LtNorm*{\phi_2 \left( \partial_t+\opL \right)^k u}[\Ropngeq].
    \end{equation*}
\end{corollary}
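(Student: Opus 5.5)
This is a direct specialization of Theorem \ref{Thm::EstBdry::MainThm::New}, exactly parallel to how Corollary \ref{Cor::Result::BoundSobolevByHighPowers} follows from Theorem \ref{Thm::Result::MainThm::New}. Fix \(N\) and \(l'\in\{1,\ldots,N\}\), and apply Theorem \ref{Thm::EstBdry::MainThm::New} with
\begin{equation*}
    l := 2\kappa l' - 1, \qquad J := l'.
\end{equation*}

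The first step is to check that these choices are admissible.
\begin{itemize}
    \item Since \(l'\geq 1\), we have \(l = 2\kappa l'-1\geq 2\kappa-1\), and \(l\in\Zg\) because \(2\kappa l'\geq 2\).
    \item Since \(\floor{\frac{l+1}{2\kappa}} = l'\) and \(l'\leq N\), we get \(1\leq J\leq \min\{\floor{\frac{l+1}{2\kappa}},N\}\).
\end{itemize}

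The second step is to substitute into \eqref{Eqn::EstBdry::MainThm::MainEst::New}, which gives, for all \(u\in\DSetL{N}\),
\begin{equation*}
    \HsNorm*{\phi_1 u}{2\kappa l'}[\Ropngeq]\lesssim
    \LtNorm*{\phi_2(\partial_t+\opL)^{l'}u}[\Ropngeq]
    +\HsNorm*{\phi_2(\partial_t+\opL)^{N}u}{(2\kappa l'-N\epsilon_0)\vee 0}[\Ropngeq]
    +\sum_{k=0}^{N-1}\LtNorm*{\phi_2(\partial_t+\opL)^k u}[\Ropngeq].
\end{equation*}
Here the first term has Sobolev index \(l+1-2\kappa J = 0\), so it is an \(\LtSpace\) norm; I am identifying \(\HsSpace{0}[\Ropngeq]\) with \(\LtSpace[\Ropngeq]\) via the restriction/infimum norm.

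The third step is to absorb the \(J\)-term. If \(l'\leq N-1\), it already appears in the sum over \(k\). If \(l'=N\), it is bounded by the \(\HsSpace{(2\kappa l'-N\epsilon_0)\vee 0}\) term, because Sobolev norms are monotone in the index and that index is \(\geq 0\). This gives the first displayed estimate. The qualitative statement (finite right-hand side implies finite left-hand side) and the constant dependence carry over verbatim from Theorem \ref{Thm::EstBdry::MainThm::New}.

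For the \enquote{in particular} statement, assume \(N\epsilon_0\geq 2\kappa l'\). Then the index \((2\kappa l'-N\epsilon_0)\vee 0\) equals \(0\), so that term is \(\LtNorm*{\phi_2(\partial_t+\opL)^N u}\), and it combines with the sum over \(k\) to give the sum up to \(k=N\).

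There is no substantive obstacle. The only point needing care is the identification of \(\HsSpace{0}[\Ropngeq]\) with \(\LtSpace[\Ropngeq]\) up to equivalent norms. This holds because the restriction norm is the infimum over extensions, and extension by zero is admissible, so the two norms agree exactly.
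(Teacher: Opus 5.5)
Your proposal is correct and is exactly the paper's proof: the paper simply takes \(l+1=2\kappa l'\) and \(J=l'\) in Theorem \ref{Thm::EstBdry::MainThm::New}. Your extra bookkeeping fills in details the paper leaves implicit: absorbing the \(J\)-term (into the sum when \(l'\leq N-1\), into the \(N\)-term when \(l'=N\)), and noting that the \(\HsSpace{0}[\Ropngeq]\) norm equals the \(\LtSpace[\Ropngeq]\) norm.
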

\begin{proof}
    Take \(l+1=2\kappa l'\) and \(J=l'\) in Theorem \ref{Thm::EstBdry::MainThm::New}.
\end{proof}

\begin{remark}\label{Rmk::EstBdry::MainThm::CanUseSigmaInsteadOfLebesgue}
    Because \(\sigma\approx 1\) on \(\CubengeqOneHalf\), and every \(\LtNorm{\cdot}\) above is localized
    to \(\CubengeqOneHalf\), one can replace Lebesgue measure on \(\CubengeqOneHalf\) with
    \(\sigma(x)\: dx\) in every \(\LtSpace\)-norm in the above assumptions and conclusions (though not in the inner products), and the assumptions and results
    are unchanged.
\end{remark}

    \subsection{Classical extension operators}\label{Section::EstNearBdry::ClassicalExtension}
    We require classical extension operators in our proofs. In this section, we review some basic formulas and results we need.
Fix \(L\in \Zgeq\); 
in proofs, one takes any \(L\geq \floor{s}\) where \(s\) is the number of derivatives under consideration.
Let \(0<\lambda_0<\lambda_1<\cdots<\lambda_L\) and \(a_0,\ldots, a_L\) be real numbers with
\(\sum_{k=0}^L a_k (-\lambda_k)^{l}=1\) for \(l=0,\ldots, L\).
For \(f:\Rng\rightarrow \C\) define \(\Extension_L f:\Rn\rightarrow \C\) by
\begin{equation}\label{Eqn::EstBdry::Extension::FormulaForExtension}
    \Extension_L f(x',x_n)
    =
    \begin{cases}
        f(x),& x_n>0,\\
        \sum_{k=0}^L a_k f(x', -\lambda_k x_n), &x_n\leq 0.
    \end{cases}
\end{equation}

The basic fact we use is:
\begin{proposition}[See {\cite[Section 4.5.2]{TriebelTheoryOfFunctionSpacesII}}]\label{Prop::EstBdry::Extension::ClassicalExtension}
    \(\Extension_L: \HsSpace{s}[\Rngeq]\rightarrow \HsSpace{s}[\Rn]\)
    continuously, for \(L\geq \floor{s}\).
\end{proposition}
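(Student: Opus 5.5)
The plan is to prove the statement of Proposition \ref{Prop::EstBdry::Extension::ClassicalExtension} first for integer \(s=k\in\{0,1,\ldots,L\}\) and for the dual range of negative integers, and then interpolate. Here \(\HsSpace{s}\) on \(\Rngeq\) and \(\Rn\) is the usual isotropic Sobolev space in \(x\) alone. The restriction-norm definition gives \(\HsSpace{s}[\Rngeq]\) as a quotient of \(\HsSpace{s}[\Rn]\), and the quotient spaces interpolate correctly by a standard retraction argument. So it will suffice to get boundedness at the integer endpoints \(k\) and invoke complex interpolation (Triebel's framework) to obtain all \(s\) with \(\floor{s}\leq L\). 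I also have to handle the range \(s<0\), where the formula must be interpreted on distributions.

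For a nonnegative integer \(k\leq L\), take \(f\) smooth up to the boundary and decaying; such \(f\) are dense. For \(x_n<0\) and \(j\leq k\), differentiation gives
\[
\partial_{x_n}^j \Extension_L f(x',x_n)=\sum_{k'=0}^L a_{k'}(-\lambda_{k'})^j(\partial_{x_n}^j f)(x',-\lambda_{k'}x_n).
\]
Tangential derivatives commute with \(\Extension_L\). The moment conditions \(\sum_{k'} a_{k'}(-\lambda_{k'})^l=1\) for \(l\leq L\) show that all normal derivatives of order at most \(L\) match across \(x_n=0\). Hence \(\Extension_L f\in C^{L}\) across the hyperplane, and its distributional derivatives up to order \(k\) carry no boundary delta terms. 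The change of variables \(x_n\mapsto-\lambda_{k'}x_n\) then bounds each piece in \(\LtSpace\) by \(\lambda_{k'}^{j-1/2}\) times the corresponding norm on the half space, so \(\HsNorm{\Extension_L f}{k}[\Rn]\lesssim \HsNorm{f}{k}[\Rngeq]\). Density extends this to all of \(\HsSpace{k}[\Rngeq]\), where the half-space norm is comparable to the intrinsic one.

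For negative \(s\), I would realize \(\Extension_L\) through duality. Its formal adjoint is a reflection-and-sum operator \(g\mapsto g|_{x_n>0}+\sum_{k'} a_{k'}\lambda_{k'}^{-1}g(x',-x_n/\lambda_{k'})\). The moment conditions make the adjoint map \(\HsSpace{-s}[\Rn]\) into functions vanishing to order \(L\) at the boundary, that is, into the dual of \(\HsSpace{s}[\Rngeq]\). This is the step I expect to be the main obstacle. If it proves delicate, I would instead restrict to \(s\geq0\), which is the only range used later in the paper, and cite \cite[Section 4.5.2]{TriebelTheoryOfFunctionSpacesII} for the general case.

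Finally, I interpolate between consecutive integers \(k,k+1\leq L\). This gives all real \(0\leq s<L+1\), i.e. \(\floor{s}\leq L\). The case \(s\in(L,L+1)\) needs \(C^{L}\) matching plus a fractional estimate near the hyperplane. That estimate follows from interpolation between \(k=L\) and \(k=L+1\), provided \(\Extension_L\) is bounded \(\HsSpace{L+1}\to\HsSpace{L+1}\). It is not, because of the missing moment condition at order \(L+1\). So for this top range I would run the interpolation with \(\Extension_{L+1}\) and note that the two operators agree in their action up to terms controlled by the moment structure. Failing that, I would again defer to the cited reference, which handles the sharp range directly.
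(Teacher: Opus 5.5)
The paper gives no argument beyond the citation to Triebel. Your route (integer orders, then complex interpolation, with $\Extension_L$ itself serving as the coretraction that identifies the interpolation spaces of the restriction spaces) is the standard one and works for $0\le s\le L$. As written, however, it has two genuine problems.

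The first is an off-by-one error that creates a gap at the top of the range. Your own computation shows that the identities $\sum_k a_k(-\lambda_k)^l=1$, $0\le l\le L$, make $\partial_{x_n}^j\Extension_L f$ continuous across $\{x_n=0\}$ for every $j\le L$. That is exactly what is needed for the distributional derivatives of order $L+1$ to carry no $\delta$-terms: the $(L+1)$-st normal derivative merely jumps, which is harmless in $L^2$. So $\Extension_L$ is bounded $H^{L+1}(\Rngeq)\to H^{L+1}(\Rn)$ by the same change-of-variables estimate. Interpolating between $k=L$ and $k=L+1$ then covers $L<s<L+1$. Your workaround through $\Extension_{L+1}$ would not have worked in any case: that operator has different nodes and coefficients, and bounds for it say nothing about $\Extension_L$.

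The second problem is that your duality step for $s<0$ is false, and it cannot be repaired for this operator. For the adjoint $\Extension_L^{*}g=g+\sum_k a_k\lambda_k^{-1}g(x',-x_n/\lambda_k)$ on $\{x_n>0\}$ one has $\partial_{x_n}^j\Extension_L^{*}g(x',0)=\bigl(1-\sum_k a_k(-\lambda_k)^{-j-1}\bigr)\partial_{x_n}^j g(x',0)$. Vanishing at the boundary would therefore require the moment identities at \emph{negative} exponents, which are not imposed. For $j=0$, Lagrange interpolation of $\mu\mapsto\mu^{-1}$ at the nodes $-\lambda_k$ gives $1+\sum_k a_k\lambda_k^{-1}=\prod_k(1+\lambda_k^{-1})>0$.

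In fact the statement fails for $s<-1/2$. Fix $0\ne\chi\in C^\infty_c(\R^{n-1})$ and set $f_\epsilon=\epsilon^{-1}\mathbf{1}_{(0,\epsilon)}(x_n)\chi(x')$. Pair $f_\epsilon$ with $H^1$ functions supported in $\{x_n\ge0\}$, which satisfy $|\phi(x',x_n)|\le x_n^{1/2}\|\partial_{x_n}\phi(x',\cdot)\|_{L^2}$. This gives $\|f_\epsilon\|_{H^s(\Rngeq)}\le\|f_\epsilon\|_{H^{-1}(\Rngeq)}\lesssim\epsilon^{1/2}$ for $s\le-1$ (similarly for all $s<-1/2$). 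Meanwhile $\Extension_L f_\epsilon\to\prod_k(1+\lambda_k^{-1})\,\delta(x_n)\chi(x')\neq0$ in $H^s(\Rn)$.

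So restricting to $s\ge0$ (plus $-1/2<s<0$, which follows from boundedness of extension by zero) is the right content to prove. Your justification for it is inaccurate, though: negative indices are used downstream. Lemma \ref{Lemma::EstBdry::Extension::VlsExtension} allows arbitrary $s\in\R$ and involves $H^{s-j}_{t,x'}$. Proposition \ref{Prop::EstBdry::ReduceTangent::MainReduceTangentProp} is applied with a negative index in the proof of Proposition \ref{Prop::EstBdry::Tangent::MainTangentProp}. Those uses need a modified operator whose coefficients also satisfy $\sum_k a_k(-\lambda_k)^l=1$ for finitely many negative $l$. With that, the adjoint vanishes to the required order and your duality argument does go through.
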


From \(\Extension_L\) we may construct other extension operators which are useful for our purposes.
To explain this we introduce some new spaces.
First, we begin with the classical, isotropic, spaces on \(\Rn\).
Let \(\Omega\subset \Rn\) be a bounded, smoothly bounded open set.
Set,
\begin{equation*}
    \HsSpace{s}[\Omega] := \left\{ u\big|_{\Omega} : u\in \HsSpace{s}[\Rn] \right\},
\end{equation*}
\begin{equation*}
    \HsNorm{v}{s}[\Omega]:=\inf\left\{ \HsNorm{u}{s}[\Rn]: u\big|_{\Omega}=v \right\}.
\end{equation*}

\begin{lemma}\label{Lemma::EstBdry::Extension::ClassicalBoundedExtension}
    Fix \(L\in \Zgeq\). Then there exists a  continuous, linear extension map \(\Extension_L\) such that
    \begin{equation*}
        \Extension_L : \HsSpace{s}[\Omega]\rightarrow \HsSpace{s}[\Rn], \quad \floor{s}\leq L,
    \end{equation*}
    and \(\Extension_L f\big|_{\Omega}=f\).
\end{lemma}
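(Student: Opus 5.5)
The plan is to reduce to the half-space case by a partition of unity and local flattening of \(\partial\Omega\), and then use the reflection operator \eqref{Eqn::EstBdry::Extension::FormulaForExtension} together with Proposition \ref{Prop::EstBdry::Extension::ClassicalExtension}.

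Since \(\Omega\) is bounded and smoothly bounded, \(\overline{\Omega}\) is covered by finitely many open sets. There is one interior set \(V_0\Subset\Omega\). The remaining sets \(V_1,\ldots,V_K\) each meet \(\partial\Omega\), and for each such \(V_k\) there is a diffeomorphism \(\Phi_k\) of a neighborhood of \(\overline{V_k}\) onto an open subset of \(\Rn\) with
\begin{equation*}
\Phi_k(V_k\cap\Omega)=\Phi_k(V_k)\cap\Rng.
\end{equation*}
Take a smooth partition of unity \(\chi_0,\ldots,\chi_K\) subordinate to this cover with \(\sum_k\chi_k=1\) on a neighborhood of \(\overline{\Omega}\). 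Also fix cutoffs \(\tilde\chi_k\in\CinftycptSpace[V_k]\) with \(\chi_k\prec\tilde\chi_k\).

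For \(f\in\HsSpace{s}[\Omega]\), define
\begin{equation*}
\Extension_L f:=\chi_0 f+\sum_{k=1}^K \tilde\chi_k\,\Phi_k^{*}\Extension_L^{\Rn}\bigl((\Phi_k^{-1})^{*}(\chi_k f)\bigr).
\end{equation*}
Here \(\Extension_L^{\Rn}\) is the half-space reflection operator, and \(\chi_0 f\) is extended by zero.

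Each piece is linear. Each piece is also independent of \(s\), since the reflection coefficients \(a_k,\lambda_k\) depend only on \(L\). On \(\Omega\), the reflection acts as the identity for \(x_n>0\), and \(\tilde\chi_k\chi_k=\chi_k\). Hence \(\Extension_L f|_\Omega=\sum_k\chi_k f=f\).

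For continuity, it suffices to know three standard facts:
\begin{itemize}
\item multiplication by a \(\CinftycptSpace\) function is bounded on \(\HsSpace{s}[\Omega]\) and on \(\HsSpace{s}[\Rn]\);
\item pullback by a diffeomorphism, localized to a compact set, is bounded on \(\HsSpace{s}\), both on \(\Rn\) and between \(\Omega\cap V_k\) and \(\Rngeq\);
\item extension by zero of \(\chi_0 f\) is bounded, since \(\supp\chi_0\Subset\Omega\).
\end{itemize}
The second fact is applied through the quotient-norm definition of \(\HsSpace{s}[\Omega]\): given any global extension \(u\) of \(f\), localize and pull back \(u\) itself, and then take the infimum over \(u\).

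Combining these facts with Proposition \ref{Prop::EstBdry::Extension::ClassicalExtension}, each summand is bounded \(\HsSpace{s}[\Omega]\to\HsSpace{s}[\Rn]\) whenever \(\floor{s}\le L\).

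The main obstacle is the diffeomorphism invariance of \(\HsSpace{s}\) for all real \(s\), including negative and non-integer \(s\). The plan is to cite this as a standard fact, for instance from the calculus of pseudodifferential operators or from Triebel. We must also check that the quotient-space definitions on \(\Omega\) and on \(\Rngeq\) match under \(\Phi_k\). This works because any extension of \((\Phi_k^{-1})^*(\chi_k f)\) can be produced from an extension of \(f\) by multiplying by a cutoff and pulling back, and conversely.
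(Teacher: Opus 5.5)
Your proposal is correct and follows essentially the same route as the paper's proof sketch. Both use a partition of unity, local flattening of \(\partial\Omega\) by a change of variables, and the half-space reflection operator of Proposition~\ref{Prop::EstBdry::Extension::ClassicalExtension}. Both also take the diffeomorphism invariance of the localized \(H^s\) norms as a standard fact, which the paper defers to Triebel.
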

\begin{proof}[Proof sketch]
    By using a partition of unity, one may reduce this to a local result near each point of the boundary.
    By a change of variables, this local result follows from Proposition \ref{Prop::EstBdry::Extension::ClassicalExtension}.
    See \cite[Section 5.1.3]{TriebelTheoryOfFunctionSpacesII} for details.
\end{proof}

Next, we work on the space \(\R\times \Rnmo\) with coordinates \((t,x')\).
Here we introduce  \(\LtSpace\)-Sobolev spaces where \(\partial_t\) is an operator of order \(2\kappa\).
Using the notation \(\Lambdatxp[s]\) from Section \ref{Section::PseudodifferentialOps}, we take
\begin{equation*}
    \HstxpNorm{u}{s}[\R\times \Rnmo] := \LtNorm{\Lambdatxp[s]u}[\R\times \Rnmo],
\end{equation*}
\begin{equation*}
    \HstxpSpace{s}[\R\times \Rnmo] :=\left\{ u\in \TemperedDistributions[\R\times \Rnmo] : \HstxpNorm{u}{s}[\R\times \Rnmo]<\infty \right\}.
\end{equation*}
Let \((a,b)\subseteq \R\) be a (possibly infinite) interval and \(\Omega\Subset \Rnmo\) be a bounded, smoothly bounded open set.
Set
\begin{equation*}
    \HstxpSpace{s}[(a,b)\times \Omega] :=\left\{ u\big|_{(a,b)\times \Omega} : u\in \HstxpSpace{s}[\R\times \Rnmo] \right\},
\end{equation*}
\begin{equation*}
    \HstxpNorm{v}{s}[(a,b)\times \Omega] :=\inf \left\{ \HstxpNorm{u}{s}[\R\times \Rnmo] : u\big|_{(a,b)\times \Omega}=v \right\}.
\end{equation*}

\begin{lemma}\label{Lemma::EstBdry::Extension::HstxpExtension}
    Fix \(L\in \Zgeq\). Then there exists a continuous, linear extension map \(\Extension_L\) such that
    \begin{equation*}
        \Extension_L : \HstxpSpace{s}[(a,b)\times \Omega]\rightarrow \HstxpSpace{s}[\R\times \Rnmo], \quad \floor{s}\leq L,
    \end{equation*}
    and \(\Extension_L f\big|_{(a,b)\times \Omega}=f\).
\end{lemma}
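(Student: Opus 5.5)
The plan is to mimic the proof of Lemma \ref{Lemma::EstBdry::Extension::ClassicalBoundedExtension}: localize with a partition of unity and flatten the boundary of \(\Omega\) by a change of variables in \(x'\) only. Away from the ends of the interval \((a,b)\) and away from \(\partial\Omega\) no extension is needed. Near the ends of \((a,b)\) and near \(\partial\Omega\) we use a reflection formula of the type \eqref{Eqn::EstBdry::Extension::FormulaForExtension}. The two reflections, one in \(t\) and one in a normal \(x'\)-variable, are composed where a corner occurs.

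\textbf{Step 1 (reduction to half-spaces).} Fix \(u\in\HstxpSpace{s}[\R\times\Rnmo]\) with \(u|_{(a,b)\times\Omega}=f\). Choose a finite partition of unity \(\{\chi_k(x')\}\) near \(\overline{\Omega}\) such that each \(\chi_k\) is supported either in \(\Omega\) or in a boundary chart \(\Phi_k\) flattening \(\partial\Omega\) to \(\{y_{n-1}=0\}\). Also choose a partition of unity \(\{\eta_j(t)\}\) subordinate to a neighborhood of \(a\), a neighborhood of \(b\), and the rest of \(\R\). Multiplication by smooth functions of \((t,x')\) is bounded on \(\HstxpSpace{s}\) by the pseudodifferential calculus of Section \ref{Section::PseudodifferentialOps}. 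A diffeomorphism in \(x'\) alone, leaving \(t\) fixed, preserves \(\HstxpSpace{s}\) locally: its symbol class is preserved, since the quasi-homogeneous weight \(\JBracket*{(\tau,\xi')}\) is compatible with linear changes of \(\xi'\) and the diffeomorphism acts only in \(x'\).

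This reduces matters to three model cases:
\begin{itemize}
\item \(f\) defined on a half-space \(\{y_{n-1}>0\}\) in an \(x'\)-variable;
\item \(f\) defined on a half-line \(\{t>a\}\) or \(\{t<b\}\);
\item \(f\) defined on a quarter-space, which is the product of the two.
\end{itemize}
In each case \(f\) is compactly supported.

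\textbf{Step 2 (model reflections).} In the half-space case in \(x'\), use \(\Extension_L\) from \eqref{Eqn::EstBdry::Extension::FormulaForExtension} in the variable \(y_{n-1}\), with \(t\) as a parameter.

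In the half-line case in \(t\), use the analogous reflection
\[
\sum_k a_k f(a-\lambda_k(t-a),x').
\]
The required moment conditions \(\sum_k a_k(-\lambda_k)^l=1\) must now hold for \(l\) up to roughly \(\floor{s/2\kappa}\), which is fine since \(L\geq\floor{s}\).

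For nonnegative integer \(s\), boundedness follows because \(\HstxpSpace{s}\) is characterized by the conditions \(\partial_t^{j}\partial_{x'}^{\beta}f\in L^2\) with \(2\kappa j+|\beta|\le s\). The reflection commutes with these derivatives up to constants, and the moment conditions guarantee that no jump terms appear across the boundary. For negative and non-integer \(s\), argue by duality and complex interpolation, exactly as in \cite[Section 4.5.2]{TriebelTheoryOfFunctionSpacesII}. Alternatively, view \(\HstxpSpace{s}\) as an anisotropic Sobolev space with weights \((2\kappa,1,\ldots,1)\) and cite Triebel's anisotropic theory directly.

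The quarter-space case follows by composing the two reflections, since each is bounded and they act in different variables.

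\textbf{Step 3 (gluing).} Define
\[
\Extension_L f:=\sum_{j,k}\widetilde{\chi}_k\widetilde{\eta}_j\,E_{j,k}(\eta_j\chi_k f),
\]
where \(E_{j,k}\) is the relevant model extension pulled back by \(\Phi_k\), and \(\widetilde{\chi}_k\), \(\widetilde{\eta}_j\) are slightly larger cutoffs. On \((a,b)\times\Omega\) this restricts to \(\sum\eta_j\chi_k f=f\). Moreover, \(\HstxpNorm{\Extension_L f}{s}\lesssim\HstxpNorm{u}{s}\) for every extension \(u\) of \(f\), so taking the infimum over \(u\) gives continuity. Linearity is clear, since \(\Extension_L\) is defined directly from \(f\).

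If \((a,b)\) is infinite, simply omit the corresponding reflection.

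The main obstacle I expect is Step 2 for non-integer and negative \(s\). In particular, one must verify that the reflection in \(t\) is bounded in the anisotropic scale, and that the operator \(\Extension_L\) can be chosen independently of \(s\) in the range \(\floor{s}\le L\). I would handle this by citing the anisotropic version of Triebel's extension theorem, rather than redoing the interpolation by hand.
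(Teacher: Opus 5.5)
Your plan is essentially the paper's proof. The paper simply sets \(\Extension_L=\Extension_L^2\Extension_L^1\), where \(\Extension_L^1\) is Lemma \ref{Lemma::EstBdry::Extension::ClassicalBoundedExtension} applied in the \(t\)-variable (from \((a,b)\times\Omega\) to \(\R\times\Omega\)), and \(\Extension_L^2\) is the same lemma applied in \(x'\). Your localization, with the two reflections composed at the corners, is that composition unpacked. You also correctly isolate the point the sketch leaves implicit: each one-variable extension must be bounded in the anisotropic scale.

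One step in Step 2 is wrong as stated. For an integer \(s\) that is not a multiple of \(2\kappa\), \(\HstxpSpace{s}\) is not characterized by \(\partial_t^j\partial_{x'}^\beta f\in L^2\) with \(2\kappa j+|\beta|\leq s\). For example, with \(s=1\) this condition imposes no \(t\)-regularity, whereas \(\HstxpSpace{1}\) carries \(1/2\kappa\) derivatives in \(t\). That description is valid only for \(s\in 2\kappa\Zgeq\).

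Interpolating between such endpoints would also need more moment conditions in \(x'\) than \(\floor{s}\leq L\) provides. A cleaner repair for \(s\geq 0\) is the Fourier-side identity
\[
\HstxpSpace{s}=H^{s/2\kappa}_t(L^2_{x'})\cap L^2_t(H^{s}_{x'}),
\]
with equivalent norms. Apply it to an arbitrary extension \(u\) of \(f\) and take the infimum. Then:
\begin{itemize}
\item The reflection in \(t\) is the classical extension on the first factor, which needs only \(\floor{s/2\kappa}\leq L\). It is bounded on the second factor because it is a finite sum of affine changes of \(t\) acting pointwise in \(x'\).
\item Symmetrically, the extension in \(x'\) is the classical one on \(L^2_t(H^s_{x'})\). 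It is bounded on \(H^{s/2\kappa}_t(L^2_{x'})\) because it acts pointwise in \(t\) and is bounded on \(L^2_{x'}\).
\end{itemize}
This gives a single operator for all \(0\leq s<L+1\) with no anisotropic interpolation or anisotropic extension theorem, which removes what you flagged as the main obstacle. Negative \(s\) can then be handled by duality, as you propose.
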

\begin{proof}[Proof sketch]
    Lemma \ref{Lemma::EstBdry::Extension::ClassicalBoundedExtension} applied in the \(t\)-variable gives an extension map
    \(\Extension_L^1:\HstxpSpace{s}[(a,b)\times \Omega]\rightarrow \HstxpSpace{s}[\R\times \Omega]\),
    for \(\floor{s}\leq L\).
    Lemma \ref{Lemma::EstBdry::Extension::ClassicalBoundedExtension} applied in the \(x'\) variable gives a second extension map
    \(\Extension_L^2:\HstxpSpace{s}[\R\times \Omega]\rightarrow \HstxpSpace{s}[\R\times \Rnmo]\).
    The result follows by taking \(\Extension_L=\Extension_L^2\Extension_L^1\).
\end{proof}

Fix (possibly infinite) intervals \((a,b),(c,d)\subseteq \R\) and let \(\Omega\) equal either \(\Rnmo\)
or a bounded, smoothly bounded open subset of \(\Rnmo\).
For \(l\in \Zgeq\), \(s\in \R\), we define
\begin{equation}\label{Eqn::EstBdry::Estension::DefinVlsNorm}
   \VlsNorm{u}{l}{s}[(c,d)\times \Omega\times (a,b)]^2:=\sum_{j=0}^l \int_{a}^b \HstxpNorm*{\partial_{x_n}^j u(\cdot,\cdot, x_n)}{s-j}[(c,d)\times \Omega]^2\: dx_n,
\end{equation}
\begin{equation}\label{Eqn::EstBdry::Estension::DefinVlsSpace}
    \VlsSpace{l}{s}[(c,d)\times \Omega\times (a,b)]:= \left\{ u(t,x',x_n)\in \LtxnSpace[(a,b)][\HstxpSpace{s}[\Omega]] : \partial_{x_n}^j u\in \LtxnSpace[(a,b)][\HstxpSpace{s-j}[\Omega]], 0\leq j\leq l \right\}.
\end{equation}
\begin{lemma}\label{Lemma::EstBdry::Extension::VlsExtension}
    Fix \(L\in \Zgeq\). Then, there exists a continuous, linear extension map \(\Extension_L\) such that
    \begin{equation*}
        \Extension_L:\VlsSpace{l}{s}[(c,d)\times \Omega\times (a,b)]\rightarrow \VlsSpace{l}{s}[\R\times \Rnmo\times \R], \quad \max\{\floor{s},0\}+l\leq L,
    \end{equation*}
    and \(\Extension_L u\big|_{(c,d)\times \Omega\times (a,b)}=u\).
\end{lemma}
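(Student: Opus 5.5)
We will construct \(\Extension_L\) by composing two extensions, exactly as in the proof of Lemma \ref{Lemma::EstBdry::Extension::HstxpExtension}. First we extend in the \((t,x')\) variables for each fixed \(x_n\). Then we extend in the \(x_n\) variable using a reflection formula of the form \eqref{Eqn::EstBdry::Extension::FormulaForExtension}.

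\textbf{Step 1: extension in \((t,x')\).} Let \(\Extension^1_L\) be the operator of Lemma \ref{Lemma::EstBdry::Extension::HstxpExtension}, applied to \(u(\cdot,\cdot,x_n)\) for each fixed \(x_n\in(a,b)\). When \(\Omega=\Rnmo\), only the \(t\)-extension is used. Since \(\Extension^1_L\) does not act in \(x_n\), it commutes with \(\partial_{x_n}^j\) in the distributional sense. This can be checked by pairing with test functions, using that \(\Extension^1_L\) is linear and continuous.

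We apply Lemma \ref{Lemma::EstBdry::Extension::HstxpExtension} with regularity indices \(s-j\), \(0\leq j\leq l\). These all satisfy \(\floor{s-j}\leq \max\{\floor{s},0\}\leq L\). We must also handle negative \(s-j\). This is the main obstacle: Lemma \ref{Lemma::EstBdry::Extension::HstxpExtension} is stated for \(\floor{s}\leq L\), which includes negative \(s\). For negative \(s\), the Triebel construction (reflection with enough moment conditions) remains bounded. If needed, one instead uses a reflection operator that is bounded simultaneously on \(H^{\sigma}\) for all \(\sigma\in[-L',L]\); Triebel's \(\Extension_L\) with moment conditions gives boundedness for all \(|\sigma|\) up to order \(L\). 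I would verify this point by duality or by citation.

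Squaring and integrating in \(x_n\) then gives
\[
\VlsNorm{\Extension^1_L u}{l}{s}[\R\times\Rnmo\times(a,b)]\lesssim \VlsNorm{u}{l}{s}[(c,d)\times\Omega\times(a,b)].
\]
Measurability in \(x_n\) follows from linearity and continuity.

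\textbf{Step 2: extension in \(x_n\).} Now extend \(v=\Extension^1_L u\) across the endpoints \(a\) and/or \(b\) when they are finite. Near \(x_n=a\), use the reflection \(\sum_{k=0}^{L}a_k v(\cdot,\cdot,a-\lambda_k(x_n-a))\) for \(x_n<a\), and analogously near \(b\). Glue these with a partition of unity in \(x_n\), with cutoffs depending only on \(x_n\). The moment conditions \(\sum_k a_k(-\lambda_k)^p=1\) for \(p\leq L\) ensure that \(\partial_{x_n}^j\) of the extension, for \(j\leq l\leq L\), has no jump at \(x_n=a\). Hence the distributional derivative is the piecewise formula
\[
\sum_k a_k(-\lambda_k)^j(\partial_{x_n}^j v)(\cdot,\cdot,a-\lambda_k(x_n-a)).
\]
Each term has \(L^2_{x_n}(H^{s-j}_{t,x'})\) norm bounded by \(\lambda_k^{-1/2}\) times the original norm, using the change of variables in \(x_n\). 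Derivatives falling on the cutoffs produce lower \(\partial_{x_n}^{i}\), \(i<j\). These are controlled in \(H^{s-i}_{t,x'}\subseteq H^{s-j}_{t,x'}\).

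Composing the two steps gives \(\Extension_L\), which is linear, continuous, and satisfies the restriction identity.
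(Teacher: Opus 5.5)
Your proposal is correct and matches the paper's proof: you first extend in \((t,x')\) with Lemma \ref{Lemma::EstBdry::Extension::HstxpExtension}, applied for each fixed \(x_n\) and commuting with \(\partial_{x_n}\), and then extend in \(x_n\) by the one-dimensional reflection construction behind Lemma \ref{Lemma::EstBdry::Extension::ClassicalBoundedExtension}. Your explicit check that each \(\partial_{x_n}^j\) term stays bounded in \(L^2_{x_n}(H^{s-j}_{t,x'})\) spells out what the paper's sketch leaves implicit. Your concern about negative \(s\) is unnecessary, since Lemma \ref{Lemma::EstBdry::Extension::HstxpExtension} is already stated for all \(s\) with \(\floor{s}\leq L\).
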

\begin{proof}[Proof sketch]
    Lemma \ref{Lemma::EstBdry::Extension::HstxpExtension}, applied in the \((t,x')\) variables, gives
    an extension map \(\Extension_L^1:\VlsSpace{l}{s}[(c,d)\times \Omega\times (a,b)]\rightarrow \VlsSpace{l}{s}[\R\times \Rnmo\times (a,b)]\).
    Lemma \ref{Lemma::EstBdry::Extension::ClassicalBoundedExtension} applied in just the \(x_n\)
    variable, gives an extension map
    \(\Extension_L^2:\VlsSpace{l}{s}[\R\times \Rnmo\times (a,b)]\rightarrow \VlsSpace{l}{s}[\R\times \Rnmo\times \R]\).
    The result follows by taking \(\Extension_L:=\Extension_L^2\Extension_L^1\).
\end{proof}

\begin{remark}
    The above results used a fixed \(L\in \Zgeq\) and only considered spaces with \(\floor{s}\leq L\);
    this is sufficient for our purposes. It is possible to exhibit a single universal extension map which works for all \(s\).
    See the elegant paper of Seeley \cite{SeeleyExtensionOfCInfinityFunctionsDefinedInAHalfSpace}
    and the earlier result of Mitjagin
    \cite{MitjaginApproximateDimensionAndBasesInNuclearSpaces}.
    Even though the formula for this universal extension map is quite simple, it is a little more
    complicated than \eqref{Eqn::EstBdry::Extension::FormulaForExtension} which complicates some of our proofs.
    Thus, we use the simpler \eqref{Eqn::EstBdry::Extension::FormulaForExtension} in this paper.
\end{remark}

    \subsection{Large and small constants}\label{Section::EstNearBdry::LargeSmallSonsts}
    We write \(\lconst\) and \(\sconst\) to denote ``large constant'' and ``small constant,'' respectively.
If we write
\begin{equation*}
    A\leq \lconst B +\sconst D
\end{equation*}
it means \(\forall \epsilon>0\), \(\exists C_\epsilon\geq 1\) with
\begin{equation*}
    A\leq C_\epsilon B + \epsilon D.
\end{equation*}
Here, \(C_\epsilon\) will only depend on \(\epsilon\) and the appropriate parameters in what we are trying to prove
(see, e.g., the statement of Theorem \ref{Thm::EstBdry::MainThm::New} and Sections \ref{Section::PseudodifferentialOps::Bdry} and \ref{Section::HorVfs}).

We will repeatedly use the elementary inequality \(AB\leq \sconst A^2 +\lconst B^2\).
Also, note that if \(A\leq \sconst A+\lconst B\), then \(A\lesssim B\) (by subtracting \(\sconst A\) from both sides).

    \subsection{Reductions}
    \begin{proposition}\label{Prop::EstBdry::Reduction::MainReduction}
    It suffices to prove Theorem \ref{Thm::EstBdry::MainThm::New} with the following changes:
    \begin{enumerate}[(i)]
        \item\label{Item::EstBdry::Reduction::MainReduction::ReplaceWWithY} We may replace the 
        H\"ormander vector fields \(W_0,\ldots, W_r\) with H\"ormander vector fields \(Y_0,\ldots, Y_r\) on \(\CubengeqOne\)
            where \(Y_0=\partial_{x_n}\), \(Y_1,\ldots, Y_r\) have no \(\partial_{x_n}\) component,
            and \(Y_0,\ldots, Y_r\)  satisfy H\"ormander's condition of order \(m\) on \(\CubengeqOne\).
        \item\label{Item::EstBdry::Reduction::MainReduction::BoundAllDerivs} 
        In place of Assumption \ref{Assumption::EstBdry::MaxSubOnInterior}, we may assume the stronger hypothesis: 
            \(\exists D_1',D_2'\geq 0\),
            \begin{equation*}
            \sum_{|\alpha|\leq \kappa} \LtNorm*{Y^\alpha f}[\Rngeq]^2
            \leq D_1' \Real \FormQ[f][f]
            +D_2' \LtNorm*{f}[\Rngeq]^2,\quad \forall f\in \CinftycptSpace[\CubengOneHalf][\C^M].
        \end{equation*}
        Similarly, in place of Assumption \ref{Assumption::EstBdry::MaxSubOnSmoothing}\ref{Item::EstBdry::MaxSubOnSmoothing::MaxSubEstimate},
        we may assume the stronger hypothesis: \(\exists E_1',E_2'\geq 0\),
        \(\forall u\in \DSetL{1}\), \(\forall S\) of the form \eqref{Eqn::EstBdry::SOpForAssump},
        \begin{equation*}
            \sum_{|\alpha|\leq \kappa} \LtNorm*{Y^{\alpha} Su}[\Ropngeq]^2
            \leq E_1' \Real \FormQH*[Su][Su] + E_2' \LtNorm*{Su}[\Ropngeq]^2.
        \end{equation*}
        \item\label{Item::EstBdry::Reduction::MainReduction::HighOrderTermPos} We may assume \(\opL\) is of the form
            \begin{equation*}
                a(x)\partial_{x_n}^{2\kappa} + \text{terms which are order }<2\kappa\text{ in }\partial_{x_n},
            \end{equation*}
            where \(a(x)\) is invertible, \(\forall x\in \CubengeqOneHalf\), and \(\MatrixNorm{a(x)^{-1}}[M][M]\lesssim 1\), \(\forall x\in \CubengeqOneHalf\) (i.e., the norm is bounded uniformly
            on \(\CubengeqOneHalf\)).
    \end{enumerate}
\end{proposition}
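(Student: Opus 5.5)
The plan is to prove the three reductions in order. Each one rewrites the data of Theorem \ref{Thm::EstBdry::MainThm::New} without changing $\opL$, $\FormQ$, $\FormQH$ or the sets $\DSetL{N}$, and the constants are kept under control using Remark \ref{Rmk::EstBdry::c0Approx1} ($|c_0|\approx 1$).

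\emph{Step (i).} Write $W_j=b_j(x)\partial_{x_n}+\widetilde W_j$ for $1\leq j\leq r$, where $\widetilde W_j$ has no $\partial_{x_n}$ component. Then set
\[
Y_0:=c_0^{-1}W_0=\partial_{x_n},\qquad Y_j:=W_j-c_0^{-1}b_jW_0=\widetilde W_j .
\]
The $\CinftySpace[\CubengeqOne][\R]$-modules generated by $\{W_j\}$ and $\{Y_j\}$ agree. Commutators of length $\leq m$ of one family are $C^\infty$-combinations of commutators of length $\leq m$ of the other, so $Y_0,\ldots,Y_r$ satisfy H\"ormander's condition of order $m$. The determinant lower bound \eqref{Eqn::HorVfs::LowerBoundDet} for the $Y$'s is controlled by that for the $W$'s and by $C^L$ norms of the $b_j$ and $c_0^{-1}$. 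By the Leibniz rule, each $W^\alpha$ is $\sum_{|\beta|\leq|\alpha|}d_{\alpha\beta}Y^\beta$ with smooth, controlled $d_{\alpha\beta}$. Substituting this into \eqref{Eqn::EstBdry::FormulaForQ} rewrites $\FormQ$ and $\FormQH$ in the same form, with new coefficients $a'_{\beta,\gamma}$ and the same $\sigma$. The operator $\opL$ is therefore literally unchanged, and all assumptions and conclusions transfer.

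\emph{Step (ii).} This is Lemma \ref{Lemma::EstBdry::Reduction::BoundAllDerivsByHighDerivs}. I would prove an interpolation inequality: for $g$ equal to $f$ (compactly supported in $\CubengOneHalf$) or to $Su$ (compactly supported in $(t,x')$ and $x_n<1/2$, but possibly nonzero at $x_n=0$),
\[
\sum_{|\beta|=k}\LtNorm{Y^\beta g}\leq \sconst\sum_{|\beta|=k+1}\LtNorm{Y^\beta g}+\lconst\sum_{|\beta|=k-1}\LtNorm{Y^\beta g},\quad 1\leq k<\kappa .
\]
Iterating this down from $k=\kappa-1$ and absorbing gives $\sum_{|\alpha|\leq\kappa}\lesssim\sum_{|\alpha|=\kappa}+\LtNorm{g}$. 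Combined with Assumptions \ref{Assumption::EstBdry::MaxSubOnInterior} and \ref{Assumption::EstBdry::MaxSubOnSmoothing}\ref{Item::EstBdry::MaxSubOnSmoothing::MaxSubEstimate}, this yields the stronger hypotheses. To prove the inequality, apply it with $h=Y^\gamma g$, $|\gamma|=k-1$, and bound each $\LtNorm{Y_jh}$:
\begin{itemize}
\item For $j\geq1$, $Y_j$ has no $\partial_{x_n}$ component. Integrating by parts, $\LtNorm{Y_jh}^2=\Ltip{h}{Y_j^*Y_jh}\lesssim\LtNorm{h}\left(\LtNorm{Y_j^2h}+\LtNorm{Y_jh}\right)$ with no boundary term.
\item For $j=0$, I use the one-dimensional inequality $\|\partial_{x_n}h\|_{L^2(0,a)}\leq\epsilon\|\partial_{x_n}^2h\|+C_\epsilon\|h\|$ on an interval, applied at each fixed $(t,x')$ and integrated. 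This avoids boundary terms entirely.
\end{itemize}
I expect this boundary-compatible interpolation to be the main (though modest) obstacle. The care needed is only that $Y_j^2h$ and $\partial_{x_n}^2h$ are genuinely words of length $k+1$ applied to $g$.

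\emph{Step (iii).} After (i), the $\partial_{x_n}^{2\kappa}$ coefficient of $\opL$ is $a(x)=(-1)^\kappa a'_{0^\kappa,0^\kappa}(x)$, where $0^\kappa$ is the word $(0,\ldots,0)$. Every other term carries fewer $\partial_{x_n}$'s, since the $Y_j$ with $j\geq1$ are tangential and $\sigma$ cancels. To show invertibility with bounded inverse, I would run a G\aa rding-type test on Assumption \ref{Assumption::EstBdry::MaxSubOnInterior} (in the form from (ii)). Fix $x^0\in\CubengOneHalf$ and $v\in\C^M$, and take $f=\chi(\lambda^{1/2}(x-x^0))e^{i\lambda x_n}v$ for a fixed bump $\chi$ and $\lambda\to\infty$. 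Then
\begin{itemize}
\item $\LtNorm{Y_0^\kappa f}^2\sim\lambda^{2\kappa}\|f\|^2|v|^2$;
\item $\Real\FormQ[f][f]=\lambda^{2\kappa}\sigma(x^0)\Real\langle a'_{0^\kappa,0^\kappa}(x^0)v,v\rangle\|f\|^2/|v|^2+o(\lambda^{2\kappa})$, since the $Y_j$ ($j\geq1$) and the localization cost at most $\lambda^{1/2}$ per derivative.
\end{itemize}
Hence $\Real\langle a'_{0^\kappa,0^\kappa}(x^0)v,v\rangle\geq c|v|^2$ with $c\gtrsim D_1^{-1}\sup\sigma^{-1}$. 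By continuity this holds on $\CubengeqOneHalf$, including $x_n=0$. So $a(x)$ is invertible with $\MatrixNorm{a(x)^{-1}}[M][M]\leq c^{-1}\lesssim1$.
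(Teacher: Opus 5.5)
Your proposal is correct. Steps (i) and (ii) are essentially the paper's argument: the same frame change $Y_0=c_0^{-1}W_0$, $Y_j=W_j-(\text{normal component of }W_j)\,Y_0$, and the same two-case interpolation. That interpolation integrates by parts with the tangential fields, uses a one-dimensional inequality in $x_n$ for $Y_0$, and is iterated downward as in Lemma~\ref{Lemma::EstBdry::Reduction::BoundAllDerivsByHighDerivs}.

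Step (iii) takes a genuinely different route. The paper proves (iii) first, in the $W$-frame, where $W_1,\dots,W_r$ are tangential only on $\{x_n=0\}$. So Lemma~\ref{Lemma::EstBdry::Reduction::aalpha0Invertible} uses boundary-concentrated test functions $\lambda^{1/2}\psi_\delta(x')\phi(\lambda x_n)v$ and gets coercivity of $a_{\alpha_0,\alpha_0}(x',0)$ only at the boundary. The paper then needs a partition of unity $\psi_1+\psi_2$, a rescaling in $x_n$, and Corollary~\ref{Cor::EstBdryInt::MainCor} to handle the region away from the boundary.

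You change frames first, so $Y_1,\dots,Y_r$ are tangential everywhere and the $\partial_{x_n}^{2\kappa}$-coefficient of $\opL$ is exactly $(-1)^\kappa a'_{0^\kappa,0^\kappa}(x)$ on the whole cube. A G\aa rding test at interior points then works: each $Y_j$, $j\geq 1$, kills $e^{i\lambda x_n}$ and costs only $\lambda^{1/2}$ through the cutoff. This gives $\Real\langle a'_{0^\kappa,0^\kappa}(x^0)v,v\rangle\geq (D_1'\sup\sigma)^{-1}|v|^2$ for every $x^0\in Q^n_{>}(1/2)$, and continuity carries it to $x_n=0$.

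So you show the hypothesis in (iii) holds automatically, not merely as a reduction. This removes the localization and the use of the interior theorem from the boundary proof. The paper's route only ever uses the original coefficient $a_{\alpha_0,\alpha_0}$ and the structure of $W$ at the boundary, but pays for that with the extra splitting. In fact your interior test also works in the $W$-frame, since $\|W_0^\kappa f\|^2=|c_0|^{2\kappa}\|\partial_{x_n}^\kappa f\|^2$ is already one of the terms on the left of Assumption~\ref{Assumption::EstBdry::MaxSubOnInterior}.

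Two small points to tighten.

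First, in (i), ``all assumptions and conclusions transfer'' is not literally true for Assumption~\ref{Assumption::EstBdry::MaxSubOnInterior} and Assumption~\ref{Assumption::EstBdry::MaxSubOnSmoothing}\ref{Item::EstBdry::MaxSubOnSmoothing::MaxSubEstimate}. These involve only $|\alpha|=\kappa$, and $Y^\alpha$ with $|\alpha|=\kappa$ contains lower-order $W$-words. As the paper notes, (ii) must come first. With your $Y$-interpolation you can write $\sum_{|\alpha|=\kappa}\|Y^\alpha g\|\lesssim\sum_{|\beta|=\kappa}\|W^\beta g\|+\sum_{|\gamma|<\kappa}\|Y^\gamma g\|$. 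Then absorb the last sum using $\sum_{|\gamma|<\kappa}\|Y^\gamma g\|\leq\epsilon\sum_{|\alpha|=\kappa}\|Y^\alpha g\|+C_\epsilon\|g\|$.

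Second, in (ii), the paper proves the interpolation for smooth functions and passes to $H^\kappa_W$ by a density theorem from the earlier paper. You apply it directly to nonsmooth $g=Su$, so you should say why the integration by parts is valid. This is easy: for $j\geq 1$, $Y_j$ has no $\partial_{x_n}$-component, so mollifying in $x'$ alone justifies it. The $j=0$ case is handled slice by slice via Fubini.
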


We begin by establishing the reduction Proposition \ref{Prop::EstBdry::Reduction::MainReduction}\ref{Item::EstBdry::Reduction::MainReduction::HighOrderTermPos}.

\begin{lemma}\label{Lemma::EstBdry::Reduction::aalpha0Invertible}
    Let \(\alpha_0=(0,0,\ldots,0)\) be the list consisting of \(\kappa\) zeros.
    There exists \(c>0\) such that
    \begin{equation}\label{Eqn::EstBdry::Reduction::aalpha0Invertible::ToShow1}
        \Real \CNip{v}{a_{\alpha_0,\alpha_0}(x',0)v}[M]
        \geq c|v|^2,
        \qquad
        x'\in \CubenmoOneHalf,\quad v\in \C^M.
    \end{equation}
    In particular, \(a_{\alpha_0,\alpha_0}(x',0)\) is invertible for every
    \(x'\in \CubenmoOneHalf\), and
    \begin{equation*}
        \sup_{x'\in \CubenmoOneHalf}
        \MatrixNorm{a_{\alpha_0,\alpha_0}(x',0)^{-1}}[M][M]
        \lesssim 1.
    \end{equation*}
\end{lemma}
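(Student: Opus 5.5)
We derive the coercivity of \(a_{\alpha_0,\alpha_0}\) on the boundary from Assumption \ref{Assumption::EstBdry::MaxSubOnInterior}, using test functions that oscillate rapidly in the \(x_n\) direction near the boundary. Fix \(x_0'\in \CubenmoOneHalf\) and \(v\in \C^M\). Choose \(\chi\in \CinftycptSpace[\Cubenmo{1/2}]\) concentrated near \(x_0'\), and \(\eta\in \CinftycptSpace[(0,1)]\). For parameters \(\lambda\to\infty\) and \(\mu\to 0\), set
\begin{equation*}
    f_{\lambda,\mu}(x',x_n):=\chi\left( \frac{x'-x_0'}{\mu} \right)\eta\left( \frac{x_n}{\mu} \right) e^{i\lambda x_n} v,
\end{equation*}
so that \(f_{\lambda,\mu}\in \CinftycptSpace[\CubengOneHalf][\C^M]\) is supported in a box of size \(\mu\) adjacent to \((x_0',0)\). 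Apply \eqref{Eqn::EstBdry::MaxSubOnInterior} to \(f_{\lambda,\mu}\), letting \(\lambda\to\infty\) first with \(\mu\) fixed.

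In this limit \(W^\alpha f\) is dominated by the part in which every \(\partial_{x_n}\) falls on the exponential. Each \(W_j\) contributes a factor \(i\lambda\) times its \(\partial_{x_n}\)-coefficient, so
\begin{equation*}
    W^\alpha f = (i\lambda)^{|\alpha|}\prod_{k} b_{\alpha_k}(x)\, f+O(\lambda^{|\alpha|-1}),
\end{equation*}
where \(b_0=c_0\) and \(b_j\) is the \(\partial_{x_n}\)-coefficient of \(W_j\). Since \(b_j(x',0)=0\) for \(j\geq1\), we have \(|b_j|\lesssim \mu\) on the support. Dividing \eqref{Eqn::EstBdry::MaxSubOnInterior} by \(\lambda^{2\kappa}\) and letting \(\lambda\to\infty\) gives
\begin{equation*}
    \sum_{|\alpha|=\kappa}\int \Big|\prod_k b_{\alpha_k}\Big|^2 |f|^2 \leq D_1\,\Real \sum_{|\alpha|=|\beta|=\kappa}\int \sigma\, \CNip{\textstyle\prod_k b_{\alpha_k} f}{a_{\alpha,\beta}\prod_k b_{\beta_k} f}[M].
\end{equation*}
All lower-order terms and the \(D_2\) term vanish in this limit because of the normalization by \(\lambda^{2\kappa}\).

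Now let \(\mu\to0\) after dividing both sides by \(\int|f|^2\approx \mu^n\). Every term involving some \(j\geq1\) is \(O(\mu)\), and \(\sigma\), \(a\), and \(c_0\) converge to their values at \((x_0',0)\). This yields
\begin{equation*}
    c_0^{2\kappa}|v|^2\leq D_1\,\sigma(x_0',0)\,c_0^{2\kappa}\,\Real\CNip{v}{a_{\alpha_0,\alpha_0}(x_0',0)v}[M].
\end{equation*}
By Remark \ref{Rmk::EstBdry::c0Approx1}, \(|c_0|\approx 1\), and \(\sigma\) is bounded above. Hence \eqref{Eqn::EstBdry::Reduction::aalpha0Invertible::ToShow1} holds with \(c=(D_1\sup\sigma)^{-1}\); if \(D_1=0\) we get a contradiction, so necessarily \(D_1>0\). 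Invertibility follows from the coercivity: \(|a v||v|\geq \Real\langle v,av\rangle\geq c|v|^2\), which gives \(\|a^{-1}\|\leq c^{-1}\).

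The main obstacle is the bookkeeping in the asymptotic expansion. We must verify that cross terms in which derivatives hit \(\chi(\cdot/\mu)\) or \(\eta(\cdot/\mu)\), which are of size \(\mu^{-1}\), are still lower order in \(\lambda\); this holds because \(\mu\) is fixed while \(\lambda\to\infty\). We must also verify that the \(O(\mu)\) smallness of \(b_j\), \(j\geq1\), is uniform, which uses the \(C^1\) bounds on the \(W_j\). Taking the limits in the order \(\lambda\to\infty\) first and \(\mu\to0\) second makes both checks routine.
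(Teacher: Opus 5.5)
Your argument is correct, but it takes a different route from the paper.

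Your test function $\chi((x'-x_0')/\mu)\,\eta(x_n/\mu)\,e^{i\lambda x_n}v$ lives at a fixed spatial scale $\mu$. The limit $\lambda\to\infty$ is then a standard principal-symbol (WKB) computation in the conormal direction $dx_n$: each $W_j$ contributes $i\lambda b_j$, where $b_j$ is its $\partial_{x_n}$-coefficient. The boundary structure ($b_j(x',0)=0$ for $j\geq 1$) enters only afterwards, through $|b_j|\lesssim\mu$ on the support, when you let $\mu\to 0$.

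The paper instead uses a boundary-layer concentration $\lambda^{1/2}\psi_\delta(x')\phi(\lambda x_n)v$. It writes $W_j=\sum_{\ell<n}b_{j,\ell}\partial_{x_\ell}+x_n\widetilde b_j\partial_{x_n}$ and rescales $s=\lambda x_n$. The tangential fields are then $O(1)$ while $W_0=\lambda c_0\partial_s$, so $\|W^\alpha f\|=O_\delta(\lambda^{\nu(\alpha)})$. This isolates the pair $(\alpha_0,\alpha_0)$ already in the $\lambda$-limit, and the second limit $\delta\to0$ is only an approximate identity in $x'$.

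What the difference buys:
\begin{itemize}
\item In your version, all cutoffs are fixed while $\lambda\to\infty$. So every derivative landing on a cutoff or a coefficient is trivially lower order. You need no separate estimate for freezing coefficients at $x_n=0$, and you use the vanishing of $b_j$ on the boundary only through a $C^1$ bound.
\item The cost is a second limit in which all terms containing some $b_j$, $j\geq1$, must be shown to disappear. In effect you prove the pointwise symbol inequality at interior points near the boundary and pass to $x_n=0$ by continuity.
\item The paper's test functions see only boundary values from the start, but they require the structured decomposition of the $W_j$ and a power-counting induction in the rescaled variable.
\end{itemize}

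One small remark: your appeal to Remark \ref{Rmk::EstBdry::c0Approx1} is unnecessary. Since $c_0\neq0$, the factor $c_0^{2\kappa}$ cancels from both sides and gives $c=(D_1\sup\sigma)^{-1}$ directly, exactly as in the paper.
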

\begin{proof}
    Since \(W_j(x',0)\) has no \(\partial_{x_n}\)-component for
    \(1\leq j\leq r\), we may write
    \begin{equation*}
        W_j
        =
        \sum_{\ell=1}^{n-1}b_{j,\ell}(x)\partial_{x_\ell}
        +x_n\widetilde b_j(x)\partial_{x_n},
        \qquad 1\leq j\leq r,
    \end{equation*}
    for smooth functions \(b_{j,\ell}\) and \(\widetilde b_j\). Recall also that
    \begin{equation*}
        W_0=c_0\partial_{x_n},
        \qquad c_0\neq0.
    \end{equation*}

    Fix \(x'_0\in \CubenmoOneHalf\) and \(v\in \C^M\) with \(|v|=1\).
    Choose
    \begin{equation*}
        \psi\in \CinftycptSpace[\Bnmo{1}[0]],
        \qquad
        \LtNorm*{\psi}[\Rnmo]=1,
    \end{equation*}
    and choose
    \begin{equation*}
        \phi\in \CinftycptSpace[(0,1/4)]
    \end{equation*}
    such that \(\phi^{(\kappa)}\not\equiv0\). For \(\delta>0\) sufficiently small, set
    \begin{equation*}
        \psi_\delta(x')
        :=
        \delta^{-(n-1)/2}
        \psi\left(\delta^{-1}(x'-x'_0)\right),
    \end{equation*}
    so that \(\supp(\psi_\delta)\subset \CubenmoOneHalf\) and
    \(\LtNorm*{\psi_\delta}[\Rnmo]=1\). For \(\lambda\geq1\), define
    \begin{equation*}
        f_{\lambda,\delta}(x',x_n)
        :=
        \lambda^{1/2}\psi_\delta(x')\phi(\lambda x_n)v.
    \end{equation*}
    Then
    \begin{equation*}
        f_{\lambda,\delta}
        \in \CinftycptSpace[\CubengOneHalf][\C^M].
    \end{equation*}

    For a list \(\alpha\), let \(\nu(\alpha)\) denote the number of entries of
    \(\alpha\) equal to \(0\). After making the change of variables
    \begin{equation*}
        s=\lambda x_n,
    \end{equation*}
    the vector fields become
    \begin{equation*}
        W_0=\lambda c_0\partial_s
    \end{equation*}
    and, for \(1\leq j\leq r\),
    \begin{equation*}
        W_j
        =
        \sum_{\ell=1}^{n-1}
        b_{j,\ell}(x',s/\lambda)\partial_{x_\ell}
        +
        s\widetilde b_j(x',s/\lambda)\partial_s.
    \end{equation*}
    Consequently, for each fixed \(\delta>0\), an induction on \(|\alpha|\) gives
    \begin{equation*}
        \LtNorm*{W^\alpha f_{\lambda,\delta}}[\Rngeq]
        =
        O_\delta\left(\lambda^{\nu(\alpha)}\right),
        \qquad |\alpha|\leq\kappa.
    \end{equation*}

    For \(\alpha=\alpha_0\), we have the exact formula
    \begin{equation*}
        W^{\alpha_0}f_{\lambda,\delta}
        =
        c_0^\kappa\lambda^{\kappa+1/2}
        \psi_\delta(x')\phi^{(\kappa)}(\lambda x_n)v.
    \end{equation*}
    Thus, setting
    \begin{equation*}
        C_\phi:=\LtNorm*{\phi^{(\kappa)}}[\R]^2>0,
    \end{equation*}
    we obtain
    \begin{equation}\label{Eqn::EstBdry::Reduction::aalpha0Invertible::Walpha0Norm}
        \LtNorm*{W^{\alpha_0}f_{\lambda,\delta}}[\Rngeq]^2
        =
        |c_0|^{2\kappa}\lambda^{2\kappa}C_\phi.
    \end{equation}

    If \(|\alpha|,|\beta|\leq\kappa\) and
    \((\alpha,\beta)\neq(\alpha_0,\alpha_0)\), then
    \begin{equation*}
        \nu(\alpha)+\nu(\beta)\leq2\kappa-1.
    \end{equation*}
    It follows from the preceding estimates and \eqref{Eqn::EstBdry::FormulaForQ} that
    \begin{equation}\label{Eqn::EstBdry::Reduction::aalpha0Invertible::FormQAsymptotic}
        \begin{split}
            \Real \FormQ*[f_{\lambda,\delta}][f_{\lambda,\delta}]
            ={}&
            |c_0|^{2\kappa}\lambda^{2\kappa}C_\phi
            \int_{\Rnmo}
            |\psi_\delta(x')|^2
            \sigma(x',0)
            \\
            &\hspace{35mm}\times
            \Real \CNip{v}{a_{\alpha_0,\alpha_0}(x',0)v}[M]
            \:dx'
            +O_\delta\left(\lambda^{2\kappa-1}\right).
        \end{split}
    \end{equation}
    Here, the replacement of the coefficients at \(x_n\) by their values at
    \(x_n=0\) also contributes only
    \(O_\delta\left(\lambda^{2\kappa-1}\right)\), since
    \(x_n=O(\lambda^{-1})\) on the support of \(f_{\lambda,\delta}\).

    Applying Assumption \ref{Assumption::EstBdry::MaxSubOnInterior} to
    \(f_{\lambda,\delta}\), using
    \eqref{Eqn::EstBdry::Reduction::aalpha0Invertible::Walpha0Norm} and
    \(\LtNorm*{f_{\lambda,\delta}}[\Rngeq]=O(1)\), and then dividing by
    \begin{equation*}
        |c_0|^{2\kappa}\lambda^{2\kappa}C_\phi,
    \end{equation*}
    we obtain
    \begin{equation*}
        \begin{split}
            1
            \leq{}&
            D_1
            \int_{\Rnmo}
            |\psi_\delta(x')|^2
            \sigma(x',0)
            \Real \CNip{v}{a_{\alpha_0,\alpha_0}(x',0)v}[M]
            \:dx'
            \\
            &\quad+O_\delta(\lambda^{-1})+O(\lambda^{-2\kappa}).
        \end{split}
    \end{equation*}

    Letting \(\lambda\rightarrow\infty\) while keeping \(\delta\) fixed gives
    \begin{equation}\label{Eqn::EstBdry::Reduction::aalpha0Invertible::ApproxIdentityEstimate}
        1
        \leq
        D_1
        \int_{\Rnmo}
        |\psi_\delta(x')|^2
        \sigma(x',0)
        \Real \CNip{v}{a_{\alpha_0,\alpha_0}(x',0)v}[M]
        \:dx'.
    \end{equation}
    In particular, Assumption \ref{Assumption::EstBdry::MaxSubOnInterior}
    forces \(D_1>0\).

    Since \(|\psi_\delta|^2\,dx'\) is an approximate identity at \(x'_0\), letting
    \(\delta\downarrow0\) in
    \eqref{Eqn::EstBdry::Reduction::aalpha0Invertible::ApproxIdentityEstimate}
    yields
    \begin{equation*}
        1
        \leq
        D_1\sigma(x'_0,0)
        \Real \CNip{v}{a_{\alpha_0,\alpha_0}(x'_0,0)v}[M].
    \end{equation*}
    Therefore,
    \begin{equation*}
        \Real \CNip{v}{a_{\alpha_0,\alpha_0}(x'_0,0)v}[M]
        \geq
        \frac{1}{D_1\sigma(x'_0,0)}
        \geq
        \frac{1}{D_1\left\|\sigma\right\|_{L^\infty(\CubengeqOneHalf)}}.
    \end{equation*}
    Since \(x'_0\) and the unit vector \(v\) were arbitrary, \eqref{Eqn::EstBdry::Reduction::aalpha0Invertible::ToShow1} follows.

    Finally, writing \(A=a_{\alpha_0,\alpha_0}(x',0)\), we have for every
    \(v\in\C^M\),
    \begin{equation*}
        c|v|^2
        \leq
        \Real \CNip{v}{Av}[M]
        \leq
        |v|\,|Av|.
    \end{equation*}
    Hence
    \begin{equation*}
        |Av|\geq c|v|.
    \end{equation*}
    Thus \(A\) is invertible and
    \begin{equation*}
        \MatrixNorm{A^{-1}}[M][M]\leq c^{-1},
    \end{equation*}
    uniformly in \(x'\), completing the proof.
\end{proof}

\begin{proof}[Proof of Proposition \ref{Prop::EstBdry::Reduction::MainReduction}\ref{Item::EstBdry::Reduction::MainReduction::HighOrderTermPos}]
    Let \(\alpha_0=(0,0,\ldots,0)\) be as in Lemma \ref{Lemma::EstBdry::Reduction::aalpha0Invertible}.
    Note that, on \(\CubenmoOne\),
        \begin{equation*}
        \opL = (-1)^{\kappa}c_0^{2\kappa} a_{\alpha_0,\alpha_0}(x',0)\partial_{x_n}^{2\kappa}+\text{terms which are order }<2\kappa\text{ in }\partial_{x_n},
    \end{equation*}
    by \eqref{Eqn::EstBdry::LGivenByQ} and \eqref{Eqn::EstBdry::FormulaForQ}.
    Write
    \begin{equation*}
        \opL = a(x)\partial_{x_n}^{2\kappa} + \sum_{\substack{|\beta|\leq 2\kappa \\ \beta_n<2\kappa}} b_{\beta}(x)\partial_{x}^{\beta},
    \end{equation*}
    so that \(a(x',0)= (-1)^{\kappa}c_0^{2\kappa}a_{\alpha_0,\alpha_0}(x',0)\).
    Lemma \ref{Lemma::EstBdry::Reduction::aalpha0Invertible} implies  \ref{Item::EstBdry::Reduction::MainReduction::HighOrderTermPos}
    holds near \(\CubenmoOneHalf\).  Informally, we will separate the proof of Theorem \ref{Thm::EstBdry::MainThm::New} into the part
    near the boundary (where Lemma \ref{Lemma::EstBdry::Reduction::aalpha0Invertible} applies) and away from the boundary,
    where the interior result Corollary \ref{Cor::EstBdryInt::MainCor} applies.

    Let \(\phi_1,\phi_2\in \CinftycptSpace[\R\times\CubengeqOneHalf]\) with
    \(\phi_1\prec\phi_2\) be as in the statement of
    Theorem \ref{Thm::EstBdry::MainThm::New}.
    Choose
    \begin{equation*}
        \psi_1,\psit_1\in \CinftycptSpace[\CubengeqOneHalf],
        \qquad
        \psi_2,\psit_2\in \CinftycptSpace[\CubengOneHalf],
    \end{equation*}
    such that \(\psi_j\prec\psit_j\), \(j=1,2\),
    \begin{equation*}
        \psi_1+\psi_2=1
        \quad\text{on}\quad
        \left\{x:\exists t,\ (t,x)\in\supp(\phi_2)\right\},
    \end{equation*}
    and \(\psit_1\) is supported where
    \begin{equation*}
        \MatrixNorm*{a(x)^{-1}}[M][M]\lesssim1.
    \end{equation*}
    In particular,
    \begin{equation*}
        \psi_j\phi_1\prec\psit_j\phi_2,
        \qquad j=1,2.
    \end{equation*}
    Let \(l\), \(J\), \(N\), and \(\DSetL{N}\) be as in the statement of
    Theorem \ref{Thm::EstBdry::MainThm::New}.
    By a simple translation and scaling in the \(x_n\)-variable,
    Corollary \ref{Cor::EstBdryInt::MainCor}, with \(\phi_1,\phi_2\) replaced by
    \(\psi_2\phi_1,\psit_2\phi_2\), respectively, implies, for every
    \(u\in\DistributionsZeroCM[\R\times\CubengeqOne]\),
    \begin{equation}\label{Eqn::EstBdry::Reduction::MainReduction::HighOrderTermPos::AwayFromBoundaryEstimate}
        \begin{split}
            \HsNorm*{\psi_2\phi_1 u}{l+1}
            &\lesssim
            \HsNorm*{\psit_2\phi_2(\partial_t+\opL)^N u}{l+1-N\epsilon_0}
            +\LtNorm*{\psit_2\phi_2 u}
            \\
            &\lesssim
            \HsNorm*{\phi_2(\partial_t+\opL)^N u}{l+1-N\epsilon_0}
            +\LtNorm*{\phi_2 u}.
        \end{split}
    \end{equation}
    After a simple scaling in the \(x_n\)-variable, Theorem
    \ref{Thm::EstBdry::MainThm::New}, under the additional assumption
    \begin{equation*}
        \MatrixNorm*{a(x)^{-1}}[M][M]\lesssim1,
    \end{equation*}
    and with \(\phi_1,\phi_2\) replaced by
    \(\psi_1\phi_1,\psit_1\phi_2\), respectively, implies, for
    \(u\in\DSetL{N}\),
    \begin{equation}\label{Eqn::EstBdry::Reduction::MainReduction::HighOrderTermPos::NearBoundaryEstimate}
        \begin{split}
            &\HsNorm*{\psi_1\phi_1 u}{l+1}[\Ropngeq]
            \\&\lesssim
            \HsNorm*{\psit_1\phi_2(\partial_t+\opL)^J u}{l+1-2\kappa J}[\Ropngeq]
            +
            \HsNorm*{\psit_1\phi_2(\partial_t+\opL)^N u}{(l+1-N\epsilon_0)\vee0}[\Ropngeq]
            +
            \sum_{k=0}^{N-1}
            \LtNorm*{\psit_1\phi_2(\partial_t+\opL)^k u}[\Ropngeq]
            \\
            &\lesssim
            \HsNorm*{\phi_2(\partial_t+\opL)^J u}{l+1-2\kappa J}[\Ropngeq]
            +
            \HsNorm*{\phi_2(\partial_t+\opL)^N u}{(l+1-N\epsilon_0)\vee0}[\Ropngeq]
            +
            \sum_{k=0}^{N-1}
            \LtNorm*{\phi_2(\partial_t+\opL)^k u}[\Ropngeq].
        \end{split}
    \end{equation}
    Combining
    \eqref{Eqn::EstBdry::Reduction::MainReduction::HighOrderTermPos::AwayFromBoundaryEstimate}
    and
    \eqref{Eqn::EstBdry::Reduction::MainReduction::HighOrderTermPos::NearBoundaryEstimate},
    and using
    \begin{equation*}
        (\psi_1+\psi_2)\phi_1=\phi_1,
    \end{equation*}
    completes the proof.
\end{proof}

Henceforth we assume Proposition \ref{Prop::EstBdry::Reduction::MainReduction}\ref{Item::EstBdry::Reduction::MainReduction::HighOrderTermPos} holds,
and establish Proposition \ref{Prop::EstBdry::Reduction::MainReduction}\ref{Item::EstBdry::Reduction::MainReduction::ReplaceWWithY},\ref{Item::EstBdry::Reduction::MainReduction::BoundAllDerivs}.
We begin by establishing Proposition \ref{Prop::EstBdry::Reduction::MainReduction}\ref{Item::EstBdry::Reduction::MainReduction::BoundAllDerivs}
with \(Y\) replaced by \(W\). This follows from the next lemma.

\begin{lemma}\label{Lemma::EstBdry::Reduction::BoundAllDerivsByHighDerivs}
    \begin{equation*}
        \sum_{|\alpha|\leq \kappa}\LtNorm{W^{\alpha} f}[\Rngeq]
        \lesssim \sum_{|\alpha|=\kappa} \LtNorm{W^{\alpha}f}[\Rngeq]+ \LtNorm{f}[\Rngeq],\quad \forall f\in \HsWSpace{\kappa}[\CubengeqOneHalfClosure],
    \end{equation*}
    where \(\HsWSpace{\kappa}[\CubengeqOneHalfClosure]\) is as in
    \eqref{Eqn::HorVfs::GainBoundar::DefineHsYSpaceOfCompact}.
\end{lemma}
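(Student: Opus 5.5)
The statement is an interpolation-type inequality: intermediate \(W\)-derivatives are controlled by the top-order ones plus \(\LtSpace\). I would prove it by induction on \(\kappa\), reducing everything to one key estimate.

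\textbf{Key estimate.} For \(1\leq j\leq \kappa-1\), and \(f\) supported in \(\CubengeqOneHalfClosure\),
\begin{equation*}
\sum_{|\alpha|=j}\LtNorm{W^\alpha f}[\Rngeq]\lesssim \sum_{|\alpha|=j+1}\LtNorm{W^\alpha f}[\Rngeq]+\LtNorm{f}[\Rngeq].
\end{equation*}
Granting this, iterating downward from \(j=\kappa-1\) gives the lemma.

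\textbf{First reduction.} By density it suffices to treat smooth \(f\). I would reduce to smooth \(f\) with compact support in \(\CubengeqOneHalfClosure\), approximating in \(\HsWSpace{\kappa}\) via the extension of Lemma \ref{Lemma::HorVfs::GainBoundary::StreetExtension} followed by mollification.

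\textbf{Proof of the key estimate by contradiction and compactness.} Suppose it fails. Then there exist \(f_k\) supported in \(\CubengeqOneHalfClosure\) with
\begin{equation*}
\sum_{|\alpha|\leq j}\LtNorm{W^\alpha f_k}=1,\qquad \sum_{|\alpha|=j+1}\LtNorm{W^\alpha f_k}+\LtNorm{f_k}\to0 .
\end{equation*}
In particular \(f_k\) is bounded in \(\HsWSpace{j+1}\). The plan is then:
\begin{enumerate}[(a)]
\item Show that \(\HsWSpace{j+1}[\CubengeqOneHalfClosure]\hookrightarrow \HsWSpace{j}\) compactly. To do this, extend by \(\Extension_{\kappa}\) from Lemma \ref{Lemma::HorVfs::GainBoundary::StreetExtension} to \(\HsWSpace{j+1}[\CubenDeltaTwoClosure]\). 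The Rothschild--Stein gain (the \(x\)-only version underlying Lemma \ref{Lemma::HorVfs::GainInterior::RSGainLemma}) gives \(W^\beta \Extension f\in \HsSpace{1/m}\) for \(|\beta|\leq j\), with compact support. Rellich's theorem then gives \(\LtSpace\)-compactness of \(W^\beta\Extension f_k\). Restricting to \(\Rngeq\), this yields a subsequence with \(f_k\to f\) in \(\HsWSpace{j}\).
\item The limit satisfies \(\sum_{|\alpha|\leq j}\LtNorm{W^\alpha f}=1\). On the other hand \(f_k\to0\) in \(\LtSpace\), so \(f=0\). This is a contradiction.
\end{enumerate}

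\textbf{Dependence of constants.} The contradiction argument above does not track constants, but the paper needs them. To get the stated dependence I would instead argue directly with the smoothing gain. Proposition \ref{Prop::HorVfs::GainBoundary::MainGainProp}, or its \(x\)-only analogue obtained from the extension, gives
\begin{equation*}
\LtNorm{\Lambdax[1/m]W^\beta \Extension f}\lesssim \sum_{|\gamma|\leq |\beta|+1}\LtNorm{W^\gamma f}.
\end{equation*}
Split \(W^\beta f\) into frequencies below and above \(R\). The high-frequency part is bounded by \(R^{-1/m}\) times the right side above. For the low-frequency part, write \(W^\beta\) as a composition of first-order operators and move them onto the frequency cutoff, so it is bounded by \(C_R\LtNorm{f}\). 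Choosing \(R\) large absorbs the lower-order terms \(\sum_{|\gamma|\leq j}\), leaving \(\lesssim\sum_{|\gamma|=j+1}+\LtNorm{f}\).

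\textbf{Main obstacle.} The difficult step is (a): making the extension–Rothschild–Stein argument work uniformly up to the boundary. This relies essentially on Lemma \ref{Lemma::HorVfs::GainBoundary::StreetExtension}, since classical extensions do not preserve the \(W\)-Sobolev spaces. There is a secondary point: the estimate must be stated with \(W\) (where \(W_0=c_0\partial_{x_n}\)) rather than \(Y\). This is harmless because \(|c_0|\approx1\) by Remark \ref{Rmk::EstBdry::c0Approx1}, and the \(W_j\), \(j\geq1\), are tangent at the boundary.
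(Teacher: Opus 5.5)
Your proposal is correct, but it takes a different route from the paper.

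\textbf{The paper's route.} The paper's proof does not use H\"ormander's condition at all. After reducing, via a density result from the companion paper, to $u\in C^\infty_{\mathrm{cpt}}(Q^n_{\geq}(3/4))$, it proves the one-step interpolation
\[
\sum_{|\alpha|=N}\|W^\alpha u\|\leq(\mathrm{s.c.})\sum_{|\alpha|=N+1}\|W^\alpha u\|+(\mathrm{l.c.})\sum_{|\alpha|=N-1}\|W^\alpha u\|
\]
by writing $W^\alpha=W_{j_0}W^\beta$. If $j_0\neq0$, then $W_{j_0}$ is tangent at $x_n=0$, so it is integrated by parts with no boundary term and $\|W^\alpha u\|^2$ is absorbed. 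If $j_0=0$, it uses the one-variable inequality $\|\partial_{x_n}v\|\leq(\mathrm{s.c.})\|\partial_{x_n}^2v\|+(\mathrm{l.c.})\|v\|$ on $[0,\infty)$ (Lemma~\ref{Lemma::EstBdry::Reduction::BoundLowerxnDerivsByHigher}, proved with a classical reflection extension). An induction on the order then finishes the proof.

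\textbf{Your route.} You derive the estimate from the $1/m$ subelliptic gain. You use the adapted extension of Lemma~\ref{Lemma::HorVfs::GainBoundary::StreetExtension}, Rothschild--Stein for $W^\beta\mathscr{E}f$, and then either Rellich plus an Ehrling-type contradiction, or a low/high frequency split. Both variants go through, with a few points to make explicit.
\begin{itemize}
\item In the split, decompose $W^\beta\mathscr{E}f$ in frequency on $\mathbb{R}^n$, not $W^\beta f$.
\item Bound the low part by duality after inserting a cutoff equal to $1$ on $\operatorname{supp}\mathscr{E}f$, and use $\|\mathscr{E}f\|_{L^2}\lesssim\|f\|_{L^2}$. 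This is the $j=0$ case of the extension lemma, which is why you need a single map bounded on every $H^j_W$, $0\leq j\leq\kappa$.
\item Lemma~\ref{Lemma::HorVfs::GainBoundary::StreetExtension} is stated for normalized fields ($Y_0=\partial_{x_n}$, and $Y_j$ with no $\partial_{x_n}$-component). So first pass to $Y_j=W_j-a_jc_0^{-1}W_0$, which give the same $H^j$ spaces with comparable norms, or cite the general theorem.
\item As you note, only the quantitative version gives the constant dependence the paper requires.
\end{itemize}

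\textbf{Comparison.} Your approach treats all directions uniformly and works directly on $H^\kappa_W$, so your density reduction is superfluous. Its cost is heavy machinery: the companion paper's extension theorem with tracked constants, Rothschild--Stein, and H\"ormander's condition for the extended fields. All of that is used to prove what is really an elementary interpolation inequality, depending only on the boundary structure $W_0=c_0\partial_{x_n}$ with $W_j$ tangent. In the paper's proof the constants are immediate.
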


To prove Lemma \ref{Lemma::EstBdry::Reduction::BoundAllDerivsByHighDerivs}, we use another lemma.
\begin{lemma}\label{Lemma::EstBdry::Reduction::BoundLowerxnDerivsByHigher}
    For \(f(x_n)\in \HsSpace{2}[\lbrack 0 ,\infty)]\), we have
    \begin{equation*}
        \LtNorm{\partial_{x_n}f}[\lbrack 0 ,\infty)]
        \leq \sconst \LtNorm{\partial_{x_n}^2 f}[\lbrack 0 ,\infty)]
        +\lconst \LtNorm{f}[\lbrack 0 ,\infty)].
    \end{equation*}
\end{lemma}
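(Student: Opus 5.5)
The plan is to prove this one-dimensional interpolation inequality by reducing it to the whole-line case with an extension operator and then applying Plancherel. Let \(\Extension=\Extension_2\) be the classical extension operator from \eqref{Eqn::EstBdry::Extension::FormulaForExtension} with \(L=2\), acting in the single variable \(x_n\). Its explicit formula gives three things directly. It is bounded \(\LtSpace[\lbrack 0,\infty)]\rightarrow\LtSpace[\R]\), with norm at most \(1+\sum_k|a_k|\lambda_k^{-1/2}\). The moment conditions \(\sum_k a_k(-\lambda_k)^l=1\) for \(l=0,1\) make \(\Extension f\) and its first derivative continuous across \(0\) whenever \(f\in\HsSpace{2}[\lbrack 0,\infty)]\). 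Hence, for \(j\leq 2\), the distributional derivative \(\partial_{x_n}^j\Extension f\) on \(\R\) equals \(\partial_{x_n}^j f\) on \((0,\infty)\) and \(\sum_k a_k(-\lambda_k)^j(\partial_{x_n}^jf)(-\lambda_k x_n)\) on \((-\infty,0)\), with no boundary delta terms.

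This yields
\begin{equation*}
    \LtNorm{\partial_{x_n}^j\Extension f}[\R]\lesssim \LtNorm{\partial_{x_n}^j f}[\lbrack 0,\infty)],\qquad j=0,1,2,
\end{equation*}
with constants depending only on the \(a_k,\lambda_k\). In particular \(\Extension f\in\HsSpace{2}[\R]\), which is also the content of Proposition \ref{Prop::EstBdry::Extension::ClassicalExtension}.

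On \(\R\), Plancherel together with the pointwise bound \(|\xi|\leq \epsilon|\xi|^2+\epsilon^{-1}\) (check the cases \(|\xi|\geq\epsilon^{-1}\) and \(|\xi|<\epsilon^{-1}\)) gives, for every \(g\in\HsSpace{2}[\R]\),
\begin{equation*}
    \LtNorm{\partial_{x_n}g}[\R]\leq \epsilon\LtNorm{\partial_{x_n}^2g}[\R]+\epsilon^{-1}\LtNorm{g}[\R].
\end{equation*}
Take \(g=\Extension f\). The left side is at least \(\LtNorm{\partial_{x_n}f}[\lbrack 0,\infty)]\), since \(\Extension f=f\) on \((0,\infty)\). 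The right side is at most \(C\epsilon\LtNorm{\partial_{x_n}^2f}[\lbrack 0,\infty)]+C\epsilon^{-1}\LtNorm{f}[\lbrack 0,\infty)]\) by the bounds above. Replacing \(\epsilon\) by \(\epsilon/C\) gives the \(\sconst\)/\(\lconst\) form of the inequality, with constants that are absolute once \(L=2\) and the \(\lambda_k,a_k\) are fixed.

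The only step that needs care is the second-derivative bound \(\LtNorm{\partial^2_{x_n}\Extension f}\lesssim\LtNorm{\partial^2_{x_n}f}\), which controls the homogeneous seminorm rather than the full \(\HsSpace{2}\) norm. It holds because the reflection formula commutes with differentiation up to the constant factors \((-\lambda_k)^j\), and the \(C^1\) matching at \(0\) rules out any singular contribution at the origin.

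As an alternative that avoids extension altogether, one could integrate by parts on \(\lbrack 0,\infty)\) and control the boundary term \(f(0)\overline{f'(0)}\) with a trace estimate plus scaling. I expect the extension route to be cleaner.
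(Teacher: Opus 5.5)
Your proof is correct and follows the paper's route: prove the interpolation inequality on $\mathbb{R}$, then transfer it to $[0,\infty)$ with the classical extension $\mathscr{E}_2$, using its explicit reflection formula to get $\|\partial_{x_n}^2\mathscr{E}_2 f\|_{L^2(\mathbb{R})}\lesssim\|\partial_{x_n}^2 f\|_{L^2([0,\infty))}$. The differences are cosmetic. The paper proves the whole-line estimate by integrating by parts, $\|\partial_{x_n}g\|^2=|\langle g,\partial_{x_n}^2 g\rangle|$, rather than by Plancherel. It also passes through the $H^1$-norm equivalence of the extension instead of restricting $\partial_{x_n}\mathscr{E}_2 f$ directly.
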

\begin{proof}
    First we show the result holds with \(\lbrack 0 ,\infty)\) replaced by \(\R\).
    Indeed, for \(g(x_n)\in \HsSpace{2}[\R]\), we have
    \begin{equation*}
    \begin{split}
         & \LtNorm*{\partial_{x_n}g}[\R]^{2}
         =\left| \Ltip*{\partial_{x_n} g}{\partial_{x_n}g}[\R] \right|
         =\left| \Ltip*{ g}{\partial_{x_n}^2g}[\R] \right|
         \leq \sconst \LtNorm*{\partial_{x_n}^2 g}[\R]^2+ \lconst \LtNorm*{ g}[\R]^2.
    \end{split}
    \end{equation*}

    We use the extension map \(\Extension_2:\HsSpace{2}[\lbrack 0 ,\infty)]\rightarrow \HsSpace{2}[\R]\) given by \eqref{Eqn::EstBdry::Extension::FormulaForExtension}.
    Note that
    \begin{equation}\label{Eqn::EstBdry::Reduction::BoundLowerxnDerivsByHigher::Tmp1}
        \LtNorm*{\partial_{x_n}^2 \Extension_2 f}[\R]\lesssim \LtNorm*{\partial_{x_n}^2 f}[ \lbrack 0 ,\infty) ],
    \end{equation}
    as can be seen from the formula \eqref{Eqn::EstBdry::Extension::FormulaForExtension}.
    Using Proposition \ref{Prop::EstBdry::Extension::ClassicalExtension} and \eqref{Eqn::EstBdry::Reduction::BoundLowerxnDerivsByHigher::Tmp1}, we have
    \begin{equation*}
        \begin{split}
            &\LtNorm*{\partial_{x_n}f}[ \lbrack 0,\infty) ]
            \lesssim \HsNorm{f}{1}[ \lbrack 0,\infty)]
            \approx \HsNorm{\Extension_2 f}{1}[\R] 
            \\&\leq \sconst \LtNorm*{\partial_{x_n}^2 \Extension_2 f}[\R] + \lconst \LtNorm*{\Extension_2 f}[\R]
            \lesssim \sconst \LtNorm*{\partial_{x_n}^2 f}[ \lbrack 0,\infty)] + \lconst \LtNorm*{f}[ \lbrack 0,\infty)].
        \end{split}
    \end{equation*}
\end{proof}

\begin{proof}[Proof of Lemma \ref{Lemma::EstBdry::Reduction::BoundAllDerivsByHighDerivs}]
    For \(\kappa=0,1\), the result is immediate, so we assume \(\kappa\geq 2\).
    We will show
    \begin{equation}\label{Eqn::EstBdry::Reduction::BoundAllDerivsByHighDerivs::ToShow}
        \sum_{|\alpha|\leq \kappa}\LtNorm{W^{\alpha} u}[\Rngeq]
        \lesssim \sum_{|\alpha|=\kappa} \LtNorm{W^{\alpha}u}[\Rngeq]+ \LtNorm{u}[\Rngeq],\quad \forall u\in \CinftycptSpace[\CubengeqThreeFourths].
    \end{equation}

    First we see why \eqref{Eqn::EstBdry::Reduction::BoundAllDerivsByHighDerivs::ToShow} completes the proof.
    As described in the proof of Lemma \ref{Lemma::HorVfs::GainBoundary::StreetExtension}, we have
    \begin{equation}\label{Eqn::EstBdry::Reduction::BoundAllDerivsByHighDerivs::EqualSpaces}
        \TLSpace{j}{2}{2}[\CubengeqOneHalfClosure][\FilteredSheafF] = \HsWSpace{j}[\CubengeqOneHalfClosure],
    \end{equation}
    for certain spaces \(\TLSpace{j}{2}{2}[\CubengeqOneHalfClosure][\FilteredSheafF]\) defined in \cite[Chapter \ref*{FS::Chapter::Spaces}]{StreetFunctionSpacesAndTraceTheoremsForMaximallySubellipticBoundaryValueProblems}
    (and as described in the proof of Lemma \ref{Lemma::HorVfs::GainBoundary::StreetExtension}, every boundary point is
    \(\FilteredSheafF\)-non-characteristic in the sense of
    \cite[Definition \ref*{FS::Defn::Filtrations::RestrictingFiltrations::NonCharPoints}]{StreetFunctionSpacesAndTraceTheoremsForMaximallySubellipticBoundaryValueProblems}).
    Using \eqref{Eqn::EstBdry::Reduction::BoundAllDerivsByHighDerivs::EqualSpaces},
    \cite[Corollary \ref*{FS::Cor::Spaces::Approximation::SmoothFunctionsAreDense} \ref*{FS::Item::Spaces::Approximation::SmoothFunctionsAreDense::ApproxInST}]{StreetFunctionSpacesAndTraceTheoremsForMaximallySubellipticBoundaryValueProblems}
    shows that every element of \(\HsWSpace{\kappa}[\CubengeqOneHalfClosure]\) can be approximated in the strong topology on
    \(\HsWSpace{\kappa}[\CubengeqThreeFourthsClosure]\) by elements of \(\CinftycptSpace[\CubengeqThreeFourths]\).
    Thus, once we prove \eqref{Eqn::EstBdry::Reduction::BoundAllDerivsByHighDerivs::ToShow}, the result follows.

    We turn to establishing \eqref{Eqn::EstBdry::Reduction::BoundAllDerivsByHighDerivs::ToShow}.
    First we claim, \(\forall N\geq 1\),
    \begin{equation}\label{Eqn::EstBdry::Reduction::BoundAllDerivsByHighDerivs::ToShow2}
        \sum_{|\alpha|=N} \LtNorm*{W^{\alpha} u}[\Rngeq]
        \leq \sconst \sum_{|\alpha|=N+1} \LtNorm*{W^{\alpha}u}[\Rngeq] + \lconst \sum_{|\alpha|=N-1} \LtNorm*{W^{\alpha}u}[\Rngeq],
        \quad \forall u\in \CinftycptSpace[\CubengeqThreeFourths].
    \end{equation}
    Let \(|\alpha|=N\), so that \(W^{\alpha}=W_{j_0}W^{\beta}\), where \(|\beta|=N-1\).
    We separate the proof of \eqref{Eqn::EstBdry::Reduction::BoundAllDerivsByHighDerivs::ToShow2} into two cases:
    when \(j_0=0\) and when \(1\leq j_0\leq r\).
    When \(j_0=0\), \(W_{j_0}=W_0=c_0\partial_{x_n}\), where \(c_0\approx 1\) (see Remark \ref{Rmk::EstBdry::c0Approx1}).
    Thus, using Lemma \ref{Lemma::EstBdry::Reduction::BoundLowerxnDerivsByHigher}, we have
    \begin{equation*}
        \begin{split}
            &\LtNorm*{W^{\alpha}u}[\Rngeq] \approx \LtNorm*{\partial_{x_n}W^{\beta}u}[\Rngeq]
            \leq \sconst \LtNorm*{\partial_{x_n}^2 W^{\beta}u}[\Rngeq] + \lconst \LtNorm*{W^{\beta}u}[\Rngeq]
            \\&\approx \sconst \LtNorm*{W_0^2 W^{\beta}u}[\Rngeq] + \lconst \LtNorm*{W^{\beta}u}[\Rngeq],
        \end{split}
    \end{equation*}
    establishing \eqref{Eqn::EstBdry::Reduction::BoundAllDerivsByHighDerivs::ToShow2} in this case.
    When \(j_0\ne 0\), by hypothesis, \(W_{j_0}(x',0)\) has no \(\partial_{x_n}\) component. As a consequence,
    we can integrate by parts with \(W_{j_0}\) and will not have any boundary terms. Note that
    \(W_{j_0}^{*}=-W_{j_0}+f_{j_0}\) for some smooth function \(f_{j_0}\). We have,
    \begin{equation}\label{Eqn::EstBdry::Reduction::BoundAllDerivsByHighDerivs::Tmp1}
        \begin{split}
            &\LtNorm*{W^{\alpha}u}[\Rngeq]^2
            =\left| \Ltip*{W_{j_0}W^{\beta}u}{W^{\alpha}u}[\Rngeq] \right|
            \\&\leq \left| \Ltip*{W^{\beta}u}{W_{j_0} W^{\alpha}u}[\Rngeq] \right|
            +\left| \Ltip*{W^{\beta}u}{f_{j_0} W^{\alpha}u}[\Rngeq] \right|
            \\&\leq \sconst \left( \LtNorm*{W_{j_0}W^{\alpha} u}[\Rngeq]^2 + \LtNorm*{W^{\alpha}u}[\Rngeq]^2\right)
            +\lconst \LtNorm*{W^{\beta}u}[\Rngeq]^2.
        \end{split}
    \end{equation}
    Subtracting \(\sconst \LtNorm*{W^{\alpha}u}[\Rngeq]^2\) from both sides
    of \eqref{Eqn::EstBdry::Reduction::BoundAllDerivsByHighDerivs::Tmp1}
    yields
    \begin{equation*}
        \LtNorm*{W^{\alpha}u}[\Rngeq]^2\leq \sconst  \LtNorm*{W_{j_0}W^{\alpha} u}[\Rngeq]^2
            +\lconst \LtNorm*{W^{\beta}u}[\Rngeq]^2,
    \end{equation*}
    completing the proof of \eqref{Eqn::EstBdry::Reduction::BoundAllDerivsByHighDerivs::ToShow2}.

    We next claim, \(\forall 1\leq j\leq N\)
    \begin{equation}\label{Eqn::EstBdry::Reduction::BoundAllDerivsByHighDerivs::ToShow3}
        \sum_{j\leq|\alpha|\leq N} \LtNorm*{W^{\alpha}u}[\Rngeq] 
        \leq \sconst \sum_{|\alpha|=N+1} \LtNorm*{W^{\alpha}u}[\Rngeq] + \lconst \sum_{|\alpha|=j-1} \LtNorm*{W^{\alpha}u}[\Rngeq].
    \end{equation}
    We proceed by induction on \(j\). The base case, \(j=N\), is \eqref{Eqn::EstBdry::Reduction::BoundAllDerivsByHighDerivs::ToShow2}.
    We assume \eqref{Eqn::EstBdry::Reduction::BoundAllDerivsByHighDerivs::ToShow3} for some \(j+1\leq N\) and prove it for \(j\).
    Fix \(\epsilon>0\). The inductive hypothesis shows \(\exists C_\epsilon\geq 1\) with
    \begin{equation*}
    \begin{split}
         & \sum_{j\leq |\alpha|\leq N} \LtNorm*{W^{\alpha}u}[\Rngeq] 
         =\sum_{j+1\leq |\alpha|\leq N} \LtNorm*{W^{\alpha}u}[\Rngeq]  + \sum_{|\alpha|=j} \LtNorm*{W^{\alpha}u}[\Rngeq] 
         \\&\leq \epsilon \sum_{|\alpha|=N+1} \LtNorm*{W^{\alpha}u}[\Rngeq] + C_{\epsilon} \sum_{|\alpha|=j} \LtNorm*{W^{\alpha}u}[\Rngeq]
         \\&\leq \epsilon \sum_{|\alpha|=N+1} \LtNorm*{W^{\alpha}u}[\Rngeq] + C_{\epsilon}  \sconst \sum_{|\alpha|=j+1}\LtNorm*{W^{\alpha}u}[\Rngeq]
         +C_{\epsilon} \lconst \sum_{|\alpha|=j-1}\LtNorm*{W^{\alpha}u}[\Rngeq],
    \end{split}
    \end{equation*}
    where the final estimate used \eqref{Eqn::EstBdry::Reduction::BoundAllDerivsByHighDerivs::ToShow2} with \(N=j\).
    Choose \(\sconst\) so small \(C_\epsilon \sconst\leq 1/2\) to see
    \begin{equation*}
        \sum_{j\leq |\alpha|\leq N} \LtNorm*{W^{\alpha}u}[\Rngeq] 
        \leq  2\epsilon \sum_{|\alpha|=N+1} \LtNorm*{W^{\alpha}u}[\Rngeq] + C_{\epsilon}' \sum_{|\alpha|=j-1}\LtNorm*{W^{\alpha}u}[\Rngeq].
    \end{equation*}
    This completes the inductive step and establishes \eqref{Eqn::EstBdry::Reduction::BoundAllDerivsByHighDerivs::ToShow3}.

    Taking \(j=1\) and \(N=\kappa-1\) in \eqref{Eqn::EstBdry::Reduction::BoundAllDerivsByHighDerivs::ToShow3}
    gives \eqref{Eqn::EstBdry::Reduction::BoundAllDerivsByHighDerivs::ToShow} and completes the proof.
\end{proof}

\begin{proof}[Completion of the proof of Proposition \ref{Prop::EstBdry::Reduction::MainReduction}]
    Set \(Y_0=\partial_{x_n}=c_0^{-1}W_0\) and for \(1\leq j\leq r\) let
    \(Y_j=W_j-a_j Y_0\), where \(a_j\) is the \(\partial_{x_n}\) component of \(W_j\).
    All our hypothesis are equivalent whether using \(W\) or \(Y\); the only ones which are
    non-obvious are Assumption \ref{Assumption::EstBdry::MaxSubOnInterior} and Assumption \ref{Assumption::EstBdry::MaxSubOnSmoothing}\ref{Item::EstBdry::MaxSubOnSmoothing::MaxSubEstimate}.
    However, as shown in Lemma \ref{Lemma::EstBdry::Reduction::BoundAllDerivsByHighDerivs},
    we may replace \(\sum_{|\alpha|=\kappa}\) in those assumptions
    with \(\sum_{|\alpha|\leq \kappa}\).
    Then, 
     Assumption \ref{Assumption::EstBdry::MaxSubOnInterior} and Assumption \ref{Assumption::EstBdry::MaxSubOnSmoothing}\ref{Item::EstBdry::MaxSubOnSmoothing::MaxSubEstimate}
    with \(W\) are clearly equivalent
    to the same assumptions with \(Y\); establishing both \ref{Item::EstBdry::Reduction::MainReduction::ReplaceWWithY}
    and \ref{Item::EstBdry::Reduction::MainReduction::BoundAllDerivs}.
\end{proof}

    \subsection{Reducing to tangent derivatives}
    One main difficulty when studying boundary value problems is that techniques involving integration by parts
introduce troublesome boundary terms. 
A key way to avoid this problem is to use extension operators to reduce the problem to a manifold without boundary; we've already seen  examples
of this in Proposition \ref{Prop::HorVfs::GainBoundary::MainGainProp} and Lemma \ref{Lemma::EstBdry::Reduction::BoundLowerxnDerivsByHigher}.
In this section we introduce a more complicated version of this, following ideas of Kohn
and Nirenberg \cite{KohnNirenbergNonCoerciveBoundaryValueProblems}.

Let \(\opP\) be a differential operator on \(\Ropn\) of the form
\begin{equation*}
    \opP= a_0\partial_t + b_0 \partial_{x_n}^{2\kappa} + \sum_{\substack{|\alpha|\leq 2\kappa \\ \alpha_n<2\kappa}} c_\alpha \partial_x^{\alpha},
    \quad a_0, b_0, c_\alpha\in \CinftySpace[\CubengeqOne][\MatrixSpace[M][M]],
\end{equation*}
and assume \(b_0(x)\) is invertible, \(\forall x\in \CubengeqOneHalf\) with \(\sup_{x\in \CubengeqOneHalf}\MatrixNorm{b_0(x)^{-1}}[M][M][\C]\leq C_0<\infty\).

\begin{proposition}\label{Prop::EstBdry::ReduceTangent::MainReduceTangentProp}
    For all \(\phi_1,\phi_2\in \CinftycptSpace[\R\times \CubengeqOneHalf]\) with \(\phi_1\prec \phi_2\),
    \(\forall l\in \Zgeq\), \(l\geq 2\kappa-1\), \(\forall s\in \R\), \(\exists C\geq 1\), \(\forall u\in \DistributionsZero[\Ropngeq]\),
    \begin{equation*}
        \sum_{j=0}^{l+1} \GsNorm*{\partial_{x_n}^j \phi_1 u}{s-j}[\Ropngeq]
        \leq C
        \left( \sum_{j=0}^{l+1-2\kappa} \GsNorm*{\partial_{x_n}^j \phi_2 \opP u}{s-2\kappa-j}[\Ropngeq] 
        +\GsNorm*{\phi_2 u}{s}[\Ropngeq]\right),
    \end{equation*}
    where if the right-hand side is finite, so is the left-hand side.
    \(C\geq 1\) can be chosen to depend only on \(n\), \(\phi_1\), \(\phi_2\), \(l\), \(s\), \(M\), \(\kappa\), 
    \(C_0\), and upper bounds for
    \(\CjNorm{a_0}{L}[\CubengeqOneHalf][\MatrixSpace[M][M]]\), \(\CjNorm{b_0}{L}[\CubengeqOneHalf][\MatrixSpace[M][M]]\),
    \(\max_{\alpha}\CjNorm{c_\alpha}{L}[\CubengeqOneHalf][\MatrixSpace[M][M]]\), where \(L\) can be chosen to depend only
    on \(n\), \(l\), and \(s\).
\end{proposition}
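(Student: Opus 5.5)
The plan is to treat \(\opP\) as an ODE in \(x_n\) with coefficients that are operators in \((t,x')\), and to solve for the top normal derivative. Write \(\partial_{x_n}^{2\kappa}u = b_0^{-1}\bigl(\opP u - a_0\partial_t u - \sum_{\alpha_n<2\kappa} c_\alpha \partial_x^\alpha u\bigr)\) on \(\supp\phi_2\); this is possible since \(b_0\) is invertible on \(\CubengeqOneHalf\) with \(\|b_0^{-1}\|\leq C_0\). In the anisotropic scale where \(\partial_t\) has order \(2\kappa\), each term on the right is controlled in \(G^{s-2\kappa-j'}\) (after \(j'\) further normal derivatives) by quantities of the form \(\GsNorm{\partial_{x_n}^{i}\phi u}{s-i}\) with \(i\leq 2\kappa-1+j'\). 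The term \(a_0\partial_t u\) has order \(2\kappa\) in \((t,x')\) and no \(x_n\)-derivatives, and \(c_\alpha\partial_x^\alpha\) with \(\alpha_n=i<2\kappa\) has tangential order \(|\alpha|-i\leq 2\kappa-i\). So \(\GsNorm{\partial_{x_n}^{2\kappa+j'}\phi_1 u}{s-2\kappa-j'}\) is bounded by \(\GsNorm{\partial_{x_n}^{j'}\phi_2\opP u}{s-2\kappa-j'}\) plus lower normal-order terms, modulo commutators of cutoffs. These commutators are lower order, and the \(x_n\)-dependence of the coefficients is handled by Leibniz.

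First, I would prove the base case \(l+1=2\kappa\), i.e. bound \(\sum_{j\leq 2\kappa}\GsNorm{\partial_{x_n}^j\phi_1 u}{s-j}\). The top term \(j=2\kappa\) follows from the solved equation once the intermediate terms \(0<j<2\kappa\) are controlled. For those I would use an interpolation inequality in \(x_n\), uniform in the tangential frequency:
\[
\GsNorm{\partial_{x_n}^j v}{s-j}\leq \sconst\,\GsNorm{\partial_{x_n}^{2\kappa}v}{s-2\kappa}+\lconst\,\GsNorm{v}{s},\qquad 0<j<2\kappa.
\]
I would derive it by taking the partial Fourier transform in \((t,x')\), rescaling \(x_n\) by \(\JBracket{(\tau,\xi')}\) for each fixed frequency, and applying the half-line Gagliardo–Nirenberg inequality (Lemma \ref{Lemma::EstBdry::Reduction::BoundLowerxnDerivsByHigher}, iterated, via the extension \(\Extension_L\)). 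The small constant then absorbs the intermediate terms. One caveat: the \(\sconst\) absorption requires the left side to be finite a priori. I would handle this by replacing \(\phi_1u\) with a tangential mollification \(\Stxp{0}\)-type regularization \(\chi_\epsilon(D_t,D_{x'})\), which commutes with \(\partial_{x_n}\) and keeps everything finite. I would then prove uniform bounds and pass to the limit, as in Lemma \ref{Lemma::PDOs::NotationNearBdry::ConcludeSobolevEstimates}. This also gives the qualitative "if the right side is finite so is the left" statement.

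Second, I would induct on \(l\). Given the estimate for \(l\), apply \(\partial_{x_n}^{l+1-2\kappa}\) to the solved equation, with a nested chain of cutoffs \(\phi_1\prec\psi\prec\phi_2\). The new top term \(\GsNorm{\partial_{x_n}^{l+1}\phi_1u}{s-l-1}\) is then bounded by \(\GsNorm{\partial_{x_n}^{l+1-2\kappa}\phi_2\opP u}{s-l-1}\) plus terms \(\GsNorm{\partial_{x_n}^i\psi u}{s-i}\) with \(i\leq l\). The inductive hypothesis, applied with \(\psi\) in place of \(\phi_1\), controls these. Commutators \([\partial_{x_n}^k,\phi]\) and \([\Lambdatxp[s],\phi]\) only produce lower-order terms with larger cutoffs, so they fit the same scheme.

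I expect the main obstacle to be the base step. The difficulty is making the interpolation and absorption rigorous in \(G^s\) near the boundary, uniformly in \(s\in\R\) and including negative \(s\), and handling commutators of \(\Lambdatxp[s]\) with the variable coefficients \(a_0,b_0,c_\alpha\) and \(b_0^{-1}\). For these commutators I would rely on the symbol calculus of \(\Symbolstxp{s}\), with \(x_n\) treated as a parameter. The normal-derivative bookkeeping is straightforward by comparison.
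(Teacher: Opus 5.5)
Your inductive step in $l$ is fine: there every commutator term has normal order at most $l$ and is controlled by the inductive hypothesis with an intermediate cutoff. Your per-frequency rescaling of the half-line Gagliardo--Nirenberg inequality is also a valid substitute for Lemma \ref{Lemma::EstBdry::ReduceTangent::EstVNormOnWholeSpace}.

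The gap is in the base case $l+1=2\kappa$, where you treat the cutoff commutators as harmless lower-order terms. With $v=\phi_1u$ one has $\opP v=\phi_1\opP u+[\opP,\phi_1]u$. The commutator contains terms like $(\partial_{x_n}^k\phi_1)\,\partial_{x_n}^{2\kappa-k}u$, $1\leq k\leq 2\kappa$, i.e.\ normal derivatives of $u$ of order up to $2\kappa-1$ on $\supp\nabla\phi_1$. In $G^{s-2\kappa}$ these cost $\|\partial_{x_n}^i(\tilde\phi u)\|_{G^{s-1-i}}$, $i\leq 2\kappa-1$, with a strictly larger cutoff $\tilde\phi$; they are lower order only tangentially. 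This breaks the argument in three ways:
\begin{itemize}
\item The right-hand side contains no normal derivatives of $u$ (only $\|\phi_2u\|_{G^s}$), so it cannot bound these terms.
\item Interpolating them yields $\|\partial_{x_n}^{2\kappa}(\tilde\phi u)\|_{G^{s-1-2\kappa}}$. This cannot be absorbed into the left-hand side, because the cutoff differs.
\item Re-applying the estimate with $\tilde\phi$ regenerates the same kind of term at level $s-2$ with a still larger cutoff.
\end{itemize}
Lowering $s$ never makes $\partial_{x_n}$-derivatives controllable by a tangential norm, so this recursion has no base. The claim that the commutators ``fit the same scheme'' therefore does not close the base case.

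The paper's proof is organized to avoid exactly this: in the absorption step no cutoff ever multiplies $u$.
\begin{itemize}
\item It works with restriction norms $\|u\|_{V^{l,s}((c,d)\times B\times(a,b))}$ on boxes.
\item It proves the interpolation inequality on a box via whole-space Fourier analysis plus the extension $\Extension_L$ (Lemmas \ref{Lemma::EstBdry::ReduceTangent::EstVNormOnWholeSpace} and \ref{Lemma::EstBdry::ReduceTangent::EstVNormOnSubspace}).
\item It solves for $\partial_{x_n}^{l+1}u$ on the same box (Lemma \ref{Lemma::EstBdry::ReduceTangent::IntroduceP}). The absorbed quantity is therefore literally the same, and all $l$ are handled at once without induction.
\item Cutoffs enter only in Lemmas \ref{Lemma::EstBdry::ReduceTangent::BoundGsByV} and \ref{Lemma::EstBdry::ReduceTangent::BoundVByGs}, which just use $\phi_2=1$ on the boxes.
\end{itemize}
To stay with cutoffs you would need an additional tool. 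One option is a continuous family of nested cutoffs with an iteration lemma that tracks how constants blow up as the gap between cutoffs shrinks. Another is weighted interpolation with powers of a single cutoff.

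Separately, tangential mollification $\chi_\epsilon(D_t,D_{x'})$ gives no regularity in $x_n$. It therefore does not make $\|\partial_{x_n}^j\chi_\epsilon\phi_1u\|_{G^{s-j}}$ finite. That finiteness would have to come from an ODE-in-$x_n$ bootstrap using the equation; the paper's absorptions tacitly presuppose it as well.
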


\begin{remark}
    Proposition \ref{Prop::EstBdry::ReduceTangent::MainReduceTangentProp} shows that if we understand the regularity of
    \(\opP u\), then we can use that to control the \(\partial_{x_n}\) derivatives for \(u\); thereby reducing
    understanding the regularity of \(u\) to understanding the regularity of \(u\) in directions tangent to the boundary.
    Once that is done, we can study the regularity in directions tangent to the boundary without boundary terms in the relevant
    integrations by parts.
\end{remark}

Due to the cutoff functions \(\phi_1\) and \(\phi_2\), only the values of the coefficients on \(\R\times \CubengeqOneHalf\)
play a role.
By multiplying \(\opP\) by \(b_0^{-1}\) it suffices to assume \(b_0=1\). Since the result does not depend on the values of \(a_0\) and \(c_\alpha\)
outside of \(\CubengeqOneHalf\), we may henceforth assume \(a_0,c_\alpha\in \CinftycptSpace[\Rn][\MatrixSpace[M][M]]\).
Thus, we assume \(\opP\) is of the form
\begin{equation}\label{Eqn::EstBdry::ReduceTangent::ReducedFormulaForP}
    \opP = a_0 \partial_t +\partial_{x_n}^{2\kappa} + \sum_{\substack{|\alpha|\leq 2\kappa \\ \alpha_n<2\kappa}} c_\alpha \partial_x^{\alpha}, \quad
    a_0, c_\alpha\in \CinftycptSpace[\Rn][\MatrixSpace[M][M]].
\end{equation}

For the remainder of this section, the fact that the coefficients are matrix valued does not play a role. Thus, we drop
it from our notation, with the understanding that all functions are vector valued (taking values in \(\C^M\)), and the coefficients
of \(\opP\) are matrix valued (in \(\MatrixSpace[M][M][\C]\)).

\begin{lemma}\label{Lemma::EstBdry::ReduceTangent::TradePowersForConsts}
    Fix \(l\in \Zgeq\). Then,
    \begin{equation*}
        a^j\leq \sconst a^{l+1}+\lconst 1, \quad \forall a\geq 0,\:  0\leq j\leq l.
    \end{equation*}
\end{lemma}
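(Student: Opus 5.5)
The plan is to reduce to a one-variable elementary inequality by case analysis on the size of \(a\). Interpret the statement as: for every \(\epsilon>0\) there is \(C_\epsilon\geq 1\), depending only on \(\epsilon\) and \(l\), such that
\begin{equation*}
    a^j\leq \epsilon a^{l+1}+C_\epsilon,\quad \forall a\geq 0,\ 0\leq j\leq l.
\end{equation*}
The strategy is to pick a threshold \(R_\epsilon\geq 1\) depending on \(\epsilon\), and to split into the regimes \(a\geq R_\epsilon\) and \(a< R_\epsilon\).

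In the regime \(a\geq R_\epsilon\geq 1\), the exponent gap is at least one, since \(j\leq l\). So we write \(a^j=a^{j-l-1}a^{l+1}\leq a^{-1}a^{l+1}\leq R_\epsilon^{-1}a^{l+1}\). Choosing \(R_\epsilon:=\max\{1,\epsilon^{-1}\}\) gives \(a^j\leq \epsilon a^{l+1}\) there.

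In the regime \(0\leq a<R_\epsilon\), we simply bound \(a^j\leq R_\epsilon^{j}\leq R_\epsilon^{l}\), using \(R_\epsilon\geq 1\) (with \(a^0=1\) for \(j=0\)). Setting \(C_\epsilon:=R_\epsilon^{l}=\max\{1,\epsilon^{-l}\}\geq 1\) then covers both cases at once, uniformly in \(0\leq j\leq l\), with \(C_\epsilon\) depending only on \(\epsilon\) and \(l\).

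There is no real obstacle. The only points to check are that the threshold is at least \(1\), so the power bounds are monotone in \(j\), and that \(j\leq l\) strictly below \(l+1\), which guarantees the decay factor \(a^{-1}\). Alternatively, one could use Young's inequality with exponents \((l+1)/j\) and \((l+1)/(l+1-j)\), but the case split is simpler and keeps the constant explicit.
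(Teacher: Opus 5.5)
Your proof is correct and is essentially the paper's argument. The paper also splits at the threshold \(a=1/\epsilon\), restricting to \(\epsilon\in(0,1]\) instead of taking \(\max\{1,\epsilon^{-1}\}\), and uses \(a^j\leq a^l\leq \epsilon a^{l+1}\) above the threshold and \(a^j\leq \epsilon^{-l}\) below it.
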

\begin{proof}
    Fix \(\epsilon\in (0,1]\). If \(a>1/\epsilon\), then \(a^j\leq a^l \leq \epsilon a^{l+1}\).
    If \(a<1/\epsilon\), then \(a^{j}\leq \epsilon^{-l}\). We conclude \(a^j\leq \epsilon a^{l+1}+ \epsilon^{-l}\).
\end{proof}

We use the spaces \(\VlsSpace{l}{s}\) defined in \eqref{Eqn::EstBdry::Estension::DefinVlsNorm} and \eqref{Eqn::EstBdry::Estension::DefinVlsSpace}.

\begin{lemma}\label{Lemma::EstBdry::ReduceTangent::EstVNormOnWholeSpace}
    \(\forall l\in \Zgeq\), \(\forall s\in \R\),
    \begin{equation*}
        \VlsNorm*{u}{l}{s}[\R\times \Rnmo\times \R]
        \leq \sconst \VlsNorm*{\partial_{x_n}^{l+1} u}{0}{s-l-1}[\R\times \Rnmo\times \R]
        +\lconst \VlsNorm*{u}{0}{s}[\R\times \Rnmo\times \R], \quad \forall u\in \TemperedDistributions[\Ropn],
    \end{equation*}
    where if the right-hand side is finite, so is the left-hand side.
\end{lemma}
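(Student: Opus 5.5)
The plan is to work entirely on the Fourier side in the tangential variables \((t,x')\), where the estimate reduces to a one-dimensional weighted inequality in \(\xi_n\) with a parameter. Write \(\rho=\JBracket*{(\tau,\xi')}\), which is \(\geq 1\). By Plancherel in \((t,x')\) and in \(x_n\), with \(\hat u(\tau,\xi',\xi_n)=\Fouriertx u\), we get
\begin{equation*}
    \VlsNorm*{u}{l}{s}[\R\times \Rnmo\times \R]^2
    =\sum_{j=0}^{l}\iiint \rho^{2(s-j)}\,|2\pi\xi_n|^{2j}\,|\hat u(\tau,\xi',\xi_n)|^2\: d\xi_n\: d\xi'\: d\tau.
\end{equation*}
In the same way, \(\VlsNorm*{\partial_{x_n}^{l+1}u}{0}{s-l-1}^2=\iiint \rho^{2(s-l-1)}|2\pi\xi_n|^{2(l+1)}|\hat u|^2\), and \(\VlsNorm*{u}{0}{s}^2=\iiint\rho^{2s}|\hat u|^2\). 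The whole claim therefore comes down to a pointwise inequality between the weights.

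The key step is to apply Lemma \ref{Lemma::EstBdry::ReduceTangent::TradePowersForConsts} with \(a=|2\pi\xi_n|/\rho\geq 0\). This gives, for \(0\leq j\leq l\) and every \(\epsilon>0\),
\begin{equation*}
    \rho^{2(s-j)}|2\pi \xi_n|^{2j}=\rho^{2s}a^{2j}\leq \epsilon\,\rho^{2s}a^{2(l+1)}+C_\epsilon\rho^{2s}
    =\epsilon\,\rho^{2(s-l-1)}|2\pi\xi_n|^{2(l+1)}+C_\epsilon\rho^{2s}.
\end{equation*}
To reach the exponent \(2j\) we apply the lemma with \(l\) replaced by \(2l+1\) and note that \(2j\leq 2l+1\). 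Summing over \(j\), multiplying by \(|\hat u|^2\), and integrating yields the estimate with squared norms. Taking square roots then gives the stated form, with new constants.

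For the finiteness clause, take \(u\in\TemperedDistributions[\Ropn]\) with both right-hand quantities finite. Then \(\rho^{s}\hat u\) and \(\rho^{s-l-1}\xi_n^{l+1}\hat u\) are \(L^2\) functions. The pointwise weight inequality shows each \(\rho^{s-j}\xi_n^j\hat u\) lies in \(L^2\), which means \(\partial_{x_n}^ju\in \LtxnSpace[\R][\HstxpSpace{s-j}]\), so the left-hand side is finite.

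The only point needing care is justifying the Plancherel identification for tempered distributions. Here I would note that finiteness of \(\VlsNorm{u}{0}{s}\) already forces \(\hat u\) to be a locally integrable function, so the computation is legitimate. I expect no genuine obstacle beyond this bookkeeping.
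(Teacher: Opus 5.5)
Your proof is correct and is essentially the paper's argument: Plancherel in all variables turns the claim into a pointwise weight inequality, which comes from Lemma \ref{Lemma::EstBdry::ReduceTangent::TradePowersForConsts} with \(a\) equal to \(|\xi_n|\) divided by \(\JBracket{(\tau,\xi')}\). The only cosmetic difference is that you apply the lemma directly to the squared weights (replacing \(l\) by \(2l+1\)), whereas the paper applies it to the unsquared weights and squares afterward; your added remarks on the finiteness clause and on \(\hat u\) being a locally integrable function are fine.
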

\begin{proof}
    Using Lemma \ref{Lemma::EstBdry::ReduceTangent::TradePowersForConsts}, we have with \(\JBracket{(\tau,\xi')}\) as in \eqref{Eqn::PDOs::DefinetxpJBracket},
    \begin{equation}\label{Eqn::EstBdry::ReduceTangent::EstVNormOnWholeSpace::Tmp1}
        |\xi_n|^j \JBracket{(\tau,\xi')}^{-j} \leq \sconst |\xi_n|^{l+1} \JBracket{(\tau,\xi')}^{-l-1} +\lconst 1.
    \end{equation}
    Using \eqref{Eqn::EstBdry::ReduceTangent::EstVNormOnWholeSpace::Tmp1} and the definition \eqref{Eqn::EstBdry::Estension::DefinVlsNorm}, we have
    \begin{equation*}
    \begin{split}
         &\VlsNorm*{u}{l}{s}[\R\times \Rnmo\times \R]^2 =
         \sum_{j=0}^l \LtNorm*{\partial_{x_n}^l \Lambdatxp[s-j] u}[\Ropn]^2
         = \sum_{j=0}^l (2\pi)^{2j} \LtNorm*{ |\xi_n|^j  \JBracket{(\tau,\xi')}^{s-j} \hat{u} }[\Ropn]^2
         \\&\leq \sconst \LtNorm*{|\xi_n|^{l+1} \JBracket{(\tau,\xi')}^{s-(l+1)} \hat{u}}[\Ropn]^2
         +\lconst \LtNorm*{\JBracket{(\tau,\xi')}^{s} \hat{u}}[\Ropn]^2
         \\&=\sconst \LtNorm*{\partial_{x_n}^{l+1}\Lambdatxp[s-(l+1)] u}[\Ropn]^2 + \lconst \LtNorm*{\Lambdatxp[s] u}[\Ropn]^2
         \\&=\sconst \VlsNorm*{\partial_{x_n}^{l+1} u}{0}{s-l-1}[\R\times \Rnmo\times \R]
        +\lconst \VlsNorm*{u}{0}{s}[\R\times \Rnmo\times \R].
    \end{split}
    \end{equation*}
\end{proof}

\begin{lemma}\label{Lemma::EstBdry::ReduceTangent::EstVNormOnSubspace}
    Fix intervals \((a,b),(c,d)\subseteq \R\) and a bounded, smoothly bounded, open set \(\Omega\Subset \Rnmo\). Then,
    \(\forall l\in \Zgeq\), \(\forall s\in \R\),
    \begin{equation}\label{Eqn::EstBdry::ReduceTangent::EstVNormOnSubspace::Conclusion}
         \VlsNorm*{u}{l}{s}[(c,d)\times \Omega\times (a,b)]
         \leq \sconst \VlsNorm*{\partial_{x_n}^{l+1} u}{0}{s-l-1}[(c,d)\times \Omega\times (a,b)]
        +\lconst \VlsNorm*{u}{0}{s}[(c,d)\times \Omega\times (a,b)],
    \end{equation}
    where if the right-hand side is finite, so is the left-hand side.
\end{lemma}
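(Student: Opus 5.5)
The plan is to transfer Lemma \ref{Lemma::EstBdry::ReduceTangent::EstVNormOnWholeSpace} from the whole space to the box \((c,d)\times\Omega\times(a,b)\) using the extension operator of Lemma \ref{Lemma::EstBdry::Extension::VlsExtension}. The complication is that the \(\sconst\) term on the right-hand side involves only the top derivative \(\partial_{x_n}^{l+1}u\), measured with \(l=0\) in \(\VlsSpace{0}{s-l-1}\). So we cannot simply extend \(u\) and then restrict: we need the extension to control \(\partial_{x_n}^{l+1}\) of the extended function by \(\partial_{x_n}^{l+1}u\) alone, up to lower-order terms.

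\textbf{Step 1 (intermediate inequality).} We first prove
\begin{equation*}
\VlsNorm*{u}{l}{s}[(c,d)\times\Omega\times(a,b)]\lesssim \VlsNorm*{\partial_{x_n}^{l+1}u}{0}{s-l-1}[(c,d)\times\Omega\times(a,b)]+\VlsNorm*{u}{0}{s}[(c,d)\times\Omega\times(a,b)],
\end{equation*}
that is, \(\VlsNorm{u}{l+1}{s}\) is equivalent to the sum of its top and bottom pieces. To do this, take \(\Extension=\Extension_L\) with \(L\) large and let \(v=\Extension u\). Since \(\Extension\) is built from the reflection formula \eqref{Eqn::EstBdry::Extension::FormulaForExtension} acting separately in the \(x_n\) variable and in the \((t,x')\) variables, the \(x_n\)-part commutes with \(\partial_{x_n}^{j}\) up to the factors \((-\lambda_k)^j\). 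Hence, exactly as in \eqref{Eqn::EstBdry::Reduction::BoundLowerxnDerivsByHigher::Tmp1},
\begin{equation*}
\VlsNorm*{\partial_{x_n}^{l+1}v}{0}{s-l-1}[\R\times\Rnmo\times\R]\lesssim\VlsNorm*{\partial_{x_n}^{l+1}u}{0}{s-l-1}[(c,d)\times\Omega\times(a,b)].
\end{equation*}
The \((t,x')\)-extension of Lemma \ref{Lemma::EstBdry::Extension::HstxpExtension} is bounded on every \(\HstxpSpace{s'}\) with \(\floor{s'}\le L\), and it commutes with \(\partial_{x_n}\). Likewise, \(\VlsNorm{v}{0}{s}\lesssim\VlsNorm{u}{0}{s}\).

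\textbf{Step 2 (the bounded-interval issue).} A bounded interval \((a,b)\) has two endpoints, so a single reflection does not suffice. We localize with a partition of unity in \(x_n\) subordinate to neighborhoods of \(a\), \(b\), and the interior, and reflect near each endpoint. Multiplying by the cutoff \(\chi(x_n)\) creates commutator terms \(\partial_{x_n}^{i}u\) with \(i\le l\). These are lower order, and we will absorb them at the end.

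\textbf{Step 3 (apply the whole-space lemma).} Apply Lemma \ref{Lemma::EstBdry::ReduceTangent::EstVNormOnWholeSpace} to \(v\), or to the localized pieces, and restrict back to the box using \(v|_{\text{box}}=u\) and the definition of the box norms as infima over extensions. This gives
\begin{equation*}
\VlsNorm{u}{l}{s}\le\epsilon C\,\VlsNorm{\partial_{x_n}^{l+1}u}{0}{s-l-1}+C_\epsilon\VlsNorm{u}{0}{s}+\epsilon C\cdot(\text{commutator terms}),
\end{equation*}
where the commutator terms are bounded by \(\VlsNorm{u}{l}{s}\). Choosing \(\epsilon\) small, we absorb this last term into the left-hand side, provided it is finite. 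Finiteness follows from Step 1 together with the finiteness of the right-hand side, which also yields the clause ``if the right-hand side is finite, so is the left-hand side.''

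\textbf{Main obstacle.} The hardest part is making sure the extension gives the top derivative alone as \(\partial_{x_n}^{l+1}\) of the extension, without lower \(x_n\)-derivatives creeping in and without losing the \(\HstxpSpace{}\) regularity. The cutoff commutators from Step 2 are exactly where lower-order terms appear. If absorbing them directly fails, the fallback is to prove the interval case \(\|\partial^j f\|_{L^2(a,b)}\le\epsilon\|\partial^{l+1}f\|+C_\epsilon\|f\|\) on the Fourier side in \((\tau,\xi')\). We would apply this one-dimensional Gagliardo–Nirenberg/Ehrling inequality with \(\epsilon\) scaled by \(\JBracket{(\tau,\xi')}^{-1}\). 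Combined with the fact that \(\JBracket{(\tau,\xi')}\ge1\) keeps the constant \(C_\epsilon\) uniform, this handles the \((t,x')\) directions when \(\Omega=\Rnmo\). For a bounded \(\Omega\), one first applies the \((t,x')\)-extension of Lemma \ref{Lemma::EstBdry::Extension::HstxpExtension}, which commutes with \(\partial_{x_n}\).
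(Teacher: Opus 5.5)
Your Steps~2--3 are essentially the paper's argument. You extend, apply Lemma~\ref{Lemma::EstBdry::ReduceTangent::EstVNormOnWholeSpace}, bound the top $x_n$-derivative of the extension by $\partial_{x_n}^{l+1}u$ plus terms controlled by $\|u\|_{V^{l,s}}$, and absorb. Your cutoff commutators $\chi^{(l+1-i)}\partial_{x_n}^i u$, $i\le l$, are indeed of this type, since $s-l-1\le s-i$. The paper handles the middle step without commutator bookkeeping, using only boundedness of $\Extension_L$ on $V^{l+1,s}$ (Lemma~\ref{Lemma::EstBdry::Extension::VlsExtension}): $\|\partial_{x_n}^{l+1}\Extension_L u\|_{V^{0,s-l-1}}^2\le\|\Extension_L u\|_{V^{l+1,s}}^2\lesssim\|u\|_{V^{l,s}}^2+\|\partial_{x_n}^{l+1}u\|_{V^{0,s-l-1}}^2$, with the last two norms taken over the box.

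The flaw is Step~1, and your finiteness claim rests on it. The bound $\|\partial_{x_n}^{l+1}v\|_{V^{0,s-l-1}}\lesssim\|\partial_{x_n}^{l+1}u\|_{V^{0,s-l-1}}$ holds on a half-line, where one reflection needs no cutoff. The $x_n$-extension for a bounded interval comes from Lemma~\ref{Lemma::EstBdry::Extension::ClassicalBoundedExtension} and involves cutoffs. In fact no extension can satisfy this bound there. Take $u=\phi(t,x')p(x_n)$ with $p\neq0$ a polynomial of degree $\le l$; the right-hand side is then $0$. But an extension $v\in V^{0,s}$ with $\partial_{x_n}^{l+1}v=0$ on $\R\times\Rnmo\times\R$ is, after Fourier transform in $(t,x')$, a polynomial in $x_n$ lying in $L^2(\R)$, so $v=0$, contradicting $v|_{\text{box}}=u$. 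Bounded intervals are exactly the case used later (the intervals $(a_k\vee0,b_k)$ in the proof of Proposition~\ref{Prop::EstBdry::ReduceTangent::MainReduceTangentProp}). There, Step~1 is itself a weak form of the lemma and already contains the finiteness clause. Appealing to it to know $\|u\|_{V^{l,s}}<\infty$ before absorbing is therefore circular. The paper's proof also subtracts $\sconst\|u\|_{V^{l,s}}$ without comment, so this finiteness is tacit there too.

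Your fallback closes the gap and is a genuinely different route. Write $\tilde u$ for the $(t,x')$-extension of Lemma~\ref{Lemma::EstBdry::Extension::HstxpExtension}, which commutes with $\partial_{x_n}$ and loses nothing. Fourier transform in $(t,x')$, and for a.e.\ $\zeta=(\tau,\xi')$ apply to $g=\widehat{\tilde u}(\zeta,\cdot)$ the scale-invariant inequality
$\|g^{(j)}\|_{L^2(a,b)}\le C\big(\eta^{l+1-j}\|g^{(l+1)}\|_{L^2(a,b)}+\eta^{-j}\|g\|_{L^2(a,b)}\big)$,
valid for $0<\eta\le b-a$, with $\eta=\delta\langle\zeta\rangle^{-1}$. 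Then multiply by $\langle\zeta\rangle^{s-j}$, square, and integrate.

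You need this power-law form, not an unquantified Ehrling constant $C_\epsilon$. The factor $\eta^{-j}=\delta^{-j}\langle\zeta\rangle^{j}$ is exactly what the weight $\langle\zeta\rangle^{s-j}$ turns into $\langle\zeta\rangle^{s}$. The bound $\langle\zeta\rangle\ge1$ is what keeps $\eta$ in the admissible range. On a bounded interval, $g,g^{(l+1)}\in L^2(a,b)$ already forces $g\in H^{l+1}(a,b)$. So this route proves the finiteness clause honestly and needs neither the $x_n$-extension nor absorption. The paper's route is shorter, but relies on the extension lemma and on a priori finiteness.
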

\begin{proof}
    Let \(\Extension_L\) be as in Lemma \ref{Lemma::EstBdry::Extension::VlsExtension}, with \(l+1+\max\{\floor{s},0\}\leq L\).
    We have, using Lemma \ref{Lemma::EstBdry::ReduceTangent::EstVNormOnWholeSpace},
    \begin{equation}\label{Eqn::EstBdry::ReduceTangent::EstVNormOnSubspace::Tmp1}
        \begin{split}
            &\VlsNorm*{u}{l}{s}[(c,d)\times \Omega\times (a,b)]
            \approx \VlsNorm*{\Extension_L u}{l}{s}[\R\times \Rnmo\times \R]
            \\&\leq \sconst \VlsNorm*{\partial_{x_n}^{l+1}\Extension_L u}{0}{s-l-1}[\R\times \Rnmo\times \R]
            +\lconst \VlsNorm*{\Extension_L u}{0}{s}[\R\times \Rnmo\times \R]
            \\&\leq \sconst \VlsNorm*{\partial_{x_n}^{l+1}\Extension_L u}{0}{s-l-1}[\R\times \Rnmo\times \R]
            +\lconst \VlsNorm*{u}{0}{s}[(c,d)\times \Omega\times (a,b)].
        \end{split}
    \end{equation}
    We have, using  Lemma \ref{Lemma::EstBdry::Extension::VlsExtension},
    \begin{equation}\label{Eqn::EstBdry::ReduceTangent::EstVNormOnSubspace::Tmp2}
    \begin{split}
         &\VlsNorm*{\partial_{x_n}^{l+1} \Extension_L u}{0}{s-l-1}[\R\times \Rnmo\times \R]^2
         \leq \sum_{j=0}^{l+1}\VlsNorm*{\partial_{x_n}^{j} \Extension_L u}{0}{s-j}[\R\times \Rnmo\times \R]^2
         \\&=\VlsNorm*{\Extension_L u}{l+1}{s}[\R\times \Rnmo\times \R]^2
         \approx \VlsNorm*{u}{l+1}{s}[(c,d)\times \Omega\times (a,b)]^2
         \\&=\VlsNorm*{u}{l}{s}[(c,d)\times \Omega\times (a,b)]^2 +  \VlsNorm*{\partial_{x_n}^{l+1}u}{0}{s-l-1}[(c,d)\times \Omega\times (a,b)]^2.
    \end{split}
    \end{equation}
    Plugging \eqref{Eqn::EstBdry::ReduceTangent::EstVNormOnSubspace::Tmp2} into the right-hand side of
    \eqref{Eqn::EstBdry::ReduceTangent::EstVNormOnSubspace::Tmp1} and subtracting
    \(\sconst \VlsNorm*{u}{l}{s}[(c,d)\times \Omega\times (a,b)]\) from both sides of the resulting inequality completes the proof.
\end{proof}

\begin{lemma}\label{Lemma::EstBdry::ReduceTangent::IntroduceP}
    Fix intervals \((a,b),(c,d)\subseteq \R\) and a bounded, smoothly bounded, open set \(\Omega\Subset \Rnmo\). Then,
    \(\forall l\in \Zgeq\), \(l\geq 2\kappa-1\), \(\forall s\in \R\),
    \begin{equation*}
        \VlsNorm*{u}{l+1}{s}[(c,d)\times \Omega\times (a,b)]
        \lesssim \VlsNorm*{\opP u}{l+1-2\kappa}{s-2\kappa}[(c,d)\times \Omega\times (a,b)] + \VlsNorm*{u}{0}{s}[(c,d)\times \Omega\times (a,b)],
    \end{equation*}
    where if the right-hand side is finite, so is the left-hand side.
\end{lemma}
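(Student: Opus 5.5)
Write \(V^{l,s}\) for \(\VlsSpace{l}{s}[(c,d)\times\Omega\times(a,b)]\) throughout. The plan is to solve the reduced form \eqref{Eqn::EstBdry::ReduceTangent::ReducedFormulaForP} of \(\opP\) for the top normal derivative:
\begin{equation*}
    \partial_{x_n}^{2\kappa}u=\opP u-a_0\partial_t u-\sum_{\substack{|\alpha|\leq 2\kappa\\ \alpha_n<2\kappa}} c_\alpha\partial_x^\alpha u .
\end{equation*}
We then estimate the \(V^{l+1,s}\) norm of \(u\) by induction on \(l\), using Lemma \ref{Lemma::EstBdry::ReduceTangent::EstVNormOnSubspace} to absorb lower order normal derivatives.

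First reduce to controlling the top piece. By definition,
\begin{equation*}
    \VlsNorm{u}{l+1}{s}^2=\VlsNorm{u}{l}{s}^2+\VlsNorm{\partial_{x_n}^{l+1}u}{0}{s-l-1}^2 ,
\end{equation*}
and Lemma \ref{Lemma::EstBdry::ReduceTangent::EstVNormOnSubspace} bounds the first term by \(\sconst\) times the second plus \(\lconst\,\VlsNorm{u}{0}{s}\). So it suffices to bound \(\VlsNorm{\partial_{x_n}^{l+1}u}{0}{s-l-1}\).

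Write \(\partial_{x_n}^{l+1}u=\partial_{x_n}^{l+1-2\kappa}\partial_{x_n}^{2\kappa}u\), which is legitimate since \(l+1-2\kappa\geq0\), and substitute the formula above. Each term is then handled separately:
\begin{enumerate}[(a)]
    \item The term \(\partial_{x_n}^{l+1-2\kappa}\opP u\) in \(V^{0,s-l-1}\) is bounded by \(\VlsNorm{\opP u}{l+1-2\kappa}{s-2\kappa}\). This is just the \(j=l+1-2\kappa\) summand, since \(s-l-1=(s-2\kappa)-(l+1-2\kappa)\).
    \item For \(\partial_{x_n}^{l+1-2\kappa}(a_0\partial_t u)\), the Leibniz rule gives terms \(\partial_{x_n}^{i}\partial_t u\) with \(i\leq l+1-2\kappa\). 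Since \(\partial_t\) has order \(2\kappa\) in the \((t,x')\) scale, each is bounded in \(V^{0,s-l-1}\) by \(\VlsNorm{\partial_{x_n}^i u}{0}{s-l-1+2\kappa}\). That is a \(V^{l+1-2\kappa,s}\)-type quantity, so it involves at most \(l+1-2\kappa\leq l\) normal derivatives with the correct tangential weight.
    \item The \(c_\alpha\partial_x^\alpha\) terms, with \(\alpha_n<2\kappa\) and \(|\alpha|\leq 2\kappa\), produce at most \(l\) normal derivatives. The accompanying tangential derivatives \(\partial_{x'}^{\alpha'}\) have order \(|\alpha'|\leq 2\kappa-\alpha_n\), so these terms are also bounded by \(\VlsNorm{u}{l}{s}\).
\end{enumerate}
Multiplication by smooth compactly supported coefficients is bounded on the tangential spaces \(\HstxpSpace{\sigma}[(c,d)\times\Omega]\), and differentiating the coefficients in \(x_n\) only lowers orders.

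Combining (a)–(c) gives
\begin{equation*}
    \VlsNorm{u}{l+1}{s}\lesssim \VlsNorm{\opP u}{l+1-2\kappa}{s-2\kappa}+\VlsNorm{u}{l}{s}.
\end{equation*}
Applying Lemma \ref{Lemma::EstBdry::ReduceTangent::EstVNormOnSubspace} again, \(\VlsNorm{u}{l}{s}\leq \sconst\VlsNorm{u}{l+1}{s}+\lconst\VlsNorm{u}{0}{s}\), and we absorb the small term into the left-hand side.

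The main obstacle is rigor rather than algebra. The absorption step requires finiteness of \(\VlsNorm{u}{l+1}{s}\) a priori, and multiplication estimates on the restricted spaces need care with the infimum norms. I would handle multiplication by extending with Lemma \ref{Lemma::EstBdry::Extension::VlsExtension} and working on \(\R\times\Rnmo\times\R\). For finiteness, I would prove the estimate first with \(s\) replaced by a lower tangential index, where it is finite, and bootstrap upward in \(s\) in steps of \(1\). Alternatively, one can mollify in \((t,x')\) with a family that commutes with \(\partial_{x_n}\) and is uniformly bounded. In either case the commutators with the coefficients are of lower tangential order, so they are controlled by the same argument.
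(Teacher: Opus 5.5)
Your argument is essentially the paper's. You solve the reduced form \eqref{Eqn::EstBdry::ReduceTangent::ReducedFormulaForP} for \(\partial_{x_n}^{2\kappa}u\), bound \(\VlsNorm{\partial_{x_n}^{l+1}u}{0}{s-l-1}\) term by term by \(\VlsNorm{\partial_{x_n}^{l+1-2\kappa}\opP u}{0}{s-l-1}+\VlsNorm{u}{l}{s}\), and absorb via Lemma \ref{Lemma::EstBdry::ReduceTangent::EstVNormOnSubspace}; the paper absorbs \(\VlsNorm{u}{l}{s}\) rather than \(\VlsNorm{u}{l+1}{s}\), which is immaterial, and the induction on \(l\) you announce is never actually used.

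The paper performs the absorption without discussing a priori finiteness. If you want to address it, neither of your proposed fixes supplies it as stated: lowering \(s\) and tangential mollification (which commutes with \(\partial_{x_n}\)) give no control of \(\partial_{x_n}\)-derivatives of \(u\). The base case of your bootstrap would therefore have to come from the equation itself, e.g.\ by rewriting the lower-order terms in divergence form in \(x_n\) and integrating on \((a,b)\).
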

\begin{proof}
    We have, using the formula \eqref{Eqn::EstBdry::ReduceTangent::ReducedFormulaForP} for \(\opP\),
    \begin{equation}\label{Eqn::EstBdry::ReduceTangent::IntroduceP::Tmp1}
    \begin{split}
            &\VlsNorm*{\partial_{x_n}^{l+1} u}{0}{s-l-1}[(c,d)\times \Omega\times (a,b)]
            \\&\lesssim \VlsNorm*{\partial_{x_n}^{l+1-2\kappa}\opP u}{0}{s-l-1}[(c,d)\times \Omega\times (a,b)]
            +\sum_{\substack{|\alpha|\leq 2\kappa \\ \alpha_n<2\kappa}} \VlsNorm*{\partial_{x_n}^{l+1-2\kappa}\partial_x^\alpha u}{0}{s-l-1}[(c,d)\times \Omega\times (a,b)]
            +\VlsNorm*{\partial_{x_n}^{l+1-2\kappa}\partial_t u}{0}{s-l-1}[(c,d)\times \Omega\times (a,b)]
            \\&=\VlsNorm*{\partial_{x_n}^{l+1-2\kappa}\opP u}{0}{s-l-1}[(c,d)\times \Omega\times (a,b)]
            +\sum_{\alpha_n=0}^{2\kappa-1} \sum_{|\alpha'|\leq 2\kappa-\alpha_n} \VlsNorm*{\partial_{x'}^{\alpha'} \partial_{x_n}^{l+1-2\kappa+\alpha_n} u }{0}{s-l-1}[(c,d)\times \Omega\times (a,b)]
            +\VlsNorm*{\partial_{x_n}^{l+1-2\kappa}\partial_t u}{0}{s-l-1}[(c,d)\times \Omega\times (a,b)]
            \\&\lesssim \VlsNorm*{\partial_{x_n}^{l+1-2\kappa}\opP u}{0}{s-l-1}[(c,d)\times \Omega\times (a,b)]
            +\sum_{\alpha_n=0}^{2\kappa-1} \VlsNorm*{\partial_{x_n}^{l+1-2\kappa+\alpha_n} u}{0}{s-l-1+2\kappa-\alpha_n}[(c,d)\times \Omega\times (a,b)]
            +\VlsNorm*{\partial_{x_n}^{l+1-2\kappa}\partial_t u}{0}{s-l-1}[(c,d)\times \Omega\times (a,b)]
            \\&\lesssim \VlsNorm*{\partial_{x_n}^{l+1-2\kappa}\opP u}{0}{s-l-1}[(c,d)\times \Omega\times (a,b)] + \VlsNorm*{u}{l}{s}[(c,d)\times \Omega\times (a,b)].
    \end{split}    
    \end{equation}
    Plugging \eqref{Eqn::EstBdry::ReduceTangent::IntroduceP::Tmp1} into the right-hand side of
    \eqref{Eqn::EstBdry::ReduceTangent::EstVNormOnSubspace::Conclusion} and subtracting
    \(\sconst \VlsNorm*{u}{l}{s}[(c,d)\times \Omega\times (a,b)]\) from both sides yields
    \begin{equation}\label{Eqn::EstBdry::ReduceTangent::IntroduceP::Tmp2}
        \VlsNorm*{u}{l}{s}[(c,d)\times \Omega\times (a,b)]
        \lesssim \VlsNorm*{\partial_{x_n}^{l+1-2\kappa}\opP u}{0}{s-l-1}[(c,d)\times \Omega\times (a,b)]
        +\VlsNorm*{u}{0}{s}[(c,d)\times \Omega\times (a,b)].
    \end{equation}
    Using \eqref{Eqn::EstBdry::Estension::DefinVlsNorm}, \eqref{Eqn::EstBdry::ReduceTangent::IntroduceP::Tmp1}, and 
    \eqref{Eqn::EstBdry::ReduceTangent::IntroduceP::Tmp2}, we have
    \begin{equation*}
    \begin{split}
         &\VlsNorm*{u}{l+1}{s}[(c,d)\times \Omega\times (a,b)]
         \approx \VlsNorm*{\partial_{x_n}^{l+1}u}{0}{s-l-1}[(c,d)\times \Omega\times (a,b)] + \VlsNorm*{u}{l}{s}[(c,d)\times \Omega\times (a,b)]
         \\&\lesssim \VlsNorm*{\partial_{x_n}^{l+1-2\kappa}\opP u}{0}{s-l-1}[(c,d)\times \Omega\times (a,b)]
         +\VlsNorm*{u}{l}{s}[(c,d)\times \Omega\times (a,b)]
         \\&\lesssim \VlsNorm*{\partial_{x_n}^{l+1-2\kappa}\opP u}{0}{s-l-1}[(c,d)\times \Omega\times (a,b)] + \VlsNorm*{u}{0}{s}[(c,d)\times \Omega\times (a,b)]
         \\&\lesssim \VlsNorm*{\opP u}{l+1-2\kappa}{s-2\kappa}[(c,d)\times \Omega\times (a,b)] + \VlsNorm*{u}{0}{s}[(c,d)\times \Omega\times (a,b)]
    \end{split}
    \end{equation*}
\end{proof}

Let \(\phi_1\) and \(\phi_2\)
be as in Proposition \ref{Prop::EstBdry::ReduceTangent::MainReduceTangentProp}.
Let \(V_1\) and \(V_2\) be open sets with \(\supp(\phi_1)\Subset V_1 \Subset V_2 \Subset \left\{ \phi_2=1 \right\} \).
Set \(\Compact=\overline{V_1}\Subset V_2\), so that \(\Compact\Subset \Ropngeq\) is compact.
Cover \(\Compact\) by a finite collection of open sets:
\begin{equation*}
    (c_1,d_1)\times B_1\times (a_1,b_1),\ldots, (c_K, d_K)\times B_K\times (a_K, b_K),
\end{equation*}
where each \(B_j\subset \Rnmo\) is a ball, \(a_j\) can be negative (though \(b_j>0\)), and
\begin{equation*}
    \left[ (c_j, d_j)\times B_j \times (a_j,b_j) \right]\cap \Rngeq \subseteq V_2, \quad 1\leq j\leq K.
\end{equation*}

\begin{lemma}\label{Lemma::EstBdry::ReduceTangent::BoundGsByV}
    For all \(l\in \Zgeq\) and \(s\in \R\),
    \begin{equation*}
        \sum_{j=0}^{l+1} \GsNorm*{\partial_{x_n}^j \phi_1 u}{s-j}[\Ropngeq]
        \lesssim \sum_{k=1}^K \VlsNorm{u}{l+1}{s}[(c_k,d_k)\times B_k \times (a_k\vee 0,b_k)],
    \end{equation*}
    where if the right-hand side is finite, so is the left-hand side.
\end{lemma}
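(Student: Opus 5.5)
The plan is to localize \(\phi_1 u\) with a partition of unity subordinate to the cover of \(\Compact\), and then compare each \(G\)-type norm of a localized piece with the \(V^{l+1,s}\) norm on the corresponding box. Choose \(\chi_1,\ldots,\chi_K\in\CinftycptSpace[(c_k,d_k)\times B_k\times(a_k,b_k)]\) (as functions on \(\Ropn\)) with \(\sum_k\chi_k=1\) on a neighborhood of \(\Compact\supseteq\supp(\phi_1)\). Then \(\phi_1 u=\sum_k \phi_1\chi_k u\), and it suffices to prove, for each \(k\) and each \(0\leq j\leq l+1\),
\begin{equation*}
    \GsNorm*{\partial_{x_n}^j \phi_1\chi_k u}{s-j}[\Ropngeq]\lesssim \VlsNorm{u}{l+1}{s}[(c_k,d_k)\times B_k\times(a_k\vee0,b_k)].
\end{equation*}

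For the proof of this local bound, first note that \(\GsNorm{v}{s}[\Ropngeq]^2=\int_0^\infty \HstxpNorm{v(\cdot,\cdot,x_n)}{s}[\R\times\Rnmo]^2\,dx_n\), since \(\Lambdatxp[s]\) acts only in \((t,x')\). Next, expand \(\partial_{x_n}^j(\phi_1\chi_k u)\) by the Leibniz rule as \(\sum_{i\leq j}\binom{j}{i}(\partial_{x_n}^{j-i}(\phi_1\chi_k))\partial_{x_n}^i u\). Set \(\eta=\partial_{x_n}^{j-i}(\phi_1\chi_k)\); it is smooth and, for each fixed \(x_n\), compactly supported in \((c_k,d_k)\times B_k\) with bounds uniform in \(x_n\). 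Multiplication by such an \(\eta\) defines a bounded map \(\HstxpSpace{s-i}[(c_k,d_k)\times B_k]\to \HstxpSpace{s-i}[\R\times\Rnmo]\), uniformly in \(x_n\). To see this, extend any representative \(w\) of a restriction and note that \(\eta w\) depends only on \(w\) restricted to the box; then apply the standard fact that multiplication by a \(C^\infty_{\mathrm{cpt}}\) function is bounded on the \(\HstxpSpace{s}\)-spaces, which comes from the \(\Symbolstxp{0}\) calculus. Finally, since \(s-j\leq s-i\), we have \(\HstxpNorm{\cdot}{s-j}\leq\HstxpNorm{\cdot}{s-i}\), and integrating over \(x_n\in(a_k\vee 0,b_k)\), which contains the \(x_n\)-support intersected with \(x_n\geq0\), gives the term \(\int\HstxpNorm{\partial_{x_n}^iu}{s-i}^2dx_n\) appearing in \eqref{Eqn::EstBdry::Estension::DefinVlsNorm}.

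Two points need care. The first is the finiteness claim: if the right-hand side is finite, each \(\partial^i_{x_n}u\) lies in \(\LtxnSpace[\HstxpSpace{s-i}]\) on the box, and the computation above shows that \(\partial_{x_n}^j\phi_1 u\in\GsSpace{s-j}\). The second is to verify that \(\partial_{x_n}^j\) of the product, taken in \(\DistributionsZero\), agrees with the Leibniz expansion; this holds because \(\phi_1\chi_k\) is smooth up to the boundary. I expect the main, though mild, obstacle to be the uniform-in-\(x_n\) multiplier bound on the restricted spaces \(\HstxpSpace{s}[(c,d)\times\Omega]\) for negative \(s\). For this I would use the quotient-norm definition: pick a near-optimal extension \(w\) and bound \(\HstxpNorm{\eta w}{s}\lesssim\HstxpNorm{w}{s}\), with constants depending on finitely many \(C^L\) norms of \(\eta\), which are uniform in \(x_n\).
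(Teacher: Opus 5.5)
Your proposal is correct and is essentially the paper's proof. The paper also takes a partition of unity \(\psi_k\) subordinate to the boxes \((c_k,d_k)\times B_k\times(a_k,b_k)\), writes each \(G^{s-j}\)-norm as an \(x_n\)-integral of \(H^{s-j}_{t,x'}\)-norms over \((a_k\vee 0,b_k)\), and bounds \(\|\psi_k(\cdot,x_n)w\|_{H^{s-j}_{t,x'}(\R\times\R^{n-1})}\) by the restricted norm on \((c_k,d_k)\times B_k\); your explicit Leibniz expansion (using \(s-j\leq s-i\)) and the quotient-norm multiplier argument make precise steps the paper passes over without comment.
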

\begin{proof}
    Pick \(\psi_k\in \CinftySpace[(c_k,d_k)\times B_k \times (a_k,b_k)]\) such that
    \(\sum_{k=1}^K \psi_k=1\) on a neighborhood of \(\Compact\).
    We have,
    \begin{equation*}
    \begin{split}
         &\sum_{j=0}^{l+1} \GsNorm*{\partial_{x_n}^j \phi_1 u}{s-j}[\Ropngeq]^2
         \leq \sum_{j=0}^{l+1}\sum_{k=1}^K \GsNorm*{\psi_k \partial_{x_n}^j \phi_1 u}{s-j}[\Ropngeq]^2
         \\& \lesssim  \sum_{j=0}^{l+1}\sum_{k=1}^K \int_0^\infty \HstxpNorm*{\psi_k(\cdot,x_n) \partial_{x_n}^j u(\cdot, x_n)}{s-j}[\R\times \Rnmo]^2\: dx_n
         \\&=\sum_{j=0}^{l+1}\sum_{k=1}^K \int_{a_k\vee 0}^{b_k} \HstxpNorm*{\psi_k(\cdot,x_n) \partial_{x_n}^j u(\cdot, x_n)}{s-j}[\R\times \Rnmo]^2\: dx_n
         \\&\lesssim \sum_{j=0}^{l+1}\sum_{k=1}^K \int_{a_k\vee 0}^{b_k} \HstxpNorm*{\partial_{x_n}^j u(\cdot, x_n)}{s-j}[(c_k,d_k)\times B_k]^2\: dx_n
         \\&= \sum_{k=1}^K \VlsNorm*{u}{l+1}{s}[(c_k,d_k)\times B_k\times (a_k\vee 0,b_k)]^2.
    \end{split}
    \end{equation*}
\end{proof}

\begin{lemma}\label{Lemma::EstBdry::ReduceTangent::BoundVByGs}
    For \(l\in \Zgeq\), \(s\in \R\), and \(1\leq k\leq K\),
    \begin{equation*}
        \VlsNorm{u}{l}{s}[(c_k,d_k)\times B_k\times (a_k\vee 0,b_k)]^2
        \leq \sum_{j=0}^l \GsNorm{\partial_{x_n}^j \phi_2 u}{s-j}[\Ropngeq]^2,
    \end{equation*}
    where if the right-hand side is finite, so is the left-hand side.
\end{lemma}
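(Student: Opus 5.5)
The plan is to unwind the definitions and use that \(\phi_2=1\) on the region where the \(\VlsSpace{l}{s}\)-norm is computed. By construction, \([(c_k,d_k)\times B_k\times(a_k,b_k)]\cap\Rngeq\subseteq V_2\Subset\{\phi_2=1\}\). Therefore \(\phi_2 u = u\) on \((c_k,d_k)\times B_k\times(a_k\vee0,b_k)\), and likewise \(\partial_{x_n}^j(\phi_2 u)=\partial_{x_n}^j u\) there, since \(\phi_2\equiv 1\) on a neighborhood of this set.

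By definition \eqref{Eqn::EstBdry::Estension::DefinVlsNorm},
\begin{equation*}
\VlsNorm{u}{l}{s}[(c_k,d_k)\times B_k\times(a_k\vee0,b_k)]^2=\sum_{j=0}^l\int_{a_k\vee0}^{b_k}\HstxpNorm*{\partial_{x_n}^j u(\cdot,\cdot,x_n)}{s-j}[(c_k,d_k)\times B_k]^2\,dx_n .
\end{equation*}
For each fixed \(x_n\in(a_k\vee0,b_k)\), the function \(\partial_{x_n}^j(\phi_2u)(\cdot,\cdot,x_n)\), defined on all of \(\R\times\Rnmo\), is an extension of \(\partial_{x_n}^j u(\cdot,\cdot,x_n)\big|_{(c_k,d_k)\times B_k}\). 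Since the restricted norm is an infimum over all extensions, we get
\begin{equation*}
\HstxpNorm*{\partial_{x_n}^j u(\cdot,\cdot,x_n)}{s-j}[(c_k,d_k)\times B_k]\le \HstxpNorm*{\partial_{x_n}^j(\phi_2u)(\cdot,\cdot,x_n)}{s-j}[\R\times\Rnmo]=\LtNorm*{\Lambdatxp[s-j]\partial_{x_n}^j\phi_2u(\cdot,\cdot,x_n)}[\R\times\Rnmo].
\end{equation*}
Squaring, integrating over \(x_n\in(a_k\vee0,b_k)\subseteq[0,\infty)\), and enlarging the domain of integration to \([0,\infty)\) gives \(\LtNorm*{\Lambdatxp[s-j]\partial_{x_n}^j\phi_2u}[\Ropngeq]^2=\GsNorm{\partial_{x_n}^j\phi_2u}{s-j}[\Ropngeq]^2\), because \(\Lambdatxp\) acts only in \((t,x')\). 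Summing over \(0\le j\le l\) yields the inequality with constant \(1\).

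The only point needing care is the finiteness and measurability claim. If the right-hand side is finite, then for almost every \(x_n\) each slice \(\partial_{x_n}^j\phi_2u(\cdot,\cdot,x_n)\) lies in \(\HstxpSpace{s-j}[\R\times\Rnmo]\). Hence the restriction lies in the local space and the slice-wise inequality holds almost everywhere. Measurability in \(x_n\) comes from Fubini/Plancherel in \((t,x')\), which gives that \(u\in\LtxnSpace[(a_k\vee0,b_k)][\HstxpSpace{s-j}]\) for each \(j\), so \(u\in\VlsSpace{l}{s}\). I expect no real obstacle: the argument is just "restriction norm \(\le\) norm of any extension" combined with \(\phi_2\equiv1\) on the relevant set.
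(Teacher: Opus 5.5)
Your proposal is correct and follows the paper's proof essentially line by line. Both arguments use \(\phi_2\equiv 1\) on the box, so that \(\partial_{x_n}^j\phi_2 u(\cdot,\cdot,x_n)\) is an admissible extension of the slice. They then bound the restricted \(H^{s-j}_{t,x'}\)-norm by that extension's norm and enlarge the \(x_n\)-integral to \([0,\infty)\) to recover the \(G^{s-j}\)-norm. Your added remarks on measurability and finiteness are a harmless refinement of what the paper leaves implicit.
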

\begin{proof}
    Since \(\phi_2=1\) on \(V_2\supseteq (c_k,d_k)\times B_k\times (a_k\vee 0,b_k)\),
    \(\partial_{x_n}^j \phi_2 u(\cdot, x_n)\) is an extension  of \(\partial_{x_n}^j u(\cdot,x_n) \big|_{(c_k,d_k)\times B_k}\).
    Thus, we have
    \begin{equation*}
        \begin{split}
            &\VlsNorm*{u}{l}{s}[(c_k,d_k)\times B_k\times (a_k\vee 0,b_k)]^2
            =\sum_{j=0}^l \int_{a_k\vee 0}^{b_k} \HstxpNorm*{ \partial_{x_n}^j u(\cdot, x_n) }{s-j}[(c_k,d_k)\times B_k]^2 \: dx_n
            \\&\leq \sum_{j=0}^l \int_{a_k\vee 0}^{b_k} \HstxpNorm*{ \partial_{x_n}^j \phi_2 u(\cdot, x_n) }{s-j}[\R\times \Rnmo]^2 \: dx_n
            \leq \sum_{j=0}^l \int_0^\infty \HstxpNorm*{ \partial_{x_n}^j \phi_2 u(\cdot, x_n) }{s-j}[\R\times \Rnmo]^2 \: dx_n
            \\&=\sum_{j=0}^l \GsNorm*{\partial_{x_n}^j \phi_2 u}{s-j}[\Ropngeq]^2.
        \end{split}
    \end{equation*}
\end{proof}

\begin{proof}[Proof of Proposition \ref{Prop::EstBdry::ReduceTangent::MainReduceTangentProp}]
    \begin{align}
        &\sum_{j=0}^{l+1} \GsNorm*{\partial_{x_n}^j \phi_1 u}{s-j}[\Ropngeq]
        \lesssim \sum_{k=1}^K \VlsNorm*{u}{l+1}{s}[(c_k,d_k)\times B_k\times(a_k\vee 0,b_k)]\label{Eqn::EstBdry::ReduceTangent::MainReduceTangentProp::Tmp1}
        \\&\lesssim \sum_{k=1}^K \VlsNorm*{\opP u}{l+1-2\kappa}{s-2\kappa}[(c_k,d_k)\times B_k\times (a_k\vee 0,b_k)]
            +\sum_{k=1}^K \VlsNorm*{u}{0}{s}[(c_k,d_k)\times B_k\times (a_k\vee 0,b_k)]\label{Eqn::EstBdry::ReduceTangent::MainReduceTangentProp::Tmp2}
        \\&\lesssim \sum_{j=0}^{l+1-2\kappa} \GsNorm{\partial_{x_n}^j \phi_2 \opP u}{s-2\kappa-j}[\Ropngeq]
        +\GsNorm*{\phi_2 u}{s}[\Ropngeq],\label{Eqn::EstBdry::ReduceTangent::MainReduceTangentProp::Tmp3}
    \end{align}
    where \eqref{Eqn::EstBdry::ReduceTangent::MainReduceTangentProp::Tmp1} uses Lemma \ref{Lemma::EstBdry::ReduceTangent::BoundGsByV},
    \eqref{Eqn::EstBdry::ReduceTangent::MainReduceTangentProp::Tmp2} uses Lemma \ref{Lemma::EstBdry::ReduceTangent::IntroduceP},
    and \eqref{Eqn::EstBdry::ReduceTangent::MainReduceTangentProp::Tmp3} uses Lemma \ref{Lemma::EstBdry::ReduceTangent::BoundVByGs}.
\end{proof}

    \subsection{Estimates in tangent directions}\label{Section::EstNearBdry::EstimatesInTangentDirections}
    \begin{proposition}\label{Prop::EstBdry::Tangent::MainTangentProp}
    In the same setting as Theorem \ref{Thm::EstBdry::MainThm::New} (after the reductions from Proposition \ref{Prop::EstBdry::Reduction::MainReduction}
    have been applied), we have, \(\forall s\in \R\),
    \begin{equation}\label{Eqn::EstBdry::Tangent::MainTangentProp::Conclusion}
        \GsNorm*{\phi_1 u}{s+\epsilon_0}[\Ropngeq]
        \lesssim
            \GsNorm*{\phi_2 \left( \partial_t+\opL \right)u}{s}[\Ropngeq] + \LtNorm*{\phi_2 u}[\Ropngeq],
        \quad \forall u\in \DSetL{1},
    \end{equation}
    where if the right-hand side is finite, so is the left-hand side.
\end{proposition}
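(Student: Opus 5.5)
The plan is the Kohn--Nirenberg energy method in the tangential variables, run with the operators \(\Stxp{s}\) of Section \ref{Section::PseudodifferentialOps::Bdry}. The point of those operators is that, by property \ref{Item::PDOS::NearBdry::DerivZero}, an operator \(\Stxp{s}\) with the support condition \ref{Item::PDOS::NearBdry::Support} can be written as an operator \(S\) of the form \eqref{Eqn::EstBdry::SOpForAssump}, possibly after truncating the symbol. Assumption \ref{Assumption::EstBdry::MaxSubOnSmoothing} can therefore be applied to \(Su\) with \(S=\Stxp{s}\). By Lemma \ref{Lemma::PDOs::NotationNearBdry::ConcludeSobolevEstimates} it suffices to prove
\begin{equation*}
    \LtNorm*{\Stxp{s+\epsilon_0}u}[\Ropngeq]\lesssim \LtNorm*{\wtEst\Stxpt{s}(\partial_t+\opL)u}[\Ropngeq]+\LtNorm*{\phi_2 u}[\Ropngeq].
\end{equation*}
I will prove this first for smoothing truncations, so that everything is finite, and then take limits as in Lemma \ref{Lemma::PDOs::NotationNearBdry::ConcludeSobolevEstimates}. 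The \(s\) will be raised in increments of \(\epsilon_0\) by induction, starting from \(s\leq 0\), where the \(\LtSpace\) term suffices.

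\textbf{Step 1 (basic energy estimate).} Let \(S=\Stxp{s}\). By the strengthened Assumption \ref{Assumption::EstBdry::MaxSubOnSmoothing}\ref{Item::EstBdry::MaxSubOnSmoothing::MaxSubEstimate} from Proposition \ref{Prop::EstBdry::Reduction::MainReduction}\ref{Item::EstBdry::Reduction::MainReduction::BoundAllDerivs},
\begin{equation*}
    \sum_{|\alpha|\leq\kappa}\LtNorm*{Y^\alpha Su}^2\lesssim \Real\FormQH[Su][Su]+\LtNorm*{Su}^2 .
\end{equation*}
Write \(\FormQH[Su][Su]=\FormQH[S^{*}Su][u]+\text{commutator terms}\). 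Here \(S^{*}S=\Stxpt{2s}\) is again an operator of the form \eqref{Eqn::EstBdry::SOpForAssump}, so Assumption \ref{Assumption::EstBdry::MaxSubOnSmoothing}\ref{Item::EstBdry::MaxSubOnSmoothing::FormQHGivesOp} gives
\begin{equation*}
    \FormQH[S^{*}Su][u]=\Ltip{S^{*}Su}{\sigma(\partial_t+\opL)u}.
\end{equation*}
We bound this by
\begin{equation*}
    \sconst\LtNorm*{\Stxpt{s+\epsilon_0}u}^2+\lconst\LtNorm*{\wtEst\Stxpt{s-\epsilon_0}(\partial_t+\opL)u}^2,
\end{equation*}
moving \(\sigma\) through by Lemma \ref{Lemma::PDOs::NotationNearBdry::CalcOfPDO}\ref{Item::PDOs::NotationNearBdry::CalcOfPDO::CommutatorMultiplication}.

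The commutator terms come from moving \(S\) across \(W^\alpha\) (equivalently \(Y^\alpha\)) and across \(\sigma a_{\alpha,\beta}\). Since \(Y_1,\ldots,Y_r\) have no \(\partial_{x_n}\) component and \(Y_0=\partial_{x_n}\) commutes with \(S\) near the boundary (property \ref{Item::PDOS::NearBdry::DerivZero}), Lemma \ref{Lemma::PDOs::NotationNearBdry::CalcOfPDO}\ref{Item::PDOs::NotationNearBdry::CalcOfPDO::VectorField},\ref{Item::PDOs::NotationNearBdry::CalcOfPDO::CommutatorMultiplication} shows that \([S,Y^\alpha]\) is a sum of terms \(\wtEst Y^{\gamma}\wtEst\Stxpt{s}\) with \(|\gamma|\leq|\alpha|-1\). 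Each commutator term is therefore bounded by
\begin{equation*}
    \Bigl(\sum_{|\alpha|\leq\kappa}\LtNorm*{Y^\alpha Su}\Bigr)\Bigl(\sum_{|\gamma|\leq\kappa-1}\LtNorm*{Y^\gamma\wtEst\Stxpt{s}u}\Bigr).
\end{equation*}
The \(\partial_t\) part of \(\FormQH\) only contributes \(\Ltip{Su}{[\sigma\partial_t,S]u}\), which is of order \(2s\) by Lemma \ref{Lemma::PDOs::NotationNearBdry::CalcOfPDO}\ref{Item::PDOs::NotationNearBdry::CalcOfPDO::CommutatorOfPartialt}.

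\textbf{Step 2 (gaining \(\epsilon_0\)).} Proposition \ref{Prop::HorVfs::GainBoundary::MainGainProp} controls \(\sum_{|\gamma|\leq\kappa-1}\LtNorm{Y^\gamma\Stxp{s}u}\) by \(\sum_{|\alpha|\leq\kappa}\LtNorm{Y^\alpha\Stxpt{s-1/m}u}\) plus a term \(\LtNorm{\partial_t(1-\partial_t^2)^{-1/4}\Stxpt{s-1/m}u}\). The \(\partial_t\) term is the delicate one. I would control it by pairing \(\partial_t\) against \((\partial_t+\opL)u\):
\begin{equation*}
    \LtNorm*{\partial_t(1-\partial_t^2)^{-1/4}\Stxpt{s'}u}^2=\Real\Ltip{\Stxp{2s'-2\kappa+2\kappa}u}{\partial_t u}\quad\text{(schematically)},
\end{equation*}
and then substituting \(\partial_t u=(\partial_t+\opL)u-\opL u\). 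The \(\opL u\) part is handled by Assumption \ref{Assumption::EstBdry::MaxSubOnSmoothing}\ref{Item::EstBdry::MaxSubOnSmoothing::FormQHGivesOp}, which turns it back into \(\FormQH\) and hence into \(\kappa\) \(Y\)-derivatives. Here \(\epsilon_0\leq1/2\) is what makes the order bookkeeping close, since \(\partial_t\) has weight \(2\kappa\) and the multiplier \((1-\partial_t^2)^{-1/4}\) gives only half of that back.

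Applying Step 1 at level \(s+\epsilon_0\) and Step 2 with \(s'=s+\epsilon_0-1/m\leq s\), and absorbing the \(\sconst\) terms, we obtain
\begin{equation*}
    \sum_{|\alpha|\leq\kappa}\LtNorm*{Y^\alpha\Stxp{s+\epsilon_0}u}\lesssim\LtNorm*{\wtEst\Stxpt{s}(\partial_t+\opL)u}+\sum_{|\alpha|\leq\kappa}\LtNorm*{Y^\alpha\Stxpt{s}u}.
\end{equation*}
The last sum has lower order and is handled by the induction on \(s\); this also gives \(\LtNorm{\Stxp{s+\epsilon_0}u}\). Nested cutoffs between \(\delta_1,U_1\) and \(\delta_2,U_2\) are needed to iterate, finitely many times, until the order drops to \(0\). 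There the bound is by \(\LtNorm{\phi_2u}\) and the \(\HsWSpace{\kappa}\) control near \(\CubengeqOneHalf\) from Assumption \ref{Assumption::EstBdry::MaxSubOnSmoothing}\ref{Item::EstBdry::MaxSubOnSmoothing::InHkWOnCubegeqOneHalf} combined with Step 1 at \(s=0\).

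\textbf{Main obstacle.} I expect the hardest step to be the \(\partial_t\) term in Step 2. Making it rigorous requires that the pairing with \(\partial_t u\) go through the form identity of Assumption \ref{Assumption::EstBdry::MaxSubOnSmoothing}\ref{Item::EstBdry::MaxSubOnSmoothing::FormQHGivesOp} with only admissible operators, which in turn requires that all composed operators keep property \ref{Item::PDOS::NearBdry::DerivZero}. The qualitative statement (finiteness of the right-hand side implies finiteness of the left-hand side) would then follow from Friedrichs-type mollifiers in \((t,x')\), whose bounds are uniform.
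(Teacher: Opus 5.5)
\textbf{There is a genuine gap: your induction on $s$ has no valid base.} However far you lower the tangential order, the leftover term is $\sum_{|\alpha|\le\kappa}\LtNorm*{Y^{\alpha}\Stxpt{s_0}u}[\Ropngeq]$. Because $Y_0=\partial_{x_n}$, it contains $\LtNorm*{\partial_{x_n}^j\Stxpt{s_0}u}[\Ropngeq]$ for $1\le j\le\kappa$. Smoothing in $(t,x')$ does nothing in $x_n$, so this is not bounded by $\LtNorm*{\phi_2u}[\Ropngeq]$, however negative $s_0$ is.

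Neither of your two proposed fixes supplies this bound:
\begin{itemize}
\item Assumption \ref{Assumption::EstBdry::MaxSubOnSmoothing}\ref{Item::EstBdry::MaxSubOnSmoothing::InHkWOnCubegeqOneHalf} is purely qualitative. It gives finiteness, not a bound with a constant independent of $u$.
\item ``Step 1 at $s=0$'' does not close. Its error terms again contain normal derivatives, now hit by operators of larger support. These are the commutators $\sum_{|\gamma|\le\kappa-1}\LtNorm*{Y^\gamma\wtEst\Stxpt{0}u}$ when $\kappa\ge2$, and, for every $\kappa$, the $\partial_t$ terms, which your Step 2 turns back into $\kappa$ $Y$-derivatives. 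So the regress never terminates.
\end{itemize}

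The missing ingredient is the equation in the normal direction. By Proposition \ref{Prop::EstBdry::Reduction::MainReduction}\ref{Item::EstBdry::Reduction::MainReduction::HighOrderTermPos}, $\partial_t+\opL$ has invertible $\partial_{x_n}^{2\kappa}$ coefficient. Proposition \ref{Prop::EstBdry::ReduceTangent::MainReduceTangentProp} then gives, for a cutoff $\phi_1\prec\psi_2\prec\phi_2$,
\[
\sum_{j=0}^{2\kappa}\GsNorm*{\partial_{x_n}^j\psi_2 u}{s-N\epsilon_0/2+\kappa-j}[\Ropngeq]
\lesssim\GsNorm*{\phi_2(\partial_t+\opL)u}{s-N\epsilon_0/2-\kappa}[\Ropngeq]+\GsNorm*{\phi_2 u}{s-N\epsilon_0/2+\kappa}[\Ropngeq].
\]
For $N$ large the right side is $\lesssim\GsNorm*{\phi_2(\partial_t+\opL)u}{s}[\Ropngeq]+\LtNorm*{\phi_2u}[\Ropngeq]$. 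This is exactly how the paper ends the iteration; see \eqref{Eqn::EstBdry::Tangent::MainTangentProp::Tmp2}--\eqref{Eqn::EstBdry::Tangent::MainTangentProp::Tmp4}. Your proposal never uses this non-characteristic structure.

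\textbf{Two further steps fail as written.}

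First, the absorption. In Step 1 you bound the main pairing using $\sconst\LtNorm*{\Stxpt{s+\epsilon_0}u}^2$. At level $s+\epsilon_0$ this becomes $\LtNorm*{\Stxpt{s+2\epsilon_0}u}$. After Proposition \ref{Prop::HorVfs::GainBoundary::MainGainProp}, that has the same order as your left side but a wider-support operator, so it cannot be subtracted from $\sum_{|\alpha|\le\kappa}\LtNorm*{Y^\alpha\Stxp{s+\epsilon_0}u}^2$. The paper never absorbs across supports. It splits at equal order,
\[
\bigl|\Ltip*{\Stxp{s}[1]\Stxp{s}[2]u}{\sigma(\partial_t+\opL)u}[\Ropngeq]\bigr|\lesssim\LtNorm*{\Stxpt{s}u}^2+\LtNorm*{\Stxpt{s}\sigma(\partial_t+\opL)u}^2,
\]
where the first term is lower order by Lemma \ref{Lemma::EstBdry::Tangent::NoDerivsGain}. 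It bounds the energy $E(s)=\sum_{|\alpha|\le\kappa}\LtNorm*{Y^\alpha\Stxp{s}u}+\LtNorm*{\partial_t(1-\partial_t^2)^{-1/4}\Stxp{s}u}$, and only afterwards uses $\LtNorm*{\Stxp{s+\epsilon_0}u}\lesssim E(s)$. Your stronger claim about the energy at level $s+\epsilon_0$ is never needed.

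Second, the $\partial_t$ commutator is not of order $2s$. The commutator $[\sigma\partial_t,\Stxp{s}]$ contains $[\sigma,\Stxp{s}]\partial_t=\wtEst\Stxpt{s-1}\partial_t$, which has order $s-1+2\kappa$. This is why the paper puts $\LtNorm*{\partial_t(1-\partial_t^2)^{-1/4}\Stxp{s}u}$ into the energy at the same level (Lemmas \ref{Lemma::EstBdry::Tangent::BounddtStxp}--\ref{Lemma::EstBdry::Tangent::BoundMaxSubPlusTFormQH}). It then controls that commutator via $\LtNorm*{\partial_t(1-\partial_t^2)^{-1/4}\Stxpt{s-1/2}u}$, as in \eqref{Eqn::EstBdry::Tangent::MovePDOsToOtherSideOfQH::Tmp7}.

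Your Step 2 does contain the right mechanism for the $\partial_t$ term: pair against $\partial_t u$ and convert the $\opL$ part through Assumption \ref{Assumption::EstBdry::MaxSubOnSmoothing}\ref{Item::EstBdry::MaxSubOnSmoothing::FormQHGivesOp}. But it must be run at the top level $s$, not only on the lower-order remainders.
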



Fix an open set \(U_2'\) and \(\delta_2'\in(\delta_1,\delta_2)\) such that 
\(U_1\Subset U_2'\Subset U_2\) and \(\phi_2=1\)
on a neighborhood of
\begin{equation*}
\overline{U_2'\times \Cubengeq{\delta_2'}}.    
\end{equation*}
Let \(\Stxp{s}\) and \(\Stxpt{s}\) be as described in Section \ref{Section::PseudodifferentialOps::Bdry}
with \(\delta_1,\delta_2',U_1,U_2'\) in place of \(\delta_1,\delta_2,U_1,U_2\). 
We use the \(\sconst\) and \(\lconst\) notation from Section \ref{Section::EstNearBdry::LargeSmallSonsts}.

\begin{lemma}\label{Lemma::EstBdry::Tangent::MovingdtsPastW}
    Let \(w\in \CinftySpace[\Ropngeq]\) be such that \(\CjNorm{w}{j}\lesssim 1\), \(\forall j\).
    Then, for \(u\in \DSetL{1}\),
    \begin{equation*}
        \begin{split}
            &\left| \Ltip*{\partial_t \left( 1-\partial_t^2 \right)^{-1/2} \Stxp{s}[1]u }{ w \Stxp{s-1}[2] \partial_t u}[\Ropngeq] \right|
            \\&\leq \sconst\LtNorm*{  \partial_t \left( 1-\partial_t^2 \right)^{-1/4} \Stxp{s}[1]u}[\Ropngeq]^2
            +\lconst \LtNorm*{\partial_t \left( 1-\partial_t^2 \right)^{-1/4} \Stxpt{s-1} u}[\Ropngeq]^2
            \\&\quad\quad\quad+\lconst \LtNorm*{\Stxpt{s-1}u}[\Ropngeq]^2.
        \end{split}
    \end{equation*}
\end{lemma}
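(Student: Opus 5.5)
The plan is to split the \(\partial_t\) symmetrically between the two slots, so that each factor carries exactly \(\partial_t(1-\partial_t^2)^{-1/4}\); this is the half-power of \(\partial_t\) that Proposition \ref{Prop::HorVfs::GainBoundary::MainGainProp} controls. All the operators involved commute with \(\partial_{x_n}\)-independent manipulations except through the weight \(w\), and \(\partial_t\) commutes with \(w\) because \(w\) does not depend on \(t\) in the relevant sense. If \(w\) does depend on \(t\), then \([\partial_t,w]=\partial_t w\) is again a bounded smooth function, and this is harmless.

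\emph{Step 1.} Write \(A:=(1-\partial_t^2)^{-1/4}\), a Fourier multiplier in \(t\) alone. Then \(\partial_t(1-\partial_t^2)^{-1/2}=A\,\partial_t A\). Move one factor \(A\) from the left slot to the right slot using \(A^{*}=A\). The inner product becomes
\[
\Ltip*{\partial_t A\Stxp{s}[1]u}{A\, w\,\Stxp{s-1}[2]\partial_t u}[\Ropngeq].
\]

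\emph{Step 2.} Commute \(A\) past \(w\) and \(\Stxp{s-1}[2]\), and move \(\partial_t\) to the front. We have \(A w\Stxp{s-1}[2]\partial_t=w\Stxp{s-1}[2]\partial_t A+[A,w]\Stxp{s-1}[2]\partial_t+w[A,\Stxp{s-1}[2]]\partial_t\). Since \(A\) is a symbol of order \(-1/2\) in \(\tau\) alone, i.e.\ order \(-\kappa\) in the non-isotropic calculus, the commutators \([A,w]\) and \([A,\Stxp{s-1}[2]]\) gain: each is a \(t\)-pseudodifferential operator of order \(-3/2\) in \(\tau\). Composed with \(\partial_t\), they give operators bounded by \(\wtEst\Stxpt{s-1}\), using Lemma \ref{Lemma::PDOs::NotationNearBdry::CalcOfPDO} for the parts involving \(\Stxp{}\). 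For \([A,w]\) one can decompose \(w\) into a sum of products \(w_1(t)w_2(x)\) as in the proof of Lemma \ref{Lemma::PDOs::NotationNearBdry::CalcOfPDO}\ref{Item::PDOs::NotationNearBdry::CalcOfPDO::CommutatorMultiplication}. Also \(\partial_t\) commutes with \(\Stxp{s-1}[2]\) up to \(\Stxpt{s-1}\) by Lemma \ref{Lemma::PDOs::NotationNearBdry::CalcOfPDO}\ref{Item::PDOs::NotationNearBdry::CalcOfPDO::CommutatorOfPartialt}, and with \(w\) up to multiplication by \(\partial_t w\). So the right slot is
\[
w\,\partial_t A\,\Stxp{s-1}[2]u+\wtEst\Stxpt{s-1}u.
\]
Bringing \(\Stxp{s-1}[2]\) through \(A\) and \(\partial_t\) once more (with the commutator again absorbed into \(\wtEst\Stxpt{s-1}u\)) gives \(w\,\partial_t A\,\Stxpt{s-1}u\).

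\emph{Step 3.} Apply Cauchy--Schwarz together with \(AB\le\sconst A^2+\lconst B^2\). The main term is bounded by
\[
\sconst\LtNorm*{\partial_t A\Stxp{s}[1]u}^2+\lconst\LtNorm*{\partial_t A\Stxpt{s-1}u}^2,
\]
using \(\CjNorm{w}{0}\lesssim1\). The error term pairs \(\partial_t A\Stxp{s}[1]u\) with \(\wtEst\Stxpt{s-1}u\), giving the \(\sconst\) term and \(\lconst\LtNorm{\Stxpt{s-1}u}^2\). These are exactly the three terms in the statement.

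The main obstacle is Step 2: one must verify that the commutators of the \(t\)-only multiplier \((1-\partial_t^2)^{-1/4}\) with \(w\) and with \(\Stxp{s-1}[2]\), after composition with \(\partial_t\), are genuinely of the form \(\wtEst\Stxpt{s-1}\). Property \ref{Item::PDOS::NearBdry::DerivZero} must be preserved, and \(A\) must be reconciled with the support requirement \ref{Item::PDOS::NearBdry::Support}, since \(A\) is not compactly supported in \(t\). I would handle this by inserting a cutoff \(\chi(t)\) equal to \(1\) near \(\overline{U_1}\), using that \((1-\chi)A\chi'\) is infinitely smoothing, and noting that \(A\) involves no \(x\)-dependence, so the \(x_n\)-property is automatic. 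Membership \(u\in\DSetL{1}\) is used only to make all the quantities finite (\(u\in L^2_t\) with values in \(\HsWSpace{\kappa}\)), so that the formal manipulations are justified.
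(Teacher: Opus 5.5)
Your Steps 1 and 3 match the paper: write \(\partial_t(1-\partial_t^2)^{-1/2}=A\,\partial_t A\) with \(A=(1-\partial_t^2)^{-1/4}\), move one \(A\) into the right slot, then apply Cauchy--Schwarz. Step 2, however, does not work as justified, because you commute \(A\) through \(\Stxp{s-1}[2]\).

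First, \(A\) is not of order \(-\kappa\), or of any order, in the non-isotropic \((t,x')\)-calculus. Its symbol \((1+4\pi^2\tau^2)^{-1/4}\) does not decay as \(|\xi'|\to\infty\), and its \(\tau\)-derivatives do not gain powers of \(\JBracket{(\tau,\xi')}\). The paper notes this explicitly in Section \ref{Section::EstInt}: \((1-\partial_t^2)^{-1}\) is not a pseudodifferential operator. So Lemma \ref{Lemma::PDOs::NotationNearBdry::CalcOfPDO} says nothing about \([A,\Stxp{s-1}[2]]\).

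More seriously, the convolution kernel of \(A\) in \(t\) is not compactly supported. Whenever \(A\) stands to the right of \(\Stxp{s-1}[2]\) (in \(\Stxp{s-1}[2]\partial_t A u\), and inside \([A,\Stxp{s-1}[2]]\)), the output depends on \(u\) at all times. For instance, let \(u\) be supported in \(t\) away from \(\overline{U_2}\). Then every \(\Stxpt{s-1}u\) and \(\Stxp{s-1}[2]\partial_t u\) vanish, while \(w[A,\Stxp{s-1}[2]]\partial_t u=-w\Stxp{s-1}[2]A\partial_t u\) is in general nonzero. So your claimed bound of this commutator by \(\wtEst\Stxpt{s-1}u\) is false.

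Your proposed cutoff \(\chi(t)\) only repairs the non-locality when \(A\) is on the far left. When \(A\) is on the right, the piece \(\Stxp{s-1}[2]A(1-\chi)\) is smoothing but acts on the values of \(u\) outside \(\supp\chi\), which nothing in the statement controls. The expression you reach for the right slot is still correct. That is only because these non-local pieces cancel between your two passages of \(A\) through \(\Stxp{s-1}[2]\), and your argument does not show that cancellation.

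The repair, which is what the paper does, is to never let \(A\) pass to the right of an operator \(\Stxp{s'}\).

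1. After Cauchy--Schwarz you only need to bound \(\LtNorm{Aw\Stxp{s-1}[2]\partial_t u}\).
2. The operator \(AwA^{-1}\) is bounded on \(L^2\). This follows from the \(t\)-calculus with \(x\) as a parameter, uniformly in \(x\) because every \(C^j\)-norm of \(w\) is bounded. Hence the quantity is \(\lesssim\LtNorm{A\Stxp{s-1}[2]\partial_t u}\).
3. Now commute only \(\partial_t\), not \(A\): \(A\Stxp{s-1}[2]\partial_t u=\partial_t A\Stxp{s-1}[2]u+A[\Stxp{s-1}[2],\partial_t]u\).
4. The commutator \([\Stxp{s-1}[2],\partial_t]\) is a genuine \(\Stxpt{s-1}\) by Lemma \ref{Lemma::PDOs::NotationNearBdry::CalcOfPDO}\ref{Item::PDOs::NotationNearBdry::CalcOfPDO::CommutatorOfPartialt}. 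Since \(\|A\|_{L^2\to L^2}\le 1\), \(A\) can be dropped from that term.

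This gives the three terms of the statement directly, with \(\Stxpt{s-1}=\Stxp{s-1}[2]\) in the middle one.
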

\begin{proof}
    We have
    \begin{equation}\label{Eqn::EstBdry::Tangent::MovingdtsPastW::Tmp1}
        \begin{split}
            &\left| \Ltip*{\partial_t \left( 1-\partial_t^2 \right)^{-1/2} \Stxp{s}[1]u }{ w \Stxp{s-1}[2] \partial_t u}[\Ropngeq] \right|
            \\&\leq \sconst \LtNorm*{  \partial_t \left( 1-\partial_t^2 \right)^{-1/4} \Stxp{s}[1]u}[\Ropngeq]^2
            +\lconst \LtNorm*{\left( 1-\partial_t^2 \right)^{-1/4}w \Stxp{s-1}[2] \partial_t u}[\Ropngeq]^2.
        \end{split}
    \end{equation}
    The classical calculus of pseudodifferential operators in the \(t\)-variable shows
    \begin{equation}\label{Eqn::EstBdry::Tangent::MovingdtsPastW::Tmp2}
        \LtOpNorm*{\left( 1-\partial_t^2 \right)^{-1/4} w \left( 1-\partial_t^2 \right)^{1/4}}\lesssim 1.
    \end{equation}
    \eqref{Eqn::EstBdry::Tangent::MovingdtsPastW::Tmp2} implies
    \begin{equation}\label{Eqn::EstBdry::Tangent::MovingdtsPastW::Tmp3}
        \LtNorm*{\left( 1-\partial_t^2 \right)^{-1/4}w \Stxp{s-1}[2] \partial_t u}[\Ropngeq]
        \lesssim \LtNorm*{\left( 1-\partial_t^2 \right)^{-1/4} \Stxp{s-1}[2] \partial_t u}[\Ropngeq].
    \end{equation}
    Lemma \ref{Lemma::PDOs::NotationNearBdry::CalcOfPDO}\ref{Item::PDOs::NotationNearBdry::CalcOfPDO::CommutatorOfPartialt}
    shows
    \begin{equation}\label{Eqn::EstBdry::Tangent::MovingdtsPastW::Tmp4}
        \begin{split}
            &\LtNorm*{\left( 1-\partial_t^2 \right)^{-1/4} \Stxp{s-1}[2] \partial_t u}[\Ropngeq]
            \\&\lesssim  \LtNorm*{\left( 1-\partial_t^2 \right)^{-1/4} \partial_t \Stxp{s-1}[2]  u}[\Ropngeq]
            +\LtNorm*{\left( 1-\partial_t^2 \right)^{-1/4}  \Stxpt{s-1}  u}[\Ropngeq]
            \\&\leq \LtNorm*{\left( 1-\partial_t^2 \right)^{-1/4} \partial_t \Stxp{s-1}[2]  u}[\Ropngeq]
            +\LtNorm*{ \Stxpt{s-1}  u}[\Ropngeq].
        \end{split}
    \end{equation}
    Combining \eqref{Eqn::EstBdry::Tangent::MovingdtsPastW::Tmp1}, \eqref{Eqn::EstBdry::Tangent::MovingdtsPastW::Tmp3},
    and \eqref{Eqn::EstBdry::Tangent::MovingdtsPastW::Tmp4} completes the proof.
\end{proof}

\begin{lemma}\label{Lemma::EstBdry::Tangent::BounddtStxp}
    There exist \(C_1,C_2\geq 1\),
    \(\forall u\in \DSetL{1}\),
    \begin{equation*}
    \begin{split}
            &\LtNorm*{ \partial_t \left( 1-\partial_t^2 \right)^{-1/4} \Stxp{s} u}[\Ropngeq]^2
            \\&\leq C_1\Real \FormQH*[ \left( \Stxp{s} \right)^{*} \partial_t(1-\partial_t^2)^{-1/2} \Stxp{s} u][u]
             \\&\quad+            
                C_2\left| \int_{-\infty}^{\infty} \FormQ*[ \left( \Stxp{s} \right)^{*} \partial_t\left( 1-\partial_t^2 \right)^{-1/2} \Stxp{s}u(t,\cdot) ][u(t,\cdot)] \: dt \right| 
            + C_2\LtNorm*{\Stxpt{s}u}[\Ropngeq]^2
            \\&\quad+C_2\LtNorm*{ \partial_t \left( 1-\partial_t^2 \right)^{-1/4} \Stxpt{s-1} u}[\Ropngeq]^2.
    \end{split}    
    \end{equation*}
\end{lemma}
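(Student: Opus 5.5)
The plan is to expand the definition of \(\FormQH\) and show that its \(\partial_t\)-part produces the left-hand side, up to errors that are either absorbed or already on the right-hand side. Set \(D:=\partial_t(1-\partial_t^2)^{-1/2}\), an \(L^2\)-bounded Fourier multiplier in \(t\) alone, and
\[
v:=\left( \Stxp{s} \right)^{*} D \Stxp{s} u .
\]
Since \(u\in\DSetL{1}\) and the operators \(\Stxp{s}\) are infinitely smoothing in \((t,x')\) with compactly supported kernels, every pairing below is finite. By the definition of \(\FormQH\),
\[
\FormQH*[v][u]=\int_{-\infty}^{\infty}\FormQ*[v(t,\cdot)][u(t,\cdot)]\,dt+\Ltip*{v}{\sigma\partial_t u}[\Ropngeq].
\]
So it suffices to prove the following estimate, with \(c\gtrsim 1\):
\begin{equation*}
\begin{split}
\Real \Ltip*{v}{\sigma\partial_t u}[\Ropngeq]
\geq{}& c\LtNorm*{\partial_t(1-\partial_t^2)^{-1/4}\Stxp{s}u}[\Ropngeq]^2
-\sconst \LtNorm*{\partial_t(1-\partial_t^2)^{-1/4}\Stxp{s}u}[\Ropngeq]^2
\\&-\lconst\left(\LtNorm*{\Stxpt{s}u}[\Ropngeq]^2+\LtNorm*{\partial_t(1-\partial_t^2)^{-1/4}\Stxpt{s-1}u}[\Ropngeq]^2\right).
\end{split}
\end{equation*}
Indeed, we then write \(\Real\Ltip{v}{\sigma\partial_t u}=\Real\FormQH[v][u]-\Real\int\FormQ\,dt\) and bound \(-\Real\int\FormQ\,dt\) by \(\left|\int\FormQ\,dt\right|\). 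Absorbing the \(\sconst\) term then gives the lemma with \(C_1=c^{-1}\) and \(C_2\) correspondingly large.

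To prove this estimate, first move \(\left(\Stxp{s}\right)^{*}\) across the inner product, giving \(\Ltip{D\Stxp{s}u}{\Stxp{s}\sigma\partial_t u}\). Next commute \(\Stxp{s}\) past \(\sigma\) and past \(\partial_t\):
\begin{itemize}
\item by Lemma \ref{Lemma::PDOs::NotationNearBdry::CalcOfPDO}\ref{Item::PDOs::NotationNearBdry::CalcOfPDO::CommutatorMultiplication}, \([\Stxp{s},\Mult{\sigma}]=\wtEst\Stxpt{s-1}\);
\item by Lemma \ref{Lemma::PDOs::NotationNearBdry::CalcOfPDO}\ref{Item::PDOs::NotationNearBdry::CalcOfPDO::CommutatorOfPartialt}, \(\Stxp{s}\partial_t=\partial_t\Stxp{s}+\Stxpt{s}\).
\end{itemize}
This leaves three terms.
\begin{enumerate}
\item The main term \(\Ltip{D f}{\sigma\partial_t f}\), with \(f=\Stxp{s}u\). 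Because \(\sigma\) does not depend on \(t\), it commutes with all \(t\)-multipliers. Using \(\partial_t^{*}=-\partial_t\) and taking the Fourier transform in \(t\), this term equals
\[
\int\sigma(x)\,\frac{(2\pi\tau)^2}{(1+4\pi^2\tau^2)^{1/2}}\,\bigl|\FourierSymbol_t f\bigr|^2\,d\tau\,dx,
\]
which is real, nonnegative, and \(\approx\LtNorm{\partial_t(1-\partial_t^2)^{-1/4}f}^2\) since \(\sigma\approx1\) on \(\CubengeqOneHalf\).
\item The term \(\Ltip{D\Stxp{s}u}{\sigma\Stxpt{s}u}\). Since \(D\) is bounded on \(L^2\), it is at most \(\LtNorm{\Stxp{s}u}\LtNorm{\Stxpt{s}u}\lesssim\LtNorm{\Stxpt{s}u}^2\).
\item The commutator term \(\Ltip{D\Stxp{s}u}{\wtEst\Stxpt{s-1}\partial_t u}\). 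This has exactly the form treated in Lemma \ref{Lemma::EstBdry::Tangent::MovingdtsPastW}, which bounds it by \(\sconst\) times the main quantity plus \(\lconst\) times the two allowed error norms.
\end{enumerate}
Under our notation each of these is a countable sum of such terms with summable symbol bounds, so the estimates sum.

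I expect the main obstacle to be bookkeeping rather than any single estimate. I need to check two things. First, the sign: \(-\partial_t^2\) is positive, so the main term enters \(\Real\FormQH\) with a plus sign. Second, the commutator errors must really land on \(\Stxpt{s}\) and \(\partial_t(1-\partial_t^2)^{-1/4}\Stxpt{s-1}\), and not on anything of higher order in \(\partial_t\). The second point is exactly why I route the commutator term through Lemma \ref{Lemma::EstBdry::Tangent::MovingdtsPastW} rather than estimating it crudely.
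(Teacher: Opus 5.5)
Your proposal is correct and is essentially the paper's proof: the paper likewise writes \(\LtNorm{\partial_t(1-\partial_t^2)^{-1/4}\Stxp{s}u}^2\approx\Real\Ltip{\partial_t(1-\partial_t^2)^{-1/2}\Stxp{s}u}{\sigma\partial_t\Stxp{s}u}\), using that \(\sigma\) is \(t\)-independent and \(\approx 1\). It then commutes \(\Stxp{s}\) past \(\sigma\) and \(\partial_t\) to get exactly your three terms, rewrites the main term as \(\Real\FormQH\) minus \(\int\FormQ\,dt\), bounds the \(\sigma\Stxpt{s}u\) term by \(\LtNorm{\Stxpt{s}u}^2\), handles the \(\wtEst\Stxpt{s-1}\partial_t u\) term with Lemma \ref{Lemma::EstBdry::Tangent::MovingdtsPastW}, and absorbs the small-constant term, just as you do (after absorption \(C_1\) is \((c-\epsilon)^{-1}\) rather than \(c^{-1}\), which is immaterial).
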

\begin{proof}
    Because \(\sigma\) does not depend on \(t\), \(\sigma\approx 1\) on \(\CubengeqOneHalf\), and due to the support of the kernel
    of \(\Stxp{s}\), we have
    \begin{equation}\label{Eqn::EstBdry::Tangent::BounddtStxp::Tmp1}
        \LtNorm*{ \partial_t \left( 1-\partial_t^2 \right)^{-1/4} \Stxp{s} u}[\Ropngeq]
        \approx \LtNorm*{\sigma^{1/2} \partial_t \left( 1-\partial_t^2 \right)^{-1/4} \Stxp{s} u}[\Ropngeq].
    \end{equation}
    We have, using Lemma \ref{Lemma::PDOs::NotationNearBdry::CalcOfPDO}\ref{Item::PDOs::NotationNearBdry::CalcOfPDO::CommutatorOfPartialt},\ref{Item::PDOs::NotationNearBdry::CalcOfPDO::CommutatorMultiplication},
    and that \(\sigma\) does not depend on \(t\),
    \begin{equation}\label{Eqn::EstBdry::Tangent::BounddtStxp::Tmp2}
    \begin{split}
         &\LtNorm*{\sigma^{1/2} \partial_t \left( 1-\partial_t^2 \right)^{-1/4} \Stxp{s} u}[\Ropngeq]^2
         =\Real \Ltip*{\partial_t (1-\partial_t^2)^{-1/2} \Stxp{s}u}{\sigma \partial_t \Stxp{s}u}[\Ropngeq]
         \\&=\Real \Ltip*{\left( \Stxp{s} \right)^{*}\partial_t (1-\partial_t^2)^{-1/2} \Stxp{s}u}{\sigma \partial_t u}[\Ropngeq]
         \\&\quad\quad+\Real \Ltip*{\partial_t (1-\partial_t^2)^{-1/2} \Stxp{s}u}{\sigma  \Stxpt{s}u}[\Ropngeq]
         \\&\quad\quad + \Real \Ltip*{\partial_t (1-\partial_t^2)^{-1/2} \Stxp{s}u}{ \wtEst \Stxpt{s-1}\partial_t u}[\Ropngeq]
         \\&=\Real \FormQH*[\left( \Stxp{s} \right)^{*}\partial_t (1-\partial_t^2)^{-1/2} \Stxp{s}u][u]
         \\&\quad\quad-\int_{-\infty}^{\infty} \Real \FormQ*[\left( \Stxp{s} \right)^{*}\partial_t (1-\partial_t^2)^{-1/2} \Stxp{s}u(t,\cdot)][u(t,\cdot)]\: dt
         \\&\quad\quad + \Real \Ltip*{\partial_t (1-\partial_t^2)^{-1/2} \Stxp{s}u}{ \sigma \Stxpt{s}u}[\Ropngeq]
         \\&\quad\quad+ \Real \Ltip*{\partial_t (1-\partial_t^2)^{-1/2} \Stxp{s}u}{ \wtEst \Stxpt{s-1}\partial_t u}[\Ropngeq].
    \end{split}
    \end{equation}
    We have,
    \begin{equation}\label{Eqn::EstBdry::Tangent::BounddtStxp::Tmp3}
         \begin{split}
            &\left| \Ltip*{\partial_t (1-\partial_t^2)^{-1/2} \Stxp{s}u}{ \sigma \Stxpt{s}u}[\Ropngeq] \right|
            \\&\lesssim \LtNorm*{\partial_t (1-\partial_t^2)^{-1/2} \Stxp{s}u}[\Ropngeq]^2 + \LtNorm*{\Stxpt{s}u}[\Ropngeq]^2
            \\&\lesssim \LtNorm*{\Stxpt{s}u}[\Ropngeq]^2.
         \end{split}
    \end{equation}
    Applying Lemma \ref{Lemma::EstBdry::Tangent::MovingdtsPastW} to the final term on the right-hand side of
    \eqref{Eqn::EstBdry::Tangent::BounddtStxp::Tmp2}, and using \eqref{Eqn::EstBdry::Tangent::BounddtStxp::Tmp1},
    \eqref{Eqn::EstBdry::Tangent::BounddtStxp::Tmp2}, and \eqref{Eqn::EstBdry::Tangent::BounddtStxp::Tmp3},
    we see
    \begin{equation}\label{Eqn::EstBdry::Tangent::BounddtStxp::Tmp4}
    \begin{split}
         &\LtNorm*{ \partial_t \left( 1-\partial_t^2 \right)^{-1/4} \Stxp{s} u}[\Ropngeq]
         \leq
            C_1' \Real \FormQH*[\left( \Stxp{s} \right)^{*}\partial_t (1-\partial_t^2)^{-1/2} \Stxp{s}u][u]
            \\&\quad+            
                C_2'\left| \int_{-\infty}^{\infty} \FormQ*[ \left( \Stxp{s} \right)^{*} \partial_t\left( 1-\partial_t^2 \right)^{-1/2} \Stxp{s}u(t,\cdot) ][u(t,\cdot)] \: dt \right| 
         \\&\quad+\sconst\LtNorm*{  \partial_t \left( 1-\partial_t^2 \right)^{-1/4} \Stxp{s}u}[\Ropngeq]^2
            \\&\quad+\lconst \LtNorm*{\partial_t \left( 1-\partial_t^2 \right)^{-1/4} \Stxpt{s-1} u}[\Ropngeq]^2
            \\&\quad+\lconst \LtNorm*{\Stxpt{s}u}[\Ropngeq]^2.
    \end{split}
    \end{equation}
    Subtracting \(\sconst\LtNorm*{  \partial_t \left( 1-\partial_t^2 \right)^{-1/4} \Stxp{s}u}[\Ropngeq]^2\) from both sides
    of \eqref{Eqn::EstBdry::Tangent::BounddtStxp::Tmp4} completes the proof.
\end{proof}


\begin{lemma}\label{Lemma::EstBdry::Tangent::BoundIntQByQH}
    \(\forall u\in \DSetL{1}\),
    \begin{equation*}
    \begin{split}
         &\left| \int_{-\infty}^{\infty} \FormQ*[ \left( \Stxp{s} \right)^{*} \partial_t\left( 1-\partial_t^2 \right)^{-1/2} \Stxp{s}u(t,\cdot) ][u(t,\cdot)] \: dt \right|
         \\&\leq C_1 \Real \FormQH*[\Stxpt{s} u][\Stxpt{s}u]
         +C_2 \LtNorm*{ \Stxpt{s}u}[\Ropngeq]^2.
    \end{split}
    \end{equation*}
\end{lemma}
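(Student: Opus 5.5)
The plan is to expand $\FormQ$ by its formula \eqref{Eqn::EstBdry::FormulaForQ}, slice by slice in $t$, and move the tangential operators $\Stxp{s}$ and the $t$-multiplier $A:=\partial_t(1-\partial_t^2)^{-1/2}$ across the $L^2$ pairing. The target is an expression in which every term is a pairing of $Y^\alpha \Stxpt{s}u$ against $Y^\beta\Stxpt{s}u$, up to bounded factors, with $|\alpha|,|\beta|\le\kappa$. The resulting sum of squares is then controlled by the strengthened maximal subellipticity hypothesis of Proposition \ref{Prop::EstBdry::Reduction::MainReduction}\ref{Item::EstBdry::Reduction::MainReduction::BoundAllDerivs}, applied with $S=\Stxpt{s}$.

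First, I use the reductions of Proposition \ref{Prop::EstBdry::Reduction::MainReduction}, so the vector fields are $Y_0=\partial_{x_n}$ and $Y_1,\ldots,Y_r$ with no $\partial_{x_n}$ component. Integrating over $t$, the left-hand side equals
\begin{equation*}
\sum_{|\alpha|,|\beta|\le\kappa}\Ltip*{Y^\alpha (\Stxp{s})^{*}A\Stxp{s}u}{\sigma a_{\alpha,\beta}Y^\beta u}[\Ropngeq].
\end{equation*}
This is meaningful because $u\in\DSetL{1}$ lies in $\LtSpaceNoMeasure[\R][\HsWSpace{\kappa}]$ on $\CubengeqOneHalf$, and the kernel support places everything there. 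I then commute $Y^\alpha$ to the right of $(\Stxp{s})^{*}A\Stxp{s}$.
\begin{itemize}
\item $A$ acts only in $t$, so it commutes with $Y_j$, $\sigma$ and $a_{\alpha,\beta}$, which are all $t$-independent.
\item For $1\le j\le r$, Lemma \ref{Lemma::PDOs::NotationNearBdry::CalcOfPDO}\ref{Item::PDOs::NotationNearBdry::CalcOfPDO::VectorField} gives $[\Stxp{s},Y_j]=\wtEst\Stxpt{s}$.
\item For $Y_0=\partial_{x_n}$, the commutator is the operator with symbol $\partial_{x_n}b$. This symbol is again of order $s$, has the required support, and vanishes near $x_n=0$, so it is an $\Stxpt{s}$.
\end{itemize}
Each commutator costs one $Y$-derivative and keeps order $s$. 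Next I move $\Stxp{s}$ from the left slot to the right slot by taking the adjoint in $(t,x')$. This introduces no boundary terms, since these operators never integrate by parts in $x_n$. I then commute $\Stxp{s}$ past $\sigma a_{\alpha,\beta}Y^\beta$, using Lemma \ref{Lemma::PDOs::NotationNearBdry::CalcOfPDO}\ref{Item::PDOs::NotationNearBdry::CalcOfPDO::CommutatorMultiplication} for the multiplication and the same vector-field commutators as above.

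After these steps I obtain a main term and error terms:
\begin{equation*}
\sum_{|\alpha|,|\beta|\le\kappa}\Ltip*{A\,Y^\alpha\Stxp{s}u}{\sigma a_{\alpha,\beta}Y^\beta\Stxp{s}u}
\end{equation*}
plus countable sums of terms $\Ltip*{\wtEst Y^{\alpha'}\Stxpt{s}u}{\wtEst Y^{\beta'}\Stxpt{s}u}$ with $|\alpha'|,|\beta'|\le\kappa$. Since $\LtOpNorm{A}\le1$ and the $\wtEst$ are uniformly bounded, Cauchy--Schwarz bounds everything by
\begin{equation*}
\sum_{|\alpha|\le\kappa}\LtNorm*{Y^\alpha\Stxpt{s}u}^2 .
\end{equation*}
The $C^L$-bounds \eqref{Eqn::PDOs::NearBdry::CL} make the countable sums converge. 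Each $\Stxpt{s}$ is an integral operator in $(s,y')$ with kernel in $\CinftycptSpace$ and $\partial_{x_n}$ of the kernel vanishing near $x_n=0$. This is exactly the form \eqref{Eqn::EstBdry::SOpForAssump}, after a harmless rescaling of supports into $\CubengeqOneHalf$, since $\delta_2'<1/2$. So the strengthened form of Assumption \ref{Assumption::EstBdry::MaxSubOnSmoothing}\ref{Item::EstBdry::MaxSubOnSmoothing::MaxSubEstimate} applies and gives
\begin{equation*}
\sum_{|\alpha|\le\kappa}\LtNorm*{Y^\alpha\Stxpt{s}u}^2\le E_1'\Real\FormQH*[\Stxpt{s}u][\Stxpt{s}u]+E_2'\LtNorm*{\Stxpt{s}u}^2 .
\end{equation*}
This is the claim, with each copy of $\Stxpt{s}$ understood in the countable-sum sense of Section \ref{Section::PseudodifferentialOps::Bdry}.

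The main obstacle is the bookkeeping of the commutator step. Each error term must stay at order $s$ (not higher), must carry at most $\kappa$ $Y$-derivatives on each side, and must keep property \ref{Item::PDOS::NearBdry::DerivZero} so the operators remain admissible in the assumption. Property \ref{Item::PDOS::NearBdry::DerivZero} is the reason the $\wtEst$ factors must be kept outside the operators. It is also why the commutator with $\partial_{x_n}$ needs the separate check above: $\partial_{x_n}b$ must still vanish near the boundary. A second point to verify is that pairing $Y^\alpha$ of the left entry against $W^\beta u$ is legitimate slice-wise for $u\in\DSetL{1}$, which should follow from Assumption \ref{Assumption::EstBdry::MaxSubOnSmoothing}\ref{Item::EstBdry::MaxSubOnSmoothing::InHkWOnCubegeqOneHalf}.
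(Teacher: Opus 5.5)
Your proposal is correct and is essentially the paper's proof. The steps match: expand $\FormQ$ slice-wise in $t$, move $(\Stxp{s})^{*}$ across the pairing, and commute the tangential smoothing operators past the $t$-independent factors $\sigma a_{\alpha,\beta}Y^\beta$ via Lemma \ref{Lemma::PDOs::NotationNearBdry::CalcOfPDO}. Then use Cauchy--Schwarz, with $\partial_t(1-\partial_t^2)^{-1/2}$ bounded, to reduce to $\sum_{|\alpha|\leq\kappa}\LtNorm*{Y^\alpha\Stxpt{s}u}[\Ropngeq]^2$, and finish with Proposition \ref{Prop::EstBdry::Reduction::MainReduction}\ref{Item::EstBdry::Reduction::MainReduction::BoundAllDerivs} applied with $S=\Stxpt{s}$.

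The only difference is bookkeeping. The paper leaves $Y^\alpha$ acting on $\partial_t(1-\partial_t^2)^{-1/2}\Stxp{s}u$ in the left slot, which is already of the required form since the $t$-multiplier commutes with $Y^\alpha$. So it never commutes $Y^\alpha$ past $\Stxp{s}$ on that side.
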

\begin{proof}
    We have, recalling the vector fields \(Y_0,\ldots, Y_r\) do not depend on \(t\), and using 
    Lemma \ref{Lemma::PDOs::NotationNearBdry::CalcOfPDO}
    \begin{equation*}
    \begin{split}
         &\left| \int_{-\infty}^{\infty} \FormQ*[ \left( \Stxp{s} \right)^{*} \partial_t\left( 1-\partial_t^2 \right)^{-1/2} \Stxp{s}u(t,\cdot) ][u(t,\cdot)] \: dt \right|
         \\&\leq \sum_{|\alpha|,|\beta|\leq \kappa} \left| \int_{-\infty}^{\infty} \Ltip*{Y^{\alpha}\left( \Stxp{s} \right)^{*} \partial_t\left( 1-\partial_t^2 \right)^{-1/2} \Stxp{s}u(t,\cdot) }{\sigma a_{\alpha,\beta}Y^{\beta} u(t,\cdot)}[\Rngeq]\: dt \right|
         \\&\lesssim \sum_{|\alpha|,|\beta|\leq \kappa} \int_{-\infty}^{\infty} \left| \Ltip*{Y^{\alpha} \partial_t\left( 1-\partial_t^2 \right)^{-1/2} \Stxp{s} u(t,\cdot)}{Y^{\beta} \wtEst\Stxpt{s} u(t,\cdot)}[\Rngeq] \right|\: dt
         \\&=\sum_{|\alpha|,|\beta|\leq \kappa} \int_{-\infty}^{\infty} \left| \Ltip*{ \partial_t\left( 1-\partial_t^2 \right)^{-1/2}Y^{\alpha} \Stxp{s} u(t,\cdot)}{Y^{\beta} \wtEst\Stxpt{s} u(t,\cdot)}[\Rngeq] \right|\: dt
         \\&\lesssim \sum_{|\alpha|\leq \kappa} \left( \LtNorm*{\partial_t\left( 1-\partial_t^2 \right)^{-1/2}Y^{\alpha} \Stxp{s} u}[\Ropngeq]^2 +  \LtNorm*{Y^{\alpha}\wtEst\Stxpt{s}u}[\Ropngeq]^2 \right)
         \\&\lesssim \sum_{|\alpha|\leq \kappa}\LtNorm*{Y^{\alpha}\Stxpt{s}u}[\Ropngeq]^2.
    \end{split}
    \end{equation*}
    From here, the result follows from Proposition \ref{Prop::EstBdry::Reduction::MainReduction}\ref{Item::EstBdry::Reduction::MainReduction::BoundAllDerivs}
    with \(S=\Stxpt{s}\).
\end{proof}

\begin{lemma}\label{Lemma::EstBdry::Tangent::BoundMaxSubPlusTFormQH}
    \(\forall u\in \DSetL{1}\),
    \begin{equation}\label{Eqn::EstBdry::Tangent::BoundMaxSubPlusTFormQH::Conclusion}
    \begin{split}
         &\sum_{|\alpha|\leq \kappa} \LtNorm*{Y^{\alpha} \Stxp{s}u}[\Ropngeq]^2 + \LtNorm*{\partial_t \left( 1-\partial_t^2 \right)^{-1/4}\Stxp{s}u}[\Ropngeq]^2
         \\&\leq C_1 \Real \FormQH*[ \Stxpt{s} u][\Stxpt{s}u]
         +C_2 \Real \FormQH*[\Stxpt{s}\partial_t\left( 1-\partial_2^2 \right)^{-1/2} \Stxpt{s} u][u]
         \\&\quad\quad+C_3 \LtNorm*{\Stxpt{s}u}[\Ropngeq]^2
         +C_4\LtNorm*{ \partial_t \left( 1-\partial_t^2 \right)^{-1/4} \Stxpt{s-1} u}[\Ropngeq]^2.
    \end{split}
    \end{equation}
\end{lemma}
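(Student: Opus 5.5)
The plan is to add two estimates already available and then absorb the cross terms.

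\textbf{The \(Y^\alpha\) term.} Apply Proposition \ref{Prop::EstBdry::Reduction::MainReduction}\ref{Item::EstBdry::Reduction::MainReduction::BoundAllDerivs} with \(S=\Stxp{s}\), which is legitimate since \(\Stxp{s}\) has a kernel of the form \eqref{Eqn::EstBdry::SOpForAssump}. By properties \ref{Item::PDOS::NearBdry::Support}--\ref{Item::PDOS::NearBdry::DerivZero}, it is smoothing in \((t,x')\) with kernel independent of \(x_n\) near the boundary. This gives
\begin{equation*}
    \sum_{|\alpha|\leq\kappa}\LtNorm*{Y^\alpha\Stxp{s}u}[\Ropngeq]^2\leq E_1'\Real\FormQH*[\Stxp{s}u][\Stxp{s}u]+E_2'\LtNorm*{\Stxp{s}u}[\Ropngeq]^2 .
\end{equation*}
Since \(\Stxp{s}\) is itself an admissible \(\Stxpt{s}\), the right-hand side is already of the form \(C_1\Real\FormQH*[\Stxpt{s}u][\Stxpt{s}u]+C_3\LtNorm*{\Stxpt{s}u}^2\).

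\textbf{The \(\partial_t\) term.} Apply Lemma \ref{Lemma::EstBdry::Tangent::BounddtStxp}. It bounds \(\LtNorm*{\partial_t(1-\partial_t^2)^{-1/4}\Stxp{s}u}^2\) by four pieces:
\begin{itemize}
    \item \(C_1\Real\FormQH*[(\Stxp{s})^*\partial_t(1-\partial_t^2)^{-1/2}\Stxp{s}u][u]\);
    \item the \(t\)-integral of \(\FormQ\);
    \item \(\LtNorm*{\Stxpt{s}u}^2\);
    \item \(\LtNorm*{\partial_t(1-\partial_t^2)^{-1/4}\Stxpt{s-1}u}^2\).
\end{itemize}
The first piece matches the \(C_2\) term in \eqref{Eqn::EstBdry::Tangent::BoundMaxSubPlusTFormQH::Conclusion}. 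To see this, note \((\Stxp{s})^*=\Stxpt{s}\) by Lemma \ref{Lemma::PDOs::NotationNearBdry::CalcOfPDO}\ref{Item::PDOs::NotationNearBdry::CalcOfPDO::Adjoint}, and \(\Stxp{s}\) is an admissible \(\Stxpt{s}\).

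The second piece is handled by Lemma \ref{Lemma::EstBdry::Tangent::BoundIntQByQH}, which bounds it by \(C\Real\FormQH*[\Stxpt{s}u][\Stxpt{s}u]+C\LtNorm*{\Stxpt{s}u}^2\). The remaining two pieces already appear on the right of \eqref{Eqn::EstBdry::Tangent::BoundMaxSubPlusTFormQH::Conclusion}.

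\textbf{Combining.} Summing the two bounds gives \eqref{Eqn::EstBdry::Tangent::BoundMaxSubPlusTFormQH::Conclusion}. Under the paper's convention, each \(\Stxpt{s}\) on the right may denote a different operator (or countable sum of such) from term to term, so this bookkeeping is immediate.

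\textbf{Main obstacle.} The one point requiring care is a sign issue. The \(\Real\FormQH\) terms enter with positive coefficients, and neither Lemma \ref{Lemma::EstBdry::Tangent::BoundIntQByQH} nor the maximal subellipticity estimate yields \(\Real\FormQH\geq 0\). We therefore cannot freely increase constants in front of \(\Real\FormQH\) terms unless each one arises as an upper bound with a positive constant, which is exactly how they arise here.

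Similarly, every \(\LtNorm{\cdot}^2\) term coming from these inputs is of the form \(\Stxpt{s}\) or \(\partial_t(1-\partial_t^2)^{-1/4}\Stxpt{s-1}\), with supports compatible with \(\delta_2',U_2'\). So the estimate closes without any absorption step, and the constants have the dependence required in Theorem \ref{Thm::EstBdry::MainThm::New}.
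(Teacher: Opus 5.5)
Your proposal is correct and matches the paper's proof. The paper also combines Proposition~\ref{Prop::EstBdry::Reduction::MainReduction}\ref{Item::EstBdry::Reduction::MainReduction::BoundAllDerivs} with \(S=\Stxp{s}\) for the \(Y^\alpha\) terms, Lemma~\ref{Lemma::EstBdry::Tangent::BounddtStxp} for the \(\partial_t\) term, and Lemma~\ref{Lemma::EstBdry::Tangent::BoundIntQByQH} for the \(t\)-integral of \(\FormQ\); your extra bookkeeping is consistent with the paper's notational conventions (identifying \((\Stxp{s})^{*}\) as an admissible \(\Stxpt{s}\), and keeping each \(\Real\FormQH\) term as a separate summand with positive coefficient).
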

\begin{proof}
    This follows from Proposition \ref{Prop::EstBdry::Reduction::MainReduction}\ref{Item::EstBdry::Reduction::MainReduction::BoundAllDerivs}
    with \(S=\Stxp{s}\), and Lemmas
    \ref{Lemma::EstBdry::Tangent::BounddtStxp}
    and \ref{Lemma::EstBdry::Tangent::BoundIntQByQH}.
\end{proof}

By taking \(S=\Stxpt{s}\partial_t\left( 1-\partial_t^2 \right)^{-1/2} \Stxpt{s}\),
it follows from Assumption \ref{Assumption::EstBdry::MaxSubOnSmoothing}\ref{Item::EstBdry::MaxSubOnSmoothing::FormQHGivesOp}
that
\begin{equation*}
    \FormQH*[S u][u]=
    \Ltip*{S u}{\sigma \left( \partial_t+\opL \right)u}[\Ropngeq].
\end{equation*}
Thus, the second term on the right-hand side of \eqref{Eqn::EstBdry::Tangent::BoundMaxSubPlusTFormQH::Conclusion}
can be written in terms of \(\left( \partial_t+\opL \right)u\).  However, that is not immediately true of the first term.
We turn to studying this term, which requires a preliminary result.

\begin{lemma}\label{Lemma::EstBdry::Tangent::Changes1s2Tor1r2}
    Let \(b_{\alpha,\beta}, w_1,w_2\in \CinftySpace[\CubengeqOne][\MatrixSpace[M][M][\C]]\) be given functions
    and let \(s_1,s_2,r_1,r_2\in \R\) satisfy \(s_1+s_2=r_1+r_2\).  Then, \(\forall \kappa_1,\kappa_2\in \Zgeq 0\), with \(\kappa_1,\kappa_2\leq \kappa\),
    \begin{enumerate}[(i)]
    \item \label{Item::EstBdry::Tangent::SmoothOnBothSides}
    \(\forall u,v\in \LtSpaceNoMeasure[\R][\HsYSpace{\kappa}[\CubengeqOneHalf]]\),
    \begin{equation*}
    \begin{split}
         &\sum_{\substack{|\alpha|\leq \kappa_1 \\ |\beta|\leq \kappa_2}} 
         \left| \Ltip*{Y^{\alpha} w_1 \Stxp{s_1}[1] u}{b_{\alpha,\beta} Y^{\beta}w_2\Stxp{s_2}[2]v}[\Ropngeq] \right|
         \\&\lesssim \sum_{|\alpha|\leq \kappa_1} \LtNorm*{Y^{\alpha} \Stxpt{r_1}u}[\Ropngeq]^2
         +\sum_{|\beta|\leq \kappa_2} \LtNorm*{Y^{\beta} \Stxpt{r_2}v}[\Ropngeq]^2.
    \end{split}
    \end{equation*}

    \item \label{Item::EstBdry::Tangent::SmoothOnOneSide}
    \(\forall u,v\in \LtSpaceNoMeasure[\R][\HsYSpace{\kappa}[\CubengeqOneHalf]]\),
    \begin{equation*}
    \begin{split}
         &\sum_{\substack{|\alpha|\leq \kappa_1 \\ |\beta|\leq \kappa_2}} 
         \left| \Ltip*{Y^{\alpha} w_1\Stxp{s_2}[2]\Stxp{s_1}[1] u}{b_{\alpha,\beta} Y^{\beta}v}[\Ropngeq] \right|
         \\&\lesssim \sum_{|\alpha|\leq \kappa_1} \LtNorm*{Y^{\alpha} \Stxpt{r_1}u}[\Ropngeq]^2
         +\sum_{|\beta|\leq \kappa_2} \LtNorm*{Y^{\beta} \Stxpt{r_2}v}[\Ropngeq]^2.
    \end{split}
    \end{equation*}

    \item \label{Item::EstBdry::Tangent::SmoothOnBothSidesWithGain}
    \(\forall u,v\in \LtSpaceNoMeasure[\R][\HsYSpace{\kappa}[\CubengeqOneHalf]]\), if \(\kappa_1\geq 1\),
    \begin{equation*}
    \begin{split}
         &\sum_{\substack{|\alpha|\leq \kappa_1-1 \\ |\beta|\leq \kappa_2}} 
         \left| \Ltip*{Y^{\alpha} w_1\Stxp{s_1}[1] u}{b_{\alpha,\beta} Y^{\beta}w_2\Stxp{s_2}[2]v}[\Ropngeq] \right|
         \lesssim \sum_{|\alpha|\leq \kappa_1} \LtNorm*{Y^{\alpha} \Stxpt{r_1-\epsilon_0}u}[\Ropngeq]^2
         \\&\quad\quad+\sum_{|\beta|\leq \kappa_2} \LtNorm*{Y^{\beta} \Stxpt{r_2}v}[\Ropngeq]^2
         +\LtNorm*{\partial_t \left( 1-\partial_t^2 \right)^{-1/4}\Stxpt{r_1-\epsilon_0} u}[\Ropngeq]^2.
    \end{split}
    \end{equation*}

    \item \label{Item::EstBdry::Tangent::SmoothOnOneSideWithGain}
    \(\forall u,v\in \LtSpaceNoMeasure[\R][\HsYSpace{\kappa}[\CubengeqOneHalf]]\), if \(\kappa_1\geq 1\),
    \begin{equation*}
    \begin{split}
         &\sum_{\substack{|\alpha|\leq \kappa_1-1 \\ |\beta|\leq \kappa_2}} 
         \left| \Ltip*{Y^{\alpha} w_1\Stxp{s_2}[2]\Stxp{s_1}[1] u}{b_{\alpha,\beta} Y^{\beta}v}[\Ropngeq] \right|
         \lesssim \sum_{|\alpha|\leq \kappa_1} \LtNorm*{Y^{\alpha} \Stxpt{r_1-\epsilon_0}u}[\Ropngeq]^2
         \\&\quad\quad+\sum_{|\beta|\leq \kappa_2} \LtNorm*{Y^{\beta} \Stxpt{r_2}v}[\Ropngeq]^2
         +\LtNorm*{\partial_t \left( 1-\partial_t^2 \right)^{-1/4}\Stxpt{r_1-\epsilon_0} u}[\Ropngeq]^2.
    \end{split}
    \end{equation*}

    \end{enumerate}
\end{lemma}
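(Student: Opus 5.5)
The plan is to move the tangential smoothing operators from one side of the inner product to the other so that the orders become \(r_1,r_2\). This uses the calculus of Lemma \ref{Lemma::PDOs::NotationNearBdry::CalcOfPDO} and the fact that \(Y_1,\ldots,Y_r\) have no \(\partial_{x_n}\) component, while \(Y_0=\partial_{x_n}\) commutes with every \(\Stxp{s}\) up to lower-order terms of the same type. The crucial structural point is that the operators \(\Stxp{s}\) act only in \((t,x')\), with symbols independent of \(x_n\) near \(x_n=0\). Hence their adjoints, compositions and commutators with \(Y^\alpha\) stay within the class. No integration by parts in \(x_n\) is ever needed, so no boundary terms arise.

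For \ref{Item::EstBdry::Tangent::SmoothOnBothSides}, fix \(\alpha,\beta\).
\begin{itemize}
\item First commute \(w_1\) and \(\Stxp{s_1}[1]\) past \(Y^\alpha\) using Lemma \ref{Lemma::PDOs::NotationNearBdry::CalcOfPDO}\ref{Item::PDOs::NotationNearBdry::CalcOfPDO::CommutatorMultiplication},\ref{Item::PDOs::NotationNearBdry::CalcOfPDO::VectorField}. For \(Y_0\) the commutator is \(\partial_{x_n}\) acting on the symbol, which is again of type \(\wtEst\Stxpt{s_1}\). This gives \(Y^\alpha w_1\Stxp{s_1}[1]u=\sum \wtEst\Stxpt{s_1}Y^{\alpha'}u\) with \(|\alpha'|\leq|\alpha|\), and similarly for the \(v\) side.
\item Next write \(\Stxpt{s_1}=\Stxpt{r_1}{}'\,\Lambdatxp[s_1-r_1]\) up to an infinitely smoothing term. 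The cleanest route is to factor \(\Stxpt{s_1}=\Stxpt{0}\Lambdatxp[s_1-r_1]\Stxpt{r_1}\) with cutoffs having \(\partial_{x_n}\)-independent symbols, and then move \(\Lambdatxp[s_1-r_1]\) together with the intermediate \(\Stxpt{0}\), the \(\wtEst\) and \(b_{\alpha,\beta}\) across to the \(v\) side via adjoints (Lemma \ref{Lemma::PDOs::NotationNearBdry::CalcOfPDO}\ref{Item::PDOs::NotationNearBdry::CalcOfPDO::Adjoint}).
\item The \(v\) side then carries an operator of order \(s_2+s_1-r_1=r_2\). Commuting back past \(Y^{\beta}\) as in the first step, Cauchy--Schwarz gives the bound.
\end{itemize}
The multiplications by \(\wtEst\) and \(b_{\alpha,\beta}\) sit between pseudodifferential operators. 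They are handled by Lemma \ref{Lemma::PDOs::NotationNearBdry::CalcOfPDO}\ref{Item::PDOs::NotationNearBdry::CalcOfPDO::CommutatorMultiplication}: commuting a multiplication through \(\Lambdatxp[s_1-r_1]\) produces an error of order one lower, which is absorbed into the same form of estimate, or handled by induction on the order. Part \ref{Item::EstBdry::Tangent::SmoothOnOneSide} is the same argument with \(w_2=1\) and \(s_2\)-operator composed on the \(u\) side. We pass \(\Stxp{s_2}[2]\) to the \(v\) side by adjoint, and then split orders exactly as above.

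For \ref{Item::EstBdry::Tangent::SmoothOnBothSidesWithGain} and \ref{Item::EstBdry::Tangent::SmoothOnOneSideWithGain}, run the same reduction but with target orders \(r_1\) on \(u\) and \(r_2\) on \(v\). This yields \(\sum_{|\alpha|\leq\kappa_1-1}\LtNorm{Y^\alpha\Stxpt{r_1}u}^2\) on the \(u\) side. Proposition \ref{Prop::HorVfs::GainBoundary::MainGainProp} with \(\kappa_1\) in place of \(\kappa\) bounds this by
\begin{equation*}
\sum_{|\alpha|\leq\kappa_1}\LtNorm{Y^\alpha\Stxpt{r_1-1/m}u}^2+\LtNorm{\partial_t(1-\partial_t^2)^{-1/4}\Stxpt{r_1-1/m}u}^2.
\end{equation*}
Since \(\epsilon_0\leq 1/m\), the order \(r_1-1/m\) can be replaced by \(r_1-\epsilon_0\) by writing \(\Stxpt{r_1-1/m}=\Stxpt{r_1-\epsilon_0}\) (a symbol of lower order is also of the higher order). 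The required \(\partial_{x_n}^j u\in L^2\) hypothesis holds because \(u\in\LtSpaceNoMeasure[\R][\HsYSpace{\kappa}]\) with \(Y_0=\partial_{x_n}\).

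The main obstacle is bookkeeping the tilde-level supports \(\delta_1,\delta_2',U_1,U_2'\) under repeated composition. Each step enlarges the support, so intermediate classes must be chosen nested strictly between them, and the \(\wtEst\) factors must never be absorbed into symbols (Remark \ref{Rmk::PDOs::NotationNearBdry::WhyWtEst}). I would handle this by fixing a finite chain of intermediate supports at the outset, with the number of steps bounded independently of \(u\).
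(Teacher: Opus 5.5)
Your proposal is correct and follows the paper's argument. You redistribute the orders by moving a factor of order \(s_1-r_1\) across the inner product via the tangential calculus, and conclude with Cauchy--Schwarz. You get the gain versions from the non-gain ones together with Proposition~\ref{Prop::HorVfs::GainBoundary::MainGainProp} (with \(\kappa_1\) in place of \(\kappa\)) and \(\epsilon_0\leq 1/m\). You reduce \ref{Item::EstBdry::Tangent::SmoothOnOneSide} and \ref{Item::EstBdry::Tangent::SmoothOnOneSideWithGain} to \ref{Item::EstBdry::Tangent::SmoothOnBothSides} and \ref{Item::EstBdry::Tangent::SmoothOnBothSidesWithGain} by transferring \(\Stxp{s_2}[2]\) to the \(v\) side.

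The only difference is cosmetic. The paper just inserts \(\Lambdatxp[s_1-r_1]\Lambdatxp[r_1-s_1]\) in front and pushes the factors directly past \(Y^\alpha, w_1\) (respectively \(b_{\alpha,\beta}, Y^\beta, w_2\)). Your preliminary step of commuting \(Y^\alpha\) onto \(u\) and later back out is therefore an unnecessary but harmless detour. Note that the commuting back must also be done on the \(u\) side, not only past \(Y^\beta\).
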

\begin{proof}
    We begin with \ref{Item::EstBdry::Tangent::SmoothOnBothSides}.
    We have
    \begin{equation*}
    \begin{split}
         &\sum_{\substack{|\alpha|\leq \kappa_1\\ |\beta|\leq \kappa_2}}
        \left| \Ltip*{Y^{\alpha} w_1\Stxp{s_1}[1] u}{b_{\alpha,\beta} Y^{\beta}w_2\Stxp{s_2}[2]v}[\Ropngeq] \right|
        \\&=\sum_{\substack{|\alpha|\leq \kappa_1\\ |\beta|\leq \kappa_2}}
        \left| \Ltip*{\Lambdatxp[s_1-r_1]\Lambdatxp[r_1-s_1]Y^{\alpha} w_1\Stxp{s_1}[1] u}{b_{\alpha,\beta} Y^{\beta}w_2\Stxp{s_2}[2]v}[\Ropngeq] \right|
        \\&\lesssim
        \sum_{\substack{|\alpha|\leq \kappa_1\\ |\beta|\leq \kappa_2}}
        \left| \Ltip*{Y^{\alpha} \wtEst\Stxpt{r_1}[1] u}{ Y^{\beta}\wtEst\Stxpt{r_2}[2]v}[\Ropngeq] \right|,
    \end{split}
    \end{equation*}
    where the final line uses the calculus of pseudodifferential operators 
    (see Lemma \ref{Lemma::PDOs::NotationNearBdry::CalcOfPDO}\ref{Item::PDOs::NotationNearBdry::CalcOfPDO::CommutatorMultiplication}).
    From here, \ref{Item::EstBdry::Tangent::SmoothOnBothSides} follows by using the Cauchy--Schwarz inequality.
    
    We turn to \ref{Item::EstBdry::Tangent::SmoothOnBothSidesWithGain}.  Using \ref{Item::EstBdry::Tangent::SmoothOnBothSides},
     Proposition \ref{Prop::HorVfs::GainBoundary::MainGainProp}, and the fact that \(\epsilon_0=\min\{1/m,1/2\}\leq 1/m\), we have
    \begin{equation*}
    \begin{split}
         &\sum_{\substack{|\alpha|\leq \kappa_1-1 \\ |\beta|\leq \kappa_2}} 
         \left| \Ltip*{Y^{\alpha} w_1\Stxp{s_1}[1] u}{b_{\alpha,\beta} Y^{\beta}w_2\Stxp{s_2}[2]v}[\Ropngeq] \right|
         \\&\lesssim \sum_{|\alpha|\leq \kappa_1-1} \LtNorm*{Y^{\alpha} \Stxpt{r_1}u}[\Ropngeq]^2
         +\sum_{|\beta|\leq \kappa_2} \LtNorm*{Y^{\beta} \Stxpt{r_2}v}[\Ropngeq]^2
         \\&\lesssim \sum_{|\alpha|\leq \kappa_1} \LtNorm*{Y^{\alpha} \Stxpt{r_1-\epsilon_0}u}[\Ropngeq]^2
         \\&\quad\quad+\sum_{|\beta|\leq \kappa_2} \LtNorm*{Y^{\beta} \Stxpt{r_2}v}[\Ropngeq]^2
         +\LtNorm*{\partial_t \left( 1-\partial_t^2 \right)^{-1/4}\Stxpt{r_1-\epsilon_0} u}[\Ropngeq]^2.
    \end{split}
    \end{equation*}

    The calculus of pseudodifferential operators (see Lemma \ref{Lemma::PDOs::NotationNearBdry::CalcOfPDO};
    in particular Lemma \ref{Lemma::PDOs::NotationNearBdry::CalcOfPDO}\ref{Item::PDOs::NotationNearBdry::CalcOfPDO::CommutatorMultiplication}) shows that 
    the left-hand side of \ref{Item::EstBdry::Tangent::SmoothOnOneSide} (respectively, \ref{Item::EstBdry::Tangent::SmoothOnOneSideWithGain})
    can be written in the same form as the left-hand side of \ref{Item::EstBdry::Tangent::SmoothOnBothSides} (respectively, \ref{Item::EstBdry::Tangent::SmoothOnBothSidesWithGain}),
    with \(\Stxp{s}\) replaced by \(\Stxpt{s}\),
    so \ref{Item::EstBdry::Tangent::SmoothOnOneSide} (respectively, \ref{Item::EstBdry::Tangent::SmoothOnOneSideWithGain}) follows.
\end{proof}

\begin{lemma}\label{Lemma::EstBdry::Tangent::NoDerivsGain}
    \(\forall u\in \LtSpaceNoMeasure[\R][\HsYSpace{\kappa}[\CubengeqOneHalf]]\),
    \begin{equation*}
        \LtNorm*{\Stxp{s}u}[\Ropngeq]^2
        \lesssim \sum_{|\alpha|\leq \kappa} \LtNorm*{Y^{\alpha} \Stxpt{s-\epsilon_0}u}[\Ropngeq]^2 + \LtNorm*{\partial_t\left( 1-\partial_t^2 \right)^{-1/4}\Stxpt{s-\epsilon_0}u}[\Ropngeq]^2.
    \end{equation*}
\end{lemma}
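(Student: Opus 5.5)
This is the case \(\kappa_1=1\), \(\kappa_2=0\) (or a direct use) of Proposition \ref{Prop::HorVfs::GainBoundary::MainGainProp}, after writing the \(L^2\) norm squared as an inner product. The plan is to write
\begin{equation*}
    \LtNorm*{\Stxp{s}u}[\Ropngeq]^2=\Ltip*{\Stxp{s}u}{\Stxp{s}u}[\Ropngeq],
\end{equation*}
which is exactly the form of the left-hand side of Lemma \ref{Lemma::EstBdry::Tangent::Changes1s2Tor1r2}\ref{Item::EstBdry::Tangent::SmoothOnBothSidesWithGain}. There we take \(\kappa_1=1\) (so \(|\alpha|\leq \kappa_1-1=0\)), \(\kappa_2=0\), \(w_1=w_2=b=1\), \(v=u\), \(s_1=s_2=s\), and we choose \(r_1=s\), \(r_2=s\), so \(s_1+s_2=r_1+r_2\).

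With these choices, that lemma gives
\begin{equation*}
    \LtNorm*{\Stxp{s}u}^2\lesssim \sum_{|\alpha|\leq 1}\LtNorm*{Y^{\alpha}\Stxpt{s-\epsilon_0}u}^2+\LtNorm*{\Stxpt{s}u}^2+\LtNorm*{\partial_t(1-\partial_t^2)^{-1/4}\Stxpt{s-\epsilon_0}u}^2.
\end{equation*}
The remaining \(\LtNorm{\Stxpt{s}u}^2\) term is not yet of the desired form, so this route is circular unless we instead bypass it. The better route is to apply Proposition \ref{Prop::HorVfs::GainBoundary::MainGainProp} directly, with its \(\kappa\) replaced by \(1\) and with the order \(s-1/m+\epsilon_0\) adjusted, as follows.

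First use the calculus (Lemma \ref{Lemma::PDOs::NotationNearBdry::CalcOfPDO}\ref{Item::PDOs::NotationNearBdry::CalcOfPDO::Composition}) to write \(\Stxp{s}=\Stxp{s-\epsilon_0+1/m}\) up to bounds. This is legitimate because \(\Stxp{s}\) only has meaning through symbol estimates, and \(\epsilon_0\le 1/m\) means a symbol of order \(s\) is also of order \(s+1/m-\epsilon_0\). Proposition \ref{Prop::HorVfs::GainBoundary::MainGainProp} with \(\kappa=1\) (the \(|\alpha|\le 0\) term on the left) then yields
\begin{equation*}
    \LtNorm*{\Stxp{s}u}\lesssim \sum_{|\alpha|\leq 1}\LtNorm*{Y^\alpha\Stxpt{s-\epsilon_0}u}+\LtNorm*{\partial_t(1-\partial_t^2)^{-1/4}\Stxpt{s-\epsilon_0}u}.
\end{equation*}
Since \(\kappa\ge1\), the sum over \(|\alpha|\le1\) is dominated by the sum over \(|\alpha|\le\kappa\). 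Squaring then gives the claim.

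The hypotheses of Proposition \ref{Prop::HorVfs::GainBoundary::MainGainProp} require \(\partial_{x_n}^j u\in L^2\) for \(j\le \kappa\), here only \(j\le 1\). This holds because \(u\in L^2(\R;\HsYSpace{\kappa})\) and \(Y_0=\partial_{x_n}\).

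The main care point is this bookkeeping of orders in the \(\Stxp{}\) notation: checking that raising the nominal order from \(s\) to \(s+1/m-\epsilon_0\ge s\) is allowed, with the symbol seminorm dependence preserved. That is immediate from the seminorm definition \eqref{Eqn::PDOs::SymboltxpSemiNorm}, since \(\JBracket{(\tau,\xi')}\ge1\).
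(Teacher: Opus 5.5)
Your final route is the paper's argument: apply Proposition \ref{Prop::HorVfs::GainBoundary::MainGainProp}, keep only the \(\alpha=\emptyset\) term on its left-hand side, and use \(\epsilon_0\leq 1/m\) to adjust orders. Raising the order on the left to \(s+1/m-\epsilon_0\), as you do, is equivalent to the paper's reading of \(\Stxpt{s-1/m}\) as a \(\Stxpt{s-\epsilon_0}\). Both are just the inclusion \(\Symbolstxp{s'}\subseteq\Symbolstxp{s''}\) for \(s'\leq s''\) with controlled seminorms; this is not the composition rule you cite.

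However, applying the proposition ``with \(\kappa\) replaced by \(1\)'' is not a legitimate instance of it. In this paper \(\kappa\) is not only the number of derivatives. It also fixes the weight of \(\partial_t\) in \(\JBracket*{(\tau,\xi')}=(1+|\tau|^2+|\xi'|^{4\kappa})^{1/4\kappa}\) and in the seminorms \eqref{Eqn::PDOs::SymboltxpSemiNorm}, and hence it fixes the meaning of \(\Stxp{s}\). The \(\kappa=1\) version of the proposition is a statement about a different symbol class. Order-\(s\) seminorms do not transfer uniformly between the two classes: at \(\xi'=0\) the weights are \(|\tau|^{1/(2\kappa)}\) versus \(|\tau|^{1/2}\). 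So the dependence of constants that the notation requires is lost. The estimate you wrote, with \(\sum_{|\alpha|\leq 1}\) on the right and the \(\kappa\)-weighted classes, is in fact true. To see it, rerun the proof of Lemma \ref{Lemma::HorVfs::GainInterior::RSGainLemma} with one derivative, using \(1/m\leq\kappa\) on the support of \(\Theta_2\). But that is a separate argument, and you did not supply it.

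The detour is also unnecessary. With the given \(\kappa\geq 1\), the left-hand side \(\sum_{|\alpha|\leq\kappa-1}\LtNorm*{Y^\alpha\Stxp{s}u}[\Ropngeq]\) of the proposition already contains \(\LtNorm*{\Stxp{s}u}[\Ropngeq]\). Its right-hand side already carries \(\sum_{|\alpha|\leq\kappa}\). Its hypothesis \(\partial_{x_n}^ju\in L^2\), \(0\leq j\leq\kappa\), follows from \(u\in\LtSpaceNoMeasure[\R][\HsYSpace{\kappa}[\CubengeqOneHalf]]\), since \(Y_0=\partial_{x_n}\). That is exactly the paper's one-line proof.

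You can also drop your first paragraph. As you noticed, Lemma \ref{Lemma::EstBdry::Tangent::Changes1s2Tor1r2}\ref{Item::EstBdry::Tangent::SmoothOnBothSidesWithGain} only reproduces an uncontrolled \(\LtNorm{\Stxpt{s}u}\) term.
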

\begin{proof}
    Using \(\LtNorm*{\Stxp{s}u}[\Ropngeq]^2 \leq \sum_{|\alpha|\leq \kappa-1}\LtNorm*{Y^{\alpha} \Stxp{s}u}[\Ropngeq]^2\),
    the result follows from Proposition \ref{Prop::HorVfs::GainBoundary::MainGainProp}
    and the fact that \(\epsilon_0=\min\{1/m,1/2\}\leq 1/m\).
\end{proof}

\begin{lemma}\label{Lemma::EstBdry::Tangent::MovePDOsToOtherSideOfQH}
    \(\forall u\in \DSetL{1}\),
    \begin{equation*}
    \begin{split}
         &\FormQH*[\Stxp{s}[1]u][\Stxp{s}[2]u]
         =\FormQH*[\left( \Stxp{s}[2] \right)^{*} \Stxp{s}[1] u][u]
         \\&\quad\quad +O\left( \sum_{|\alpha|\leq \kappa} \LtNorm*{Y^{\alpha}\Stxpt{s-\epsilon_0/2}u}[\Ropngeq]^2 
            +\LtNorm*{\partial_t\left( 1-\partial_t^2 \right)^{-1/4}\Stxpt{s-\epsilon_0/2}u}[\Ropngeq]^2
         \right).
    \end{split}
    \end{equation*}
\end{lemma}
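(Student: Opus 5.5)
We will expand \(\FormQH\) into its two pieces and move \(\Stxp{s}[2]\) across each piece separately. Write
\begin{equation*}
\FormQH*[\Stxp{s}[1]u][\Stxp{s}[2]u]=\sum_{|\alpha|,|\beta|\leq\kappa}\Ltip*{Y^\alpha\Stxp{s}[1]u}{\sigma a_{\alpha,\beta}Y^\beta\Stxp{s}[2]u}[\Ropngeq]+\Ltip*{\Stxp{s}[1]u}{\sigma\partial_t\Stxp{s}[2]u}[\Ropngeq].
\end{equation*}
Here the \(W\)'s have already been replaced by the \(Y\)'s via Proposition \ref{Prop::EstBdry::Reduction::MainReduction}. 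The analogous expansion is used for \(\FormQH*[(\Stxp{s}[2])^*\Stxp{s}[1]u][u]\). Note that \((\Stxp{s}[2])^*=\Stxpt{s}\) still satisfies property \ref{Item::PDOS::NearBdry::DerivZero}, so every term is well defined for \(u\in\DSetL{1}\). Also, \(Y^\alpha\Stxp{s}u\in\LtSpace\) by Assumption \ref{Assumption::EstBdry::MaxSubOnSmoothing}.

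For the \(\partial_t\) term, \(\sigma\) is independent of \(t\). We move \(\Stxp{s}[2]\) to the left, using Lemma \ref{Lemma::PDOs::NotationNearBdry::CalcOfPDO}\ref{Item::PDOs::NotationNearBdry::CalcOfPDO::CommutatorOfPartialt},\ref{Item::PDOs::NotationNearBdry::CalcOfPDO::CommutatorMultiplication}:
\begin{equation*}
\Ltip*{\Stxp{s}[1]u}{\sigma\partial_t\Stxp{s}[2]u}=\Ltip*{(\Stxp{s}[2])^*\Stxp{s}[1]u}{\sigma\partial_t u}+\Ltip*{\Stxp{s}[1]u}{\sigma\Stxpt{s}u}+\Ltip*{\Stxp{s}[1]u}{\wtEst\Stxpt{s-1}\partial_t u}.
\end{equation*}
The second error term is \(O(\LtNorm{\Stxpt{s}u}^2)\). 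The third is treated as in Lemma \ref{Lemma::EstBdry::Tangent::MovingdtsPastW}: we split \(\partial_t=\partial_t(1-\partial_t^2)^{-1/4}\cdot(1-\partial_t^2)^{1/4}\) and put half a \(t\)-derivative on each side. Since \(\Stxpt{s-1}\) has order \(s-1\) and a quarter \(t\)-derivative costs \(\kappa/2\) in the non-isotropic scaling, this gives a bound by \(\LtNorm*{\partial_t(1-\partial_t^2)^{-1/4}\Stxpt{s-1/2}u}^2+\LtNorm{\Stxpt{s-1/2}u}^2\). This is dominated by the claimed error because \(\epsilon_0/2\leq 1/4<1/2\). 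Any \(\LtNorm{\Stxpt{s}u}^2\)-type terms are converted to the required form with Lemma \ref{Lemma::EstBdry::Tangent::NoDerivsGain}, applied with \(s\) replaced by \(s+\epsilon_0/2\). That lemma gives exactly \(\sum_{|\alpha|\leq\kappa}\LtNorm{Y^\alpha\Stxpt{s-\epsilon_0/2}u}^2\) plus the \(\partial_t\) term.

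For the spatial terms, we commute \(\Stxp{s}[2]\) to the far left. We first commute past \(Y^\beta\): by Lemma \ref{Lemma::PDOs::NotationNearBdry::CalcOfPDO}\ref{Item::PDOs::NotationNearBdry::CalcOfPDO::VectorField}, \([\Stxp{s},Y_j]=\wtEst\Stxpt{s}\) for \(j\ge1\), and \([\Stxp{s},\partial_{x_n}]\) is handled by \ref{Item::PDOs::NotationNearBdry::CalcOfPDO::CommutatorMultiplication}. Next we commute past \(\sigma a_{\alpha,\beta}\) with error \(\wtEst\Stxpt{s-1}\). We then take the adjoint across the inner product, which involves no boundary terms because \(\Stxp{s}\) acts only in \((t,x')\). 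Finally we commute \((\Stxp{s}[2])^*\) past \(Y^\alpha\) on the left.

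Each commutator with a vector field loses one \(Y\)-derivative without losing order. So every error term has the form of Lemma \ref{Lemma::EstBdry::Tangent::Changes1s2Tor1r2}, with total pseudodifferential order \(2s\) and at most \(\kappa-1\) vector fields on one side. The commutators with multiplications instead drop the order by one. We apply part \ref{Item::EstBdry::Tangent::SmoothOnBothSidesWithGain} or \ref{Item::EstBdry::Tangent::SmoothOnOneSideWithGain} with \(r_1=s+\epsilon_0/2\) and \(r_2=s-\epsilon_0/2\). This yields \(\sum_{|\alpha|\leq\kappa}\LtNorm{Y^\alpha\Stxpt{s-\epsilon_0/2}u}^2\) on both sides, plus the \(\partial_t(1-\partial_t^2)^{-1/4}\Stxpt{s-\epsilon_0/2}u\) term. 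This splitting of the gain \(\epsilon_0\) symmetrically is the reason the statement has \(\epsilon_0/2\).

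The main obstacle is the bookkeeping for commutators that involve \(\partial_{x_n}=Y_0\). Lemma \ref{Lemma::PDOs::NotationNearBdry::CalcOfPDO}\ref{Item::PDOs::NotationNearBdry::CalcOfPDO::VectorField} excludes \(\partial_{x_n}\), and \([\Stxp{s},\partial_{x_n}]=(\partial_{x_n}b)(t,x,D_t,D_{x'})\) is again of order \(s\). It still has the form \(\Stxpt{s}\): its symbol has vanishing \(x_n\)-derivative near \(x_n=0\), since \(\partial_{x_n}b\) vanishes identically there. We also need to check that the \(\wtEst\) factors and the countable sums are compatible with Lemma \ref{Lemma::EstBdry::Tangent::Changes1s2Tor1r2}. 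I would verify this case carefully, and otherwise expect the argument to be routine.
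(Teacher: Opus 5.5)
Your proposal is correct and is essentially the paper's proof. It uses the same expansion of \(\FormQH\) into spatial and \(\partial_t\) parts and arrives at the same error terms. As in the paper, it uses Lemma~\ref{Lemma::EstBdry::Tangent::Changes1s2Tor1r2}\ref{Item::EstBdry::Tangent::SmoothOnBothSidesWithGain},\ref{Item::EstBdry::Tangent::SmoothOnOneSideWithGain} with \(r_1=s+\epsilon_0/2\), \(r_2=s-\epsilon_0/2\) for the vector-field commutators, the \((1-\partial_t^2)^{\pm 1/4}\) splitting for the \(\wtEst\Stxpt{s-1}\partial_t\) term, and Lemma~\ref{Lemma::EstBdry::Tangent::NoDerivsGain} for the zeroth-order terms.

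The one step to state explicitly: the multiplication-commutator terms carry up to \(\kappa\) vector fields on both sides, so they cannot go through the gain parts. Handle them with part~\ref{Item::EstBdry::Tangent::SmoothOnOneSide} (or~\ref{Item::EstBdry::Tangent::SmoothOnBothSides}), taking \(r_1=r_2=s-1/2\) and using \(\epsilon_0\leq 1\), exactly as the paper does.
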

\begin{proof}
    Using the calculus of pseudodifferential operators (see Lemma \ref{Lemma::PDOs::NotationNearBdry::CalcOfPDO}),
    we have
    \begin{equation}\label{Eqn::EstBdry::Tangent::MovePDOsToOtherSideOfQH::Tmp1}
    \begin{split}
         &\FormQH*[\Stxp{s}[1]u][\Stxp{s}[2]u]
         \\&=\sum_{|\alpha|,|\beta|\leq \kappa} \Ltip*{Y^{\alpha} \Stxp{s}[1]u}{\sigma a_{\alpha,\beta}Y^\beta \Stxp{s}[2]u}[\Ropngeq]
         +\Ltip*{\Stxp{s}[1]u}{\sigma \partial_t\Stxp{s}[2]u}[\Ropngeq]
         \\&= \sum_{|\alpha|,|\beta|\leq \kappa} \Ltip*{Y^{\alpha}\left( \Stxp{s}[2] \right)^{*} \Stxp{s}[1] u }{\sigma a_{\alpha,\beta}Y^\beta u}[\Ropngeq]
         +\Ltip*{\left( \Stxp{s}[2] \right)^{*}\Stxp{s}[1]u}{\sigma \partial_tu}[\Ropngeq]
         \\&\quad+
         O\Bigg(
            \sum_{\substack{|\alpha|\leq \kappa \\ |\beta|\leq \kappa-1}} \left| \Ltip*{ Y^{\alpha} \Stxp{s}[1]u}{a_{\alpha,\beta} Y^{\beta} \wtEst\Stxpt{s} u}[\Ropngeq]\right|
            \\&\quad\quad\quad\quad+\sum_{\substack{|\alpha|,|\beta|\leq \kappa}} \left| \Ltip*{  Y^{\alpha} \wtEst\Stxpt{s-1} \Stxp{s}[1]u}{Y^{\beta} u}[\Ropngeq] \right|
            \\&\quad\quad\quad\quad+\sum_{\substack{|\alpha|\leq \kappa-1\\|\beta|\leq \kappa}} \left| \Ltip*{ Y^{\alpha}\wtEst\Stxpt{s}  \Stxp{s}[1] u }{Y^{\beta}u}[\Ropngeq] \right|
            \\&\quad\quad\quad\quad+ \left| \Ltip*{\Stxp{s}[1]u}{\sigma \Stxpt{s} u}[\Ropngeq] \right|
            \\&\quad\quad\quad\quad+ \left| \Ltip*{\Stxp{s}[1]u}{\wtEst \Stxpt{s-1}\partial_t u}[\Ropngeq] \right|
         \Bigg).
    \end{split}
    \end{equation}
    By the definition of \(\FormQH\), we have
    \begin{equation}\label{Eqn::EstBdry::Tangent::MovePDOsToOtherSideOfQH::Tmp2}
    \begin{split}
         &\sum_{|\alpha|,|\beta|\leq \kappa} \Ltip*{Y^{\alpha}\left( \Stxp{s}[2] \right)^{*} \Stxp{s}[1] u }{\sigma a_{\alpha,\beta}Y^\beta u}[\Ropngeq]
         +\Ltip*{\left( \Stxp{s}[2] \right)^{*}\Stxp{s}[1]u}{\sigma \partial_tu}[\Ropngeq]
         \\&=\FormQH*[\left( \Stxp{s}[2] \right)^{*} \Stxp{s}[1] u][u].
    \end{split}
    \end{equation}

    Thus, to complete the proof, we need to estimate the terms inside the \(O(\cdot)\) in \eqref{Eqn::EstBdry::Tangent::MovePDOsToOtherSideOfQH::Tmp1}.
    Using Lemma \ref{Lemma::EstBdry::Tangent::Changes1s2Tor1r2}\ref{Item::EstBdry::Tangent::SmoothOnOneSideWithGain}
    with \(s_1=s_2=s\), \(r_1=s+\epsilon_0/2\), \(r_2=s-\epsilon_0/2\), we have
    \begin{equation}\label{Eqn::EstBdry::Tangent::MovePDOsToOtherSideOfQH::Tmp3}
    \begin{split}
         &\sum_{\substack{|\alpha|\leq \kappa-1\\|\beta|\leq \kappa}} \left| \Ltip*{ Y^{\alpha}\wtEst\Stxpt{s}  \Stxp{s}[1] u }{Y^{\beta}u}[\Ropngeq] \right|
         \\&\lesssim 
         \sum_{|\alpha|\leq \kappa} \LtNorm*{Y^{\alpha}\Stxpt{s-\epsilon_0/2}u}[\Ropngeq]^2 
             +\LtNorm*{\partial_t\left( 1-\partial_t^2 \right)^{-1/4}\Stxpt{s-\epsilon_0/2}u}[\Ropngeq]^2.
    \end{split}
    \end{equation}
    Similarly, reversing the roles of 
    \(\alpha\) and \(\beta\) and using Lemma \ref{Lemma::EstBdry::Tangent::Changes1s2Tor1r2}\ref{Item::EstBdry::Tangent::SmoothOnBothSidesWithGain},
    we have
    \begin{equation}\label{Eqn::EstBdry::Tangent::MovePDOsToOtherSideOfQH::Tmp4}
    \begin{split}
         &\sum_{\substack{|\alpha|\leq \kappa \\ |\beta|\leq \kappa-1}} \left| \Ltip*{ Y^{\alpha} \Stxp{s}[1]u}{a_{\alpha,\beta} Y^{\beta} \wtEst\Stxpt{s} u}[\Ropngeq]\right|
         \\&\lesssim 
         \sum_{|\alpha|\leq \kappa} \LtNorm*{Y^{\alpha}\Stxpt{s-\epsilon_0/2}u}[\Ropngeq]^2 
             +\LtNorm*{\partial_t\left( 1-\partial_t^2 \right)^{-1/4}\Stxpt{s-\epsilon_0/2}u}[\Ropngeq]^2.
    \end{split}
    \end{equation}
    Next, we use Lemma \ref{Lemma::EstBdry::Tangent::Changes1s2Tor1r2}\ref{Item::EstBdry::Tangent::SmoothOnOneSide}
    with \(s_2=s-1\), \(s_1=s\), \(r_1=r_2=s-1/2\) and \(\epsilon_0=\min\{1/m,1/2\}\leq 1\) to see
    \begin{equation}\label{Eqn::EstBdry::Tangent::MovePDOsToOtherSideOfQH::Tmp5}
    \begin{split}
         &\sum_{\substack{|\alpha|,|\beta|\leq \kappa}} \left| \Ltip*{  Y^{\alpha} \wtEst\Stxpt{s-1} \Stxp{s}[1]u}{Y^{\beta} u}[\Ropngeq] \right|
         \\&\lesssim 
         \sum_{|\alpha|\leq \kappa} \LtNorm*{Y^{\alpha}\Stxpt{s-\epsilon_0/2}u}[\Ropngeq]^2 
             +\LtNorm*{\partial_t\left( 1-\partial_t^2 \right)^{-1/4}\Stxpt{s-\epsilon_0/2}u}[\Ropngeq]^2.
    \end{split}
    \end{equation}
    Next, Lemma \ref{Lemma::EstBdry::Tangent::NoDerivsGain} shows
    \begin{equation}\label{Eqn::EstBdry::Tangent::MovePDOsToOtherSideOfQH::Tmp6}
    \begin{split}
         &\left| \Ltip*{\Stxp{s}[1]u}{\sigma \Stxpt{s} u}[\Ropngeq] \right|
         \lesssim \LtNorm*{\Stxpt{s} u}[\Ropngeq]^2
         \\&\lesssim 
         \sum_{|\alpha|\leq \kappa} \LtNorm*{Y^{\alpha}\Stxpt{s-\epsilon_0/2}u}[\Ropngeq]^2 
             +\LtNorm*{\partial_t\left( 1-\partial_t^2 \right)^{-1/4}\Stxpt{s-\epsilon_0/2}u}[\Ropngeq]^2.
    \end{split}
    \end{equation}
    Using the calculus of pseudodifferential operators (see Lemma \ref{Lemma::PDOs::NotationNearBdry::CalcOfPDO}), we have,
    using \(\LtNorm*{\left( 1-\partial_t^2 \right)^{-1/4}\wtEst \left( 1-\partial_t^2 \right)^{1/4}}\lesssim 1\),
    \begin{equation}\label{Eqn::EstBdry::Tangent::MovePDOsToOtherSideOfQH::Tmp7}
    \begin{split}
         &\left| \Ltip*{\Stxp{s}[1]u}{\wtEst \Stxpt{s-1}\partial_t u}[\Ropngeq] \right|
         \lesssim \left| \Ltip*{\Stxpt{s-1/2}u}{\wtEst \Stxpt{s-1/2}\partial_t u}[\Ropngeq] \right|
         \\&\lesssim \left| \Ltip*{\Stxpt{s-1/2}u}{\wtEst \partial_t\Stxpt{s-1/2} u}[\Ropngeq] \right|
         \\&\lesssim \left| \Ltip*{\left( 1-\partial_t^2 \right)^{1/4}\Stxpt{s-1/2}u}{\left( 1-\partial_t^2 \right)^{-1/4}\wtEst \left( 1-\partial_t^2 \right)^{1/4} \left( 1-\partial_t^2 \right)^{-1/4} \partial_t\Stxpt{s-1/2} u}[\Ropngeq] \right|
         \\&\lesssim \LtNorm*{\left( 1-\partial_t^2 \right)^{1/4}\Stxpt{s-1/2}u}[\Ropngeq]^2
         +\LtNorm*{\left( 1-\partial_t^2 \right)^{-1/4} \partial_t\Stxpt{s-1/2} u}[\Ropngeq]^2
         \\&\lesssim \LtNorm*{\left( 1-\partial_t^2 \right)^{-1/4} \partial_t\Stxpt{s-1/2} u}[\Ropngeq]^2 + \LtNorm*{\Stxpt{s-1/2} u}[\Ropngeq]^2.
    \end{split}
    \end{equation}
    Using \eqref{Eqn::EstBdry::Tangent::MovePDOsToOtherSideOfQH::Tmp3}, \eqref{Eqn::EstBdry::Tangent::MovePDOsToOtherSideOfQH::Tmp4},
    \eqref{Eqn::EstBdry::Tangent::MovePDOsToOtherSideOfQH::Tmp5}, \eqref{Eqn::EstBdry::Tangent::MovePDOsToOtherSideOfQH::Tmp6},
    and \eqref{Eqn::EstBdry::Tangent::MovePDOsToOtherSideOfQH::Tmp7} to estimate the terms in the \(O(\cdot)\)
    in \eqref{Eqn::EstBdry::Tangent::MovePDOsToOtherSideOfQH::Tmp1} completes the proof.
\end{proof}

\begin{lemma}\label{Lemma::EstBdry::Tangent::EstMaxSubBySumOfForms}
    \(\forall u\in \DSetL{1}\),
    \begin{equation}\label{Eqn::EstBdry::Tangent::EstMaxSubBySumOfForms::Conclusion}
    \begin{split}
         &\sum_{|\alpha|\leq \kappa} \LtNorm*{ Y^{\alpha} \Stxp{s} u}[\Ropngeq]^2
            +\LtNorm*{\partial_t \left( 1-\partial_t^2 \right)^{-1/4} \Stxp{s}u}[\Ropngeq]^2
            \\&\leq C_1 \Real \FormQH*[ \Stxpt{s} \Stxpt{s} u][u]
            +C_2 \Real \FormQH*[\Stxpt{s} \partial_t \left( 1-\partial_t^2 \right)^{-1/2}\Stxpt{s}u][u]
            \\&\quad\quad+C_3\sum_{|\alpha|\leq \kappa} \LtNorm*{Y^\alpha \Stxpt{s-\epsilon_0/2}u}[\Ropngeq]^2
            +C_3 \LtNorm*{\partial_t \left( 1-\partial_t^2 \right)^{-1/4} \Stxpt{s-\epsilon_0/2}u}[\Ropngeq]^2.
    \end{split}
    \end{equation}
\end{lemma}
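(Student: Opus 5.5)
This follows from Lemma \ref{Lemma::EstBdry::Tangent::BoundMaxSubPlusTFormQH}, once its first right-hand term \(\Real \FormQH*[\Stxpt{s}u][\Stxpt{s}u]\) is rewritten in the form \(\Real \FormQH*[\Stxpt{s}\Stxpt{s}u][u]\) using Lemma \ref{Lemma::EstBdry::Tangent::MovePDOsToOtherSideOfQH}. The second term of Lemma \ref{Lemma::EstBdry::Tangent::BoundMaxSubPlusTFormQH}, \(\Real\FormQH*[\Stxpt{s}\partial_t(1-\partial_t^2)^{-1/2}\Stxpt{s}u][u]\), already has the desired form and is carried over unchanged.

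\textbf{Step 1 (apply the earlier lemma).} Apply Lemma \ref{Lemma::EstBdry::Tangent::BoundMaxSubPlusTFormQH} to the given \(\Stxp{s}\). This bounds the left-hand side of \eqref{Eqn::EstBdry::Tangent::EstMaxSubBySumOfForms::Conclusion} by four terms:
\begin{itemize}
\item \(C_1\Real \FormQH*[\Stxpt{s}u][\Stxpt{s}u]\);
\item the \(\partial_t\)-form term above;
\item \(C_3\LtNorm*{\Stxpt{s}u}^2\);
\item \(C_4\LtNorm*{\partial_t(1-\partial_t^2)^{-1/4}\Stxpt{s-1}u}^2\).
\end{itemize}

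\textbf{Step 2 (move the operator across the form).} For the first term, the notation allows each \(\Stxpt{s}\) on the right-hand side of Lemma \ref{Lemma::EstBdry::Tangent::BoundMaxSubPlusTFormQH} to be chosen from the same countable family. Apply Lemma \ref{Lemma::EstBdry::Tangent::MovePDOsToOtherSideOfQH} with \(\Stxp{s}[1]=\Stxp{s}[2]=\Stxpt{s}\). The cutoff scales change from \((\delta_1,U_1)\) to an intermediate \((\delta',U')\), which is permitted because the constants \(\delta_2',U_2'\) were chosen with room to spare. This gives
\[
\FormQH*[\Stxpt{s}u][\Stxpt{s}u]=\FormQH*[(\Stxpt{s})^{*}\Stxpt{s}u][u]+O\Bigl(\sum_{|\alpha|\leq\kappa}\LtNorm*{Y^\alpha\Stxpt{s-\epsilon_0/2}u}^2+\LtNorm*{\partial_t(1-\partial_t^2)^{-1/4}\Stxpt{s-\epsilon_0/2}u}^2\Bigr).
\]
By Lemma \ref{Lemma::PDOs::NotationNearBdry::CalcOfPDO}\ref{Item::PDOs::NotationNearBdry::CalcOfPDO::Adjoint}, \((\Stxpt{s})^{*}\) is again of the form \(\Stxpt{s}\), so the main term is \(\Real\FormQH*[\Stxpt{s}\Stxpt{s}u][u]\) and the error is of the type allowed in the \(C_3\) terms. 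Taking real parts gives the same conclusion for \(\Real\FormQH\).

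\textbf{Step 3 (absorb the lower-order remainders).} The remaining terms are handled as follows.
\begin{itemize}
\item \(\LtNorm*{\Stxpt{s}u}^2\): by Lemma \ref{Lemma::EstBdry::Tangent::NoDerivsGain} it is bounded by \(\sum_{|\alpha|\leq\kappa}\LtNorm*{Y^\alpha\Stxpt{s-\epsilon_0}u}^2+\LtNorm*{\partial_t(1-\partial_t^2)^{-1/4}\Stxpt{s-\epsilon_0}u}^2\).
\item Orders \(s-\epsilon_0\) and \(s-1\): since \(\epsilon_0\leq1/2<1\), both are at most \(s-\epsilon_0/2\). Write \(\Stxpt{s-\epsilon_0}=\Stxpt{0}\Stxpt{s-\epsilon_0/2}\), with the first factor of order \(-\epsilon_0/2\le 0\) and hence \(L^2\)-bounded. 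Commute it past \(Y^\alpha\) and past \(\partial_t(1-\partial_t^2)^{-1/4}\) using Lemma \ref{Lemma::PDOs::NotationNearBdry::CalcOfPDO}\ref{Item::PDOs::NotationNearBdry::CalcOfPDO::VectorField},\ref{Item::PDOs::NotationNearBdry::CalcOfPDO::CommutatorOfPartialt}. The commutator terms are of the same or lower order, so these norms are dominated by the \(C_3\) terms. The same argument treats the \(C_4\) term with order \(s-1\).
\end{itemize}

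\textbf{Main obstacle.} The only delicate point is this bookkeeping: commuting with \(Y^\alpha\) produces factors \(\wtEst\) that cannot be absorbed into the operators, because they would destroy property \ref{Item::PDOS::NearBdry::DerivZero}. They must instead be bounded using \(\CjNorm{\wtEst}{L}\lesssim1\), together with the \(L^2\)-boundedness of \((1-\partial_t^2)^{-1/4}\wtEst(1-\partial_t^2)^{1/4}\) already used in Lemma \ref{Lemma::EstBdry::Tangent::MovingdtsPastW}. In addition, \(Y^\alpha\) applied to \(\wtEst\,\Stxpt{}\,u\) is expanded by the Leibniz rule into terms with at most \(|\alpha|\) vector fields, which stay within \(\sum_{|\alpha|\le\kappa}\).
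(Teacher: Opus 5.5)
Your proposal is correct and follows the paper's proof. You combine Lemma \ref{Lemma::EstBdry::Tangent::BoundMaxSubPlusTFormQH} with Lemma \ref{Lemma::EstBdry::Tangent::MovePDOsToOtherSideOfQH}, using that the adjoint of an \(\Stxpt{s}\) is again of that form, and you dispose of the leftover \(\LtNorm*{\Stxpt{s}u}[\Ropngeq]^2\) via Lemma \ref{Lemma::EstBdry::Tangent::NoDerivsGain}. The factor-and-commute argument in your Step 3 is unnecessary: the seminorms \(\SymbolstxpNorm{b}{s'}{L}\) decrease as \(s'\) increases, so an operator of the form \(\Stxpt{s-1}\) or \(\Stxpt{s-\epsilon_0}\) already qualifies as an \(\Stxpt{s-\epsilon_0/2}\), and this is how the paper silently absorbs those terms.
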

\begin{proof}
    Combining Lemmas \ref{Lemma::EstBdry::Tangent::BoundMaxSubPlusTFormQH} and \ref{Lemma::EstBdry::Tangent::MovePDOsToOtherSideOfQH}
    we see
    \begin{equation}\label{Eqn::EstBdry::Tangent::EstMaxSubBySumOfForms::Tmp1}
    \begin{split}
        &\sum_{|\alpha|\leq \kappa} \LtNorm*{ Y^{\alpha} \Stxp{s} u}[\Ropngeq]^2
            +\LtNorm*{\partial_t \left( 1-\partial_t^2 \right)^{-1/4} \Stxp{s}u}[\Ropngeq]^2
            \\&\leq C_1 \Real \FormQH*[ \Stxpt{s} \Stxpt{s} u][u]
            +C_2 \Real \FormQH*[\Stxpt{s} \partial_t \left( 1-\partial_t^2 \right)^{-1/2}\Stxpt{s}u][u]
            \\&\quad\quad+C_3\sum_{|\alpha|\leq \kappa} \LtNorm*{Y^\alpha \Stxpt{s-\epsilon_0/2}u}[\Ropngeq]^2
            +C_3 \LtNorm*{\partial_t \left( 1-\partial_t^2 \right)^{-1/4} \Stxpt{s-\epsilon_0/2}u}[\Ropngeq]^2
            \\&\quad\quad+C_4 \LtNorm*{\Stxpt{s} u}[\Ropngeq]^2.
    \end{split}
    \end{equation}
    Only the final term on the right-hand side of \eqref{Eqn::EstBdry::Tangent::EstMaxSubBySumOfForms::Tmp1} is not of the desired form.
    However, Lemma \ref{Lemma::EstBdry::Tangent::NoDerivsGain} shows that this term can be bounded by terms of the desired form,
    completing the proof.
\end{proof}

\begin{lemma}\label{Lemma::EstBdry::Tangent::EstMaxSubByOps}
    For \(u\in \DSetL{1}\),
    \begin{equation*}
    \begin{split}
         &\sum_{|\alpha|\leq \kappa} \LtNorm*{ Y^{\alpha} \Stxp{s} u}[\Ropngeq]
            +\LtNorm*{\partial_t \left( 1-\partial_t^2 \right)^{-1/4} \Stxp{s}u}[\Ropngeq]
        \\&\lesssim \LtNorm*{\Stxpt{s}\sigma \left( \partial_t+\opL \right)u}[\Ropngeq]
        +\sum_{|\alpha|\leq \kappa} \LtNorm*{Y^\alpha \Stxpt{s-\epsilon_0/2}u}[\Ropngeq]
        +\LtNorm*{\partial_t \left( 1-\partial_t^2 \right)^{-1/4} \Stxpt{s-\epsilon_0/2}u}[\Ropngeq].
    \end{split}
    \end{equation*}
\end{lemma}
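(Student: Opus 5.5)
The plan is to start from Lemma \ref{Lemma::EstBdry::Tangent::EstMaxSubBySumOfForms} and turn the two form terms on its right-hand side into expressions involving \(\left(\partial_t+\opL\right)u\). Both of them have the shape \(\Real \FormQH*[Su][u]\), with
\[
S=\Stxpt{s}\Stxpt{s}
\quad\text{or}\quad
S=\Stxpt{s}\partial_t\left( 1-\partial_t^2 \right)^{-1/2}\Stxpt{s}.
\]
In each case \(S\) is itself an operator of the form \eqref{Eqn::EstBdry::SOpForAssump}. This holds because the symbols are smoothing in \((t,x')\), have the required support, and are independent of \(x_n\) near \(x_n=0\) (property \ref{Item::PDOS::NearBdry::DerivZero}); the factor \(\partial_t(1-\partial_t^2)^{-1/2}\) is an order-zero multiplier in \(t\) and preserves these properties.

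Assumption \ref{Assumption::EstBdry::MaxSubOnSmoothing}\ref{Item::EstBdry::MaxSubOnSmoothing::FormQHGivesOp} then gives
\[
\FormQH*[Su][u]=\Ltip*{Su}{\sigma\left(\partial_t+\opL\right)u}[\Ropngeq].
\]
I would write \(S=\Stxpt{s}[a]\,B\,\Stxpt{s}[b]\), where \(B\) is either the identity or \(\partial_t(1-\partial_t^2)^{-1/2}\). Taking the adjoint of the left factor, Lemma \ref{Lemma::PDOs::NotationNearBdry::CalcOfPDO}\ref{Item::PDOs::NotationNearBdry::CalcOfPDO::Adjoint} gives
\[
\left|\Ltip*{Su}{\sigma\left(\partial_t+\opL\right)u}[\Ropngeq]\right|
=\left|\Ltip*{B\Stxpt{s}u}{\Stxpt{s}\sigma\left(\partial_t+\opL\right)u}[\Ropngeq]\right|
\leq \LtNorm*{\Stxpt{s}u}[\Ropngeq]\,\LtNorm*{\Stxpt{s}\sigma\left(\partial_t+\opL\right)u}[\Ropngeq],
\]
using that \(B\) is bounded on \(L^2\). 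I then apply \(AB\leq \sconst A^2+\lconst B^2\), which bounds these terms by \(\lconst\LtNorm*{\Stxpt{s}\sigma(\partial_t+\opL)u}^2+\sconst\LtNorm*{\Stxpt{s}u}^2\).

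The remaining term \(\LtNorm*{\Stxpt{s}u}^2\) is handled by Lemma \ref{Lemma::EstBdry::Tangent::NoDerivsGain}. That lemma bounds it by \(\sum_{|\alpha|\leq\kappa}\LtNorm*{Y^\alpha\Stxpt{s-\epsilon_0}u}^2\) plus the \(\partial_t(1-\partial_t^2)^{-1/4}\Stxpt{s-\epsilon_0}u\) term. Since \(\Stxpt{s-\epsilon_0}=\Stxpt{s-\epsilon_0/2}\Stxpt{-\epsilon_0/2}\), and a smoothing factor of negative order only weakens the estimate, these are already among the allowed error terms \(\Stxpt{s-\epsilon_0/2}\).

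One routine point is the order of operations in \(\Stxpt{s}\sigma(\partial_t+\opL)u\), since \(\sigma\) need not commute with \(\Stxpt{s}\). The statement already places \(\sigma\) inside, so no commutator is needed. One also needs \((\partial_t+\opL)u\) to make sense as a distribution on which \(\Stxpt{s}\) acts; this is supplied by Assumption \ref{Assumption::EstBdry::MaxSubOnSmoothing}\ref{Item::EstBdry::MaxSubOnSmoothing::dtplusOplOnDIsFunction} together with the support condition.

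Taking square roots of the squared inequality then gives the stated bound. The main obstacle is checking that the composite \(S\) really is of the form \eqref{Eqn::EstBdry::SOpForAssump}, i.e. that it has a kernel \(K(t,s,x,y')\) with \(\partial_{x_n}K=0\) near \(x_n=0\). This is where property \ref{Item::PDOS::NearBdry::DerivZero} of the notation from Section \ref{Section::PseudodifferentialOps::Bdry} is essential. It also requires the countable sums implicit in the notation to converge suitably, so that Assumption \ref{Assumption::EstBdry::MaxSubOnSmoothing}\ref{Item::EstBdry::MaxSubOnSmoothing::FormQHGivesOp} applies term by term, or by a limiting argument.
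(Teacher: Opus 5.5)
Your proposal is correct and is essentially the paper's own proof. The paper likewise applies Assumption \ref{Assumption::EstBdry::MaxSubOnSmoothing}\ref{Item::EstBdry::MaxSubOnSmoothing::FormQHGivesOp} to the two form terms of Lemma \ref{Lemma::EstBdry::Tangent::EstMaxSubBySumOfForms}, moves the left smoothing factor across the inner product, uses Cauchy--Schwarz with \(\partial_t(1-\partial_t^2)^{-1/2}\) bounded on \(L^2\), and absorbs \(\LtNorm{\Stxpt{s}u}^2\) via Lemma \ref{Lemma::EstBdry::Tangent::NoDerivsGain}. One small correction: passing from \(\Stxpt{s-\epsilon_0}\) to \(\Stxpt{s-\epsilon_0/2}\) needs no factorization, since the \(\Symbolstxp{s-\epsilon_0/2}\) seminorms are dominated by the \(\Symbolstxp{s-\epsilon_0}\) seminorms, so every \(\Stxpt{s-\epsilon_0}\) is already a \(\Stxpt{s-\epsilon_0/2}\).
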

\begin{proof}
    We have, using Assumption \ref{Assumption::EstBdry::MaxSubOnSmoothing}\ref{Item::EstBdry::MaxSubOnSmoothing::FormQHGivesOp}
    with \(S=\Stxp{s}[1] \Stxp{s}[2]\),
    \begin{equation}\label{Eqn::EstBdry::Tangent::EstMaxSubByOps::Tmp1}
    \begin{split}
        &\left| \FormQH*[ \Stxp{s}[1] \Stxp{s}[2] u][u] \right|
        =\left| \Ltip*{\Stxp{s}[1] \Stxp{s}[2] u}{\sigma \left( \partial_t+\opL \right)u}[\Ropngeq] \right|
        \\&\lesssim \LtNorm*{\Stxpt{s}u}[\Ropngeq]^2 + \LtNorm*{\Stxpt{s} \sigma \left( \partial_t+\opL \right)u}[\Ropngeq]^2
        \\&\lesssim  \LtNorm*{\Stxpt{s} \sigma \left( \partial_t+\opL \right)u}[\Ropngeq]^2
        +\sum_{|\alpha|\leq \kappa} \LtNorm*{Y^\alpha \Stxpt{s-\epsilon_0/2}u}[\Ropngeq]^2
        \\&\quad\quad\quad+\LtNorm*{\partial_t \left( 1-\partial_t^2 \right)^{-1/4} \Stxpt{s-\epsilon_0/2}u}[\Ropngeq]^2,
    \end{split}
    \end{equation}
    where in the final estimate
    we used Lemma \ref{Lemma::EstBdry::Tangent::NoDerivsGain}.
    Similarly, using Assumption \ref{Assumption::EstBdry::MaxSubOnSmoothing}\ref{Item::EstBdry::MaxSubOnSmoothing::FormQHGivesOp}
    with \(S=\Stxp{s}[1] \partial_t \left( 1-\partial_t^2 \right)^{-1/2}\Stxp{s}[2]\),
    \begin{equation}\label{Eqn::EstBdry::Tangent::EstMaxSubByOps::Tmp2}
    \begin{split}
         &\left| \FormQH*[\Stxp{s}[1] \partial_t \left( 1-\partial_t^2 \right)^{-1/2}\Stxp{s}[2]u][u] \right|
         \\&=\left| \Ltip*{\Stxp{s}[1]  \partial_t \left( 1-\partial_t^2 \right)^{-1/2}\Stxp{s}[2] u}{\sigma \left( \partial_t+\opL \right)u}[\Ropngeq]   \right|
         \\&\lesssim \LtNorm*{\partial_t \left( 1-\partial_t^2 \right)^{-1/2}\Stxpt{s}u}[\Ropngeq]^2 + \LtNorm*{\Stxpt{s}\sigma  \left( \partial_t+\opL \right)u}[\Ropngeq]^2
         \\&\lesssim \LtNorm*{\Stxpt{s}u}[\Ropngeq]^2 + \LtNorm*{\Stxpt{s}\sigma  \left( \partial_t+\opL \right)u}[\Ropngeq]^2
        \\&\lesssim  \LtNorm*{\Stxpt{s} \sigma \left( \partial_t+\opL \right)u}[\Ropngeq]^2
        +\sum_{|\alpha|\leq \kappa} \LtNorm*{Y^\alpha \Stxpt{s-\epsilon_0/2}u}[\Ropngeq]^2
        \\&\quad\quad\quad+\LtNorm*{\partial_t \left( 1-\partial_t^2 \right)^{-1/4} \Stxpt{s-\epsilon_0/2}u}[\Ropngeq]^2.
    \end{split}
    \end{equation} 
    Using \eqref{Eqn::EstBdry::Tangent::EstMaxSubByOps::Tmp1}
    and \eqref{Eqn::EstBdry::Tangent::EstMaxSubByOps::Tmp2} to bound the right-hand side of \eqref{Eqn::EstBdry::Tangent::EstMaxSubBySumOfForms::Conclusion},
    the result follows.
\end{proof}

\begin{lemma}\label{Lemma::EstBdry::Tangent::EstMaxSubByOpsNTimes}
        For \(u\in \DSetL{1}\), \(N\in \Zgeq\),
    \begin{equation*}
    \begin{split}
         &\sum_{|\alpha|\leq \kappa} \LtNorm*{ Y^{\alpha} \Stxp{s} u}[\Ropngeq]
            +\LtNorm*{\partial_t \left( 1-\partial_t^2 \right)^{-1/4} \Stxp{s}u}[\Ropngeq]
        \\&\lesssim \LtNorm*{\Stxpt{s}\sigma \left( \partial_t+\opL \right)u}[\Ropngeq]
        +\sum_{|\alpha|\leq \kappa} \LtNorm*{Y^\alpha \Stxpt{s-N\epsilon_0/2}u}[\Ropngeq]
        +\LtNorm*{\partial_t \left( 1-\partial_t^2 \right)^{-1/4} \Stxpt{s-N\epsilon_0/2}u}[\Ropngeq].
    \end{split}
    \end{equation*}
\end{lemma}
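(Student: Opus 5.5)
This follows by induction on \(N\), iterating Lemma \ref{Lemma::EstBdry::Tangent::EstMaxSubByOps}. The case \(N=0\) is trivial, since the left-hand side is then dominated by the error terms at order \(s\) (take \(\Stxpt{s}=\Stxp{s}\)). The case \(N=1\) is Lemma \ref{Lemma::EstBdry::Tangent::EstMaxSubByOps} itself, because \(s-\epsilon_0/2\) is the shift for \(N=1\).

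For the inductive step, assume the estimate holds for \(N\) with the pair \((\delta_1,U_1)\subset(\delta_2',U_2')\). The two error terms on the right,
\[
\sum_{|\alpha|\leq\kappa}\LtNorm*{Y^\alpha\Stxpt{s-N\epsilon_0/2}u}[\Ropngeq]
\quad\text{and}\quad
\LtNorm*{\partial_t(1-\partial_t^2)^{-1/4}\Stxpt{s-N\epsilon_0/2}u}[\Ropngeq],
\]
are exactly the left-hand side of Lemma \ref{Lemma::EstBdry::Tangent::EstMaxSubByOps} at order \(s-N\epsilon_0/2\). The countable sums implicit in the notation are harmless, because the constants there depend only on symbol seminorm bounds, which are summable by \eqref{Eqn::PDOs::NearBdry::CL}. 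Applying Lemma \ref{Lemma::EstBdry::Tangent::EstMaxSubByOps} to each of these operators bounds them by
\[
\LtNorm*{\Stxpt{s-N\epsilon_0/2}\sigma(\partial_t+\opL)u}[\Ropngeq]
\]
plus the same two quantities at order \(s-(N+1)\epsilon_0/2\). Finally, \(\LtNorm{\Stxpt{s-N\epsilon_0/2}\sigma(\partial_t+\opL)u}\lesssim\LtNorm{\Stxpt{s}\sigma(\partial_t+\opL)u}\), because an operator of lower order is in particular an operator of order \(s\) (the symbol seminorms in \eqref{Eqn::PDOs::SymboltxpSemiNorm} are monotone in the order).

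The main obstacle is bookkeeping of the supports. Each application of Lemma \ref{Lemma::EstBdry::Tangent::EstMaxSubByOps} enlarges the support of the operators from \((\delta_1,U_1)\) to \((\delta_2',U_2')\). To iterate \(N\) times, I will fix intermediate parameters \(\delta_1=\eta_0<\eta_1<\cdots<\eta_N=\delta_2'\) and \(U_1=V_0\Subset V_1\Subset\cdots\Subset V_N=U_2'\). I then apply Lemma \ref{Lemma::EstBdry::Tangent::EstMaxSubByOps} at step \(k\) with the pair \((\eta_{k-1},V_{k-1})\subset(\eta_k,V_k)\). This is legitimate because that lemma was proved for an arbitrary nested pair of such parameters; its constants depend only on the stated quantities. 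Since \(N\) is fixed, the resulting constants depend only on \(N\) and the allowed data.

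Operators supported in an intermediate set are in particular of the \(\Stxpt{\cdot}\) type for the final set, so the statement follows. The step requiring care is that the intermediate operators remain of the required form, in particular property \ref{Item::PDOS::NearBdry::DerivZero}. This is automatic, since the lemmas only produce operators of that class.
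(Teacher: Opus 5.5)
Your proposal is correct and follows the paper's proof, which simply applies Lemma \ref{Lemma::EstBdry::Tangent::EstMaxSubByOps} \(N\) times (with \(N=0\) trivial). Several details you supply are exactly what the paper leaves implicit: the nested chain of supports \((\eta_{k-1},V_{k-1})\subset(\eta_k,V_k)\); the fact that operators of type \(\Stxpt{s-N\epsilon_0/2}\) are also of type \(\Stxpt{s}\); and the treatment of the implicit countable families (most cleanly justified by normalizing each member, since the estimate is homogeneous in the operator).
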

\begin{proof}
    For \(N=0\), there is nothing to prove. For \(N\geq 1\),
    apply Lemma \ref{Lemma::EstBdry::Tangent::EstMaxSubByOps} \(N\) times.
\end{proof}

\begin{proof}[Proof of Proposition \ref{Prop::EstBdry::Tangent::MainTangentProp}]

        Let \(\phi_1\), \(\phi_2\) be as in the statement of the proposition (see the statement of Theorem \ref{Thm::EstBdry::MainThm::New}).
    Fix 
    \(\psi_1,\psi_2\in \CinftycptSpace[\R\times \CubengeqOneHalf]\)
    with
    \(\phi_1\prec \psi_1\prec \psi_2 \prec \phi_2\)
    and \(\psi_1=1\) on a neighborhood of 
    \(\overline{U_2' \times \Cubengeq{\delta_2'}}\).
    We will show
    \begin{equation}\label{Eqn::EstBdry::Tangent::MainTangentProp::ToShow}
        \LtNorm*{\Stxp{s+\epsilon_0} u}[\Ropngeq]
        \lesssim \GsNorm*{\phi_2 \left( \partial_t+\opL \right)u}{s}[\Ropngeq]+\LtNorm*{\phi_2 u}[\Ropngeq].
    \end{equation}
    \eqref{Eqn::EstBdry::Tangent::MainTangentProp::Conclusion} follows from \eqref{Eqn::EstBdry::Tangent::MainTangentProp::ToShow}
    as in the proof of Lemma \ref{Lemma::PDOs::NotationNearBdry::ConcludeSobolevEstimates}.

    Fix \(N\) so large \(s-N\epsilon_0/2+2\kappa\leq 0\) and \(-N\epsilon_0/2+\kappa\leq 0\). Note that,
    \begin{equation}\label{Eqn::EstBdry::Tangent::MainTangentProp::Tmp1}
    \begin{split}
        &\LtNorm*{\partial_t \left( 1-\partial_t^2 \right)^{-1/4}\Stxpt{s-N\epsilon_0/2}u}[\Ropngeq]
        \lesssim \LtNorm*{\partial_t \Stxpt{s-N\epsilon_0/2}u}[\Ropngeq]
        \\&\lesssim \LtNorm*{\Stxpt{s-N\epsilon_0/2+2\kappa}u}[\Ropngeq]
        \lesssim \LtNorm*{\phi_2 u}[\Ropngeq].
    \end{split}
    \end{equation}
    Similarly, using the form of \(Y_0,\ldots, Y_r\) described in Proposition \ref{Prop::EstBdry::Reduction::MainReduction} \ref{Item::EstBdry::Reduction::MainReduction::ReplaceWWithY},
    we have
    \begin{equation}\label{Eqn::EstBdry::Tangent::MainTangentProp::Tmp2}
    \begin{split}
         &\sum_{|\alpha|\leq \kappa}\LtNorm*{Y^{\alpha} \Stxpt{s-N\epsilon_0/2} u}[\Ropngeq]
         \lesssim \sum_{j=0}^\kappa \LtNorm*{\partial_{x_n}^j \Stxpt{s-N\epsilon_0/2+\kappa-j} u}[\Ropngeq]
         \\&\lesssim \sum_{j=0}^\kappa \GsNorm*{\partial_{x_n}^j \psi_2 u}{s-N\epsilon_0/2+\kappa-j}[\Ropngeq]
         \lesssim \sum_{j=0}^{2\kappa} \GsNorm*{\partial_{x_n}^j \psi_2 u}{s-N\epsilon_0/2+\kappa-j}[\Ropngeq].
    \end{split}
    \end{equation}

    Using the form of \(\opL\) described in Proposition \ref{Prop::EstBdry::Reduction::MainReduction} \ref{Item::EstBdry::Reduction::MainReduction::HighOrderTermPos},
    we see that \(\partial_t+\opL\) satisfies the conditions of \(\opP\) in Proposition \ref{Prop::EstBdry::ReduceTangent::MainReduceTangentProp}.
    Proposition \ref{Prop::EstBdry::ReduceTangent::MainReduceTangentProp} shows
    \begin{equation}\label{Eqn::EstBdry::Tangent::MainTangentProp::Tmp3}
    \begin{split}
         &\sum_{j=0}^{2\kappa} \GsNorm*{\partial_{x_n}^j \psi_2 u}{s-N\epsilon_0/2+\kappa-j}[\Ropngeq]
         \lesssim \GsNorm*{\phi_2\left( \partial_t+\opL \right)u}{s-N\epsilon_0/2-\kappa}[\Ropngeq]
         +\GsNorm*{\phi_2 u}{s-N\epsilon_0/2+\kappa}[\Ropngeq]
         \\&\lesssim \GsNorm*{\phi_2\left( \partial_t+\opL \right)u}{s}[\Ropngeq]
         +\LtNorm*{\phi_2 u}[\Ropngeq].
    \end{split}
    \end{equation}
    Combining \eqref{Eqn::EstBdry::Tangent::MainTangentProp::Tmp2} and \eqref{Eqn::EstBdry::Tangent::MainTangentProp::Tmp3}, we see
    \begin{equation}\label{Eqn::EstBdry::Tangent::MainTangentProp::Tmp4}
    \begin{split}
         &\sum_{|\alpha|\leq \kappa}\LtNorm*{Y^{\alpha} \Stxpt{s-N\epsilon_0/2} u}[\Ropngeq]
         \lesssim \GsNorm*{\phi_2\left( \partial_t+\opL \right)u}{s}[\Ropngeq]
         +\LtNorm*{\phi_2 u}[\Ropngeq].
    \end{split}
    \end{equation}
    Applying Lemmas \ref{Lemma::EstBdry::Tangent::NoDerivsGain} and \ref{Lemma::EstBdry::Tangent::EstMaxSubByOpsNTimes}, we see
    \begin{equation*}
    \begin{split}
         &\LtNorm*{\Stxp{s+\epsilon_0} u}[\Ropngeq]
         \lesssim \sum_{|\alpha|\leq \kappa} \LtNorm*{ Y^{\alpha} \Stxp{s} u}[\Ropngeq]
            +\LtNorm*{\partial_t \left( 1-\partial_t^2 \right)^{-1/4} \Stxp{s}u}[\Ropngeq]
        \\&\lesssim 
        \LtNorm*{\Stxpt{s}\sigma \left( \partial_t+\opL \right)u}[\Ropngeq]
        +\sum_{|\alpha|\leq \kappa} \LtNorm*{Y^\alpha \Stxpt{s-N\epsilon_0/2}u}[\Ropngeq]
        \\&\quad\quad\quad+\LtNorm*{\partial_t \left( 1-\partial_t^2 \right)^{-1/4} \Stxpt{s-N\epsilon_0/2}u}[\Ropngeq]
        \\&\lesssim \LtNorm*{\Stxpt{s}\sigma \left( \partial_t+\opL \right)u}[\Ropngeq]
        + \GsNorm*{\phi_2\left( \partial_t+\opL \right)u}{s}[\Ropngeq]
         +\LtNorm*{\phi_2 u}[\Ropngeq]
         \\&\lesssim \GsNorm*{\phi_2\left( \partial_t+\opL \right)u}{s}[\Ropngeq]
         +\LtNorm*{\phi_2 u}[\Ropngeq],
    \end{split}
    \end{equation*}
    where the second to last estimate used \eqref{Eqn::EstBdry::Tangent::MainTangentProp::Tmp1} and \eqref{Eqn::EstBdry::Tangent::MainTangentProp::Tmp4}.
    This establishes \eqref{Eqn::EstBdry::Tangent::MainTangentProp::ToShow} and completes the proof.
\end{proof}

    \subsection{Completion of the proof of Theorem \texorpdfstring{\ref{Thm::EstBdry::MainThm::New}}{\ref*{Thm::EstBdry::MainThm::New}}}
    \begin{lemma}\label{Lemma::EstBdry::CompletePF::WorkWithHsNorms}
     Let \(\phi_1,\phi_2\in \CinftycptSpace[\R\times \CubengeqOneHalf]\) with
     \(\phi_1\prec\phi_2\).
     Then, for \(l\in \Zgeq\) with \(l\geq 2\kappa-1\), and \(J\in \Zg\)
     with \(J 2\kappa \leq l+1\), we have \(\forall u\in \DistributionsZeroCM[\R\times \CubengeqOne]\),
     \begin{equation*}
          \HsNorm*{\phi_1 u}{l+1}[\Ropngeq]
          \lesssim \HsNorm*{\phi_2 \left( \partial_t+\opL  \right)^J u}{l+1-J2\kappa}[\Ropngeq]
          +\sum_{j=0}^{J-1} \GsNorm*{\phi_2 \left( \partial_t+\opL \right)^j u}{l+1-j2\kappa}[\Ropngeq].
     \end{equation*}
\end{lemma}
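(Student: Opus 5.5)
We prove the estimate by induction on \(J\), using Proposition \ref{Prop::EstBdry::ReduceTangent::MainReduceTangentProp} (with \(\opP=\partial_t+\opL\), legitimate by the reduction Proposition \ref{Prop::EstBdry::Reduction::MainReduction}\ref{Item::EstBdry::Reduction::MainReduction::HighOrderTermPos}) as the only analytic input. The first step is to compare norms. The isotropic-in-\((t,x)\) norm \(\HsNorm{\cdot}{l+1}[\Ropngeq]\) (with \(\partial_t\) of order \(2\kappa\)), for an integer index \(l+1\), is equivalent on functions supported in a fixed compact set to \(\sum_{j=0}^{l+1}\GsNorm{\partial_{x_n}^j\,\cdot}{l+1-j}[\Ropngeq]\). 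This follows by extending with \(\Extension_L\) from Proposition \ref{Prop::EstBdry::Extension::ClassicalExtension} and using, on the Fourier side, \(\JBracket{(\tau,\xi)}^{2(l+1)}\approx\sum_{j\le l+1}|\xi_n|^{2j}\JBracket{(\tau,\xi')}^{2(l+1-j)}\). The same holds for negative indices in the form needed, \(\sum_j\GsNorm{\partial_{x_n}^j f}{s-j}\lesssim \HsNorm{f}{s}\) for \(s\ge 0\) integer. I will only use this for the index \(l+1-j2\kappa\ge 0\).

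\emph{Case \(J=1\).} Apply Proposition \ref{Prop::EstBdry::ReduceTangent::MainReduceTangentProp} with \(s=l+1\) and cutoffs \(\phi_1\prec\phi_2\). The left side dominates \(\HsNorm{\phi_1u}{l+1}\) by the norm comparison. The right side is \(\sum_{j\le l+1-2\kappa}\GsNorm{\partial_{x_n}^j\phi_2(\partial_t+\opL)u}{l+1-2\kappa-j}+\GsNorm{\phi_2u}{l+1}\). Its first sum is \(\lesssim\HsNorm{\phi_2(\partial_t+\opL)u}{l+1-2\kappa}\), again by the comparison. This is exactly the claim with \(J=1\).

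\emph{Inductive step.} Suppose the claim holds for \(J-1\) and all admissible \(l\) and cutoffs. Choose \(\phi_1\prec\psi\prec\phi_2\). Apply the case \(J=1\) with \(\phi_1\prec\psi\) to get
\[
\HsNorm{\phi_1u}{l+1}\lesssim\HsNorm{\psi(\partial_t+\opL)u}{l+1-2\kappa}+\GsNorm{\psi u}{l+1}.
\]
Set \(v=(\partial_t+\opL)u\) and \(l'+1=l+1-2\kappa\). If \(J\ge2\), then \(l'+1\ge 2\kappa\) and \(l'\ge 2\kappa-1\). Bound \(\HsNorm{\psi v}{l'+1}\) using the \((J-1)\) statement applied to \(v\) with cutoffs \(\psi\prec\phi_2\). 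This produces \(\HsNorm{\phi_2(\partial_t+\opL)^{J}u}{l+1-J2\kappa}\) plus \(\sum_{j=0}^{J-2}\GsNorm{\phi_2(\partial_t+\opL)^{j+1}u}{l+1-(j+1)2\kappa}\). Together with \(\GsNorm{\psi u}{l+1}\le\GsNorm{\phi_2u}{l+1}\), this is the desired sum over \(0\le j\le J-1\).

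The monotonicity \(\GsNorm{\psi u}{s}\lesssim\GsNorm{\phi_2u}{s}\) needs a short justification: \(\psi=\psi\phi_2\), and multiplication by a smooth compactly supported function is bounded on \(\GsSpace{s}\). This holds because it is a pseudodifferential operator in \((t,x')\) depending smoothly on the parameter \(x_n\). The finiteness clause follows along the same chain, since each quoted result carries its own "if the right side is finite" conclusion.

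The main obstacle is the norm comparison between \(\HsSpace{s}\) and the mixed \(\GsSpace{\cdot}\) norms on the half space. One must check that restriction infima and the extension operator behave uniformly, with dependence only on the stated quantities. I would handle this by applying \(\Extension_L\) with \(L\ge l+1\), working on \(\Ropn\) via Plancherel, and restricting back. The constraint \(J2\kappa\le l+1\) ensures all indices stay nonnegative, so the extension lemma applies.
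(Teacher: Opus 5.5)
Your proof is correct and follows the paper's argument. It is an induction on \(J\): the base case comes from Proposition \ref{Prop::EstBdry::ReduceTangent::MainReduceTangentProp} together with the comparison \(\HsNorm{\cdot}{l+1}\approx\sum_{j}\GsNorm{\partial_{x_n}^j\,\cdot}{l+1-j}\), and the inductive step uses an intermediate cutoff \(\psi\). The only difference is cosmetic and lies in the order of peeling: you apply the base case to \(u\) and then the \((J-1)\) case to \((\partial_t+\opL)u\) at the lower index \(l+1-2\kappa\) (correctly checking \(l'\geq 2\kappa-1\)), whereas the paper applies the \(J\) case to \(u\) and then the base case to \((\partial_t+\opL)^J u\).
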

\begin{proof}
     We prove the result by induction on \(J\).  We begin with the base case, \(J=1\).
     By Proposition \ref{Prop::EstBdry::Reduction::MainReduction}\ref{Item::EstBdry::Reduction::MainReduction::HighOrderTermPos},
     \(\partial_t+\opL\) is of the form studied in Proposition \ref{Prop::EstBdry::ReduceTangent::MainReduceTangentProp}.
     We have, using Proposition \ref{Prop::EstBdry::ReduceTangent::MainReduceTangentProp},
     \(\forall u\in \DistributionsZero[\R\times \CubengeqOne]\)
     \begin{equation}\label{Eqn::EstBdry::CompletePF::WorkWithHsNorms::BaseCase}
     \begin{split}
          &\HsNorm*{\phi_1 u}{l+1}[\Ropngeq]
          \approx \sum_{j=0}^{l+1}\GsNorm{\partial_{x_n}^j \phi_1 u}{l+1-j}[\Ropngeq]
          \\&\lesssim \sum_{j=0}^{l+1-2\kappa} \GsNorm*{\partial_{x_n}^j \phi_2 \left( \partial_t+\opL \right) u}{l+1-2\kappa-j}[\Ropngeq] +\GsNorm*{\phi_2 u}{l+1}[\Ropngeq]
          \\&\lesssim \HsNorm*{\phi_2 \left( \partial_t+\opL \right) u}{l+1-2\kappa}[\Ropngeq]+\GsNorm*{\phi_2 u}{l+1}[\Ropngeq].
     \end{split}
     \end{equation}
     Proposition \ref{Prop::EstBdry::ReduceTangent::MainReduceTangentProp} was stated for \(u\in \DistributionsZero[\Ropngeq]\),
     but due to the cutoff functions holds \(\forall u\in \DistributionsZero[\R\times \CubengeqOne]\).
     This completes the proof of the base case.

     We turn to the inductive step and assume the result for \(J\) and prove it for \(J+1\).
     Take \(\psi\in \CinftycptSpace[\R\times \CubengeqOneHalf]\) with \(\phi_1\prec\psi\prec\phi_2\).
     By the inductive hypothesis with \(\phi_2\) replaced by \(\psi\), we have
     \begin{equation}\label{Eqn::EstBdry::CompletePF::WorkWithHsNorms::Inductive1}
     \begin{split}
               \HsNorm*{\phi_1 u}{l+1}[\Ropngeq]
               &\lesssim \HsNorm*{\psi \left( \partial_t+\opL  \right)^J u}{l+1-J2\kappa}[\Ropngeq]
               +\sum_{j=0}^{J-1} \GsNorm*{\psi \left( \partial_t+\opL \right)^j u}{l+1-j2\kappa}[\Ropngeq]
               \\&\lesssim \HsNorm*{\psi \left( \partial_t+\opL  \right)^J u}{l+1-J2\kappa}[\Ropngeq]
               +\sum_{j=0}^{J-1} \GsNorm*{\phi_2 \left( \partial_t+\opL \right)^j u}{l+1-j2\kappa}[\Ropngeq]
     \end{split}     \end{equation}
     Applying \eqref{Eqn::EstBdry::CompletePF::WorkWithHsNorms::BaseCase} with \(\phi_1\)
     replaced by \(\psi\) and \(u\) replaced by \(\left( \partial_t+\opL  \right)^J u\)
     we have
     \begin{equation}\label{Eqn::EstBdry::CompletePF::WorkWithHsNorms::Inductive2}
          \begin{split}
               &\HsNorm*{\psi \left( \partial_t+\opL  \right)^J u}{l+1-J2\kappa}[\Ropngeq]
               \\&\lesssim \HsNorm*{\phi_2 \left( \partial_t+\opL \right)^{J+1}u}{l+1-(J+1)2\kappa}[\Ropngeq]
               +\GsNorm*{\phi_2 \left( \partial_t+\opL \right)^J u}{l+1-J2\kappa}[\Ropngeq].
          \end{split}
     \end{equation}
     Plugging \eqref{Eqn::EstBdry::CompletePF::WorkWithHsNorms::Inductive2} into the right-hand side of
     \eqref{Eqn::EstBdry::CompletePF::WorkWithHsNorms::Inductive1} completes the proof.
\end{proof}

\begin{lemma}\label{Lemma::EstBdry::CompletePF::WorkWithGsNorms}
    Fix \(0<\delta_1<\delta_2<1/2\) and \(U_1\Subset U_2\Subset \R\) open.
    Let \(\phi_1\in \CinftycptSpace[U_1\times \Cubengeq{\delta_1}]\),
    \(\phi_2\in \CinftycptSpace[\R\times \CubengeqOneHalf]\)
    with 
    \(\phi_2=1\) on a neighborhood of \(\overline{U_1}\times \CubengeqDeltaOneClosure\),
    and fix \(s\in \R\) and \(N\in \Zg\).
    Then, for \(u\in \DSetL{N}\) and \(0\leq j\leq N\),
    \begin{equation*}
    \begin{split}
      &\GsNorm*{\phi_1 \left( \partial_t+\opL \right)^j u}{s-j\epsilon_0}[\Ropngeq]
      \lesssim \GsNorm*{\phi_2\left( \partial_t+\opL \right)^N u}{s-N\epsilon_0}[\Ropngeq]
      +\sum_{k=j}^{N-1}\LtNorm*{\phi_2 \left( \partial_t+\opL \right)^k u}[\Ropngeq],
    \end{split}
    \end{equation*}
    where when \(j=N\) we take the convention \(\sum_{k=N}^{N-1}=0\).
\end{lemma}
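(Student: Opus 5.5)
We will prove the estimate by downward induction on \(j\), from \(j=N\) to \(j=0\), using Proposition \ref{Prop::EstBdry::Tangent::MainTangentProp} as the engine. The case \(j=N\) is immediate. Since \(\phi_1\prec\phi_2\) and \(\phi_1\) is bounded, the localized \(\GsSpace{s-N\epsilon_0}\)-norm with \(\phi_1\) is controlled by the one with \(\phi_2\). To make this rigorous, write \(\phi_1=\phi_1\phi_2\) and use that multiplication by a smooth bounded function is bounded on \(\GsSpace{s}[\Ropngeq]\), by the \((t,x')\)-calculus.

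For the inductive step, fix \(j<N\) and assume the result for \(j+1\) for all admissible pairs of cutoffs and all \(s\). Set \(v:=(\partial_t+\opL)^j u\). Since \(u\in\DSetL{N}\), Definition \ref{Defn::EstBdry::DSet} gives \(v\in\DSetL{N-j}\subseteq\DSetL{1}\). Choose an intermediate cutoff \(\psi\) with \(\phi_1\prec\psi\prec\phi_2\), and with \(\psi=1\) on a neighborhood of \(\overline{U_1'}\times\overline{\Cubengeq{\delta_1'}}\) for some \(U_1\Subset U_1'\Subset U_2\) and \(\delta_1<\delta_1'<\delta_2\). We also require that \(\psi\) be supported in some \(U_1''\times\Cubengeq{\delta_1''}\) on which \(\phi_2=1\); this is possible after shrinking, since \(\phi_2=1\) on a neighborhood of \(\overline{U_1}\times\CubengeqDeltaOneClosure\).

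Apply Proposition \ref{Prop::EstBdry::Tangent::MainTangentProp} to \(v\) with the pair \((\phi_1,\psi)\) and with \(s\) replaced by \(s-(j+1)\epsilon_0\). This gives
\begin{equation*}
\GsNorm*{\phi_1 v}{s-j\epsilon_0}\lesssim \GsNorm*{\psi(\partial_t+\opL)v}{s-(j+1)\epsilon_0}+\LtNorm*{\psi v}.
\end{equation*}
Here \((\partial_t+\opL)v=(\partial_t+\opL)^{j+1}u\). We then bound \(\LtNorm*{\psi v}\le\LtNorm*{\phi_2(\partial_t+\opL)^j u}\), which is exactly the \(k=j\) term of the sum. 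For the first term, apply the inductive hypothesis at level \(j+1\) with cutoffs \((\psi,\phi_2)\). It is bounded by \(\GsNorm*{\phi_2(\partial_t+\opL)^Nu}{s-N\epsilon_0}+\sum_{k=j+1}^{N-1}\LtNorm*{\phi_2(\partial_t+\opL)^ku}\). Adding the two bounds gives the claim at level \(j\), including the finiteness statement, since every step preserves "right side finite \(\Rightarrow\) left side finite".

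The main obstacle is bookkeeping on the cutoffs. Proposition \ref{Prop::EstBdry::Tangent::MainTangentProp} is stated in the setting of Theorem \ref{Thm::EstBdry::MainThm::New}, where the inner cutoff lives in \(U_1\times\Cubengeq{\delta_1}\) and the outer one equals \(1\) near its closure. We therefore need a finite nested chain of cutoffs \(\phi_1=\chi_j\prec\chi_{j+1}\prec\cdots\prec\chi_N=\phi_2\), each admissible relative to the next with suitable intermediate \(\delta\)'s and \(U\)'s. Choosing strictly increasing parameters \(\delta_1=\delta^{(j)}<\delta^{(j+1)}<\cdots<\delta_2\), and similarly for the open sets, makes this possible. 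All constants then depend only on the allowed quantities, since the chain is finite and has length at most \(N\). A further point to check is that each \(\chi_k\) is supported in \(\R\times\CubengeqOneHalf\), which holds because \(\delta_2<1/2\).
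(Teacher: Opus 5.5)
Your proof is correct and follows the paper's argument essentially verbatim. The paper also inducts downward on \(j\). It applies Proposition \ref{Prop::EstBdry::Tangent::MainTangentProp} to \((\partial_t+\opL)^{j}u\in\DSetL{1}\) with an intermediate cutoff \(\psi\) in place of \(\phi_2\), absorbs \(\|\psi(\partial_t+\opL)^j u\|_{L^2}\) into the \(k=j\) term, and then invokes the inductive hypothesis with \(\phi_1\) replaced by \(\psi\) (and \((\delta_1,U_1)\) replaced by a slightly larger pair). The explicit nested chain of cutoffs in your last paragraph is just this induction unwound. It is harmless but unnecessary, since your inductive hypothesis is already quantified over all admissible cutoffs.
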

\begin{proof}
     We prove the result by induction on \(j\). The base case, \(j=N\), is trivial.
     We assume the result for \(j\in \{1,2,\ldots, N\}\) and prove it for \(j-1\).
     Fix \(\delta'\in (\delta_1,\delta_2)\) and \(V\Subset U_2\) open with \(U_1\Subset V\)
     and \(\phi_2 =1\) on a neighborhood of \(\overline{V}\times \overline{\Cubengeq{\delta'}}\).
     Take \(\psi\in \CinftycptSpace[V\times \Cubengeq{\delta'}]\)
     with \(\psi=1\) on a neighborhood of \(\overline{U_1}\times \CubengeqDeltaOneClosure\).
     Let \(u\in \DSetL{N}\), then \((\partial_t+\opL)^{j-1}u\in \DSetL{N-(j-1)}\subseteq \DSetL{1}\).
     Applying  Proposition \ref{Prop::EstBdry::Tangent::MainTangentProp}
     with \(\phi_2\) replaced by \(\psi\) and \(u\) replaced by \((\partial_t+\opL)^{j-1}u\),
     we have
     \begin{equation*}
     \begin{split}
           \GsNorm*{\phi_1 \left( \partial_t+\opL \right)^{j-1}u}{s-(j-1)\epsilon_0}[\Ropngeq]
           &\lesssim \GsNorm*{\psi \left( \partial_t+\opL \right)^{j}u}{s-j\epsilon_0}[\Ropngeq]
           +\LtNorm*{\psi\left( \partial_t+\opL \right)^{j-1}u}[\Ropngeq]
           \\&\lesssim \GsNorm*{\psi \left( \partial_t+\opL \right)^{j}u}{s-j\epsilon_0}[\Ropngeq]
           +\LtNorm*{\phi_2\left( \partial_t+\opL \right)^{j-1}u}[\Ropngeq].
     \end{split}
     \end{equation*}
     Applying the inductive hypothesis to \( \GsNorm*{\psi \left( \partial_t+\opL \right)^{j}u}{s-j\epsilon_0}[\Ropngeq]\)
     (with \(\phi_1\) replaced by \(\psi\))
     completes the proof.
\end{proof}

\begin{proof}[Proof of Theorem \ref{Thm::EstBdry::MainThm::New}]
     Fix \(\delta'\in (\delta_1,\delta_2)\) and \(V\Subset U_2\) open with \(U_1\Subset V\)
     and \(\phi_2 =1\) on a neighborhood of \(\overline{V}\times \overline{\Cubengeq{\delta'}}\).
     Take \(\psi\in \CinftycptSpace[V\times \Cubengeq{\delta'}]\)
     with \(\psi=1\) on a neighborhood of \(\overline{U_1}\times \CubengeqDeltaOneClosure\).
     Applying   Lemma \ref{Lemma::EstBdry::CompletePF::WorkWithHsNorms}
     with \(\phi_2\) replaced by \(\psi\) shows
     \(\forall u\in \DistributionsZero[\R\times \CubengeqOne]\)
     \begin{equation}\label{Eqn::EstBdry::CompletePF::FinalProof::Tmp1}
\begin{split}
          \HsNorm*{\phi_1 u}{l+1}[\Ropngeq]
               &\lesssim \HsNorm*{\psi \left( \partial_t+\opL  \right)^J u}{l+1-J2\kappa}[\Ropngeq]
               +\sum_{j=0}^{J-1} \GsNorm*{\psi \left( \partial_t+\opL \right)^j u}{l+1-j2\kappa}[\Ropngeq]
          \\&\lesssim \HsNorm*{\phi_2 \left( \partial_t+\opL  \right)^J u}{l+1-J2\kappa}[\Ropngeq]
               +\sum_{j=0}^{J-1} \GsNorm*{\psi \left( \partial_t+\opL \right)^j u}{l+1-j2\kappa}[\Ropngeq].
\end{split}     \end{equation}
     Now assuming \(u\in \DSetL{N}\) and using that \(J\leq N\), Lemma \ref{Lemma::EstBdry::CompletePF::WorkWithGsNorms}
     shows for \(0\leq j\leq J-1\),
     \begin{equation}\label{Eqn::EstBdry::CompletePF::FinalProof::Tmp2}
     \begin{split}
          &\GsNorm*{\psi \left( \partial_t+\opL \right)^j u}{l+1-j2\kappa}[\Ropngeq]
          \leq \GsNorm*{\psi \left( \partial_t+\opL \right)^j u}{l+1-j\epsilon_0}[\Ropngeq]
          \\&\lesssim \GsNorm*{\phi_2 \left( \partial_t+\opL \right)^N u }{l+1-N\epsilon_0}[\Ropngeq]
          +\sum_{k=j}^{N-1} \LtNorm*{\phi_2 \left( \partial_t +\opL\right)^k u}[\Ropngeq].
     \end{split}
     \end{equation}
     Plugging \eqref{Eqn::EstBdry::CompletePF::FinalProof::Tmp2} into \eqref{Eqn::EstBdry::CompletePF::FinalProof::Tmp1}
     shows
     \begin{equation}\label{Eqn::EstBdry::CompletePF::FinalProof::Tmp3}
     \begin{split}
           \HsNorm*{\phi_1 u}{l+1}[\Ropngeq]
           \lesssim  &\HsNorm*{\phi_2 \left( \partial_t+\opL  \right)^J u}{l+1-J2\kappa}[\Ropngeq]
           +\GsNorm*{\phi_2 \left( \partial_t+\opL \right)^N u }{l+1-N\epsilon_0}[\Ropngeq]
           \\&+\sum_{k=0}^{N-1} \LtNorm*{\phi_2 \left( \partial_t +\opL\right)^k u}[\Ropngeq].
     \end{split}
     \end{equation}
     Finally, using
     \(\GsNorm{\cdot}{s}[\Ropngeq]\lesssim \HsNorm{\cdot}{s\vee 0}[\Ropngeq]\), \(\forall s\in \R\),
     \eqref{Eqn::EstBdry::CompletePF::FinalProof::Tmp3} implies \eqref{Eqn::EstBdry::MainThm::MainEst::New}
     and completes the proof.
\end{proof}

\section{Estimates on the interior}\label{Section::EstInt}
Let \(W_0,W_1,\ldots, W_r\in \CinftySpace[\CubenOne][\Rn]\) be smooth vector fields on \(\CubenOne\) satisfying
H\"ormander's condition of order \(m\). For \(|\alpha|\leq 2 \kappa\), let \(b_{\alpha}\in \CinftySpace[\CubenOne][\MatrixSpace[M][M][\C]]\)
be a matrix and set
\begin{equation*}
\begin{split}
     &\opL:=\sum_{|\alpha|\leq 2\kappa} b_{\alpha}(x)W^{\alpha},
\end{split}
\end{equation*}
so that \(\opL\) is a partial differential operator of order \(2\kappa\).
Let \(\sigma\in \CinftySpace[\CubenOne][(0,\infty)]\) be given.

\begin{assumption}\label{Assumption::EstBdryInt::MaxSubAssump}
    \(\exists C_1, C_2\geq 0\),
    such that
    \begin{equation}\label{Eqn::EstBdryInt::MaxSubAssumption}
        \sum_{|\alpha|=\kappa}\LtNorm*{W^\alpha f}[\Rn]^2\leq 
         C_1 \Real \Ltip*{f}{\sigma \opL f}[\Rn]
         +C_2 \LtNorm{f}[\Rn]^2,\quad \forall f\in \CinftycptSpace[\CubenOneHalf][\C^M].
    \end{equation}
\end{assumption}

Set \(\epsilon_0=\min\{1/m,1/2\}\).

\begin{theorem}\label{Thm::EstBdryInt::MainThm::New}
    Under Assumption \ref{Assumption::EstBdryInt::MaxSubAssump}, the following holds.
    Let \(\phi_1,\phi_2\in \CinftycptSpace[\R\times \CubenOneHalf]\) with \(\phi_1\prec \phi_2\).
    Then, \(\forall s\in \R\), \(\forall N\in \Zgeq\), \(\exists C_{s,N,\phi_1,\phi_2}\geq 0\),
    \begin{equation*}
        \HsNorm*{\phi_1 u}{s+\epsilon_0}[\Ropn]
        \leq C_{s,N,\phi_1,\phi_2}
        \left( \HsNorm*{\phi_2 \left( \partial_t+\opL \right)u}{s}[\Ropn] + \HsNorm*{\phi_2 u}{s-N}[\Ropn] \right),
    \end{equation*}
    \(\forall u\in \Distributions*[\R\times \CubenOne][\C^M]\), where if the right-hand side is finite, so is the left-hand side.
    \(C_{s,N,\phi_1,\phi_2}\) can be chosen to depend only on
    \(s\), \(n\), \(m\), \(r\), \(\kappa\), \(M\), \(N\), \(\phi_1\), \(\phi_2\), \(c_{\CubenSevenEighthsClosure}\) (as in \eqref{Eqn::HorVfs::LowerBoundDet}),
    \(C_1\) and \(C_2\), and an upper bound for
    \begin{equation*}
        \max_{0\leq j\leq r} \CjNorm*{W_j}{L}[\Cuben{7/8}][\Rn] + \max_{\alpha}\CjNorm*{b_{\alpha}}{L}[\CubenOneHalf]+\max_{1\leq j\leq 2}\CjNorm*{\phi_j}{L}+\CjNorm{\sigma}{L}[\CubenOneHalf]+\sup_{x\in \CubenOneHalf} \sigma(x)^{-1},
    \end{equation*}
    where \(L\) can be chosen to depend only on \(N\), \(s\), \(m\), \(n\), \(\kappa\), and \(r\).
\end{theorem}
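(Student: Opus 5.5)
The plan is to mimic the tangential argument of Section \ref{Section::EstNearBdry::EstimatesInTangentDirections}, now with full isotropic pseudodifferential operators \(\Stx{s}\) from Section \ref{Section::PseudodifferentialOps::Interior}. Since there is no boundary, there are no boundary terms, no \(\wtEst\) factors, and no need for Proposition \ref{Prop::EstBdry::ReduceTangent::MainReduceTangentProp}. First, as in Lemma \ref{Lemma::EstBdry::Reduction::BoundAllDerivsByHighDerivs} (easier, since we can integrate by parts freely on \(\Rn\)), upgrade Assumption \ref{Assumption::EstBdryInt::MaxSubAssump} to control \(\sum_{|\alpha|\leq\kappa}\LtNorm{W^\alpha f}^2\). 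Then define the time-dependent form
\[
\FormQH[u][v]:=\Ltip{u}{\sigma(\partial_t+\opL)v}[\Ropn].
\]
By Remark \ref{Rmk::EstBdry::MainThm::tdoesntplayroleInMaxSub} (\(\sigma\) is \(t\)-independent), integrating the upgraded assumption in \(t\) gives
\[
\sum_{|\alpha|\leq\kappa}\LtNorm{W^\alpha v}^2\lesssim \Real\FormQH[v][v]+\LtNorm{v}^2
\]
for \(v\) supported in \(\R\times\CubenOneHalf\). A subtlety is that \(\opL\) is given in the nondivergence form \(\sum b_\alpha W^\alpha\). For Cauchy--Schwarz splits I would rewrite \(\Ltip{f}{\sigma\opL g}\) by integrating by parts as \(\sum_{|\alpha|,|\beta|\leq\kappa}\Ltip{W^\alpha f}{c_{\alpha,\beta}W^\beta g}\), which is legitimate for compactly supported functions.

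Next I would reproduce Lemmas \ref{Lemma::EstBdry::Tangent::BounddtStxp}–\ref{Lemma::EstBdry::Tangent::EstMaxSubByOps} with \(\Stx{s}\) in place of \(\Stxp{s}\). The \(\partial_t\) control comes from testing against \((\Stx{s})^*\partial_t(1-\partial_t^2)^{-1/2}\Stx{s}u\), and the \(W\)-control from testing \(\Stx{s}u\) against itself. Commutators \([\Stx{s},W_j]\) and \([\Stx{s},\Mult{v}]\) are handled by Lemma \ref{Lemma::PDOs::NotationInterior::CalcOfPDO}, and the resulting error terms, which carry at most \(\kappa-1\) \(W\)'s on one side, are absorbed via Proposition \ref{Prop::Horvfs::GainInterior::RSGain} as in Lemma \ref{Lemma::EstBdry::Tangent::Changes1s2Tor1r2}. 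This should give, for \(u\) with enough a priori regularity,
\[
\sum_{|\alpha|\le\kappa}\LtNorm{W^\alpha\Stx{s}u}+\LtNorm{\partial_t\Lambdatx[-\kappa]\Stx{s}u}\lesssim \LtNorm{\Stxt{s}\sigma(\partial_t+\opL)u}+\text{same quantities with }\Stxt{s-\epsilon_0/2}.
\]
Proposition \ref{Prop::Horvfs::GainInterior::RSGain} applied once more (with \(\kappa-1\geq0\) derivatives) then gives \(\LtNorm{\Stx{s+\epsilon_0}u}\) on the left.

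The iteration then runs as in Lemma \ref{Lemma::EstBdry::Tangent::EstMaxSubByOpsNTimes}: apply the estimate repeatedly on a nested family of cutoffs until the error is \(\Stxt{s-K\epsilon_0/2}\) with \(K\) large. The remaining terms are bounded by \(\LtNorm{\Stxt{s'}u}\) with \(s'\leq s-N\), hence by \(\HsNorm{\phi_2u}{s-N}\). Here I use the trivial bound \(\LtNorm{W^\alpha\Stxt{r}u}\lesssim\LtNorm{\Stxt{r+\kappa}u}\), valid since \(W\) has isotropic order \(1\leq 2\kappa\)-scaling order, and similarly for \(\partial_t\). Lemma \ref{Lemma::PDOs::NotationInterior::ConcludeSobolevEstimates} then converts the result to Sobolev norms of \(\phi_1u\), \(\phi_2(\partial_t+\opL)u\), and \(\phi_2u\).

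I expect the main obstacle to be the qualitative statement ``if the right-hand side is finite, so is the left-hand side'', because the integrations by parts above presume \(u\) is regular enough for all pairings to be finite. I would handle this by first proving the estimate with \(\Stx{s}\) replaced by a family of regularized operators \(\Stx{s}\Theta_\varepsilon\), where \(\Theta_\varepsilon\) has symbol \(\chi(\varepsilon\JBracket{(\tau,\xi)})\) and is uniformly bounded in \(\Symbolstx{0}\). The constants are then uniform in \(\varepsilon\), and every quantity is finite because \(u\in\HsSpace{s-N}\) locally. Letting \(\varepsilon\to0\) with weak compactness, as in the proof of Lemma \ref{Lemma::PDOs::NotationNearBdry::ConcludeSobolevEstimates}, yields both the finiteness and the estimate. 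Throughout, the dependence of constants is tracked as in Section \ref{Section::HorVfs}.
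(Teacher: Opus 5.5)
Your proposal is correct and is essentially the paper's proof. The paper also obtains Theorem \ref{Thm::EstBdryInt::MainThm::New} as an easier reprise of Proposition \ref{Prop::EstBdry::Tangent::MainTangentProp}, replacing \(\Stxp{s}\), \(\GsSpace{s}\), and \(\FormQH\) by \(\Stx{s}\), \(\HsSpace{s}\), and \(\Ltip{u}{\sigma(\partial_t+\opL)v}[\Ropn]\), and using Assumption \ref{Assumption::EstBdryInt::MaxSubAssump} integrated in \(t\) in place of the smoothing maximal subelliptic hypothesis; your upgrade to \(|\alpha|\leq\kappa\) and your bilinear rewriting of \(\Ltip{f}{\sigma\opL g}\) are details the paper leaves implicit. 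Your regularization \(\Stx{s}\Theta_\varepsilon\) is already built into the paper's notation, since the operators \(\Stx{s}\) have symbols in \(\bigcap_{s'}\Symbolstx{s'}\), so every step holds for arbitrary distributions and the finiteness claim follows from the frequency-cutoff and weak-compactness argument of Lemma \ref{Lemma::PDOs::NotationInterior::ConcludeSobolevEstimates}, just as in your limit \(\varepsilon\to0\).
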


The proof of Theorem \ref{Thm::EstBdryInt::MainThm::New} is an easier reprise of the proof of Proposition \ref{Prop::EstBdry::Tangent::MainTangentProp}.
The changes needed are:
\begin{itemize}
    \item We do not need any of the reductions from Proposition \ref{Prop::EstBdry::Reduction::MainReduction}.
    \item Replace \(\GsSpace{s}\) with \(\HsSpace{s}\), throughout.
    \item Replace \(\Stxp{s}\), \(\Stxpt{s}\), and \(\Lambdatxp[s]\) with \(\Stx{s}\), \(\Stxt{s}\), and \(\Lambdatx[s]\), respectively.
        Here we can use the simpler notation from Section \ref{Section::PseudodifferentialOps::Interior}, and so do not need
        to use the \(\wtEst\) functions in our estimates.
    \item Formally replace \(\FormQ[u][v]\) with \(\Ltip*{u}{\sigma \opL v}[\Rn]\) and replace \(\FormQH[u][v]\) with \(\Ltip*{u}{\sigma \left( \partial_t+\opL \right)v}[\Ropn]\),
    wherever they appear.
    \item Because \(\Stx{s}\) and \(\Stxt{s}\) are infinitely smoothing in all variables (and have compact support), one may work with general distributions at every step.
    \item Since we are working with all distributions, we no longer need to discuss the set \(\DSetL{1}\).
    \item In Proposition \ref{Prop::EstBdry::Tangent::MainTangentProp}, the \(\partial_{x_n}\) derivatives are treated differently. This is because
        integration by parts in the \(x_n\)-variable would introduce troublesome boundary terms. In the setting of Theorem \ref{Thm::EstBdryInt::MainThm::New},
        we do not need to distinguish any direction, as integration by parts never introduces boundary terms.
    \item Because \(\HsSpace{s}\) covers regularity in all variables, we do not need to use Proposition \ref{Prop::EstBdry::ReduceTangent::MainReduceTangentProp} as
        is used at the end of the proof of Proposition \ref{Prop::EstBdry::Tangent::MainTangentProp}.
    \item Though not necessary, one may replace \(1-\partial_t^2\) wherever it appears by \(\Lambdatx[4\kappa]\). This is  because
        Proposition \ref{Prop::Horvfs::GainInterior::RSGain} gives a better estimate than Proposition \ref{Prop::HorVfs::GainBoundary::MainGainProp}.
        This is convenient, since \(\Lambdatx[-4\kappa]\) is a pseudodifferential operator (in all variables), while \(\left( 1-\partial_t^2 \right)^{-1}\) is not.
        This allows several steps to be (slightly) simplified using the calculus of pseudodifferential operators.
    \item Assumption \ref{Assumption::EstBdryInt::MaxSubAssump} takes the place of
        Assumption \ref{Assumption::EstBdry::MaxSubOnSmoothing}\ref{Item::EstBdry::MaxSubOnSmoothing::MaxSubEstimate}.
        Indeed, by integrating \eqref{Eqn::EstBdryInt::MaxSubAssumption} in \(t\) one obtains an estimate of the form \eqref{Eqn::EstBdry::MaxSubOnSmoothing::MaxSubEstimate};
        see Remark \ref{Rmk::EstBdry::MainThm::tdoesntplayroleInMaxSub}.
\end{itemize}

Because Theorem \ref{Thm::EstBdryInt::MainThm::New} holds for all distributions and all \(s\) and \(N\)
we can iterate the result. This leads to the next corollary.

\begin{corollary}\label{Cor::EstBdryInt::MainCor}
    Under Assumption \ref{Assumption::EstBdryInt::MaxSubAssump}, the following holds.
    Let \(\phi_1,\phi_2\in \CinftycptSpace[\R\times \CubenOneHalf]\) with \(\phi_1\prec \phi_2\).
    Then, \(\forall s\in \R\), \(\forall N\in \Zg\), \(\exists C_{s,N,\phi_1,\phi_2}\geq 0\),
    \(\forall u\in \Distributions*[\R\times \CubenOne][\C^M]\),
    \begin{equation*}
        \HsNorm*{\phi_1 u}{s+\epsilon_0}[\Ropn]
        \leq C_{s,N,\phi_1,\phi_2}
        \left( 
            \HsNorm*{\phi_2 \left( \partial_t+\opL \right)^N u}{s-(N-1)\epsilon_0}[\Ropn]
            +\LtNorm*{\phi_2 u}[\Ropn]
         \right),
    \end{equation*}
    where if the right-hand side is finite, so is the left-hand side.
    Here, \(C_{s,N,\phi_1,\phi_2}\geq 0\) can be chosen to depend on only the same quantities as in Theorem \ref{Thm::EstBdryInt::MainThm::New}.
\end{corollary}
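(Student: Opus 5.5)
The plan is to iterate Theorem \ref{Thm::EstBdryInt::MainThm::New} \(N\) times along a nested chain of cutoffs, each step trading one power of \(\partial_t+\opL\) for a gain of \(\epsilon_0\). First choose cutoffs \(\phi_1=\psi_0\prec\psi_1\prec\cdots\prec\psi_N=\phi_2\) in \(\CinftycptSpace[\R\times \CubenOneHalf]\). The constants will depend only on the allowed quantities, because the \(\psi_j\) can be chosen depending only on \(\phi_1,\phi_2,N\). Write \(u_k:=(\partial_t+\opL)^k u\). Since we are working in \(\Distributions*[\R\times \CubenOne][\C^M]\), each \(u_k\) is again a distribution, so Theorem \ref{Thm::EstBdryInt::MainThm::New} applies to every \(u_k\) with no domain issues.

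Applying Theorem \ref{Thm::EstBdryInt::MainThm::New} to \(u_k\) with cutoffs \(\psi_k\prec\psi_{k+1}\) and regularity index \(s-k\epsilon_0\) gives
\begin{equation*}
\HsNorm*{\psi_k u_k}{s+\epsilon_0-k\epsilon_0}\lesssim \HsNorm*{\psi_{k+1}u_{k+1}}{s-k\epsilon_0}+\HsNorm*{\psi_{k+1}u_k}{s-k\epsilon_0-N'},
\end{equation*}
where \(N'\) is a large integer to be fixed. The term \(\HsNorm*{\psi_{k+1}u_{k+1}}{s-k\epsilon_0}\) is exactly the quantity the next step controls, since at step \(k+1\) the index is \(s+\epsilon_0-(k+1)\epsilon_0=s-k\epsilon_0\). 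Chaining the steps for \(k=0,\ldots,N-1\) therefore bounds \(\HsNorm{\phi_1u}{s+\epsilon_0}\) by \(\HsNorm*{\phi_2 u_N}{s-(N-1)\epsilon_0}\) plus the error terms \(\sum_{k<N}\HsNorm*{\psi_{k+1}u_k}{s-k\epsilon_0-N'}\).

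The main obstacle is controlling these lower-order error terms by \(\LtNorm{\phi_2 u}\) alone. Because \(\opL\) has order \(2\kappa\) in the non-isotropic scale (and \(\partial_t\) also has order \(2\kappa\)), \(\psi_{k+1}u_k\) is a sum of derivatives of total order \(\le 2\kappa k\) applied to \(\psi' u\), with the cutoff \(\psi'\) chosen slightly larger. This is done by commuting the cutoff through \(\partial_t+\opL\) with the Leibniz rule, and all the resulting cutoffs stay \(\prec \phi_2\). Hence
\begin{equation*}
\HsNorm*{\psi_{k+1}u_k}{s-k\epsilon_0-N'}\lesssim \HsNorm*{\phi_2 u}{s-k\epsilon_0-N'+2\kappa k}\le \LtNorm*{\phi_2u}
\end{equation*}
once \(N'\ge s+2\kappa N\). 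The coefficients \(b_\alpha\) and the \(W_j\) enter only through finitely many \(C^L\) norms, which are among the allowed dependencies.

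The last detail is the clause ``if the right-hand side is finite, so is the left-hand side.'' This follows by propagating finiteness down the chain, because each application of Theorem \ref{Thm::EstBdryInt::MainThm::New} carries the same clause.
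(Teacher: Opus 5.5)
Your argument is the paper's proof written out as an explicit chain: the paper inducts on \(N\), at each step applying Theorem \ref{Thm::EstBdryInt::MainThm::New} to \((\partial_t+\opL)^N u\) with index \(s-N\epsilon_0\) and a large \(N'\). It absorbs the lower-order term through \(\|\psi_2(\partial_t+\opL)^N u\|_{H^{\sigma}}\lesssim\|\phi_2 u\|_{H^{\sigma+2\kappa N}}\) with \(\psi_2\prec\phi_2\), exactly as you do.

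The one fix is in your cutoff chain: take \(\psi_N\prec\phi_2\) instead of \(\psi_N=\phi_2\). With \(\psi_N=\phi_2\), the error term at the last step \(k=N-1\) is \(\|\phi_2 u_{N-1}\|_{H^{s-(N-1)\epsilon_0-N'}}\), and this is not controlled by \(\|\phi_2 u\|\), because the commutator of \(\phi_2\) with \((\partial_t+\opL)^{N-1}\) sees \(u\) near the edge of \(\supp\phi_2\). With \(\psi_N\prec\phi_2\), your locality argument goes through, and the main term is handled by \(\|\psi_N u_N\|_{H^{\sigma}}\lesssim\|\phi_2 u_N\|_{H^{\sigma}}\).
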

\begin{proof}
    We prove the result by induction on \(N\). The base case, \(N=1\) follows by taking \(N\geq s\)
    in Theorem \ref{Thm::EstBdryInt::MainThm::New}.  We assume the result for \(N\geq 1\) and prove it for \(N+1\).
    Let \(\psi_1,\psi_2 \in \CinftycptSpace[\R\times \CubenOneHalf]\) with \(\phi_1\prec\psi_1\prec\psi_2\prec\phi_2\).
    By the inductive hypothesis applied with
    \(\phi_2\) replaced by \(\psi_1\), we have
    \begin{equation}\label{Eqn::Cor::EstBdryInt::MainCor::Tmp1}
\begin{split}
            \HsNorm*{\phi_1 u}{s+\epsilon_0}[\Ropn]
            &\lesssim 
                \HsNorm*{\psi_1 \left( \partial_t+\opL \right)^N u}{s-(N-1)\epsilon_0}[\Ropn]
                +\LtNorm*{\psi_1 u}[\Ropn]
            \\&\lesssim \HsNorm*{\psi_1 \left( \partial_t+\opL \right)^N u}{s-(N-1)\epsilon_0}[\Ropn]
                +\LtNorm*{\phi_2 u}[\Ropn].
\end{split}
    \end{equation}
    Applying Theorem \ref{Thm::EstBdryInt::MainThm::New} with \(u\) replaced by
    \(\left( \partial_t+\opL \right)^N u\), \(\phi_1\) and \(\phi_2\) replaced by \(\psi_1\) and \(\psi_2\), respectively,
    \(s\) replaced by \(s-N\epsilon_0\),
    and \(N\) replaced by \(N'\geq s-N\epsilon_0+2\kappa N\), we have
    \begin{equation}\label{Eqn::Cor::EstBdryInt::MainCor::Tmp2}
    \begin{split}
         &\HsNorm*{\psi_1 \left( \partial_t+\opL \right)^N u}{s-(N-1)\epsilon_0}[\Ropn]
        \\&\lesssim
        \HsNorm*{\psi_2 \left( \partial_t+\opL \right)^{N+1} u}{s-N\epsilon_0}[\Ropn]
        +\HsNorm*{\psi_2 \left( \partial_t+\opL \right)^N u}{s-N\epsilon_0-N'}[\Ropn]
        \\&\lesssim \HsNorm*{\phi_2 \left( \partial_t+\opL \right)^{N+1} u}{s-N\epsilon_0}[\Ropn]
        +\HsNorm*{\phi_2 u}{s-N\epsilon_0+2\kappa N-N'}[\Ropn]
        \\&\lesssim \HsNorm*{\phi_2 \left( \partial_t+\opL \right)^{N+1} u}{s-N\epsilon_0}[\Ropn]
        +\LtNorm*{\phi_2 u}[\Ropn].
    \end{split}
    \end{equation}
    Plugging \eqref{Eqn::Cor::EstBdryInt::MainCor::Tmp2} into \eqref{Eqn::Cor::EstBdryInt::MainCor::Tmp1}
    completes the proof.
\end{proof}

\begin{remark}\label{Rmk::EstBdryInt::MainCor::CanUseSigmaInsteadOfLebesgue}
    Similar to Remark \ref{Rmk::EstBdry::MainThm::CanUseSigmaInsteadOfLebesgue},
    one can replace Lebesgue measure on \(\CubenOneHalf\)
    with \(\sigma(x)\: dx\) in every \(\LtSpace\)-norm in the above assumptions and conclusions (though not in the inner products), and the assumptions and results
     are unchanged.
\end{remark}


\section{Completion of the proof of Theorem \texorpdfstring{\ref{Thm::Result::MainThm::New}}{\ref*{Thm::Result::MainThm::New}}}
In this section, we complete the proof of Theorem \ref{Thm::Result::MainThm::New}
by reducing it to Theorem \ref{Thm::EstBdry::MainThm::New} and Corollary \ref{Cor::EstBdryInt::MainCor}.
Fix \(x_0\in \Omega\), \(t_0\in \R\) and an open neighborhood \(V\subseteq \R\times \Omega\) of \((t_0,x_0)\).  
We will show that there exist \(\phi_1,\phi_2\in \CinftycptSpace[V]\) 
with \(\phi_1\prec\phi_2\) and \(\phi_1=1\) on a neighborhood of \((x_0, t_0)\),
such that the conclusions of Theorem \ref{Thm::Result::MainThm::New} hold
(this is done in Propositions \ref{Prop::Proof::Boundary} and \ref{Prop::Proof::Interior}).
The general statement of Theorem \ref{Thm::Result::MainThm::New} (for general \(\phi_1,\phi_2\in \CinftycptSpace[\R\times \Omega]\) with
\(\phi_1\prec\phi_2\)) then follows by a simple partition of unity argument, which we leave to the reader.
We may, without loss of generality, assume \(t_0=0\), by translation invariance in the \(t\)-variable.


We begin by explaining the maximally subelliptic estimates we use.

\begin{lemma}[The interior maximally subelliptic estimate]\label{Lemma::Proof::New::InteriorMaxSubEstimate}
    \(\forall U\Subset \Omega\cap\InteriorN\), relatively compact and open, \(\exists C_U\geq 0\), \(\forall f\in \CinftycptSpace[U][\C^M]\),
    \begin{equation*}
        \HsWNorm{f}{\kappa}^2\leq C_U \left( \left\lvert \FormQ[f][f] \right\rvert + \LtNorm{f}^2  \right). 
    \end{equation*}
\end{lemma}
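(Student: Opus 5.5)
This is a direct consequence of Assumptions \ref{Asssumption::Result::TestFunctionsInDomain}, \ref{Asssumption::Result::DomainIsHkappaW}, and the sectoriality of \(\QDomainQ\), via Remark \ref{Rmk::Result::DomainIsHkappaW::MaxSub}. The plan has three steps.

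First, I will fix \(U\Subset\Omega\cap\InteriorN\) and set \(\Compact_0:=\overline{U}\). This is a compact subset of \(\Omega\) contained in the interior of \(\ManifoldN\). Consequently every \(f\in\CinftycptSpace[U][\C^M]\) vanishes to infinite order on \(\BoundaryN\) (indeed it vanishes near \(\BoundaryN\)). Hence \(f\in\TestFunctionsZeroCM[\Omega]\), and by Assumption \ref{Asssumption::Result::TestFunctionsInDomain}, \(f\in\DomainQ\).

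Second, I will apply Remark \ref{Rmk::Result::DomainIsHkappaW::MaxSub} to \(f\), which has \(\supp(f)\subseteq\Compact_0\). That remark rests on Assumption \ref{Asssumption::Result::DomainIsHkappaW} and the Closed Graph Theorem argument of Remark \ref{Rmk::Result::DomainIsHkappaW::Continuity}. It gives a constant \(C_{\Compact_0}\) with \(\HsWNorm{f}{\kappa}\le C_{\Compact_0}\DomainQNorm{f}\).

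The one point to check here is that the remark applies to the global norm \(\HsWNorm{f}{\kappa}\), not just the local seminorms. Continuity into \(\HsWlocSpace{\kappa}[\Omega;\C^M]\) gives control of \(\HsWNorm{f}{\kappa}[V]\) for one fixed open \(V\) with \(\Compact_0\subset V\Subset\Omega\), by finitely many \(\DomainQ\)-seminorms. Since \(f\) is supported in \(\Compact_0\), \(W^\alpha f\) vanishes off \(V\), so \(\HsWNorm{f}{\kappa}[V]=\HsWNorm{f}{\kappa}\).

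Third, I will square this bound and use \eqref{Eqn::Core::QNormSameAsFormQ}, which follows from sectoriality through \eqref{Eqn::Result::SectorialConsequence}:
\begin{equation*}
\DomainQNorm{f}^2\approx\left|\FormQ[f][f]\right|+\LtNorm{f}^2 .
\end{equation*}
Combining the two gives the stated inequality with \(C_U\) a multiple of \(C_{\Compact_0}^2\).

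I expect no real obstacle. The only subtlety is the localization in the second step, and it is handled by the support argument above.
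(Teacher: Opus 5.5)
Your proposal is correct and matches the paper's proof: apply Remark \ref{Rmk::Result::DomainIsHkappaW::MaxSub} with \(\Compact_0=\overline{U}\) to get \(\HsWNorm{f}{\kappa}\lesssim\DomainQNorm{f}\), then bound \(\DomainQNorm{f}^2\) by \(|\FormQ[f][f]|+\LtNorm{f}^2\). The paper leaves implicit two details that you supply correctly: that \(f\in\TestFunctionsZeroCM[\Omega]\subseteq\DomainQ\) by Assumption \ref{Asssumption::Result::TestFunctionsInDomain}, and the support argument identifying the local and global \(\HsWSpace{\kappa}\)-norms.
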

\begin{proof}
    This follows from Remark \ref{Rmk::Result::DomainIsHkappaW::MaxSub}. Indeed, that remark shows
    \begin{equation*}
        \HsWNorm{f}{\kappa}\leq C_U' \DomainQNorm{f}. 
    \end{equation*}
    Since \(\DomainQNorm{f}^2\lesssim \left( \left\lvert \FormQ[f][f] \right\rvert + \LtNorm{f}^2  \right)\), the result follows.
\end{proof}

\begin{lemma}\label{Lemma::Proof::New::BoundarySubellipticEstimate}
    Fix \(x_0\in \Omega\cap \BoundaryN\).  Then \(\exists A=A(x_0)\geq 0\) such that the following holds.
    Let \(T\) be as in Assumption \ref{Asssumption::Result::BoundaryAssumption::New}. 
    Then, \(T:\LtSpaceNoMeasure*[\R][\DomainL]\rightarrow \CinftycptSpace*[\R][\DomainQ]\)
    and \(T:\LtSpaceNoMeasure*[\R][\DomainL]\rightarrow \CinftycptSpace*[\R][\HsWSpace{\kappa}[\Omega;\C^M]]\)
    and these maps are continuous.  Moreover,
            \(\forall u\in \LtSpaceNoMeasure*[\R][\DomainL]\),
                \begin{equation}\label{Eqn::Proof::New::BoundarySubellipticEstimate}
                    \int \HsWNorm{Tu (t,\cdot)}{\kappa}^2 \: dt
                    \leq A
                    \left( \int \left| \FormQ[Tu(t,\cdot)][Tu(t,\cdot)] \right|\: dt 
                    + \LtNorm*{T u}[\R\times \ManifoldN][dt\times d\Vol]^2  \right).
                \end{equation}
\end{lemma}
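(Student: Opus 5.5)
The plan is to mirror the argument of Lemma \ref{Lemma::Cor::TtMapsRightSpaces}, now with only Assumptions \ref{Asssumption::Result::DomainIsHkappaW} and \ref{Asssumption::Result::BoundaryAssumption::New} available. There are three steps, carried out in order.

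\emph{Step 1: the factorization.} Write \(T\) in the chart as \(Tu(t,\Psi(x))=\iint K(t,s,x,y')u(s,\Psi(y',x_n))\,ds\,dy'\). Fix \(\eta\in\CinftycptSpace[\R]\) equal to \(1\) on the \(t\)-projection of \(\supp K\), and choose \(\chi(x)\in \CinftycptSpace[\CubengeqOneHalf]\) with \(\partial_{x_n}\chi=0\) near \(x_n=0\) and \(\chi=1\) on the \(x\)-support of \(K\). Then \(T=\Psi_*(\eta\chi)\Psi^*\,T\). Moreover, \(\eta\chi\) times any such \(K\) is again an admissible kernel.

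\emph{Step 2: boundedness and smoothness in \(t\).} By Assumption \ref{Asssumption::Result::BoundaryAssumption::New}\ref{Item::Result::BoundaryAssumption::SOperator::New}, \(T\) maps \(\LtSpaceNoMeasure*[\R][\DomainL]\) into \(\LtSpaceNoMeasure*[\R][\DomainQ]\). This map is continuous by the closed graph theorem: if \(u_j\to u\) in \(\LtSpaceNoMeasure[\R][\DomainL]\) and \(Tu_j\to v\) in \(\LtSpaceNoMeasure[\R][\DomainQ]\), then \(Tu_j\to Tu\) in \(\LtSpace\), since \(T\) is an integral operator with a smooth compactly supported kernel in \((t,s,x')\) acting on \(L^2\). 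Hence \(v=Tu\).

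To upgrade to \(\CinftycptSpace[\R]\)-valued maps, apply the same assumption to the kernels \(\partial_t^kK\), which are admissible. This gives that \(\partial_t^k Tu\in \LtSpaceNoMeasure[\R][\DomainQ]\) continuously for all \(k\), with compact \(t\)-support. Sobolev embedding in \(t\) then yields \(T:\LtSpaceNoMeasure[\R][\DomainL]\to\CinftycptSpace[\R][\DomainQ]\). Composing with the restriction map, which is continuous into \(\HsWlocSpace{\kappa}[\Omega]\) by Remark \ref{Rmk::Result::DomainIsHkappaW::Continuity}, and using that \(Tu(t,\cdot)\) is supported in the fixed compact set \(\Psi(\supp_x K)\Subset\Omega\), gives the \(\HsWSpace{\kappa}[\Omega;\C^M]\)-valued statement.

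\emph{Step 3: the estimate \eqref{Eqn::Proof::New::BoundarySubellipticEstimate}.} Let \(\Compact_0\) be a fixed compact subset of \(\Psi(\CubengeqOneHalf)\) containing the \(x\)-supports of all admissible kernels. This requires a uniform choice: fix \(\Compact_0=\Psi(\overline{\Cubengeq{1/2}})\), which is compact in \(\Omega\). Remark \ref{Rmk::Result::DomainIsHkappaW::MaxSub} gives
\[
\HsWNorm{f}{\kappa}\le C_{\Compact_0}\DomainQNorm{f}
\]
for \(f\in\DomainQ\) with \(\supp f\subseteq\Compact_0\). Combine this with \eqref{Eqn::Core::QNormSameAsFormQ}, which gives \(\DomainQNorm{f}^2\lesssim |\FormQ[f][f]|+\LtNorm{f}^2\). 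Apply the result to \(f=Tu(t,\cdot)\), which lies in \(\DomainQ\) for every \(t\) by Step 2 and is supported in \(\Compact_0\), and integrate in \(t\). This yields \eqref{Eqn::Proof::New::BoundarySubellipticEstimate} with \(A=C_{\Compact_0}^2\cdot C\), where \(C\) is the constant implicit in \eqref{Eqn::Core::QNormSameAsFormQ}. Since \(A\) depends only on \(\Compact_0\), and hence only on \(x_0\) through the chart, it is independent of \(T\).

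The main obstacle is the \(t\)-regularity in Step 2. The assumption gives only an \(L^2\)-in-\(t\) statement, with no continuity. The plan there is the closed graph argument applied to each differentiated kernel \(\partial_t^k K\), followed by the one-dimensional embedding \(H^1(\R;\DomainQ)\hookrightarrow C(\R;\DomainQ)\).
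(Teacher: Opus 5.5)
Your proposal is correct and follows the paper's proof essentially step by step. Both arguments use the closed graph theorem for \(T\) and for each \(\partial_t^k T\), then the vector-valued Sobolev embedding in \(t\), and finally Remark~\ref{Rmk::Result::DomainIsHkappaW::MaxSub} on the fixed compact set \(\Compact_0=\overline{\Psi(\CubengeqOneHalf)}\), combined with \eqref{Eqn::Core::QNormSameAsFormQ} and integration in \(t\). The factorization in your Step~1 is never used and can be dropped.
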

\begin{proof}
    Since \(K\) smooth with compact support, we have \(T:\LtSpaceNoMeasure*[\R\times \ManifoldN][\C^M]\rightarrow \LtSpaceNoMeasure*[\R\times \ManifoldN][\C^M]\) continuously, and therefore the map \(T:\LtSpaceNoMeasure*[\R][\DomainL]\rightarrow \LtSpaceNoMeasure[\R][\DomainQ]\) is closed. 
    The Closed Graph Theorem then implies \(T:\LtSpaceNoMeasure*[\R][\DomainL]\rightarrow \LtSpaceNoMeasure*[\R][\DomainQ]\)
    is continuous (Assumption \ref{Asssumption::Result::BoundaryAssumption::New} did not assume the map was continuous, but here the Closed Graph Theorem
    gives the continuity). 
    Since \(\partial_t^j T\) is of the same form as \(T\), we conclude \(\partial_t^jT: \LtSpaceNoMeasure*[\R][\DomainL]\rightarrow \LtSpaceNoMeasure*[\R][\DomainQ]\)
    and is continuous, \(\forall j\in \Zgeq\). The vector valued Sobolev Embedding Theorem (along with the compact support in \(t\) of the Schwartz
    kernel of \(T\)) shows \(T:\LtSpaceNoMeasure*[\R][\DomainL]\rightarrow \CinftycptSpace*[\R][\DomainQ]\), continuously. 

    Set \(\Compact_0\coloneqq \overline{\Psi(\CubengeqOneHalf)}\). Note that \(\Compact_0\Subset \Psi(\CubengeqOne)\subseteq \Omega\)
    and therefore \(\Compact_0\) is a compact subset of \(\Omega\).
    Also, \(T u(t,\cdot)\) is supported in \(\Compact_0\Subset \Psi(\CubengeqOne)\Subset \Omega\), for any \(u\) and \(t\). 
    Remark \ref{Rmk::Result::DomainIsHkappaW::MaxSub} shows
    \begin{equation*}
        \left\{ f\in \DomainQ : \supp(f)\subseteq \Compact_0 \right\} \hookrightarrow \HsWSpace{\kappa}[\Omega;\C^M],
    \end{equation*}
    continuously. We conclude, \(\exists A',A\geq 0\) such that
    any \(T\) as in the lemma takes \(T:\LtSpaceNoMeasure*[\R][\DomainL]\rightarrow \CinftycptSpace*[\R][\HsWSpace{\kappa}[\Omega;\C^M]]\) continuously and 
    \begin{equation*}
    \begin{split}
        & \HsWNorm*{Tu(t,\cdot)}{\kappa}^2 \leq A' \DomainQNorm{Tu(t,\cdot)}^2
        \leq A \left( \left\lvert \FormQ[Tu(t,\cdot)][Tu(t,\cdot)] \right\rvert + \LtNorm*{Tu(t,\cdot)}[\ManifoldN][\Vol]^2  \right).
    \end{split}
    \end{equation*}
    Integrating in \(t\) gives \eqref{Eqn::Proof::New::BoundarySubellipticEstimate}.
\end{proof}

We turn to the harder case \(x_0\in \Omega\cap\BoundaryN\), which is completed by the next proposition.
\begin{proposition}\label{Prop::Proof::Boundary}
    Assume all the assumptions from Section \ref{Section::MainResult}.
    Let \(x_0\in \Omega\cap \BoundaryN\), and \(\Psi:\CubengeqOne\xrightarrow{\sim}\Psi(\CubengeqOne)\subseteq \Omega\) as in Assumption \ref{Asssumption::Result::BoundaryAssumption::New}.
    Take \(U_1\Subset U_2\Subset \CubengeqOneHalf\) neighborhoods of \(0\)
    and \(0<\delta_1<\delta_2<1\) so small that \([-\delta_2,\delta_2]\times \Psi(\overline{U_2} )\Subset V\).
    Let \(\phih_1\in \CinftycptSpace*[(-\delta_1,\delta_1)\times U_1]\) with \(\phih_1=1\) on a neighborhood of \((0,0)\)
    and \(\phih_2\in \CinftycptSpace*[(-\delta_2,\delta_2)\times U_2]\) be such that \(\phih_2=1\)
    on a neighborhood of \([-\delta_1,\delta_1]\times \overline{U_1}\).
    Set \(\phi_j(t,\xi):=\phih_j(t,\Psi^{-1}(\xi))\in \CinftycptSpace[V]\).
    Then, the conclusions of Theorem \ref{Thm::Result::MainThm::New} hold for this choice
    of \(\phi_1,\phi_2\).
\end{proposition}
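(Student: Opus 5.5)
We reduce Proposition \ref{Prop::Proof::Boundary} to Theorem \ref{Thm::EstBdry::MainThm::New} by pulling everything back to \(\R\times\CubengeqOne\) via \(\Psi\).

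\textbf{Pull-back data.} Set \(\sigma\) to be the density of \(\Psi^*\Vol\) with respect to Lebesgue measure; it is smooth and strictly positive. Pull back the vector fields: choose \(W_0:=\Psi^*X=c_0(x)\partial_{x_n}\), together with \(\Psi^*W_1,\ldots,\Psi^*W_r\). Theorem \ref{Thm::EstBdry::MainThm::New} requires \(c_0\) to be constant, so rescale: replace \(X\) by \(c_0^{-1}c_0(0)X\), which is still in the \(C^\infty\)-module generated by the \(W_j\). Write each \(\Psi^*W_j=\widetilde W_j+g_j W_0\) with \(\widetilde W_j\) having no \(\partial_{x_n}\) component. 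The list \(W_0,\widetilde W_1,\ldots,\widetilde W_r\) spans the same module, so it still satisfies H\"ormander's condition, possibly of a different order. Its order is still \(\le m\), because module-equivalent generating sets yield the same spans of iterated commutators up to each order.

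\textbf{The form.} Re-express \(\FormQF\) in these vector fields to get coefficients \(a_{\alpha,\beta}\), and define \(\FormQ\) as in \eqref{Eqn::EstBdry::FormulaForQ}, with \(\opL\) the pulled-back operator. Possibly shrink and rescale the cube so that \(\CubengeqOne\) corresponds to \(\Psi(\Cubengeq{\delta})\); this keeps the supports of \(\phih_j\) inside \(\CubengeqOneHalf\).

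\textbf{Assumption \ref{Assumption::EstBdry::MaxSubOnInterior}.} Take \(f\in\CinftycptSpace[\CubengOneHalf]\). Then \(\Psi_*f\) is compactly supported in the interior, so Lemma \ref{Lemma::Proof::New::InteriorMaxSubEstimate} together with Assumption \ref{Asssumption::Result::QIsQF} gives
\[
\HsWNorm{f}{\kappa}^2\lesssim|\FormQ[f][f]|+\LtNorm{f}^2.
\]
Sectoriality \eqref{Eqn::Result::SectorialConsequence} then converts \(|\FormQ|\) into \(\Real\FormQ\).

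\textbf{Assumption \ref{Assumption::EstBdry::MaxSubOnSmoothing}.} Let \(\DSetL{1}\) be the pull-backs of \(\phi u\) for \(u\in\FSpace{1}\), with a suitable cutoff. Check the four items as follows.
\begin{itemize}
\item[(i)] This follows from Assumption \ref{Asssumption::Result::DomainIsHkappaW}.
\item[(ii)] For \(S\) of the form \eqref{Eqn::EstBdry::SOpForAssump}, \(T=\Psi_*S\Psi^*\) is covered by Lemma \ref{Lemma::Proof::New::BoundarySubellipticEstimate}, which yields \eqref{Eqn::Proof::New::BoundarySubellipticEstimate}. The \(\partial_t\) term contributes nothing to \(\Real\FormQH\) (Remark \ref{Rmk::EstBdry::MainThm::tdoesntplayroleInMaxSub}), and sectoriality again replaces the absolute value by the real part. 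Integrating in \(t\) gives the required estimate.
\item[(iii)] On \(\Omega\), \(\opL u\in L^2\) is given by the differential formula.
\item[(iv)] This is the main point. Since \(Tu(t)\in\DomainQ\) and \(u(t)\in\DomainL\), we have \(\FormQ[Tu(t)][u(t)]=\Ltip{Tu(t)}{\opL u(t)}\). Integrating in \(t\) and pairing with \(\partial_t u\) in the distributional sense gives the \(\FormQH\) identity, where \(Tu\in C^\infty_{\mathrm{cpt}}\) in \(t\) justifies moving \(\partial_t\).
\end{itemize}

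\textbf{Main obstacle: the set \(\DSetL{1}\).} Item (iv) needs \(u\) itself, not merely \(\phi u\), to lie in \(\DSetL{1}\). The reason is that multiplying by a cutoff does not preserve \(\DomainL\). I would therefore try taking \(\DSetL{1}\) to be the pullback of \(\FSpace{1}\) itself, restricted to \(\R\times\Psi(\CubengeqOne)\). All the conditions are local near the kernel's support in \(\CubengeqOneHalf\), and \(\opL u\) is defined globally, so no cutoff is needed. With this choice, \(\FSpace{N}\) maps into \(\DSetL{N}\).

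\textbf{Conclusion.} Apply Theorem \ref{Thm::EstBdry::MainThm::New} with \(\phih_1,\phih_2\) and transport the Sobolev norms back by \(\Psi\). The constants depend on the chart, but that dependence is harmless here.
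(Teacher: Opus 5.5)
Your proposal is correct and follows the paper's proof essentially step by step. It uses the same pull-back setup (normalizing \(X\) and subtracting multiples of it from the \(W_j\)) and verifies Assumption \ref{Assumption::EstBdry::MaxSubOnSmoothing} via Lemma \ref{Lemma::Proof::New::BoundarySubellipticEstimate}, sectoriality, and Assumption \ref{Asssumption::Result::QIsQF}. In particular, you resolve your ``main obstacle'' exactly as the paper does, taking \(\DSetL{1}=\{u(t,\Psi(x)):u\in\FSpace{1}\}\) with no cutoff. One small fix: for Assumption \ref{Assumption::EstBdry::MaxSubOnInterior}, invoke Remark \ref{Rmk::Result::DomainIsHkappaW::MaxSub} with the compact set \(\Compact_0=\Psi(\overline{\CubengeqOneHalf})\Subset\Omega\), as the paper does, rather than Lemma \ref{Lemma::Proof::New::InteriorMaxSubEstimate}. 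The set \(\Psi(\CubengOneHalf)\) is not relatively compact in \(\InteriorN\), so that lemma's constant \(C_U\) would not be uniform in \(f\).
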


We turn to the proof of Proposition \ref{Prop::Proof::Boundary}.
All the assumptions from Section \ref{Section::MainResult} hold with \(\Omega\) replaced with the smaller open set \(\Omegat=\Psi(\CubengeqOne)\).
Note that the vector field \(X\)
from Assumption \ref{Asssumption::Result::BoundaryAssumption::New} is defined on all of \(\Omegat\).

Set \(\Wt_0:=\frac{1}{c_0\circ \Psi^{-1}}X\) and let \(a_j\in \CinftySpace[\Psi(\CubengeqOne)][\R]\), \(1\leq j\leq r\)
be such that \(W_j(x')-a_j(x')\Wt_0(x')\in \TangentSpace{x'}{\BoundaryN}\), \(\forall x'\in \BoundaryN\cap \Omegat\).
Set \(\Wt_j:=W_j-a_j \Wt_0\).
We may replace \(W_1,\ldots, W_r\) with \(\Wt_0,\ldots,\Wt_r\) and \(\Omega\) with \(\Omegat\)
throughout (and appropriately modifying \(a_{\alpha,\beta}\); call the new functions \(\at_{\alpha,\beta}\))
and all our assumptions hold.
We henceforth make this replacement.

Let \(\sigma\in \CinftySpace[\CubengeqOne][(0,\infty)]\) be defined by
\(d \Psi^{*}\Vol = \sigma(x) dx\). 
Set \(\Wh_j:=\Psi^{*} \Wt_j\); notice that \(\Wh_0=\partial_{x_n}\) and for \(1\leq j\leq r\),
\(\Wh_j(x',0)\) does not have \(\partial_{x_n}\) component \(\forall x'\in \CubenmoOne\).
\(\Wh_0,\ldots, \Wt_r\) satisfy H\"ormander's condition of order \(m\) because
\(\Wt_0,\ldots, \Wt_r\) do.
Set \(\ah_{\alpha,\beta}:= \at_{\alpha,\beta}\circ \Psi\),
 and, whenever it makes sense
\begin{equation}\label{Eqn::Proof::QhEqualsPullbackQF}
    \FormQh[f][g]:=\sum_{|\alpha|,|\beta|\leq \kappa}
    \Ltip*{\Wh^{\alpha}f}{\sigma \ah_{\alpha,\beta} \Wh^{\beta} g}[\CubengeqOne]
    =\FormQF*[f\circ \Psi^{-1}][g\circ \Psi^{-1}].
\end{equation}
Set
\begin{equation*}
    \opLh:=\sum_{|\alpha|,|\beta|\leq \kappa} \sigma^{-1}\left( \Wh^{\alpha} \right)^{*} \sigma \ah_{\alpha,\beta} \Wh^{\beta},
\end{equation*}
\begin{equation*}
    \FormQHh[u][v]:=
    \sum_{|\alpha|,|\beta|\leq \kappa}
    \Ltip*{\Wh^{\alpha}u}{\sigma \ah_{\alpha,\beta} \Wh^{\beta} v}[\R\times \CubengeqOne]
    +\Ltip*{u}{\sigma \partial_t v}[\R\times \CubengeqOne].
\end{equation*}
We verify the assumptions of Theorem \ref{Thm::EstBdry::MainThm::New} for \(\opLh\).
For this, we put a \(\wedge\) on all the notation 
from Theorem \ref{Thm::EstBdry::MainThm::New};
so for example we use \(\opLh\), \(\Wh_0,\ldots, \Wh_r\), \(\FormQh\), \(\FormQHh\), and \(\ah_{\alpha,\beta}\) in place of
\(\opL\), \(W_0,\ldots, W_r\), \(\FormQ\), \(\FormQH\), and \(a_{\alpha,\beta}\) in the setup of that theorem.

For \(f\in \LtSpace[\CubengeqOneHalf]\), we have
\begin{equation}\label{Eqn::Proof::LtNormPullBackIsEquiv}
    \LtNorm{f}[\Rngeq]\approx
    \LtNorm{\sigma^{1/2}f}[\Rngeq]=
    \LtNorm{f}[\Rngeq][\Psi^{*}\Vol] = \LtNorm{f\circ\Psi^{-1}}[\ManifoldN][\Vol].
\end{equation}

Combining \eqref{Eqn::Proof::QhEqualsPullbackQF} with Assumption \ref{Asssumption::Result::QIsQF},
we see \(\forall f,g\in \DomainQ\) with \(\supp(f)\Subset \Omegat\), we have
\begin{equation}\label{Eqn::Proof::QhEqualsPullbackQ}
    \FormQh*[f\circ \Psi][g\circ \Psi] = \FormQF[f][g]= \FormQ[f][g].
\end{equation}

\begin{lemma}\label{Lemma::EstBdry::InteriorMaxSubHolds}
Assumption \ref{Assumption::EstBdry::MaxSubOnInterior} holds.    
\end{lemma}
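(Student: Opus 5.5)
We need to show: there exist \(D_1,D_2\geq 0\) such that for all \(f\in \CinftycptSpace[\CubengOneHalf][\C^M]\),
\[
\sum_{|\alpha|=\kappa}\LtNorm*{\Wh^\alpha f}[\Rngeq]^2\leq D_1\Real \FormQh[f][f]+D_2\LtNorm{f}[\Rngeq]^2 .
\]
The plan is to push \(f\) forward to the manifold, apply the interior maximally subelliptic estimate there, and pull the result back.

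First we transfer the test function. Given such an \(f\), set \(g:=f\circ\Psi^{-1}\), extended by zero. Since \(\supp(f)\) is a compact subset of the open cube \(\CubengOneHalf\), \(\supp(g)\) lies in the fixed set \(U:=\Psi(\CubengOneHalf)\). This \(U\) is open, contained in \(\Omegat\cap\InteriorN\), and relatively compact in \(\Omega\cap\InteriorN\), because \(\Psi\) maps the open half-cube \(\{x_n>0\}\) into the interior of \(\ManifoldN\). Hence \(g\in\CinftycptSpace[U][\C^M]\subseteq\TestFunctionsZeroCM[\Omega]\subseteq\DomainQ\) by Assumption \ref{Asssumption::Result::TestFunctionsInDomain}.

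Next we apply Lemma \ref{Lemma::Proof::New::InteriorMaxSubEstimate} with this \(U\). It gives
\[
\HsWNorm{g}{\kappa}^2\leq C_U\bigl(|\FormQ[g][g]|+\LtNorm{g}^2\bigr).
\]
By \eqref{Eqn::Proof::QhEqualsPullbackQ} we have \(\FormQ[g][g]=\FormQh[f][f]\), and by \eqref{Eqn::Proof::LtNormPullBackIsEquiv} we have \(\LtNorm{g}\approx\LtNorm{f}[\Rngeq]\). The left-hand side is computed with the \(\Wt\) fields, which are \(C^\infty\)-combinations of the original \(W_j\) and \(X\). Pulling back through \(\Psi\) and using \(\sigma\approx1\) on \(\CubengeqOneHalf\), we get
\[
\sum_{|\alpha|=\kappa}\LtNorm{\Wh^\alpha f}[\Rngeq]^2\lesssim \sum_{|\alpha|\leq\kappa}\LtNorm{\Wt^\alpha g}^2 .
\]
This is bounded by \(\HsWNorm{g}{\kappa}^2\), since each \(\Wt_j\) lies in the \(C^\infty\)-module generated by \(W_1,\ldots,W_r\) and \(X\) does too. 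Expanding \(\Wt^\alpha\) by the Leibniz rule yields \(W\)-words of length at most \(\kappa\) with coefficients bounded on the compact set \(\overline U\).

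The main obstacle is that the interior estimate controls \(|\FormQh[f][f]|\) rather than \(\Real\FormQh[f][f]\). We handle this with the sectorial bound \eqref{Eqn::Result::SectorialConsequence}: \(|\FormQ[g][g]|\leq C_1\Real\FormQ[g][g]+C_2\LtNorm{g}^2\). Substituting \(\FormQ[g][g]=\FormQh[f][f]\) gives
\[
\sum_{|\alpha|=\kappa}\LtNorm{\Wh^\alpha f}^2\leq D_1\Real\FormQh[f][f]+D_2\LtNorm{f}^2,
\]
with constants depending only on \(U\), \(\Psi\), and the coefficients. This is exactly Assumption \ref{Assumption::EstBdry::MaxSubOnInterior}.
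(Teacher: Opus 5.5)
Your argument follows the paper's proof step by step, except at one load-bearing step: the application of Lemma \ref{Lemma::Proof::New::InteriorMaxSubEstimate}. The steps that match are: push forward to $g=f\circ\Psi^{-1}$, compare the $\Wh$-, $\Wt$- and $W$-norms, control $\HsWNorm{g}{\kappa}^2$ by $|\FormQ[g][g]|+\LtNorm{g}^2$, then use \eqref{Eqn::Result::SectorialConsequence} together with $\FormQ[g][g]=\FormQh[f][f]$.

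That lemma requires $U\Subset\Omega\cap\InteriorN$, and your $U=\Psi(\CubengOneHalf)$ is not relatively compact there. Its closure $\Psi([-1/2,1/2]^{n-1}\times[0,1/2])$ contains the image of the face $\{x_n=0\}$, which lies in $\BoundaryN$. Your justification (that $\Psi$ maps $\{x_n>0\}$ into the interior) only shows $U\subseteq\InteriorN$.

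This is not a technicality. Assumption \ref{Assumption::EstBdry::MaxSubOnInterior} demands $D_1,D_2$ independent of $f$, while elements of $\CinftycptSpace[\CubengOneHalf]$ can be supported arbitrarily close to $x_n=0$. If you shrink $U$ to be genuinely relatively compact in the interior, it has to depend on $\supp(f)$. The lemma as stated then gives no uniform control of $C_U$.

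The repair is exactly what the paper does: invoke Remark \ref{Rmk::Result::DomainIsHkappaW::MaxSub} directly with $\Compact_0=\Psi(\CubengeqOneHalfClosure)$. This set is compact in $\Omega$ and is allowed to meet $\BoundaryN$; the remark only uses Assumption \ref{Asssumption::Result::DomainIsHkappaW} and the Closed Graph Theorem. It gives $\HsWNorm{g}{\kappa}\leq C_{\Compact_0}\DomainQNorm{g}$ for every $g\in\DomainQ$ supported in $\Compact_0$. Moreover, $\DomainQNorm{g}^2\approx|\FormQ[g][g]|+\LtNorm{g}^2$ by \eqref{Eqn::Core::QNormSameAsFormQ}.

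Equivalently, the proof of Lemma \ref{Lemma::Proof::New::InteriorMaxSubEstimate} goes through verbatim for $U\Subset\Omega$, but its statement does not cover your $U$. With this substitution the rest of your proof is correct.
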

\begin{proof}
    Using \eqref{Eqn::Proof::LtNormPullBackIsEquiv},
    Remark \ref{Rmk::Result::DomainIsHkappaW::MaxSub}
    with \(\Compact_0= \Psi\left( \overline{\CubengeqOneHalf} \right) \),
    Assumption \ref{Asssumption::Result::TestFunctionsInDomain},
    and \eqref{Eqn::Proof::QhEqualsPullbackQ},
    we have for \(f\in \CinftycptSpace[\CubengOneHalf]\),
    \begin{equation}\label{Eqn::Proof::IntMaxSub::Tmp1}
    \begin{split}
        &\sum_{|\alpha|= \kappa} \LtNorm*{\Wh^{\alpha}f}[\Rngeq]^2
        \approx \sum_{|\alpha|= \kappa} \LtNorm*{\Wt^{\alpha}f\circ \Psi^{-1}}[\ManifoldN][\Vol]^2
        \lesssim \sum_{|\alpha|\leq \kappa} \LtNorm*{W^{\alpha}f\circ \Psi^{-1}}[\ManifoldN][\Vol]^2
        \\&=\HsWNorm*{f\circ \Psi^{-1}}{\kappa}^2
        \lesssim \left| \FormQ*[f\circ \Psi^{-1}][f\circ \Psi^{-1}] \right| + \LtNorm*{f\circ \Psi^{-1}}[\ManifoldN][\Vol]^2
        \\&\approx \left| \FormQ*[f\circ \Psi^{-1}][f\circ \Psi^{-1}] \right| + \LtNorm*{f}[\Rngeq]^2
    \end{split}
    \end{equation}
    Using \eqref{Eqn::Result::SectorialConsequence} and \eqref{Eqn::Proof::LtNormPullBackIsEquiv}, we have
    \begin{equation}\label{Eqn::Proof::IntMaxSub::Tmp2}
    \begin{split}
        &\left| \FormQ*[f\circ \Psi^{-1}][f\circ \Psi^{-1}] \right|
        \leq C_1 \Real \FormQ*[f\circ \Psi^{-1}][f\circ \Psi^{-1}] + C_2 \LtNorm*{f\circ\Psi^{-1}}[\ManifoldN][\Vol]^2
        \\&\leq C_1 \Real \FormQ*[f\circ \Psi^{-1}][f\circ \Psi^{-1}] + C_3 \LtNorm*{f}[\Rngeq]^2
        =C_1 \Real \FormQh*[f][f] + C_3 \LtNorm*{f}[\Rngeq]^2.
    \end{split}
    \end{equation}
    Combining \eqref{Eqn::Proof::IntMaxSub::Tmp1} and \eqref{Eqn::Proof::IntMaxSub::Tmp2},
    Assumption \ref{Assumption::EstBdry::MaxSubOnInterior} follows.
\end{proof}

\begin{lemma}\label{Lemma::Proof::opLhIsPullbackOpL}
    For \(f\in \DomainL\), 
    \begin{equation*}
        \opLh \left( f\circ \Psi \right) = \left( \opL f \right)\circ \Psi.
    \end{equation*}
\end{lemma}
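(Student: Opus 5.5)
The plan is to test the identity in the distributional sense against test functions and then invoke the defining relation of \(\opL\) through Assumption \ref{Asssumption::Result::QIsQF}. Both sides are distributions on \(\CubengeqOne\) (interpreted in \(\DistributionsZeroCM*[\CubengeqOne]\), with \(f\circ\Psi\) meaning the pullback of \(f\big|_{\Omegat}\)), so it suffices to show
\begin{equation*}
    \Ltip*{h}{\sigma\,\opLh(f\circ\Psi)}[\Rngeq]=\Ltip*{h}{\sigma\,(\opL f)\circ\Psi}[\Rngeq],
\end{equation*}
for every \(h\in \TestFunctionsZeroCM[\CubengeqOne]\). This suffices because \(\sigma\) is smooth and strictly positive, so we may test against \(\sigma h\) instead of \(h\).

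First, the right-hand side. By the definition of \(\sigma\) (\(d\Psi^{*}\Vol=\sigma\,dx\)) and a change of variables, it equals \(\Ltip*{h\circ\Psi^{-1}}{\opL f}[\ManifoldN][\Vol]\). Here \(h\circ\Psi^{-1}\), extended by zero, lies in \(\TestFunctionsZeroCM[\Omegat]\subseteq\TestFunctionsZeroCM[\Omega]\subseteq\DomainQ\) by Assumption \ref{Asssumption::Result::TestFunctionsInDomain}. Its support is compact in \(\Omegat\). The defining property of \(\LDomainL\) together with \eqref{Eqn::Proof::QhEqualsPullbackQ} then gives
\begin{equation*}
    \Ltip*{h\circ\Psi^{-1}}{\opL f}=\FormQ[h\circ\Psi^{-1}][f]=\FormQh[h][f\circ\Psi].
\end{equation*}
Note that \eqref{Eqn::Proof::QhEqualsPullbackQ} only needs the support condition on the first argument, so \(f\in\DomainL\subseteq\DomainQ\) is allowed in the second slot. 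Here \(\FormQh[h][f\circ\Psi]\) is well defined, since Assumption \ref{Asssumption::Result::DomainIsHkappaW} places \(f\) in \(\HsWlocSpace{\kappa}\) and \(h\) has compact support.

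Second, the left-hand side. From the formula for \(\opLh\) and \eqref{Eqn::Proof::QhEqualsPullbackQF}, we should have \(\FormQh[h][g]=\Ltip*{h}{\sigma\opLh g}[\Rngeq]\) for every \(g\) that is \(\HsWSpace{\kappa}\) near \(\supp h\), with \(\opLh g\) taken in \(\DistributionsZeroCM\). To see this, move each \(\Wh^{\alpha}\) off \(h\) via the \(\sigma\,dx\)-formal adjoint \(\sigma^{-1}(\Wh^{\alpha})^{*}\sigma\). No boundary terms arise because \(h\) vanishes to infinite order on \(x_n=0\). This is precisely the definition of derivatives on \(\DistributionsZeroCM\).

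The main (mild) obstacle is this last justification: \(f\circ\Psi\) is only in \(\HsWSpace{\kappa}\) locally, so the pairing must be read distributionally. Concretely, one checks that \(\Ltip*{h}{\sigma(\Wh^{\alpha})^{*}G}\) equals \(\Ltip*{\Wh^{\alpha}h}{G}\) for \(G=\sigma\ah_{\alpha,\beta}\Wh^{\beta}(f\circ\Psi)\in L^2_{\mathrm{loc}}\), which is the definition of the distributional derivative against functions in \(\TestFunctionsZero\). Equating the two computations yields the lemma.
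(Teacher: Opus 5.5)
Your proposal is correct and is essentially the paper's proof. Both arguments test against functions vanishing to infinite order at the boundary, change variables via \(d\Psi^{*}\Vol=\sigma\,dx\), apply the defining relation of \(\opL\) together with \eqref{Eqn::Proof::QhEqualsPullbackQ}, and read \(\FormQh[h][f\circ\Psi]\) distributionally as \(\Ltip*{h}{\sigma\opLh(f\circ\Psi)}[\Rngeq]\). The paper tests only against \(\TestFunctionsZero[\CubengOneHalf]\), which is all it uses later, whereas you obtain the identity on all of \(\CubengeqOne\).

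There is one bookkeeping slip in your last paragraph. Since \(\sigma\opLh g=\sum_{\alpha,\beta}(\Wh^{\alpha})^{*}\sigma\ah_{\alpha,\beta}\Wh^{\beta}g\), the identity to check is \(\Ltip*{h}{(\Wh^{\alpha})^{*}G}=\Ltip*{\Wh^{\alpha}h}{G}\) with \(G=\sigma\ah_{\alpha,\beta}\Wh^{\beta}(f\circ\Psi)\); the extra factor \(\sigma\) in front of \((\Wh^{\alpha})^{*}\) should be dropped.
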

\begin{proof}
    Let \(f\in \DomainL\) and 
    \(g\in \TestFunctionsZero[\CubengOneHalf]\).
    We have \(g\circ \Psi^{-1}\in \DomainQ\)
    (see Assumption \ref{Asssumption::Result::TestFunctionsInDomain}), and therefore, using \eqref{Eqn::Proof::QhEqualsPullbackQ},
    \begin{equation*}
    \begin{split}
         & \Ltip*{g}{\sigma \left( \opL f \right)\circ \Psi}[\Rngeq]
         =\Ltip{g\circ \Psi^{-1}}{\opL f}[\ManifoldN][\Vol]
         \\&=\FormQ[g\circ \Psi^{-1}][f] = \FormQh[g][f\circ \Psi]
         =\Ltip*{g}{\sigma \opLh \left( f\circ \Psi \right)}[\Rngeq].
    \end{split}
    \end{equation*}
\end{proof}

We turn to Assumption \ref{Assumption::EstBdry::MaxSubOnSmoothing}.
Set
\begin{equation*}
    \DSetL{1}:=\left\{ u(t, \Psi(x)) : u(t,\xi)\in \FSpace{1} \right\}.
\end{equation*}
Since \(\FSpace{l+1}\subseteq \FSpace{l}\) and \((\partial_t+\opL)\FSpace{l+1}\subseteq \FSpace{l}\)
(see Definition \ref{Defn::Result::FSpace}), using Lemma \ref{Lemma::Proof::opLhIsPullbackOpL}, we see
\begin{equation}\label{Eqn::Proof::DSetContainsFSpace}
  \DSetL{N}\supseteq \left\{u(t, \Psi(x)) : u(t,\xi)\in \FSpace{N}  \right\}  
\end{equation}
(see Definition \ref{Defn::EstBdry::DSet}).

\begin{lemma}
    Assumption \ref{Assumption::EstBdry::MaxSubOnSmoothing}\ref{Item::EstBdry::MaxSubOnSmoothing::InHkWOnCubegeqOneHalf}
    holds.
    That is, \(\forall \uh\in \DSetL{1}\), 
    \(\uh\big|_{\R\times \CubengeqOneHalf}\in \LtSpaceNoMeasure*[\R][\HsWhSpace*{\kappa}[\CubengeqOneHalf;\C^M]] \).
\end{lemma}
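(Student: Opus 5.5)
The plan is to reduce the claim to Assumption \ref{Asssumption::Result::DomainIsHkappaW} combined with the equivalence of the norms built from \(W\), \(\Wt\), and \(\Wh\). Let \(\uh\in \DSetL{1}\). By definition, \(\uh(t,x)=u(t,\Psi(x))\) for some \(u\in \FSpace{1}=\LtSpaceNoMeasure[\R][\DomainL]\). Since \(\DomainL\subseteq \DomainQ\) continuously (the graph norm dominates the form norm, because \(\Real\FormQ[f][f]=\Real\Ltip{f}{\opL f}\lesssim \LtNorm{f}^2+\LtNorm{\opL f}^2\)), we get \(u\in \LtSpaceNoMeasure[\R][\DomainQ]\).

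Set \(\Compact_0:=\Psi\bigl(\CubengeqOneHalfClosure\bigr)\), a compact subset of \(\Omegat\subseteq\Omega\). Fix \(\chi\in \CinftycptSpace[\Omega]\) with \(\chi=1\) near \(\Compact_0\) and \(\supp\chi\subseteq \Psi(\Cubengeq{3/4})\).

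The first point to check is that multiplication by \(\chi\) preserves \(\DomainQ\). This does not follow directly from the assumptions of Section \ref{Section::MainResult}. To avoid it, I will instead use Assumption \ref{Asssumption::Result::DomainIsHkappaW} together with Remark \ref{Rmk::Result::DomainIsHkappaW::Continuity}. These give that the restriction map \(\DomainQ\to \HsWlocSpace{\kappa}[\Omega;\C^M]\) is continuous. Composing with the seminorm \(\sum_{|\alpha|\leq\kappa}\LtNorm{W^\alpha f}[U]\), for an open \(U\Subset\Omega\) with \(\Compact_0\subseteq U\), yields the bound
\begin{equation*}
    \sum_{|\alpha|\leq\kappa}\LtNorm{W^\alpha f}[U][\Vol]\leq C\,\DomainQNorm{f},\quad f\in\DomainQ.
\end{equation*}
Applying this pointwise in \(t\) and integrating shows that \(t\mapsto u(t,\cdot)|_U\) lies in \(\LtSpaceNoMeasure[\R][\HsWSpace{\kappa}[U;\C^M]]\). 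Measurability follows because a continuous linear map applied to a strongly measurable function is strongly measurable.

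Next I pull back through \(\Psi\). Each \(\Wt_j\) lies in the \(\CinftySpace\)-span of \(W_0:=X, W_1,\ldots,W_r\), and \(X\) itself lies in the module generated by the \(W_j\). Hence, on \(U\cap\Omegat\), expanding \(\Wt^\alpha\) by the Leibniz rule gives \(\LtNorm{\Wt^\alpha f}\lesssim \sum_{|\beta|\leq|\alpha|}\LtNorm{W^\beta f}\), with coefficients bounded on the relatively compact set. Then \(\Wh^\alpha(f\circ\Psi)=(\Wt^\alpha f)\circ\Psi\), and \eqref{Eqn::Proof::LtNormPullBackIsEquiv} compares the \(\LtSpace\) norms on \(\CubengeqOneHalf\) with those on \(\Psi(\CubengeqOneHalf)\). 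Together these give
\begin{equation*}
    \sum_{|\alpha|\leq\kappa}\LtNorm{\Wh^\alpha \uh(t,\cdot)}[\CubengeqOneHalf]\lesssim \sum_{|\alpha|\leq\kappa}\LtNorm{W^\alpha u(t,\cdot)}[U][\Vol].
\end{equation*}
Squaring and integrating in \(t\) finishes the argument.

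The main obstacle is bookkeeping rather than substance. One has to make sure the derivatives \(W^\alpha u\) are taken in the distributional sense on \(\Omega\), so that \(\HsWlocSpace{\kappa}\) membership transfers correctly under \(\Psi\). One also has to justify the \(t\)-measurability of the vector-valued function; this follows from the continuity of the restriction map established via the Closed Graph Theorem.
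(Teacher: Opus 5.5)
Your proposal is correct and follows the paper's argument: \(\FSpace{1}\subseteq \LtSpaceNoMeasure[\R][\DomainQ]\), then Assumption \ref{Asssumption::Result::DomainIsHkappaW} to land in \(\LtSpaceNoMeasure[\R][\HsWlocSpace{\kappa}]\), then bounding \(\Wt\)-derivatives by \(W\)-derivatives on a compact set, then pulling back via \eqref{Eqn::Proof::LtNormPullBackIsEquiv}. You spell out the continuity from Remark \ref{Rmk::Result::DomainIsHkappaW::Continuity}, which is needed to integrate in \(t\) and to get measurability and which the paper leaves implicit; the cutoff \(\chi\) you introduce is never used and can simply be dropped.
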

\begin{proof}
    Let \(u(t,\xi)\in \FSpace{1}\subseteq \LtSpaceNoMeasure*[\R][\DomainL]\subseteq \LtSpaceNoMeasure*[\R][\DomainQ]\).
    By Assumption \ref{Asssumption::Result::DomainIsHkappaW},
    \(u\in \LtSpaceNoMeasure*[\R][\HsWlocSpace*{\kappa}[\Omegat;\C^M]]=\LtSpaceNoMeasure*[\R][\HsWtlocSpace*{\kappa}[\Omegat;\C^M]]\).
    Using \eqref{Eqn::Proof::LtNormPullBackIsEquiv}, we conclude
    \(u(t, \Psi(x))\big|_{\R\times \CubengeqOneHalf}\in \LtSpaceNoMeasure*[\R][\HsWhSpace*{\kappa}[\CubengeqOneHalf;\C^M]].\)
\end{proof}

\begin{lemma}
    Assumption \ref{Assumption::EstBdry::MaxSubOnSmoothing}\ref{Item::EstBdry::MaxSubOnSmoothing::dtplusOplOnDIsFunction} holds.
\end{lemma}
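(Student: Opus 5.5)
We must show: for every \(\uh\in \DSetL{1}\), the distribution \(\opLh \uh\big|_{\R\times \CubengeqOneHalf}\), taken in the sense of \(\DistributionsZeroCM*[\R\times \CubengeqOne]\), agrees with a function in \(\LtSpaceNoMeasure[\R\times \CubengeqOneHalf][\C^M]\). The approach is to identify \(\opLh\uh\) with the pullback of \(\opL u\), which is square integrable because \(u\) takes values in \(\DomainL\).

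\textbf{Step 1 (setup).} Write \(\uh(t,x)=u(t,\Psi(x))\) with \(u\in \FSpace{1}=\LtSpaceNoMeasure*[\R][\DomainL]\). Then \(\opL u\in \LtSpaceNoMeasure*[\R\times\ManifoldN][\C^M]\), with \(\opL\) acting pointwise in \(t\). The candidate function is \(F(t,x):=(\opL u(t,\cdot))(\Psi(x))\). By \eqref{Eqn::Proof::LtNormPullBackIsEquiv} applied for each \(t\) and integrated in \(t\), \(F\big|_{\R\times\CubengeqOneHalf}\in \LtSpaceNoMeasure[\R\times \CubengeqOneHalf][\C^M]\).

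\textbf{Step 2 (weak identity at fixed \(t\)).} Fix a test function \(g\in \TestFunctionsZeroCM*[\R\times\CubengOneHalf]\). For almost every \(t\), we have \(u(t,\cdot)\in\DomainL\) and \(g(t,\cdot)\in \TestFunctionsZero[\CubengOneHalf]\). The proof of Lemma \ref{Lemma::Proof::opLhIsPullbackOpL} then gives
\[
\Ltip*{g(t,\cdot)}{\sigma F(t,\cdot)}[\Rngeq]=\FormQh*[g(t,\cdot)][\uh(t,\cdot)].
\]
The right-hand side equals the pairing of \(\opLh \uh(t,\cdot)\) against \(\sigma g(t,\cdot)\): since \(g(t,\cdot)\) vanishes to infinite order at the boundary, integrating the \(\Wh\)'s by parts onto \(g\) produces no boundary terms. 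Here we use that \(\uh(t,\cdot)\in \HsWhSpace{\kappa}\) locally, which is the preceding lemma.

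\textbf{Step 3 (integrate in \(t\)).} Integrate the identity of Step 2 in \(t\). The key point is justifying that
\[
\int \FormQh*[g(t,\cdot)][\uh(t,\cdot)]\,dt
\]
equals the distributional pairing of \(\opLh\uh\) on \(\R\times\CubengeqOne\). This reduces to measurability and integrability of \(t\mapsto \Wh^\beta\uh(t,\cdot)\) in \(L^2\). That holds by continuity of the restriction \(\DomainQ\to\HsWlocSpace{\kappa}\) (Remark \ref{Rmk::Result::DomainIsHkappaW::Continuity}) together with \(\DomainL\hookrightarrow\DomainQ\) continuously, so that \(\uh\in\LtSpaceNoMeasure[\R][\HsWhSpace{\kappa}]\) as a Bochner-measurable function.

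\textbf{Step 4 (conclude).} Since \(\sigma\) is smooth, positive and independent of \(t\), replacing \(g\) by \(\sigma^{-1}g\) shows \(\opLh\uh=F\) on \(\R\times\CubengOneHalf\) in \(\DistributionsZeroCM\), which is the claim. I expect the only delicate point to be this measurability bookkeeping in Step 3; the rest is a direct consequence of Lemma \ref{Lemma::Proof::opLhIsPullbackOpL}.
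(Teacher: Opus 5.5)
Your proposal is correct and follows the paper's proof: you identify \(\opLh\uh\) with \((\opL u)(t,\Psi(x))\) via Lemma \ref{Lemma::Proof::opLhIsPullbackOpL} and get square integrability from \eqref{Eqn::Proof::LtNormPullBackIsEquiv}. The paper applies that lemma in \((t,x)\) without comment, and your Steps 2--3 only make the passage from fixed \(t\) to a distribution on \(\R\times\CubengeqOne\) explicit; the measurability concern you flag can even be bypassed by applying Fubini directly to the pairing of the locally square integrable \(\uh\) with the smooth, compactly supported function obtained by applying the formal adjoint of \(\sigma\opLh\) to \(g\).
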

\begin{proof}
    Let \(\uh(t,x)\in \DSetL{1}\). Take \(u(t,\xi)\in \FSpace{1}= \LtSpaceNoMeasure*[\R][\DomainL]\) such that
    \(\uh(t,x)=u(t,\Psi(x))\).  
    Using Lemma \ref{Lemma::Proof::opLhIsPullbackOpL}, we have
    \begin{equation*}
        \opLh \uh(t,x)\big|_{\R\times \CubengeqOneHalf} = 
        \opLh u(t,\Psi(x))\big|_{\R\times \CubengeqOneHalf}
        =\left( \opL u \right)(t,\Psi(x))\big|_{\R\times \CubengeqOneHalf}.
    \end{equation*}
    Since \(u(t,\xi)\in \LtSpaceNoMeasure*[\R][\DomainL]\),
    \(\opL u(t,\xi)\in \LtSpace[\R\times \ManifoldN][dt\times d\Vol]\).
    It follows from \eqref{Eqn::Proof::LtNormPullBackIsEquiv}
    that \(\left( \opL u \right)(t,x)\big|_{\R\times \CubengeqOneHalf}\in \LtSpace[\R\times \CubengeqOneHalf]\),
    completing the proof.
\end{proof}

\begin{lemma}\label{Lemma::Proof::MaxSubSmoothingAssumpHolds}
    Assumption \ref{Assumption::EstBdry::MaxSubOnSmoothing}\ref{Item::EstBdry::MaxSubOnSmoothing::MaxSubEstimate},\ref{Item::EstBdry::MaxSubOnSmoothing::FormQHGivesOp} hold.
\end{lemma}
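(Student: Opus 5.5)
The plan is to transfer everything through the chart \(\Psi\) and reduce both items to facts about \(T\) established in Lemma \ref{Lemma::Proof::New::BoundarySubellipticEstimate}. Given \(S\) of the form \eqref{Eqn::EstBdry::SOpForAssump} and \(\uh\in \DSetL{1}\), I write \(\uh(t,x)=u(t,\Psi(x))\) with \(u\in \FSpace{1}=\LtSpaceNoMeasure*[\R][\DomainL]\). I then set \(T:=\Psi_{*}S\Psi^{*}\), which is exactly an operator as in Assumption \ref{Asssumption::Result::BoundaryAssumption::New}\ref{Item::Result::BoundaryAssumption::SOperator::New}, so that \(S\uh(t,x)=(Tu)(t,\Psi(x))\). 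That lemma then gives \(Tu\in \CinftycptSpace*[\R][\DomainQ]\), \(Tu(t,\cdot)\) supported in \(\Psi(\overline{\CubengeqOneHalf})\Subset\Omegat\), and the estimate \eqref{Eqn::Proof::New::BoundarySubellipticEstimate}.

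For item \ref{Item::EstBdry::MaxSubOnSmoothing::MaxSubEstimate}, I start with the left side \(\sum_{|\alpha|=\kappa}\LtNorm{\Wh^{\alpha}S\uh}^2\). Using \eqref{Eqn::Proof::LtNormPullBackIsEquiv} slice by slice in \(t\), it is \(\approx \int\sum_{|\alpha|=\kappa}\LtNorm{\Wt^\alpha Tu(t,\cdot)}^2dt\). Since the \(\Wt_j\) are \(C^\infty\)-combinations of the \(W_j\) on the compact support, this is \(\lesssim \int \HsWNorm{Tu(t,\cdot)}{\kappa}^2dt\). Applying \eqref{Eqn::Proof::New::BoundarySubellipticEstimate}, this is in turn bounded by \(A(\int|\FormQ[Tu(t)][Tu(t)]|dt+\LtNorm{Tu}^2)\). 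Next I use \eqref{Eqn::Result::SectorialConsequence} pointwise in \(t\) to replace \(|\FormQ|\) by \(C_1\Real\FormQ+C_2\LtNorm{\cdot}^2\). Then \eqref{Eqn::Proof::QhEqualsPullbackQ}, which is valid since \(\supp Tu(t)\Subset\Omegat\), converts \(\int\Real \FormQ[Tu(t)][Tu(t)]dt\) into \(\Real\int \FormQh[S\uh(t)][S\uh(t)]dt\). Finally, Remark \ref{Rmk::EstBdry::MainThm::tdoesntplayroleInMaxSub} shows this equals \(\Real\FormQHh[S\uh][S\uh]\), because \(S\uh\in C^\infty_{\mathrm{cpt}}\) in \(t\) with values in the form domain, so the \(\partial_t\) term has zero real part. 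The \(L^2\) terms convert back to \(\LtNorm{S\uh}[\Ropngeq]^2\) via \eqref{Eqn::Proof::LtNormPullBackIsEquiv}.

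For item \ref{Item::EstBdry::MaxSubOnSmoothing::FormQHGivesOp}, I split \(\FormQHh[S\uh][\uh]\) into its spatial part and its \(\partial_t\) part. For each fixed \(t\), the spatial part is \(\FormQh[S\uh(t)][\uh(t)]\). By \eqref{Eqn::Proof::QhEqualsPullbackQ}, with \(f=Tu(t)\) compactly supported in \(\Omegat\) and \(g=u(t)\in\DomainL\subseteq\DomainQ\), this equals \(\FormQ[Tu(t)][u(t)]\). By the defining property of \(\opL\) this is \(\Ltip{Tu(t)}{\opL u(t)}[\ManifoldN][\Vol]\), and by pulling back with Lemma \ref{Lemma::Proof::opLhIsPullbackOpL} it is \(\Ltip{S\uh(t)}{\sigma\opLh\uh(t)}[\Rngeq]\). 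I integrate in \(t\); this is justified because \(Tu\in C^\infty_{\mathrm{cpt}}(\R;\DomainQ)\) and \(u\in L^2(\R;\DomainL)\), so everything is integrable. The \(\partial_t\) part \(\Ltip{S\uh}{\sigma\partial_t\uh}\) is interpreted distributionally, meaning \(-\Ltip{\partial_t S\uh}{\sigma\uh}\). This is legitimate because \(S\uh\) is smooth and compactly supported in \(t\) and \(\sigma\) is independent of \(t\). Summing the two parts gives \(\Ltip{S\uh}{\sigma(\partial_t+\opLh)\uh}\).

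The main obstacle I expect is this last bookkeeping step: making sense of the pairing with \(\partial_t\uh\) when \(\uh\) is only \(L^2\) in \(t\). I also need to check that the identification of \(\FormQh\) with \(\FormQ\) is legitimate when the second argument \(u(t)\) is not compactly supported in \(\Omegat\). Here Assumption \ref{Asssumption::Result::QIsQF} requires compact support only in the first argument, which \(Tu(t)\) has, so I will lean on that asymmetry.
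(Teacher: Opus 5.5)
Your proposal is correct and follows essentially the same route as the paper. You pull back through \(\Psi\) and, for the maximal subelliptic estimate, combine Lemma \ref{Lemma::Proof::New::BoundarySubellipticEstimate}, \eqref{Eqn::Result::SectorialConsequence}, and Assumption \ref{Asssumption::Result::QIsQF} (in the form \eqref{Eqn::Proof::QhEqualsPullbackQ}), then kill the real part of the \(\partial_t\) term using the \(t\)-independence of \(\sigma\); for the last item you apply Assumption \ref{Asssumption::Result::QIsQF}, the defining property of \(\opL\), and Lemma \ref{Lemma::Proof::opLhIsPullbackOpL} slice by slice in \(t\). Your interpretation of \(\langle S\uh,\sigma\partial_t\uh\rangle\) as \(-\langle\partial_t S\uh,\sigma\uh\rangle\) is a clarification the paper leaves implicit; since the same term appears on both sides of the identity, it is harmless rather than a needed extra step.
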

\begin{proof}
    Let \(S\) be an operator as in \eqref{Eqn::EstBdry::SOpForAssump}.
    Define \(T=\Psi_{*}S\Psi^{*}\) as in 
    Assumption \ref{Asssumption::Result::BoundaryAssumption::New}.
    Take \(\uh\in \DSetL{1}\) and let \(u\in \FSpace{1}\)
    be such that \(\uh(t,x)=u(t,\Psi(x))\); i.e., \(\uh= \Psi^{*} u\). 

    We begin with Assumption \ref{Assumption::EstBdry::MaxSubOnSmoothing}\ref{Item::EstBdry::MaxSubOnSmoothing::MaxSubEstimate}.
    Using \eqref{Eqn::Proof::LtNormPullBackIsEquiv} and the support of \(K\) (see the discussion preceding \eqref{Eqn::EstBdry::SOpForAssump}), we have
    \begin{equation}\label{Eqn::Proof::MaxSubSmoothingAssumpHolds::Tmp1}
        \begin{split}
            &\sum_{|\alpha|= \kappa} \LtNorm*{\Wh^{\alpha} S\uh}[\Ropngeq]
            = \sum_{|\alpha|=\kappa}\LtNorm*{\Wh^{\alpha} S\Psi^{*} u}[\Ropngeq]
            \\&\leq\sum_{|\alpha|\leq \kappa}\LtNorm*{\Wh^{\alpha} S\Psi^{*} u}[\Ropngeq]
            \approx \sum_{|\alpha|\leq \kappa}\LtNorm*{\sigma^{1/2}\Wh^{\alpha} S\Psi^{*} u}[\Ropngeq]
            \\&=\sum_{|\alpha|\leq \kappa}\LtNorm*{\Wt^{\alpha} T u}[\R\times \Psi(\CubengeqOneHalf)][dt\times d\Vol]
            \approx \sum_{|\alpha|\leq \kappa}\LtNorm*{W^{\alpha} T u}[\R \times \ManifoldN][dt\times d\Vol].
        \end{split}
    \end{equation}
    By Lemma \ref{Lemma::Proof::New::BoundarySubellipticEstimate},
    \eqref{Eqn::Result::SectorialConsequence}, and Assumption \ref{Asssumption::Result::QIsQF},
    \begin{equation}\label{Eqn::Proof::MaxSubSmoothingAssumpHolds::Tmp2}
    \begin{split}
         &\sum_{|\alpha|\leq \kappa}\LtNorm*{W^{\alpha} T u}[\R\times \ManifoldN][dt\times d\Vol]^2
         \lesssim \left( \int \left| \FormQ[Tu(t,\cdot)][Tu(t,\cdot)] \right|\: dt + \LtNorm{Tu}[\R\times \ManifoldN][dt\times d\Vol]^2 \right)
         \\&\leq C_1 \int \Real \FormQ[Tu(t,\cdot)][Tu(t,\cdot)] \: dt + C_2\LtNorm{Tu}[\R\times \ManifoldN][dt\times d\Vol]^2
         \\&=C_1 \int \Real \FormQF[Tu(t,\cdot)][Tu(t,\cdot)] \: dt + C_2\LtNorm{Tu}[\R\times \ManifoldN][dt\times d\Vol]^2
         \\&=C_1 \int \Real \FormQh[S\Psi^{*}u(t,\cdot)][S\Psi^{*}u(t,\cdot)] \: dt + C_2\LtNorm{Tu}[\R\times \ManifoldN][dt\times d\Vol]^2
         \\&=C_1 \int \Real \FormQh[S\uh(t,\cdot)][S\uh(t,\cdot)] \: dt + C_2\LtNorm{Tu}[\R\times \ManifoldN][dt\times d\Vol]^2.
    \end{split}
    \end{equation}
    We have, using that \(\sigma\) does not depend on \(t\) (see, also, Remark \ref{Rmk::EstBdry::MainThm::tdoesntplayroleInMaxSub})
    \begin{equation}\label{Eqn::Proof::MaxSubSmoothingAssumpHolds::Tmp3}
    \begin{split}
         &\int \Real \FormQh[S\uh(t,\cdot)][S\uh(t,\cdot)] \: dt
         =\int \Real \FormQh[S\uh(t,\cdot)][S\uh(t,\cdot)] \: dt + \Real \Ltip*{S\uh}{\sigma \partial_tS\uh}[\Ropngeq]
         \\&=\Real \FormQHh[S\uh][S\uh].
    \end{split}
    \end{equation}
    Using \eqref{Eqn::Proof::LtNormPullBackIsEquiv} and the support of \(K\) (see the discussion preceding \eqref{Eqn::EstBdry::SOpForAssump}), we have
    \begin{equation}\label{Eqn::Proof::MaxSubSmoothingAssumpHolds::Tmp4}
    \begin{split}
         &\LtNorm{Tu}[\R\times \ManifoldN][dt\times \Vol]
         = \LtNorm{Tu}[\R\times \Psi\left(\CubengeqOneHalf  \right)][dt\times d\Vol]
         \\&=\LtNorm{\sigma^{1/2}S \Psi^{*} u}[\R\times \CubengeqOneHalf]
         \approx \LtNorm{S \Psi^{*} u}[\R\times \CubengeqOneHalf]
         =\LtNorm{S \uh}[\Ropngeq].
    \end{split}
    \end{equation}
    Combining \eqref{Eqn::Proof::MaxSubSmoothingAssumpHolds::Tmp1},
    \eqref{Eqn::Proof::MaxSubSmoothingAssumpHolds::Tmp2},
    \eqref{Eqn::Proof::MaxSubSmoothingAssumpHolds::Tmp3},
    and \eqref{Eqn::Proof::MaxSubSmoothingAssumpHolds::Tmp4}
    establishes Assumption \ref{Assumption::EstBdry::MaxSubOnSmoothing}\ref{Item::EstBdry::MaxSubOnSmoothing::MaxSubEstimate}.

    We turn to Assumption \ref{Assumption::EstBdry::MaxSubOnSmoothing}\ref{Item::EstBdry::MaxSubOnSmoothing::FormQHGivesOp}.
    We have,
    \begin{equation}\label{Eqn::Proof::MaxSubSmoothingAssumpHolds::Tmp5}
    \begin{split}
         &\FormQHh[S\uh][\uh]
         =\int \FormQh[S\uh (t,\cdot)][\uh(t,\cdot)]\: dt + \Ltip{S \uh}{\sigma \partial_t \uh}[\Ropngeq]
         \\&=\int \FormQh[S\Psi^{*}u (t,\cdot)][\Psi^{*}u(t,\cdot)]\: dt + \Ltip{S \uh}{\sigma \partial_t \uh}[\Ropngeq]
         \\&=\int \FormQF[Tu (t,\cdot)][u(t,\cdot)]\: dt + \Ltip{S \uh}{\sigma \partial_t \uh}[\Ropngeq].
    \end{split}
    \end{equation}
    For almost every \(t\), \(\supp(Tu(t,\cdot))\subseteq \Psi(\CubengeqOneHalf)\Subset \Omega\),
    and \(Tu(t,\cdot)\in \DomainQ\) and \(u(t,\cdot)\in \DomainL\subseteq \DomainQ\).
    Assumption \ref{Asssumption::Result::QIsQF} shows
    \begin{equation}\label{Eqn::Proof::MaxSubSmoothingAssumpHolds::Tmp6}
    \begin{split}
         & \FormQF[Tu (t,\cdot)][u(t,\cdot)] = 
         \FormQ[Tu (t,\cdot)][u(t,\cdot)].
    \end{split}
    \end{equation}
    By the definition of \(\opL\),
    and using \(Tu(t,\cdot)\in \DomainQ\) and \(u(t,\cdot)\in \DomainL\), \(\forall t\), and Lemma \ref{Lemma::Proof::opLhIsPullbackOpL}, we have
    \begin{equation}\label{Eqn::Proof::MaxSubSmoothingAssumpHolds::Tmp7}
    \begin{split}
         &
         \FormQ[Tu (t,\cdot)][u(t,\cdot)]
         =\Ltip*{Tu(t,\cdot)}{\opL u(t,\cdot)}[\ManifoldN][\Vol]
         \\&=\Ltip*{\Psi^{*}Tu(t,\cdot)}{\sigma \opLh \Psi^{*} u(t,\cdot)}[\Rngeq]
         =\Ltip*{S\uh(t,\cdot)}{\sigma \opLh \uh(t,\cdot)}[\Rngeq].
    \end{split}
    \end{equation}
    Combining \eqref{Eqn::Proof::MaxSubSmoothingAssumpHolds::Tmp5}, \eqref{Eqn::Proof::MaxSubSmoothingAssumpHolds::Tmp6},
    and \eqref{Eqn::Proof::MaxSubSmoothingAssumpHolds::Tmp7}
    establishes  Assumption \ref{Assumption::EstBdry::MaxSubOnSmoothing}\ref{Item::EstBdry::MaxSubOnSmoothing::FormQHGivesOp}.
\end{proof}

\begin{proof}[Proof of Proposition \ref{Prop::Proof::Boundary}]
    We have shown above that all the assumptions for Theorem \ref{Thm::EstBdry::MainThm::New} hold. 
    Let \(N\), \(l\), and \(J\) be as in the statement of Theorem \ref{Thm::Result::MainThm::New}.
    Take \(u\in \FSpace{N}\) and set \(\uh(t,x):=u(t,\Psi(x))\).
    By \eqref{Eqn::Proof::DSetContainsFSpace}, \(\uh\in \DSetL{N}\).
    Applying Theorem \ref{Thm::EstBdry::MainThm::New}, we see
    \begin{equation}\label{Eqn::Proof::ApplyBdryEstToHat}
    \begin{split}
            \HsNorm*{\phih_1 \uh}{l+1}[\Ropngeq]\lesssim
            \bigg( &
                \HsNorm*{\phih_2\left( \partial_t+\opLh \right)^J \uh}{l+1-2\kappa J}[\Ropngeq]
                \\&+ \HsNorm*{\phih_2\left( \partial_t+\opLh \right)^N \uh}{\left( l+1-N\epsilon_0 \right)\vee 0}[\Ropngeq]
                +\sum_{k=0}^{N-1} \LtNorm*{\phih_2\left( \partial_t+\opLh \right)^k \uh}[\Ropngeq]
             \bigg),
    \end{split}    
\end{equation}
    where if the right-hand side is finite, so is the left-hand side.
    Using that each object with a \(\wedge\) is the pullback via \(\Psi\) of the object without the \(\wedge\),
    and using \eqref{Eqn::Proof::LtNormPullBackIsEquiv}
    and the equivalence of the compactly supported Sobolev norms under smooth coordinate changes
    (as in Remark \ref{Rmk::PDOs::DefineClassicalHsSpaceOnMfld}),
    this implies \eqref{Eqn::Result::MainThm::MainEqn} holds and completes the proof.
\end{proof}

Finally we deal with the easy case  \(x_0\in \Omega\cap\InteriorN\), where we conclude a result even stronger
than Theorem \ref{Thm::Result::MainThm::New}.  This follows from the next proposition.
\begin{proposition}\label{Prop::Proof::Interior}
    Assume all the assumptions from Section \ref{Section::MainResult}.
    Let \(x_0\in \Omega\cap \InteriorN\), and \(\Psi:\CubenOne\xrightarrow{\sim}\Psi(\CubenOne)\subseteq \Omega\) 
    be any smooth coordinate chart with \(\Psi(0)=x_0\).
    Let \(\phih_1,\phih_2\in \CinftycptSpace[\left(\mathrm{id}\times \Psi \right)^{-1}(V)\cap (\R\times \CubenOneHalf)]\) with \(\phih_1=1\) on a 
    neighborhood of \((0,0)\)
    and 
    \(\phih_1\prec \phih_2\).
    Set \(\phi_j(t,\xi):=\phih_j(t,\Psi^{-1}(\xi))\in \CinftycptSpace[V]\).
    Then, \(\forall s\in \R\), \(\forall N\in \Zg\), \(\exists C\geq 0\), \(\forall u\in \Distributions[\R\times \InteriorN][\C^M]\),
    \begin{equation*}
        \HsNorm*{\phi_1 u}{s+\epsilon_0}
        \leq C
        \left( 
            \HsNorm*{\phi_2 \left( \partial_t+\opL \right)^N u}{s-(N-1)\epsilon_0}
            +\LtNorm*{\phi_2 u}
         \right),
    \end{equation*}
    where if the right-hand side is finite, so is the left-hand side.
\end{proposition}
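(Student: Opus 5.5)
The plan is to transfer the statement to the chart \(\CubenOne\) and then invoke Corollary \ref{Cor::EstBdryInt::MainCor}. Pull everything back by \(\Psi\). Set \(\sigma\) by \(d\Psi^{*}\Vol=\sigma(x)\,dx\), \(\Wh_j:=\Psi^{*}W_j\), and \(\ah_{\alpha,\beta}:=a_{\alpha,\beta}\circ\Psi\). Let
\[
\opLh:=\sum_{|\alpha|,|\beta|\leq\kappa}\sigma^{-1}\left(\Wh^{\alpha}\right)^{*}\sigma\ah_{\alpha,\beta}\Wh^{\beta},
\]
where the adjoint is taken with respect to Lebesgue measure. After expanding, \(\opLh\) has the form \(\sum_{|\gamma|\leq 2\kappa}b_\gamma\Wh^\gamma\) required in Section \ref{Section::EstInt}. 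Formal adjoints of the \(\Wh_j\) are \(-\Wh_j+\) smooth functions, so the expansion is legitimate. The \(\Wh_j\) satisfy H\"ormander's condition of order \(m\) on \(\CubenOne\), since the \(W_j\) do and \(\Psi\) is a diffeomorphism onto a subset of \(\InteriorN\cap\Omega\).

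Next I verify the identity \(\opLh(f\circ\Psi)=(\opL_0 f)\circ\Psi\) in the distributional sense on \(\Psi(\CubenOne)\), where \(\opL_0=\sum (W^\alpha)^{*}a_{\alpha,\beta}W^\beta\). This is a direct change-of-variables computation: \(\Ltip{g\circ\Psi^{-1}}{\opL_0 f}[\ManifoldN][\Vol]=\Ltip{g}{\sigma\opLh(f\circ\Psi)}\) for \(g\in\CinftycptSpace[\CubenOne]\). Then, with \(\uh(t,x):=u(t,\Psi(x))\), the operator \(\partial_t+\opL\) in the statement is read as the differential operator \(\partial_t+\opL_0\) acting on \(u\in\Distributions[\R\times\InteriorN][\C^M]\). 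On distributions in the interior this is the only meaning available. It is consistent with the previous remark that \(\opL g|_\Omega=\opL_0 g|_\Omega\) for \(g\in\DomainL\).

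The main point is Assumption \ref{Assumption::EstBdryInt::MaxSubAssump} for \(\opLh\) and \(\sigma\). I obtain it exactly as in Lemma \ref{Lemma::EstBdry::InteriorMaxSubHolds}, using Lemma \ref{Lemma::Proof::New::InteriorMaxSubEstimate} with \(U=\Psi(\CubenOneHalf)\Subset\Omega\cap\InteriorN\). Take \(f\in\CinftycptSpace[\CubenOneHalf][\C^M]\). Then \(f\circ\Psi^{-1}\in\TestFunctionsZeroCM[\Omega]\subseteq\DomainQ\) by Assumption \ref{Asssumption::Result::TestFunctionsInDomain}, and its support is compact in \(\Omega\). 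Assumption \ref{Asssumption::Result::QIsQF} gives \(\FormQ[f\circ\Psi^{-1}][f\circ\Psi^{-1}]=\FormQF[f\circ\Psi^{-1}][f\circ\Psi^{-1}]=\Ltip{f}{\sigma\opLh f}[\Rn]\), the last equality by integration by parts with no boundary terms. Combining this with \eqref{Eqn::Result::SectorialConsequence} replaces \(|\FormQ|\) by \(C_1\Real\FormQ+C_2\LtNorm{\cdot}^2\). The equivalence \eqref{Eqn::Proof::LtNormPullBackIsEquiv} together with \(\HsWNorm{f\circ\Psi^{-1}}{\kappa}\gtrsim\sum_{|\alpha|=\kappa}\LtNorm{\Wh^\alpha f}\) then yields \eqref{Eqn::EstBdryInt::MaxSubAssumption}.

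Finally, apply Corollary \ref{Cor::EstBdryInt::MainCor} to \(\uh\) with cutoffs \(\phih_1\prec\phih_2\in\CinftycptSpace[\R\times\CubenOneHalf]\):
\[
\HsNorm{\phih_1\uh}{s+\epsilon_0}\lesssim\HsNorm{\phih_2(\partial_t+\opLh)^N\uh}{s-(N-1)\epsilon_0}+\LtNorm{\phih_2\uh}.
\]
Here \((\partial_t+\opLh)^N\uh=((\partial_t+\opL)^N u)\circ(\mathrm{id}\times\Psi)\). The last step transfers back to \(\phi_1,\phi_2\), using invariance of compactly supported (non-isotropic) Sobolev norms under diffeomorphisms acting only in \(x\) (Remark \ref{Rmk::PDOs::DefineClassicalHsSpaceOnMfld}) and \eqref{Eqn::Proof::LtNormPullBackIsEquiv}. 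The finiteness clause is inherited from the corollary.

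I expect no serious obstacle. The only delicate points are interpreting \(\opL\) on general distributions as \(\opL_0\), and checking that the integration by parts relating \(\FormQF\) to \(\Ltip{f}{\sigma\opLh f}\) produces no boundary terms; the latter holds because \(f\) is compactly supported in the open cube.
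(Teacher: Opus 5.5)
Your proposal is correct and is essentially the paper's own argument: the paper likewise pulls back by \(\Psi\), verifies Assumption~\ref{Assumption::EstBdryInt::MaxSubAssump} as in Lemma~\ref{Lemma::EstBdry::InteriorMaxSubHolds}, applies Corollary~\ref{Cor::EstBdryInt::MainCor}, and transfers back through invariance of the compactly supported Sobolev norms, leaving to the reader the details that you have supplied. Your use of Lemma~\ref{Lemma::Proof::New::InteriorMaxSubEstimate} with \(U=\Psi(\CubenOneHalf)\), and your reading of \(\opL\) on interior distributions as the differential operator \(\sum (W^\alpha)^{*}a_{\alpha,\beta}W^\beta\), match what the paper's sketch implicitly does when it writes \(\Psi^{*}\opL\Psi_{*}\).
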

\begin{proof}[Proof sketch]
    This is a simpler reprise of the proof of Proposition \ref{Prop::Proof::Boundary}.
    We apply Corollary \ref{Cor::EstBdryInt::MainCor}
    with
    \(W_0,\ldots, W_r\) replaced by \(\Psi^{*}W_1,\ldots,\Psi^{*} W_r\) and 
    \(\opL\) replaced by \(\Psi^{*}\opL\Psi_{*}\).
    That Assumption \ref{Assumption::EstBdryInt::MaxSubAssump} holds follows as in Lemma \ref{Lemma::EstBdry::InteriorMaxSubHolds}.
    From here, the result follows from Corollary \ref{Cor::EstBdryInt::MainCor},
    just as Proposition \ref{Prop::Proof::Boundary} followed from Theorem \ref{Thm::EstBdry::MainThm::New};
    we leave the details to the reader.
\end{proof}

\section{Boundary Conditions}\label{Section::BoundaryCond}
In this section, we describe the connection between the choice
of \(\FormQF\) and \(\CoreB\) and boundary conditions
as outlined in Example \ref{Example::Intro::Examples}.
First, in Section \ref{Section::BoundaryCond::IdentifyDomains}, we describe the standard way in which we connect forms to boundary conditions.
In Section \ref{Section::BoundaryCond::SecondOrder}, we describe Example \ref{Example::Intro::Examples}\ref{Item::Intro::Examples::SubLaplace},
while in Section \ref{Section::BoundaryCond::GeneralOps} we describe Example \ref{Example::Intro::Examples}\ref{Item::Intro::Examples::NonDirichlet}.

    \subsection{Identifying Domains}\label{Section::BoundaryCond::IdentifyDomains}
    In this section, we describe the standard way to understand the operator \(\LDomainL\)
in terms of the corresponding form acting on a core. For simplicity,
we work on a bounded smooth domain \(\Omega\subset \Rn\), though the same results hold for a manifold
with boundary.

Let \(\QDomainQ\) be a closed, densely defined, sectorial form on \(\LtSpace[\Omega]\),
and let \(\CoreB\) be a core for \(\QDomainQ\) with \(\TestFunctionsZero[\Omega]\subseteq \CoreB\). 
Let \(\LDomainL\) be the corresponding closed, densely defined, m-sectorial operator
(see \cite[Chapter 6, Section 2, Theorem 2.1]{KatoPerturbationTheory}).

Suppose that we have
\begin{equation*}
    \FormQ[f][g]=\FormQF[f][g]:=\sum_{|\alpha|,|\beta|\leq \kappa} \Ltip*{\partial_x^{\alpha}f}{a_{\alpha,\beta}\partial_x^{\beta}g}[\Omega],\quad f,g\in \CoreB,
\end{equation*}
where \(a_{\alpha,\beta}\in \CinftySpace[\Omega]\) are given functions.
Define
\begin{equation*}
    \opL_0:=\sum_{|\alpha|,|\beta|\leq \kappa} (-1)^{|\alpha|} \partial_x^{\alpha} a_{\alpha,\beta} \partial_x^{\beta}.
\end{equation*}

\begin{proposition}\label{Prop::BoundaryCond::IdentifyL}
    Fix \(g\in \LtSpace[\Omega]\). The following are equivalent:
    \begin{enumerate}[(i)]
        \item\label{Item::BoundaryCond::IdentifyL::gInDomain} \(g\in \DomainL\).
        \item\label{Item::BoundaryCond::IdentifyL::IntByParts} 
        \(g\in \DomainQ\),
        \(\opL_0 g\in \LtSpace[\Omega]\) and \(\FormQ[f][g]=\Ltip{f}{\opL_0g}\), \(\forall f\in \CoreB\); here \(\opL_0 g\) is taken in the sense of \(\DistributionsZero[\Omega]\).
    \end{enumerate}
    In the above case, we have \(\opL g=\opL_0 g\).
\end{proposition}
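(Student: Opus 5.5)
The plan is to use Kato's characterization of the operator associated to a closed sectorial form, which reduces everything to checking the defining identity on the core \(\CoreB\) instead of on all of \(\DomainQ\). Recall from \cite[Chapter 6, Section 2, Theorem 2.1]{KatoPerturbationTheory} that \(g\in\DomainL\) with \(\opL g=h\) if and only if \(g\in\DomainQ\), \(h\in\LtSpace[\Omega]\), and \(\FormQ[f][g]=\Ltip{f}{h}\) for all \(f\in\DomainQ\). Moreover (part (iii) of that theorem) it suffices to require this for \(f\) in a core. The proof therefore has two halves: identifying \(h\) with \(\opL_0 g\) distributionally, and passing from \(\CoreB\) to \(\DomainQ\).

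For (i)\(\Rightarrow\)(ii), take \(g\in\DomainL\). Then \(g\in\DomainL\subseteq\DomainQ\), and \(\FormQ[f][g]=\Ltip{f}{\opL g}\) for every \(f\in\DomainQ\), in particular for every \(f\in\CoreB\). Next test with \(f\in\TestFunctionsZero[\Omega]\subseteq\CoreB\). On the core, \(\FormQ\) is given by the formula \(\FormQF\), and since \(f\) vanishes to infinite order at \(\partial\Omega\) we may integrate by parts with no boundary terms. This gives
\begin{equation*}
\FormQF[f][g]=\sum_{|\alpha|,|\beta|\leq\kappa}\Ltip*{\partial_x^\alpha f}{a_{\alpha,\beta}\partial_x^\beta g}=\Ltip{f}{\opL_0 g},
\end{equation*}
where the right-hand side is the pairing in \(\DistributionsZero[\Omega]\).

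One subtlety arises here: \(g\) lies only in \(\DomainQ\), not in the core, so we cannot directly use \(\FormQ[f][g]=\FormQF[f][g]\). To handle this, approximate \(g\) by \(g_k\in\CoreB\) converging in \(\DomainQ\). Then \(\FormQ[f][g]=\lim_k\FormQF[f][g_k]=\lim_k\Ltip{\overline{\opL_0^{t}}f}{g_k}\), where the middle expression is the integration by parts onto the test function \(f\). Because \(g_k\to g\) in \(\LtSpace\), the limit equals \(\Ltip{f}{\opL_0 g}\) in the distributional sense. Comparing with \(\Ltip{f}{\opL g}\) shows \(\opL_0 g=\opL g\in\LtSpace[\Omega]\) in \(\DistributionsZero[\Omega]\), and the identity \(\FormQ[f][g]=\Ltip{f}{\opL_0 g}\) for \(f\in\CoreB\) follows.

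For (ii)\(\Rightarrow\)(i), set \(h=\opL_0 g\in\LtSpace\). Both sides of \(\FormQ[f][g]=\Ltip{f}{h}\) are continuous in \(f\) with respect to the \(\DomainQ\) norm: the left because \(\FormQ\) is bounded on \(\DomainQ\), the right because the \(\DomainQ\) norm dominates the \(\LtSpace\) norm. Since \(\CoreB\) is dense in \(\DomainQ\), the identity extends to all \(f\in\DomainQ\). Kato's characterization then gives \(g\in\DomainL\) and \(\opL g=h=\opL_0 g\).

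The only delicate step is the justification in the (i)\(\Rightarrow\)(ii) direction: the distributional identity \(\FormQ[f][g]=\Ltip{f}{\opL_0 g}\) for test functions \(f\) must be obtained via the core approximation, because \(\FormQ=\FormQF\) is assumed only on \(\CoreB\).
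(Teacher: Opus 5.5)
Your proposal is correct and follows essentially the same route as the paper. Both arguments reduce, via Kato's representation theorem with its core clause, to finding \(w\in\LtSpace[\Omega]\) with \(\FormQ[f][g]=\Ltip{f}{w}\) for \(f\in\CoreB\); both then approximate \(g\) by \(g_k\in\CoreB\) in \(\DomainQ\) and test against \(f\in\TestFunctionsZero[\Omega]\), using only \(g_k\to g\) in \(\LtSpace\), to identify \(w=\opL_0 g\) distributionally. The only cosmetic difference is that in (ii)\(\Rightarrow\)(i) you re-derive the core clause of Kato's theorem by a density argument rather than citing it.
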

\begin{proof}
    By \cite[Chapter 6, Section 2, Theorem 2.1]{KatoPerturbationTheory},
    \ref{Item::BoundaryCond::IdentifyL::gInDomain} holds if and only if
    \(g\in\DomainQ\) and there exists \(w\in\LtSpace[\Omega]\) such that
    \begin{equation}\label{Eqn::BoundaryCond::IdentifyL::Existw}
        \FormQ[f][g]=\Ltip{f}{w},
        \qquad \forall f\in\CoreB.
    \end{equation}
    Here we used that \(\CoreB\) is a core for \(\QDomainQ\).

    Suppose \eqref{Eqn::BoundaryCond::IdentifyL::Existw} holds.
    Take \(g_j\in\CoreB\) such that
    \(g_j\rightarrow g\) in \(\DomainQ\).
    For \(f\in\TestFunctionsZero[\Omega]\subseteq\CoreB\), we have
    \begin{align*}
        \Ltip{f}{w}
        &=
        \FormQ[f][g]
        =
        \lim_{j\to\infty}\FormQ[f][g_j]
        =
        \lim_{j\to\infty}\FormQF[f][g_j]
        \\
        &=
        \lim_{j\to\infty}\Ltip{f}{\opL_0g_j}
        =
        \Ltip{f}{\opL_0g},
    \end{align*}
    where the final equality uses
    \(g_j\rightarrow g\) in \(\LtSpace[\Omega]\).
    Thus \(w=\opL_0g\) in \(\DistributionsZero[\Omega]\).
    In particular, \(\opL_0g\in\LtSpace[\Omega]\), and
    \ref{Item::BoundaryCond::IdentifyL::IntByParts} holds.
    Conversely, if
    \ref{Item::BoundaryCond::IdentifyL::IntByParts} holds, then
    \eqref{Eqn::BoundaryCond::IdentifyL::Existw} holds with
    \(w=\opL_0g\).
    This proves
    \ref{Item::BoundaryCond::IdentifyL::gInDomain}
    \(\Longleftrightarrow\)
    \ref{Item::BoundaryCond::IdentifyL::IntByParts}.

    Finally,
    \cite[Chapter 6, Section 2, Theorem 2.1]{KatoPerturbationTheory}
    gives
    \begin{equation*}
        \opL g=w=\opL_0g,
    \end{equation*}
    completing the proof.
\end{proof}

In the examples in  Sections \ref{Section::BoundaryCond::SecondOrder} and \ref{Section::BoundaryCond::GeneralOps}, we will see for sufficiently smooth \(g\)
\begin{equation*}
    \FormQF[f][g]=\Ltip{f}{\opL_0 g}+\text{Boundary Integral},\quad \forall f\in \CoreB.
\end{equation*}
where the Boundary Integral comes from integration by parts. Thus, for \(g\in \DomainL\) it is necessary
and sufficient that the Boundary Integrals vanish \(\forall f\in \CoreB\). This will lead to the desired ``boundary conditions.''

    \subsection{Second Order Operators}\label{Section::BoundaryCond::SecondOrder}
    Let \(\ManifoldN\) be a compact manifold with boundary, with smooth strictly
positive density \(\Vol\), and let \(W_1,\ldots, W_r\) be smooth vector fields
on \(\ManifoldN\) satisfying H\"ormander's condition.
Let
\begin{equation*}
    \opL_0 f(x) = -\sum_{i,j=1}^r a_{i,j} W_i W_j f(x) + \sum_{j=1}^r a_j W_j f(x) +a(x)f(x),
\end{equation*}
where 
\(a_{i,j}\in \CinftySpace[\ManifoldN][\R]\)
\(a_j, a\in \CinftySpace[\ManifoldN][\C]\) and if
\(A(x)=\left( a_{i,j}(x) \right)_{1\leq i,j,\leq r}\in \CinftySpace[\ManifoldN][\MatrixSpace[r][r][\R]]\),
then \(A(x)\) is a real, symmetric, strictly positive definite matrix, \(\forall x\in \ManifoldN\).

\begin{assumption}\label{Assumption::BoundaryCond::SecondOrder::NonChar}
    We assume every point of \(\BoundaryN\) is \(W\)-non-characteristic
    (see Definition \ref{Defn::Intro::NonChar::NonCharDefn}).
\end{assumption}

\begin{remark}
    In Assumption \ref{Assumption::BoundaryCond::SecondOrder::NonChar}, we have
    assumed every boundary point is \(W\)-non-characteristic. It is possible
    to instead give local versions of the results in this section near a
    \(W\)-non-characteristic boundary point, even if some other boundary
    points are not \(W\)-non-characteristic. We leave such generalizations to the reader.
\end{remark}



The main result of this section is the next proposition.

\begin{proposition}\label{Prop::BoundaryConds::SecondOrder::MainSecondOrder}
    There exists a vector field \(X\) in the \(\CinftySpace[\ManifoldN][\R]\)-module
    generated by \(W_1,\ldots, W_r\), with \(X(x')\not \in \TangentSpace{x'}{\BoundaryN}\),
    \(\forall x'\in \BoundaryN\), and such that \(\forall B\in \CinftySpace[\BoundaryN][\C]\),
    there exist \(e_j^1, e_j^2, e\in \CinftySpace[\ManifoldN][\C]\) such that if we set
    \begin{equation}\label{Eqn::BoundaryConds::SecondOrder::MainSecondOrder::FormQF}
        \FormQF[f][g]:=
        \sum_{1\leq i,j\leq r}
        \Ltip*{W_if}{a_{i,j}W_jg}[\ManifoldN]
        +\sum_{j=1}^r \Ltip*{f}{e_j^1 W_j g}[\ManifoldN]
        +\sum_{j=1}^r\Ltip*{W_j e_j^2 f}{g}[\ManifoldN]
        +\Ltip*{f}{eg}[\ManifoldN],
    \end{equation}
    then,
    \begin{enumerate}[(i)]
        \item\label{Item::BoundaryConds::SecondOrder::MainSecondOrder::MaxSub} \(\left( \FormQF, \CinftySpace[\ManifoldN] \right)\) is maximally subelliptic
            in the sense that Assumption \ref{Assumption::Intro2::MaxSub}
            holds with \(\CoreB=\CinftySpace[\ManifoldN]\).
        \item\label{Item::BoundaryConds::SecondOrder::MainSecondOrder::LEqualsL0} If \(\LDomainL\) is the operator associated to the closure
            of \(\left( \FormQF, \CinftySpace[\ManifoldN] \right)\) (as in the introduction\footnote{See the discussion following Lemma \ref{Lemma::Intro2::QFIsCloseable}.}),
            then \(\opL f=\opL_0 f\), \(\forall f\in \DomainL\), where \(\opL_0 f\)
            is taken in the sense of \(\DistributionsZero[\ManifoldN]\).
        \item\label{Item::BoundaryConds::SecondOrder::MainSecondOrder::BoundaryCond} 
        For \(f\in \CjSpace{2}[\ManifoldN]\),
            \begin{equation*}
                f\in \DomainL \iff \left( Xf+Bf \right)\big|_{\BoundaryN}=0.
            \end{equation*}
    \end{enumerate}
\end{proposition}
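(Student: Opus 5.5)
The plan has four steps: build \(X\) from a boundary defining function, choose the coefficients \(e_j^1,e_j^2,e\) so that the formal operator is \(\opL_0\) and the boundary term is \(\int_{\BoundaryN}\overline{f}\,(Xg+Bg)\), deduce \ref{Item::BoundaryConds::SecondOrder::MainSecondOrder::MaxSub} from Lemma \ref{Lemma::Intro2::PerturbForm}, and read off \ref{Item::BoundaryConds::SecondOrder::MainSecondOrder::LEqualsL0} and \ref{Item::BoundaryConds::SecondOrder::MainSecondOrder::BoundaryCond}.

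\emph{Constructing \(X\).} Fix a real boundary defining function \(\rho\in\CinftySpace[\ManifoldN][\R]\), with \(\rho>0\) on \(\InteriorN\), \(\rho=0\) on \(\BoundaryN\) and \(d\rho\neq0\) there. The integration by parts formula I will use is the divergence theorem for \(\Vol\):
\[
\int_{\ManifoldN}(Yu)\,d\Vol=-\int_{\ManifoldN}(\mathrm{div}_{\Vol}Y)\,u\,d\Vol+\int_{\BoundaryN}u\,\iota_Y\Vol .
\]
On \(\BoundaryN\) we have \(\iota_Y\Vol=-(Y\rho)\,h\,dS\) for a fixed smooth positive function \(h\) and a fixed smooth positive boundary density \(dS\). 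Applying this with \(Y=W_i\) to the principal term gives
\[
\Ltip*{W_if}{a_{i,j}W_jg}=\Ltip*{f}{W_i^{*}a_{i,j}W_jg}-\int_{\BoundaryN}\overline{f}\,a_{i,j}(W_i\rho)(W_jg)\,h\,dS .
\]
This suggests defining
\[
X:=-\sum_{i,j}a_{i,j}(W_i\rho)\,W_j ,
\]
which lies in the \(\CinftySpace[\ManifoldN][\R]\)-module generated by \(W_1,\ldots,W_r\) because \(a_{i,j}\) and \(\rho\) are real. On \(\BoundaryN\),
\[
X\rho=-\sum_{i,j}a_{i,j}(W_i\rho)(W_j\rho)<0 .
\]
This is strictly negative because \(A\) is positive definite and, by Assumption \ref{Assumption::BoundaryCond::SecondOrder::NonChar}, some \(W_i\rho\neq0\) at each boundary point (a vector \(v\) is tangent to \(\BoundaryN\) exactly when \(d\rho(v)=0\)). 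Hence \(X\) is nowhere tangent to \(\BoundaryN\). The principal part of \(\FormQF\) then contributes exactly the boundary term \(\int_{\BoundaryN}\overline f\,(Xg)\,h\,dS\).

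\emph{Choosing the coefficients.} The term \(\Ltip*{f}{e_j^1W_jg}\) produces no boundary term. Integrating by parts in \(\Ltip*{W_je_j^2f}{g}\) moves \(W_j\) onto \(g\) and produces the boundary term
\[
-\int_{\BoundaryN}\overline f\,\overline{e_j^2}\,(W_j\rho)\,g\,h\,dS .
\]
I will choose \(e_j^2\) so that \(-\sum_j\overline{e_j^2}(W_j\rho)=B\) on \(\BoundaryN\). One way is \(\overline{e_j^2}:=-B\,(W_j\rho)/\sum_k|W_k\rho|^2\), extended smoothly into \(\ManifoldN\); the denominator is nonzero near \(\BoundaryN\), and a cutoff handles the rest. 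Next I expand all four terms of \(\FormQF\) as \(\Ltip{f}{Pg}\) with \(P\) a second order operator: \(W_i^{*}a_{i,j}W_j=-a_{i,j}W_iW_j+\) first order terms, and the \(e_j^2\) terms contribute first and zeroth order terms. I then choose \(e_j^1\) and \(e\) to cancel these extra first and zeroth order terms and insert \(a_j\) and \(a\). With these choices, for smooth \(f,g\),
\[
\FormQF[f][g]=\Ltip{f}{\opL_0g}+\int_{\BoundaryN}\overline f\,(Xg+Bg)\,h\,dS .
\]
I expect this bookkeeping, with its complex conjugates and density factors, to be the main obstacle; the divergence-theorem identity above is what I would rely on to keep it straight.

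\emph{Proof of (i)--(iii).} For \ref{Item::BoundaryConds::SecondOrder::MainSecondOrder::MaxSub}, since \(A\) is real, symmetric and uniformly positive definite, the principal part satisfies \(\Real\sum_{i,j}\Ltip*{W_if}{a_{i,j}W_jf}\geq c\sum_j\LtNorm{W_jf}^2\). Every lower order term is bounded by \(\epsilon\sum_j\LtNorm{W_jf}^2+C_\epsilon\LtNorm f^2\), using Cauchy--Schwarz and \(\LtNorm{W_j(e_j^2f)}\leq\LtNorm{e_j^2W_jf}+C\LtNorm f\). Lemma \ref{Lemma::Intro2::PerturbForm} then applies, with the principal part as \(\FormQF\), the remaining terms as \(\FormQZero\), and \(\CoreB=\CinftySpace[\ManifoldN]\). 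This also gives closability via Lemma \ref{Lemma::Intro2::QFIsCloseable}. For \ref{Item::BoundaryConds::SecondOrder::MainSecondOrder::LEqualsL0}, test against \(f\in\TestFunctionsZero[\ManifoldN]\subseteq\CoreB\): the boundary integral vanishes, so \(\opL g=\opL_0g\) in \(\DistributionsZero[\ManifoldN]\), exactly as in Proposition \ref{Prop::BoundaryCond::IdentifyL}.

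For \ref{Item::BoundaryConds::SecondOrder::MainSecondOrder::BoundaryCond}, let \(g\in\CjSpace{2}[\ManifoldN]\). First, \(g\in\DomainQ\): approximate \(g\) in \(C^1\) by smooth functions, and note that \(\FormQF\) is continuous in the \(C^1\) (indeed \(H^1_W\)) topology, so the approximants converge in \(\DomainQ\). Second, \(\opL_0g\in\LtSpace\). By Proposition \ref{Prop::BoundaryCond::IdentifyL} (in its manifold version), \(g\in\DomainL\) if and only if \(\FormQ[f][g]=\Ltip{f}{\opL_0g}\) for all \(f\in\CinftySpace[\ManifoldN]\). The integration by parts identity above holds for \(g\in C^2\) by approximation, so this is equivalent to
\[
\int_{\BoundaryN}\overline f\,(Xg+Bg)\,h\,dS=0\qquad\text{for all } f\in\CinftySpace[\ManifoldN].
\]
Since restrictions of smooth \(f\) exhaust \(\CinftySpace[\BoundaryN]\) and \(h>0\), this holds if and only if \((Xg+Bg)\big|_{\BoundaryN}=0\).
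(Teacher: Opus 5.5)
Your proof is correct and follows the paper's overall architecture. You set things up so that integration by parts produces \(\Ltip{f}{\opL_0 g}\) plus a boundary integral pairing \(f\) with \(Xg+Bg\). You get (i) by absorbing the lower-order terms into the positive-definite principal part; Lemma \ref{Lemma::Intro2::PerturbForm} packages the same absorption the paper does by hand in Lemma \ref{Lemma::BoundaryCond::SecondOrder::MaxSub}. You get (ii) by testing against \(\TestFunctionsZero[\ManifoldN]\), and (iii) from Kato's characterization as in Proposition \ref{Prop::BoundaryCond::IdentifyL}.

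The real difference is how \(X\) arises.

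\emph{The paper's route.} It first changes generators (Lemma \ref{Lemma::BoundaryCond::SecondOrder::ChooseWt}). It takes one transversal \(\Wt_0\) in the module, makes the others tangent via \(\Wt_j=W_j-m_j\Wt_0\), and sets \(\At=M^{\transpose}AM\), so that only \(\Wt_0\) produces boundary terms. Then \(X=\sum_j\at_{0,j}\Wt_j\) is transversal because \(\at_{0,0}>0\), and \(B\) is carried by the single coefficient \(b_0^2\).

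\emph{Your route.} You keep the original \(W_j\) and let each one contribute a boundary term weighted by \(W_j\rho\). This gives the conormal field directly, and it is transversal because \(X\rho=-\sum_{i,j}a_{i,j}(W_i\rho)(W_j\rho)<0\). You then spread \(B\) over all the \(e_j^2\) in proportion to \(W_j\rho\).

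The two fields agree up to normalization. The paper's \(X\) equals \(\sum_{i,j}m_ia_{i,j}W_j\), and \(W_i\rho=m_i\,\Wt_0\rho\) on \(\BoundaryN\), so there your \(X\) is \(-(\Wt_0\rho)\) times the paper's. Your version is more intrinsic and yields a form already written in the \(W_j\), with no conversion back from the \(\Wt_j\). The paper's adapted frame, for its part, is the same device it uses in its boundary analysis (cf. Proposition \ref{Prop::EstBdry::Reduction::MainReduction}\ref{Item::EstBdry::Reduction::MainReduction::ReplaceWWithY}).

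Your proof that \(\CjSpace{2}[\ManifoldN]\subseteq\DomainQ\), via \(C^1\)-approximation and \(|\FormQF[h][h]|\lesssim\|h\|_{C^1}^2\), is also more elementary. The paper instead appeals to the density of \(\CinftySpace[\ManifoldN]\) in \(\HsWSpace{1}[\ManifoldN]\) from the companion paper.

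One step should be made explicit. The same approximation gives \(\FormQ[f][g]=\FormQF[f][g]\) for \(f\in\CinftySpace[\ManifoldN]\) and \(g\in\CjSpace{2}[\ManifoldN]\). That identity is what lets you replace \(\FormQ\) by the integrated-by-parts formula in (iii).
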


\begin{remark}
    When considering elliptic boundary value problems, one has more flexibility in
    choosing the form \(\FormQF\). This comes from the following idea.
    Consider the operator
    \begin{equation*}
        \sum_{i,j} a_{i,j}\partial_{x_i}\partial_{x_j}.
    \end{equation*}
    Because \(\left[ \partial_{x_i},\partial_{x_j} \right]=0\),
    one can replace \(a_{i,j}\) with any matrix \(b_{i,j}\)
    with \(2a_{i,j}=b_{i,j}+b_{j,i}\). This flexibility allows one
    to choose forms for a wider class of boundary conditions;
    see \cite[Example 7.11]{FollandIntroductionPDE}.
    In our case, \(\left[ W_i, W_j \right]\) may not be in the
    \(\CinftySpace[\ManifoldN][\R]\)-module generated by \(W_1,\ldots, W_r\),
    so we lack the same flexibility.
    Roughly, speaking, if we had this flexibility, in Proposition \ref{Prop::BoundaryConds::SecondOrder::MainSecondOrder}
    instead of saying \(\exists X\), we could choose a form \(\FormQF\) for any
    \(X\) in the \(\CinftySpace[\ManifoldN][\R]\)-module generated by \(W_1,\ldots, W_r\)
    with \(X(x')\not\in \TangentSpace{x'}{\BoundaryN}\), \(\forall x'\in \BoundaryN\). As is, we stick to proving
    Proposition \ref{Prop::BoundaryConds::SecondOrder::MainSecondOrder} for just one
    such \(X\).
    It seems likely that the methods of this paper could be extended (with a few new estimates
    and relying on the second order nature of \(\opL_0\)),
    to prove such a version of Proposition \ref{Prop::BoundaryConds::SecondOrder::MainSecondOrder}.
    However, to do this for the more general (higher order) operators covered in this paper
    would likely require some new ideas. We do not pursue such questions here.
\end{remark}

We let \(W_j^{*}\) denote the formal \(\LtSpace[\ManifoldN][\Vol]\)-adjoint of \(W_j\);
and similarly for any other vector field.

\begin{lemma}\label{Lemma::BoundaryCond::SecondOrder::ChooseWt}
    There exist \(\Wt_0,\Wt_1,\ldots, \Wt_r\) smooth vector fields on \(\ManifoldN\)
    and \(\at_{i,j}\in \CinftySpace[\ManifoldN][\R]\), \(0\leq i,j,\leq r\)
    such that:
    \begin{enumerate}[(i)]
        \item\label{Item::BoundaryCond::SecondOrder::ChooseWt::SameModule} The \(\CinftySpace[\ManifoldN][\R]\)-module generated by \(\Wt_0,\ldots, \Wt_r\)
            equals the \(\CinftySpace[\ManifoldN][\R]\)-module generated by \(W_1,\ldots, W_r\).
        \item\label{Item::BoundaryCond::SecondOrder::ChooseWt::NotInTangentSpace} \(\Wt_0(x')\not \in \TangentSpace{x'}{\BoundaryN}\), \(\forall x'\in \BoundaryN\).
        \item\label{Item::BoundaryCond::SecondOrder::ChooseWt::InTangentSpace} \(\Wt_j(x')\in \TangentSpace{x'}{\BoundaryN}\), \(\forall x'\in \BoundaryN\), \(1\leq j\leq r\).
        \item\label{Item::BoundaryCond::SecondOrder::ChooseWt::SameForm} \(\sum_{0\leq i,j\leq r}\Ltip{\Wt_i f}{\at_{i,j} \Wt_j g}[\ManifoldN]=\sum_{1\leq i,j\leq r} \Ltip{W_i f}{a_{i,j}W_j g}[\ManifoldN]\) for any \(f\) and \(g\) for which both sides make sense.
        \item\label{Item::BoundaryCond::SecondOrder::ChooseWt::SameOperator} \(\sum_{0\leq i,j\leq r} \Wt_i^{*}\at_{i,j}\Wt_j=\sum_{1\leq i,j\leq r} W_i^{*} a_{i,j}W_j\). 
        \item\label{Item::BoundaryCond::SecondOrder::ChooseWt::at00Postive} \(\at_{0,0}(x')>0\), \(\forall x'\in \BoundaryN\).
    \end{enumerate}
\end{lemma}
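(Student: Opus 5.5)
The plan is to construct the new frame by a linear change of basis of the generators, built from the matrix \(A\) on a collar of \(\BoundaryN\). First I fix a smooth real vector field \(\nu\) transverse to \(\BoundaryN\); for instance, the inward normal with respect to some Riemannian metric, extended to a collar. At each boundary point \(x'\), the non-characteristic assumption (Assumption \ref{Assumption::BoundaryCond::SecondOrder::NonChar}) lets me write \(W_j(x')=\lambda_j(x')\nu(x')+\) a tangential part. Here \(\lambda=(\lambda_1,\ldots,\lambda_r)\) is a smooth real vector, nonvanishing on \(\BoundaryN\). I extend \(\lambda\) smoothly to all of \(\ManifoldN\) and set
\[
\Wt_0:=\sum_{j=1}^r \mu_j W_j,\qquad \mu:=A^{-1}\lambda \quad(\text{or a smooth rescaling}).
\]
Other choices of weights, such as \(\mu=\lambda\), would also work for (ii); I will pick \(\mu\) so that the formulas for \(\at\) come out clean. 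Then \(\Wt_0(x')=\big(\sum_j\mu_j\lambda_j\big)\nu+\text{tangential}\). With \(\mu=A^{-1}\lambda\), the coefficient is \(\lambda^\transpose A^{-1}\lambda>0\), which gives (ii) on \(\BoundaryN\). Off a neighborhood of the boundary, transversality is irrelevant.

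Next I define \(\Wt_j:=W_j-c_j\Wt_0\) with \(c_j:=\lambda_j/(\lambda^\transpose A^{-1}\lambda)\), smoothly extended (cutting off away from \(\BoundaryN\) where needed). By construction, \(\Wt_j(x')\) has no normal component at the boundary, which is (iii). Since \(W_j=\Wt_j+c_j\Wt_0\) and \(\Wt_0,\Wt_j\) are \(C^\infty\)-combinations of the \(W\)'s, the two modules coincide, which is (i). Writing \(W_j=\sum_{k=0}^r P_{jk}\Wt_k\) with \(P_{j0}=c_j\) and \(P_{jk}=\delta_{jk}\) for \(k\ge1\), I set
\[
\at_{k,l}:=\sum_{i,j}P_{ik}\,a_{i,j}\,P_{jl},\quad\text{i.e. } \At=P^\transpose A P .
\]
These coefficients are real because \(P\) and \(A\) are real. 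Then (iv) is a pointwise identity: \(\sum_{i,j}\langle W_if,a_{ij}W_jg\rangle=\sum_{k,l}\langle \Wt_kf,\at_{kl}\Wt_lg\rangle\), since the coefficients are real functions and can be moved across the inner product. Property (v) follows from (iv) by taking \(f,g\in\TestFunctionsZero[\ManifoldN]\) and identifying the formal adjoints; it can also be checked directly from \(W_i^{*}=\sum_k \Wt_k^{*}P_{ik}\), using that the \(P_{ik}\) are real.

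Finally, \(\at_{0,0}=c^\transpose A c\), which is \(>0\) at every boundary point since \(A\) is positive definite and \(c=\lambda/(\lambda^\transpose A^{-1}\lambda)\neq0\) there. This gives (vi). The only genuine subtlety is making \(\lambda\), and hence \(c\) and \(\mu\), globally smooth on \(\ManifoldN\). I handle this with a collar \(\BoundaryN\times[0,\epsilon)\) and a cutoff equal to \(1\) near \(\BoundaryN\). Away from the boundary, \(\Wt_0\) may be cut off to zero without harming (i), because the \(\Wt_j\) there reduce to the \(W_j\). So the main obstacle is mild: verifying that the normal component \(\lambda\) is well defined and smooth, and that \(\lambda^\transpose A^{-1}\lambda\) is bounded away from \(0\) on the compact set \(\BoundaryN\). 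Both follow from non-characteristicity and compactness.
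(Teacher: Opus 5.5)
Your proof is correct and follows the paper's argument. The paper likewise picks a transverse \(\Wt_0\) in the real module, sets \(\Wt_j=W_j-c_j\Wt_0\) with \(c_j\) extending the boundary ratio, and takes \(\At=P^{\transpose}AP\) (its \(M^{\transpose}AM\)), so that \(\at_{0,0}=c^{\transpose}Ac>0\) because \(c\neq0\) on \(\BoundaryN\); the only difference is that the paper allows any transverse \(\Wt_0\) in the module, so your weighting \(\mu=A^{-1}\lambda\) and the cutoff of \(\Wt_0\) away from the boundary are harmless but unnecessary.
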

\begin{proof}
    Pick \(\Wt_0\) in the \(\CinftySpace[\ManifoldN][\R]\)-module generated
    by \(W_1,\ldots, W_r\) such that \ref{Item::BoundaryCond::SecondOrder::ChooseWt::NotInTangentSpace}
    holds; this is always possible by Assumption \ref{Assumption::BoundaryCond::SecondOrder::NonChar}.
    For \(1\leq j\leq r\), there exists a unique \(c_j\in \CinftySpace[\BoundaryN][\R]\)
    such that \(W_j(x')-c_j(x')\Wt_0(x')\in \TangentSpace{x'}{\BoundaryN}\), \(\forall x'\in \BoundaryN\).
    Assumption \ref{Assumption::BoundaryCond::SecondOrder::NonChar} implies
    \begin{equation}\label{Eqn::BoundaryCond::SecondOrder::ChooseWt::Tmp0}
        \left( c_1(x'),\ldots, c_r(x') \right)\ne 0,\quad \forall x'\in \BoundaryN.
    \end{equation}
    Let \(m_j\in \CinftySpace[\ManifoldN][\R]\) be such that \(m_j\big|_{\BoundaryN}=c_j\)
    and define
    \(\Wt_j = W_j -m_j W_0\), \(1\leq j\leq r\);
    \ref{Item::BoundaryCond::SecondOrder::ChooseWt::SameModule} and
    \ref{Item::BoundaryCond::SecondOrder::ChooseWt::InTangentSpace} follow.
    Let \(M(x)\) denote the \(r\times (r+1)\) matrix
    \begin{equation*}
        M(x)=
        \begin{bmatrix}
            m_1(x) & 1&0&\cdots &0\\
            m_2(x) &0 &1&\cdots &0\\
            \ldots &0 &0 & \ddots &0\\
            m_r(x) &0 &0 &\cdots &1
        \end{bmatrix},
    \end{equation*}
    so that
    \begin{equation}\label{Eqn::BoundaryCond::SecondOrder::ChooseWt::MatrixEqn}
        \begin{bmatrix}
            W_1(x)\\
            W_2(x)\\
            \vdots \\
            W_r(x)
        \end{bmatrix}
        =M(x)
        \begin{bmatrix}
            \Wt_0(x)\\
            \Wt_1(x)\\
            \vdots\\
            \Wt_r(x)
        \end{bmatrix}.
    \end{equation}
    Define \(\at_{i,j}\) by letting \(\At(x)=(\at_{i,j})_{0\leq i,j\leq r}\) and
    setting \(\At(x)=M(x)^{\transpose}A(x)M(x)\); \ref{Item::BoundaryCond::SecondOrder::ChooseWt::SameForm}
    and \ref{Item::BoundaryCond::SecondOrder::ChooseWt::SameOperator} follow.

    Using \eqref{Eqn::BoundaryCond::SecondOrder::ChooseWt::MatrixEqn}, we have
    \begin{equation}\label{Eqn::BoundaryCond::SecondOrder::ChooseWt::Tmp1}
        \at_{0,0}(x') = \sum_{i,j}m_i(x') a_{i,j}(x')m_j(x')= \sum_{i,j}c_i(x') a_{i,j}(x')c_j(x'),\quad \forall x'\in \BoundaryN.
    \end{equation}
    From \eqref{Eqn::BoundaryCond::SecondOrder::ChooseWt::Tmp1},
    \ref{Item::BoundaryCond::SecondOrder::ChooseWt::at00Postive} follows from
    \eqref{Eqn::BoundaryCond::SecondOrder::ChooseWt::Tmp0} and the fact
    that \(A=(a_{i,j})\) is assumed strictly positive definite.
\end{proof}

For the rest of this section, let \(\Wt_0,\ldots, \Wt_r\) and \(\at_{i,j}\in \CinftySpace[\ManifoldN][\R]\)
be as in Lemma \ref{Lemma::BoundaryCond::SecondOrder::ChooseWt}.
Choose \(\at_j,\at\in \CinftySpace[\ManifoldN][\C]\) such that
\begin{equation*}
    \opL_0=\sum_{0\leq i,j\leq r} \Wt_i^{*}a_{i,j}\Wt_j+\sum_{j=0}^r \at_j \Wt_j+\at.
\end{equation*}
This is clearly possible by Lemma \ref{Lemma::BoundaryCond::SecondOrder::ChooseWt}\ref{Item::BoundaryCond::SecondOrder::ChooseWt::SameModule},\ref{Item::BoundaryCond::SecondOrder::ChooseWt::SameOperator}.

Let \(h_j\in \CinftySpace[\ManifoldN][\R]\) be defined by \(\Wt_j^{*}=-\Wt_j+h_j\).
Take \(b_j^1,b_j^2\in \CinftySpace[\ManifoldN][\C]\) satisfying
\(b_j^1-\overline{b_j^2}=\at_j\) and \(B=\overline{b_0^2}\big|_{\BoundaryN}\).
Define \(b\in \CinftySpace[\ManifoldN][\C]\)
by \(b+\sum_{j=0}^r h_j b_j^2=\at\).
Define
\begin{equation}\label{Eqn::BoundaryCond::SecondOrder::FormulaFormQF}
    \begin{split}
        \FormQF[f][g]
        :=
        &\sum_{0\leq i,j\leq r} \Ltip*{\Wt_i f}{\at_{i,j}\Wt_j g}[\ManifoldN]
        +\sum_{j=0}^r \Ltip*{f}{b_j^1 \Wt_j g}[\ManifoldN]
        \\&+\sum_{j=0}^r \Ltip*{\Wt_j b_j^2 f}{g}[\ManifoldN]
        +\Ltip*{f}{bg}[\ManifoldN].
    \end{split}
\end{equation}
\(\FormQF\) is of the form \eqref{Eqn::BoundaryConds::SecondOrder::MainSecondOrder::FormQF}
by Lemma \ref{Lemma::BoundaryCond::SecondOrder::ChooseWt}\ref{Item::BoundaryCond::SecondOrder::ChooseWt::SameModule},\ref{Item::BoundaryCond::SecondOrder::ChooseWt::SameForm}.
Set \(X:=\sum_{j=0}^r \at_{0,j} \Wt_j\);
that \(X\) is in the \(\CinftySpace[\ManifoldN][\R]\)-module generated by 
\(W_1,\ldots, W_r\) follows from Lemma \ref{Lemma::BoundaryCond::SecondOrder::ChooseWt}\ref{Item::BoundaryCond::SecondOrder::ChooseWt::SameModule}
and that \(X(x')\not \in \TangentSpace{x'}{\BoundaryN}\), \(\forall x'\in \BoundaryN\)
follows from Lemma \ref{Lemma::BoundaryCond::SecondOrder::ChooseWt}\ref{Item::BoundaryCond::SecondOrder::ChooseWt::NotInTangentSpace},\ref{Item::BoundaryCond::SecondOrder::ChooseWt::InTangentSpace},\ref{Item::BoundaryCond::SecondOrder::ChooseWt::at00Postive}.

\begin{lemma}\label{Lemma::BoundaryCond::SecondOrder::IntByParts}
    For \(f,g\in \CjSpace{1}[\ManifoldN]\),
    \begin{equation}\label{Eqn::BoundaryCond::SecondOrder::IntByParts::jNonZero}
        \Ltip*{\Wt_j f}{g}[\ManifoldN] = \Ltip*{f}{\Wt_j^{*}g}[\ManifoldN],\quad 1\leq j\leq r,
    \end{equation}
    \begin{equation}\label{Eqn::BoundaryCond::SecondOrder::IntByParts::jZero}
        \Ltip*{\Wt_0 f}{g}[\ManifoldN] = \Ltip*{f}{\Wt_0^{*} g}[\ManifoldN] + \int_{\BoundaryN} f(x')\overline{g(x')}\: d\sigma(x'),
    \end{equation}
    where \(\sigma\) is a smooth, nowhere-vanishing density on \(\BoundaryN\).
\end{lemma}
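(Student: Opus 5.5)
The plan is to reduce both identities to the divergence theorem on \(\ManifoldN\) with respect to \(\Vol\). For any smooth vector field \(Z\) on \(\ManifoldN\) and \(F\in \CjSpace{1}[\ManifoldN]\), the divergence theorem for the density \(\Vol\) gives
\begin{equation*}
    \int_{\ManifoldN} Z F\: d\Vol = -\int_{\ManifoldN} (\mathrm{div}_{\Vol} Z) F\: d\Vol + \int_{\BoundaryN} F\, \iota_{Z}\Vol,
\end{equation*}
where \(\iota_Z\Vol\) is the contraction of \(Z\) into \(\Vol\), restricted to \(\BoundaryN\) with the induced orientation. (Since \(\Vol\) is a density rather than a form, we work locally with an orientation, or equivalently regard \(\iota_Z\Vol\) as a density on \(\BoundaryN\) weighted by the sign of the outward component of \(Z\).) Set \(F=f\overline{g}\). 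Since \(Z(f\overline g)=(Zf)\overline g+f\overline{Zg}\) and \(Z\) is real, this yields
\begin{equation*}
    \Ltip{Zf}{g}=\Ltip{f}{(-Z-\mathrm{div}_{\Vol}Z)g}+\int_{\BoundaryN}f\overline{g}\,\iota_Z\Vol .
\end{equation*}
Testing against \(f\in\TestFunctionsZero[\ManifoldN]\) identifies the formal adjoint as \(Z^{*}=-Z-\mathrm{div}_{\Vol}Z\). This is consistent with the \(h_j\) introduced above, namely \(h_j=-\mathrm{div}_{\Vol}\Wt_j\).

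For \(1\leq j\leq r\), Lemma \ref{Lemma::BoundaryCond::SecondOrder::ChooseWt}\ref{Item::BoundaryCond::SecondOrder::ChooseWt::InTangentSpace} says \(\Wt_j\) is tangent to \(\BoundaryN\). Hence the pullback of \(\iota_{\Wt_j}\Vol\) to \(\BoundaryN\) vanishes, because contracting a top-degree density with a vector tangent to \(\BoundaryN\) and then evaluating on vectors tangent to \(\BoundaryN\) gives zero. The boundary term therefore drops out, which gives \eqref{Eqn::BoundaryCond::SecondOrder::IntByParts::jNonZero}.

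For \(j=0\), Lemma \ref{Lemma::BoundaryCond::SecondOrder::ChooseWt}\ref{Item::BoundaryCond::SecondOrder::ChooseWt::NotInTangentSpace} says \(\Wt_0\) is transverse to \(\BoundaryN\) everywhere. So \(\iota_{\Wt_0}\Vol\) restricts to a nowhere-vanishing smooth density on \(\BoundaryN\). The main point to check is the sign: it must be the same at every boundary point so that the boundary term is \(\int f\overline g\,d\sigma\) with \(\sigma\) a genuine positive density. If \(\Wt_0\) points inward or outward consistently on each component of \(\BoundaryN\), then by continuity and transversality we may set \(d\sigma=\pm\iota_{\Wt_0}\Vol\) on each component. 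The statement only asks that \(\sigma\) be a smooth nowhere-vanishing density, so a sign that varies between components is harmless: we absorb the sign into \(\sigma\), viewed as a smooth signed density. Alternatively, replacing \(m_j\) by \(-m_j\) componentwise in the construction of \(\Wt_0\) makes \(\Wt_0\) inward everywhere. Finally, the \(\CjSpace{1}\) regularity of \(f\) and \(g\) suffices for the divergence theorem, and compactness of \(\ManifoldN\) guarantees all integrals are finite.
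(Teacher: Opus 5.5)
Your argument is correct and matches the paper's proof, which simply cites integration by parts together with the tangency of \(\Wt_j\) (\(1\leq j\leq r\)) and the transversality of \(\Wt_0\) from Lemma \ref{Lemma::BoundaryCond::SecondOrder::ChooseWt}; absorbing a component-dependent sign into \(\sigma\) is exactly what the statement's ``nowhere-vanishing'' allows, and later only the vanishing of \(Xg+Bg\) on \(\BoundaryN\) is used. Drop the alternative of replacing \(m_j\) by \(-m_j\): in that lemma the boundary values of \(m_j\) are determined by \(\Wt_0\) (through the tangency of \(W_j-c_j\Wt_0\)) and are not used to build \(\Wt_0\), so flipping their sign would destroy the tangency of \(\Wt_j\) rather than reorient \(\Wt_0\).
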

\begin{proof}
    \eqref{Eqn::BoundaryCond::SecondOrder::IntByParts::jNonZero}
    follows from Lemma \ref{Lemma::BoundaryCond::SecondOrder::ChooseWt}\ref{Item::BoundaryCond::SecondOrder::ChooseWt::InTangentSpace}
    and integration by parts, while \eqref{Eqn::BoundaryCond::SecondOrder::IntByParts::jZero}
    follows from Lemma \ref{Lemma::BoundaryCond::SecondOrder::ChooseWt}\ref{Item::BoundaryCond::SecondOrder::ChooseWt::NotInTangentSpace}
    and integration by parts.
\end{proof}

\begin{lemma}\label{Lemma::BoundaryCond::SecondOrder::IntByPartsWholeForm}
    \begin{enumerate}[(i)]
        \item\label{Item::BoundaryCond::SecondOrder::IntByPartsWholeForm::C2} For \(f,g\in \CjSpace{2}[\ManifoldN]\),
            \begin{equation*}
                \FormQF[f][g]
                =\Ltip*{f}{\opL_0 g}[\ManifoldN]
                +\int_{\BoundaryN} f(x') \overline{\left( X g(x')+B(x')g(x') \right)}\: d\sigma(x'),
            \end{equation*}
            where \(\sigma\) is the smooth, nowhere-vanishing density from
            Lemma \ref{Lemma::BoundaryCond::SecondOrder::IntByParts}.
        \item\label{Item::BoundaryCond::SecondOrder::IntByPartsWholeForm::Distribution} For \(f\in \TestFunctionsZero[\ManifoldN]\), \(g\in \DistributionsZero[\ManifoldN]\),
            \begin{equation*}
                \FormQF[f][g]=\Ltip{f}{\opL_0 g}[\ManifoldN].
            \end{equation*}
    \end{enumerate}
\end{lemma}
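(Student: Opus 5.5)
The plan is to expand each term of \(\FormQF[f][g]\) in \eqref{Eqn::BoundaryCond::SecondOrder::FormulaFormQF} using Lemma \ref{Lemma::BoundaryCond::SecondOrder::IntByParts}, moving every vector field off \(f\) and onto \(g\). Then we collect the interior terms into \(\Ltip{f}{\opL_0 g}\) and the boundary terms into \(\int_{\BoundaryN} f\,\overline{(Xg+Bg)}\,d\sigma\). Part \ref{Item::BoundaryCond::SecondOrder::IntByPartsWholeForm::Distribution} will then follow from part \ref{Item::BoundaryCond::SecondOrder::IntByPartsWholeForm::C2} by a density/duality argument.

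For \ref{Item::BoundaryCond::SecondOrder::IntByPartsWholeForm::C2}, fix \(f,g\in \CjSpace{2}[\ManifoldN]\). In the principal term \(\Ltip{\Wt_i f}{\at_{i,j}\Wt_j g}\) we apply Lemma \ref{Lemma::BoundaryCond::SecondOrder::IntByParts} with \(g\) replaced by \(\at_{i,j}\Wt_j g\in \CjSpace{1}\). For \(i\geq 1\) there is no boundary term. For \(i=0\) we get \(\Ltip{f}{\Wt_0^{*}\at_{0,j}\Wt_j g}+\int_{\BoundaryN} f\,\overline{\at_{0,j}\Wt_j g}\,d\sigma\), since \(\at_{0,j}\) is real. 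Summing over \(j\), the boundary contribution is exactly \(\int_{\BoundaryN} f\,\overline{Xg}\,d\sigma\), because \(X=\sum_j \at_{0,j}\Wt_j\).

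Next take the term \(\Ltip{\Wt_j b_j^2 f}{g}\). Lemma \ref{Lemma::BoundaryCond::SecondOrder::IntByParts} gives \(\Ltip{b_j^2 f}{\Wt_j^{*} g}\), plus the boundary term \(\int_{\BoundaryN} b_0^2 f\,\overline{g}\,d\sigma\) when \(j=0\). Since \(b_0^2 f\,\overline g=f\,\overline{\overline{b_0^2}g}\) and \(B=\overline{b_0^2}|_{\BoundaryN}\), this is \(\int_{\BoundaryN} f\,\overline{Bg}\,d\sigma\). For the interior piece we write \(\Wt_j^{*}=-\Wt_j+h_j\), which gives \(\Ltip{f}{\overline{b_j^2}(-\Wt_j g+h_j g)}\).

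Combining with \(\Ltip{f}{b_j^1\Wt_j g}+\Ltip{f}{bg}\), the first-order coefficient is \(b_j^1-\overline{b_j^2}=\at_j\). The zeroth-order coefficient is \(b+\sum_j h_j\overline{b_j^2}\), and this should equal \(\at\). Here one must check the conjugation conventions, since the definition reads \(b+\sum_j h_j b_j^2=\at\). Because \(h_j\) is real, the inner product's antilinearity in the second slot has to be tracked carefully. I expect this bookkeeping to be the only delicate point. If a conjugate mismatch appears, it will indicate the intended convention for \(\Ltip{\cdot}{\cdot}\), and the definitions of \(b,b_j^i\) are chosen precisely to make the terms match. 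The interior terms then total \(\Ltip{f}{(\sum \Wt_i^{*}\at_{i,j}\Wt_j+\sum\at_j\Wt_j+\at)g}=\Ltip{f}{\opL_0 g}\).

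For \ref{Item::BoundaryCond::SecondOrder::IntByPartsWholeForm::Distribution}, when \(f\in \TestFunctionsZero[\ManifoldN]\) all boundary integrals vanish. Part \ref{Item::BoundaryCond::SecondOrder::IntByPartsWholeForm::C2} therefore gives \(\FormQF[f][g]=\Ltip{f}{\opL_0 g}\) for smooth \(g\). For general \(g\in\DistributionsZero[\ManifoldN]\), both sides are interpreted by moving derivatives onto \(f\): \(\FormQF[f][g]\) pairs \(g\) with \(\sum \Wt_j^{*}\overline{\at_{i,j}}\,\Wt_i f+\cdots\), which lies in \(\TestFunctionsZero\), and the same differential expression defines \(\Ltip{f}{\opL_0 g}\) distributionally. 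The identity is thus the definition of the distributional action of \(\opL_0\), equivalently obtained by the same formal computation with no boundary terms since \(f\) vanishes to infinite order at \(\BoundaryN\).
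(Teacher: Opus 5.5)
Your proposal is correct and matches the paper's argument: apply Lemma \ref{Lemma::BoundaryCond::SecondOrder::IntByParts} term by term to \eqref{Eqn::BoundaryCond::SecondOrder::FormulaFormQF} and unravel the definitions, and treat part (ii) as the same computation with no boundary terms. On the conjugation point you flagged, the mismatch is genuine and not a matter of inner-product convention, since moving a scalar from one slot to the other conjugates it under either convention (this is also why the paper has \(B=\overline{b_0^2}\big|_{\BoundaryN}\) and \(b_j^1-\overline{b_j^2}=\at_j\)); so \(b\) must be chosen with \(b+\sum_{j=0}^r h_j\overline{b_j^2}=\at\), and the paper's displayed relation is simply missing this bar, which is harmless because \(b\) is a free choice.
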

\begin{proof}
    \ref{Item::BoundaryCond::SecondOrder::IntByPartsWholeForm::C2} follows by applying Lemma \ref{Lemma::BoundaryCond::SecondOrder::IntByParts}
    to \eqref{Eqn::BoundaryCond::SecondOrder::FormulaFormQF} and unravelling the definitions.
    \ref{Item::BoundaryCond::SecondOrder::IntByPartsWholeForm::Distribution} is the same, except
    there are no boundary terms since \(f\in \TestFunctionsZero[\ManifoldN]\).
\end{proof}

\begin{lemma}\label{Lemma::BoundaryCond::SecondOrder::MaxSub}
    \(\FormQF\) is maximally subelliptic in the sense that \(\exists C_1,C_2\geq 0\),
    \begin{equation*}
        \LtNorm{f}[\ManifoldN]^2+\sum_{j=1}^r \LtNorm{W_j f}[\ManifoldN]^2
        \leq C_1 \Real \FormQF[f][f] + C_2 \LtNorm{f}[\ManifoldN]^2,\quad \forall f\in \CinftySpace[\ManifoldN].
    \end{equation*}
\end{lemma}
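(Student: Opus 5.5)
We will prove the estimate by writing \(\Real\FormQF[f][f]\) in terms of the \(\Wt\)-form \eqref{Eqn::BoundaryCond::SecondOrder::FormulaFormQF} and bounding the lower order terms. By Lemma \ref{Lemma::BoundaryCond::SecondOrder::ChooseWt}\ref{Item::BoundaryCond::SecondOrder::ChooseWt::SameForm}, the principal part equals \(\sum_{1\leq i,j\leq r}\Ltip{W_if}{a_{i,j}W_jf}[\ManifoldN]\). Since \(A(x)\) is real, symmetric and strictly positive definite on the compact manifold \(\ManifoldN\), there is \(c>0\) with \(A(x)\geq c I\). Hence
\begin{equation*}
    \Real\sum_{1\leq i,j\leq r}\Ltip{W_if}{a_{i,j}W_jf}[\ManifoldN]\geq c\sum_{j=1}^r\LtNorm{W_jf}[\ManifoldN]^2 .
\end{equation*}
This holds pointwise before integrating: for real symmetric \(A\) and \(v=(W_jf(x))_j\in\C^r\), the quantity \(\sum \overline{v_i}a_{i,j}v_j\) is real and \(\geq c|v|^2\).

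The remaining terms in \eqref{Eqn::BoundaryCond::SecondOrder::FormulaFormQF} are handled as follows.

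\begin{enumerate}[(i)]
    \item For \(\Ltip{f}{b_j^1\Wt_jf}\), use Cauchy--Schwarz. We have \(|\Ltip{f}{b_j^1\Wt_jf}|\leq C\LtNorm{f}\LtNorm{\Wt_jf}\). Each \(\Wt_j\) lies in the \(\CinftySpace[\ManifoldN][\R]\)-module generated by \(W_1,\ldots,W_r\), by Lemma \ref{Lemma::BoundaryCond::SecondOrder::ChooseWt}\ref{Item::BoundaryCond::SecondOrder::ChooseWt::SameModule}, and the coefficients are bounded by compactness. So \(\LtNorm{\Wt_jf}\lesssim\sum_k\LtNorm{W_kf}\). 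The term is then \(\leq\sconst\sum_k\LtNorm{W_kf}^2+\lconst\LtNorm{f}^2\).
    \item For \(\Ltip{\Wt_jb_j^2f}{f}\), do \emph{not} integrate by parts, since that would produce boundary terms when \(j=0\). Instead expand \(\Wt_j(b_j^2f)=(\Wt_jb_j^2)f+b_j^2\Wt_jf\). This gives a bound by \(C\LtNorm{f}^2+C\LtNorm{f}\LtNorm{\Wt_jf}\), which is handled exactly as in (i).
    \item The term \(\Ltip{f}{bf}\) is trivially \(\lesssim\LtNorm{f}^2\).
\end{enumerate}

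Combining, and absorbing the small constant times \(\sum_k\LtNorm{W_kf}^2\) into \(c\sum_k\LtNorm{W_kf}^2\) by choosing \(\sconst\leq c/2\), we get
\begin{equation*}
    \frac{c}{2}\sum_{j=1}^r\LtNorm{W_jf}^2\leq\Real\FormQF[f][f]+C\LtNorm{f}^2 .
\end{equation*}
Adding \(\LtNorm{f}^2\) to both sides yields the claim with \(C_1=2/c\) and \(C_2=2(C+1)/c\) (after rescaling). This holds for all \(f\in\CinftySpace[\ManifoldN]\), since no integration by parts was used and so no boundary terms arise.

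There is no serious obstacle here. The only point requiring care is to avoid moving \(\Wt_0\) across the inner product; expanding via the Leibniz rule, as in (ii), keeps the argument boundary-term free. Note also that this argument yields Assumption \ref{Assumption::Intro2::MaxSub} with \(\kappa=1\). The full sum \(\sum_{|\alpha|\leq 1}\) equals the left-hand side of the lemma, so this gives Proposition \ref{Prop::BoundaryConds::SecondOrder::MainSecondOrder}\ref{Item::BoundaryConds::SecondOrder::MainSecondOrder::MaxSub}.
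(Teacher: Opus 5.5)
Your proposal is correct and follows essentially the same route as the paper: the principal part is bounded below via Lemma \ref{Lemma::BoundaryCond::SecondOrder::ChooseWt}\ref{Item::BoundaryCond::SecondOrder::ChooseWt::SameForm} and uniform positive definiteness of \(A\). The first-order terms are then controlled by Cauchy--Schwarz with small/large constants, using that the \(\Wt_j\) lie in the module generated by the \(W_j\), and absorbed. Your explicit Leibniz expansion of \(\Wt_j(b_j^2 f)\), which avoids boundary terms, just spells out the step the paper dismisses with ``similarly.''
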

\begin{proof}
    Since \(A(x)=(a_{i,j}(x))_{1\leq i,j\leq r}\) is assumed strictly positive definite,
    we have
    \begin{equation}\label{Eqn::BoundaryCond::SecondOrder::MaxSub::MainPartLowerBound}
        \begin{split}
            &\sum_{j=1}^r \LtNorm{W_j f}[\ManifoldN]^2
            \lesssim \sum_{1\leq i,j\leq r} \Ltip*{W_i f}{a_{i,j} W_j f}[\ManifoldN]
            \\&=\sum_{1\leq i,j\leq r} \Real \Ltip*{W_i f}{a_{i,j} W_j f}[\ManifoldN]
            =\sum_{0\leq i,j\leq r} \Real \Ltip*{\Wt_i f}{\at_{i,j}\Wt_j f}[\ManifoldN]
        \end{split}
    \end{equation}
    where the final equality follows from 
    Lemma \ref{Lemma::BoundaryCond::SecondOrder::ChooseWt}\ref{Item::BoundaryCond::SecondOrder::ChooseWt::SameForm}.

    Using the \(\sconst\), \(\lconst\) notation from Section \ref{Section::EstNearBdry::LargeSmallSonsts}
    and Lemma \ref{Lemma::BoundaryCond::SecondOrder::ChooseWt}\ref{Item::BoundaryCond::SecondOrder::ChooseWt::SameModule},
    we have
    \begin{equation}\label{Eqn::BoundaryCond::SecondOrder::MaxSub::BoundWtjOnRight}
        \begin{split}
            \sum_{j=0}^r \left| \Ltip*{f}{b_j^1 \Wt_j f}[\ManifoldN] \right|
            &\leq  \sconst \sum_{j=0}^r \LtNorm*{\Wt_j f}[\ManifoldN]^2+\lconst \LtNorm*{f}[\ManifoldN]^2 
            \\&\leq \sconst \sum_{j=1}^r \LtNorm*{W_j f}[\ManifoldN]^2+\lconst \LtNorm*{f}[\ManifoldN]^2,
        \end{split}
    \end{equation}
    and similarly,
    \begin{equation}\label{Eqn::BoundaryCond::SecondOrder::MaxSub::BoundWtjOnLeft}
        \sum_{j=0}^r \left| \Ltip*{\Wt_j b_j^2 f}{f}[\ManifoldN] \right|
        \leq \sconst \sum_{j=1}^r \LtNorm*{W_j f}[\ManifoldN]^2+\lconst \LtNorm*{f}[\ManifoldN]^2.
    \end{equation}
    Using the formula \eqref{Eqn::BoundaryCond::SecondOrder::FormulaFormQF}
    and \eqref{Eqn::BoundaryCond::SecondOrder::MaxSub::MainPartLowerBound}, 
    \eqref{Eqn::BoundaryCond::SecondOrder::MaxSub::BoundWtjOnRight},
    and \eqref{Eqn::BoundaryCond::SecondOrder::MaxSub::BoundWtjOnLeft}, we see
    \begin{equation}\label{Eqn::BoundaryCond::SecondOrder::MaxSub::FinalEst}
        \sum_{j=1}^r \LtNorm{W_j f}[\ManifoldN]^2
        \lesssim \Real \FormQF[f][f]
        + \sconst \sum_{j=1}^r \LtNorm{W_j f}[\ManifoldN]^2
        +\lconst \LtNorm{f}[\ManifoldN]^2.
    \end{equation}
    Subtracting 
    \(\sconst \sum_{j=1}^r \LtNorm{W_j f}[\ManifoldN]^2\)
    from both sides of \eqref{Eqn::BoundaryCond::SecondOrder::MaxSub::FinalEst} completes the proof.
\end{proof}

\begin{proof}[Proof of Proposition \ref{Prop::BoundaryConds::SecondOrder::MainSecondOrder}]
    We have already verified that \(\FormQF\) is of the form \eqref{Eqn::BoundaryConds::SecondOrder::MainSecondOrder::FormQF}
    and that \(X\) is in the \(\CinftySpace[\ManifoldN][\R]\)-module
    generated by \(W_1,\ldots, W_r\), with \(X(x')\not \in \TangentSpace{x'}{\BoundaryN}\),
    \(\forall x'\in \BoundaryN\).
    \ref{Item::BoundaryConds::SecondOrder::MainSecondOrder::MaxSub} is Lemma \ref{Lemma::BoundaryCond::SecondOrder::MaxSub}.
    \ref{Item::BoundaryConds::SecondOrder::MainSecondOrder::LEqualsL0}
    follows from Lemma \ref{Lemma::BoundaryCond::SecondOrder::IntByPartsWholeForm}\ref{Item::BoundaryCond::SecondOrder::IntByPartsWholeForm::Distribution}.

    We turn to \ref{Item::BoundaryConds::SecondOrder::MainSecondOrder::BoundaryCond}; this uses the same method
    as the proof of Proposition \ref{Prop::BoundaryCond::IdentifyL}.
    First we claim \(\CjSpace{2}[\ManifoldN]\subseteq \DomainQ\). Indeed, Lemma \ref{Lemma::BoundaryCond::SecondOrder::MaxSub}
    shows  \(\HsWNorm{f}{1}\approx \DomainQNorm{f}\) on \(\CinftySpace[\ManifoldN]\).  Since \(\CinftySpace[\ManifoldN]\)
    is dense in \(\HsWSpace{1}[\ManifoldN]\) (see \cite[Corollary \ref*{FS::Cor::Spaces::Approximation::SmoothFunctionsAreDense}\ref*{FS::Item::Spaces::Approximation::SmoothFunctionsAreDense::ApproxInDist}]{StreetFunctionSpacesAndTraceTheoremsForMaximallySubellipticBoundaryValueProblems})
    this shows \(\CjSpace{2}[\ManifoldN]\subseteq \HsWSpace{1}[\ManifoldN]\subseteq \DomainQ\).
    The same density argument shows \(\FormQ\) agrees with the continuous extension of \(\FormQF\)
    on \(\HsWSpace{1}[\ManifoldN]\times \HsWSpace{1}[\ManifoldN]\). In particular,
    \begin{equation*}
        \FormQ[f][g]=\FormQF[f][g],\quad f\in \CinftySpace[\ManifoldN],\: g\in \CjSpace{2}[\ManifoldN].
    \end{equation*}

    Now, fix \(g\in \CjSpace{2}[\ManifoldN]\subseteq\DomainQ\).
    Consider the condition
    \begin{equation}\label{Eqn::BoundaryConds::SecondOrder::MainSecondOrder::1}
        \exists w\in \LtSpace[\ManifoldN][\Vol]\text{ with } \FormQF[f][g]=\Ltip{f}{w}[\ManifoldN],\quad \forall f\in \CinftySpace[\ManifoldN].
    \end{equation}
    If \eqref{Eqn::BoundaryConds::SecondOrder::MainSecondOrder::1} holds, then
    Lemma \ref{Lemma::BoundaryCond::SecondOrder::IntByPartsWholeForm}\ref{Item::BoundaryCond::SecondOrder::IntByPartsWholeForm::Distribution}
    shows \(w\) must equal \(\opL_0 g\). We conclude 
    \eqref{Eqn::BoundaryConds::SecondOrder::MainSecondOrder::1} holds if and only if
    \begin{equation}\label{Eqn::BoundaryConds::SecondOrder::MainSecondOrder::2}
        \FormQF[f][g]=\Ltip{f}{\opL_0 g}[\ManifoldN],\quad \forall f\in \CinftySpace[\ManifoldN].
    \end{equation}
    By
    Lemma \ref{Lemma::BoundaryCond::SecondOrder::IntByPartsWholeForm}\ref{Item::BoundaryCond::SecondOrder::IntByPartsWholeForm::C2},
    \eqref{Eqn::BoundaryConds::SecondOrder::MainSecondOrder::2} holds if and only if
    \begin{equation}\label{Eqn::BoundaryConds::SecondOrder::MainSecondOrder::3}
        X(x')g(x')+B(x')g(x')=0,\quad \forall x'\in \BoundaryN.
    \end{equation}
    By \cite[Chapter 6, Section 2, Theorem 2.1]{KatoPerturbationTheory},
    \eqref{Eqn::BoundaryConds::SecondOrder::MainSecondOrder::1} holds if and only if \(g\in \DomainL\).
    We conclude \(g\in \DomainL\) if and only if \eqref{Eqn::BoundaryConds::SecondOrder::MainSecondOrder::3} holds;
    completing the proof.
\end{proof}

    \subsection{General Operators}\label{Section::BoundaryCond::GeneralOps}
    Let \(\ManifoldN\) be a smooth manifold with boundary of dimension \(n\geq 2\), endowed with a smooth, strictly positive
density \(\Vol\). Let \(W_1,\ldots, W_r\) be vector fields on \(\ManifoldN\)
satisfying H\"ormander's condition.

\begin{assumption}\label{Assumption::BoundaryCond::GeneralOps::NonChar}
    We assume every point of \(\BoundaryN\) is \(W\)-non-characteristic
    (see Definition \ref{Defn::Intro::NonChar::NonCharDefn}).
\end{assumption}

The results in this section can be seen as a generalization of classical results;
see \cite[Section 7.B]{FollandIntroductionPDE}.

\begin{remark}
    The computations in this section are local, so the global Assumption \ref{Assumption::BoundaryCond::GeneralOps::NonChar}
    is not necessary: one can work near a \(W\)-non-characteristic boundary point.
\end{remark}

Fix \(\kappa\in \Zg\) and set
\begin{equation*}
    \FormQF[f][g]:=\sum_{|\alpha|,|\beta|\leq \kappa} \Ltip*{W^{\alpha}f}{a_{\alpha,\beta}W^\beta g}[\ManifoldN],\quad f,g\in \CinftycptSpace[\ManifoldN],
\end{equation*}
where \(a_{\alpha,\beta}\in \CinftySpace[\ManifoldN]\) are given functions.
Set
\begin{equation*}
    \opL_0:=\sum_{|\alpha|,|\beta|\leq \kappa} \left( W^{\alpha} \right)^{*} a_{\alpha,\beta} W^\beta,
\end{equation*}
where \(*\) denotes the formal \(\LtSpace[\ManifoldN][\Vol]\) adjoint.
In light of Proposition \ref{Prop::BoundaryCond::IdentifyL}, we consider the following question.

\begin{question}\label{Question::BoundaryCond::GeneralOps::MainQuestion}
    Given a subspace \(\CoreB\subseteq \CinftycptSpace[\ManifoldN]\) with \(\TestFunctionsZero[\ManifoldN]\subseteq \CoreB\),
    for what \(g\in \CinftycptSpace[\ManifoldN]\) do we have
    \begin{equation*}
        \FormQF[f][g]=\Ltip*{f}{\opL_0 g}[\ManifoldN]?
    \end{equation*}
\end{question}

In this section we address Question \ref{Question::BoundaryCond::GeneralOps::MainQuestion} for 
\(\CoreB\) given by certain boundary conditions; this, in particular, covers
settings like Example \ref{Example::Intro::Examples}\ref{Item::Intro::Examples::NonDirichlet}
(see Example \ref{Example::BoundaryCond::GeneralOps::CoversIntro}).

\begin{definition}
    Fix \(j\in \Zgeq\) and let \(B\) be a partial differential operator on \(\ManifoldN\). We say
    \(B\) has \(W\)-degree \(\leq j\) if
    \begin{equation*}
        B=\sum_{|\alpha|\leq j} b_{\alpha}(x) W^{\alpha},\quad b_\alpha\in \CinftySpace[\ManifoldN].
    \end{equation*}
\end{definition}

\begin{definition}
    Fix \(j\in \Zgeq\). We say \(B\) is a \(W\)-normal partial differential operator of degree \(j\)
    if:
    \begin{itemize}
        \item \(B\) has \(W\)-degree \(\leq j\).
        \item \(\BoundaryN\) is non-characteristic for \(B\) in the classical sense.
            In other words, let \(\Psi:\CubengeqOne\xrightarrow{\sim} \Psi(\CubengeqOne)\subseteq \ManifoldN\)
            be any coordinate chart near \(\BoundaryN\),  we assume 
            \begin{equation*}
                \Psi^{*}B\Psi_{*} =a(x)\partial_{x_n}^j +\text{ terms which are lower order in }\partial_{x_n},
            \end{equation*}
            where \(a(x',0)\ne 0\), \(\forall x'\in \CubenmoOne\).
    \end{itemize}
\end{definition}

\begin{definition}
    For \(J\subseteq \Zgeq\), we say \(\sB=(B_j)_{j\in J}\) is a \(W\)-normal \(J\)-system if every \(B_j\)
    is a \(W\)-normal partial differential operator of degree \(j\).
\end{definition}

\begin{example}\label{Example::BoundaryCond::GeneralOps::CoversIntro}
    Suppose \(X\) is in the \(\CinftySpace[\ManifoldN][\R]\)-module generated by \(W_1,\ldots,W_r\)
    and \(X(x')\not \in \TangentSpace{x'}{\BoundaryN}\), \(\forall x'\in \BoundaryN\).
    Then, \((X^j)_{j\in J}\) is a \(W\)-normal \(J\)-system.
    Using this (with \(\ManifoldN\) replaced by \(\Omega\)), the setting of 
    Example \ref{Example::Intro::Examples}\ref{Item::Intro::Examples::NonDirichlet} can be addressed
    by Proposition \ref{Prop::BoundaryCond::GeneralOps::CharacterizeDomain}.
\end{example}

\begin{proposition}\label{Prop::BoundaryCond::GeneralOps::IntByParts}
    Let \(\sigma\) be a smooth, strictly positive density on \(\BoundaryN\).
    Given a \(W\)-normal \(\left\{ 0,1,\ldots, \kappa-1 \right\}\)-system \(\left\{ M_0,M_1,\ldots, M_{\kappa-1} \right\}\),
    there exists \(N_j\), \(\kappa\leq j\leq 2\kappa-1\) such that each \(N_j\) is a partial differential operator of \(W\)-degree
    \(\leq j\), such that
    \begin{equation*}
        \FormQF[f][g]-\Ltip*{f}{\opL_0 g}[\ManifoldN]
        =\sum_{j=0}^{\kappa-1} \int_{\BoundaryN} \left( M_j f(x') \right) \overline{\left( N_{2\kappa-1-j}g(x')  \right)}\: d\sigma(x'),\quad \forall f,g\in \CinftySpace[\ManifoldN].
    \end{equation*}
    Moreover, if \(\opL_0\) is a \(W\)-normal partial differential operator of degree \(2\kappa\),
    then \(\left\{ N_{\kappa},N_{\kappa+1},\ldots, N_{2\kappa-1} \right\}\) is a \(W\)-normal
    \(\left\{ \kappa, \kappa+1,\ldots, 2\kappa-1 \right\}\)-system.
\end{proposition}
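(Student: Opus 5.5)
The plan is to localize with a partition of unity and work in a boundary chart \(\Psi:\CubengeqOne\to\ManifoldN\) with \(X\) (a \(W\)-module element transverse to \(\BoundaryN\)) pulled back to \(\partial_{x_n}\), up to a nonvanishing factor. As in Lemma \ref{Lemma::BoundaryCond::SecondOrder::ChooseWt}, I would replace \(W_1,\ldots,W_r\) by \(\Wt_0=X,\Wt_1,\ldots,\Wt_r\) with \(\Wt_j\) tangent to \(\BoundaryN\) for \(j\geq1\) and generating the same module. Rewriting each \(W^\alpha\) in the \(\Wt\)'s changes \(\FormQF\) into a form of the same type, with coefficients \(\at_{\alpha,\beta}\), and \(W\)-degree is preserved. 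Interior pieces of the partition contribute no boundary terms, so only boundary patches matter.

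The key computation is a one-vector-field integration by parts, as in Lemma \ref{Lemma::BoundaryCond::SecondOrder::IntByParts}. For \(j\geq1\) we have \(\Ltip{\Wt_j u}{v}=\Ltip{u}{\Wt_j^*v}\) with no boundary term. For \(j=0\) we have \(\Ltip{\Wt_0u}{v}=\Ltip{u}{\Wt_0^*v}+\int_{\BoundaryN}u\bar v\,d\sigma_0\), where after multiplying by a smooth positive function the density \(\sigma_0\) becomes the given \(\sigma\). Applying this repeatedly to \(\Ltip{\Wt^\alpha f}{\at_{\alpha,\beta}\Wt^\beta g}\), peeling off the leftmost factor each time, gives
\[
\Ltip{\Wt^\alpha f}{\at_{\alpha,\beta}\Wt^\beta g}=\Ltip{f}{(\Wt^\alpha)^*\at_{\alpha,\beta}\Wt^\beta g}+\sum \int_{\BoundaryN}(P f)\,\overline{(Q g)}\,d\sigma .
\]
In each boundary term, \(P\) is a product of \(\Wt\)'s of \(W\)-degree \(\leq |\alpha|-1\leq\kappa-1\), and \(Q\) is of \(W\)-degree \(\leq |\alpha|-1-\deg P+|\beta|\). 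The total degree in each boundary pairing is therefore at most \(2\kappa-1\). Summing over \(\alpha,\beta\) gives \(\FormQF[f][g]-\Ltip{f}{\opL_0g}=\sum_k\int_{\BoundaryN}(P_kf)\overline{(Q_kg)}\,d\sigma\) with \(\deg_W P_k+\deg_W Q_k\leq 2\kappa-1\) and \(\deg_W P_k\leq \kappa-1\).

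Next I would re-express the traces \(P_kf|_{\BoundaryN}\) through \(M_0f,\ldots,M_{\kappa-1}f\) restricted to the boundary. Since \(M_j=a_j\Wt_0^j+\) lower order in \(\partial_{x_n}\), with \(a_j\neq0\) on the boundary, an induction on \(j\) (triangular inversion) expresses each \(\Wt_0^jf|_{\BoundaryN}\) as \(\sum_{i\leq j}E_{j,i}(M_if)|_{\BoundaryN}\), where the \(E_{j,i}\) are tangential differential operators on \(\BoundaryN\). The \(E_{j,i}\) are built from the \(\Wt_1,\ldots,\Wt_r\), which are tangent, and \(\deg_W E_{j,i}\leq j-i\).

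This degree bound is the step I expect to be the main obstacle. Lower order terms of \(M_j\) contain words mixing \(\Wt_0\) with tangential fields in arbitrary order. Moving all \(\Wt_0\)'s to the left creates commutators \([\Wt_0,\Wt_k]\), and these are not in the \(W\)-module in general. I would handle this by not reordering at all. Instead, I would note that any word \(\Wt^\gamma\) restricted to the boundary can be written as \(\sum_i T_i\,(\Wt_0^i f)|_{\BoundaryN}\) with \(T_i\) tangential, and I would measure degree with the Hörmander-filtration grading, where commutators of degree-\(d\) fields have degree \(d+d'\). The \(W\)-degree statement would then use ``\(W\)-degree'' in the filtered sense. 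I would check that the definition of \(W\)-degree \(\leq j\) given here is closed under such rewriting, and if it is not, I would state the result with that caveat.

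Substituting the expressions for the traces, \(P_kf|_{\BoundaryN}=\sum_i E_i(M_if)\), and moving each tangential \(E_i\) onto the \(g\)-side by integration by parts on the boundaryless \(\BoundaryN\) gives the form \(\sum_j\int_{\BoundaryN}(M_jf)\overline{N_{2\kappa-1-j}g}\,d\sigma\). Here \(N_{2\kappa-1-j}\) is a sum of \(E^*Q_k\) terms, and the degree count gives \(\deg N_{2\kappa-1-j}\leq 2\kappa-1-j\).

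For the normality claim, I would track only the top \(\partial_{x_n}\) term. Among all the boundary terms, \(\partial_{x_n}^{2\kappa-1-j}g\) arises in the pairing with \(\Wt_0^jf\) only from \(\alpha=\beta=(0,\ldots,0)\), with coefficient \(\pm c\,\at_{\alpha_0,\alpha_0}\) times the normalization from \(M_j\). This coefficient is nonzero exactly when \(\opL_0\) is \(W\)-normal of degree \(2\kappa\), since \(\opL_0\) has leading \(\partial_{x_n}^{2\kappa}\) coefficient \(\pm\at_{\alpha_0,\alpha_0}\), as in the proof of Proposition \ref{Prop::EstBdry::Reduction::MainReduction}. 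Triangularity of the inversion keeps the \(N\)'s triangular, so each \(N_{2\kappa-1-j}\) is \(W\)-normal of degree \(2\kappa-1-j\).
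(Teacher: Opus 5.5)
Your route is the paper's own. The paper only says this is a straightforward modification of Folland's Green formula for Dirichlet systems (his Theorem 7.4), and your steps are exactly that argument: peel off one field at a time against a transverse \(\Wt_0=X\) and fields \(\Wt_1,\ldots,\Wt_r\) tangent along \(\BoundaryN\); invert the \(M_j\) triangularly on the boundary; move the tangential operators across by integrating by parts on \(\BoundaryN\); and read off normality from the \(\alpha=\beta=(0,\ldots,0)\) term.

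The gap is that you stop at the one point where the \(W\)-setting differs from Folland's and leave it open (``if it is not, I would state the result with that caveat''). As written, you have not shown that \(N_j\) has \(W\)-degree \(\leq j\) in the paper's sense. Your first formulation of the inversion is also not right as stated, as you partly noticed. The \(E_{j,i}\) are in general not built from \(\Wt_1,\ldots,\Wt_r\) alone. Also, the \(\Wt_0\)'s must be moved to the right, not the left, so that tangential operators act on the traces \((\Wt_0^i f)|_{\BoundaryN}\).

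The deferred check comes out positively, so no caveat is needed. Let \(\mathcal{A}_j\) denote the operators of \(W\)-degree \(\leq j\).
\begin{itemize}
\item By the Leibniz rule, \(\mathcal{A}_i\mathcal{A}_j\subseteq\mathcal{A}_{i+j}\).
\item Since \(W_k^{*}=-W_k+h_k\), formal adjoints preserve \(\mathcal{A}_j\).
\item A commutator of vector fields is \(V_1V_2-V_2V_1\) as an operator, so every vector field in the \(C^\infty\)-span of commutators of length \(\leq d\) lies in \(\mathcal{A}_d\).
\end{itemize}
Thus your filtered grading defines exactly the paper's classes.

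Concretely, work in a boundary chart. If \(Z\in\mathcal{A}_d\) is tangent along \(\BoundaryN\), then \(\Wt_0 Z=Z\Wt_0+c\,\Wt_0+Z'\), where \(c=([\Wt_0,Z]x_n)/(\Wt_0 x_n)\) and \(Z'=[\Wt_0,Z]-c\,\Wt_0\in\mathcal{A}_{d+1}\) is tangent to \(\{x_n=0\}\). Iterating, every \(P\in\mathcal{A}_p\) equals \(\sum_{i\leq p}T_i\Wt_0^i\). Here \(T_i\) is a sum of terms \(b\,Z_1\cdots Z_m\), with each \(Z_\ell\) tangent along \(\BoundaryN\), \(Z_\ell\in\mathcal{A}_{d_\ell}\), \(d_\ell\geq 1\), and \(\sum_\ell d_\ell\leq p-i\). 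Applied to \(M_j\), this yields your inversion, with each \(E_{j,i}\) of this form and of total degree \(\leq j-i\).

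On \(\BoundaryN\), \((Z|_{\BoundaryN})^{*}=-Z|_{\BoundaryN}-\mathrm{div}_\sigma(Z|_{\BoundaryN})\). Extending the divergence smoothly into \(\ManifoldN\) gives \(E^{*}\big((Qg)|_{\BoundaryN}\big)=(\widetilde{E}Qg)|_{\BoundaryN}\) with \(\widetilde{E}Q\in\mathcal{A}_{\deg E+\deg Q}\). Hence \(N_{2\kappa-1-j}\in\mathcal{A}_{2\kappa-1-j}\) is an honest operator on \(\ManifoldN\).

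The same structure justifies your normality step. Any term coming from an \(E\) of positive degree contributes nothing to the coefficient of \(\partial_{x_n}^{2\kappa-1-j}\) at \(x_n=0\). Either its order is too small, or that coefficient carries a factor \(Z_\ell x_n\), which vanishes on \(\BoundaryN\). So only the pure-function parts of the inversion, and hence only the \(\alpha=\beta=(0,\ldots,0)\) term, contribute to the leading coefficient of \(N_{2\kappa-1-j}\) on the boundary. That is all \(W\)-normality asks; the restriction to \(x_n=0\) matters because \(\Wt_1,\ldots,\Wt_r\) may have \(\partial_{x_n}\)-components off the boundary.
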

\begin{proof}[Comments on the proof]
    This is a straightforward modification of \cite[Theorem 7.4]{FollandIntroductionPDE};
    see, also, the remark on \cite[page 236]{FollandIntroductionPDE}.
\end{proof}

\begin{remark}
    If \(\FormQF\) is maximally subelliptic as covered by this paper (for example, satisfying
    the assumptions of Section \ref{Section::MainCor}), then \(\opL_0\) is a \(W\)-normal partial differential
    operator of degree \(2\kappa\). Indeed, \(\opL_0\) clearly has \(W\)-degree \(\leq 2\kappa\).
    That \(\BoundaryN\) is non-characteristic for \(\opL_0\) follows from Proposition
    \ref{Prop::EstBdry::Reduction::MainReduction}\ref{Item::EstBdry::Reduction::MainReduction::HighOrderTermPos}.
\end{remark}

\begin{proposition}\label{Prop::BoundaryCond::GeneralOps::ExtensionMap}
    Let \(\left\{ M_0,M_1,\ldots, M_{\kappa-1} \right\}\) be a \(W\)-normal \(\left\{ 0,1,\ldots, \kappa-1 \right\}\)-system.
    Given \(f_0,f_1,\ldots, f_{\kappa-1}\in \CinftycptSpace[\BoundaryN]\), there exists \(w\in \CinftycptSpace[\ManifoldN]\)
    with
    \begin{equation*}
        M_j w\big|_{\BoundaryN}=f_j.
    \end{equation*}
\end{proposition}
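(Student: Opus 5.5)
The plan is a local construction followed by a partition of unity. The condition \(M_j w\big|_{\BoundaryN}=f_j\) only involves \(w\) near \(\BoundaryN\), and each \(M_j\) is a differential operator, so it suffices to solve the problem near each boundary point.

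\textbf{Step 1: cover the support.} Since the \(f_j\) have compact support, I cover \(\bigcup_j\supp(f_j)\) by finitely many coordinate charts \(\Psi_k:\CubengeqOne\xrightarrow{\sim}\Psi_k(\CubengeqOne)\). I pick \(\chi_k\in\CinftycptSpace[\BoundaryN]\), supported in \(\Psi_k(\CubenmoOne\times\{0\})\), with \(\sum_k\chi_k=1\) on \(\bigcup_j\supp(f_j)\).

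\textbf{Step 2: local solution in one chart.} By linearity it is enough to solve the problem for the data \((\chi_k f_j)_j\) in a single chart, since the local solutions \(w_k\) then add. In a chart \(\Psi\), \(W\)-normality gives
\begin{equation*}
    \Psi^{*}M_j\Psi_{*}=a_j(x)\partial_{x_n}^j+\sum_{i<j}A_{j,i}\,\partial_{x_n}^i,
\end{equation*}
where each \(A_{j,i}\) is a differential operator in the \(x'\) variables with smooth coefficients, and \(a_j(x',0)\neq 0\).

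I look for \(w\) of the form
\begin{equation*}
    w(x',x_n)=\eta(x_n)\sum_{i=0}^{\kappa-1}\frac{x_n^i}{i!}\,g_i(x'),
\end{equation*}
with \(g_i\in\CinftycptSpace[\CubenmoOne]\) and \(\eta\in\CinftycptSpace[\lbrack 0,1/2)]\) equal to \(1\) near \(0\). Then \(\partial_{x_n}^i w(x',0)=g_i(x')\). The boundary conditions become the triangular system
\begin{equation*}
    a_j(x',0)\,g_j(x')+\sum_{i<j}\big(A_{j,i}g_i\big)(x')=\tilde f_j(x'),\qquad 0\le j\le\kappa-1,
\end{equation*}
where \(\tilde f_j\) denotes \(\chi_k f_j\) pulled back to the chart. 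Here I must check that \(A_{j,i}\partial_{x_n}^i w\) restricted to \(x_n=0\) only involves \(g_i\) and its \(x'\)-derivatives. This holds because the tangential operators \(A_{j,i}\) commute with restriction to \(x_n=0\), and the remaining \(x_n\)-dependence of their coefficients is evaluated at \(x_n=0\).

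\textbf{Step 3: solve the triangular system.} I solve recursively:
\begin{equation*}
    g_j=a_j(\cdot,0)^{-1}\Big(\tilde f_j-\sum_{i<j}A_{j,i}g_i\Big).
\end{equation*}
Each \(g_j\) is smooth. Its support lies in \(\bigcup_{i}\supp(\tilde f_i)\), because differential operators do not enlarge supports, so each \(g_j\) is compactly supported in the chart. Pushing \(w\) forward by \(\Psi\) and extending by zero gives an element of \(\CinftycptSpace[\ManifoldN]\).

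\textbf{Main obstacle.} The one delicate point is the precise normal form of \(M_j\) in coordinates. I need the coefficient of \(\partial_{x_n}^j\) to be nonvanishing on the boundary, and the lower terms to be exactly lower order in \(\partial_{x_n}\). Both are given directly by the definition of \(W\)-normal. I would also note that the \(W\)-structure plays no role beyond this: the argument is the classical Borel/Taylor-type construction found in \cite[Section 7.B]{FollandIntroductionPDE}.
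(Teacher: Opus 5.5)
Your proposal is correct and is essentially the local-coordinates argument the paper points to (the classical construction of \cite[Proposition 7.7]{FollandIntroductionPDE}). It uses a boundary partition of unity, a Taylor-polynomial ansatz in \(x_n\), and recursive solution of the triangular system using \(a_j(x',0)\neq 0\). The paper additionally notes that the result follows from the inverse trace theorem of the companion function-spaces paper, but your route matches the one it cites first.
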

\begin{proof}
    One can prove this in local coordinates as in \cite[Proposition 7.7]{FollandIntroductionPDE}.
    It also follows directly from \cite[Theorem \ref*{FS::Thm::Trace::Dirichlet::MainInverseThm}]{StreetFunctionSpacesAndTraceTheoremsForMaximallySubellipticBoundaryValueProblems}.
    In fact, that theorem gives more detailed mapping properties in terms of adapted Besov and Triebel--Lizorkin spaces,
    which is useful in more refined arguments similar to the ones in this section.
\end{proof}

\begin{proposition}\label{Prop::BoundaryCond::GeneralOps::CharacterizeDomain}
    Let \(\left\{ M_0,M_1,\ldots, M_{\kappa-1} \right\}\) be a \(W\)-normal \(\left\{ 0,1,\ldots, \kappa-1 \right\}\)-system.
    Let \(J\subseteq \left\{ 0,1,\ldots, \kappa-1 \right\}\) and \(\JComplement=\left\{ 0,1,\ldots, \kappa-1 \right\}\setminus J\).
    Let
    \begin{equation*}
        \CoreB:=\left\{ f\in \CinftycptSpace[\ManifoldN] : M_j f\big|_{\BoundaryN}=0,\forall j\in J \right\}.
    \end{equation*}
    Let \(\left\{ N_{\kappa},N_{\kappa+1},\ldots, N_{2\kappa-1} \right\}\) be as in Proposition \ref{Prop::BoundaryCond::GeneralOps::IntByParts}.
    Fix \(g\in \CinftycptSpace[\ManifoldN]\). Then,
    \begin{equation}\label{Eqn::BoundaryCond::GeneralOps::CharacterizeDomain::IntByParts}
        \FormQF[f][g]=\Ltip*{f}{\opL_0 g}[\ManifoldN],\quad \forall f\in \CoreB
    \end{equation}
    if and only if
    \begin{equation}\label{Eqn::BoundaryCond::GeneralOps::CharacterizeDomain::BoundaryCond}
        N_{2\kappa-1-j}g\big|_{\BoundaryN}=0,\quad \forall j\in \JComplement.
    \end{equation}
\end{proposition}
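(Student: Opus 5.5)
The plan is to use Proposition \ref{Prop::BoundaryCond::GeneralOps::IntByParts}, which gives, for all \(f\in \CoreB\subseteq \CinftySpace[\ManifoldN]\),
\begin{equation*}
    \FormQF[f][g]-\Ltip*{f}{\opL_0 g}[\ManifoldN]
    =\sum_{j=0}^{\kappa-1} \int_{\BoundaryN} \left( M_j f(x') \right) \overline{\left( N_{2\kappa-1-j}g(x')  \right)}\: d\sigma(x').
\end{equation*}
For \(f\in\CoreB\) the terms with \(j\in J\) vanish, since \(M_jf\big|_{\BoundaryN}=0\) there. Hence \eqref{Eqn::BoundaryCond::GeneralOps::CharacterizeDomain::IntByParts} is equivalent to
\begin{equation*}
    \sum_{j\in \JComplement}\int_{\BoundaryN} \left( M_j f \right) \overline{\left( N_{2\kappa-1-j}g \right)}\: d\sigma=0,\quad\forall f\in\CoreB.
\end{equation*}

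The direction \eqref{Eqn::BoundaryCond::GeneralOps::CharacterizeDomain::BoundaryCond}\(\Rightarrow\)\eqref{Eqn::BoundaryCond::GeneralOps::CharacterizeDomain::IntByParts} is then immediate, because every remaining boundary integrand vanishes.

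For the converse, fix \(j_0\in\JComplement\) and an arbitrary \(h\in\CinftycptSpace[\BoundaryN]\). Apply Proposition \ref{Prop::BoundaryCond::GeneralOps::ExtensionMap} with boundary data \(f_{j_0}=h\) and \(f_j=0\) for \(j\ne j_0\). This produces \(w\in\CinftycptSpace[\ManifoldN]\) with \(M_{j_0}w\big|_{\BoundaryN}=h\) and \(M_jw\big|_{\BoundaryN}=0\) for all other \(j\).

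\begin{itemize}
    \item Since \(j_0\notin J\), we have \(M_jw\big|_{\BoundaryN}=0\) for every \(j\in J\), so \(w\in\CoreB\).
    \item Plugging \(f=w\) into the displayed identity kills every term except \(j=j_0\), leaving
    \begin{equation*}
        \int_{\BoundaryN} h\,\overline{N_{2\kappa-1-j_0}g}\,d\sigma=0\quad\text{for all } h\in\CinftycptSpace[\BoundaryN].
    \end{equation*}
    \item The function \(N_{2\kappa-1-j_0}g\big|_{\BoundaryN}\) is smooth and \(\sigma\) is a strictly positive smooth density, so this forces \(N_{2\kappa-1-j_0}g\big|_{\BoundaryN}=0\).
    \item Doing this for each \(j_0\in\JComplement\) gives \eqref{Eqn::BoundaryCond::GeneralOps::CharacterizeDomain::BoundaryCond}.
\end{itemize}

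The only substantive inputs are the two earlier propositions. The point most needing care is that Proposition \ref{Prop::BoundaryCond::GeneralOps::IntByParts} is stated for \(f,g\in\CinftySpace[\ManifoldN]\), whereas here \(\CoreB\subseteq\CinftycptSpace[\ManifoldN]\) and \(g\) is compactly supported. This is consistent, since compactly supported functions are smooth and the boundary integrals converge because \(g\) and \(f\) have compact support.

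The extension step is where the independence of the boundary data is essential. We need the trace map \(w\mapsto(M_jw\big|_{\BoundaryN})_j\) to be jointly surjective onto \(\CinftycptSpace[\BoundaryN]^{\kappa}\), so that a single \(M_{j_0}\) trace can be prescribed while the others are zero. This is exactly what Proposition \ref{Prop::BoundaryCond::GeneralOps::ExtensionMap} provides for a \(W\)-normal system.
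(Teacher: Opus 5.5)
Your proof is correct and is essentially the paper's argument: the forward direction is read off from Proposition \ref{Prop::BoundaryCond::GeneralOps::IntByParts}, and the converse uses Proposition \ref{Prop::BoundaryCond::GeneralOps::ExtensionMap} to produce a test function in \(\CoreB\) with prescribed traces. The only difference is cosmetic: the paper prescribes \(M_j f\big|_{\BoundaryN}=N_{2\kappa-1-j}g\big|_{\BoundaryN}\) for all \(j\in\JComplement\) at once and obtains \(\sum_{j\in\JComplement}\int_{\BoundaryN}|N_{2\kappa-1-j}g|^2\,d\sigma=0\), which is your argument with \(h=N_{2\kappa-1-j_0}g\big|_{\BoundaryN}\) (an admissible choice, since this trace lies in \(\CinftycptSpace[\BoundaryN]\)).
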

\begin{proof}
    \eqref{Eqn::BoundaryCond::GeneralOps::CharacterizeDomain::BoundaryCond}\(\Rightarrow\)\eqref{Eqn::BoundaryCond::GeneralOps::CharacterizeDomain::IntByParts}
    follows immediately from Proposition \ref{Prop::BoundaryCond::GeneralOps::IntByParts}.

    Suppose \eqref{Eqn::BoundaryCond::GeneralOps::CharacterizeDomain::IntByParts} holds.
    By Proposition \ref{Prop::BoundaryCond::GeneralOps::ExtensionMap}, we may
    take \(f\in \CinftycptSpace[\ManifoldN]\) such that
    \begin{equation*}
        M_j f\big|_{\BoundaryN}
        =
        \begin{cases}
            0, & j\in J,\\
            N_{2\kappa-1-j}g\big|_{\BoundaryN}, &j\in \JComplement.
        \end{cases}
    \end{equation*}
    Note that \(f\in \CoreB\) by the definition of \(\CoreB\).
    Since \eqref{Eqn::BoundaryCond::GeneralOps::CharacterizeDomain::IntByParts} holds, by assumption,
    Proposition \ref{Prop::BoundaryCond::GeneralOps::IntByParts} shows
    \begin{equation*}
        \sum_{j\in \JComplement}\int_{\BoundaryN} \left| N_{2\kappa-1-j}g(x') \right|^2\: d\sigma(x')=0.
    \end{equation*}
    \eqref{Eqn::BoundaryCond::GeneralOps::CharacterizeDomain::BoundaryCond} follows.
\end{proof}

\section{Proofs of the results from the introduction}
\label{Section::IntroProofs}
\begin{proof}[Proof of Lemma \ref{Lemma::Intro2::QFIsCloseable}]
    Assumption \ref{Assumption::Intro2::MaxSub} implies \((\FormQF, \CoreB)\)
    is sectorial (see \eqref{Eqn::Result::SectorialDefn}).
    To see that \((\FormQF, \CoreB)\) is closable and its closure
    is sectorial, we will use \cite[Chapter 6, Section 1.4, Theorem 1.17]{KatoPerturbationTheory};
    this will complete the proof, since \(\CoreB\subseteq \DomainQ\)
    and \(\TestFunctionsZero[\ManifoldN]\subseteq \CoreB\), and therefore
    \(\QDomainQ\) is densely defined (since \(\TestFunctionsZero[\ManifoldN]\), and therefore
    \(\DomainQ\), is dense in \(\LtSpaceNoMeasure[\ManifoldN]\)).

    Using \eqref{Eqn::Intro2::DefineFormQF}, we clearly have
    \begin{equation*}
        \left| \FormQF[f][f] \right| + \LtNorm{f}^2\lesssim \HsWNorm{f}{\kappa}^2,\quad f\in \CoreB.
    \end{equation*}
    Combining this with Assumption \ref{Assumption::Intro2::MaxSub}, we see
    \begin{equation*}
    \begin{split}
         &\left| \FormQF[f][f] \right|+\LtNorm{f}^2\lesssim \HsWNorm{f}{\kappa}^2
         \leq C_1 \Real \FormQF[f][f] + C_2 \LtNorm{f}^2
         \lesssim \left| \FormQF[f][f] \right| + \LtNorm{f}^2,\quad f\in \CoreB.
    \end{split}
    \end{equation*}
    This implies
    \begin{equation}\label{Eqn::PfIntro::QFIsCloseable::EquivNorms}
        \left| \FormQF[f][f] \right|+\LtNorm{f}^2\approx \HsWNorm{f}{\kappa}^2, \quad f\in \CoreB.
    \end{equation}

    Suppose \(f_j\in \CoreB\) is a sequence with \(\LtNorm{f_j}\rightarrow 0\)
    and 
    \begin{equation*}
        \lim_{j,k\rightarrow\infty} \FormQF[f_j-f_k][f_j-f_k]=0.
    \end{equation*}
    Then, by \eqref{Eqn::PfIntro::QFIsCloseable::EquivNorms}, \(f_j\)
    is a Cauchy sequence in \(\HsWSpace{\kappa}[\ManifoldN]\), and is therefore convergent
    in \(\HsWSpace{\kappa}[\ManifoldN]\).  Since \(f_j\rightarrow 0\) in \(\LtSpace[\ManifoldN]\),
    we conclude \(f_j\rightarrow 0\) in \(\HsWSpace{\kappa}[\ManifoldN]\).
    Using \eqref{Eqn::PfIntro::QFIsCloseable::EquivNorms} we see
    \(\left| \FormQF[f_j][f_j] \right|\rightarrow 0\).
    From here, \cite[Chapter 6, Section 1.4, Theorem 1.17]{KatoPerturbationTheory} applies
    to show \((\FormQF, \CoreB)\) is closable and its closure is sectorial. 
\end{proof}

\begin{proof}[Proof of Corollaries \ref{Cor::Intro::dtPlusOplSubellipic}--\ref{Cor::Intro::EigenVectorsSmooth}]
    We apply Corollary \ref{Cor::Core::MainCor} with
    \(\Omegat=\ManifoldN\), \(\Omega=\Omega\), \(M=1\), and \(\psi=1\in \CinftycptSpace[\Omegat]\)
    (here we are using that \(\ManifoldN\) is compact).
    The other objects (\(\FormQF\), \(\CoreB\), \(X\), and \(\QDomainQ\))
    have the same name in Section \ref{Section::Intro} as they do in the application
    of Corollary \ref{Cor::Core::MainCor}.

    Assumptions \ref{Assumption::Core::ExistsPsi},
    \ref{Assumption::Core::TestFunctionsInCore},
    \ref{Assumption::Core::LocalizationOfCore},
    \ref{Assumption::Core::QEqualsQF},
    and \ref{Assumption::Core::MaxSubAssumption}
    follow immediately from the definitions.
    All that remains is to verify Assumption \ref{Asssumption::Core::Smoothing}.
    Fix \(x_0\in \Omega\cap \BoundaryN\) for which we wish to verify
    Assumption \ref{Asssumption::Core::Smoothing} and let \(S\)
    be as in \eqref{Eqn::MainCor::FormOfS}.

    Pick any coordinate chart \(\Psi:\CubengeqOne\xrightarrow{\sim} \Psi(\CubengeqOne)\)
    with \(\Psi(0)=x_0\) and \(\Psi^{*}X=c_0\partial_{x_n}\) for some \textit{constant} \(c_0\ne 0\);
    by the choice of \(\Omega\) and \(x_0\) it is clear such a \(\Psi\) exists.
    Set \(T=\Psi_{*} S\Psi^{*}\) as in Assumption \ref{Asssumption::Core::Smoothing}\ref{Item::Core::Smoothing::SOperator}.
    Note that
    \begin{equation*}
        S:\CinftySpace[\CubengeqOne]\rightarrow \CinftycptSpace[\CubengeqOne],
    \end{equation*}
    \begin{equation*}
        g\big|_{\{x_n=0\}}=0\implies Sg\big|_{\{x_n=0\}}=0, \quad \forall g\in \CinftySpace[\CubengeqOne],
    \end{equation*}
        \begin{equation*}
        \partial_{x_n}^j [\partial_{x_n},S]g\big|_{\{x_n=0\}}=0,\quad \forall g\in \CinftySpace[\CubengeqOne],\:\forall j\in \Zgeq.
    \end{equation*}
    and therefore,
    \begin{equation}\label{Eqn::PfIntro::MainSubCor::TPerservesSmooth}
        T:\CinftySpace[\ManifoldN]\rightarrow \CinftySpace[\ManifoldN],
    \end{equation}
    \begin{equation}\label{Eqn::PfIntro::MainSubCor::TPerservesBoundaryVanish}
        f\big|_{\BoundaryN}=0\implies Tf\big|_{\BoundaryN}=0,\quad \forall f\in \CinftySpace[\ManifoldN],
    \end{equation}
    \begin{equation}\label{Eqn::PfIntro::MainSubCor::TCommutesWithX}
        X^j \left[ X,T \right]f\big|_{\BoundaryN}=0,\quad \forall f\in \CinftySpace[\ManifoldN],\: \forall j\in \Zgeq.
    \end{equation}

    Let \(j\in \JSet\).  Then,
    using \eqref{Eqn::PfIntro::MainSubCor::TPerservesBoundaryVanish} and \eqref{Eqn::PfIntro::MainSubCor::TCommutesWithX},
    we have
    \begin{equation*}
    \begin{split}
         &X^j Tf\big|_{\BoundaryN} = TX^j f\big|_{\BoundaryN}=0, \quad \forall f\in \CoreB.
    \end{split}
    \end{equation*}
    Combining this with \eqref{Eqn::PfIntro::MainSubCor::TPerservesSmooth},
    we conclude \(T:\CoreB\rightarrow \CoreB\).
     Assumption \ref{Asssumption::Core::Smoothing} follows.

    Applying Corollary \ref{Cor::Core::MainCor}
    shows that the 
    conclusions of the corollaries in Section \ref{Section::MainResult}
    hold.  Corollaries \ref{Cor::Intro::dtPlusOplSubellipic}--\ref{Cor::Intro::EigenVectorsSmooth}
    have the same conclusions as corollaries in Section \ref{Section::MainResult}.
\end{proof}

\section{Elliptic Regularization}\label{Section::EllipticRegularization}
\cite{KohnNirenbergNonCoerciveBoundaryValueProblems} introduced an elliptic regularization
procedure which has since been used in many papers. We do not use it in this paper;
however, in this section we show how to incorporate this regularization into our framework.

For convenience we work on a bounded smooth domain \(\EllipOmega\subset \Rn\); though this is not essential.
Let \(\QDomainQ\) be a closed, densely defined, sectorial form on \(\LtSpace[\EllipOmega]\). Fix \(\kappa\in \Zg\).
We assume throughout this section the following assumption.
\begin{assumption}\label{Assumption::EllipticReg::SobolevIsCore}
    We assume \(\DSet:=\HsSpace{\kappa}[\EllipOmega]\cap \DomainQ\) is a core for \(\QDomainQ\).
\end{assumption}

Set
\begin{equation*}
    \Formt[f][g]:=\sum_{j=1}^n \Ltip*{\partial_{x_j}^\kappa f}{\partial_{x_j}^\kappa g}[\EllipOmega].
\end{equation*}
It is easy to see \(\tDomaint\) is a closed, densely defined, sectorial form.
Set, for \(\epsilon>0\),
\begin{equation*}
    \FormQepsilon[f][g]:=\FormQ[f][g]+\epsilon \Formt[f][g],\quad f,g\in \DSet.
\end{equation*}
It follows from \cite[Chapter 6, Section 1, Theorem 1.31]{KatoPerturbationTheory}
that \(\QepsilonDomainQepsilon\) is a closed, densely defined, sectorial form.
Let \(\gamma\in \R\) be a vertex for \(\FormQ\);
i.e., \(\gamma\) satisfies \eqref{Eqn::Result::SectorialDefn} (see \cite[Chapter 8, Section 1.1]{KatoPerturbationTheory}).
Note that \(\gamma\) is a vertex for \(\FormQepsilon\), \(\forall \epsilon>0\).

Let \(\LDomainL\) and \(\LepsilonDomainLepsilon\) be the m-sectorial operators
associated to \(\QDomainQ\) and \(\QepsilonDomainQepsilon\) by
\cite[Chapter 6, Section 2, Theorem 2.1]{KatoPerturbationTheory}.


\begin{example}\label{Example::EllipticReg::IntroExample}
    A main example to keep in mind is \(\QDomainQ\) as given in Section \ref{Section::Intro} (where we take \(\ManifoldN=\overline{\EllipOmega}\)).
    There, \(\CoreB\subseteq \CinftySpace[\overline{\EllipOmega}]\) and therefore 
    \(\CoreB=\CoreB\cap \HsSpace{\kappa}[\EllipOmega]\subseteq \DomainQ \cap \HsSpace{\kappa}[\EllipOmega]\)
    is a core for \(\QDomainQ\) (by assumption) and Assumption \ref{Assumption::EllipticReg::SobolevIsCore} holds.
    Replacing \(\FormQ\) with \(\FormQepsilon\) has the effect of adding \(\epsilon \sum_{j=1}^n (-1)^\kappa \partial_{x_j}^{2\kappa}\)
    to \(\opL_0\).  \(\FormQepsilon\) is coercive in the sense of \cite[Section 7.D]{FollandIntroductionPDE}
    and therefore one has a wide range of results which apply to each \(\FormQepsilon\) (though not uniformly in \(\epsilon>0\)).
    The main point is that the methods of this paper apply uniformly in \(\epsilon>0\), giving estimates 
    on \(\opL_\epsilon\)
    which are uniform in \(\epsilon>0\).
    One can then take \(\epsilon\downarrow 0\) to conclude estimates about \(\opL\).
\end{example}

As described in Example \ref{Example::EllipticReg::IntroExample}, the goal is to take \(\epsilon\downarrow 0\).
The remainder of this section is devoted to explaining that this limit behaves quite well.

\begin{proposition}\label{Prop::EllipticReg::GeneralizedStrongConverg}
    \(\opL_\epsilon\xrightarrow{\epsilon\downarrow 0}\opL\) strongly in the generalized sense
    (see \cite[Chapter 8, Section 1.1]{KatoPerturbationTheory}).
    In particular, 
    for \(\Real \zeta <\gamma\) we have
    \(\left( \opL_{\epsilon}-\zeta \right)^{-1}\xrightarrow{\epsilon\downarrow 0}\left( \opL-\zeta \right)^{-1}\)
    in the strong operator topology, where \(\left( \opL_{\epsilon}-\zeta \right)^{-1}\)
    and \(\left( \opL-\zeta \right)^{-1}\) denote the resolvents.
\end{proposition}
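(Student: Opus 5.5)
The plan is to invoke Kato's monotone convergence theorem for sectorial forms, specifically \cite[Chapter 8, Section 3, Theorem 3.6]{KatoPerturbationTheory}. That result treats a family of closed sectorial forms \(\FormQepsilon\) that decrease, in the sense of real parts, to a limit. It guarantees that the associated m-sectorial operators converge strongly in the generalized sense to the operator of the closure of the limit form, provided that closure is the given closed form.

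The first step is to verify the hypotheses of that theorem. Each \(\FormQepsilon\) is defined on the common domain \(\DSet\) and is closed and sectorial with common vertex \(\gamma\), as already noted above. Writing \(\FormQepsilon = \FormQ + \epsilon\Formt\), where \(\Formt\) is symmetric and nonnegative, the imaginary part \(\Imaginary \FormQepsilon[f][f] = \Imaginary \FormQ[f][f]\) does not depend on \(\epsilon\). The real part \(\Real\FormQepsilon[f][f]\) is nondecreasing in \(\epsilon\). Consequently the family has a uniform sector: from \eqref{Eqn::Result::SectorialDefn},
\[
\left|\Imaginary \FormQepsilon[f][f]\right| \leq C\left(\Real\FormQ[f][f]-\gamma\LtNorm{f}^2\right) \leq C\left(\Real\FormQepsilon[f][f]-\gamma\LtNorm{f}^2\right).
\]
In particular, \(\FormQepsilon - \FormQ = \epsilon\Formt\) is symmetric and nonnegative, and it decreases as \(\epsilon\downarrow 0\). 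This is exactly the setting of a monotonically decreasing sequence of forms bounded below, with the difference from the limit real-valued. I will pass to a sequence \(\epsilon_k\downarrow 0\) first and argue at the end that the full family converges.

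The second step, which I expect to be the main obstacle, is to identify the limit. The pointwise limit of \(\FormQepsilon\) on \(\DSet\) is \(\FormQ|_{\DSet}\). Kato's decreasing-forms theorem yields convergence to the operator associated with the closure of the limit form, provided that form is closable. Here \(\FormQ|_{\DSet}\) is a restriction of the closed form \(\FormQ\), so it is closable. By Assumption \ref{Assumption::EllipticReg::SobolevIsCore}, \(\DSet\) is a core, so its closure is exactly \(\QDomainQ\), and the limit operator is \(\opL\).

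If the theorem in Kato is stated only for symmetric forms, I would instead argue directly via resolvents. For \(\Real\zeta<\gamma\) and \(h\in\LtSpace\), set \(u_\epsilon=(\opL_\epsilon-\zeta)^{-1}h\). The uniform sector bound gives a bound on \(\Real\FormQ[u_\epsilon][u_\epsilon] + \epsilon\Formt[u_\epsilon][u_\epsilon] + \LtNorm{u_\epsilon}^2\) that is uniform in \(\epsilon\). Hence \(u_\epsilon\) is bounded in \(\DomainQ\) and \(\sqrt\epsilon\,u_\epsilon\) is bounded in the \(\Formt\)-norm. A weak limit \(u\) in \(\DomainQ\) then satisfies
\[
\FormQ[f][u]-\zeta\Ltip{f}{u}=\Ltip{f}{h}\quad \text{for } f\in\DSet,
\]
because \(\epsilon\Formt[f][u_\epsilon]=O(\sqrt\epsilon)\to 0\). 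Density of the core extends this identity to all \(f\in\DomainQ\), so \(u=(\opL-\zeta)^{-1}h\). Uniqueness of the limit then gives convergence of the full family.

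Strong convergence follows from an energy identity. Compare \(\Real(\FormQepsilon-\zeta)[u_\epsilon-v][u_\epsilon-v]\), where \(v\in\DSet\) approximates \(u\) in \(\DomainQ\), and use the coercivity from \(\Real\zeta<\gamma\) to convert weak convergence into strong convergence in \(\LtSpace\). That strong resolvent convergence for one such \(\zeta\) is equivalent to generalized strong convergence is \cite[Chapter 8, Section 1.1]{KatoPerturbationTheory}, which gives the stated conclusion.
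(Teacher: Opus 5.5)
Your proposal is correct and matches the paper's proof: the paper applies \cite[Chapter 8, Section 3, Theorem 3.6]{KatoPerturbationTheory} along arbitrary sequences \(\epsilon_m\downarrow 0\), after checking the same hypotheses you check. These are \(\DSet\subseteq\DomainQ\), that \(\FormQepsilon-\FormQ=\epsilon\Formt\) is real and nonnegative (hence uniformly sectorial with vertex \(0\)), and that \(\FormQepsilon[f][f]\to\FormQ[f][f]\) for \(f\) in the core \(\DSet\). That theorem is stated for nonsymmetric sectorial forms and does not require monotonicity, so your fallback resolvent argument is unnecessary, though it is correct and essentially reproduces Kato's proof.
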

\begin{proof}
    To prove the proposition, we will apply \cite[Chapter 8, Section 3, Theorem 3.6]{KatoPerturbationTheory}.
    \(\Domain{\FormQepsilon}=\DSet\subseteq \DomainQ\), \(\forall \epsilon>0\). We have,
    \begin{equation*}
        \left| \Imaginary \left( \FormQepsilon[f][f]-\FormQ[f][f] \right) \right|
        =0\leq 
        \epsilon \Formt[f][f]=
        \Real\left( \FormQepsilon[f][f]-\FormQ[f][f] \right),\quad \forall f\in \DSet.
    \end{equation*}
    Clearly, \(\forall f\in \DSet\),
    \(\lim_{\epsilon\downarrow 0} \FormQepsilon[f][f]=\FormQ[f][f]\),
    and by Assumption \ref{Assumption::EllipticReg::SobolevIsCore},
    \(\DSet\) is a core for \(\QDomainQ\).
    From here, the result follows from \cite[Chapter 8, Section 3, Theorem 3.6]{KatoPerturbationTheory}
    (applied to any sequence \(\epsilon_m\downarrow 0\)).
\end{proof}

As described in Section \ref{Section::Intro::Quant}, a main motivation of this paper is to help
study heat operators. We turn to discussing the limit \(\epsilon\downarrow 0\) as applied to the corresponding heat operators.
We begin with a lemma.

\begin{lemma}\label{Lemma::EllipticReg::GenerateSemiGroups}
    \((-\opL, \DomainL)\) and \((-\opL_\epsilon,\DomainLepsilon)\) are generators for strongly continuous semigroups
    satisfying
    \begin{equation*}
        \LtOpNorm*{e^{-t\opL}}, \LtOpNorm*{e^{-t\opL_{\epsilon}}}\leq e^{-\gamma t},\quad \forall t\geq 0.
    \end{equation*}
    Recall, \(\gamma\) may be negative.
\end{lemma}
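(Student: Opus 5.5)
The plan is to apply the Lumer--Phillips theorem (in the form for m-sectorial / quasi-m-accretive operators) to the shifted operators \(\opL-\gamma\) and \(\opL_\epsilon-\gamma\). The point is that \(\gamma\) is a vertex for both \(\FormQ\) and \(\FormQepsilon\), as noted before Example \ref{Example::EllipticReg::IntroExample}. By \cite[Chapter 6, Section 2, Theorem 2.1]{KatoPerturbationTheory}, \(\LDomainL\) is m-sectorial with numerical range contained in the sector with vertex \(\gamma\). The same holds for \(\LepsilonDomainLepsilon\), since \(\QepsilonDomainQepsilon\) is closed, densely defined and sectorial with the same vertex.

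The first step is the accretivity estimate. For \(g\in\DomainL\subseteq\DomainQ\), the defining identity \(\Ltip{g}{\opL g}=\FormQ[g][g]\), together with \eqref{Eqn::Result::SectorialDefn}, gives
\begin{equation*}
    \Real\Ltip{g}{(\opL-\gamma)g}=\Real\FormQ[g][g]-\gamma\LtNorm{g}^2\geq0 .
\end{equation*}
So \(\opL-\gamma\) is accretive. For \(\opL_\epsilon\) the same computation applies, because \(\Real\FormQepsilon[g][g]=\Real\FormQ[g][g]+\epsilon\Formt[g][g]\geq\Real\FormQ[g][g]\).

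The second step is m-accretivity, i.e.\ the range condition. Kato's representation theorem states that \(\zeta\) lies in the resolvent set of \(\opL\) for every \(\zeta\) outside the closed sector. In particular, for \(\lambda>0\) the operator \(\opL-\gamma+\lambda\) is surjective with \(\LtOpNorm{(\opL-\gamma+\lambda)^{-1}}\leq\lambda^{-1}\). Both \(\DomainL\) and \(\DomainLepsilon\) are dense, since m-sectorial operators are densely defined.

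Lumer--Phillips then shows that \(-(\opL-\gamma)\) generates a strongly continuous contraction semigroup. Therefore \(e^{-t\opL}=e^{-\gamma t}e^{-t(\opL-\gamma)}\) satisfies \(\LtOpNorm{e^{-t\opL}}\leq e^{-\gamma t}\), and \(\opL_\epsilon\) is handled identically.

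I expect no real obstacle here. The only point needing care is the sign convention: the vertex inequality \(\gamma\LtNorm{f}^2\leq\Real\FormQ[f][f]\) yields decay \(e^{-\gamma t}\), which is growth when \(\gamma<0\). One must also confirm that the vertex for \(\FormQepsilon\) is indeed the same \(\gamma\), which is immediate because \(\Formt\geq0\). Alternatively, one could cite directly \cite[Chapter 9, Section 1.6]{KatoPerturbationTheory}, which states that \(-T\) generates a holomorphic semigroup with \(\|e^{-tT}\|\leq e^{-\gamma t}\) for m-sectorial \(T\) with vertex \(\gamma\).
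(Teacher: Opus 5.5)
Your proposal is correct and follows the paper's proof. The paper likewise notes that \(\opL\) and \(\opL_\epsilon\) are m-sectorial with vertex \(\gamma\), so \(\opL-\gamma\) and \(\opL_\epsilon-\gamma\) are m-accretive; it then applies Lumer--Phillips and mentions Kato's holomorphic-semigroup theorem, after reducing to \(\gamma=0\), as an alternative. Your explicit accretivity computation and resolvent bound simply unpack the definition of m-accretivity that the paper cites.
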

\begin{proof}
    \((\opL, \DomainL)\) and \((\opL_\epsilon,\DomainLepsilon)\) are m-sectorial operators with vertex \(\gamma\).
    Thus, by definition (see \cite[Chapter 5, Section 3.10]{KatoPerturbationTheory})
    we have \((\opL-\gamma, \DomainL)\) and \((\opL_\epsilon-\gamma,\DomainLepsilon)\) are m-accretive.
    From here, the result follows from the Lumer--Phillips Theorem
    \cite[Chapter 2, Theorem 3.15]{EngelNagelAShortCourseOnOperatorSemigroups}

    Alternatively, by replacing \(\FormQ[f][g]\) and \(\FormQepsilon[f][g]\) with \(\FormQ[f][g]-\gamma\Ltip{f}{g}\) and
    \(\FormQepsilon[f][g]-\gamma\Ltip{f}{g}\), respectively, we may assume \(\gamma=0\).
    From here, the result follows from \cite[Chapter 9, Section 1, Theorem 1.24]{KatoPerturbationTheory}.
    In fact, we conclude that \(e^{-t\opL}\) and \(e^{-t\opL_{\epsilon}}\) are holomorphic semigroups,
    though we do not use this fact.

\end{proof}


\begin{proposition}\label{Prop::EllipticReg::ConvergeHeat}
    \(e^{-t\opL_{\epsilon}}\xrightarrow{\epsilon\downarrow 0}e^{-t\opL}\) in the strong operator topology,
    uniformly on compact subsets of \(t\in [0,\infty)\).
\end{proposition}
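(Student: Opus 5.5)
The plan is to derive the semigroup convergence from the generalized strong resolvent convergence in Proposition \ref{Prop::EllipticReg::GeneralizedStrongConverg}, using a Trotter--Kato type approximation theorem. The key observation is that the semigroups are uniformly bounded: by Lemma \ref{Lemma::EllipticReg::GenerateSemiGroups}, \(\LtOpNorm{e^{-t\opL_\epsilon}}\leq e^{-\gamma t}\) with \(\gamma\) independent of \(\epsilon\).

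\textbf{Step 1: normalize.} Replace \(\opL,\opL_\epsilon\) by \(\opL-\gamma,\opL_\epsilon-\gamma\), equivalently multiply the semigroups by \(e^{\gamma t}\). This does not affect strong convergence uniformly on compact \(t\)-sets, since \(e^{\gamma t}\) is bounded there. After this reduction all semigroups are contractions and all generators are m-accretive.

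\textbf{Step 2: resolvent convergence.} Proposition \ref{Prop::EllipticReg::GeneralizedStrongConverg} gives \((\opL_\epsilon-\zeta)^{-1}\to(\opL-\zeta)^{-1}\) strongly for \(\Real\zeta<\gamma\). After the shift this holds for every \(\lambda>0\) in the form \((\lambda+\opL_\epsilon)^{-1}\to(\lambda+\opL)^{-1}\).

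\textbf{Step 3: apply Trotter--Kato.} Invoke the first Trotter--Kato approximation theorem in \cite[Chapter 3, Theorem 4.8]{EngelNagelAShortCourseOnOperatorSemigroups}. For uniformly bounded \(C_0\)-semigroups on a Banach space, strong convergence of the resolvents at one (equivalently every) \(\lambda\) in the common resolvent region, with the limit being the resolvent of a generator, is equivalent to strong convergence of the semigroups uniformly on compact subsets of \([0,\infty)\). The theorem is stated for sequences, so apply it along an arbitrary sequence \(\epsilon_m\downarrow0\). Since every sequence works, this gives the full limit \(\epsilon\downarrow0\).

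Alternatively, one can cite \cite[Chapter 9, Section 2, Theorem 2.16]{KatoPerturbationTheory}, which directly states that generalized strong convergence of generators of uniformly bounded semigroups yields strong semigroup convergence, uniformly on compact \(t\)-sets.

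\textbf{Main obstacle.} The step requiring care is checking that the hypotheses of the abstract theorem match what Proposition \ref{Prop::EllipticReg::GeneralizedStrongConverg} provides. Trotter--Kato needs a uniform bound \(\|T_\epsilon(t)\|\le Me^{\omega t}\) with \(M,\omega\) independent of \(\epsilon\). It also needs the limit of the resolvents to be the resolvent of the already-identified generator \(-\opL\); this is exactly what generalized strong convergence asserts. Both are supplied by the results stated earlier, so the proof reduces to these citations.
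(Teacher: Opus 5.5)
Your proposal is correct and matches the paper's proof: the paper combines the uniform bound from Lemma \ref{Lemma::EllipticReg::GenerateSemiGroups} with the resolvent convergence in Proposition \ref{Prop::EllipticReg::GeneralizedStrongConverg}, and feeds both into the First Trotter--Kato Approximation Theorem (citing it from the larger Engel--Nagel monograph). Your normalization by \(e^{\gamma t}\) and the passage from sequences \(\epsilon_m\downarrow 0\) to the full limit are harmless details that the paper leaves implicit.
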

\begin{proof}
    Using 
    Proposition \ref{Prop::EllipticReg::GeneralizedStrongConverg}
    and Lemma \ref{Lemma::EllipticReg::GenerateSemiGroups} this follows from
    the  First Trotter--Kato Approximation Theorem
    (see \cite[Chapter 3, Theorem 4.8]{EngelNagelOneParameterSemigroupsForLinearEvolutionEquations}).
\end{proof}

\begin{remark}
    In the setting of Example \ref{Example::EllipticReg::IntroExample},
    and with \(\Omega\subseteq \overline{\Gamma}=\ManifoldN\) as in Section \ref{Section::Intro},
    the results of this paper imply\footnote{This implication is standard.  See 
    \cite[Proposition \ref*{Heat::Prop::SubProof::QualSubellip::ExistsHeatKernel}]{StreetHypoellipticityAndHigherOrderGaussianBounds} for an exposition
    on manifolds without boundary. The proof can be easily adapted to manifolds with boundary.}
     that there is a unique smooth
    function \(K(t,x,y)\in \CinftySpace*[(0,\infty)\times \Omega\times \Omega][\MatrixSpace[M][M]]\) such that
    \begin{equation*}
        e^{-t\opL} f(x) = \int_{\Omega} K(t,x,y) f(y)\: d\Vol(y),\quad \forall f\in \LtSpace[\Omega][\Vol][\C^M],\:\forall x\in \Omega.
    \end{equation*}
    Similarly, there are unique smooth functions \(K_t^{\epsilon}(x,y)\in \CinftySpace*[(0,\infty)\times \Omega\times \Omega][\MatrixSpace[M][M]]\) corresponding to \(e^{-t\opL_\epsilon}\), \(\epsilon>0\).
    Since the results of this paper hold uniformly for \(\epsilon\in (0,1]\),
    it follows that \(\left\{ K_t^{\epsilon} \right\}\subset \CinftySpace*[(0,\infty)\times \Omega\times \Omega][\MatrixSpace[M][M]]\)
    is a bounded set. Combining this with Proposition \ref{Prop::EllipticReg::ConvergeHeat}
    shows \(K_t^{\epsilon}\rightarrow K_t\) in \(\CinftySpace*[(0,\infty)\times \Omega\times \Omega][\MatrixSpace[M][M]]\).
\end{remark}

\bibliographystyle{amsalpha}

\bibliography{bibliography}

\center{\it{University of Wisconsin-Madison, Department of Mathematics, 480 Lincoln Dr., Madison, WI, 53706}}

\center{\it{street@math.wisc.edu}}

\center{MSC 2020:  35H20, 35G15, 35B45}

\end{document}